\newtheorem{theorem}{Theorem}[section]
\newtheorem{lemma}[theorem]{Lemma}
\newtheorem{proposition}[theorem]{Proposition}
\newtheorem{corollary}[theorem]{Corollary}
\theoremstyle{definition}
\newtheorem{definition}[theorem]{Definition}
\newtheorem{example}[theorem]{Example}
\newtheorem{cexample}[theorem]{Counterexample}
\theoremstyle{remark}
\newtheorem{remark}[theorem]{Remark}
\numberwithin{theorem}{section}
\newcommand{\R}{\mathbb{R}}
\begin{document}

\title[A choice-independent Kuranishi theory]{The polyfold--Kuranishi correspondence I:\\A choice-independent theory of Kuranishi structures}

\author[Dingyu Yang]{Dingyu Yang}
\address{Courant Institute of Mathematical Sciences, New York University, 251 Mercer Street, New York, N.Y. 10012-1185, United States}
\curraddr{Institut de Math\'{e}matiques de Jussieu - Paris Rive Gauche, Universit\'{e} Pierre et Marie Curie, UMR 7586, Bureau 15-25/501, 4 place Jussieu, Case 247, 75252 Paris cedex 05, France}
\email{dingyu\_yang@cantab.net}


\subjclass[2010]{Primary 53D99, 58D27; Secondary 32Q65, 53D42}

\date{February 27, 2014}


\keywords{Symplectic topology, moduli space, Kuranishi structure, polyfold Fredholm theory, abstract perturbation}

\begin{abstract}\addcontentsline{toc}{section}{Abstract}
This is the first paper in a series which proposes and develops the \emph{polyfold Fredholm structure--Kuranishi structure correspondence}, identifying these two abstract perturbative structures which are indispensable for constructing and understanding symplectic invariants in the most general settings. In this paper, I present my version of the theory of Kuranishi structures in full generality. This theory is independent of all the choices made in the construction (including the choices of good coordinate systems); and it uses the \emph{equivalence of Kuranishi structures} as the germ to capture the intrinsic underlying structure to which the perturbation theory descends. This is the first theory in the literature that has these two properties. This choice-independent theory is essential for canonically and functorially identifying the polyfold Fredholm theory of Hofer-Wysocki-Zehnder with the theory of Kuranishi structures. The next two papers in this series will be on the forgetful functor and the globalization functor in the respective directions of the polyfold--Kuranishi correspondence, as well as illustrating the use of the correspondence with a few sample applications.
\end{abstract}

\maketitle
\newpage
\tableofcontents
\newpage

\listoffigures
\newpage

\section*{Introduction}
In symplectic geometry, a wide range of interesting problems are often reduced to counting the signed solutions of first order elliptic partial differential equations defined on varying domains (satisfying certain constraints to make them index 0), or more generally constructing invariants on the solution spaces of those PDEs. For example, the moduli spaces of stable pseudoholomorphic curves in symplectic manifolds are used to define Floer cohomologies, symplectic field theory and finite energy foliations and to prove many results regarding periodic orbits and the properties of the background symplectic manifolds. 

However, in such approaches, there is an intrinsic difficulty: the analysis is not compatible with the compactness and transversality. Bubbling off can occur in the limit of a sequence of holomorphic curves with an energy bound, and this phenomenon is not smooth in the usual sense and causes the issue of compactness. The symmetry of problems creates the issue of transversality, namely, moduli spaces cannot be made transverse using geometric perturbations. The interplay between the analysis and compactification is responsible for a wealth of information encoded in the moduli spaces. On the other hand, to extract invariants from the moduli spaces, we need to abstactly perturb the section that characterizes the moduli space as the zero set, so that the zeros are transversely cut out and then we can assign invariants to the resulting regular solution space of the perturbed section.

To abstractly perturb a space $X$, we first need to invent an abstract perturbative space with sufficient structure to allow a perturbation mechanism to perturb itself into a regular space (an invariant weighted branched orbifold), and then we must demonstrate that we can equip $X$ with such a structure, so that $X$ is an example of the abstract perturbative space.

In the literature of symplectic geometry, there are only two mainstream abstract perturbative spaces, polyfold Fredholm structures and Kuranishi structures. Those two structures originated from different contexts and are of completely different flavors and philosophies.

The theory of Kuranishi structures was invented by Fukaya-Ono in \cite{FO} and further improved by Fukaya-Oh-Ohta-Ono in \cite{FOOO} during their development of the (Lagrangian) Floer theory. They package the local models inspired by variations of complex structures and the gauge theory into a global geometric object. Basically one uses finite dimensional differential geometric techniques to look at local finite dimensional reductions of a Fredholm problem. Rather than solving $f=0$, one looks at the space $V$ solving $f\in L|_U$ for a local finite rank bundle $L|_U$ filling up the cokernel of $f$, and now the solution of $f$ locally is the solution of the section $s:=f|_{V}$ in a finite dimensional bundle $L|_{V}$. Due to the Fredholm property, different ways of local finite dimensional reductions of a Fredholm problem are related if $L|_U$ and $L'|_{U'}$ are related (for example, $L|_{U\cap U'}\overset{\text{subbundle}}{\subset} L'|_{U\cap U'}$), and one can try to patch those local data in different dimensions coherently into a global object, and then gradually modify it to have better properties, and try to perturb it into a regular space. Here the local model of this global geometric object is a familiar object in finite dimensional geometry, but to be useful in the context of symplectic geometric applications, the dimensions of local models are allowed to vary. Therefore, one has to consider coordinate changes between local models of different dimensions, and in this case, a complete understanding of how coordinate changes between local models behave in the global perturbation theory is rather involved. Moreover, there is no canonical choice of coherent system of local finite dimensional reductions. A natural question is whether we get the same answer using different coherent systems of local finite dimensional reductions and different choices made during the later modifications and constructions during perturbations?

A series of preprints and notes have addressed various counterexamples, issues and subtleties present in the Kuranishi structure theory; see \cite{FOOO}, \cite{KH}, \cite{DY}, \cite{MW}, \cite{FOOONEW}. In the latest two detailed preprints (circa Aug/Sep 2012), McDuff-Wehrheim \cite{MW} provides a construction of Kuranishi theory for the special case of Kuranishi structures with trivial stabilizers, while Fukaya-Oh-Ohta-Ono \cite{FOOONEW} provides more details and clarifications for the construction in their book \cite{FOOO}.

The current status of the Kuranishi structure theories in the literature (except the results announced in \cite{DY}) can be summarized as follows: (I) The various proposed theories all rely on a fixed choice of a ``good coordinate system'' (ordered finite cover) for a given Kuranishi structure once and for all, and their methods are inadequate to compare different choices. (II) Various theories all work on the level of atlas, not descending to a germ that is intrinsic to the underlying geometric object represented by one particular choice of atlas which so happens to be chosen. The right notion of \emph{germ} of Kuranishi structures is not defined (except announced in \cite{DY}). What I meant by a right notion of germ is that one could imagine that different choices of Kuranishi structures representing the same germ capturing the underlying object should behave the same in the perturbation theory; ideally on this germ level, the choice of a coherent global system of local finite dimensional reductions should become canonical; and all the constructions for the perturbation theory and comparisons of choices involved should stay in this germ. (III) An issue important to the Hausdorffness--the topological matching condition--has been overlooked.

In this paper, I establish a choice-independent theory of Kuranishi structures in full generality; this is the first theory in the literature that deals with (I) and (II) where I call the germ as the Kuranishi structure equivalence class, as well as addressing (III).

\begin{theorem}
\begin{enumerate}
\item If we define a Kuranishi structure \ref{KURANS} as in \cite{FOOO} together with two additional conditions, the maximality condition and the topological matching condition, we can extract an ordered finite cover called a good coordinate system \ref{EXISTGCS}; and moreover, we can extract a Hausdorff good coordinate system with a stronger form of coordinate changes \ref{LEVEL1CCHANGE}, which we call a level-1 good coordinate system \ref{LEVEL0GCS}, \ref{LEVELONEGCS}. 
\item A level-1 good coordinate system admits a compact invariant multisectional perturbation theory \ref{PERTN}.
\item There are notions of (level-1) chart-refinements and (concerted\footnote{A Kuranishi embedding is \emph{concerted} if the coordinate change directions after the Kuranishi embedding do not strictly change.} level-1) Kuranishi embeddings between (level-1) good coordinate systems \ref{CHARTREFINEMENT}, \ref{KEMB}, \ref{LEVEL1CRMENT}, \ref{LEVEL1KEMBED}. There always exists a concerted level-1 Kuranishi embedding for a given concerted Kuranishi embedding, after fixing a level-1 good coordinate system for the domain good coordinate system of the Kuranishi embedding \ref{EMBEDDING0TO1EXTENDINGLEVEL1GCS}. 
\item There is a combinatorial process called tripling \ref{TRIPLING} which provides a chart-refinement of a level-1 good coordinate system. Equipping the domain good coodinate system of a general Kuranish embedding with a level-1 structure and then applying the tripling to the resulting level-1 good coordinate system, we can achieve a concerted Kuranishi embedding chart-refining the original Kuranishi embedding \ref{CONCERTEDPROOF}. 
\item The fiber product exists for a pair of Kuranishi embeddings with a common domain good coordinate system \ref{FIBPROD}, as follows: Applying item (4) and (3) and one more tripling, we have a pair of concerted level-1 Kuranishi embeddings with a common level-1 good coordinate system such that the pair chart-refines the original one and is now admissible (the coordinate change directions from one target in the new pair to the other target via the domain do not strictly change). Applying the tripling to this picture again to get the latest level-1 pair level-1 chart-refining the intermediate pair, and then there is a mechanism to form fiber product Kuranishi charts such that the induced coordinate changes between the fiber product Kuranishi charts naturally exist and are compatible. The fiber product good coordinate system is naturally level-1, and completes a level-1 fiber product commutative square with the latest level-1 pair. 
\item There is a notion of equivalence class of Kuranishi structures which is a global germ and captures the intrinsic underlying abstract perturbative structure \ref{KSEQUIV}, \ref{KSEQUIVPF}, as well as a similar notion for (level-1) good coordinate systems (see \ref{GCSEQUIV}, the proof of \ref{GCSEQUIVPF}). 
\item All the different choices made in extracting a level-1 good coordinate system from a fixed Kuranishi structure lead to equivalent level-1 good coordinate systems \ref{INDEPGCSYSTEM}, \ref{INDEPLEVEL1}. The level-1 good coordinate systems for equivalent Kuranishi structures are equivalent \ref{GCSEMBEDDINGFORKS}, \ref{EMBEDDING0TO1EXTENDINGLEVEL1GCS}. 
\item Equivalent level-1 good coordinate systems admit the same perturbation theory \ref{COMPAREPERTURBATION}. We have a well-defined perturbation theory for each Kuranishi structure equivalence class---the intrinsic underlying object---independent of all the choices made (with two different sets of full choices comparable in a common level-1 refinement) \ref{FULLINDEP}.
\end{enumerate}
\end{theorem}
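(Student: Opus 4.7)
My plan is to organize the eight items into four conceptual blocks and address them in the order in which they build on one another: (a) extraction of (level-1) good coordinate systems from a Kuranishi structure (items (1) and part of (7)); (b) the multisection perturbation theory on a single level-1 good coordinate system (item (2)); (c) the combinatorial calculus of embeddings, triplings, and fiber products (items (3)--(5)); (d) the passage to the germ level and the choice-independence of the perturbation theory (items (6)--(8)).

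For block (a), I would start from a Kuranishi structure satisfying maximality and topological matching. The classical cover-extraction argument (partially order charts by dimension, then shrink) produces an ordered finite cover. The non-routine step is promoting this to a Hausdorff good coordinate system: I would isolate a lemma saying that the topological matching condition is exactly what forces the quotient of the disjoint union of shrunk charts by coordinate changes to be Hausdorff. To upgrade further to \emph{level-1}, the stronger bundle-embedding identities on overlaps have to be arranged by a carefully controlled finite downward induction on dimension, choosing nested shrinkings compatible with all intersections simultaneously. For block (b), once level-1 data are in place the standard multisection machinery applies, and the extension-across-overlaps cochain, which is the usual difficulty, vanishes essentially by definition of the level-1 condition; so this step reduces to writing down the obstruction and observing that level-1 compatibility kills it.

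Block (c) is where I expect the bulk of the work. I would first give the definitions of chart-refinement and Kuranishi embedding, and then define \emph{tripling} explicitly as an operation on the nerve of the cover. The central claim is that tripling turns any Kuranishi embedding into a concerted one; the proof is a case analysis over pairs of charts in the tripled nerve, showing that after tripling the only coordinate change directions appearing in the image are those already present in the domain. For fiber products, a second tripling (as item (5) describes) is what enforces admissibility and lets us build fiber-product charts dimension by dimension, with the induced coordinate changes patching automatically from level-1 compatibility. The hardest technical point in the whole proof, I expect, is the bookkeeping for the fiber product: tracking the indexing poset after double tripling and verifying that the new coordinate changes themselves satisfy the level-1 condition, so that the fiber product good coordinate system closes the level-1 square.

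Finally, for block (d), I would define equivalence of Kuranishi structures (respectively of (level-1) good coordinate systems) as the equivalence relation generated by concerted (level-1) Kuranishi embeddings. Using (c), any two level-1 good coordinate systems extracted from equivalent Kuranishi structures admit a common level-1 refinement by a zig-zag of concerted embeddings that can be collapsed via fiber product; transitivity of the relation is exactly the content of the fiber-product construction, which is why I flagged that as the main obstacle. Invariance of the perturbation theory then follows by constructing multisections on a common level-1 refinement and pushing them to each side, invoking the extension step of block (b) to see the result is well-defined. This yields item (8) and packages the whole construction as a theory living on the Kuranishi equivalence class rather than on any particular atlas.
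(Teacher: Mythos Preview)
Your block decomposition mirrors the paper's architecture, and your identification of the fiber product as the central difficulty is correct. However, several steps in your plan underestimate what is actually required, and one definitional choice changes the logical structure in a way that does not match what the theorem asserts.

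First, the level-1 upgrade is not ``choosing nested shrinkings compatible with all intersections.'' A level-1 coordinate change $C_\beta|_{U'_\beta}\to C_\alpha|_{U'_\alpha}$ carries genuine geometric data beyond the underlying coordinate change: a vector-bundle-like submersion $\pi_{\alpha\beta}:W'_{\alpha\beta}\to\phi_{\alpha\beta}(U'_{\alpha\beta})$ from a tubular neighborhood, a fiberwise-isomorphic extension $\tilde\pi_{\alpha\beta}$ of the image subbundle over that neighborhood, and a bundle projection $\hat\pi_{\alpha\beta}$. These are built from Riemannian metrics (for normal exponential maps) and connections (for parallel transport of subbundles), and the compatibility among three charts requires a result of the form ``given two vector-bundle-like submersions $A\to B\to C$, there is a single metric recovering both and their composite.'' The induction runs \emph{upward} in the order, not downward, and at each stage one constructs new metrics and connections agreeing with the already-built data on overlaps via transition regions. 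None of this is visible in a pure shrinking argument, and without $(\pi_{\alpha\beta},\tilde\pi_{\alpha\beta})$ you have no mechanism to lift perturbations in block (b).

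Second, your definition of equivalence as ``the equivalence relation generated by concerted embeddings'' is not the paper's. The paper defines two objects to be equivalent if they admit a \emph{common refinement}, and then proves this relation is transitive---that is the content of item (6). If you build transitivity in by fiat, you have not proved what is claimed; worse, for item (8) you would then need to push perturbations along arbitrary zig-zags, and collapsing a zig-zag to a single common refinement is exactly the fiber-product step you cannot skip.

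Third, in the fiber product itself there is an obstacle your plan does not anticipate: the naive inclusion of $C^i_T$ as the $i$-th factor of $C^{\text{FP}}_T$ does \emph{not} commute with coordinate changes when $|T|>1$, because the base point over which one fibers shifts from $\min T$ to $\min T'$ as $T\subset T'$. The paper resolves this with a \emph{twisting system} $\{\lambda_\alpha\}$---a compatible family of bundle endomorphisms interpolating between the identity and $\pi_{\alpha\beta}$---which bends the factor embeddings so that the square actually closes. This is separate from, and in addition to, the double tripling, and is the step most likely to be missed in the bookkeeping you flagged.
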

On the other hand, the polyfold approach is to work with compatified space as a whole in infinite dimension, and look at the analysis involved and find a new smooothness concept that describes the compactified space smoothly and allows some structure to make the solution space generic, which on the formal level looks like the usual Sards-Smale theory in the usual smoothness situation. Polyfold Fredholm theory was invented by Hofer, Wysocki and Zehnder in a series of papers to develop the foundation for symplectic field theory, \cite{PolyfoldI}, \cite{PolyfoldII}, \cite{PolyfoldIII}, \cite{Polyfoldinteg}, \cite{Polyfoldanalysis}, \cite{PolyfoldGW}, and survey \cite{Clay}. This theory bases on the formalism in the usual differential geometry and we have invertible coordinate changes, but the local models are of the form $r(U)$ for an sc-smooth retraction $r: U\to U$. This involves a new notion of smoothness of maps, where the dimension of a connected component of $r(U)$ can vary, but it coincides with the usual smoothness in finite dimensions (when $U$ is finite dimensional). As a result, the domain reparametrization and bubbling-off/gluing are sc-smooth. The weaker notion of smoothness means that the implicit function theorem only holds for sections satisfying specific local forms, called \emph{sc-Fredholm sections}. A polyfold Fredholm structure is essentially an sc-Fredholm section $f:B\to E$ for which the orbit space of the zero set is compact and where restrtciting the base $B$ to an invariant open subset around the zero set is allowed.

The main picture of this series of papers is the discovery of a way of identifying the two kinds of aforementioned abstract perturbative spaces---Kuranishi structures and polyfold Fredholm structures---and their respective perturbation theories, which I call the \emph{polyfold Fredholm structure--Kuranishi structure correspondence}, or polyfold--Kuranishi correspondence for short. Thus, the only two mainstream perturbation theories in the literature are unified, and transferring results between these two theories can be very fruitful. In particular, signed solution counts (or more generally invariants of solution spaces) using these two markedly different frameworks give the same results.

\section*{Acknowledgements}

This paper is based on a part of the author's Ph.D. thesis at Courant Institute, New York University. The author is extremely grateful and indebted to Professor Helmut Hofer for his suggestion of this exciting project (he envisioned that a forgetful functor from polyfold theory to Kuranishi structures should exist) and his guidance, discussion, support and encouragement throughout author's graduate study. The author is also deeply thankful to his wife, Xiaoxue Yu, for her presence and support throughout author's Ph.D.
\newpage

\specialsection*{A PHILOSOPHICAL INTRODUCTION TO THIS CHOICE-INDEPENDENT THEORY OF KURANISHI STRUCTURES}

The main goal of part I of this paper series titled \emph{The polyfold--Kuranishi correspondence} is to provide a choice-independent theory of Kuranishi structures, as it is crucial for understanding the \emph{polyfold Fredholm structure--Kuranishi structure correspondence}, or the \emph{polyfold--Kuranishi correspondence} for short. Half of the correspondence is based on the forgetful construction starting from polyfold Fredholm structures and producing Kuranishi structures. The ambiguity in the outputs arises from different choices made during the construction and can be formulated as examples of the \emph{Kuranishi structure equivalence}. A Kuranishi structure equivalence class can also be viewed as a global germ for Kuranishi structures, as is evident from self-contained considerations on Kuranishi structures alone. The polyfold--Kuranishi correspondence is only useful if it preserves the perturbation theories. Therefore, we want a perturbation theory of Kuranishi structures that works for the outputs of the forgetful construction and also factors through Kuranishi structure equivalence. The outputs of the forgetful construction have additional structures that might be convenient, but we insist on constructing a perturbation theory that is applicable to the Kuranishi structures defined in a minimal way and arised from general contexts. 

In the first half of this paper, we will introduce the definition of Kuranishi structures in our theory, where we essentially added two more conditions to the definition given in \cite{FOOO}. The first condition is the \emph{maximality} condition, which roughly corresponds to the condition in \cite{KH}, \cite{FOOONEW} and the condition in \cite{MW}.  The second condition is the \emph{topological matching} condition which is new but is absolutely crucial for achieving Hausdorffness necessary for later constructions. To capture the pointwise-formulated information in the definition of a Kuranishi structure in a manageable way, we extract an ordered finite cover, called \emph{a good coordinate system}. We then further shrink each chart in this finite cover to (i) establish that whenever two charts overlap in the \emph{identification space} obtained by gluing charts together, there exists a coordinate change in the direction of increasing order, and (ii) obtain Hausdorffness of the identification space in the \emph{relative topology}. We call such a good coordinate system a \emph{Hausdorff good coordinate system}. We then introduce the new notion of \emph{refinement} and \emph{chart-refinement} in the contexts of Kuranishi structures and Hausdorff good coordinate systems. Roughly speaking, if we shrink charts of an object $A$ to form an object $B$ and coherently embed charts of $B$ into charts of possibly higher dimensions of an object $C$, then $C$ is said to be a refinement of $A$, $B$ is said to be a chart-refinement, and embedding from $B$ to $C$ is called a \emph{Kuranishi embedding}. Here objects can be simultaneously Kuranishi structures or Hausdorff good coordinate systems. Two objects are said to be equivalent if they admit a common refinement. In particular, two Kuranishi structures are Kuranishi structure equivalent, or \emph{equivalent} for short, if they admit a common Kuranishi structure refinement. This defines the aforementioned Kuranishi structure equivalence. There is also a notion of a \emph{chart-refinement of a Kuranishi embedding map between good coordinate systems}.

A \emph{choice-independent theory} of Kuranishi structures is a theory that is independent of all choices made in the construction of the theory, whereas in the literature all the existing theories depend on a fixed choice of a good coordinate system throughout and the dependence on different such choices, which inevitably involves the notion of \emph{refining a good coordinate system}, is not examined. We also want the perturbation theory to factor through Kuranishi structure equivalence, for which the existing perturbation theories of Kuranishi structures are inadequate. This underlines the necessity to construct such a choice-independent theory in the discussion of the polyfold-Kuranishi correspondence.

In the second half of this paper, we will show that the Kuranishi structure equivalence is an equivalence relation, and construct a perturbation theory that is independent of all the choices made during the construction and that factors through the Kuranishi structure equivalence. The key structure to achieve both goals is a new concept called a \emph{level-1 good coordinate system}, where coordinate changes have extra compatible structures for extending perturbations across coordinate changes and for forming fiber products. An important and nontrivial result is that we can always chart-refine and equip extra structures to a Kuranishi embedding between Hausdorff good coordinate systems to obtain a level-1 Kuranishi embedding of level-1 good coordinate systems. We explain a combinatorial process called \emph{tripling}, which is a functorial process needed (i) to show the independence of choices of level-1 structures and (ii) for forming a fiber product of a suitable pair of level-1 Kuranishi embeddings mapping from the same level-1 good coordinate system. The latter in turn is used to show the independence of choices of Hausdorff good coordinate systems and that Kuranishi structure equivalence and equvialence of Hausdorff good coordinate systems are equivalence relations.

We emphasize that this choice-independent theory of Kuranishi structures, especially the step of turning everything into level-1, cannot be drastically simplied even if we restrict to special case in which the Kuranishi structures arise from the output of the forgetful constructions. This is because, for such a Kuranishi structure, even though a level-1 structure is almost compatibly given from the forgetful construction, there is still an important piece of key data not naturally available, and to compatibly construct it for all the coordinate changes is a non-trivial task and there is no shortcut.

The definition and theory of Kuranishi structures were proposed by Fukaya and Ono in \cite{FO} to describe and ultimately perturb compactified moduli spaces in symplectic geometry to obtain invariants for applications. The definition was modified in \cite{FOOO}, most notably by not using germs for charts and coordinate changes and by including a strengthened tangent bundle condition. There are several preprints that discuss and clarify various other subtleties concerning Kuranishi structures (\cite{KH}, \cite{DY}, \cite{MW}, \cite{FOOONEW} and \cite{Joyce}), and my initial understanding of ideas around Kuranishi structures is from studying \cite{FO}, \cite{FOOO} and \cite{KH}; and an outline of my global germ approach to the theory of Kuranishi structures and the definition of Kuranishi structure equivalence are given in \cite{DY}, from which this paper builds and develops into current form with all the details and proofs included (the most important changes are that the metric quotient topology, which would forbid any dimension jump across coordinate changes, has been replaced by a new topological matching condition leading to a Hausdorff relative topology, and the form of the level-1 structure is more refined in this paper).

My version of the theory of Kuranishi structures described in this paper defines Kuranishi structures (before discussing new ideas of equivalence classes via common refinement) as a minimal modification of the form of the definition of Kuranishi structures in \cite{FOOO} for our purpose. This is because (i) the definition in \cite{FOOO} is the one to which I originally produced my forgetful construction before discussing the independence of choice, (ii) theory in \cite{MW} with nontrivial stablizers is not available at the time of writing, (iii) theories of Kuranishi structures in the literature all rely on a fixed choice of a good coordinate system with the dependence of such choice unexamined, so even if we implant the notion of refinement in our theory into theories of \cite{MW} and \cite{FOOONEW}, it is not obvious that perturbation mechanisms there will be compatible with refinement in order to factorize through Kuranishi structure equivalence, and the methods of trying to establish compatibility of perturbation with refinement are highly nontrivial and not available in the literature until this paper and they are similar in efforts to the current work, plus the extra significant efforts of learning their new theories in details if we were to base our definition prior to taking equivalence on the new definitions/conditions of \cite{MW} and \cite{FOOONEW} and to use their perturbation theories.

All the main ideas, constructions and proofs at each stage of our theory of Kuranish structures are new. I will also discuss how our theory addresses various known subtleties one might encounter in formulating such a theory.

In this paper for brevity, I will use \emph{ep-groupoids}\footnote{Here an ep-groupoid is a Lie groupoid where the source map $s$ and target map $t$ are local diffeomorphisms, and every point $z$ in the object space $M$ has an open neighborhood $W_z$ in $M$ such that $t:s^{-1}(\overline{W_z})\to M$ is proper. See \cite{PolyfoldIII}.} to mean \'{e}tale-proper\footnote{Here \'{e}tale means that the source map $s$ and target map $t$ are local diffeomorphisms, and proper means that $(s,t)$ is a proper map.} Lie groupoids in the usual sense, since both definitions agree in the finite dimensional setting in this paper and ep-groupoids are more adapted to infinite dimensions. I will use the term \emph{vector bundle ep-groupoids}, where the object spaces and the morphism spaces are vector bundles, the structure maps preserve the linear structures on the fibers and the orbit space of course does not have the vector bundle structure, to distinguish them from general fiber bundle ep-groupoids. I use $\subset$ to mean \emph{subset of or equal to}.

\specialsection*{KURANISHI STRUCTURES, GOOD COORDINATE SYSTEMS AND REFINEMENTS\label{COMPLETETHEORY}}

\section{Kuranishi structures}

Roughly speaking, a Kuranishi structure is a global object formed by coherently patching together local models cut out by sections in finite dimensional bundles with finite group actions, where the dimensions of the local bundles can vary. Local models varying in dimension from point to point will include many interesting examples incapable of being described by traditional differential geometry, with the most important examples being moduli spaces in symplectic geometry; and the structure provided by a Kuranishi structure is sufficient to have a well-defined perturbation theory perturbing examples into compact regular spaces to which the invariants can then be assigned.

We now describe the precise form of a local model, also called a \emph{Kuranishi chart}, or simply a \emph{chart}. We always use the underline notation in this paper to denote the quotient of group action on objects and maps.

\begin{definition} An \emph{$n$-manifold} $V$ \emph{with cornes of the corner index at most $m$} is a manifold based on local models which are relative open sets in $[0,\infty)^m\times \R^{n-m}\subset \R^n$. A point $x\in V$ is said to be of the corner index $k$ if under some local chart, it maps to a coordinate with $k$ zeros.
\end{definition}

\begin{definition}[\cite{FOOO} Kuranishi chart] Let $X$ be a topological space and let $p\in X$ be given. Let $G_p$ be a finite group acting smoothly and effectively on a finite dimensional vector bundle $E_p\to V_p$ over a manifold $V_p$ with corners, equipped with a smooth equivariant section $s_p: V_p\to E_p$, and let $\psi_p: \underline{s_p^{-1}(0)}\to X$ be a homeomorphism onto an open neighborhood $X_p$ of $p$ in $X$. 
Then $$(G_p, s_p:V_p\to E_p, \psi_p: \underline{s_p^{-1}(0)}\to X_p),\;\;\text{or}\;\; C_p\;\; \text{for short},$$ is said to be a \emph{Kuranishi chart} of $X$ based at $p$ (or centered at $p$). $X_p$ is called the \emph{coverage} on $X$ by the chart $C_p$, or simply coverage.
\end{definition}

\begin{remark}
\begin{enumerate}
\item In the above definition, we chose the setting to be a possibly nontrivial vector bundle $(E_p\to V_p, G_p)$ with a global quotient orbifold action, by considering locality, a grouping convention and notational simplicity. Since a Kuranishi structure will be pointwisely formulated using the local model described by a Kuranishi chart, only the behaviour of $s_p$ around a point $x\in V_p$ with $\psi_p(\underline{x})=p$ matters and this can be captured in the simplest local form of the local model used. Later in getting an ordered finite cover with nice properties from a Kuranishi structure, we allow grouping of charts into a chart of a slightly general form; and we choose to use this convention because it is intuitive and notationally simple for the starting basic definition, and think it benefits more than insisting local models in the much later modification stay in this same more complicated form from the very beginning.

\item We allow the bundle $E_p$ to be possibly non-trivial rather than always being a trivial bundle as in FOOO \cite{FOOO}, mostly as it is simpler to write than $V_p\times F_p$. As explained in item (1), this makes little difference as the behavior of $s_p$ around a point $x$ corresponding to $p$ is captured in the bundle-trivializing neighborhood $V_p'$ of $x$ in $V_p$ anyway.

\item $G_p$ is required to be effective, which is required in a possible triangulation of the zero set of compact transverse multisectional perturbation of $s_p, p\in X$, which is not covered in this paper. For the content in this paper, effectiveness is not necessary. For the polyfold-Kuranishi correspondence, we of course need the effectiveness to have the orbifold--ep-group correspondence.
\item The more important point concerns the description of orbifolds. We focus on the base here, as the bundle case is the same. We can choose either 
\begin{enumerate}[(a)]
\item a local orbifold representative (namely, a global quotient $(G_p, V_p)$) as we have done,
\item a fixed choice of finitely many local orbifold representatives of the same dimension $$(G_y, V_y, \varphi_y:\underline{V_y}\to X_p), y\in S_p,\text{ $S_p$ finite index set, }\bigcup_{y\in S_p}\varphi_y(\underline{V_y})=X_p$$ such that for any $y, y'\in S_p$ and any point $z\in \varphi_y(\underline{V_y})\cap\varphi_{y'}(\underline{V_{y'}})$, there exists $y''\in S_p$ such that $z\in \varphi_{y''}(\underline{V_{y''}})$ and there exist equivariant embeddings\footnote{By definition, an equivariant embedding induces an isomorphism between the stabilizer groups of every point in the domain and of its image.} from $(G_{y''}, V_{y''}, \varphi_{y''})$ to $(G_y, V_y, \varphi_y)$ and to $(G_{y'}, V_{y'}, \varphi_{p'})$ respectively (in particular, $\varphi_{y''}(\underline{V_{y''}})\subset\varphi_y(\underline{V_y})\cap\varphi_{y'}(\underline{V_{y'}})$), 
\item an orbifold representative $(V_p, \mathbb{V}_p, \psi:\underline{V_p}\to X_p)$ expressed as an \'etale-proper Lie groupoid, or
\item an orbifold which is an equivalence class of representatives.
\end{enumerate}
The option (d) will add an extra layer of structure, which is not a real complication, as a map between orbifolds is a generalized map. We cannot, however, use an orbifold germ at a point in the local model to describe a germ of $x$ in $X$, because this will create a problem when describing the composition of coordinate changes later on.
\item Because of observation (4), we can only use (a), (b), (c), or (d) to define Kuranishi charts. The resulting definition of Kuranishi structures, to be introduced below, is just a way of describing a coordinate representative of the intrinsic structure underlying the Kuranishi structure, and we need a notion of a germ to describe the intrinsic structure. We will later achieve the germ notion in Kuranishi structure equivalence classes in a global manner, with all the Kuranishi charts regarded as a whole. This is the first time such a germ notion for Kuranishi structures is given in the literature.
\item As alluded to in item (1), a local model in the ordereed finite cover with nice properties later can be described using either a special case of (4)(b) (where always $\varphi_{y''}(\underline{V_{y''}})=\varphi_y(\underline{V_y})\cap\varphi_{y'}(\underline{V_{y'}}))$ or (4)(c). We will adopt (4)(c); but using (4)(b) will be equally fine, so effective group action is not needed here, agreeing with the point in (3). 
\end{enumerate}
\end{remark}

We will often \emph{restrict a Kuranishi chart} in the following way, which we will also call \emph{shrinking a chart}. During this process, the coverage on $X$ by the restricted chart is important. Hence we also introduce useful notations for the restricted or shrunken charts and its coverage on $X$:

\begin{definition}[restricting/shrinking charts] Let $U$ be a $G_p$-invariant open subset of $V_p$ and define the restricted coverage $X_p|_U:=\psi_p(\underline{(s_p|_U)^{-1}(0)})$ and $$\psi_p|_U:=\psi_p|_{\underline{(s_p|_U)^{-1}(0)}}:\underline{(s_p|_U)^{-1}(0)}\to X_p|_U.$$ If $p\in X_p|_U$ (that is, the base point is still covered), we define $$C_p|_{U}:=(G_p, s_p|_{U}, \psi_p|_{U}),$$ which we often refer to as \emph{restricting} or \emph{shrinking} a chart to $U$.
\end{definition}

One key difference between a Kuranishi structure and \emph{the zero set of a global section in an orbifold bundle} is that the we do not require the dimensions of $E_p$ in local models in a Kuranishi structure to be constant. This is necessary because examples in applications originate from local finite dimensional reductions which use tranversality and are local in nature, and such local constructions which are further apart are not related to each other; moreover, this construction does not involve consideration of the dimension, so it is not clear how to impose a uniform dimension a priori.

The dimension is an invariant preserved by diffeomorphisms, so in order to define a global object with local models being Kuranishi charts with $E_p$ in different dimensions, we cannot hope to have the usual diffeomorphic coordinate changes from a Kuranishi chart to another. In this case, how are the local models coherently patched together into a global object? Suppose we have $p_1, p_2\in X$ and that $X_{p_1}\cap X_{p_2}\not=\emptyset$, that is, the coverages on $X$ by Kuranishi charts at $p_1$ and $p_2$ overlap. Letting $q$ be a point in the overlap $X_{p_1}\cap X_{p_2}$, we require that a restricted version, depending on $p_i$ and still covering $q$, of the Kuranishi chart at $q$ equivariantly embeds into the Kuranishi charts at $p_i$, and such an equivariant embedding is an equivariant bundle embedding intertwining the sections, and is called a \emph{pre-coordinate change} from the Kuranishi chart at $q$ to the Kuranish ichart at $p_i$, for $i=1, 2$. Since the equivariant embedding exists but is not required to be an open map, we have $\text{dim} E_q\leq \text{dim} E_{p_i}$ but $\text{dim} E_q< \text{dim} E_{p_i}$ is also allowed. In a sense, local models at points in the overlap $X_{p_1}\cap X_{p_2}$ are local submodels that mediate between $C_{p_1}$ and $C_{p_2}$.

We now fix some notations and then introduce the precise form of a pre-coordinate change.

\begin{definition} 
\begin{enumerate}
\item (quotient of a map) Let $f: (A, G_A)\to (B, G_B)$ be an equivariant map from $A$ with the group action $G_A$ to $B$ with the group action $G_B$. We use $\underline{f}$ to denote the quotient map $\underline{f}: \underline{A}\to\underline{B}$.  
\item ($G_A-G_B$-embedding) Let $f: (A, G_A)\to (B, G_B)$ be a smooth equivariant map between manifolds with smooth global group actions with an injective induced group morphism $f_\ast: G_A\to G_B$. $f$ is said to be a \emph{$G_A-G_B$-embedding} if $f$ is an embedding, $\underline{f}:\underline{A}\to \underline{B}$ is injective, and for every $x\in A$, $f$ induces an isomorphic stablizer-mapping from $(G_A)_x$ to $(G_B)_{f(x)}$.
\end{enumerate}
\end{definition}

\begin{remark} Observe that if $f(A)\cap g(f(A))\not=\emptyset$ for some $g\in G_B$, then $g\in f_\ast(G_A)$, and hence $f(A)=g(f(A))$. Indeed, suppose not, we have $f(x_1)=g(f(x_2))$ for some $x_1, x_2\in A$, but $g\not\in f_\ast(G_A)$, then $\underline{x_1}\not=\underline{x_2}$\footnote{If not, then $x_1=h(x_2)$ for some $h\in G_A$, then $f(h(x_2))=g(f(x_2))$, so $(f_\ast h)( f(x_2))=g (f(x_2))$. So $(f_\ast h)\cdot g^{-1}\in Stab(f(x_2))=f_\ast (Stab(x_2))\subset f_\ast (G_A)$, so $f_\ast (h)\cdot g^{-1}=f_\ast(h')$ for some $h'\in G_A$, so $g=f_\ast(h'^{-1}\cdot h)\in f_\ast (G_A)$, a contradiction.} and $\underline{f}(\underline{x_1})=\underline{f}(\underline{x_2})$, $\underline{f}$ is not injective, a contradiction.
\end{remark}

The precise form of a pre-coordinate change from the local model centered at $q\in X_p$ to the local model at $p$ is the following:

\begin{definition}[\cite{FOOO} pre-coordinate change] Let $C_q$ and $C_p$ be two Kuranishi charts where $q\in X_p$, and let $V_{pq}$ be a $G_q$-invariant open subset of $V_q$ such that $q\in X_q|_{V_{pq}}$. Let $\hat\phi_{pq}: E_q|_{V_{pq}}\to E_p$ be a $G_q-G_p$ bundle embedding that covers $\phi_{pq}:V_{pq}\to V_p$ such that $$\hat\phi_{pq}\circ (s_q|_{V_{pq}})=s_p\circ\phi_{pq}\;\;\text{and}\;\; \psi_p\circ\underline{(\phi_{pq}|_{(s_q|_{V_{pq}})^{-1}(0)})}=\psi_q|_{\underline{(s_q|_{V_{pq}})^{-1}(0)}}.$$ A \emph{pre-coordinate change} from $C_q$ to $C_p$ consists of the tuple $(\phi_{pq}, \hat\phi_{pq}, V_{pq})$, and $V_{pq}$ is called the \emph{domain of the pre-coordinate change}.
\end{definition}

\begin{remark}
Let $q\in X_p$, a pre-coordinate change from $C_q$ to $C_p$ is just a $G_q-G_p$-embedding from $C_q|_{V_{pq}}$ to $C_p$.
\end{remark}

The dimension of local models varying from point to point gives us the flexibility to account for some interesting examples, but we also need to control the extra dimensions when the dimension of local models increases across a pre-coordinate change. Since we will only be interested in compatible perturbations of those sections $s_p$, a dimension jump going from $C_q$ to $C_p$ for $q\in X_p$ does not change the behavior of the zero sets of the sections under perturbations, if the extra dimensions of the base and the fibers in $C_p$ transverse to directions from $C_q$ can be identified naturally, which is exactly what the tangent bundle condition specifies.

In order to define the tangent bundle condition, we recall a notion called \emph{normal linearization}:

Given a submanifold $U$ in $W$, the normal bundle $N_UW$ is the quotient bundle $TW|_U/TU$ defined by the fiberwise quotient. By the definition of the pre-coordinate change, we have $s_p|_{\phi_{pq}(V_{pq})}: \phi_{pq}(V_{pq})\to \hat\phi_{pq}(E_q|_{V_{pq}})$ and $\hat\phi_{pq}(E_q|_{V_{pq}})$ is a subbundle of $E_p|_{\phi_{pq}(V_{pq})}$. For any $x\in \phi_{pq}(V_{pq})$, dropping the dependence of $x$ in the notation, we can choose an open neighborhood $U$ of $x$ in $\phi_{pq}(V_{pq})$ such that we have an embedded open tubular neighborhood $W$ of $U$ in $V_p$, which is the image of an open neighborhhood of the zero section of $N_UW$ under the normal exponential map using a metric on $V_p$. We smoothly extend $(\hat\phi_{pq}(E_q|_{V_{pq}}))|_U$ to a bundle $\tilde E$ over $W$ and define the quotient bundle $Q:=E_p|_{W}/\tilde E$ over $W$. (Here for the definition of the normal linearization, the group action is irrelevant. The construction is natural and can be made invariant anyway.) Fixing a bundle connection choice $\nabla$ on $Q$, we denote $$\nabla^N (s_p|_W):=\nabla (s_p|_W/\tilde E): TW\to Q,$$ and the restriction $(\nabla^N(s_p|_W))|_{TW|_{U}}$ descends to $$d^N s_p: N_UW\to Q|_{U},$$ as $\nabla^N(s_p|_{U}: U\to Q|_U)=0$. Note that $N_UW=(N_{\phi_{pq}(V_{pq})} V_p)|_U$ and $Q|_U=(E_p|_{\phi_{pq}(V_{pq})}/\hat\phi_{pq}(E_q|_{V_{pq}}))|_U$. Then we recall a couple of general simple facts in differential geometry: (1) the connection $\nabla s$ at a zero $x$ of $s$ does not depend on $\nabla$ (indeed, $\tilde\nabla_x s=\nabla_x s+ A (s(x))=\nabla_x s$ for some bundle endomorphism-valued 1-form $A$), and (2) the linearization of $d_x s$ in a local coordinate around $x$ can be thought of as $\nabla_x s$ for some $\nabla$. Lastly, we remark that $(d^Ns_p)_x$ does not depend on the chosen extension $\tilde E$, because once we know that the normal linearization does not depend on the connection (or local coordinate we choose) over zeros, we observe that the condition of the normal linearization at zero $x$ is a pointwise condition for $d_x(s_p)$, where $d_x(s_p)$ is defined using a local coordinate. So $(d^N s_p)_x: (N_{\phi_{pq}(V_{pq})} V_p)_x\to (E_p|_{\phi_{pq}(V_{pq})}/\hat\phi_{pq}(E_q|_{V_{pq}}))_x$ is an intrinsic notion for zero $x\in s_p^{-1}(0)\cap \phi_{pq}(V_{pq})$, independent of all the choices made. Later, we will see by shrinking charts to $C_q|_{U'_q}$ and $C_p|_{U'_p}$ and denoting the restricted domain of the pre-coordinate change by $U'_{pq}$, we can choose $U$ to be the entire $\phi_{pq}(U'_{pq})$ and construct an open embedded invariant tubular neighborhood $W$ of $U=\phi_{pq}(U'_{pq})$ in $U'_p$.

\begin{definition}[\cite{FOOO} coordinate change, tangent bundle condition]\label{level0} Let $q\in X_p$. A pre-coordinate change from $C_q$ to $C_p$ is said to satisfy the \emph{tangent bundle condition}, if the normal linearization $$d^{N,q}s_p: N_{\phi_{pq}(V_{pq})}V_p\to E_p|_{\phi_{pq}(V_{pq})}/\hat\phi_{pq}(E_q|_{V_{pq}}),$$ which is well-defined over $s_p^{-1}(0)\cap \phi_{pq}(V_{pq})$, is a fiberwise isomorphism over $s_p^{-1}(0)\cap \phi_{pq}(V_{pq})$. A pre-coordinate change $(\phi_{pq}, \hat\phi_{pq}, V_{pq})$ from $C_q$ to $C_p$ that satisfies the tangent bundle condition is called a \emph{coordinate change}. A coordinate change from $C_q$ to $C_p$ is abbreviated as $C_q\to C_p$. We denote $V_{pq}:=\text{dom}(C_q\to C_p)$ and call it the \emph{domain of the coordinate change}.
\end{definition}

There might be more than one way to go from one chart to another, so we require the following compatibility condition.

Before discussing the compatibility condition, we notice that $G_p$ acts on $V_p$ and $E_p$, so
$$\begin{CD}
E_p @> g >> E_p\\
@A s_p AA @ A s_p AA\\
V_p @> g >>V_p
\end{CD}.$$
Therefore, $G_p$ induces automorphisms of $C_p$, namely, $$g\in G_p\mapsto (g: (G_p, s_p, \psi_p)\to (G_p, s_p, \psi_p)).$$
Here we do not need the action structure of $G_p$.

\begin{definition}[compatibility]\label{COMPATIBLE} Let $q\in X_p$, $r\in X_q\cap X_p$, and $(\phi_{pq}, \hat\phi_{pq}, V_{pq})$, $(\phi_{qr}, \hat\phi_{qr}, V_{qr})$, and  $(\phi_{pr}, \hat\phi_{pr}, V_{pr})$ be three coordinate changes. 

$(\phi_{pq}, \hat\phi_{pq}, V_{pq})$ and $(\phi_{qr}, \hat\phi_{qr}, V_{qr})$ can be composed over $V_{qr}\cap (\phi_{qr})^{-1}(V_{pq})$, which can be the empty set. Those three coordinate changes are said to be \emph{compatible} if over the common domain of the definitions, $$V_{pqr}:=V_{pr}\cap V_{qr}\cap (\phi_{qr})^{-1}(V_{pq}),$$ the composed coordinate change $\hat\phi_{pq}\circ\hat\phi_{qr}$ equals the direct coordinate change $\hat\phi_{pr}$ up to the $G_p$-action on $C_p$. A collection of coordinate changes are said to be \emph{compatible} if any three such coordinate changes among three charts in the collection are compatible.
\end{definition}

Having introduced all the ingredients, we can now define:

\begin{definition}[Kuranishi structure, maximality condition, topological\\matching condition]\label{KURANS} Let $X$ be a compact metrizable topological space. A \emph{Kuranishi structure} on $X$ is a tuple $$(X, \{C_p\}_{p\in X}, \{C_q\to C_p\}_{q\in X_p})$$ consisting of
\begin{enumerate}[(a)]
\item for all $p\in X$, a Kuranishi chart $C_p$, and 
\item for all $q\in X_p$, a coordinate change $C_q\to C_p$,
\end{enumerate}
such that 
\begin{enumerate}[(1)]
\item all  the coordinate changes are compatible,
\item the \emph{maximality condition} holds: for all $q\in X_p$, if $z\in V_q$, $z'\in V_p$, and $\underline{z}$ and $\underline{z'}$ are connected via a sequence of quotiented coordinate changes each of which moves in either direction, denoted by $\underline{z}\sim \underline{z'}$, then $z\in V_{pq}$ and $\underline{\phi_{pq}(z)}=\underline{z'}$, and
\item the \emph{topological matching} condition holds: for any two charts $V_p$ and $V_{p'}$ where $p, p'\in X$ and any pair of $v\in \underline{s_p^{-1}(0)}$ and $v'\in \underline{s_{p'}^{-1}(0)}$ such that there exist a sequence $v_n\in \underline{V_{p}}$ and a sequence $v'_n\in \underline{V_{p'}}$ satisfying $v_n\sim v_n'$, $v_n\to v$ in $\underline{V_p}$ and $v'_n\to v'$ in $\underline{V_{p'}}$, we have $v\sim v'$.
\end{enumerate}
\end{definition}

\begin{remark}
\begin{enumerate}
\item We observe that the above definition is the same as the version of Fukaya-Oh-Ohta-Ono in \cite{FOOO} together with my addition of the maximality condition and the topological matching condition. 
\item We need to pick $V_{pq}$ in specifying the data for the Kuranishi structure, and the domains $V_{pq}, q\in X_p, p\in X$ of the coordinate changes are then required to satisfy the maximality condition specified in my definition.
\item The maximality condition seems related to the condition in \cite{KH} and \cite{FOOONEW} and the tameness condition in \cite{MW}.
\item The topological matching condition is completely new in the literature, and it says that if two zeros can be approximated by the same sequence up to the identification $\sim$, then those two zeros are identified in $X$. This condition is essential in ruling out some pathological examples and for achieving Hausdorffness. Intuitively, a Kuranishi structure provides a coherent neighborhood of $X$, so that when two points $x$ and $y$ in $X$ can be approximated by the same sequence in this coherent neighborhood, if $x\not=y$, as indicated by the topology from the charts, $y$ is in the closure of $x$ and vice versa, but the topology of $X$ is Hausdorff and so $x$ cannot be in the closure of $y$ using the topology of $X$, and vice versa. Topology matching says that $x$ has to be $y$.
\item The definition of Kuranishi structure says that if $q\in X_p$, then there must exist a coordinate change $C_q\to C_p$; on the other hand, the coordinate changes are indexed by $(q,p)\in X\times X$ satisfying $q\in X_p$ in the definition. Therefore, the coordinate change $C_q\to C_p$ exists in a Kuranishi structure if and only if $q\in X_p$.
\item If the maximality condition holds, then whenever $q\in X_p$ and $r\in X_q\cap X_p$, $V_{pqr}$ is nonempty and is actually an open neighborhood in $V_r$ around a point mapping to $r$ in $X$. Indeed, let $x_r$, $x_q$, and $x_p$ be the points in $V_r$, $V_q$, and $V_p$ whose quotients are mapped to $r$ through $\psi_r$, $\psi_q$, and $\psi_p$ respectively. Since $\underline{x_q}\overset{\underline{\phi_{qr}}}{\leftarrow}\underline{x_r}\overset{\underline{\phi_{pr}}}{\to}\underline{x_p}$, $\underline{x_q}\sim\underline{x_p}$, by maximality we have $x_q\in V_{pq}$. Since $V_{pq}$ is $G_q$-invariant, $G_q(x_q)\subset V_{pq}$; and since and $\phi_{qr}$ is $G_r-G_q$-invariant, we have $x_r\in \phi_{qr}^{-1}(G_q (x_q))\subset(\phi_{qr})^{-1}(V_{pq})$. So $(\phi_{qr})^{-1}(V_{pq})$ is an open neighborhood of $x_r$. As $V_{qr}$ and $V_{pr}$ are also open neighborhood around $x_r$, $V_{pqr}=V_{pr}\cap V_{qr}\cap (\phi_{qr})^{-1}(V_{pq})$ is an open neighborhood around $x_r$. Alternatively, by the maximality of $C_q\to C_p$, $\phi_{qr}(V_{pr}\cap V_{qr})\subset V_{pq}$, thus $V_{pqr}=V_{pr}\cap V_{qr}$. This is one of the reasons why a condition like the maximality condition is important: the compatibility condition places more constraints when the maximality condition holds. Without the maximality condition, some coordinate changes have smaller domains of coordinate changes than they should, and this issue might persist in an ordered finite cover we choose, which will cause problems in the compatibility of the extensions during the inductive constructions on the latter (when two regions which are unrelated in the earlier part of induction become related later on). 
\item We will see that the maximality and topological matching condition will also play an important role in establishing Hausdorffness.
\end{enumerate}
\end{remark}

\begin{cexample}[counterexample to the maximality condition]\label{DISKEX} Let $X=\{x, y, z\}$, and let $C_z$, and $C_y$ and $C_x$ be Kuranishi charts with coordinate changes $C_z\to C_y$, $C_z\to C_x$ and $C_y\to C_x$ satisfying the maximality condition. Suppose we have a point $w\in V_{yz}\backslash s_z^{-1}(0)$ such that $\underline{\phi_{xz}(w)}=\underline{\phi_{xy}(\phi_{yz}(w))}$ (namely $w$ also belongs to $V_{xz}\cap \phi_{yz}^{-1}(V_{xy})$). We remove $w$ from $V_{yz}$ and keep everything else unchanged, then the maximality condition fails. This will cause problems in the compatibility of the extensions during the inductive constructions later.
\end{cexample}

\begin{cexample}[failure of the maximality condition cannot be corrected without shrinking the Kuranishi charts]\label{NONCOEXISTENCE} Consider $X=\{x, y, z\}$. Let $\lambda:\R\to \R$ be a periodic function such that $\lambda=\lambda(\cdot+6)$ and it has only zeros at $-1+6k$, $1+6k$, $3+6k$, $k\in\mathbb{N}$, here at least one set of the zeros are not transverse for the existence of such a function. Let $V_x:=(-4, 2)\times (-1,1)$, define $s_x: V_x\to E_x:=V_x\times \R^2$ by $s_x(a, b)=(\lambda|_{[-4,2]}(a), b)$, and let $\psi_x: (-1, 0)\mapsto x, (1, 0)\mapsto y, (-3, 0)\mapsto z$. Let $V_y:=(-2, 4)\times (-1,1)$, define $s_y: V_y\to E_y:=V_y\times \R^2$ by $s_y(a, b)=(\lambda|_{[-2,4]}(a), b)$, and let $\psi_y: (-1, 0)\mapsto x, (1, 0)\mapsto y, (3, 0)\mapsto z$.  Let $V_z:=(-1, 1)$, define $s_z: V_z\to E_z:=V_z\times \R^2$ by $s_z(a)=\lambda(a-3)$, and let $\psi_x: 0\mapsto z$. Define $\phi_{xz}:(-1,1)\mapsto (-4, 2)$ by $a\mapsto (a-3, 0)$, $\phi_{yz}:(-1, 1)\mapsto (2, 4)$ by $a\mapsto a+3$, $\phi_{xy}: V_{xy}:=(-2, 2)\times (-1, 1)\to (-4, 2)\times (-1, 1)$ by $(a, b)\mapsto (a, b)$, and $\phi_{yx}: V_{yx}:=(-2, 2)\times(-1,1)\to (-2, 4)\times (-1,1)$ by $(a, b)\mapsto (a, b)$. The space $M$ formed by gluing $V_x, V_y$ and $V_z$ together using coordinate changes looks like a strip folded around, twisted by 90 degrees and with its two ends intersected along the center line of the strip. See figure \ref{figure01}. The maximality condition does not hold. If we add points to $V_{xy}$ to foce validity of the maximality condition valid, we will have to make new $\tilde V_{xy}:=(-4,-2)\times\{0\}\cup (-2, 2)\times (-1, 1)$ and this is not open in $V_y$, failing a requirement for coordinate changes. Therefore failure of the maximality condition in this case cannot be corrected without shrinking the Kuranishi charts.
\end{cexample}

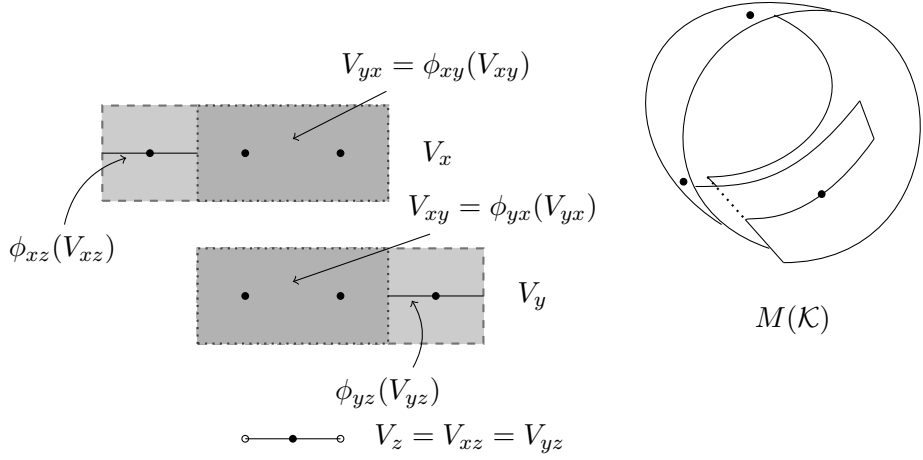
\begin{figure}[htb]
\begin{center}
\begin{tikzpicture}[scale=19/30]

\filldraw [color=black!20] (6,4) rectangle (12,6);
\draw [thick, dashed, color=black!55] (6,4) rectangle (12,6);
\filldraw [color=black!30] (8, 4) rectangle (12, 6);
\draw [thick, dotted, color=black!75] (8, 4) rectangle (12, 6);

\filldraw [color=black!20] (8,1) rectangle (14,3);
\draw [thick, dashed, color=black!55] (8,1) rectangle (14,3);
\filldraw [color=black!30] (8, 1) rectangle (12, 3);
\draw [thick, dotted, color=black!75] (8, 1) rectangle (12, 3);

\draw (6, 5) -- (8, 5);
\draw (12, 2)--(14, 2);

\filldraw (7,5) circle (2pt);
\filldraw (9,5) circle (2pt);
\filldraw (11,5) circle (2pt);
\filldraw (9,2) circle (2pt);
\filldraw (11,2) circle (2pt);
\filldraw (13,2) circle (2pt);
\filldraw (10,-1) circle (2pt);
\draw (9,-1) circle (2pt);
\draw (11,-1) circle (2pt);

\draw (9, -1) -- (11, -1);

\node at (12.5,5) [right] {$V_x$};
\node at (14.5, 2) [right] {$V_y$};
\node at (11.5, -1) [right] {$V_z=V_{xz}=V_{yz}$};

\node (p1) at (6.5, 3.5) [below left] {$\phi_{xz}(V_{xz})$}; 
\path[->] (p1) edge [bend left] (6.5, 4.9);

\node (p2) at (10.8, 6.3) [above right] {$V_{yx}=\phi_{xy}(V_{xy})$}; 
\path[->] (p2) edge (10, 5.3);

\node (p3) at (12.2, 3.3) [above right] {$V_{xy}=\phi_{yx}(V_{yx})$}; 
\path[->] (p3) edge (10, 2.3);

\node (p4) at (12, 0.5) [below] {$\phi_{yz}(V_{yz})$}; 
\path[->] (p4) edge [bend right] (12.5, 1.9);

\draw (20, 3) .. controls (17,4) and (18, 8) .. (21, 8);
\draw (21, 8) .. controls (24, 8) and (24, 2.7) .. (20.3, 2.7);
\draw (20.3, 2.7)-- (19.5, 3.6);
\draw (19.5, 3.6) .. controls (19.6, 3.7) and (20.8, 3.3) .. (22.2, 5.3);
\draw (18.45, 4.3) .. controls (19.5, 4.3) and (20.5, 4.3) .. (21.9, 6.1);
\draw (21.9, 6.1) -- (22.2, 5.3);
\draw (20.1, 7.9) .. controls (22.5, 6.8) and (21, 4.5) .. (18.7, 4.5);
\draw (18.7, 4.5) -- (18.9, 4.3);
\draw (19, 3.5) .. controls (16.3,5.2) and (17, 9) .. (21.3, 8);
\filldraw (21.1,4.14) circle (2pt);
\filldraw (18.2, 4.4) circle (2pt);
\filldraw (19.6, 7.9) circle (2pt);
\draw [thick,dotted] (18.7, 4.5)--(19.5, 3.6);

\node at (20.5, 1.5) {$M(\mathcal{K})$};
\end{tikzpicture}
\end{center}

\caption[Counterexample 1.14.]{Counterexample 1.14.}
\label{figure01}
\end{figure}

\begin{cexample}[counterexamples to the topological matching\\condition]\label{NONMATCHING} The following examples fail the topological matching condition.
\begin{enumerate}
\item Let $V_x=D_2(0)\subset \R^2$ be a standard open disk centered at 0 of radius 2, and define $s_x: V_x\to E_x:=D_2(0)\times \R^2$ so that $s_x$ has the only zeros $(0, 0)$ and $(1, 0)$ which are mapped under $\psi_x$ to $x$ and $z$ respectively, $\pi_{\R^2}(s_x(\cdot, 0))=(t(\cdot), 0)$, and $(1,0)$ is a nondegenerate zero. Let $V_y=D_2(0)$ and $s_y: V_y\to E_y:=D_2(0)\times\R^2$ so that $s_y$ has the only zeros being $(0,0)$ and $(1,0)$ which are mapped under $\psi_y$ to $y$ and $z$ respectively, $\pi_{\R^2}(s_y(\cdot, 0))=(t(\cdot),0)$, and $(1,0)$ is a nondegenerate zero. Let $V_z=(0,2)$ and $s_z:=t: V_z\to V_z\times \R$ with the only zero $(1, 0)$ of $s_z$ mapping to $z$. We can extend these data into a Kuranishi structure without the topological matching condition, with $\phi_{xz}: V_{xz}:=(0,2)\to V_x$ and $\phi_{yz}: V_{yz}:=(0,2)\to V_y$ being inclusions $(0,2)\times\{0\}$ in $D_2(0)\subset \R^2$. The topology from $X$ says that $x$ and $y$ are discrete, but information from the charts essentially indicates that $x$ is in the closure of $y$ and vice versa. See figure \ref{figure02}.
\item Let $s: D_2(0)\to D_2(0)\times \R^2$ with only two zeros at $(0,0)$ and $(1,0)$. Let $s_x$ be a copy of $s$ with $(0,0)$ and $(1,0)$ mapping to $x$ and $z$; let $s_y$ be another copy of $s$ with $(0,0)$ and $(1,0)$ maps to $y$ and $z$; and let $s_z$ be a copy of $s|_{D_2(0)\backslash\{0\}}$ with $(1,0)$ mapping to $z$. Let $\hat\phi_{xz}$ and $\hat\phi_{yz}$ be inclusions. This is similar to (1) but without dimension jump.
\item  Let $X$ consist of three points $x, y$, and $z$. Letting $V_x=D_2(0)\subset \R^2$, and define $\psi_x: (0,0)\mapsto x$, and $(1,0)\mapsto z$. Similarly, let $V_y$ be another copy of same disk and define $\psi_y: (0, 0)\mapsto y, (1,0)\mapsto z$. Let $V_z:=D_2(0)\backslash\{0\}\subset \R^2$ be a standard open disk centered at zero of radius 2 and punctured at 0 and define $\psi_z: (1,0)\mapsto z$, we can extend these into three Kuranishi charts with coordinate changes $C_z\to C_x$ and $C_z\to C_y$, so that $s_x^{-1}(0)=\{(0,0),(1,0)\}$, $s_y^{-1}(0)=\{(0,0),(1,0)\}$, $s_z^{-1}(0)=\{(1,0)\}$, $V_{xz}=V_{yz}=V_z$. The resulting object is almost a Kuranishi structure except the failure of the topological matching condition. We cannot achieve Hausdorffness in any ordered finite covers extracting from these three examples and they will have perturbation issues.
\end{enumerate}
\end{cexample}

\begin{figure}[htb]
  \begin{center}
\begin{tikzpicture}[scale=38/50]
\hspace{0cm}

\filldraw[color=black!20] (6,3) circle (2);
\draw[thick, dashed, color=black!40] (6,3) circle (2);
\filldraw[red] (6,3) circle (2pt);
\filldraw (7,3) circle (2pt);
\draw (6,3)--(8,3);
\node at (6,0.5) {$V_x$};
\path[->] node at (8,5)[above]{$\phi_{xz}(V_{xz})$} edge [bend right] (6.5,3);

\filldraw[color=black!20] (12,3) circle (2);
\draw[thick, dashed, color=black!40] (12,3) circle (2);
\filldraw[green] (12,3) circle (2pt); 
\filldraw (13,3) circle (2pt);
\draw (12,3)--(14,3);
\node at (12,0.5){$V_y$};
\path[->] node at (14,5)[above]{$\phi_{yz}(V_{yz})$} edge [bend right] (12.5,3);

\draw (16, 3)--(18, 3);
\filldraw (17,3) circle (2pt);
\draw (16,3) circle (2pt) (18,3) circle (2pt);
\node at (17,1.5){$V_z=V_{xz}=V_{yz}$};

\draw [rotate around={65:(14,-2)}](12,-2) circle (2 and 0.15);
\draw [rotate around={15:(14,-2)}](14,-2) arc (0:180: 2 and 0.15);
\draw [dashed, rotate around={15:(14,-2)}](10,-2) arc (180:360: 2 and 0.15);
\draw [rotate around={37:(14,-2)}] (14,-2)--(11.8,-2);
\draw [rotate around={15:(14,-2)}](10,-2) coordinate (p1);
\draw [rotate around={37:(14,-2)}](11.8,-2)..controls (10.8,-1.15)..(p1);
\draw [rotate around={65:(14,-2)}](10,-2) coordinate (p2);
\draw [rotate around={37:(14,-2)}](11.8,-2)..controls (10.85, -3.15)..(p2);
\filldraw [red, rotate around={36:(14,-2)}](11.8,-2) circle (2pt);
\filldraw [green, rotate around={38:(14,-2)}](11.8,-2) circle (2pt);
\filldraw [rotate around={37:(14,-2)}]({(11.7+14)/2},-2) circle (2pt);
\node at (9.5, -3) [left] {$M(\mathcal{K})$};
\end{tikzpicture}
  \end{center}
\caption[Counterexample 1.15.(1)]{Counterexample 1.15.(1)}
\label{figure02}
\end{figure}
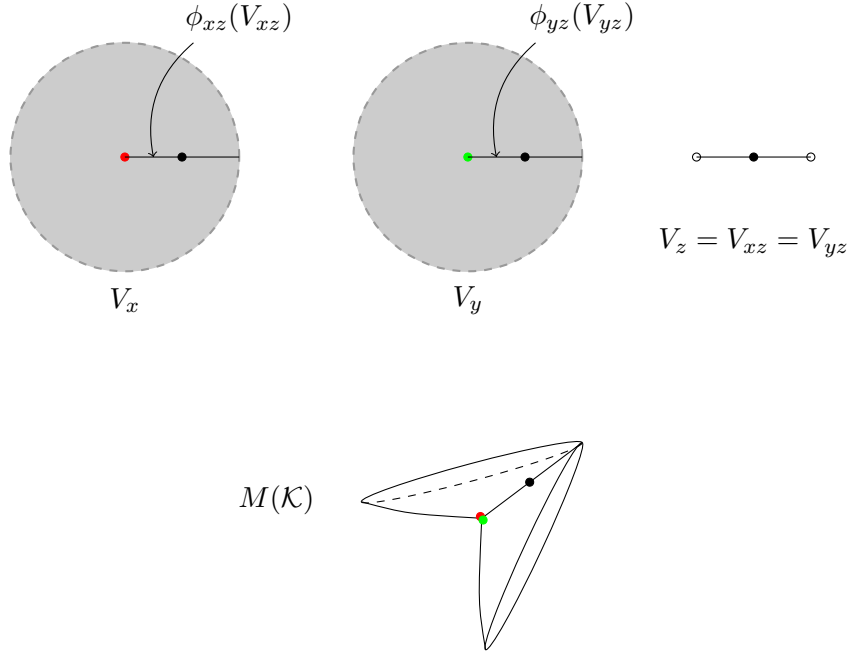

Now, we will give an alternative formulation of the maximality condition we often use later on. From this alternative picture, we will observe that another consequence of the maximality condition is that we can recover the domain $V_{pq}$ of the coordinate change unambiguously from $V_q$ and $V_p$. This is necessary because we often need to shrink charts during our constructions and need to know the ``maximal'' domain of the coordinate changes between the shrunken charts. 

\begin{definition}[the identification space as a set] Let $\mathcal{K}$ be a Kuranishi structure, and let $\sim$ be the relation on $\bigsqcup_{p\in X}\underline{V_p}$ saying two points are connected via a sequence of quotiented coordinate changes in the Kuranishi structure each of which moves in either direction. The set $M=M(\mathcal{K}):=(\bigsqcup_{p\in X}\underline{V_p})/\sim$ is called the identification space of the Kuranishi structure $\mathcal{K}$. For all $p\in X$, let $\iota_p: \underline{V_p}\to M$ be the map naturally induced from the inclusion $\underline{V_p}\to\bigsqcup_{p\in X}\underline{V_p}$, let $\text{quot}_p: V_p\to\underline{V_p}$ denote the quotient map, and denote $t_p:=\iota_p\circ \text{quot}_p: V_p\to M$. 
\end{definition}

If the maximality condition holds, the following lemma implies that $V_{pq}$ is determined by how $V_q$ and $V_p$ intersect in the identification space $M$ via $t_q$ and $t_p$. We do not need a topology on $M$ so far.

\begin{lemma}\label{MAXIMALITYEQUIV} The maximality condition is equivalent to the following: $$(i)\;\; V_{pq}=t_q^{-1}(t_q(V_q)\cap t_p(V_p))$$ for all $q\in X_p, p\in X$ and (ii) $\iota_p: \underline{V_p}\to M$ is injective for all $p\in X$.
\end{lemma}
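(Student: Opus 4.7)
The plan is to prove the equivalence by chasing the definitions in both directions, using that the identification space $M$ is by construction the quotient of $\bigsqcup_p \underline{V_p}$ by the equivalence relation generated by the quotiented coordinate changes. The key observation is that $t_q(z) = t_p(z')$ in $M$ iff $\underline{z} \sim \underline{z'}$, so conditions (i) and (ii) just re-express the maximality condition purely in terms of how the $V_p$'s sit in $M$.

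For the forward direction, assume the maximality condition. To prove (i), the inclusion $V_{pq} \subseteq t_q^{-1}(t_q(V_q) \cap t_p(V_p))$ is immediate from the fact that $t_p(\phi_{pq}(z)) = t_q(z)$ for $z \in V_{pq}$ (which is part of the definition of a pre-coordinate change once quotiented). Conversely, if $z \in V_q$ satisfies $t_q(z) \in t_p(V_p)$, then $\underline{z} \sim \underline{z'}$ for some $z' \in V_p$, and maximality gives $z \in V_{pq}$. For (ii), suppose $z, z' \in V_p$ with $\iota_p(\underline{z}) = \iota_p(\underline{z'})$, i.e., $\underline{z} \sim \underline{z'}$; applying maximality with $q = p$ (so $V_{pp} = V_p$ and $\phi_{pp} = \mathrm{id}$) yields $\underline{z} = \underline{z'}$.

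For the backward direction, assume (i) and (ii) and take $z \in V_q$, $z' \in V_p$ with $\underline{z} \sim \underline{z'}$. Then $t_q(z) = t_p(z') \in t_q(V_q) \cap t_p(V_p)$, so by (i), $z \in V_{pq}$. To conclude $\underline{\phi_{pq}(z)} = \underline{z'}$, compute $\iota_p(\underline{\phi_{pq}(z)}) = t_p(\phi_{pq}(z)) = t_q(z) = t_p(z') = \iota_p(\underline{z'})$; then the injectivity in (ii) gives $\underline{\phi_{pq}(z)} = \underline{z'}$.

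The only subtlety that needs attention is the use of the ``identity'' coordinate change $C_p \to C_p$ in the proof of (ii) from maximality; this should be recorded as the canonical coordinate change $(\mathrm{id}_{V_p}, \mathrm{id}_{E_p}, V_p)$ guaranteed by taking $q = p$ in clause (b) of the Kuranishi structure definition. Apart from this bookkeeping point, the whole argument is a definition chase: the maximality axiom is precisely the assertion that $V_{pq}$ is as large as the overlap in $M$ allows, together with the requirement that each $\underline{V_p}$ embeds into $M$ without self-identifications, which is exactly (i) and (ii).
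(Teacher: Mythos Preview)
Your backward direction and your argument for (i) from maximality are correct and match the paper's proof essentially verbatim. The gap is in your derivation of (ii) from maximality: you assert that the coordinate change $C_p\to C_p$ supplied by clause~(b) of Definition~\ref{KURANS} is the identity $(\mathrm{id}_{V_p},\mathrm{id}_{E_p},V_p)$, but the definition does not say this. Clause~(b) only provides \emph{some} coordinate change for each $q\in X_p$; when $q=p$ this gives a coordinate change $\phi_{pp}:V_{pp}\to V_p$ that is a $G_p$--$G_p$ embedding, but a priori nothing more. So ``$\phi_{pp}=\mathrm{id}$'' is not bookkeeping---it is an unproved claim.

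The paper closes this gap as follows. First, from (i) (which you have already established) one gets $V_{pp}=t_p^{-1}(t_p(V_p))=V_p$. Then the compatibility axiom applied to the triple $(p,p,p)$ gives $\hat\phi_{pp}\circ\hat\phi_{pp}=g\cdot\hat\phi_{pp}$ for some $g\in G_p$, hence $\underline{\phi_{pp}}\circ\underline{\phi_{pp}}=\underline{\phi_{pp}}$ on $\underline{V_p}$. Since $\phi_{pp}$ is a $G_p$--$G_p$ embedding, $\underline{\phi_{pp}}$ is injective; an injective idempotent self-map of a set is the identity, so $\underline{\phi_{pp}}=\underline{\mathrm{Id}}$. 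With this in hand your own computation finishes: if $\underline{z}\sim\underline{z'}$ in $\underline{V_p}$, maximality gives $\underline{\phi_{pp}}(\underline{z})=\underline{z'}$, hence $\underline{z}=\underline{z'}$.
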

\begin{proof}
(1) The maximality condition implies these two conditions:

We always have that $V_{pq}\subset t_q^{-1}(t_q(V_q)\cap t_p(V_p))$. If $z\in t_q^{-1}(t_q(V_q)\cap t_p(V_p))$, then $t_q(z)\in t_q(V_q)\cap t_p(V_p)$, namely $\underline{z}\in \underline{V_q}$ can be identified with some $\underline{z'}\in \underline{V_p}$ using $\sim$, and by the maximality, $z\in V_{pq}$, so we have $t_q^{-1}(t_q(V_q)\cap t_p(V_p))\subset V_{pq}$. Hence the equality $V_{pq}= t_q^{-1}(t_q(V_q)\cap t_p(V_p))$ holds.

If the maximality holds, then from the consequence just proved, we know that $V_{pp}=t_p^{-1}(t_p(V_p))=V_p$. From this, $\phi_{pp}\circ\phi_{pp}$ and $\phi_{pp}$ agreeing up to $G_p$-action, and $\phi_{pp}$ being invertible (being an embedding onto itself, without dimension increase or stablizer group size increase), we know that $\underline{\phi_{pp}}=\underline{Id}:\underline{V_p}\to\underline{V_p}$. If $\underline{z}, \underline{z'}\in \underline{V_p}$ satisfy $\iota_p(\underline{z})=\iota_p(\underline{z'})$, then $\underline{z}\sim \underline{z'}$. By the maximality condition, $\underline{\phi_{pp}}(\underline{z})=\underline{z'}$, and hence $\underline{z'}=\underline{\phi_{pp}}(\underline{z})=\underline{Id}(\underline{z})=\underline{Id(z)}=\underline{z}$. Hence $\iota_p$ is injective.

(2) These two conditions imply the maximality condition:

Let $q\in X_p$, and assume $\underline{z}\sim \underline{z'}$ for some $z\in V_q$ and $z'\in V_p$, then $t_q(z)=t_p(z')$, hence $z\in t_q^{-1}(t_q(V_q)\cap t_p(V_p))=V_{pq}$. Now $\iota_p(\underline{\phi_{pq}(z)})=\iota_p(\underline{z'})$ but $\iota_p$ is injective, so $\underline{\phi_{pq}}(\underline{z})=\underline{\phi_{pq}(z)}=\underline{z'}$. \end{proof}

\begin{remark}The identification space $M$ will be only used as a set in determining the domain of the coordinate changes of shrunken Kuranishi charts. In general, $M$ is not Hausdorff for any reasonable definition of topology.
\end{remark}

\section{A good coordinate system for a Kuranishi structure}

If we have coordinate charts centered at each point of a compact manidold, we can extract a finite cover consisting of coordinate charts with invertible coordinate changes among them. This section discusses the Kuranishi structure analogue of this fact.

\begin{definition}[a non-antisymmetric total order]
A finite set $S$ with a relation $\leq$ is said to be a \emph{non-antisymmetric total order}, or \emph{order} for short in our context, if
\begin{enumerate}
\item for all $x\in S$, $x\leq x$,
\item for all $x, y, z \in S$, if $x\leq y$ and $y\leq z$ then $x\leq z$, and
\item for all $x, y\in S$, $x\leq y$ or $y\leq x$ (or both).
\end{enumerate}
\end{definition}

We use a slightly different notation for Kuranishi charts, because we want to give a stand-alone definition of a good coordinate system and also define a good coordinate system for a Kuranishi structure. Denote $$\tilde C_x:=(G_x, s_x:U_p\to E_p, \psi_x: \underline{s_x^{-1}(0)}\to \tilde X_x),$$ where $\tilde X_x:=\psi_x(\underline{s_x^{-1}(0)})$ is an open neighborhood of $x$ in $X$ and is often referred to as the \emph{coverage on $X$ by the chart $\tilde C_x$}.

A good coordinate system in a first approximation is a finite collection of Kuranishi charts $\tilde C_x, x\in S$ indexed by a non-antisymmetric total order $(S, \leq)$, such that if $$\tilde X_y\cap \tilde X_x\not=\emptyset, \;\;y,x\in S, \;\; y\leq x$$ there exists a coordinate change $\tilde C_y\to \tilde C_x$ in the direction dictated by the order $y\leq x$, and $X$ is covered by the coverages of the charts: $X=\bigcup_{x\in S}\tilde X_x$. There are also some other technical conditions.

Since the condition for coordinate changes are relaxed, we need slightly general form of coordinate changes in the context of good coordinate systems:

\begin{definition}[coordinate change in the context of good coordinate systems]\label{GCSCC} Let $\tilde C_y$ and $\tilde C_x$ be two Kuranishi charts for $X$ such that $\tilde X_y\cap \tilde X_x\not=\emptyset$, and let $U_{xy}$ be a $G_y$-invariant open subset of $U_y$. Let $\hat\phi_{xy}: E_y|_{U_{xy}}\to E_x$ be a $G_y-G_x$ bundle embedding that covers $\phi_{xy}:U_{xy}\to U_x$ such that $$\hat\phi_{xy}\circ (s_y|_{U_{xy}})=s_x\circ\phi_{xy}\;\;\text{and}\;\; \psi_x\circ\underline{(\phi_{xy}|_{(s_y|_{U_{xy}})^{-1}(0)})}=\psi_y|_{\underline{(s_y|_{U_{xy}})^{-1}(0)}}.$$ A \emph{coordinate change} from $\tilde C_y$ to $\tilde C_x$ consists of such tuple $(\phi_{xy}, \hat\phi_{xy}, U_{xy})$ that satisfies the tangent bundle condition and the condition $\psi_y^{-1}(\tilde X_y\cap\tilde X_x)\subset\underline{U_{xy}}$, and $U_{xy}$ is the \emph{domain of the coordinate change}.
\end{definition}

Note that in this context, it is not necessary that $y\in \tilde X_y|_{U_{xy}}$ for such a coordinate change. But the domain of such a coordinate change is always non-empty by $\psi_y^{-1}(\tilde X_y\cap\tilde X_x)\subset\underline{U_{xy}}$.

\begin{definition}[compatibility]\label{GCSCOMPATIBLE} Let $$\text{$(\phi_{yz}, \hat\phi_{yz}, U_{yz})$, $(\phi_{xz}, \hat\phi_{xz}, U_{xz})$, and $(\phi_{xy}, \hat\phi_{xy}, U_{xy})$}$$ be three coordinate changes. Those three coordinate changes are said to be \emph{compatible} if over the common domain of the definitions, $$U_{xyz}:=U_{xz}\cap U_{yz}\cap (\phi_{yz})^{-1}(U_{xy}),$$ the composed coordinate change $\hat\phi_{xy}\circ\hat\phi_{yz}$ equals the direct coordinate change $\hat\phi_{xz}$ up to the $G_x$-action on $\tilde C_x$. A collection of coordinate changes is said to be \emph{compatible} if any three such coordinate changes in the collection are compatible.
\end{definition}

\begin{definition}[good coordinate system]\label{LEVEL0GCS} A \emph{good coordinate system} for a compact metrizable space $X$ consists of:
\begin{enumerate} 
\item a finite set $S\subset X$ with a non-antisymmetric total order $\leq$, and
\item a collection of Kuranishi charts $$\tilde C_x=(G_x, s_x: U_x\to E_x, \psi_x: \underline{s_x^{-1}(0)}\to \tilde X_x), \;\;x\in S$$ for $X$ indexed by $S$, such that:
\begin{enumerate}
\item $X=\bigcup_{x\in S}\tilde X_x$,
\item if $\text{dim} E_y<\text{dim} E_x$, then $y\leq x$ and $x\not\leq y$,
\end{enumerate}
and
\item a collection of coordinate changes $\{\tilde C_y\to \tilde C_x\}_{(y,x)\in I(\mathcal{G})}$ as defined in the \ref{GCSCC}, where the index set is $I(\mathcal{G}):=\{(y, x)\in S\times S\;|\; y\leq x, \tilde X_y\cap \tilde X_x\not=\emptyset\}$ such that
\begin{enumerate}
\item the collection of the coordinate changes is compatible,
\item the maximality condition holds: for any coordinate change $\tilde C_y\to \tilde C_x$ in the collection and for any pair $z_1\in U_y$ and $z_2\in U_x$ such that $\underline{z_1}$ and $\underline{z_2}$ can be identified via a sequence of quotiented coordinate changes pointing in either directions each of which is quotiented from a coordinate change in the given collection, which is denoted by $\underline{z_1}\sim \underline{z_2}$, we have $z_1\in U_{xy}$ and $\underline{z_2}=\underline{\phi_{xy}}(\underline{z_1})$, and
\item the topological matching condition is satisfied: for any two charts $U_y$ and $U_x$ where $y, x\in S$ and any pair of $v\in \underline{s_y^{-1}(0)}$ and $v'\in \underline{s_x^{-1}(0)}$ such that there exist a sequence $v_n\in \underline{U_y}$ and a sequence $v'_n\in \underline{U_x}$ satisfying $v_n\sim v_n'$, $v_n\to v$ in $\underline{U_y}$ and $v'_n\to v'$ in $\underline{U_x}$, we have $v\sim v'$.
\end{enumerate}
\end{enumerate}
\end{definition}

\begin{remark}\label{TRIVIALFACT}
\begin{enumerate}
\item We now explain why we require the condition $$\psi_y^{-1}(\tilde X_y\cap\tilde X_x)\subset\underline{U_{xy}}$$ in definition \ref{LEVEL0GCS} of a coordinate change. If we remove this condition, it is possible to have a good coordinate system where we have a coordinate change $\tilde C_y\to \tilde C_x$ such that two points $w\in U_y$ and $z\in U_x$ map to the same point in $X$, but $w\not\in U_{xy}$, and this does not violate the maximality condition, because we can have $\underline{w}\not\sim \underline{z}$. This is simply against the philosophy of how coordinate changes work and will have compatibility issues.
\item For a coordinate change $C_q\to C_p$ in a Kuranishi structure, always $\psi_q^{-1}(X_q\cap X_p)\subset\underline{V_{pq}}$. Indeed, if $z\in V_q$ with $\underline{z}\in \psi_q^{-1}(X_q\cap  X_p)$, then by denoting $r:=\psi_q(\underline{z})\in X$, we have $r\in X_q\cap X_p$, so by definition we have coordinate changes $C_r\to C_q$ and $C_r\to C_p$ with $r\in X_q|_{V_{qr}}$ and $r\in X_p|_{V_{pr}}$, so $\underline{z}\sim \psi_p^{-1}(r)$. Therefore, by the maximality condition, $z\in V_{pq}$, and hence $\underline{z}\in \underline{V_{pq}}$.
\item By item 2, if a coordinate change $C_y\to C_x$ is restricted from a coordinate change in a Kuranishi structure, we automatically have $\psi_y^{-1}(\tilde X_y\cap\tilde X_x)\subset\underline{U_{xy}}$. Therefore, if we only work with a good coordinate system induced from a Kuranishi structure (to be precisely defined in \ref{GCSFORKS}) which we always do in this paper, we do not need to impose the condition $\psi_y^{-1}(\tilde X_y\cap\tilde X_x)\subset\underline{U_{xy}}$, but we also want to provide a stand-alone definition of a good coordinate system including this essential property.
\item If we have a coordinate change $\tilde C_y\to\tilde C_x$, then the domain $U_{xy}$ is nonempty; and since $U_{xy}$ is open and $G_y$-invariant in $U_y$ and $\phi_{xy}$ is an equivariant embedding, we have $\text{dim} E_y\leq \text{dim} E_x$ and $|G_y|\leq |G_x|$.
\end{enumerate}
\end{remark}

A good coordinate system we are interested in is extracted from a given Kuranishi structure.

\begin{definition}\label{CCRESTRICTED} A coordinate change $C_y|_{U_y}\to C_x|_{U_x}$ is said to be \emph{restricted} from $(C_y\to C_x)=(\phi_{xy}, \hat\phi_{xy}, V_{xy})$ if the domain $U_{xy}$ of the coordinate change $C_y|_{U_y}\to C_x|_{U_x}$ satisfies $U_{xy}=\phi_{xy}^{-1}(U_x)\cap U_y$ with the intersection taken in $V_x$, and $(C_y|_{U_y}\to C_x|_{U_x})=(\phi_{xy}|_{U_{xy}}, \hat\phi_{xy}|_{E_y|_{U_{xy}}}, U_{xy})$.
\end{definition}

\begin{definition}[good coordinate system for a Kuranishi structure]\label{GCSFORKS} A good coordinate system $$\mathcal{G}:=(X, (S,\leq), \{\tilde C_x\}_{x\in S}, \{\tilde C_y\to \tilde C_x\}_{y\leq x, \tilde X_y\cap \tilde X_x\not=\emptyset}),$$ or $(X, \{\tilde C_x\}_{x\in S})$ for short, is said to be a \emph{good coordinate system for the Kuranishi structure} $\mathcal{K}:=(X, \{C_p\}_{p\in X}, \{C_q\to C_p\}_{q\in X_p})$ if 
\begin{enumerate}
\item $\tilde C_x=C_x|_{U_x}$ is a restriction of $C_x$ for all $x\in S$,
\item for any coordinate change $\tilde C_y\to\tilde C_x$ in the collection,
\begin{enumerate}
\item if $y\in X_x$, $\tilde C_y\to\tilde C_x$ is restricted from $C_y\to C_x$, and
\item if $y\not\in X_x$, $\tilde C_y\to \tilde C_x$ is an inverse of the restriction of a coordinate change in $\mathcal{K}$ across which the dimension and stabilizer group size remain the same.
\end{enumerate}
\end{enumerate}
\end{definition}

We give a simple counterexample to show that in order to obtain a good coordinate system we cannot just take a finite collection of Kuranishi charts from Kuranishi structure without shrinking charts. Consider the following Kuranishi structure for $\{x, y, z\}$. Let $V_x=(-1,\infty)\times\R\times\{0\}\subset \R^3$ and define $s_x$ with its only zeros as $(0,0,0)$ and $(2,0,0)$ which are respectively identified as $z$ and $x$ under $\psi_x$ such that $(0,0,0)$ is a nondegenerate zero and $s_x|_{(-1,1)\times\{0\}\times \{0\}}=t: (-1,1)\times\{0\}\times\{0\}\to \R\times\{0\}\times\{0\}$; let $V_y=(-\infty,1)\times \{0\}\times \R\subset\R^3$ and define $s_y$ with its only zero as $(0,0,0)$ and $(-2,0,0)$ which are identified as $z$ and $y$, such that $(0,0,0)$ is a nondegenerate zero and $s_y|_{(-1,1)\times\{0\}\times \{0\}}=t$; and $V_z=(-1,1)\times\{0\}\times\{0\}\subset \R^3$ and define $s_z=t$ with the only zero $(0,0,0)$ of $s_z$, identified as $z$. The coordinate changes of this Kuranishi structure are induced from how $V_x, V_y$ and $V_z$ embed in $\R^3$. Because we have $z\in X_x$ and $z\in X_y$, not other inclusions, we only have two coordinate changes, and this is a Kuranishi structure. The index set is already finite, but we do not have coordinate change between $C_x$ and $C_y$ despite the fact that $X_x\cap X_y\not=\emptyset$, both charts have the bundles in the same dimension and are acted upon by trivial stabilizer groups.

\begin{remark}\label{SAMEWAVE}
We include here two very important remarks:
\begin{enumerate}[(I)]
\item We observe that we have specified exactly what the collection of coordinate changes is in the definition \ref{LEVEL0GCS}. This has two consequences (i) the composition of two coordinate changes up to group actions is not necessarily a coordinate change in the collection (just as in the case of Kuranishi structure), and (ii) later on, we will shrink $C_x|_{U_x}$ to $C_{U'_x}, x\in S$, which might affect the index set $I(\mathcal{G})$. A good coordinate system in this form is an intermediate notion, as we will ultimately making it Hausdorff and level-1 by shrinking and further constructions; and we can shrink charts to avoid issue (i) and achieve that the collection of coordinate changes is closed up to group action under composition, as we will show in proposition \ref{COORDEXIST}. Also since $X_x|_{U_x}, x\in S$ are open in $X$, whether two of them intersect or not will not change if we shrink $U_x$ slightly; so after a slight shrinking from $U_x$ to $U'_x$, we might have $X_x|_{U'_x}\not=X_x|_{U_x}$, but we still have $I(\mathcal{G}')=I(\mathcal{G})$, where $\mathcal{G}':=\{C_x|_{U'_x}\}_{x\in S}$. The reason we choose the collection of coordinate changes indexed by $I(\mathcal{G})$ rather than the closure of collection of coordinate changes up to group action under composition is because: we added some new inverted coordinate changes as in definition \ref{GCSFORKS}.(2)(b), and if we also include compositions of coordinate changes in our collection, the composed coordinate changes might not satisfy the maximality condition, similar to the example \ref{NONCOEXISTENCE}.
\item If we use $\sim_{\mathcal{K}}$ induced from coordinate changes in the Kuranishi structure $\mathcal{K}$ for the definition \ref{GCSFORKS} which has more coordinate changes to use under identification, will it make any difference? The answer is no. First observe that $\underline{z}\sim \underline{z'}$ implies $\underline{z} \sim_{\mathcal{K}}\underline{z'}$. Indeed, although we added some inverted coordinate changes in forming the good coordinate system, we are allowed to have coordinate changes in either direction in defining $\sim_{\mathcal{K}}$. For the other direction, just note that the coordinate changes in the good coordinate system are all induced in two ways from the coordinate changes in Kuranishi structures which already satisfy the maximality condition defined via $\sim_{\mathcal{K}}$.
\end{enumerate}
\end{remark}

The finite index set $S$ and order $\leq$ are a finite way to organize local models of varying stabilizer groups and dimensions in a Kuranishi structure. The order $\leq$ on $S$ is required to be compatible with the order by the bundle dimension. We will show in a nontrivial theorem \ref{HAUSIDSPACE} that we can have a Hausdorff good coordinate system by shrinking. Then we will show that by grouping certain Kuranishi charts of the same bundle dimension, the data can be organized to allow inductive constructions. Thus a Hausdorff good coordinate system provides a convenient ordered finite cover for working with Kuranishi structures.

\begin{remark}\label{TEMPRESOLUTION} We now explain the intuition behind the condition $\tilde X_y\cap\tilde X_x\not=\emptyset$ for the existence of coordinate changes in definition \ref{LEVEL0GCS}, as it is not spelt out in the literature. In a Kuranishi structure, we can have two charts $C_x$ and $C_y$ such that their coverages $X_x$ and $X_y$ overlap in $X$ but only the section of a subbundle of $E_x$ restricted over a submaifold of $V_x$ is identified with the section of a subbundle of $E_y$ restricted over a submanifold of $V_y$ via a mediating local model of the lower dimension. Moreover, there can be a third chart $C_o$ larger in dimension than $C_x$ and $C_y$ in which part of $C_x$ and $C_y$ including their identified parts embed, and this can cause compatibility issues in extending perturbations. For example, to visualize, consider $V_x, V_y, V_o$ to be embedded in $\R^3$ with coordinate changes induced from the embeddings in $\R^3$. Let $V_x$ and $V_y$ be two surfaces intersect along a line $V_z$ of a smaller order, and those two surfaces are tangent along this line, and let $V_o$ be $\R^3$. In this case without knowing the last chart $C_o$, the extensions of perturbations in the earlier stage of the induction can be incompatibly chosen. So to resolve this situation, we require that a coordinate change exists between the charts if their coverages for $X$ intersect. We remark that this intersection condition does not avoid this issue, but provides a condition sufficient for a further construction which ultimately resolves this issue. A condition that resolves it directly will make the formulation of a good coordinate system less elegant and not minimal. If we want to obtain a good coordinate system from these charts, we have to shrink $C_x$ and $C_y$ so that their coverages for $X$ is disjoint in order to satisfy the existence condition for coordinate changes.
\end{remark}

The following is a key theorem and it will be proved in the next section. 

\begin{theorem}[existence of a good coordinate system] For any Kuranishi structure, a good coordinate system always exists.
\end{theorem}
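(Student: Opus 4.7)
The plan is to prove existence by a downward induction on the dimension of the local obstruction bundles $E_p$, combined with a closure-shrinking trick that guarantees coordinate changes exist whenever coverages overlap.

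First, I observe that for each $d$ the sublevel set $X^{\leq d} := \{p \in X : \dim E_p \leq d\}$ is open in $X$. Indeed, if $\dim E_p \leq d$, then the tangent bundle condition forces $\dim E_q \leq \dim E_p$ for every $q \in X_p$, so the open neighborhood $X_p$ is contained in $X^{\leq d}$. By compactness of $X$, only finitely many dimensions occur; enumerate them $d_1 < d_2 < \cdots < d_N$. Each $X^{\geq d_k}$ is closed and compact, and $X^{\geq d_k}\setminus X^{\geq d_{k+1}} = X^{= d_k}$.

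Proceeding from $k = N$ down to $k = 1$, I will inductively produce finite sets $S_k \subset X^{= d_k}$ and shrunken Kuranishi charts $\tilde C_x = C_x|_{U_x}$ for $x \in S_k$, subject to the key closure condition
\[
\overline{\tilde X_x} \subset X_x \quad \text{in } X,
\]
and so that $W_k := \bigcup_{l \geq k} \bigcup_{x \in S_l} \tilde X_x$ is an open neighborhood of $X^{\geq d_k}$. The base case $k = N$ covers the compact set $X^{= d_N}$ by finitely many such charts, using metrizability of $X$ to arrange the closure condition. For the inductive step, $X^{\geq d_k}\setminus W_{k+1}$ is compact and, since $X^{\geq d_{k+1}} \subset W_{k+1}$, contained in $X^{= d_k}$. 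I cover it by charts centered at points $y \in X^{= d_k}$, pre-shrinking each coverage $\tilde X_y$ to lie inside the open neighborhood
\[
\Bigl(\bigcap_{x \in S',\ y \in X_x} X_x\Bigr) \,\cap\, \Bigl(\bigcap_{x \in S',\ y \notin X_x} (X \setminus \overline{\tilde X_x})\Bigr), \qquad S' := \bigsqcup_{l > k} S_l.
\]
This is an open neighborhood of $y$ precisely because the closure condition at previous stages forces $y \notin X_x \Rightarrow y \notin \overline{\tilde X_x}$. The pre-shrinking then guarantees that whenever $\tilde X_y$ meets a previously chosen $\tilde X_x$ we automatically have $y \in X_x$; an entirely parallel pre-shrinking handles intersections among charts added at the same stage.

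Set $S := \bigsqcup_k S_k$ and order it lexicographically by $(\dim E_\cdot, |G_\cdot|)$, which is a non-antisymmetric total order compatible with the dimension condition required by Definition \ref{LEVEL0GCS}. For each $(y,x) \in I(\mathcal{G})$ (i.e.\ $y \leq x$ and $\tilde X_y \cap \tilde X_x \neq \emptyset$), the construction above yields $y \in X_x$ (for whichever of $y,x$ was chosen later). Hence the $\mathcal{K}$-coordinate change $C_y \to C_x$ exists, and I restrict it in the sense of Definition \ref{CCRESTRICTED} with domain $U_{xy} = \phi_{xy}^{-1}(U_x) \cap U_y$. In the symmetric case $\dim E_y = \dim E_x$ and $|G_y| = |G_x|$, the restricted equivariant embedding is open and induces an isomorphism of stabilizers, so inverting over its image supplies the reverse change $\tilde C_x \to \tilde C_y$ as in Definition \ref{GCSFORKS}(2)(b).

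The axioms of Definition \ref{LEVEL0GCS} are then verified as follows. The coverage condition $X = \bigcup_{x \in S} \tilde X_x$ and the dimension-order compatibility are immediate from the construction. Compatibility of every triple of coordinate changes transfers from $\mathcal{K}$ via the restriction and inversion formulas. The maximality condition for $\mathcal{G}$ reduces, via Lemma \ref{MAXIMALITYEQUIV} and Remark \ref{SAMEWAVE}(II), to the maximality of $\mathcal{K}$: the identification relations $\sim$ for $\mathcal{G}$ and $\mathcal{K}$ coincide, and the restriction recipe $U_{xy} = \phi_{xy}^{-1}(U_x) \cap U_y$ delivers precisely the maximal domain determined by the intersection pattern in the identification space. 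Topological matching transfers directly, since convergent sequences in shrunken charts are convergent sequences in the original charts. The main obstacle I anticipate is ensuring compatibility of inverted coordinate changes with direct restricted ones in mixed triples: the identification $\hat\phi_{xy} \circ \hat\phi_{yz} = \hat\phi_{xz}$ up to $G_x$-action interacts delicately with inversion, and is exactly the subtlety flagged in Remark \ref{SAMEWAVE}(I) for the paper's later Proposition \ref{COORDEXIST}; resolving it may require a final round of shrinking so that the collection of coordinate changes becomes closed under composition modulo group actions.
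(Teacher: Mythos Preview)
Your dimension-stratified approach is genuinely different from the paper's, which uses a single global metric-ball construction: choose $B_{r_p}(p)\subset X_p$, set $\tilde X_p = B_{r_p/2}(p)$, extract a finite cover, and define the same order $\leq$ you use. The point is that whenever $\tilde X_y\cap\tilde X_x\neq\emptyset$, the triangle inequality gives $d(y,x)<r_y/2+r_x/2\leq \max(r_x,r_y)$, so one center lies in the other's original coverage $X_\cdot$. This yields a $\mathcal{K}$-coordinate change in one direction; when that direction disagrees with $\leq$, dimensions and group sizes are forced equal and one inverts. No stratification is needed, and compatibility of the (restricted or inverted) coordinate changes is inherited directly from $\mathcal{K}$ --- your closing worry about mixed triples is unfounded here, since the inverted changes are inverses of diffeomorphic restricted ones that were already compatible in $\mathcal{K}$.

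Your inter-stage mechanism is sound: the closure condition $\overline{\tilde X_x}\subset X_x$ from earlier stages is exactly what makes the displayed intersection an open neighborhood of $y$, so pre-shrinking there guarantees $y\in X_x$ whenever $\tilde X_y$ meets an earlier $\tilde X_x$. The gap is the within-stage step. Your phrase ``an entirely parallel pre-shrinking handles intersections among charts added at the same stage'' is not an argument: if you mean to choose $y_1,y_2,\ldots$ sequentially, pre-shrinking each new chart against those already chosen, you lose any uniform lower bound on the size of $\tilde X_{y_j}$ and have no reason for the process to cover the compact set $X^{\geq d_k}\setminus W_{k+1}$ in finitely many steps. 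If instead you first extract a finite cover and then attempt to shrink, you may destroy the cover. This issue is already present at the base case $k=N$, where there are no previous stages to leverage. The fix is precisely the paper's metric-ball/triangle-inequality trick applied within each stage --- but once you do that, the stratification buys you nothing and you have reproduced the paper's argument with extra scaffolding.
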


The original proof by Fukaya and Ono in \cite{FO} used a background chart $C_z$ containing both $C_y|_{U_y}$ and $C_x|_{U_x}$ to construct a coordinate change $C_y|_{U_y}\to  C_x|_{U_x}$, since it is unclear whether there are coordinate changes between different choices of $C_z$, so the resulting coordinate change $C_y|_{U_y}\to C_x|_{U_x}$ might depend on the choice of $C_z$, and all the resulting coordinate changes might not be compatible as a result. Moreover, even if we can choose compatible coordinate changes by extra arguments and hence a good coordinate system, two different choices of such good coordinate systems cannot be compared. We will use a different argument in section \ref{EXISTGCS}.

Before proceeding to that section, we show that we can also go backwards.

\begin{proposition}[inducing a Kuranishi structure]\label{GCSKS} 
\begin{enumerate}
\item A good coordinate system naturally induces a Kuranishi structure.
\item If a good coordinate system $\mathcal{G}$ is obtained from a Kuranishi structure, then we can choose an induced Kuranishi structure $\mathcal{K}(\mathcal{G})$ such that $\mathcal{G}$ is a good coordinate system for $\mathcal{K}(\mathcal{G})$.
\end{enumerate}
\end{proposition}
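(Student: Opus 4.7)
The plan for part (1) is to construct the Kuranishi structure $\mathcal{K}(\mathcal{G})$ on $X$ by a pointwise selection procedure from the finite data in $\mathcal{G}$. Since $X = \bigcup_{x \in S} \tilde X_x$, for each $p \in X$ I select a minimal element $x(p) \in S$ (with respect to $\leq$, breaking ties in any fixed way) satisfying $p \in \tilde X_{x(p)}$, and then define $C_p := \tilde C_{x(p)}$. For $q \in X_p = \tilde X_{x(p)}$ the minimality of $x(q)$ forces $x(q) \leq x(p)$, so either $x(q) = x(p)$, in which case $C_q \to C_p$ is taken as the identity, or $x(q) < x(p)$, in which case the coordinate change $\tilde C_{x(q)} \to \tilde C_{x(p)}$ provided by $\mathcal{G}$ serves as $C_q \to C_p$ (after a standard shrinking to place $q$ in its coverage, if necessary).

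The verification of the axioms of Definition \ref{KURANS} then proceeds by inheritance. Compatibility of three coordinate changes $C_r \to C_q$, $C_q \to C_p$, $C_r \to C_p$ reduces to compatibility of $\tilde C_{x(r)} \to \tilde C_{x(q)}$, $\tilde C_{x(q)} \to \tilde C_{x(p)}$, $\tilde C_{x(r)} \to \tilde C_{x(p)}$ in $\mathcal{G}$, available because the selection rule enforces the chain $x(r) \leq x(q) \leq x(p)$. Maximality and topological matching transfer via Lemma \ref{MAXIMALITYEQUIV} and Remark \ref{SAMEWAVE}.(II): the identification relation $\sim$ in $\mathcal{K}(\mathcal{G})$ is generated by coordinate changes that restrict the coordinate changes of $\mathcal{G}$ (together with identities), so each $\mathcal{K}(\mathcal{G})$-chain projects via $x(\cdot)$ to a $\mathcal{G}$-chain, and the two conditions transfer cleanly from $\mathcal{G}$.

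For part (2), assume $\mathcal{G}$ is a good coordinate system for some Kuranishi structure in the sense of Definition \ref{GCSFORKS}. I will arrange the tie-breaking in the selection map so that $x(s) = s$ for every $s \in S$: this is legitimate since $s \in \tilde X_s$ and any competing $y \in S$ with $s \in \tilde X_y$ can be ignored in favor of $s$ itself. With this choice $C_s = \tilde C_s$ without further shrinking, so condition (1) of Definition \ref{GCSFORKS} is trivial. For $y, x \in S$ with $y \in \tilde X_x$, the $\mathcal{K}(\mathcal{G})$-coordinate change $C_y \to C_x$ is literally $\tilde C_y \to \tilde C_x$, trivially a restriction of itself, matching condition (2)(a). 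For $y \leq x$ in $S$ with $y \notin \tilde X_x$, the coordinate change $\tilde C_y \to \tilde C_x$ is, by the original provenance of $\mathcal{G}$, an inverse of a restriction of an equidimensional coordinate change; I will take the corresponding equidimensional coordinate change in $\mathcal{K}(\mathcal{G})$ to be the $C_x \to C_y$ constituent, matching condition (2)(b).

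The main obstacle is the comparison of identification relations: $\sim$ in $\mathcal{K}(\mathcal{G})$ is a priori generated by coordinate changes indexed by all pairs in $X \times X$, whereas the corresponding relation in $\mathcal{G}$ uses only the finite index set $I(\mathcal{G})$. Remark \ref{SAMEWAVE}.(II) supplies the needed reduction---every $\mathcal{K}(\mathcal{G})$-chain collapses to an equivalent $\mathcal{G}$-chain at the selected indices, and conversely every $\mathcal{G}$-chain lifts---so the maximality and topological matching established for $\mathcal{G}$ push cleanly to $\mathcal{K}(\mathcal{G})$, completing both parts.
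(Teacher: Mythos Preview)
Your argument for part (1) is essentially the same as the paper's, with the cosmetic difference that you take $C_p=\tilde C_{x(p)}$ without restricting to a small neighborhood $V_p$ of the point over $p$; this is harmless and arguably cleaner.

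For part (2), however, there is a genuine gap. You write that setting $x(s)=s$ for $s\in S$ is ``tie-breaking,'' but $s$ need not be minimal in $S_s=\{x\in S: s\in\tilde X_x\}$: if some $y\in S$ has $\dim E_y<\dim E_s$ and $s\in\tilde X_y$, then by Definition~\ref{LEVEL0GCS}(2)(b) we have $y\leq s$ and $s\not\leq y$, so $s$ is strictly non-minimal and your choice is not a tie-break. Once $x(s)=s$ is forced for such $s$, the implication ``$q\in X_p\Rightarrow x(q)\leq x(p)$'' from part (1) fails. Concretely, take $q\in S$, $p\notin S$, and $q\in X_p=\tilde X_{x(p)}$; you would need a coordinate change $\tilde C_q\to\tilde C_{x(p)}$ in $\mathcal G$, hence $q\leq x(p)$, but this can fail (e.g.\ when $\dim E_{x(p)}<\dim E_q$, which forces $x(p)\leq q$ and $q\not\leq x(p)$). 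Your plan produces no $C_q\to C_p$ in that case.

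The paper's proof avoids this by invoking the \emph{original} Kuranishi structure $\hat{\mathcal K}$ from which $\mathcal G$ was obtained. For $q,p\in S$ with $q\in X_p=\tilde X_p\subset\hat X_p$, the paper uses the coordinate change $\hat C_q\to\hat C_p$ in $\hat{\mathcal K}$ (case~(a)), and then bootstraps the mixed cases (b) and (c) from this. Your argument uses only the data of $\mathcal G$ and never appeals to $\hat{\mathcal K}$, which is precisely the missing ingredient. If you want to repair your approach without reintroducing $\hat{\mathcal K}$, you would at minimum need to supply, for $q,p\in S$ with $q\in\tilde X_p$ but $q\not\leq p$, the coordinate change $C_q\to C_p$ by \emph{inverting} the equidimensional $\tilde C_p\to\tilde C_q$ present in $\mathcal G$, and then check compatibility of these inversions with the rest; this is not the argument you wrote.
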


\begin{proof}
\begin{enumerate}
\item For $p\in X$, denote $S_p:=\{x\in S\;|\; p\in \tilde X_x\}$, pick a smallest $y\in S_p$ according to $\leq$, and choose an invariant open neighborhood $V_p$ of $w$ in $U_y$ where $w$ corresponds to $p$, and define $C_p:=\tilde C_y|_{V_p}$. 

The coordinate changes among $\{C_p\}_{p\in X}$ will be induced from coordinate changes of this good coordinate system with the domain specified by the maximality condition. We only need to show for $q\in X_p$, we have $C_q\to C_p$. Suppose, $q\in X_p$, $C_q:=\tilde C_z|_{V_q}$, $C_p:=\tilde C_y|_{V_p}$, then $q\in \tilde X_y|_{V_p}\subset \tilde X_y$, so $y\in S_q$, so by definition $z\leq y$ (and $\tilde X_z\cap \tilde X_y$ contains $q$), by the definition of a good coordinate system, we have $\tilde C_z\to \tilde C_y$, and this induces $C_q:=\tilde C_z|_{V_q}\to \tilde C_y|_{V_p}=:C_p$.

\item For $p\in S$, define $C_p:=\tilde C_p$, the chart in the good coordinate system. For $p\in X\backslash S$, denote $S_p:=\{x\in S\;|\; p\in \tilde X_x\}$, as before, pick a smallest $y\in S_p$ according to $\leq$, and choose an invariant open subset $V_p$ of $w$ in $U_y$ where $w$ corresponds to $p$, and define $C_p:=\tilde C_y|_{V_p}$. 

The coordinate changes among $\{C_p\}_{p\in X}$ will again be induced from coordinate changes of this good coordinate with the domain specified by the maximality condition. We will only need to verify that for $q\in X_p$, we have $C_q\to C_p$. The only new cases are, (a) when $q\in S$, $p\in S$, (b) $q\in S, p\not\in S$, and (c) $q\not\in S, p\in S$.

Case (a): Since the charts of good coordinate system are shrunken from a original Kuranishi structure, $\tilde C_x:=\hat C_x|_{U_x}$, so we must have $q\in \hat X_p$, so we have $\hat C_q\to\hat C_p$, which induces $C_q:=\tilde C_q\to \tilde C_p=: C_p$.

Case (b): Since $C_p:=\tilde C_y|_{V_p}$ for some $y\in S$, and hypothesis says $q\in X_p$, we must have $q\in \tilde X_y$, and from case (a), we have $\tilde C_q\to \tilde C_y$, and this induces $C_q:=\tilde C_q\to \tilde C_y|_{V_p}=: C_p$.

Case (c) follows from item (1) and an inclusion.

So we have a new Kuranishi structure $\mathcal{K}(\mathcal{G})$ from $\mathcal{G}$; and $\mathcal{G}$ is also a good coordinate system for $\mathcal{K}(\mathcal{G})$ by the construction.
\end{enumerate}
\end{proof}

The above also allows us to consider examples/nonexamples of Kuranishi structures derived from simple examples/nonexamples of good coordinate systems.

\section{The existence of a good coordinate system}\label{EXISTGCS}

This section gives a short proof of the existence of a good coordinate system for a Kuranishi structure.

The idea of my proof is as follows. If we can (i) define an order compatible with the order by dimension and (ii) find a way to shrink charts such that whenever two charts intersect there exists a coordinate change or we can naturally create a coordinate change between these two charts in the direction determined by the order, then we can invoke the compactness of $X$ to extract a finite number of charts covering $X$. Since the existence condition for coordinate changes in the definition of a good coordinate system is relaxed from the existence condition in a Kuranishi structure, it is necessary to create some new coordinate changes, but we do not want to change the information provided by the original Kuranishi structure, hence we impose the condition \ref{GCSFORKS}.2.(b).

\begin{theorem} A good coordinate system $\mathcal{G}$ for a Kuranishi structure $\mathcal{K}$ exists. Moreover, $\mathcal{G}$ can be chosen such that for all $x\in S$, $\tilde X_x$ is the interior of its closure $\overline{\tilde X_x}$, where the closure of $\tilde X_x$ is taken in $X$.
\end{theorem}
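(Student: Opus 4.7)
The plan is to extract a finite cover of $X$ from the pointwise Kuranishi charts, shrunk so that every nontrivial chart intersection automatically admits a coordinate change in the prescribed direction, with no separate disjointness-enforcing second step. Fix a compatible metric $d$ on $X$. For each $p\in X$ set $\eta(p):=\tfrac14\,d(p,X\setminus X_p)>0$, which is positive because $X_p$ is an open neighborhood of $p$; by continuity of $\psi_p$ at $\underline{x_p}$, one can choose a $G_p$-invariant open $V_p^{(1)}\Subset V_p$ containing $x_p$ whose coverage satisfies $N_p:=X_p|_{V_p^{(1)}}\subset B(p,\eta(p))\cap X_p$. The $\{N_p\}_{p\in X}$ form an open cover of $X$; by compactness extract a finite subcover indexed by $S\subset X$. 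Equip $S$ with the non-antisymmetric total order $y\le x\iff (\dim E_y,|G_y|)\le_{\mathrm{lex}}(\dim E_x,|G_x|)$, which is compatible with dimension as required by Definition \ref{LEVEL0GCS}(2)(b). Set $\tilde C_x:=C_x|_{V_x^{(1)}}$ and $\tilde X_x:=N_x$ for $x\in S$.

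The coordinate-change existence is the essence of the argument. For any $y,x\in S$ with $\tilde X_y\cap\tilde X_x\neq\emptyset$, pick $r$ in the intersection: $d(y,r)<\eta(y)$ and $d(x,r)<\eta(x)$ force $d(y,x)<\eta(y)+\eta(x)$. If $\eta(y)\le 3\eta(x)$, this gives $d(y,x)<4\eta(x)=d(x,X\setminus X_x)$, hence $y\in X_x$; otherwise $\eta(y)>3\eta(x)$ gives $x\in X_y$ symmetrically. A short case analysis on the lex order then yields: if $y<x$ strictly (strict $\dim E$, or equal $\dim E$ with strict $|G|$), the alternative $x\in X_y$ is ruled out by the dimension/stabilizer embedding inequality that any $G_x$--$G_y$-embedding must satisfy, so $y\in X_x$ and the Kuranishi coordinate change $C_y\to C_x$ restricts to $\tilde C_y\to\tilde C_x$ as in Definition \ref{GCSFORKS}(2)(a); if $y\le x$ and $x\le y$ (equal dim and $|G|$), at least one of $y\in X_x$ or $x\in X_y$ holds, supplying one direction directly and permitting the other to be produced by the local inverse of Definition \ref{GCSFORKS}(2)(b) using equal dim and stabilizer. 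The domain of each coordinate change is taken to be $U_{xy}:=\phi_{xy}^{-1}(V_x^{(1)})\cap V_y^{(1)}$ as in Definition \ref{CCRESTRICTED}.

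The remaining axioms are then straightforward. Coverage of $X$ is by construction. Compatibility of the coordinate-change collection is inherited from $\mathcal{K}$ because each $\tilde C_y\to\tilde C_x$ is either a restriction of a Kuranishi coordinate change or a local inverse of one. The maximality condition follows from Lemma \ref{MAXIMALITYEQUIV} together with the prescribed choice of $U_{xy}$, noting that $\sim$-equivalence in the good coordinate system agrees with the restriction of $\sim_{\mathcal{K}}$ (Remark \ref{SAMEWAVE}). Topological matching transfers by lifting any convergent approximation in the good coordinate system to $\mathcal{K}$, invoking its topological matching condition, and then observing that the identified limits lie in charts in $S$. For the ``moreover'' clause that $\tilde X_x=\mathrm{int}(\overline{\tilde X_x})$, perform one final invariant shrinking of each $V_x^{(1)}$ to some $V_x^{(2)}$ whose coverage in $X$ is a regular open set; such approximations exist by metrizability of $X$, and one application of the shrinking lemma preserves both coverage and all the previously verified properties.

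The principal obstacle is the delicate coordination between the metric inequality $d(y,x)<\eta(y)+\eta(x)$ and the algebraic embedding constraint on $(\dim E,|G|)$: together they ensure that for every intersecting pair $y,x\in S$ at least one of $y\in X_x$ or $x\in X_y$ holds in precisely the direction compatible with the lexicographic order on $S$. This is what permits the construction to succeed in a single metric step rather than requiring an intricate dim-inductive shrinking to separate incompatible chart coverages.
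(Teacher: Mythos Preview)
Your argument is correct and follows essentially the same route as the paper: the lexicographic order on $(\dim E,|G|)$, a metric shrinking of the coverages so that whenever two of them meet one base point is forced into the other's Kuranishi neighborhood, and then the restriction/inversion dichotomy to supply the coordinate change in the required direction. The only visible differences are cosmetic. The paper picks an arbitrary radius $r_p$ with $B_{r_p}(p)\subset X_p$ and arranges the coverage to be \emph{exactly} the ball $B_{r_p/2}(p)$ (by deleting from $V_p$ the zeros mapping outside that ball), so the triangle inequality reads simply $d(y,x)<r_y/2+r_x/2\le r_x$ when $r_y\le r_x$; your choice $\eta(p)=\tfrac14 d(p,X\setminus X_p)$ and coverage $\subset B(p,\eta(p))$ accomplishes the same thing with slightly different arithmetic. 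For the regular-open clause the paper simply asserts that a metric ball equals the interior of its closure; your explicit final shrinking to a regular open coverage is, if anything, the more careful treatment of that point.
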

\begin{proof} Since $X$ is metrizable, choose a metric $d$ on $X$. Define $$B_r(p):=\{q\in X\;|\; d(q,p)<r\}.$$ For all $p\in X$, since the coverage $X_p$ is an open neighborhood of $p$ in $X$, we can choose $B_{r_p}(p)\subset X_p$ for some $r_p>0$ depending on $p$.

Denote $quot_p: V_p\to \underline{V_p}$, and define $$U_p:=V_p\backslash (s_p^{-1}(0)\backslash quot_p^{-1}(\psi_p^{-1}(B_{r_p/2}(p)))),$$ which is an invariant open subset of $V_p$ such that $p\in X_p|_{U_p}$.
Define $$\tilde C_p:=C_p|_{U_p}\text{ and }\tilde X_p:=X_p|_{U_p},$$ so $\tilde X_p=B_{r_p/2}(p)$. Therefore, $\tilde X_x$ is the interior of its closure $\overline{\tilde X_x}$.

Since $\tilde X_p, p\in X$ is an open cover for the compact $X$, we can choose a finite index set $S$, such that $X=\bigcup_{x\in S}\tilde X_x$.

For $y, x\in S$, define $y\leq x$ if
\begin{enumerate}[(i)]
\item $\text{dim} E_y<\text{dim} E_x$, or
\item $\text{dim} E_y=\text{dim} E_x$ and $|G_y|\leq |G_x|$, where $|G|$ is the size of the group $G$.
\end{enumerate}

So $\leq$ is defined for all pairs of points from $S$.

We have constructed the order $(S, \leq)$ and Kuranishi charts $\tilde C_x, x\in S$ so far. We only need to show that if $\tilde X_y\cap \tilde X_x\not=\emptyset$, $y\leq x$, then there exists a coordinate change $\tilde C_y\to \tilde C_x$ such that condition 2 in definition \ref{GCSFORKS} holds. Namely, (i) if $y\in X_x$, $\tilde C_y\to\tilde C_x$ is restricted from $C_y\to C_x$, and (ii) if $y\not\in X_x$, $\tilde C_y\to \tilde C_x$ is an inverse of the restriction of a coordinate change in $\mathcal{K}$ across which the dimension and stabilizer group size remain the same.

Starting from $\tilde X_y\cap \tilde X_x\not=\emptyset$ and $y\leq x$, we have three cases:

\begin{enumerate}[(a)]
\item $r_y\leq r_x$. We denote $z\in \tilde X_y\cap \tilde X_x\not=\emptyset$, then $$d(y,x)\leq d(y, z)+d(z,x)<r_y/2+r_x/2\leq r_x/2+r_x/2=r_x,$$ so $y\in B_{r_x}(x)\subset X_x$. By the existence condition in the definition of the Kuranishi structure, there exists a coordinate change $C_y\to C_x$. Then define $\tilde C_y:=C_y|_{U_y}\to C_x|_{U_x}=:\tilde C_x$ as being restricted from $C_y\to C_x$ as in definition \ref{CCRESTRICTED}. Observe that the domain $U_{xy}$ of the coordinate change $\tilde C_y\to \tilde C_x$ determined by the maximality of the Kuranishi structure is same as $U_{xy}:=\phi_{xy}^{-1}(U_x)\cap U_y$ with the intersection taken in $V_y$, since the maximality condition already holds for $C_y\to C_x$.
\item $r_x< r_y$, and $y\in X_x$. Then $C_y\to C_x$ exists in the Kuranishi structure and define $(\tilde C_y\to \tilde C_x):=(C_y|_{U_y}\to C_x|_{U_x})$.
\item $r_x< r_y$, and $y\not\in X_x$. Then $\tilde C_y\to \tilde C_x$ cannot be a restriction of a coordinate change $C_y\to C_x$, as the latter does not exist; and we have to create it, but this case is easy too. The same argument as in case (a) shows that $x\in X_y$, and the coordinate change $C_x\to C_y$ exists in the Kuranishi structure. So, $\text{dim} E_x\leq \text{dim} E_y$ and $|G_x|\leq |G_y|$. Since we have hypothesis $y\leq x$ and we know $\text{dim} E_x\leq \text{dim} E_y$ from this case, according to the definition of $y\leq x$, we are left with the alternative that $\text{dim} E_y=\text{dim} E_x$ and $|G_y|\leq |G_x|$, and the latter and $|G_x|\leq |G_y|$ give $|G_y|=|G_x|$. Thus, we have $\text{dim} E_x=\text{dim} E_y$ and $|G_x|=|G_y|$. Therefore, the given coordinate change $C_x\to C_y$ provides a $G_x-G_y$ bundle diffeomorphism $\hat\phi_{yx}: E_x|_{V_{yx}}\to E_y$ onto its image intertwining sections and the stabilizer groups are isomorphic. Define $U_{yx}:=\phi_{yx}^{-1}(U_y)\cap U_x$ with the intersection taken in $V_x$. So in this case, $\phi_{yx}(U_{yx})$ is $G_y$-invariant in $U_y$ and we can invert this $G_x-G_y$ bundle diffeomorphism $(\phi_{yx}, \hat\phi_{yx}, U_{yx})$ into a $G_y-G_x$ bundle diffeomorphism $(\phi_{xy}, \hat\phi_{xy}, U_{xy})$, where $$\phi_{xy}:=(\phi_{yx})^{-1}: \phi_{yx}(U_{yx})\to U_{yx}\subset U_x, \hat\phi_{xy}:=(\hat\phi_{yx})^{-1}, U_{xy}:=\phi_{yx}(U_{yx});$$ and we define $$(\tilde C_y\to \tilde C_x):=(\phi_{xy}, \hat\phi_{xy}, U_{xy}).$$ 
This satisfies the definition of the coordinate change in \ref{GCSCC}. By construction, $\tilde C_y\to \tilde C_x$ is the inverse of $C_x|_{U_x}\to C_y|_{U_y}$.
\end{enumerate}

By construction of (a) and (b), if $y\in X_x$, $\tilde C_y\to \tilde C_x$ is induced from the coordinate change $C_y\to C_x$ (not just up to group action, which is why we separate the case (b) out from (c)). Since the above three cases exhaust all the possibilities, if $y\not\in X_x$, we are in case (c), and $\tilde C_y\to \tilde C_x$ is an inverse of the restriction $C_x|_{U_x}\to C_y|_{U_y}$ of the coordinate change $C_x\to C_y$ in $\mathcal{K}$ across which the dimension and the stabilizer group size remain the same.

The compatibility of coordinate changes follows from the same property of the Kuranishi structure, since the invertible coordinate changes from which newly added coordinate changes are inverted are already compatible among themselves and with the rest coordinate changes (in cases (a) and (b)).

We already discussed in remark \ref{SAMEWAVE}.(II) that it makes no difference to the maximality condition and the topological matching condition of a good coordinate system obtained from a Kuranishi structure whether we use $\sim$ or $\sim_{\mathcal{K}}$. The maximality condition for coordinate changes from restriction holds using $\sim_{\mathcal{K}}$ and hence using $\sim$. The maximality condition for coordinate changes from inversion also holds using $\sim_\mathcal{K}$ (since it holds for the restricted coordinate changes from which they are inverted), and hence using $\sim$.

Since the addition of extra inverted coordinate changes (which are diffeomorphisms intertwining isomorphic group actions) does not change how $\underline{U_x}, x\in S$ are glued in the identification space $M$ (the set appeared in the equivalent formulation of the maximality condition), topological matching condition in the current setting is just a subcase of the topological matching condition of the Kuranishi structure.
\end{proof}

\begin{remark}
\begin{enumerate}
\item From the above proof, it seems that an alternative definition of the order by defining $y\leq' x$ if and only if $r_y\leq r_x$ (while removing item 2.(b) in definition \ref{LEVEL0GCS}) will make the proof much shorter (only case (a) now) and all coordinate changes in the good coordinate system are restricted from coordinate changes in the Kuranishi structure. However, this order is very sensitive to the choice of the metric and the radii of the chosen balls in $X_x$ and it is impossible to tell the order between $x$ and $y$ from the knowledge of charts $C_x$ and $C_y$. Moreover, later on we will group the charts of each dimension together, so we prefer to define an order which is compatible with the order by dimension. The disagreement of the orders $\leq$ and $\leq'$ occurs when (i) $\tilde X_y\cap\tilde X_x=\emptyset$ which does not matter, since the order is used to organize coordinate changes and in this case there is no coordinate change; and (ii) a coordinate change between charts based at those two points is inverted, and in this case $\leq'$ between these two points is valid in only one way while $\leq$ between these two points holds in both ways. In our case, the inverted diffeomorphic coordinate changes intertwining isomorphic groups are all harmless. We have included condition 2.(b) in definition \ref{LEVEL0GCS} specifically to rule out the order $\leq'$ and choose the current definition of $\leq$.
\item Later we will show that two different choices of good coordinate systems obtained using the method in the above proof are equivalent.
\end{enumerate}
\end{remark}

The bases $U_x$ appearing in the charts in a good coordinate system for a Kuranishi structure are restricted from  $V_x$ in a way that we do nothing on $V_x$ away from $X_x$, so we may have a very non-Hausdorff space if we glue together all the $U_x$'s in a good coordinate system with respect to any reasonable topology. We will deal with this issue in the next section.

\section{Obtaining a Hausdorff good coordinate system}

In this section, we consider the Hausdorffness aspect of a good coordinate system obtained from a Kuranishi structure $\mathcal{K}$. We will write a Kuranishi chart as $C_x|_{U_x}$ rather than $\tilde C_x$, as we often restrict charts to smaller bases.

Hausdorffness is important for the existence of partitions of unity and the compatibility of the later constructions. Hausdorffness can fail even in the simplest case where we have two charts of the same dimension with a coordinate change between them. Indeed, if we glue these two charts by identifying each point in the domain of the coordinate change with its image in the target, Hausdorffness might fail, as we have just glued two open sets along open subsets. It becomes more involved to discuss Hausdorffness of a good coordinate system on a global scale where dimensions are not constant and the identification $\sim_{\mathcal{K}}$ becomes less visual. Here $\sim_{\mathcal{K}}$ is defined using coordinate changes of the Kuranishi structure from which we extracted the good coordinate system. In this general case, we need to define an appriopriate topology to even discuss Hausdorffness, and we need to improve the properties that $\sim_{\mathcal{K}}$ might have.

We will first give a conceptual overview of what will happen in this section culminating in theorem \ref{BIGHAUSDORFF}. Next we will motivate the discussions by some examples. Then we achieve a nice property where we can visualize $\sim_{\mathcal{K}}$. Then we define the relative topology for the identification space with dimension jumping. In last subsection, we will proceed to obtain a Hausdorffness good coordinate system with respect to this relative topology by considering a special sequence of shrinkings of good coordinate systems.

\subsection{Conceptual overview and definitions}

From the last section, we have obtained a good coordinate system $\mathcal{G}$ such that for all $x\in S$, $X_x|_{U_x}$ is the interior of its the closure $\overline{X_x|_{U_x}}$ taken in $X$. We fix such a $\mathcal{G}$ in this and the next section. We can form an identification space $M(\mathcal{G})$ of $\mathcal{G}$ by gluing charts together via coordinate changes in $\mathcal{G}$. In general it is impossible to define any reasonable Hausdorff topology on $M(\mathcal{G})$.

What we will do is to shrink $\mathcal{G}$ to $\mathcal{G}'$ to achieve better properties.

\begin{definition}[shrinking]\label{GCSSHRINKING} A good coordinate system $$\mathcal{G}':=(X, (S,\leq), \{C_x|_{U'_x}\}_{x\in S}, \{C_y|_{U'_y}\to C_x|_{U'_x}\}_{(y,x)\in I(\mathcal{G}')})$$ is called a \emph{shrinking} of another good coordinate system $$\mathcal{G}:=(X, (S,\leq), \{C_x|_{U_x}\}_{x\in S}, \{C_y|_{U_y}\to C_x|_{U_x}\}_{(y,x)\in I(\mathcal{G})}),$$ if $U'_x$ is a $G_x$-invariant open subset of $U_x$ with $x\in X_x|_{U'_x}$ for all $x\in S$, and for all $y, x\in S$, $X_y|_{U_y}\cap X_x|_{U_x}\not=\emptyset$ implies $X_y|_{U'_y}\cap X_x|_{U'_x}\not=\emptyset$ (hence $X_y|_{U_y}\cap X_x|_{U_x}\not=\emptyset$ is equivalent to $X_y|_{U'_y}\cap X_x|_{U'_x}\not=\emptyset$, so $I(\mathcal{G'})=I(\mathcal{G})$), and coordinate changes $C_y|_{U'_y}\to C_x|_{U'_x}$ among the restricted charts are restricted from coordinate changes in $\mathcal{G}$. We also say $U'_x, x\in S$ is a shrinking of a good coordinate system $C_x|_{U_x}, x\in S$, or $\mathcal{G}'$ is \emph{shrunken} from $\mathcal{G}$.
\end{definition}

We will introduce two useful properties of a good coordinate system.

\begin{definition}\label{STRONGLYINT}
Let $\mathcal{G}$ be a good coordinate system for a Kuranishi structure $\mathcal{K}$.
\begin{enumerate}
\item (identification-matching) $\mathcal{G}$ is said to be \emph{identification-matching} if $M(\mathcal{G})=M(\mathcal{K})|_{\mathcal{G}}$, where $M(\mathcal{K})|_{\mathcal{G}}$ is the identification space of the Kuranishi structure $\mathcal{K}$ restricted to the part corresponding to the good coordinate system.
\item (strongly intersecting) $\mathcal{G}$ is said to be \emph{strongly intersecting} if for all $y, x\in S$, the condition that $\underline{U_y}$ and $\underline{U_x}$ intersect in $M(\mathcal{K})|_{\mathcal{G}}$ implies that $X_y|_{U_y}\cap X_x|_{U_x}\not=\emptyset$.
\end{enumerate}
\end{definition}

\begin{remark} For a strongly intersecting good coordinate system $\mathcal{G}$, if $\underline{U_y}$ and $\underline{U_x}$ intersect in $M(\mathcal{G})$ and $y\leq x$, then there exists a coordinate change $C_y|_{U_y}\to C_x|_{U_x}$ from the data in the definition. A shrinking of a strongly intersecting good coordinate system is strongly intersecting by definition \ref{GCSSHRINKING}.
\end{remark}

In general $\mathcal{G}$ is neither identification-matching nor strongly intersecting, but as a nontrivial result \ref{COORDEXIST} shows, we can always find a shrinking $\mathcal{G}'$ of $\mathcal{G}$ that satisfies these two properties, as well as retaining the property of $\mathcal{G}$: $X_x|_{U'_x}$ is the interior of its closure $\overline{X_x|_{U'_x}}$ taken in $X$. Now we fix such a $\mathcal{G}'$.

We will show that we can choose a \emph{precompact} shrinking $\mathcal{G}'':=\{C_x|_{U''_x}\}_{x\in S}$ of $\mathcal{G}'$, which additionally requires that $\overline{U''_x}$ to be compact in $U'_x$. This requirement is for the following purpose: We want to choose a $U''_x\subset U'_x$ with a natural compactification in $U'_x$ such that every sequence escaping in infinity in $U''_x$ has a subsequence converges to a point in $U'_x$. So we can discuss ``boundary points of $U''_x$ at infinity'' as interior points in $U'_x$. The precompact shrinking allows us to define the relative topology. Moreover, we can always choose $\mathcal{G}''$ such that $M(\mathcal{G}'')$ can be equipped with a Hausdorff relative topology relative to $\mathcal{G}'$ (or equivalently relative to $\mathcal{G}$ or to $\mathcal{K}$). This topology is the topology we will explicitly use when we do constructions later.

\begin{theorem}\label{BIGHAUSDORFF} Let $\mathcal{K}$ be a Kuranishi structure. We can always choose a good coordinate system $\mathcal{G}$ for $\mathcal{K}$ in the sense of \ref{GCSFORKS} such that $\mathcal{G}$ is strongly intersecting and identification-matching, and $M(\mathcal{G})$ can be equipped with a relative topology $\mathcal{T}(\mathcal{G}, \mathcal{K})$ which is Hausdorff.
\end{theorem}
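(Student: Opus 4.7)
The plan is to produce $\mathcal{G}$ by three successive equivariant shrinkings of the good coordinate system $\mathcal{G}_0$ constructed in Section 3 (which already has the property that each $X_x|_{U_x^{(0)}}$ is the interior of its closure in $X$), and then to define and verify the relative topology.

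First, I would shrink $\mathcal{G}_0$ to a good coordinate system $\mathcal{G}_1=\{C_x|_{U_x^{(1)}}\}_{x\in S}$ that is simultaneously strongly intersecting and identification-matching, invoking the forthcoming Proposition \ref{COORDEXIST}. The geometric idea is: for each pair $y,x\in S$ whose quotient bases $\underline{U_y^{(0)}},\underline{U_x^{(0)}}$ meet in $M(\mathcal{K})|_{\mathcal{G}_0}$ without $X_y|_{U_y^{(0)}}\cap X_x|_{U_x^{(0)}}\neq\emptyset$, one trims away the offending ``spurious'' regions of the bases equivariantly; a parallel trimming expels identifications on $\bigsqcup_x \underline{U_x^{(1)}}$ not already present in $M(\mathcal{K})|_{\mathcal{G}_1}$. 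One checks that all these trimmings can be performed simultaneously while (i) preserving the covering $\bigcup X_x|_{U_x^{(1)}}=X$, (ii) preserving $I(\mathcal{G}_1)=I(\mathcal{G}_0)$, and (iii) keeping the interior-of-closure property of $X_x|_{U_x^{(1)}}$.

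Second, I would construct an equivariant precompact shrinking $\mathcal{G}=\{C_x|_{U''_x}\}_{x\in S}$ of $\mathcal{G}_1$ with $\overline{U''_x}$ compact in $U_x^{(1)}$ and $\bigcup_{x\in S} X_x|_{U''_x}=X$. This is standard: compactness of $X$ together with the usual precompact-shrinking lemma for finite open covers, applied carefully to the coverages on $X$ and then lifted to the bases (averaged over each $G_x$ to remain equivariant), produces the desired $U''_x$. Being shrunk from the strongly intersecting, identification-matching $\mathcal{G}_1$, the resulting $\mathcal{G}$ inherits both properties; the precompactness is what gives the ``room'' needed to define a relative topology in which limits of escaping sequences sit as interior points of $U_x^{(1)}$.

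Third, I would define the relative topology $\mathcal{T}(\mathcal{G},\mathcal{K})$ on $M(\mathcal{G})$ as the subspace topology inherited from the quotient topology on $M(\mathcal{G}_1)=\bigl(\bigsqcup_x \underline{U_x^{(1)}}\bigr)/\!\sim$ along the natural injection $M(\mathcal{G})\hookrightarrow M(\mathcal{G}_1)$ (which is well-defined and injective by identification-matching together with the injectivity of each $\iota_x$ established in Lemma \ref{MAXIMALITYEQUIV}). The main obstacle is Hausdorffness, and this is where the topological matching condition of $\mathcal{K}$ does the essential work. Suppose $v,v'\in M(\mathcal{G})$ cannot be separated; then there exist $x,y\in S$ and sequences $w_n\in U''_x$, $w'_n\in U''_y$ with $w_n\sim_{\mathcal{K}}w'_n$ whose images in $M(\mathcal{G}_1)$ converge to $v$ and $v'$ respectively. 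By precompactness, extract subsequences so that $w_n\to w\in\overline{U''_x}\subset U_x^{(1)}$ and $w'_n\to w'\in\overline{U''_y}\subset U_y^{(1)}$. If $v,v'$ lie over $X$ in $M(\mathcal{G})$, then $w$ and $w'$ are zeros of $s_x$ and $s_y$, and the topological matching condition (Definition \ref{KURANS}(3)), applied inside $\mathcal{K}$, forces $\underline{w}\sim\underline{w'}$ and hence $v=v'$; for general $v,v'$, the relative topology permits reducing to this case by tracking the sequences through the coordinate change realizing $w_n\sim_{\mathcal{K}}w'_n$ and projecting to the zero loci, where the same topological matching step concludes. The remaining bookkeeping is to verify that the order, the coordinate changes, and the compatibility/maximality/topological-matching conditions of a good coordinate system for $\mathcal{K}$ all survive the two shrinking steps, which follows from the way restrictions of coordinate changes are defined in \ref{CCRESTRICTED} together with the functoriality of $\sim_{\mathcal{K}}$.
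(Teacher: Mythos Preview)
Your overall architecture matches the paper's: shrink to achieve the strongly intersecting property (which automatically yields identification-matching by Corollary \ref{SETMATCHING}, so no separate ``parallel trimming'' is needed), then shrink precompactly, define the relative topology as the subspace topology inherited from the quotient topology on the larger identification space, and argue Hausdorffness. Note, incidentally, that your heuristic for Proposition \ref{COORDEXIST}---trimming away ``spurious'' regions pair by pair---is not how that result is proved; the offending intersections can accumulate on the zero set, and the paper instead uses a good shrinking sequence together with the topological matching condition in a contradiction argument.

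The genuine gap is in your Hausdorffness argument. A \emph{single} precompact shrinking $\mathcal{G}$ of $\mathcal{G}_1$ is not enough. When you extract subsequential limits $w\in\overline{U''_x}\subset U_x^{(1)}$ and $w'\in\overline{U''_y}\subset U_y^{(1)}$ of a non-separable pair, there is no reason these limits are zeros of $s_x$ and $s_y$; but the topological matching condition (Definition \ref{KURANS}(3)) applies \emph{only} to points of $\underline{s_x^{-1}(0)}$ and $\underline{s_y^{-1}(0)}$. Your proposed repair for the general case---``projecting to the zero loci''---has no content here: there is no canonical retraction from a neighborhood in $U_x^{(1)}$ onto $s_x^{-1}(0)$, and the identification $w_n\sim_{\mathcal{K}} w'_n$ supplies no such map.

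The paper closes this gap by replacing the single precompact shrinking with a \emph{strong shrinking sequence} $\mathcal{G}(k)$ satisfying $\bigcap_k\overline{U_x(k)}\subset\overline{(s_x|_{U_x(1)})^{-1}(0)}$ (Definition \ref{STRONGSHRINKINGSEQ}, Lemma \ref{GSSE}). The Hausdorffness statement (Theorem \ref{HAUSIDSPACE}) is then proved by contradiction: if $(M(\mathcal{G}(k)),\mathcal{T}(\mathcal{G}(k),\mathcal{G}))$ fails to be Hausdorff for all $k$, one chooses non-separable pairs at each level; because these pairs lie in the ever-smaller sets $U_x(k_n)$, Lemma \ref{TOPOLOGYLEMMA} forces any subsequential limits to land in the zero sets, where topological matching (via Lemma \ref{IDENTIFY}) delivers the contradiction. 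The passage to a sequence of shrinkings is essential and cannot be collapsed to a single precompact step.
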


The theorem is the ultimate goal of this section. 

\subsection{Motivating examples}

We motivate the need for the strongly intersecting and identification-matching properties by discussions and examples.

In a good coordinate system $$(X, (S, \leq), \{C_x|_{U_x}\}_{x\in S}, \{C_y|_{U_y}\to C_x|_{U_x}\}_{y\leq x, X_y|_{U_y}\cap X_x|_{U_x}\not=\emptyset}),$$ by the definition, a coordinate change between two charts $C_y|_{U_y}$ and $C_x|_{U_x}$ exists if and only if $X_y|_{U_y}\cap X_x|_{U_x}\not=\emptyset$ for $y, x\in S$. This condition gives rise to two issues:
\begin{enumerate}[(a)]
\item We can have $C_z|_{U_z}\to C_y|_{U_y}$ and $C_y|_{U_y}\to C_x|_{U_x}$ in a good coordinate sysem such that one of them is created by inverting the restriction of a coordinate change in a Kuranishi structure, $U_{yz}\cap \phi_{yz}^{-1}(U_{xy})\not=\emptyset$ and $X_z|_{U_z}\cap X_x|_{U_x}=\emptyset$. So the composed coordinate change $\phi_{xy}\circ\phi_{yz}$ naturally arises but is not included in the data of coordinate changes. $\phi_{xy}\circ\phi_{yz}$ will play a role in identifying parts together, but it could be impossible to achieve the maximality condition for $\phi_{xy}\circ\phi_{yz}$ and satisfy the definition \ref{GCSCC} at the same time without further shrinking $U_x, x\in S$. We cannot induce $\phi_{xy}\circ\phi_{yz}$ from the Kuranishi structure giving rise to the good coordinate system either, because one of the $\phi_{yz}$ and $\phi_{xy}$ is obtained by inversion and does not exist in the Kuranishi structure.
\item When two charts from a good coordinate system intersect in $M\backslash X$, where $M$ is the identification space, we do not know whether there is effectively a direct coordinate change between the charts. Because nonzero points of sections from these two charts might be identified via coordinate changes from a common chart of lower order or to a common chart of higher order according to the order $\leq$, even though there is no direct coordinate changes between these two charts.
\end{enumerate}

An example to illustrate (a) and (b) is the following figure \ref{figure03}, and it will cause compatibility issue for further construction.

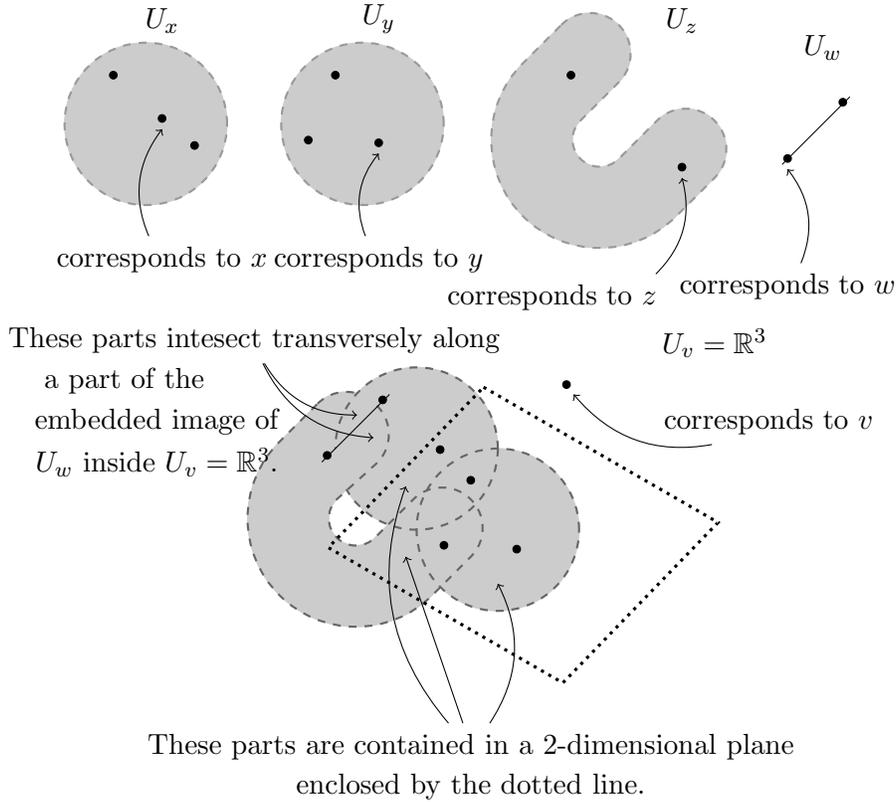
\begin{figure}[htb]
  \begin{center}

\begin{tikzpicture}[scale=36/50]
\hspace{0 cm}

\filldraw[color=black!20] (6,2) circle (1.5);
\draw[thick, dashed, color=black!40] (6,2) circle (1.5);
\filldraw[color=black!20] (10, 2) circle (1.5);
\draw[thick, dashed, color=black!40](10, 2) circle (1.5);

\filldraw (5.4,2.9) circle (2pt);
\filldraw (6.3,2.1) circle (2pt);
\path[->] node at (6.3, -0.5){corresponds to $x$} edge[bend left] (6.3, 1.9);
\filldraw (6.9,1.6) circle (2pt);

\filldraw (9.5, 2.9) circle (2pt);
\filldraw (9, 1.7) circle (2pt);
\filldraw (10.3, 1.65) circle (2pt);
\path[->] node at (10.3, -0.5){corresponds to $y$} edge[bend left] (10.3, 1.45);

\begin{scope}[shift={(0.5cm,-1cm)}]
\filldraw[color=black!20, rotate around={-45:(16,2)},](12,2) arc (180: 0: 0.75)  -- (13.5, 1) arc (180: 360: 0.5) -- (14.5, 2) arc (180:0:0.75) --(16,1) arc (0:-180:2)--(12,2);
\draw[thick, dashed, color=black!40][rotate around={-45:(16,2)},](12,2) arc (180: 0: 0.75)  -- (13.5, 1) arc (180: 360: 0.5) -- (14.5, 2) arc (180:0:0.75) --(16,1) arc (0:-180:2)--(12,2);
\end{scope}

\filldraw (13.85,2.9) circle (2pt);
\filldraw (15.9,1.2) circle (2pt);

\path[->] node at (13.5, -1.2){corresponds to $z$} edge[bend right] (15.9, 1);

\begin{scope}[shift={(8.5,5.5)}]
\draw (10.5, -3)--(9.25,-4.25);
\end{scope}

\filldraw (18.87,2.4) circle (2pt);
\filldraw (17.85,1.36) circle (2pt);
\path[->] node at (17.85, -1){corresponds to $w$} edge[bend right] (17.85, 1.16);

\node [above] at (6.3, 3.5){$U_x$};
\node [above] at (10.3, 3.5){$U_y$};
\node [above] at (15.9, 3.5){$U_z$};
\node [above] at (18.5, 3){$U_w$};

\node [above] at (16.5, -2.5){$U_v=\mathbb{R}^3$};

\path[->] node at (17.5, -3.5){corresponds to $v$} edge[bend left] (13.9, -3);

\filldraw[color=black!20] (11, -4) circle (1.5);

\filldraw[color=black!20] (12.5,-5.5) circle (1.5);

\begin{scope}[shift={(-4cm,-8cm)}]
\filldraw[color=black!20, rotate around={-45:(16,2)},](12,2) arc (180: 0: 0.75)  -- (13.5, 1) arc (180: 360: 0.5) -- (14.5, 2) arc (180:0:0.75) --(16,1) arc (0:-180:2)--(12,2);
\draw[thick, dashed, color=black!60, rotate around={-45:(16,2)}](12,2) arc (180: 118: 0.75)
(12.8,2.75) arc (90: 0: 0.75)  -- (13.5, 1) arc (180: 360: 0.5) -- (14.5, 2) arc (180:0:0.75) --(16,1) arc (0:-180:2)--(12,2);
\filldraw[rotate around={-45:(16,2)}] (12.8, 1.45) circle (2pt) (12.8, 2.9) circle (2pt) (15.5,1.8) circle (2pt) (15, 3) circle (2pt) (16.5, 2.7) circle (2pt) (14.2, 3) circle (2pt) (15, 5.5) circle (2pt);
\end{scope}
\draw[thick, dashed, color=black!60] (12.5, -4) arc (0: 180: 1.5) (12.5, -4) arc (0: -150: 1.5);

\draw[thick, dashed, color=black!60](12.5,-5.5) circle (1.5);
\draw (10.5, -3)--(9.25,-4.25);

\begin{scope}[shift={(4.2,2)}]
\draw[very thick, dotted, rotate around={-30:(12, -6)}] (8,-7) -- (13, -7)--(12,-11)--(7,-11)--(8,-7);
\end{scope}

\path[->] node (n) at (12, -9.5){These parts are contained in a 2-dimensional plane}
  edge (10.8, -6);
\node at (12,-10.25){enclosed by the dotted line.};
\path[->] (n) edge[bend left](10.8, -4.7);
\path[->] (n) edge[bend right] (12.5, -6.5);

\path[->] node (m) at (8, -2){These parts intesect transversely along} edge[bend right] (9.9, -3.38);
\path[->] (m) edge[bend right] (10.2, -3.8);
\node at (5.6,-2.75){a part of the};
\node at (6.2,-3.5){embedded image of};
\node at (6.2,-4.25){$U_{w}$ inside $U_v=\mathbb{R}^3$.};
\end{tikzpicture}

  \end{center}
\caption[An example to illustrate (a) and (b).]{An example to illustrate (a) and (b).}
\label{figure03}
\end{figure}

In the example in figure \ref{figure03}, we order points from $S:=\{x, y, z, w, v\}$ using the order of the dimensions of the bases. The points drawn in a given base are the points corresponding to the zeros of the section over that base. $X_z|_{U_z}\cap X_y|_{U_y}\not=\emptyset$, so there are coordinate changes between them in both ways. Similarly, $X_y|_{U_y}\cap X_x|_{U_x}\not=\emptyset$, so there are coordinate changes between them. On the other hand, $X_z|_{U_z}\cap X_x|_{U_x}=\emptyset$, so there is no coordinate change between $C_z|_{U_z}$ and $C_x|_{U_x}$ given in the data of the good coordinate system. However, there are effectively coordinate changes between $C_z|_{U_z}$ and $C_x|_{U_x}$ on a part of the right region of their intersection in $M(\mathcal{G})$ induced by $\phi_{xy}\circ\phi_{yz}$ and its inverse, and those naturally arise in the picture. We would need the maximality condition on this induced $\phi_{xz}$, but the domain $U_{xz}$ has to include a part of the 1-dimensional curve $\phi_{zw}(U_{zw})$ on the left region of the intersection, which is of course impossible: $U_{xz}$ then has to have a 1-dimensional component which is not open in $U_z$, and there is no way to have a direct coordinate change between $C_z|_{U_z}$ and $C_x|_{U_x}$ over this component of $U_{xz}$. This does not violate the maximality condition of the good coordinate system, since this coordinate change is not included in the data and therefore not restrained by the maximality condition. This explains issue (a), and the nature of this issue is similar to figure \ref{figure01}. We have seen that via $\phi_{xy}\circ\phi_{yz}$, parts of $C_z|_{U_z}$ and $C_x|_{U_x}$ are identified. Two more identifications occur via considering coordinate changes from both charts into $C_v|_{U_v}$ and coordinate changes from $C_w|_{U_w}$ into both charts. This explains issue (b).

Both issues (a) and (b) are resolved by the strongly intersecting property.

Another important observation is that the identification space $M(\mathcal{G})$ for the good coordinate system $\mathcal{G}$ for a Kuranishi structure $\mathcal{K}$ might not be the same as identification space $M(\mathcal{K})|_{\mathcal{G}}$ obtained from the identification space of the Kuranishi structure $\mathcal{K}$ by restricting to the part corresponding to the good coordinate system. See figure \ref{figure04}. This phenomenon must happen away from $X$, but can be arbitrarily near a point in $X$ (here we can describe the nearness in an ad hoc way using the topology of any single chart covering that point in $X$). This does not violate the maximality condition for the good coordinate system, as there are fewer coordinate changes in the good coordinate system by the definition.

\begin{figure}[htb]
  \begin{center}

\begin{tikzpicture}[scale=18/50]
\coordinate (P1) at (2,10);
\filldraw [color=black!20] (P1) circle (2);
\draw [thick, dashed, color=black!50] (P1) circle (2);
\filldraw (P1) circle (2pt);
\filldraw (3.5, 10) circle (2pt);
\coordinate[label=below: $V_x$] (V1) at (2,8); 
\node at (0,10) [left] {$\mathcal{K}$};

\coordinate (P2) at (7,10);
\filldraw [color=black!20] (P2) circle (2);
\draw [thick, dashed, color=black!50] (P2) circle (2);
\filldraw (P2) circle (2pt);
\filldraw (5.5, 10) circle (2pt);
\coordinate[label=below: $V_y$] (V2) at (7,8); 

\coordinate (P3) at (11,10);
\draw (10,10)--(12,10);
\filldraw (P3) circle (2pt);
\coordinate[label=below: $V_z$] (V3) at (11,9);

\coordinate (P4) at (2,3);
\filldraw [color=black!20] (P4) circle (2);
\filldraw (P4) circle (2pt);
\node at (0,3) [left] {$M(\mathcal{K})$};

\coordinate (P5) at (5,3);
\filldraw [color=black!20] (P5) circle (2 and 1);
\draw[thick, dashed, color=black!50] (3,3) arc (-180:128:2 and 1);
\draw[thick, dashed, color=black!50] (4,3) arc (0:335:2);
\filldraw (P5) circle (2pt);
\filldraw (3.5, 3) circle (2pt);

\coordinate (P6) at (3.5,3);
\draw (2.5,3)--(4.5,3);
\filldraw (P6) circle (2pt);

\coordinate (P7) at (18,6.5);
\filldraw [color=black!20] (P7) circle (2);
\draw[thick, dashed, color=black!50] (P7) circle (2);
\filldraw (19.5, 6.5) circle (2pt);
\filldraw [color=black!40] (P7) circle (0.8);
\draw [thick, dotted, color=black!70] (P7) circle (0.8);
\filldraw (P7) circle (2pt);
\path[->] node at (18, 3) {$U_x$} edge [bend left] (18, 6.2);

\coordinate (P8) at (23,6.5);
\filldraw [color=black!20] (P8) circle (2);
\draw [thick, dashed, color=black!50] (P8) circle (2);
\filldraw [color=black!40] (P8) circle (1.5);
\draw [thick, dotted, color=black!70] (P8) circle (1.5);
\filldraw (P8) circle (2pt);
\filldraw (21.5, 6.5) circle (2pt);
\path[->] node at (23, 3) {$U_y$} edge [bend right] (23, 6.0);

\node at (23,10) {$\mathcal{G}$ for $\mathcal{K}$};

\coordinate (P9) at (27,6.5);
\draw (26,6.5)--(28,6.5);
\filldraw (P9) circle (2pt);
\coordinate[label=below: ${U_z=V_z}$] (V9) at (27,5.5);

\node at (23, 1.5) {no coordinate change};

\coordinate (P10) at (18,-0.5);
\filldraw [color=black!40] (P10) circle (0.8);
\draw [thick, dotted, color=black!70] (P10) circle (0.8);
\filldraw (P10) circle (2pt);
\node at (16,-0.5) [left ]{$M(\mathcal{G})$};

\coordinate (P11) at (24,-0.5);
\filldraw [color=black!40] (P11) circle (1.5);
\draw [thick, dotted, color=black!70] (P11) circle (1.5);
\filldraw (P11) circle (2pt);

\coordinate (P12) at (20.5,-0.5);
\draw (19.5,-0.5)--(21.5,-0.5);
\filldraw (P12) circle (2pt);

\coordinate (P13) at (2,-5);
\filldraw [color=black!40] (P13) circle (0.8);
\draw [thick, dotted, color=black!70] (P13) circle (0.8);
\filldraw (P13) circle (2pt);
\node at (1,-5) [left] {$M(\mathcal{K})|_{\mathcal{G}}$};

\coordinate (P14) at (5,-5);
\filldraw [color=black!40] (P14) circle (1.5 and 0.75);
\draw [thick, dotted, color=black!70] (P14) circle (1.5 and 0.75);
\filldraw (P14) circle (2pt);

\coordinate (P15) at (3.5,-5);
\draw (2.5,-5)--(4.5,-5);
\filldraw (P15) circle (2pt);

\end{tikzpicture}

  \end{center}
\caption[Non-identification-matching.]{Non-identification-matching.}
\label{figure04}
\end{figure}
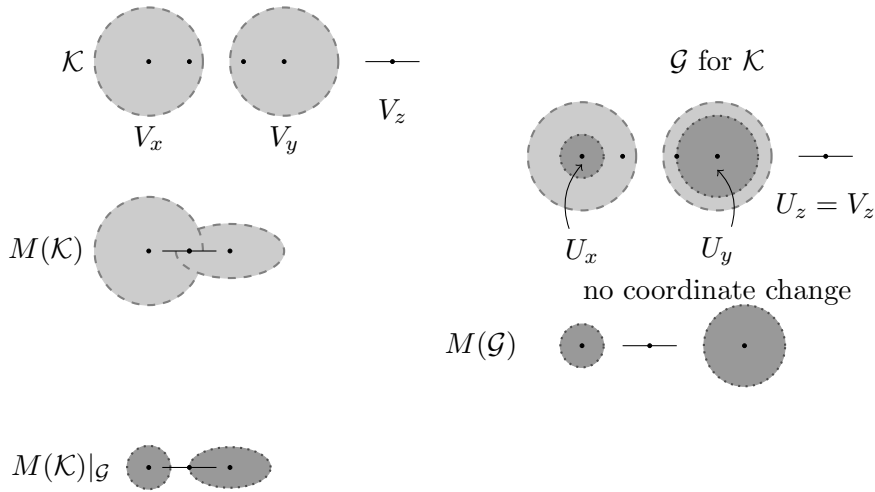

In the example in figure \ref{figure04}, the open regions depicted are the bases of Kuranishi charts, and inside each base the dots represent points that correspond to the zeros of the section over that base. We have coordinate changes $C_z\to C_x$ and $C_z\to C_y$ in the Kuranishi structure $\mathcal{K}$, since $z\in X_x\cap X_y$. But $X_x|_{U_x}=\{x\}$, $X_y|_{U_y}=\{y\}$ and $X_z|_{U_z}=\{z\}$ are disjoint, so $I(\mathcal{G})=\emptyset$ and there is no coordinate change in the good coordinate system $\mathcal{G}$. Therefore, the identification space $M(\mathcal{G})$ obtained by gluing bases via coordinate changes in $\mathcal{G}$ is just the disjoint union of those three bases. On the other hand, $M(\mathcal{K})|_{\mathcal{G}}$ is connected. The discrepancy of identification spaces occurs away from $X$ but is arbitrarily close to $z$. So in some sense, the good coordinate system $\mathcal{G}$ does not truly capture how $X$ is situated in the identification space $M(\mathcal{K})$ of the Kuranishi structure $\mathcal{K}$.

After resolving issues (a) and (b), we will be able to specify when we can reduce such the identification $\sim_{\mathcal{K}}$ to the identification via a single coordinate change in a good coordinate system, which is key to the understanding of how to obtain identification-matching and Hausdorffness.

\subsection{Strongly intersecting and Hausdorffness in each dimension}
Now we start to prove the Hausdorffness for each dimension through a sequence of results. First, we establish the strongly intersecting property that will resolve issues (a) and (b) simultaneously:

\begin{proposition}[strongly intersecting property]\label{COORDEXIST} Let $M(\mathcal{K})$ be the identification space (as a set) of the Kuranishi structure $\mathcal{K}$. Let $\mathcal{G}:=\{C_x|_{U_x}\}_{x\in S}$ be a good coordinate system obtained from $\mathcal{K}$ such that $X_x|_{U_x}$ is the interior of its closure $\overline{X_x|_{U_x}}$ taken in $X$, for all $x\in S$ (recall, by construction $X_x|_{U_x}=B_{r_x/2}(x)$). 

Then there exists a shrinking $\mathcal{G}'=\{C_x|_{U'_x}\}_{x\in S}$ of $\mathcal{G}$ in the sense of \ref{GCSSHRINKING}, such that for all $x\in S$, $x\in X_x|_{U'_x}=\text{interior}(\overline{X_x|_{U'_x}})\subset \overline{X_x|_{U'_x}}\subset X_x|_{U_x}$ (with the closure taken in $X$), and the following strongly intersecting property holds: If $\underline{U_x'}$ and $\underline{U_y'}$ for $x, y\in S$ intersect in $M(\mathcal{K})|_{\mathcal{G}'}$, then $X_x|_{U'_x}\cap X_y|_{U'_y}\not=\emptyset$ (so a direct coordinate change exists in $\mathcal{G}'$).
\end{proposition}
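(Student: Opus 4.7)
The plan is to shrink each chart $C_x|_{U_x}$ to a small invariant neighborhood of the preimage of a slightly smaller ball around $x$, controlled by two parameters $\rho_x$ (an $X$-coverage radius) and $\epsilon_x$ (a bundle-direction tolerance), and then to extract the strongly intersecting property by letting the $\epsilon_x$ shrink to $0$ along a diagonal sequence and invoking the topological matching condition of $\mathcal{K}$. The crucial feature of the construction is that the limit of any such shrinking sequence lies in a single compact set that is contained in \emph{every} approximation, which allows us to convert a $\sim_{\mathcal{K}}$-identification of limits into an actual point of $X_x|_{U'_x}\cap X_y|_{U'_y}$.

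For the construction, fix a $G_x$-invariant metric on each $V_x$. For each $x\in S$ choose $\rho_x\in(0, r_x/2)$ such that the balls $\{B_{\rho_x}(x)\}_{x\in S}$ still cover $X$, each $B_{\rho_x}(x)$ equals $\mathrm{interior}(\overline{B_{\rho_x}(x)})$, and the pairwise intersection pattern $X_y|_{U_y}\cap X_x|_{U_x}\neq\emptyset$ is preserved when $r_x/2$ is replaced by $\rho_x$; this is possible since $S$ is finite and each original coverage is an open ball. Let $K_x$ be the $G_x$-invariant preimage in $s_x^{-1}(0)$ of $\psi_x^{-1}(\overline{B_{\rho_x/2}(x)})$, which is compact and disjoint from $V_x\setminus U_x$. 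For $\epsilon_x>0$ small define
\[
U'_x:= N_{\epsilon_x}(K_x)\,\setminus\,\bigl(s_x^{-1}(0)\setminus\psi_x^{-1}(B_{\rho_x}(x))\bigr),
\]
a $G_x$-invariant open subset of $U_x$ containing $K_x$ and having $X_x|_{U'_x}$ sandwiched between $\overline{B_{\rho_x/2}(x)}$ and $B_{\rho_x}(x)$; a small adjustment of $\epsilon_x$ secures the interior-of-closure condition.

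For the strongly intersecting property I would argue by contradiction. Suppose for the uniform choice $\epsilon_x=1/n$ the property fails for arbitrarily large $n$. Since $S\times S$ is finite, pigeonhole gives a fixed pair $(x,y)$, necessarily with $x\neq y$ (otherwise the conclusion is immediate), and witnesses $v_n\in U'_{x,n}$, $w_n\in U'_{y,n}$ with $\underline{v_n}\sim_{\mathcal{K}}\underline{w_n}$ but $X_x|_{U'_{x,n}}\cap X_y|_{U'_{y,n}}=\emptyset$. By construction there are $v'_n\in K_x$ with $d_{V_x}(v_n,v'_n)<1/n$; compactness of $K_x$ lets me pass to a subsequence with $v'_n\to v'_\infty\in K_x$, hence $v_n\to v'_\infty$ and $\underline{v_n}\to\underline{v'_\infty}$ in $\underline{V_x}$. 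Similarly $\underline{w_n}\to\underline{w'_\infty}\in\underline{K_y}$ in $\underline{V_y}$. Both limits are zeros of the respective sections, so the topological matching condition of $\mathcal{K}$ yields $\underline{v'_\infty}\sim_{\mathcal{K}}\underline{w'_\infty}$, whence $\psi_x(\underline{v'_\infty})=\psi_y(\underline{w'_\infty})$ is a common point of $X$. Since $K_x\subset U'_{x,n}$ for \emph{every} $n$, this common point lies in $\psi_x(\underline{K_x})=\overline{B_{\rho_x/2}(x)}\subset X_x|_{U'_{x,n}}$ and symmetrically in $X_y|_{U'_{y,n}}$, contradicting the emptiness assumption. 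Taking $n$ beyond the finitely many indices at which any individual pair could fail, the diagonal choice $\epsilon_x=1/n$ gives a shrinking with the strongly intersecting property.

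The remaining verifications that $\mathcal{G}'=\{C_x|_{U'_x}\}_{x\in S}$ is a good coordinate system with $I(\mathcal{G}')=I(\mathcal{G})$ are routine: the preservation of pairwise coverage intersections secures $I(\mathcal{G}')=I(\mathcal{G})$, coordinate changes are defined by restriction as in Definition \ref{CCRESTRICTED}, and the compatibility, maximality, and topological matching conditions for $\mathcal{G}'$ follow directly from those for $\mathcal{G}$ (or equivalently $\mathcal{K}$, cf.\ Remark \ref{SAMEWAVE}) since every identification in $\mathcal{G}'$ is a restriction of one in $\mathcal{G}$. The principal obstacle is designing the construction so that the compact set $K_x$ is the \emph{same} compact set for every $\epsilon_x$-shrinking (so that the limit $v'_\infty$ continues to witness a point of $X_x|_{U'_{x,n}}$ for all $n$), after which the key step is an application of the topological matching condition; once this is in place the rest is bookkeeping.
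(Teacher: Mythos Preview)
Your argument is correct and follows essentially the same route as the paper: construct a one-parameter family of shrinkings collapsing onto fixed compacta in the zero sets, assume the strongly intersecting property fails along a subsequence, pigeonhole on the finite index set $S$ to fix a pair $(x,y)$, use compactness to extract limits that are zeros, and then invoke the topological matching condition of $\mathcal{K}$ to produce a common point of $X$ lying in both coverages, contradicting the assumed emptiness.

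The only real difference is packaging. The paper first sets up an abstract notion of a \emph{good shrinking sequence} (with the auxiliary property that disjoint coverages have disjoint closures) and a separate topology lemma, because this framework is reused verbatim for the Hausdorffness proof in the next subsection; your concrete $\epsilon$-neighborhood construction around the compacta $K_x$ accomplishes the same thing in one shot. Your observation that $K_x\subset U'_{x,n}$ for \emph{every} $n$ plays exactly the role of the paper's fixed closure $\overline{X_x|_{U_x(1)}}$ in locating the limit point inside every coverage. One small point to tighten: to guarantee $I(\mathcal{G}')=I(\mathcal{G})$ you need the intersection pattern preserved by the sets $\overline{B_{\rho_x/2}(x)}$ (which are contained in every $X_x|_{U'_{x,n}}$), not merely by $B_{\rho_x}(x)$; and the ``small adjustment of $\epsilon_x$'' for the interior-of-closure condition deserves a line of justification. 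Both are routine.
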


\begin{remark} In stating proposition \ref{COORDEXIST}, we need $X_x|_{U_x}$ to be the interior of its closure $\overline{X_x|_{U_x}}$, because it is a property readily available, and it also allows us to easily choose an invariant subset $U_x(1)\subset U_x$ which inherits this property and also satisfies $\overline{X_x|_{U_x(1)}}\subset X_x|_{U_x}$.
\end{remark}

We will prove this proposition and some results below constructively by considering special sequences of shrinkings:

\begin{definition}\label{STRONGSHRINKINGSEQ}
Suppose that a good coordinate system $\mathcal{G}:=\{C_x|_{U_x}\}_{x\in S}$ is constructed from a Kuranishi structure such that $X_x|_{U_x}$ is the interior of its closure $\overline{X_x|_{U_x}}$ taken in $X$ for all $x\in S$.
\begin{enumerate}
\item A \emph{sequence of shrinkings} $\mathcal{G}(k):=\{C_x|_{U_x(k)}\}_{x\in S}, k\in \mathbb{N}$ of $\mathcal{G}$ is a collection of shrinkings of $\mathcal{G}$ in the sense of \ref{GCSSHRINKING} such that $\mathcal{G}(k+1)$ is also a shrinking of $\mathcal{G}(k)$ for all $k\in\mathbb{N}$. We also say $U_x(k), x\in S, k\in\mathbb{N}$ is a \emph{sequence of shrinkings} of $\{C_x|_{U_x}\}_{x\in S}$.
\item A shrinking $\{C_x|_{U'_x}\}_{x\in S}$ of $\mathcal{G}:=\{C_x|_{U_x}\}_{x\in S}$ is said to be \emph{precompact} in $\mathcal{G}$ if for all $x\in S$, $\overline{U'_x}$ with the closure taken in $U_x$ is compact in $U_x$ (so in particular $\overline{X_x|_{U'_x}}\subset X_x|_{U_x}$ where the closure of $X_x|_{U'_x}$ is taken in $X$) and $X_x|_{U'_x}=\text{interior}(\overline{X_x|_{U'_x}})$.
\item A \emph{good shrinking sequence} of $\mathcal{G}$ is a sequence of shrinkings $$U_x(k), x\in S, k\in\mathbb{N}$$ of $\mathcal{G}$ such that $\{C_x|_{U_x(1)}\}_{x\in S}$ is precompact in $\mathcal{G}$, $X_x|_{U_x(k+1)}=X_x|_{U_x(k)}$ for all $x\in S$ and all $k\in\mathbb{N}$, and $\bigcap_k \overline{U_x(k)}=\overline{(s_x|_{U_x(1)})^{-1}(0)}$.
\item A \emph{strong shrinking sequence} of $\mathcal{G}$ is a sequence of shrinkings $$U_x(k), x\in S, k\in \mathbb{N}$$ of $\mathcal{G}$ such that $\{C_{U_x(1)}\}_{x\in S}$ is precompact in $\mathcal{G}$, $\{C_x|_{U_x(k+1)}\}_{x\in S}$ is a precompact shrinking for $C_x|_{U_x(k)}, x\in S$ for all $k\in\mathbb{N}$ and $\bigcap_k \overline{U_x(k)}\subset \overline{(s_x|_{U_x(1)})^{-1}(0)}$.
\end{enumerate}
\end{definition}

\begin{remark}
\begin{enumerate}
\item In definition \ref{STRONGSHRINKINGSEQ}.(3), we need $$X_x|_{U_x(1)}=\text{interior}(\overline{X_x|_{U_x(1)}})$$ to obtain $U'_x$ with the property $X_x|_{U'_x}=\text{interior}(\overline{X_x|_{U'_x}})$ in the conclusion of proposition \ref{COORDEXIST}.
\item A subsequence of a good shrinking sequence is still a good shrinking sequence, and a subsequence of a strong shrinking sequence is still a strong shrinking sequence.
\end{enumerate}
\end{remark}

\begin{lemma}\label{GSSE} Let $\{C_x|_{U_x}\}_{x\in S}$ be a good coordinate system with $$X_x|_{U_x}=\text{interior}(\overline{X_x|_{U_x}}),$$ then a precompact shrinking $\{C_x|_{U_x(1)}\}_{x\in S}$ for $\{C_x|_{U_x}\}_{x\in S}$ always exists. Moreover, we can also achieve that if $X_x|_{U_x(1)}\cap X_y|_{U_y(1)}=\emptyset$, then $$\overline{X_x|_{U_x(1)}}\cap \overline{X_y|_{U_y(1)}}=\emptyset,$$ for all pairs $x, y\in S$. In particular, a good sequence of $\{C_x|_{U_x}\}_{x\in S}$ always exists, and a strong sequence of $\{C_x|_{U_x}\}_{x\in S}$ always exists.
\end{lemma}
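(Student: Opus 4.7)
The plan is to first carry out the shrinking on the level of coverages in $X$ (a compact metrizable, hence normal, space), then lift it to the bases $U_x$. Set $V_x:=X_x|_{U_x}\subset X$. Since $\{V_x\}_{x\in S}$ is a finite open cover of $X$, for every ordered pair $(x,y)$ with $V_x\cap V_y\neq\emptyset$ choose once and for all a witness point $p_{xy}=p_{yx}\in V_x\cap V_y$, and let $K_x:=\{x\}\cup\{p_{xy}:y\in S,\,V_x\cap V_y\neq\emptyset\}$, which is a finite (hence compact) subset of $V_x$. I then invoke the standard shrinking lemma for finite open covers of a normal space, strengthened in the usual inductive way: processing the indices $x\in S$ one at a time, at step $x$ I let $F_x:=X\setminus(\bigcup_{j<x}W_j\cup\bigcup_{j>x}V_j)$ (closed in $V_x$) and by normality produce an open $W_x$ with $K_x\cup F_x\subset W_x\subset\overline{W_x}\subset V_x$; this keeps $\{W_j\}_{j\leq x}\cup\{V_j\}_{j>x}$ a cover throughout the induction, so $\{W_x\}_{x\in S}$ is an open cover of $X$ with $\overline{W_x}\subset V_x$, $x\in W_x$, and $p_{xy}\in W_x\cap W_y$ whenever $V_x\cap V_y\neq\emptyset$. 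Finally I replace $W_x$ by $W_x':=\mathrm{interior}(\overline{W_x})$; one checks directly that $W_x\subset W_x'$ and $W_x'=\mathrm{interior}(\overline{W_x'})$, while $\overline{W_x'}\subset\overline{W_x}\subset V_x$ still holds, so all the previously established properties persist.

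With $W_x'$ in hand, observe that $\overline{W_x'}$ is a compact subset of $V_x$; via the homeomorphism $\psi_x$ its preimage $\psi_x^{-1}(\overline{W_x'})$ is a compact subset of $\underline{s_x^{-1}(0)}$, so $K_x^{\mathrm{lift}}:=\mathrm{quot}_x^{-1}(\psi_x^{-1}(\overline{W_x'}))$ is a $G_x$-invariant compact subset of $s_x^{-1}(0)\subset U_x$. Using a $G_x$-invariant metric on the manifold $U_x$ I choose a $G_x$-invariant open neighborhood $\widetilde W_x$ of $K_x^{\mathrm{lift}}$ whose closure in $U_x$ is compact, and then set
\[
U_x(1):=\widetilde W_x\setminus\bigl(s_x^{-1}(0)\setminus \mathrm{quot}_x^{-1}(\psi_x^{-1}(W_x'))\bigr).
\]
The removed set is closed in $U_x$ (being a closed subset of the closed set $s_x^{-1}(0)$), so $U_x(1)$ is $G_x$-invariant open, and by construction $X_x|_{U_x(1)}=W_x'$ while $\overline{U_x(1)}\subset\overline{\widetilde W_x}$ is compact in $U_x$.

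It remains to verify the properties claimed. Precompactness of $\{C_x|_{U_x(1)}\}$ in $\mathcal{G}$ and the regular-open property $X_x|_{U_x(1)}=\mathrm{interior}(\overline{X_x|_{U_x(1)}})$ are immediate from the construction, and the coordinate changes for $\mathcal{G}(1):=\{C_x|_{U_x(1)}\}_{x\in S}$ are obtained by restriction in the sense of \ref{CCRESTRICTED}. The intersection pattern is preserved: if $V_x\cap V_y\neq\emptyset$, then $p_{xy}\in W_x'\cap W_y'=X_x|_{U_x(1)}\cap X_y|_{U_y(1)}$; conversely, $X_x|_{U_x(1)}\subset V_x$ gives the trivial direction. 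The disjoint-closure property follows automatically, since $X_x|_{U_x(1)}\cap X_y|_{U_y(1)}=\emptyset$ forces $V_x\cap V_y=\emptyset$ and then $\overline{X_x|_{U_x(1)}}\cap\overline{X_y|_{U_y(1)}}\subset V_x\cap V_y=\emptyset$ (using $\overline{W_x'}\subset V_x$ and the fact that disjoint open sets have disjoint closures when each closure lies inside the corresponding open set).

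For the last sentence I iterate. For a \emph{good} shrinking sequence I keep the coverage fixed, $X_x|_{U_x(k+1)}=X_x|_{U_x(k)}$, and shrink only transversally to $s_x^{-1}(0)$: having built $U_x(k)$, choose a $G_x$-invariant open neighborhood $U_x(k+1)$ of $s_x^{-1}(0)\cap U_x(k)$ in $U_x(k)$ whose closure is compact in $U_x(k)$ and whose ``thickness'' around the zero set shrinks to zero as $k\to\infty$ (e.g.\ using a $G_x$-invariant metric and an $\varepsilon_k$-tubular neighborhood with $\varepsilon_k\to 0$); this yields $\bigcap_k\overline{U_x(k)}=\overline{(s_x|_{U_x(1)})^{-1}(0)}$. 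A \emph{strong} shrinking sequence is obtained by the same inductive procedure, applying the base-level shrinking construction of the first two paragraphs at each step to $\mathcal{G}(k)$ in place of $\mathcal{G}$, and arranging the inductive parameters so that $\bigcap_k\overline{U_x(k)}\subset\overline{(s_x|_{U_x(1)})^{-1}(0)}$. The main technical point throughout is the simultaneous preservation of intersection patterns \emph{and} of the regular-open property; this is the step I expect to be the most delicate and is the reason for the witness-point modification of the standard shrinking lemma above.
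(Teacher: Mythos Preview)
Your proof is correct and follows essentially the same approach as the paper: shrink the coverages in $X$ first (taking interiors of closures to get regular-open sets), then lift to the bases by removing extraneous zeros from a precompact invariant neighborhood of the compact zero-set preimage, and finally iterate for the good/strong sequences. The one notable difference is that you give an explicit construction (the witness-point modification of the shrinking lemma) for the intersection-preserving shrinking on $X$, whereas the paper simply asserts that such $Y_x$ exist and proceeds; your argument is a clean way to fill that small gap.
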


\begin{proof}
\begin{enumerate}
\item Choose a $G_x$-invariant Riemannian metric on $U_x$, which induces a metric\footnote{The metric is defined using the infimum of lengths of paths connecting two points that are connected, and $\infty$ if two points are in different connected components.} with the metric topology compatible with the manifold topology of $U_x$.

For each $x\in S$, we can choose an open subset $Y_x$ of $X$ such that $x\in Y_x\subset\overline{Y_x}\subset X_x|_{U_x}$, $X=\bigcup_{x\in S} Y_x$ and whenever $X_x|_{U_x}\cap X_y|_{U_y}\not=\emptyset$, we have $Y_x^o\cap Y_y^o\not=\emptyset$, where $Y_x^o:=\text{interior}(\overline{Y_x})$.

Then we have:
\begin{enumerate}[(i)]
\item $X_x|_{U_x}\cap X_y|_{U_y}\not=\emptyset$ if and only if $Y_x^o\cap Y_y^o\not=\emptyset$; and 
\item $Y^o_x\cap Y^o_y=\emptyset$ implies $\overline{Y_x}\cap \overline{Y_y}=\emptyset$.
\end{enumerate}

Indeed, the other direction in (i) is trivial. (ii) follows from (i), because if $\overline{Y_x}\cap\overline{Y_y}\not=\emptyset$, then intersection of bigger sets is not empty: $X_x|_{U_x}\cap X_y|_{U_y}\not=\emptyset$, which implies $Y^o_x\cap Y^o_y\not=\emptyset$.

Denote $quot_x: U_x\to \underline{U_x}$, and $Z_x:=quot_x^{-1}(\psi_x^{-1}(Y_x^o))\subset U_x$. Since $\overline{Y_x}$ is compact in $X_x|_{U_x}$, $\overline{Z_x}$ is a compact subset of $U_x$. 

For all $u\in \overline{Z_x}$, choose an open metric ball $B_{r_u}(u)\subset U_x$ with $r_u$ small enough such that the closure $\overline{B_{r_u}(u)}$ is compact in $U_x$ and $r_{gu}=r_u$ for all $u\in \overline{Z_x}, g\in G_x$. Since the metric is induced from a $G_x$-invariant Riemannian metric and $u\mapsto r_u$ is chosen to be $G_x$-invariant, we have $g(B_{r_u}(u))=B_{r_{gu}}(gu)$, for all $g\in G_x$. Define $O_{\underline{u}}:=\bigcup_{u\in\underline{u}} B_{r_u}(u)$, so $O_{\underline{u}}$ is invariant and its closure is compact in $U_x$. Consider $O_{\underline{u}}, \underline{u}\in \underline{\overline{Z_x}}=\psi_x^{-1}(\overline{Y_x})$ a cover for the compact $\overline{Z_x}$ where each open set in the cover is $G_x$-invariant, we can choose a finite cover indexed by $T_x$. Define $\hat U_x(1):=\bigcup_{\underline{u}\in T_x} O_{\underline{u}}$ and its closure is compact in $U_x$. Define $U_x(1):=\hat U_x(1)\backslash ((s_x|_{\hat U_x(1)})^{-1}(0)\backslash Z_x)$. Then $\overline{U_x(1)}\subset\overline{\hat U_x(1)}$ is compact in $U_x$, and $X_x|_{U_x(1)}=Y_x^o$. By (i) above, $\{C_x|_{U(1)_x}\}_{x\in S}$ is a shrinking of $\{C_x|_{U_x}\}_{x\in S}$ by definition \ref{GCSSHRINKING}, and (ii) implies the extra property for $U_x(1), x\in S$ in the conclusion.

\item For a good shrinking sequence: Use the metric on $U_x(1)$ induced from the one chosen for $U_x$ in part (i). For $w\in (s_x|_{U_x(1)})^{-1}(0)$, we can choose an open ball $B_{r_w}(w)\subset U_x(1)$, where the choice $w\mapsto r_w$ is $G_x$-invariant. For $k\geq 2$, define $$U_x(k):=\bigcup_{w\in (s_x|_{U_x(1)})^{-1}(0)}B_{r_w/k}(w).$$

\item For a strong shrinking sequence: The part (1) gives us a precompact shrinking $U_x(1), x\in S$ of $\{C_x|_{U_x}\}_{x\in S}$. We then repeat it with $\{C_x|_{U_x(1)}\}_{x\in S}$ in the place of $\{C_x|_{U_x}\}_{x\in S}$, using the metric on $U_x(1)$ induced from the one chosen for $U_x$ in part (i), and using some $r^{(2)}_u$ in the place of $r_u$ with the additional property that $r^{(2)}_u\leq r_u/2$ whenever $r^{(2)}_u$ is defined. This yields $U_x(2), x\in S$. In general, at the $(n+1)$th step, we yield $U_x(n+1), x\in S$ using $\{C_x|_{U_x(n)}\}_{x\in S}$ and $r^{(n+1)}_u\leq r^{(n)}_u/2$ whenever $r^{(n+1)}_u$ is defined, for all $n\in\mathbb{N}$.
\end{enumerate}
\end{proof}

\begin{lemma}\label{TOPOLOGYLEMMA} Suppose we have an open set $U$, a compact subset $Y\subset U$, and a sequence of open subsets $U_k\subset U$, with $U_{k+1}\subset U_k$, $\overline{U_1}$ compact in $U$, and $\bigcap_k \overline{U_k}\subset Y$, where the closures are taken in $U$. Suppose we choose a point $y_k$ in each $U_k$, then the sequence $y_k$ has a convergent subsequence converging to a point in $Y$. (For example, we will apply this to $U=U_x$, $U_k=U_x(k)$, and $Y=\overline{(s_x|_{U_x(1)})^{-1}(0)}$.)
\end{lemma}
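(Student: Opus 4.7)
The plan is to exploit the compactness of $\overline{U_1}$ to extract a subsequential limit, and then use the nestedness of the $\overline{U_k}$ together with the hypothesis $\bigcap_k \overline{U_k}\subset Y$ to locate the limit inside $Y$. This is a purely point-set topological argument; no differential-geometric or Kuranishi-specific input is needed at this stage.

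First I would observe that since $U_k \subset U_{k-1}\subset\cdots\subset U_1$, every $y_k$ lies in $U_1$ and hence in the compact set $\overline{U_1}$. Because $\overline{U_1}$ is compact in $U$, it is in particular sequentially compact (the ambient $U$ is a manifold, so metrizable, but one could equally appeal to a metric chosen on $U$). Thus the sequence $\{y_k\}_{k\in\mathbb{N}}$ admits a subsequence $y_{k_j}$ converging to some point $y^\ast\in \overline{U_1}\subset U$.

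Next I would show that $y^\ast\in \overline{U_k}$ for every fixed $k$. Indeed, for any fixed $k$, the condition $k_j\geq k$ holds for all $j$ sufficiently large, and then $y_{k_j}\in U_{k_j}\subset U_k \subset \overline{U_k}$. Since $\overline{U_k}$ is closed in $U$ and $y_{k_j}\to y^\ast$ in $U$, we conclude $y^\ast\in \overline{U_k}$. As $k$ was arbitrary, $y^\ast\in \bigcap_k \overline{U_k}$, and the hypothesis $\bigcap_k \overline{U_k}\subset Y$ finishes the argument.

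There is essentially no obstacle; the only point worth checking carefully is that the closures $\overline{U_k}$, which are taken in $U$, really do contain the limit $y^\ast$ taken in $U$. This is immediate from the fact that $\overline{U_1}$ is already compact (hence closed) in $U$, so all the nested closures $\overline{U_k}\subset\overline{U_1}$ are closed subsets of $U$, and convergence in $U$ respects them. No subtlety about closures in different ambient spaces arises.
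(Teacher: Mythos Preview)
Your proof is correct and follows essentially the same approach as the paper: extract a subsequential limit in the compact $\overline{U_1}$, then use nestedness and closedness of the $\overline{U_k}$ to place the limit in $\bigcap_k\overline{U_k}\subset Y$. The only difference is that you spell out the metrizability and the closure-in-$U$ point a bit more explicitly than the paper does.
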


\begin{proof}
The sequence $y_k$ in the compact $\overline{U_1}$ has a subsequnece $y_{k_j}, j\in\mathbb{N}$ converging to $y\in \overline{U_1}$. Since we definitely have $y_{k_j}, j\geq i$ lying in the closed $\overline{U_i}$, so $y\in \overline{U_i}$. So $y\in \bigcap_i\overline{U_i}\subset Y$.
\end{proof}

\begin{proof} (of proposition \ref{COORDEXIST} on the strongly intersecting property)

Applying lemma \ref{GSSE}, we can choose a good shrinking sequence $$U_x(k), x\in S, k\in\mathbb{N},$$ with the property that if $X_x|_{U_x(1)}\cap X_y|_{U_y(1)}=\emptyset$, $\overline{X_x|_{U_x(1)}}\cap \overline{X_y|_{U_y(1)}}=\emptyset$. We claim that for large enough $k$, $C_x|_{U_x(k)}, x\in S$ is the desired good coordinate system shrinking in the conclusion of proposition \ref{COORDEXIST}.

Suppose the claim is not true. Then, we have a subsequence $k_1<k_2<\cdots\to\infty$ such that for all $n\in \mathbb{N}$, there exists $x(k_n), y(k_n)\in S$, $w(k_n)\in U_{x(k_n)}(k_n)$ and $z(k_n)\in U_{y(k_n)}(k_n)$ with $\underline{w(k_n)}\sim_{\mathcal{K}} \underline{z(k_n)}$ but $X_{x(k_n)}|_{U_{x(k_n)}(k_n)}\cap X_{y(k_n)}|_{U_{y(k_n)}(k_n)}=\emptyset$. Then by a part of the definition of the good shrinking sequence being precompact, $$X_{x(k_n)}|_{U_{x(k_n)}(1)}\cap X_{y(k_n)}|_{U_{y(k_n)}(1)}=X_{x(k_n)}|_{U_{x(k_n)}(k_n)}\cap X_{y(k_n)}|_{U_{y(k_n)}(k_n)}=\emptyset,$$ therefore $\overline{X_{x(k_n)}|_{U_{x(k_n)}(1)}}\cap \overline{X_{y(k_n)}|_{U_{y(k_n)}(1)}}=\emptyset$.

Since $S$ is finite, there exists a subsequence $k'_n, n\in\mathbb{N}$ of $k_n, n\in \mathbb{N}$ such that $x(k'_n)=x$ and $y(k'_n)=y$ for some fixed $x, y\in S$ indepedent of $n$. Using the properties of the good shrinking sequence $$\{U_x(1)\}_{x\in S}, \{U_x(k_1')\}_{s\in S}, \{U_x(k_2')\}_{s\in S}, \cdots$$ and applying lemma \ref{TOPOLOGYLEMMA}, we have, for a subsequence $k_n''$ of $k_n'$, that $w(k_n'')$ converges in $U_x$ to $w\in \overline{(s_x|_{U_x(1)})^{-1}(0)}$ and $z(k_n'')$ converges in $U_y$ to $z\in \overline{(s_y|_{U_y(1)})^{-1}(0)}$.

In summary there exist a subsequence $\{k_n''\}_{n\in\mathbb{N}}$ and $x, y\in S$ such that
\begin{enumerate}
\item $\overline{X_x|_{U_x(1)}}\cap \overline{X_y|_{U_y(1)}}=\emptyset$, 
\item $\underline{U_{x}(k_n'')}$ and $\underline{U_y(k_n'')}$ intersect in $M(\mathcal{K})$, namely, they contain $\underline{w(k_n'')}$ and $\underline{z(k_n'')}$ respectively with $\underline{w(k_n'')}\sim_{\mathcal{K}} \underline{z(k_n'')}$,
\item $w(k_n'')\to w$ in $U_x$ and $z(k_n'')\to z$ in $U_y$, such that $$\underline{w}\in \underline{\overline{(s_x|_{U_x(1)})^{-1}(0)}}\subset \underline{(s_x|_{U_x})^{-1}(0)},\;\; \underline{z}\in \underline{\overline{(s_y|_{U_y(1)})^{-1}(0)}}\subset \underline{(s_y|_{U_y})^{-1}(0)}.$$
\end{enumerate}

By the topological matching condition of $\mathcal{K}$, $\underline{w}\sim \underline{z}$. Therefore, $$\psi_{x}(\underline{w})=\psi_{y}(\underline{z})\in \overline{X_x|_{U_x(1)}}\cap \overline{X_y|_{U_y(1)}}\not=\emptyset,$$ a contradiction to item (1) above.
\end{proof}

We have established proposition \ref{COORDEXIST} and obtained a strongly intersecting good coordinate system shrinking $\mathcal{G}'$. Now we can rule out issue (a) for $\mathcal{G}'$:

\begin{corollary} For a strongly intersecting good coordinate system $$\mathcal{G}':=(X, \{C_x|_{U_x'}\}_{x\in S}, \{C_y|_{U'_y}\to C_x|_{U'_x}\}_{(y,x)\in I(\mathcal{G}')}),$$ the set of coordinate changes indexed by $I(\mathcal{G}')$ is closed under the composition. Namely, if $(z, y), (y, x)\in I(\mathcal{G}')$ such that $$(\phi_{yz}, \hat\phi_{yz}, U'_{yz})\text{ and }(\phi_{xy}, \hat\phi_{xy}, U'_{xy})$$ are composable ($U'_{yz}\cap \phi_{yz}^{-1}(U'_{xy})\not=\emptyset$), then $(z, x)\in I(\mathcal{G}')$, so the direct coordinate change is already included in the data (with the maximality condition satisfied).
\end{corollary}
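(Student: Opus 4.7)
The plan is to verify the two defining conditions for a pair $(z,x)$ to lie in $I(\mathcal{G}')$: namely $z\leq x$ in the non-antisymmetric total order, and $\tilde X_z|_{U'_z}\cap \tilde X_x|_{U'_x}\neq\emptyset$. Since $(z,y),(y,x)\in I(\mathcal{G}')$, we already have $z\leq y$ and $y\leq x$, so transitivity of $\leq$ gives $z\leq x$ immediately; the whole content lies in establishing the nonempty intersection of coverages.

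To get the intersection of coverages, first I would pick a witness $w$ in the (nonempty by hypothesis) composability locus $U'_{yz}\cap\phi_{yz}^{-1}(U'_{xy})\subset U'_z$. Then $w\in U'_z$, $\phi_{yz}(w)\in U'_y$, and $\phi_{xy}(\phi_{yz}(w))\in U'_x$ are all well defined, and each of the two consecutive identifications $\underline{w}\sim\underline{\phi_{yz}(w)}\sim\underline{\phi_{xy}(\phi_{yz}(w))}$ is effected by quotiented coordinate changes of $\mathcal{G}'$. Because every coordinate change in a good coordinate system for $\mathcal{K}$ is either a restriction of a coordinate change of $\mathcal{K}$ or an inverse of such a restriction (Definition 1.28), each of these elementary identifications is also an $\sim_{\mathcal{K}}$-identification. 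Chaining them gives $\underline{w}\sim_{\mathcal{K}}\underline{\phi_{xy}(\phi_{yz}(w))}$, which shows that $\underline{U'_z}$ and $\underline{U'_x}$ meet inside $M(\mathcal{K})|_{\mathcal{G}'}$.

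Now I would invoke the strongly intersecting property of $\mathcal{G}'$ (Definition 1.32 (2)), which takes exactly that nonempty intersection in $M(\mathcal{K})|_{\mathcal{G}'}$ as hypothesis and delivers $\tilde X_z|_{U'_z}\cap \tilde X_x|_{U'_x}\neq\emptyset$ as conclusion. Combined with $z\leq x$, this is precisely the defining condition for $(z,x)\in I(\mathcal{G}')$, so the direct coordinate change $C_z|_{U'_z}\to C_x|_{U'_x}$ is already part of the data of $\mathcal{G}'$, and the maximality condition for it is automatic because it is built into the very definition of a good coordinate system (Definition 1.28, condition (3)(b)).

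I do not anticipate a real obstacle here: the statement is essentially a bookkeeping consequence, and the only non-trivial input is the strongly intersecting property, which has just been established in Proposition (on strongly intersecting property). The one point that merits care is making sure that the identifications used to witness the intersection in $M(\mathcal{K})|_{\mathcal{G}'}$ genuinely come from $\sim_{\mathcal{K}}$ (so that the hypothesis of strong intersection applies), which is guaranteed by Remark 1.30 (II) together with the structural description of coordinate changes of $\mathcal{G}'$ in Definition 1.28 (2).
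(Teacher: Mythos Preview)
Your proof is correct and follows essentially the same approach as the paper: pick a witness in the composability locus to see that $\underline{U'_z}$ and $\underline{U'_x}$ intersect in $M(\mathcal{K})$, invoke the strongly intersecting property to obtain nonempty overlap of coverages, and combine with transitivity of $\leq$ to conclude $(z,x)\in I(\mathcal{G}')$. The paper's version is more terse (it simply asserts the intersection in $M(\mathcal{K})$ from composability without spelling out the witness), but your added care in justifying that the identifications are genuine $\sim_{\mathcal{K}}$-identifications is appropriate and harmless.
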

\begin{proof} Since $U'_{yz}\cap \phi_{yz}^{-1}(U'_{xy})\not=\emptyset$, $U'_z$ and $U'_x$ intersect in $M(\mathcal{K})$. Then by the strongly intersecting property, we have $X_z|_{U_z'}\cap X_x|_{U'_x}\not=\emptyset$. By transitivity, $z\leq x$. Therefore, $(z,x)\in I(\mathcal{G}')$.
\end{proof}

We have the following stronger condition equivalent to the existence condition for coordinate changes in $\{C_x|_{U'_x}\}_{x\in S}$, which answers issue (b):

\begin{corollary}\label{GCSEQUIVCCEXIST} Suppose $\mathcal{G}':=\{C_x|_{U_x'}\}_{x\in S}$ is a strongly intersecting good coordinate system for a Kuranishi structure $\mathcal{K}$. Then $\underline{U'_x}$ and $\underline{U'_y}$ for $x, y\in S$ intersect in $M(\mathcal{K})$ if and only if there is already a coordinate change given in the data of $\mathcal{G}'$ between charts $C_x|_{U'_x}$ and $C_y|_{U'_y}$ in the direction dictated by the order $\leq$.
\end{corollary}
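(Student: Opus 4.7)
The plan is to establish the two directions separately, with each following essentially from unpacking a definition together with one previously proved property.

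For the nontrivial direction ($\Rightarrow$), suppose $\underline{U'_x}$ and $\underline{U'_y}$ intersect in $M(\mathcal{K})$. By the strongly intersecting property (Proposition \ref{COORDEXIST}) applied to $\mathcal{G}'$, this forces $X_x|_{U'_x}\cap X_y|_{U'_y}\neq\emptyset$. Without loss of generality assume $y\leq x$ (the order $\leq$ is non-antisymmetric total, so at least one of $y\leq x$ or $x\leq y$ holds). Then $(y,x)\in I(\mathcal{G}')$ by the very definition of the index set, and the collection of coordinate changes of $\mathcal{G}'$ provides a coordinate change $C_y|_{U'_y}\to C_x|_{U'_x}$ in the direction dictated by $\leq$. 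This is exactly the desired conclusion.

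For the reverse direction ($\Leftarrow$), suppose the data of $\mathcal{G}'$ contains a coordinate change between $C_x|_{U'_x}$ and $C_y|_{U'_y}$; say $(\phi_{xy},\hat\phi_{xy},U'_{xy})$ going from $C_y|_{U'_y}$ to $C_x|_{U'_x}$ (the other direction is symmetric). By Definition \ref{GCSCC} and Remark \ref{TRIVIALFACT}, the domain $U'_{xy}$ is non-empty: we have $\psi_y^{-1}(X_y|_{U'_y}\cap X_x|_{U'_x})\subset\underline{U'_{xy}}$ and moreover $X_y|_{U'_y}\cap X_x|_{U'_x}\neq\emptyset$ (since $(y,x)\in I(\mathcal{G}')$). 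Pick any $z\in U'_{xy}$; then $\underline{z}\in\underline{U'_y}$ and $\underline{\phi_{xy}(z)}\in\underline{U'_x}$ are identified by the quotiented coordinate change $\underline{\phi_{xy}}$, hence $\iota_y(\underline{z})=\iota_x(\underline{\phi_{xy}(z)})$ in $M(\mathcal{K})$, so $\underline{U'_x}$ and $\underline{U'_y}$ intersect in $M(\mathcal{K})$.

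Since both directions are essentially immediate consequences of Proposition \ref{COORDEXIST} and the definitions of good coordinate system and coordinate change, there is no real obstacle; the only subtle point is making sure to invoke the strongly intersecting property in the forward direction rather than trying to appeal to the maximality or topological matching conditions directly.
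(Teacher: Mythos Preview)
Your proof is correct and follows essentially the same route as the paper's own argument: the forward direction invokes the strongly intersecting hypothesis to get $X_x|_{U'_x}\cap X_y|_{U'_y}\neq\emptyset$ and then reads off the coordinate change from the definition of a good coordinate system, while the reverse direction just uses that the domain of any coordinate change is nonempty. The paper's proof is terser (it dismisses the reverse direction as ``trivial''), and one small point of presentation: the strongly intersecting property is simply the hypothesis here (Definition~\ref{STRONGLYINT}(2)), so you need not cite Proposition~\ref{COORDEXIST}, which is about \emph{constructing} such a shrinking rather than the property itself.
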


\begin{proof} If $\underline{U'_x}$ and $\underline{U'_y}$ for $x, y\in S$ intersect in $M$, then $X_x|_{U'_x}\cap X_y|_{U'_y}\not=\emptyset$. Then there is a coordinate change in the direction of $\leq$, by the definition of a good coordinate system.

The other direction is trivial, since by the definition, the domain of a coordinate change is non-empty.
\end{proof}

\begin{corollary}[Being strongly intersecting is identification-matching]\label{SETMATCHING} Let $\mathcal{K}$ be a Kuranishi structure with the identification space $M(\mathcal{K})$. We can obtain a strongly intersecting good coordinate system $\mathcal{G}$ by proposition \ref{COORDEXIST}, and we can define the identification space $M(\mathcal{G})$ by gluing using coordinate changes in $\mathcal{G}$. We can also use the inclusion of $\mathcal{G}$ into $\mathcal{K}$ to define the restriction $M(\mathcal{K})|_{\mathcal{G}}$. Then $M(\mathcal{G})=M(\mathcal{K})|_{\mathcal{G}}$ as sets. Namely, points from charts in $\mathcal{G}$ getting identified in $\mathcal{K}$ are already identified in $\mathcal{G}$ using coordinate changes in $\mathcal{G}$.
\end{corollary}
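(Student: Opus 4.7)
The plan is to show that the natural comparison map $\pi\colon M(\mathcal{G})\to M(\mathcal{K})|_{\mathcal{G}}$ is a bijection of sets. Well-definedness and surjectivity are essentially formal: by Definition \ref{GCSFORKS}, every coordinate change of $\mathcal{G}$ is (possibly after inversion) a restriction of a coordinate change in $\mathcal{K}$, so the $\mathcal{G}$-identification $\sim$ implies the $\mathcal{K}$-identification $\sim_{\mathcal{K}}$; and $M(\mathcal{K})|_{\mathcal{G}}$ is by construction the image of $\bigsqcup_{x\in S}\underline{U_x}$ in $M(\mathcal{K})$. The entire content therefore lies in showing injectivity: if two points of $\bigsqcup_{x\in S}\underline{U_x}$ are identified by $\sim_{\mathcal{K}}$, then they are already identified by a single direct coordinate change of $\mathcal{G}$.

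Suppose $z_{1}\in U_{x}$ and $z_{2}\in U_{y}$ satisfy $\underline{z_{1}}\sim_{\mathcal{K}}\underline{z_{2}}$. Then $\underline{U_{x}}$ and $\underline{U_{y}}$ share a point in $M(\mathcal{K})|_{\mathcal{G}}$, so by the strongly intersecting property (Proposition \ref{COORDEXIST}) we have $X_{x}|_{U_{x}}\cap X_{y}|_{U_{y}}\neq\emptyset$. The total order on $S$ picks a direction, so without loss of generality $y\leq x$, and by the definition of a good coordinate system there is a direct coordinate change $(\phi_{xy},\hat\phi_{xy},U_{xy})\colon C_{y}|_{U_{y}}\to C_{x}|_{U_{x}}$ in $\mathcal{G}$. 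According to Definition \ref{GCSFORKS}.(2), this coordinate change is either (a) restricted from a coordinate change $C_{y}\to C_{x}$ of $\mathcal{K}$ with some domain $V_{xy}$, or (b) the inverse of the restriction of a coordinate change $C_{x}\to C_{y}$ of $\mathcal{K}$ with some domain $V_{yx}$ (with equal dimensions and stabilizer sizes).

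In case (a), the maximality of $\mathcal{K}$ applied to $C_{y}\to C_{x}$ together with $\underline{z_{1}}\sim_{\mathcal{K}}\underline{z_{2}}$ gives $z_{1}\in V_{xy}$ and $\underline{\phi_{xy}(z_{1})}=\underline{z_{2}}$ in $\underline{V_{x}}$. Since $\underline{z_{2}}\in\underline{U_{x}}$ and $U_{x}$ is $G_{x}$-invariant, $\phi_{xy}(z_{1})\in U_{x}$, so $z_{1}\in\phi_{xy}^{-1}(U_{x})\cap U_{y}=U_{xy}$ and the restricted coordinate change of $\mathcal{G}$ directly identifies $\underline{z_{1}}$ with $\underline{z_{2}}$. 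In case (b), applying maximality of $\mathcal{K}$ to $C_{x}\to C_{y}$ yields $z_{2}\in V_{yx}$ and $\underline{\phi_{yx}(z_{2})}=\underline{z_{1}}$; the same $G$-invariance argument places $z_{2}$ in the restricted domain $U_{yx}=\phi_{yx}^{-1}(U_{y})\cap U_{x}$, hence $\phi_{yx}(z_{2})\in U_{xy}=\phi_{yx}(U_{yx})$, and $G_{y}$-invariance of $U_{xy}$ together with $\underline{\phi_{yx}(z_{2})}=\underline{z_{1}}$ forces $z_{1}\in U_{xy}$; then $\underline{\phi_{xy}(z_{1})}=\underline{z_{2}}$ follows since $\phi_{xy}=\phi_{yx}^{-1}$ on the restricted domain. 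The main obstacle is case (b): one needs to argue that the $\mathcal{K}$-maximality conclusion for the opposite direction descends through the restriction-and-inversion procedure of Definition \ref{GCSFORKS} into a statement about the $\mathcal{G}$-domain $U_{xy}$. This is a careful chase using the $G$-invariance of the restricted bases $U_{x},U_{y}$ together with the fact that, since the dimensions and group sizes agree, the original $\phi_{yx}$ is a genuine $G_{x}$--$G_{y}$ equivariant diffeomorphism onto its image with $(\phi_{yx})_{\ast}$ an isomorphism, so its inverse is still equivariant and the associated maximality translates cleanly.
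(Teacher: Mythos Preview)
Your proof is correct and follows essentially the same route as the paper's. The paper's proof is a single line (``Using the definitions and corollary \ref{GCSEQUIVCCEXIST}''), and what you have written is precisely the unpacking of that line: invoke the strongly intersecting property to produce a direct coordinate change in $\mathcal{G}$, then use the maximality condition of $\mathcal{K}$ (together with $G$-invariance of the restricted bases) to force the two points into the domain of that coordinate change. Your explicit treatment of case~(b), where the $\mathcal{G}$-coordinate change arises by inverting a restricted $\mathcal{K}$-coordinate change, is a detail the paper leaves implicit but which you have handled correctly.
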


\begin{proof} Using the definitions and corollary \ref{GCSEQUIVCCEXIST}.
\end{proof}

So far, we showed that:

\begin{theorem}\label{STRONGGCS} Let $\mathcal{K}:=\{C_x\}_{x\in X}$ be a Kuranishi structure. There exists a good coordinate system $\mathcal{G}:=\{C_x|_{U_x}\}_{x\in S}$ for $\mathcal{K}$ such that 
\begin{enumerate}
\item (identification-matching) $M(\mathcal{G})=M(\mathcal{K})|_{\mathcal{G}}$,
\item (strongly intersecting) if $y, x\in S$ with $y\leq x$ and $\underline{U_x}$ and $\underline{U_x}$ intersect in $M(\mathcal{G})$ (same as $M(\mathcal{K})|_{\mathcal{G}}$), then there exists $C_y|_{U_y}\to C_x|_{U_x}$ already given in the data of $\mathcal{G}$ (namely, $(y,x)\in I(\mathcal{G})$ as in \ref{LEVEL0GCS}), and
\item $X_x|_{U_x}$ is the interior of $\overline{X_x|_{U_x}}$ with the closure taken in $X$.
\end{enumerate}
\end{theorem}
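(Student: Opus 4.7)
The plan is to assemble Theorem \ref{STRONGGCS} as a synthesis of the three preceding ingredients, since the heavy lifting has already been carried out in the earlier existence theorem and in Proposition \ref{COORDEXIST}. First I would invoke the existence theorem for a good coordinate system proved at the start of Section \ref{EXISTGCS}, producing an initial good coordinate system $\mathcal{G}_0 = \{C_x|_{U_x^{(0)}}\}_{x \in S}$ for $\mathcal{K}$ in which, for every $x \in S$, the coverage $X_x|_{U_x^{(0)}}$ is the interior of its closure in $X$. This takes care of property (3) at the starting stage and fixes the finite index set $S$ together with its non-antisymmetric total order $\leq$.

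Next I would feed $\mathcal{G}_0$ into Proposition \ref{COORDEXIST} to obtain a shrinking $\mathcal{G} = \{C_x|_{U_x}\}_{x\in S}$ with $U_x \subset U_x^{(0)}$ and $x \in X_x|_{U_x} = \mathrm{interior}(\overline{X_x|_{U_x}}) \subset \overline{X_x|_{U_x}} \subset X_x|_{U_x^{(0)}}$, satisfying the strongly intersecting property of Definition \ref{STRONGLYINT}. Notice that property (3) is preserved by the conclusion of Proposition \ref{COORDEXIST}, so (2) and (3) both hold for $\mathcal{G}$ simultaneously; no further shrinking is required. By Definition \ref{GCSSHRINKING}, the index set of coordinate changes is unchanged, so $\mathcal{G}$ is indeed a good coordinate system for $\mathcal{K}$ in the sense of \ref{GCSFORKS}, not merely a collection of charts.

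Finally, for property (1), I would apply Corollary \ref{SETMATCHING}, which says precisely that a strongly intersecting good coordinate system obtained from $\mathcal{K}$ satisfies $M(\mathcal{G}) = M(\mathcal{K})|_{\mathcal{G}}$ as sets. Since $\mathcal{G}$ was constructed to be strongly intersecting in the previous step, this is immediate. The only point worth spelling out is that the equivalence relation $\sim$ used to build $M(\mathcal{G})$ (generated by the coordinate changes of $\mathcal{G}$) coincides with the restriction of $\sim_{\mathcal{K}}$ on charts in $\mathcal{G}$; this was already observed in Remark \ref{SAMEWAVE}(II) and combines with Corollary \ref{GCSEQUIVCCEXIST} to yield the set-theoretic identification.

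The three properties of the theorem therefore hold for the same $\mathcal{G}$, and the proof is complete. The only step that required substantive work was Proposition \ref{COORDEXIST}, whose main obstacle was extracting a contradiction from a hypothetical failure sequence by using a \emph{good shrinking sequence} together with the topological matching condition of $\mathcal{K}$; that work has been done and can now be quoted. There is no new obstruction here beyond verifying that a single shrinking simultaneously realizes all three desiderata, which follows because the shrinking supplied by Proposition \ref{COORDEXIST} explicitly preserves the ``interior of its closure'' property.
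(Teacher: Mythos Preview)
Your proposal is correct and matches the paper's approach exactly: the paper presents Theorem~\ref{STRONGGCS} with the preamble ``So far, we showed that:'' and no separate proof, since it is precisely the synthesis of the existence theorem in Section~\ref{EXISTGCS}, Proposition~\ref{COORDEXIST}, and Corollary~\ref{SETMATCHING} that you outline.
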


\subsection{The relative topology}

Suppose now $\mathcal{G}:=\{C_x|_{U_x}\}_{x\in S}$ is a good coordinate system that satisfies the conclusion of theorem \ref{STRONGGCS}. We choose a precompact shrinking $\mathcal{G}':=\{C_x|_{U'_x}\}_{x\in S}$ for $\mathcal{G}$. We can define a topology $\mathcal{T}(\mathcal{G}',\mathcal{G})$ on the identification space $M(\mathcal{G}')$ as follows.

For any point $z\in M(\mathcal{G}')$, by the definition of $M(\mathcal{G}')$, there exists a possibly non-unique $x^0\in S$ with the largest order such that $z\in \iota_{x^0}(\underline{U'_{x^0}})$ where $\iota_{x^0}:\underline{U_{x^0}}\to M(\mathcal{G}')$. We define $i_z:=\text{dim} E_{x^0}$, and observe that it is well-defined independent of $x^0$ chosen for $z$ (since there exists an invertible coordinate change between charts centered at any two such $x^0$ in $\mathcal{G}'$, by the strongly intersecting property). 

We also observe that there exists a possibly non-unique $x^+\in S$ with the largest order such that $z\in\overline{\underline{U'_{x^+}}}$ where the closure is taken in $\underline{U_{x^+}}$ (and is compact in $\underline{U_{x^+}}$). The existence of a choice of $x^+$ follows by considering the set $J_z:=\{x\in S \;|\; z\in \iota_{x}(\underline{\overline{U'_x}})\}$, where the closure is always taken in $\underline{U_x}$. Since $J_z$ is non-empty (a choice $x^0\in J_z$), it has a largest element, a choice of which is $x^+$. We define $j_z:=\text{dim} E_{x^+}$, and observe that it is well-defined independent of $x^+$ chosen for $z$ (since there exists an invertible coordinate change between charts centered at any two such $x^+$ in $\mathcal{G}$, by the strongly intersecting property). 

Since by the definition $i_z\leq j_z$, we have either $i_z=j_z$, or $i_z<j_z$.

\begin{definition}\label{JUMPINGPOINT}
Let $z\in M(\mathcal{G}')$, $z$ is said to be an \emph{interior point}, if $i_z=j_z$; and a \emph{jumping point} if $i_z<j_z$. The notions of an interior point and a jumping point depend on $\mathcal{G}'$.
\end{definition}

\begin{figure}[htb]
  \begin{center}

\begin{tikzpicture}[scale=2/3]
\filldraw[black!20] (0,6) circle (2 and 0.75);
\draw[thick, dashed, black!60] (0,6) circle (2 and 0.75);
\filldraw[black!40] (0,6) circle (1.8 and 0.65);
\draw[thick, dotted, black!60] (0,6) circle (1.8 and 0.65);
\draw (2,6)--(-0.5,6)..controls (-0.7, 6) and (-.8, 6.2).. (-1, 6.2) arc (90:190:0.2)..controls (-1.1,5.7) and (-0.9, 6.2)..(-0.84,5.85);
\draw (-0.4,6) arc (180:178:3) (-0.4,6)arc (180:182:3);

\draw [decorate,decoration={brace,amplitude=10pt}, xshift=0, yshift=10pt]
(-.4,6)--(2,6) node [midway,yshift=7.5pt, above]{$\phi_{vx}(U_{vx})$};

\node at (0,5)[below]{$U_v$};
\path[->]node at (2,4){$U'_v$} edge [bend right] (0.5,5.8);

\draw (3,6)--(7,6);
\draw (3.2,6) arc (180:178:3) (3.2,6)arc (180:182:3);
\draw (6.8,6) arc (0:2:3) (6.8,6)arc (0:-2:3);

\draw [decorate,decoration={brace,amplitude=10pt}, xshift=0, yshift=8pt]
(3.2,6)--(6.8,6) node [midway,yshift=7.5pt, above]{$U'_x$};
\node at ({(3.2+6.8)/2},6)[below]{$U_x$};

\filldraw[black!20] (11,6) circle (3);
\draw[thick, dashed, black!60] (11,6) circle (3);
\filldraw[black!40] (11,6) circle (2.7);
\draw[thick, dotted, black!60] (11,6) circle (2.7);
\draw (8,6)--(11.5,6).. controls (13,6) and (12.5,5.4).. (12,5);
\filldraw(12,5) arc (-35: -270:0.2)--(12.3,5.3)--(12,5);
\draw (8.3,6) arc (-120:-95: 3);
\draw[thick, dotted] (8.3,6) arc (120:95:3);

\draw (11,6) arc (0:3:2) (11,6) arc (0:-3:2);

\node at (11,3)[below]{$U_y$};
\path[->]node at (14.5,3){$U'_y$} edge [bend right] (12,4.2);
\path[->]node at (13,9.5){corresponding to $X_y|_{U_y}$} edge [bend left] (12.3,6.0);

\filldraw[black!20] (4,0) circle (2 and 0.75);
\filldraw [black!20](7.5,0) circle (3);
\filldraw [black!40](7.5,0) circle (2.7);
\draw[thick, dashed, black!60] (4,0) circle (2 and 0.75);
\filldraw[black!40] (4,0) circle (1.8 and 0.65);
\draw[thick, dashed, black!60] (4,0) circle (1.8 and 0.65);
\draw (3.5,0)--(7.5,0);
\draw (3.7,0) arc (180:178:3) (3.7,0)arc (180:182:3);
\draw (7.3,0) arc (0:2:3) (7.3,0)arc (0:-2:3);

\draw (3.5,0) arc (180:178:3) (3.5,0) arc (180:182:3);
\draw (7.5,0) arc (0:3:2) (7.5,0) arc (0:-3:2);

\draw (4.8,0) arc (-120:-95: 3);
\draw[thick, dotted] (4.8,0) arc (120:95:3);

\begin{scope}[shift={(-3.5,-6)}]
\draw (8,6)--(11.5,6).. controls (13,6) and (12.5,5.4).. (12,5);
\filldraw(12,5) arc (-35: -270:0.2)--(12.3,5.3)--(12,5);
\end{scope}
\begin{scope}[shift={(4,-6)}]
\draw(-0.5,6)..controls (-0.7, 6) and (-.8, 6.2).. (-1, 6.2) arc (90:190:0.2)..controls (-1.1,5.7) and (-0.9, 6.2)..(-0.84,5.85);
\end{scope}

\draw[thick,dashed, black!60] (7.5,0) circle (3);
\draw[thick,dotted,black!60] (7.5,0) circle (2.7);

\filldraw(4.65,0) circle (2pt);
\path (4.8,0) arc (180: 188: 2.7) coordinate (x);
\filldraw (x) circle (2pt);

\path[->]node at (2.5,-2){$w$} edge [bend right] (4.6,-.1);
\path[->]node at (5,-2.5){$z$} edge [bend left] (4.8,-.48);

\node at (4,-4) {$w$ is an interior point, $i_w=j_w=2$.};
\node at (4.4,-4.75) {$z$ is a jumping point, $i_z=2<j_z=3$.};
\end{tikzpicture}

  \end{center}
\caption[An interior point and a jumping point.]{An interior point and a jumping point.}
\label{figure05}
\end{figure}

For an interior point $z$, an open basis of $z$ in $M(\mathcal{G}')$ is defined to be induced (quotiented by $\sim$ and followed by $\iota_{x^0}$) from an open basis of $\iota_{x^0}^{-1}(z)$ in $$D'_{x^0}:=\underline{U'_{x^0}\backslash \bigcup_{x^0\leq y,\;y\not\leq x^0,\;X_{x^0}|_{U'_{x^0}}\cap X_y|_{U'_y}\not=\emptyset}\overline{U'_{y x^0}}},$$ where the closure is taken in $U_{x^0}$. Notice that this set $D'_{x^0}$ induces a set in $M(\mathcal{G}')$ consisting of interior points. Such a definition of a basis for $z$ is compatible with another basis defined using different basis of $z$ in $D'_{x^0}$ or using different $x^0$.

For a jumping point $z$, by the definition of a jumping point, $i_z<j_z$, so $$z\in \iota_{x^+}(\overline {\underline{U'_{x^+}}}\backslash \underline{U'_{x^+}})\text{ but }z\not\in \iota_{y}(\underline{\overline{U'_y}})\text{ for all }y\text{ such that }x^+\leq y, y\not\leq x^+.$$ Since $\underline{\overline{U'_{x^+}}}$ is a compact subset in $\underline{U_{x^+}}$, $z\in\iota_{x^+}(\underline{U_{x^+}})$. We can choose an open basis $O_m$ of $\iota_{x^+}^{-1}(z)$ in $$D_{x^+}:=\underline{U_{x^+}\backslash \bigcup_{x^+\leq y,\;y\not\leq x^+,\;X_{x^+}|_{U_{x^+}}\cap X_y|_{U'_y}\not=\emptyset}\overline{\tilde U_{y x^+}}},$$
where $\tilde U_{y x^+}$ is the domain of coordinate change of $C_{x^+}|_{U_{x^+}}\to C_y|_{U'_y}$ determined by the maximality condition (or simply by restriction). We can intersect $\iota_{x^+}(\underline{O_m})$ with $M(\mathcal{G}')$ in $M(\mathcal{G})$ to define an open basis of $z$ in $M(\mathcal{G}')$. Any two such choices (using different basis $O_m$ or different $x^+$) are compatible as well. See figure \ref{figure06}.

\begin{figure}[htb]
  \begin{center}

\begin{tikzpicture}[scale=.7]
\hspace{0cm},

\draw [very thick, dashed] (6.5,8) arc (0:20:10) coordinate (x0) (6.5,8) arc (0:-20:10) coordinate (x);
\draw [very thick, dashed] (x) arc (200:180:10) arc (180:160:10) coordinate (y); 

\begin{scope}[shift={(5,0)}]
\draw [very thick, dashed] (6.5,8) arc (0:20:10) coordinate (y0) (6.5,8) arc (0:-20:10) coordinate (y);
\end{scope}

\draw [very thick, dashed] (x)--(y) (x0)--(y0);

\filldraw[black!20] (6.5,8) arc (0:20:10)--(y0) arc (20:-20:10)--(x) arc (-20:0:10);

\draw [blue, very thick, dotted] (6.8,8) arc (0:18:10) coordinate (z0) (6.8,8) arc (0:-18:10) coordinate (z);
\draw [blue, very thick, dotted] (z) arc (198:180:10) arc (180:162:10) coordinate (u); 

\begin{scope}[shift={(4.4,0)}]
\draw [blue, very thick, dotted](6.8,8) arc (0:18:10) coordinate (u0) (6.8,8) arc (0:-18:10) coordinate (u);
\end{scope}

\filldraw[black!40] (6.8,8) arc (0:18:10)--(u0) arc (18:-18:10)--(z) arc (-18:0:10);

\draw[thick, dashed] (6.5,8) arc (0:20:10)--(y0) arc (20:-20:10)--(x) arc (-20:0:10);

\filldraw[black!50] (z0) arc (18:-18:10) arc (198:162:10);

\filldraw[black!30] (x0) arc (20:-20:10) arc (200:160:10);

\filldraw[black!20] (2,6) rectangle (5.9, 10);

\filldraw[black!40] (2.5,6.5) rectangle (5.9, 9.5);

\draw[very thick, dashed] (2,6) rectangle (8, 10);

\draw[blue, very thick, dotted] (2.5,6.5) rectangle (7.5, 9.5);

\draw [blue, very thick, dotted](z)--(u) (z0)--(u0);

\draw[very thick, dashed] (6.5,8) arc (0:20:10)--(y0) arc (20:-20:10)--(x) arc (-20:0:10);

\draw[blue,very thick, dotted] (6.8,8) arc (0:18:10)--(u0) arc (18:-18:10)--(z) arc (-18:0:10);

\draw (0,8)--(3.5,8);
\draw[blue, very thick, dotted] (0.5,8)--(3,8);

\draw [decorate,decoration={brace,amplitude=10pt}, xshift=0, yshift=2pt]
(0,8)--(3.5,8) node [midway,yshift=7.5pt, above]{$U_v$};

\draw [decorate,decoration={brace,amplitude=10pt}, xshift=0, yshift=-6pt]
(3,8)--(0.5,8) node [midway,yshift=-7.5pt, below]{$U'_v$};

\filldraw (2.5, 8) circle (2pt);

\node at(2.3, 7.7) {$z$};
\node at (4, 7.4){$U'_x$};
\node at (4,6.2){$U_x$};
\node at (10, 7.4){$U'_y$};
\node at (10,4.5){$U_y$};

\begin{scope}[shift={(0,-8)}]

\draw[very thick, dashed] (2,6) rectangle (8, 10);

\filldraw[yellow] (2,6) rectangle (6.3, 10);

\draw[blue, very thick, dotted] (2.5,6.5) rectangle (7.5, 9.5);

\filldraw[black!30] (2.5, 8) circle (.3);
\node at(2.5, 7.7) {$z$};
\filldraw (2.5, 8) circle (2pt);

\draw[blue, very thick, dotted] (6.8,8) arc (0:18:10) coordinate (a1) (6.8,8) arc (0:-18:10) coordinate(b1) (11.2,8) arc (0:18:10) coordinate (c1) (11.2,8) arc (0:-18:10) coordinate (d1);
\draw[blue, very thick, dotted] (a1)--(c1) (b1)--(d1);
\draw[blue, very thick, dotted] (a1) arc (162:198:10);
\end{scope}

\node at(6.5, -3.5) {Here $x^+=x$. The yellow area is $D_{x^+}$.};
\node at (6.5,-4.25){The grey disk is an element of the basis of $\iota_{x^+}^{-1}(z)$ in $D_{x^+}$.};
\begin{scope}[shift={(0,-0.75)}]
\filldraw[black!30] (2.5,-5) arc (-90:90:0.3) --(2.5,-5);
\filldraw (2.5,-4.7) circle (2pt);
\draw (2.5,-4.7)--(2.2, -4.7);
\draw (2.2, -4.7) circle (2pt);

\node at(2.5, -5) {$z$};

\node at(8, -5.5){This is an element of the basis of $z$ in $(M(\mathcal{G}'),\mathcal{T}(\mathcal{G}',\mathcal{G}))$.};
\end{scope}
\end{tikzpicture}

  \end{center}
\caption[An element of an basis at a jumping point in the relative topology.]{An element of an basis at a jumping point in the relative topology.}
\label{figure06}
\end{figure}

From the form of $D'_{x^0}$ and $D_{x^+}$, it is obvious that they generate a topology.

\begin{definition}[relative topology]\label{RELATIVETOP}
An open set in $M(\mathcal{G}')$ is a union of elements from bases of various points in $M(\mathcal{G}')$. This defines the \emph{relative topology} $\mathcal{T}(\mathcal{G}',\mathcal{G})$ for $M(\mathcal{G}')$.
\end{definition}

\begin{remark}\label{TWOCASESISONE} We have three important remarks to make:
\begin{enumerate}
\item After defining everything, we now observe that no matter what kind of point $z$ is, we can always use $D_{x^+}$ to induce an open basis.
\item It is called the relative topology because a basis of a jumping point $z$ for the topology $\mathcal{T}(\mathcal{G}', \mathcal{G})$ of $M(\mathcal{G}')$ is defined in a dimension-constant background $\iota_{x^+}(\underline{D_{x^+}})\subset M(\mathcal{G})$ as a relatively open basis. 
\item Notice that $\mathcal{T}(\mathcal{G}',\mathcal{G})$ is completely different from the quotient topology on $M(\mathcal{G}')$. See figure \ref{figure07}. $\mathcal{T}(\mathcal{G}',\mathcal{G})$ is a relative topology of the quotient topology on $M(\mathcal{G})$. This topology $\mathcal{T}(\mathcal{G}',\mathcal{G})$ is what we will use explicitly in the constructions in the next chapter.
\end{enumerate}
\end{remark}

\begin{figure}[htb]
  \begin{center}

\begin{tikzpicture}[scale=3/5]
\hspace{0 cm}

\draw (2,6)--(7,6);
\draw (2.5,6) arc (180:178:3) (2.5,6)arc (180:182:3);
\draw (3.2,6) arc (180:178:3) (3.2,6)arc (180:182:3);
\draw (6.5,6) arc (0:2:3) (6.5,6)arc (0:-2:3);

\draw [decorate,decoration={brace,amplitude=10pt}, xshift=0, yshift=8pt]
(3.2,6)--(7,6) node [midway,yshift=7.5pt, above]{$s_y^{-1}(0)$};

\draw [decorate,decoration={brace,amplitude=10pt}, xshift=0, yshift=-12pt]
(6.5,6)--(2.5,6) node [midway,yshift=-7.5pt, below]{$U'_y$};
\node at ({(2.5+6.5)/2},6)[below]{$U_y$};

\filldraw[black!20] (11,6) circle (3);
\draw[thick, dashed, black!60] (11,6) circle (3);
\filldraw[black!40] (11,6) circle (2.5);
\draw[thick, dotted, black!60] (11,6) circle (2.5);
\draw (8,6)--(11.5,6).. controls (13,6) and (12.5,5.4).. (12,5);
\filldraw(12,5) arc (-35: -270:0.2)--(12.3,5.3)--(12,5);

\draw (11,6) arc (0:3:2) (11,6) arc (0:-3:2);

\node at (11,3)[below]{$U_x$};
\path[->]node at (14.5,3){$U'_x$} edge [bend right] (12,4.2);
\path[->]node at (13,9.5){$s_x^{-1}(0)$} edge [bend left] (12.3,6.0);

\filldraw [black!20](7.5,-1) circle (3);
\filldraw [black!40](7.5,-1) circle (2.5);

\draw[very thick, dotted](5,-1) circle (0.4);
\begin{scope}
\clip (7.5,-1) circle (2.5);
\clip (5,-1) circle (.4);
\filldraw[black!70] (7.5,-1) circle (2.5) (5,-1) circle (.4);
\end{scope}
\draw[very thick](5,-1)--(4.6,-1);

\draw (2.5,-1)--(7.5,-1);
\draw (3,-1) arc (180:178:3) (3,-1)arc (180:182:3);
\draw (3.7,-1) arc (180:178:3) (3.7,-1)arc (180:182:3);
\draw (7,-1) arc (0:2:3) (7,-1)arc (0:-2:3);
\draw (7.5,-1) arc (0:2:3) (7.5,-1)arc (0:-2:3);

\begin{scope}[shift={(-3.5,-7)}]
\draw (8,6)--(11.5,6).. controls (13,6) and (12.5,5.4).. (12,5);
\filldraw(12,5) arc (-35: -270:0.2)--(12.3,5.3)--(12,5);
\end{scope}

\draw[thick,dashed, black!60] (7.5,-1) circle (3);
\draw[thick,dotted,black!60] (7.5,-1) circle (2.5);

\filldraw(5,-1) circle (2pt);

\path[->]node at (4,-2){$z$} edge [bend right] (4.9,-1.2);

\node at (7,-4.5) {relative topology};
\node at (16,-4.5) {quotient topology};

\node at (7,-7) {an open neighborhood of $z$};
\node at (7,-7.75) {in $M(\mathcal{G}')$ with};
\node at (7,-8.5) {the relative topology};

\node at (16,-7) {an open neighborhood of $z$};
\node at (16,-7.75) {in $M(\mathcal{G}')$ with};
\node at (16,-8.5){the quotient topology};

\begin{scope}[shift={(0,-5)}]
\filldraw(5,-1) circle (2pt);

\draw[very thick, dotted](5,-1) circle (0.4);
\begin{scope}
\clip (7.5,-1) circle (2.5);
\clip (5,-1) circle (.4);
\filldraw[black!70] (7.5,-1) circle (2.5) (5,-1) circle (.4);
\end{scope}

\draw[very thick](5,-1)--(4.6,-1);
\end{scope}

\begin{scope}[shift={(9,0)}]
\filldraw [black!20](7.5,-1) circle (3);
\filldraw [black!40](7.5,-1) circle (2.5);
\begin{scope}[shift={(-9,0)}]
\draw[very thick,dotted] (14,-1).. controls (14.5,-1) and (14.7,-0.5).. (15,-0.5)arc (90:-90:0.5).. controls (14.7,-1.5) and (14.5, -1)..(14,-1);
\filldraw[black!70] (14,-1).. controls (14.5,-1) and (14.7,-0.5).. (15,-0.5)arc (90:-90:0.5).. controls (14.7,-1.5) and (14.5, -1)..(14,-1);
\draw[very thick](13.6,-1)--(14,-1);
\end{scope}

\draw (2.5,-1)--(7.5,-1);
\draw (3,-1) arc (180:178:3) (3,-1)arc (180:182:3);
\draw (3.7,-1) arc (180:178:3) (3.7,-1)arc (180:182:3);
\draw (7,-1) arc (0:2:3) (7,-1)arc (0:-2:3);
\draw (7.5,-1) arc (0:2:3) (7.5,-1)arc (0:-2:3);

\begin{scope}[shift={(-3.5,-7)}]
\draw (8,6)--(11.5,6).. controls (13,6) and (12.5,5.4).. (12,5);
\filldraw(12,5) arc (-35: -270:0.2)--(12.3,5.3)--(12,5);
\end{scope}

\draw[thick,dashed, black!60] (7.5,-1) circle (3);
\draw[thick,dotted,black!60] (7.5,-1) circle (2.5);

\filldraw(5,-1) circle (2pt);

\path[->]node at (4,-2){$z$} edge [bend right] (4.9,-1.2);
\end{scope}

\begin{scope}[shift={(0,-5)}]
\filldraw (14,-1) circle (2pt);
\draw[very thick,dotted] (14,-1).. controls (14.5,-1) and (14.7,-0.5).. (15,-0.5)arc (90:-90:0.5).. controls (14.7,-1.5) and (14.5, -1)..(14,-1);
\filldraw[black!70] (14,-1).. controls (14.5,-1) and (14.7,-0.5).. (15,-0.5)arc (90:-90:0.5).. controls (14.7,-1.5) and (14.5, -1)..(14,-1);
\draw[very thick](13.6,-1)--(14,-1);
\end{scope}
\end{tikzpicture}

  \end{center}
\caption[The relative topology versus the quotient topology.]{The relative topology versus the quotient topology.}
\label{figure07}
\end{figure}
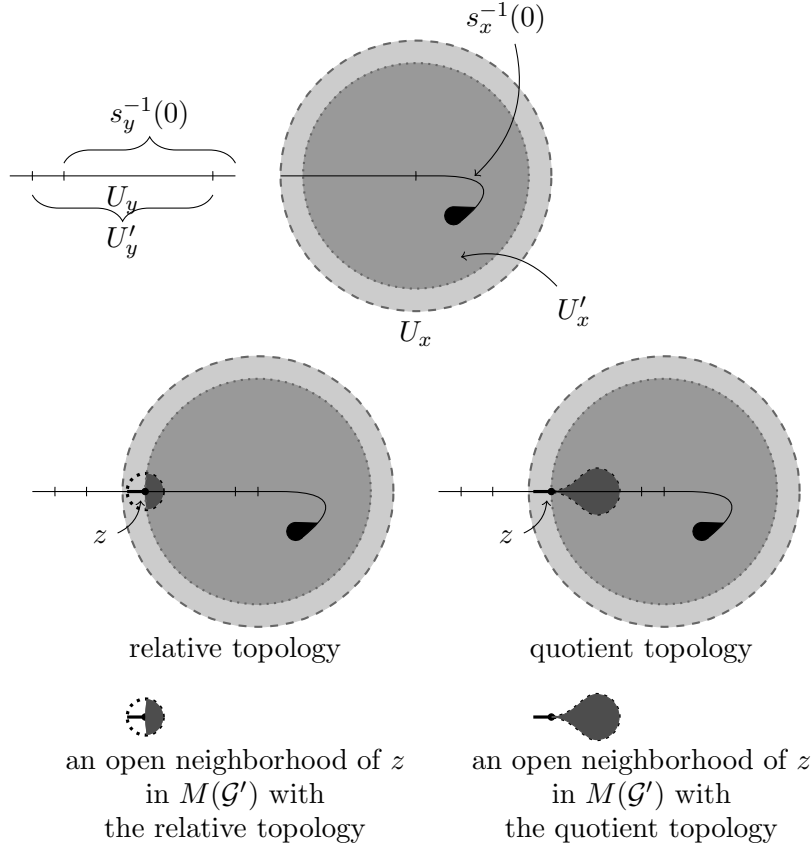

\subsection{Obtaining Hausdorffness from a strong shrinking sequence}

Now we have obtained the strongly intersecting property for a shrinking $\mathcal{G}$ of a starting $\tilde{\mathcal{G}}$ (using a good shrining sequence), established the existence of a strong shrinking sequence $\mathcal{G}(k), k\in\mathbb{N}$ of $\mathcal{G}$ and introduced the relative topology $\mathcal{T}(\mathcal{G}(k), \mathcal{G}(k'))$ for $M(\mathcal{G}(k))$, for $k'<k$; and we are ready to prove the Hausdorffness of $(M(\mathcal{G}(k)),\mathcal{T}(\mathcal{G}(k),\mathcal{G}(k')))$ for $k$ large enough.

First we establish a convenient lemma that will be used in the main proof.
\begin{lemma}\label{IDENTIFY} Let $\{C_x|_{U_x'}\}_{x\in S}$ be a good coordinate system. Then if a sequence $u_n$ in the quotiented domain $\underline{U'_{x_2 x_1}}\subset \underline{U'_{x_1}}$ of the coordinate change converges to $y$ in $\underline{(s_{x_1}|_{U'_{x_1}})^{-1}(0)}$, such that $\underline{\phi_{x_2 x_1}}(u_n)$ converges to $z$ in $\underline{(s_{x_2}|_{U'_{x_2}})^{-1}(0)}$, then $y\in \underline{U'_{x_2 x_1}}$, and $\underline{\phi_{x_2 x_1}}(y)=z$.
\end{lemma}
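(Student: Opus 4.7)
The plan is to derive the conclusion as a direct combination of the topological matching condition and the maximality condition, both of which are built into the definition of a good coordinate system (Definition \ref{LEVEL0GCS}). Essentially all the analytic content is hidden in those two axioms; the lemma just records their joint consequence for a single coordinate change.

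First I would set up the sequences needed for topological matching. Take $v_n := u_n \in \underline{U'_{x_1}}$ and $v'_n := \underline{\phi_{x_2 x_1}}(u_n) \in \underline{U'_{x_2}}$. Since $v'_n$ is obtained from $v_n$ by applying (the quotient of) a single coordinate change in the collection, by the very definition of the identification relation we have $v_n \sim v'_n$ for every $n$. By hypothesis $v_n \to y$ in $\underline{U'_{x_1}}$ with $y \in \underline{(s_{x_1}|_{U'_{x_1}})^{-1}(0)}$, and $v'_n \to z$ in $\underline{U'_{x_2}}$ with $z \in \underline{(s_{x_2}|_{U'_{x_2}})^{-1}(0)}$. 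Thus the hypotheses of the topological matching condition (item (3)(c) of Definition \ref{LEVEL0GCS}) are satisfied, yielding $y \sim z$.

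Next I would invoke the maximality condition to turn the abstract identification $y \sim z$ into the concrete statement we want. Pick representatives $\tilde y \in U'_{x_1}$ of $y$ and $\tilde z \in U'_{x_2}$ of $z$; then $\underline{\tilde y} \sim \underline{\tilde z}$. The maximality condition for the coordinate change $\tilde C_{x_1} \to \tilde C_{x_2}$ (item (3)(b) of Definition \ref{LEVEL0GCS}) then gives $\tilde y \in U'_{x_2 x_1}$ and $\underline{\phi_{x_2 x_1}}(\underline{\tilde y}) = \underline{\tilde z}$. Quotienting yields $y \in \underline{U'_{x_2 x_1}}$ and $\underline{\phi_{x_2 x_1}}(y) = z$, as required.

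I do not expect any genuine obstacle here: the whole point of imposing the topological matching and maximality axioms in the definition of a good coordinate system is precisely to make this kind of limit/identification argument go through verbatim. The only small technical care is to remember to apply topological matching before maximality (so that we first upgrade the sequence-level identification $v_n \sim v'_n$ to a limit-level identification $y \sim z$), and to work with representatives when appealing to maximality since that condition is formulated upstairs in $U'_{x_1}\times U'_{x_2}$ rather than on the quotient.
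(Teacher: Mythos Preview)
Your proposal is correct and follows essentially the same approach as the paper: apply topological matching to obtain $y\sim z$, then invoke maximality. The only minor difference is that the paper uses maximality to get $y\in\underline{U'_{x_2 x_1}}$ and then deduces $\underline{\phi_{x_2 x_1}}(y)=z$ from uniqueness of limits in the Hausdorff space $\underline{U'_{x_2}}$, whereas you extract both conclusions directly from the full statement of the maximality condition; your version is slightly more streamlined.
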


\begin{proof} By the topological matching condition, we have $y\sim z$. By the maximality condition, $y\in \underline{U_{x_2 x_1}}$. Then because $\underline{\phi_{xy}}(y)$ and $z$ are the limits in $\underline{U_{x_2}}$ of the same converging sequence $\underline{\phi_{x_2 x_1}}(u_n)$, and $\underline{U_{x_2}}$ is Hausdorff, we have $\underline{\phi_{x_2 x_1}}(y)=z$. 
\end{proof}

We now prove the existence of a Hausdorff good coordinate system.

\begin{theorem}\label{HAUSIDSPACE} Let $\mathcal{G}:=\{C_x|_{U_x}\}_{x\in S}$ be a good coordinate system satisfying the properties in the conclusion of theorem \ref{STRONGGCS}. For a strong shrinking sequence $U_x(k), x\in S, k\in\mathbb{N}$ of $\mathcal{G}$, define $\mathcal{G}(0):=\mathcal{G}$ and $\mathcal{G}(k):=\{C_x|_{U_x(k)}\}_{x\in S}$, then $$(M(\mathcal{G}(k)), \mathcal{T}(\mathcal{G}(k),\mathcal{G}))$$ is Hausdorff for sufficiently large $k$. 
\end{theorem}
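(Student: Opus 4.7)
I proceed by contradiction: suppose the conclusion fails, so that there is a subsequence $k_j \to \infty$ for which $(M(\mathcal{G}(k_j)), \mathcal{T}(\mathcal{G}(k_j),\mathcal{G}))$ is not Hausdorff. For each $j$ choose a distinct non-separable pair $z_1(k_j), z_2(k_j)$, and use Remark \ref{TWOCASESISONE}(1) to describe basic neighborhoods uniformly via lifts $\tilde z_i(k_j) \in \underline{U_{x_i^+(k_j)}}$ into the largest-order chart. Since $S$ is finite, by passing to a further subsequence I may assume $x_i^+(k_j) = x_i$ is constant in $j$. Non-separability, applied to a nested base of $\tilde z_i(k_j)$ in $\underline{D_{x_i}}$, yields a sequence $w_n^{(j)} \in M(\mathcal{G}(k_j))$ with representatives $u_n^{(j)} \in \underline{U_{x_1}}$, $v_n^{(j)} \in \underline{U_{x_2}}$ converging as $n \to \infty$ to $\tilde z_1(k_j), \tilde z_2(k_j)$. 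Since $w_n^{(j)}$ lies in $M(\mathcal{G}(k_j))$, it has a representative $p_n^{(j)} \in U_{y_n^{(j)}}(k_j)$; another application of finiteness of $S$ makes $y_n^{(j)} = y$ independent of both indices. By the strongly intersecting property (Corollary \ref{GCSEQUIVCCEXIST}) and lemma \ref{MAXIMALITYEQUIV}, after swapping roles if needed one may assume $y \leq x_i$ and write $\underline{u_n^{(j)}} = \underline{\phi_{x_1 y}(p_n^{(j)})}$, $\underline{v_n^{(j)}} = \underline{\phi_{x_2 y}(p_n^{(j)})}$.

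Now I extract a diagonal subsequence. For each $j$ pick $n(j)$ so that $\underline{u_{n(j)}^{(j)}}$ and $\underline{v_{n(j)}^{(j)}}$ lie within distance $1/j$ of $\tilde z_1(k_j)$ and $\tilde z_2(k_j)$ in fixed $G_{x_i}$-invariant metrics on $\underline{U_{x_i}}$. Precompactness of the shrinking $\{C_x|_{U_x(1)}\}_{x \in S}$ inside $\mathcal{G}$ lets me pass to subsequences with $\tilde z_i(k_j) \to \tilde z_i^\infty$ in $\overline{\underline{U_{x_i}(1)}}$ and $\underline{p_{n(j)}^{(j)}} \to \underline{p^*}$ in $\overline{\underline{U_y(1)}}$. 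The strong-shrinking condition $\bigcap_k \overline{U_x(k)} \subset \overline{(s_x|_{U_x(1)})^{-1}(0)}$, together with the closedness of the zero set, forces $\tilde z_i^\infty$ and $\underline{p^*}$ all to be zeros. By the triangle inequality the diagonal sequences also satisfy $\underline{u_{n(j)}^{(j)}} \to \tilde z_1^\infty$ and $\underline{v_{n(j)}^{(j)}} \to \tilde z_2^\infty$. Applying Lemma \ref{IDENTIFY} to the good coordinate system $\mathcal{G}$ and the coordinate change $\phi_{x_i y}$ I conclude $\underline{p^*} \in \underline{U_{x_i y}}$ with $\underline{\phi_{x_i y}(p^*)} = \tilde z_i^\infty$, so that in $M(\mathcal{G})$ we have the common identification $\iota_{x_1}(\tilde z_1^\infty) = \iota_y(\underline{p^*}) = \iota_{x_2}(\tilde z_2^\infty) =: z^\infty$.

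The main obstacle is to convert this identification at the diagonal limit into a contradiction at finite $j$. The mechanism is as follows: since $\underline{p^*}$ lies in the open domain $\underline{U_{x_i y}}$, by choosing a small invariant open neighborhood $B$ of $\underline{p^*}$ in $\underline{U_y}$ with $\overline{B} \subset \underline{U_{x_i y}}$ and $\underline{\phi_{x_i y}}(\overline{B})$ avoiding every $\underline{\overline{\tilde U_{z x_i}}}$ for $x_i < z, z \not\leq x_i$ (using the fact that zeros of $s_{x_i}$ near $\tilde z_i^\infty$ lie in the submanifold $\underline{\phi_{x_i y}(U_{x_i y})}$ by the tangent bundle condition), one obtains tubular neighborhoods in $\underline{U_{x_i}}$ of $\underline{\phi_{x_i y}}(B)$ whose images under $\iota_{x_i}$, intersected with $M(\mathcal{G}(k_j))$, give a basis of basic open sets at $\tilde z_i^\infty$. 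For $j$ large, $\tilde z_i(k_j)$ lies in the chosen tubular neighborhood, and any pair of basic neighborhoods of $z_1(k_j), z_2(k_j)$ supported in these tubes intersects only if the corresponding open sets in $\underline{U_y}$ around the (necessarily unique) preimage points meet. Since $z_1(k_j) \ne z_2(k_j)$ forces their representatives in $\underline{U_y(k_j)}$ to be distinct, the Hausdorffness of the manifold $\underline{U_y}$ lets me shrink $B$ further to separate them, contradicting the non-separability of $z_1(k_j), z_2(k_j)$. The delicate point is precisely keeping track of how the relative topology's basic opens at a jumping point of $\mathcal{G}(k_j)$ interact with the local submanifold structure cut out by the coordinate change $\phi_{x_i y}$, and verifying that this separation survives the passage from a single chart $\underline{U_y}$ back up to $M(\mathcal{G}(k_j))$.
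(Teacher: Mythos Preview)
Your overall architecture matches the paper's: argue by contradiction, lift the non-separable pair to $x^+$-charts, stabilize the chart indices by finiteness of $S$, extract limits using the strong-shrinking condition so that the limit points are zeros, and then invoke Lemma~\ref{IDENTIFY}. Where you diverge is in introducing a \emph{third} chart index $y$ carrying the approximating points $p_n^{(j)}\in U_y(k_j)$, and in your endgame via tubular neighborhoods. Both are unnecessary, and the endgame as written has a genuine gap.

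The gap is in the last paragraph. You claim that basic neighborhoods of $z_1(k_j)$ and $z_2(k_j)$, once supported in tubes around $\underline{\phi_{x_i y}}(B)$, ``intersect only if the corresponding open sets in $\underline{U_y}$ around the preimage points meet.'' This conflates the tubular projection with the identification $\sim$. A common point of two basic open sets in $M(\mathcal{G}(k_j))$ is a $\sim$-class, and its representatives in $\underline{U_{x_1}}$, $\underline{U_{x_2}}$ need not lie on the submanifolds $\underline{\phi_{x_i y}(U_{x_i y})}$, nor need their tubular projections coincide under any coordinate change. Relatedly, you invoke the tangent bundle condition to say zeros near $\tilde z_i^\infty$ lie on $\underline{\phi_{x_i y}(U_{x_i y})}$, but the points $\tilde z_i(k_j)$ are \emph{not} zeros in general (they lie only in $\overline{\underline{U_{x_i}(k_j)}}$), so this does not place them on the submanifold. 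Thus the separation you perform in $\underline{U_y}$ does not transport back to a separation in $M(\mathcal{G}(k_j))$.

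The paper avoids the third chart entirely. Since $u_n^{(j)}\in\underline{U_{x_1}}$ and $v_n^{(j)}\in\underline{U_{x_2}}$ are $\sim$-identified, the strongly intersecting property gives a \emph{direct} coordinate change $C_{x_1}|_{U_{x_1}}\to C_{x_2}|_{U_{x_2}}$ (after WLOG $x_1\le x_2$). Lemma~\ref{IDENTIFY} applied to the diagonal sequence then places the limit point in the \emph{open} domain $\underline{U_{x_2 x_1}}$. The contradiction at finite level is now immediate: for one fixed large $N$, $\tilde z_1(k''_N)$ lies in $\underline{U_{x_2 x_1}}$, hence so does the approximating sequence $u_m$ for $m$ large; maximality and injectivity of $\iota_{x_2}$ force $\underline{\phi_{x_2 x_1}}(u_m)=v_m$, and passing to the limit in $m$ gives $\underline{\phi_{x_2 x_1}}(\tilde z_1(k''_N))=\tilde z_2(k''_N)$, i.e.\ $z_1(k''_N)=z_2(k''_N)$. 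No tubes, no tangent bundle condition, no third chart. If you drop $y$ and run this direct argument, your proof goes through.
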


\begin{proof} First observe that the relative topology on $M(\mathcal{G}(k))$ is the same if we define it relative to any $\mathcal{G}(j)$ for $0\leq j\leq k-1$. This is why we require the sequence of shrinkings to be a strong shrinking sequence.

We claim that for sufficiently large $k$, $M(\mathcal{G}(k))$ is Hausdorff for $\mathcal{T}(\mathcal{G}(k), \mathcal{G})$.

Suppose this is not the case. Then there exists a subsequence $k_n, n\in\mathbb{N}$ with $k_1<k_2<\cdots\to \infty$ of $\mathbb{N}$ such that $M(\mathcal{G}(k_n))$ is not Hausdorff in the relative topology $\mathcal{T}(\mathcal{G}(k_n), \mathcal{G}(k_{n-1}))=\mathcal{T}(\mathcal{G}(k_n), \mathcal{G})$. Here the convention is $\mathcal{G}(k_0):=\mathcal{G}$. 

Then for each $k_n$, there will be a pair of points $\hat y(k_n)$ and $\hat z(k_n)$ causing the non-Hausdorffness in the identified union $(\bigsqcup_{x\in S}\underline{U_x(k_n)})/\sim$. That is, $\hat y(k_n)\not=\hat z(k_n)$, but any pair of open neighborhoods around $\hat y(k_n)$ and $\hat z(k_n)$ respectively in the topology $\mathcal{T}(\mathcal{G}(k_n), \mathcal{G}(k_{n-1}))$ will intersect in $M(\mathcal{G}(k_n))$.

By remark \ref{TWOCASESISONE}.1., the local bases around $\hat y(k_n)$ and $\hat z(k_n)$ are defined respectively using $\iota_{x^+_{\hat y(k_n)}}(D_{x^+_{\hat y(k_n)}}(k_{n-1}))$ and $\iota_{x^+_{\hat z(k_n)}}(D_{x^+_{\hat z(k_n)}}(k_{n-1}))$ respectively, where a choice of $x^+_z$ for a point $z$ is defined just before definition \ref{JUMPINGPOINT} and $D_{x^+_z}(k_{n-1})\subset \underline{U_{x^+_z}(k_{n-1})}$ is defined immediately before definition \ref{RELATIVETOP} together with remark \ref{TWOCASESISONE}. We denote $$y(k_n):=\iota_{x^+_{\hat y(k_n)}}^{-1}(\hat y(k_n))\text{ and }z(k_n):=\iota_{x^+_{\hat z(k_n)}}^{-1}(\hat z(k_n)).$$ 

The non-Hausdorffness then means that there exist a sequence of points $y(k_n)_m$, $m\in\mathbb{N}$ converges to $y(k_n)$ in $D_{x^+_{y(k_n)}}(k_{n-1})$ and another sequence of points $z(k_n)_m$, $m\in\mathbb{N}$ converges to $z(k_n)$ in $D_{x^+_{z(k_n)}}(k_{n-1})$ such that $y(k_n)_m\sim z(k_n)_m$.

Since $S$ is finite, by passing to a subsequence $k_n', n\in\mathbb{N}$ of $k_n, n\in\mathbb{N}$, we can assume $x^+_{\hat y(k_n')}=x_1$ and $x^+_{\hat z(k_n')}=x_2$ are fixed indepedent of $k_n'$. We will use the convention $\mathcal{G}(k_0'):=\mathcal{G}$ to define the relative topology for $M(\mathcal{G}(k_1'))$.

We have two possibilities, either $x_1\leq x_2$ or $x_2\leq x_1$. We will only consider the first case, as the the argument for the second case is the same after swapping $x_1$ and $x_2$.

We apply lemma \ref{TOPOLOGYLEMMA} with $U=\underline{U_{x_i}}, U_n=\underline{U_{x_i}(k_n')}, Y=\underline{\overline{(s_{x_i}|_{U_{x_i}(k_1')})^{-1}(0)}}$ for $i=1$, $2$. By passing to a subsequence $k_n''$ of $k_n'$, we have that $y(k_n'')$ converges to $y\in \overline{\underline{(s_{x_1}|_{U_{x_1}(k_1'')})^{-1}(0)}}\subset \underline{(s_{x_1}|_{U_{x_1}})^{-1}(0)}$ in $\underline{U_{x_1}}$, and that $z(k_n'')$ converges to $z\in \overline{\underline{(s_{x_2}|_{U_{x_2}(k_1'')})^{-1}(0)}}\subset  \underline{(s_{x_2}|_{U_{x_2}})^{-1}(0)}$ in $\underline{U_{x_2}}$.

We will first establish a coordinate change $C_{x_1}|_{U_{x_1}}\to C_{x_2}|_{U_{x_2}}$ by the strongly intersecting property. By the definition, the pair $y(k_1'')\in \underline{U_{x_1}(k_1'')}$ and $z(k_1'')\in \underline{U_{x_2}(k_1'')}$ causes non-Hausdorffness. Since $\underline{U_{x_1}(k''_1)}$ and $\underline{U_{x_2}(k''_1)}$ are open, the approximating sequences $y(k_1'')_n$ and $z(k_2'')_n$ lie in the respective charts for large enough $n$, and for each of such $n$ they are identified. So by the strongly intersecting property, there is a direct coordinate change $C_{x_1}|_{U_{x_1}}\to C_{x_2}|_{U_{x_2}}$, since we are discussing the case $x_1\leq x_2$.

We have $y\in \underline{U_{x_1}}$ and $z\in \underline{U_{x_2}}$, and then by passing to a diagonal subsequence $y(k_m'')_m, z(k_m'')_m, m\in\mathbb{N}$, we see that the hypothesis of lemma \ref{IDENTIFY} holds, so that $y$ is identified with $z$, and $y\in U_{x_2 x_1}$, so $y$ has an open neighborhood in the domain of coordinate change. This in turn implies that $y(k_n'')$ is in the domain of the coordinate change for all $n\geq N$ for some large $N$, so the limiting sequence $y(k_N'')_m$ approximating $y(k_N'')$, which is identified with $z(k_N'')_m$ limiting to $z(k_N'')$, lies inside the domain of the coordinate change as well for all $m\geq L(N)$ for some large $L(N)$. Therefore for all $m\geq L(N)$, $\underline{\phi_{x_2 x_1}}(y(k_N'')_m)$ is identified with $z(k_N'')_m$, and hence equals $z(k_N'')_m$ by the maximality condition and the Hausdorffness of $\underline{U_{j_2}}$. Because $\underline{\phi_{x_2 x_1}}(y(k_N'')_m)$ converges to $\underline{\phi_{x_2 x_1}}(y(k_N''))$ and $z(k_N'')_m$ converges to $z(k_N'')$, we have $\underline{\phi_{x_2 x_1}}(y(k_N''))=z(k_N'')$, namely, $\hat y(k_N'')=\hat z(k_N'')$, and this contradicts the choice $\hat y(k_N'')\not=\hat z(k_N'')$.
\end{proof}

\begin{remark}
\begin{enumerate}
\item We need $X_x|_{U'_x}=\text{interior}(\overline{X_x|_{U'_x}})$ in the conclusion of proposition \ref{COORDEXIST}, because we will need to extract a strong shrinking sequence from it again in the proof of theorem \ref{HAUSIDSPACE}.
\item We now comment on the relation between Hausdorffness in the relative topology and the Hausdorffness in each dimension. Denote $N:=\text{max}\{\text{dim} E_x\;|\; x\in S\}$ and denote $$M_i:=M_i(\mathcal{G}'):=\underline{U_i}:=(\bigsqcup_{x\in S,\;\text{dim} E_x=i} \underline{U_x})/\sim.$$
The topology on $M_i$ can be defined in the usual way using only charts of dimension i without dimension jump (namely, generated by bases of $z$ in $D'_{x^0(z)}$ for some choice of $x^0(z)$, for $z\in M_i$). By remark \ref{TWOCASESISONE}.(1), if $M(\mathcal{G}')$ is $\mathcal{T}(\mathcal{G}',\mathcal{G})$-Hausdorff, then $M_i$ for all $i\leq N$ is Hausdorff in the usual topology. However, an easy example such as figure \ref{figure08} illustrates that $\mathcal{G}'$ can be non-Hausdorff if we only have Hausdorffness of $M_i$ for all $i\leq N$.
\item In \cite{Dingyu}, I proved theorem \ref{HAUSIDSPACE} in two steps. First I considered a good shrinking sequence to achieve Hausdorffness of $M_i$ for all $i$ simultaneously. Then I grouped the charts in dimension $i$ together for each $i$ and considered a strong shrinking sequence of this slightly generalized good coordinate system to achieve the result. The structures in both steps are the same, except in the first step the topology is the usual topology for each $i$, which by remark \ref{TWOCASESISONE}.(1) is a special case of relative topology and the result achieved using a good shrinking sequence can of course be achieved using a strong shrinking sequence. Therefore, I can combine both steps into the one step. As a result, the discussion of grouping charts is not necessary here and will be systematically introduced in grouping using a total order partition.
\end{enumerate}
\end{remark}

\begin{figure}[htb]
  \begin{center}

\begin{tikzpicture}[scale=1]
\filldraw[dashed, color=black!20] (2,0) circle (2);
\filldraw (2,0) circle (2pt);
\draw[dotted, color=black!40] (2.5, -1) -- (2.5, 1);
\begin{scope}
\clip (2.5, -1) rectangle (3.5, 1);
\filldraw[dotted, color=black!40] (2.5,0) circle (1);
\filldraw (3,0) circle (2pt);
\end{scope}
\path (2,-2.5) node {$U_y$};
\path[->] (4,-1.5) node [below right]{$U_{xy}$} edge [bend right](3,-0.5);

\begin{scope}
\clip (6.5, -1) rectangle (7.5, 1);
\filldraw[dotted, color=black!40] (6.5,0) circle (1);
\filldraw (7,0) circle (2pt);
\end{scope}
\path[->] (6,0) edge (9,0);
\path[->] (6.5, -2.5) edge (6.5, 2.5);
\path[->] (7.5, -0.8) edge (5.5, 0.8);
\filldraw (8.5, 0.5) circle (2pt);
\path[->] (8, 1) node [above right]{$\phi_{xy}(U_{xy})$} edge [bend right] (7, 0.5);

\path (8.5, -2.5) node[below] {$U_x=(0,\infty)\times\mathbb{R}\times\mathbb{R}$};
\end{tikzpicture}

  \end{center}
\caption[A non-Hausdorff identification space with the part from each dimension being Hausdorff.]{A non-Hausdorff identification space (of a good coordinate system for a Kuranishi structure) with the part from each dimension being Hausdorff.}
\label{figure08}
\end{figure}
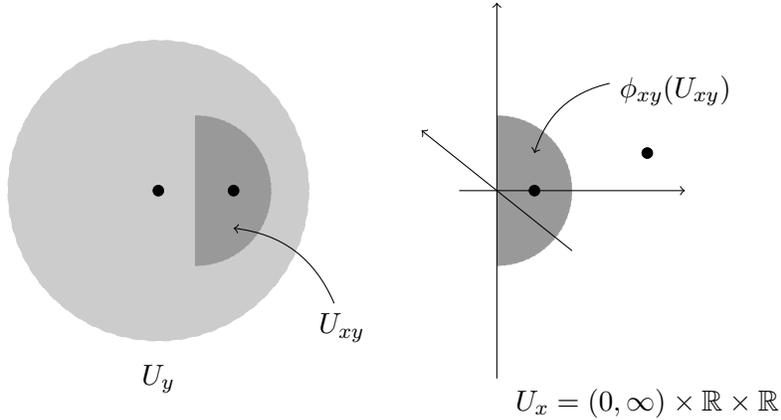

\begin{proof}[Proof of theorem \ref{BIGHAUSDORFF} by defining $\mathcal{T}(\mathcal{G},\mathcal{K})$] Changing the notations slightly, from the conclusion in theorem \ref{HAUSIDSPACE}, we can pick a precompact shrinking $\mathcal{G}$ of a good coordinate system $\tilde{\mathcal{G}}$ obtained from a Kuranishi structure $\mathcal{K}$ such that $M(\mathcal{G})$ is a Hausdorff space for the relative topology $\mathcal{T}(\mathcal{G},\tilde{\mathcal{G}})$. 

We now define $\mathcal{T}(\mathcal{G},\mathcal{K})$, and show it only depends on $\mathcal{G}$ and $\mathcal{K}$, it is well-defined, and agrees with $\mathcal{T}(\mathcal{G},\tilde{\mathcal{G}})$. We denote the collection of Kuranishi charts indexed by $S$ in the Kuranishi structure by $\mathcal{K}_S:=\{C_x|_{V_x}\}_{x\in S}$. Define $\mathcal{\mathcal{T}(\mathcal{G}, \mathcal{K}}):=\mathcal{T}(\mathcal{G}, \mathcal{K}_S)$ defined in the same way as $\mathcal{T}(\mathcal{G}, \tilde{\mathcal{G}})$. The well-definedness and recovering $\mathcal{T}(\mathcal{G},\tilde{\mathcal{G}})$ follow because by the definitions, the topologies on $M(\mathcal{G})$ satisfy the identity: $\mathcal{T}(\mathcal{G}, \mathcal{K})=\mathcal{T}(\mathcal{G}, \tilde{\mathcal{G}})$.
\end{proof}

\section{Refinements and equivalences}\label{REFANDEQUIV}

We have seen so far that starting from a Kuranishi structure, we can extract a good coordinate system with the strongly intersecting property among other nice attributes. We can define a relative topology such that the identification space obtained by gluing $\underline{U_x}, x\in S$ together using the quotiented coordinate changes of the good coordiate system is Hausdorff in that topology.

In obtaining a Hausdorff good coordinate system, we made various choices and later on when we upgrade it into a stronger form called a \emph{level-1 good coordinate system} and perform a perturbation, we will have made even more choices. We want to be able to compare results with different choices made, in particular at this stage to compare two different Hausdorff good coordinate systems obtained using the recipe of the last two sections, which the current literature simply avoids. In order to compare two such good coordinate systems, we need to be able to perform shrinking, refine the index set, and coherently replace each chart by a chart of possibly higher dimension. Those modifications to a good coordinate system are captured by the notion of a \emph{refinement}. Two good coordinate systems are essentially the same and said to be \emph{equivalent} if they can be refined to be the same good coordinate system. There is an analogous definition of refinement and hence equivalence for Kuranishi structures. An important and highly non-trivial result is that good coordinate systems obtained from equivalent Kuranishi structures are equivalent. In this paper, we will show that our perturbation method factors through the equivalence classes of Kuranishi structures; and later in the series, we will see that the polyfold-Kuranishi correspondence becomes well-defined on the level of equivalence classes of Kuranishi structures.

The notion of equivalence classes are not just a gadget to absorb ambiguity of choices. A Kuranishi structure (or a good coordinate system) merely provides a coordinate representative of the intrinsic underlying structure, and two different data sets can essentially describe the same underlying structure. Their equivalence classes provide germs to capture those intrinsic structures in a more general way than equivalence classes of atlases captures instrinsic structures of orbifolds. The equivalence classes are global germs and are first announced in \cite{DY}.

\begin{definition}[chart embedding between Kuranishi charts based at the same point]\label{SAMECOORD} Let $C_p$ and $C'_p$ be two Kuranishi charts based at the same point $p$ of $X$. An \emph{chart embedding} $(\phi_p, \hat\phi_p, V_p)=(C_p\to C'_p)$ is a coordinate change from $C_p$ to $C'_p$ such that the domain $\text{dom}(C_p\to C'_p)$ of the coordinate change is the entire $V_p$.
\end{definition}

\begin{remark} This definition allows that $\text{dim} E_p'>\text{dim} E_p$. The existence of such an embedding implies that $X_p\subset X_p'$. The definition of a chart embedding above is same as \ref{level0}, except that chart in the target is based at the same point as the chart of the domain.
\end{remark}

\begin{definition}[Kuranishi embedding of Kuranishi structures]\label{REFINE} A \emph{Kuranishi embedding} from a Kuranishi structure $\mathcal{K}''$ on $X$ to another Kuranishi structure $\mathcal{K}'$ on $X$ is a collection of chart embeddings $\{C''_p\to C'_p\}_{p\in X}$ as in \ref{SAMECOORD}, such that for all $q\in X_p''$ ($\subset X_p'$), the following square is commute up to the $G_p$-action on $C'_p$:

$$\begin{CD}
C_q'@>>> C_p'\\
@AAA @AAA\\
C_q'' @>>> C_p''
\end{CD}$$
\end{definition}
\begin{definition}[chart-refinement of Kuranishi structures] A Kuranishi embedding is \emph{open} if the chart embeddings $(\phi_p,\hat\phi_p, V''_p)$ in $(C''_p\to C'_p)$ are open maps for all $p\in X$. In particular, $\text{dim} E''_p=\text{dim} E'_p$ for all $p\in X$. An open Kuranishi embedding $\mathcal{K}''\to \mathcal{K}'$ is also called a \emph{chart-refinement}.
\end{definition}

\begin{definition}[refinement of Kuranishi structures]\label{KUREQUIV} A Kuranishi structure $\mathcal{K}$ is said to \emph{have a refinement} $\mathcal{K}'$ if there exists a diagram $$\mathcal{K}\overset{\alpha}{\leftarrow}\mathcal{K}''\overset{\beta}{\to}\mathcal{K}',$$ where $\alpha$ is a chart-refinement (namely, an open Kuranishi embedding) and $\beta$ is a Kuranishi embedding. We also say that the diagram $\mathcal{K}\overset{\alpha}{\leftarrow}\mathcal{K}''\overset{\beta}{\to}\mathcal{K}'$ is a \emph{refinement}, and $\mathcal{K}'$ refines $\mathcal{K}$.
\end{definition}
\begin{definition}[equivalence of Kuranishi structures]\label{KSEQUIV} Two Kuranishi structures $\mathcal{K}_1$ and $\mathcal{K}_2$ are \emph{Kuranishi equivalent}, or simply \emph{equivalent}, if they have a common refinement. 
\end{definition}

We now define the notion of the equivalence of good coordinate systems via common refinements of good coordinate systems. As usual, we will omit tilde in the notation of good coordinate systems\footnote{Because tilde was only introduced to discuss the relation between a Kuranishi structure and a good coordinate system obtained from it.}. We will write a typical chart of a good coordinate system by $C_x|_{U_x}$ to signify the base of the bundle in the chart being $U_x$, and it can be understood as a chart in a stand-alone good coordinate system previously denoted by $\tilde C_x:=(G_x, s_p: U_x\to E_x, \psi_x)$ or a restricted chart $C_x|_{U_x}$ from a chart $C_x$ with a possibly bigger base, but the data of this $C_x$ is not available unless otherwise included.

\begin{definition}[chart-refinement of good coordinate systems]\label{CHARTREFINEMENT} Let $$\mathcal{G}=(X, (S, \leq), \{C_x|_{U_x}\}_{x\in S}, \{C_y\to C_x\}_{(y,x)\in I(\mathcal{G}))})$$ and $\mathcal{G}'=(X, (S', \leq'), \{C'_{x'}|_{U'_{x'}}\}_{x'\in S'}, \{C'_{y'}\to C'_{x'}\}_{({y'},{x'})\in I(\mathcal{G}')})$ be two good coordinate systems for $X$. $\mathcal{G}'$ is said to chart-refine $\mathcal{G}$ if there is a map $\mu: S'\to S$ such that if $y'\leq' x'$, then $\mu(y')\leq \mu(x')$, and for each $x'\in S'$, we have an open embedding $C'_{x'}|_{U'_{x'}}\to C_{\mu(x')}|_{U_{\mu(x')}}$ with the domain of the coordinate change being the entire $U'_{x'}$, such that for all $(y', x')\in I(\mathcal{G}')$,
$$\begin{CD}
C_{\mu(y')}|_{U_{\mu(y')}}@>>> C_{\mu(x')}|_{U_{\mu(x')}}\\
@AAA @AAA\\
C'_{y'}|_{U'_{y'}} @>>> C'_{x'}|_{U'_{x'}}
\end{CD}$$ is commutative up to group actions. We will also write $\mathcal{G}'\overset{\sim}{\to}\mathcal{G}$, for short, and call it a \emph{chart-refinement} of good coordinate systems. $\mathcal{G}'$ is also said to be a \emph{chart-refinement} of $\mathcal{G}$.
\end{definition}

The chart embeddings $C'_{x'}|_{U'_{x'}}\to C_{\mu(x')}|_{U_{\mu(x')}}$ in a chart-refinement are open and $\mu$ between index sets preserves the orders. See figure\ref{figure09}.
\begin{figure}[htb]
  \begin{center}

\begin{tikzpicture}[scale=7/15]
\hspace{0 cm}

\node at (12,11.5) {chart-refinement};
\node at (1,6) {$\mathcal{G}$};
\node at (1,5.25) {$x\leq y$};

\node at (1,-1) {$\mathcal{G}'$};
\node at (1,-1.75) {$v\leq w$};
\node at (1,-2.5) {$w\leq z$};
\node at (1,-3.25) {$z\leq w$};
\node at (1,-4) {$v\leq z$};

\node at (4,-4) {$U'_v$};
\node at (7.5,-4) {$U'_w$};
\node at (12,-5) {$U'_z$};

\node at (2,-9.5) {$\mu(v)=x$};
\node at (2.5,-10.25) {$\mu(w)=\mu(z)=y$};

\path [->] node at (4,4){corresponds to $x$} edge [bend  right] (5,5.8);

\draw (3,6)--(7,6);

\draw (4,6) arc (180:178:3) (4,6)arc (180:182:3);
\filldraw (5,6) circle (2pt);

\draw [decorate,decoration={brace,amplitude=10pt}, xshift=0, yshift=8pt]
(4,6)--(7,6) node [midway,yshift=7.5pt, above]{$s_x^{-1}(0)$};

\node at (4.5,6)[below]{$U_x$};

\filldraw[black!20] (11,6) circle (3);
\draw[thick, dashed, black!60] (11,6) circle (3);
\draw (8,6)--(11.5,6).. controls (13,6) and (12.5,5.4).. (12,5);
\filldraw(12,5) arc (-35: -270:0.2)--(12.3,5.3)--(12,5);

\draw (9,6) arc (0:3:2) (9,6) arc (0:-3:2);
\filldraw (11,6) circle (2pt);

\node at (11,3)[below]{$U_y$};
\path[->]node at (13,9.5){$s_y^{-1}(0)$} edge [bend left] (12.3,6.0);

\begin{scope}[shift={(12,0)}]
\draw (3,6)--(7,6);

\draw (4,6) arc (180:178:3) (4,6)arc (180:182:3);
\filldraw (5,6) circle (2pt);
\end{scope}

\begin{scope}[shift={(10,0)}]
\filldraw[black!20] (11,6) circle (3);
\draw[thick, dashed, black!60] (11,6) circle (3);
\draw (8,6)--(11.5,6).. controls (13,6) and (12.5,5.4).. (12,5);
\filldraw(12,5) arc (-35: -270:0.2)--(12.3,5.3)--(12,5);

\draw (9,6) arc (0:3:2) (9,6) arc (0:-3:2);
\filldraw (11,6) circle (2pt);

\end{scope}

\begin{scope}[shift={(0,-16)}]
\filldraw[black!20] (11,6) circle (3);

\filldraw[black!40] (8.8,6) circle (0.6);

\filldraw[black!40] (11.5,6) circle (2.3);
\draw[thick, dashed, black!70] (11.5,6) circle (2.3);
\filldraw (11.5,6) circle (2pt);

\draw[thick, dashed, black!70] (8.8,6) circle (0.6);
\filldraw (8.8,6) circle (2pt);

\draw[thick, dashed, black!60] (11,6) circle (3);
\draw (8,6)--(11.5,6).. controls (13,6) and (12.5,5.4).. (12,5);
\filldraw(12,5) arc (-35: -270:0.2)--(12.3,5.3)--(12,5);

\draw (9,6) arc (0:3:2) (9,6) arc (0:-3:2);
\filldraw (11,6) circle (2pt);

\end{scope}

\begin{scope}[shift={(2,-16)}]
\draw (3,6)--(7,6);

\draw (4,6) arc (180:178:3) (4,6)arc (180:182:3);
\filldraw (5,6) circle (2pt);
\end{scope}

\begin{scope}[shift={(10,-8)}]

\filldraw[black!40] (8.8,6) circle (0.6);
\draw (8.2,6)--(9.4,6);

\filldraw[black!40] (11.5,6) circle (2.3);
\draw[thick, dashed, black!70] (11.5,6) circle (2.3);
\filldraw (11.5,6) circle (2pt);

\draw[thick, dashed, black!70] (8.8,6) circle (0.6);
\filldraw (8.8,6) circle (2pt);

\draw (9.2,6)--(11.5,6).. controls (13,6) and (12.5,5.4).. (12,5);
\filldraw(12,5) arc (-35: -270:0.2)--(12.3,5.3)--(12,5);

\draw (9,6) arc (0:3:2) (9,6) arc (0:-3:2);

\begin{scope}[shift={(2,0)}]
\draw (3,6)--(7,6);

\draw (4,6) arc (180:178:3) (4,6)arc (180:182:3);
\filldraw (5,6) circle (2pt);
\end{scope}

\end{scope}

\begin{scope}[shift={(-1,-8)}]
\filldraw[black!40] (8.8,6) circle (0.6);
\draw[thick, dashed, black!70] (8.8,6) circle (0.6);
\filldraw (8.8,6) circle (2pt);
\draw (9,6) arc (0:3:2) (9,6) arc (0:-3:2);
\draw (8.2,6)--(9.4,6);
\end{scope}

\begin{scope}[shift={(0,-8)}]

\filldraw[black!40] (11.5,6) circle (2.3);
\draw[thick, dashed, black!70] (11.5,6) circle (2.3);
\filldraw (11.5,6) circle (2pt);

\draw (9.2,6)--(11.5,6).. controls (13,6) and (12.5,5.4).. (12,5);
\filldraw(12,5) arc (-35: -270:0.2)--(12.3,5.3)--(12,5);

\begin{scope}[shift={(-1,0)}]
\draw (3,6)--(7,6);

\draw (4,6) arc (180:178:3) (4,6)arc (180:182:3);
\filldraw (5,6) circle (2pt);
\end{scope}

\end{scope}

\path [->] node at (8,2){corresponds to $y$} edge [bend  right] (11,5.8);
\path [->] node at (2,-5.5){corresponds to $v$} edge [bend  left] (4, -2.2);
\path [->] node at (6.5,-6.5){corresponds to $w$} edge [bend  right] (7.8, -2.2);
\path [->] node at (15,-6){corresponds to $z$} edge [bend  left] (11.5,-2.2);

\path [->] node at (10,-14) (a) {images of $U'_{x'}, x'\in S'$} edge [bend  left] (6.5, -10.2);
\path[->] (a) edge (8.7, -10.2);
\path[->] (a) edge[bend right] (10.5, -10.6);
\node at (10,-15){under open embeddings of charts};
\end{tikzpicture}

  \end{center}
\caption[A chart-refinement of good coordinate systems.]{A chart-refinement of good coordinate systems.}
\label{figure09}
\end{figure}
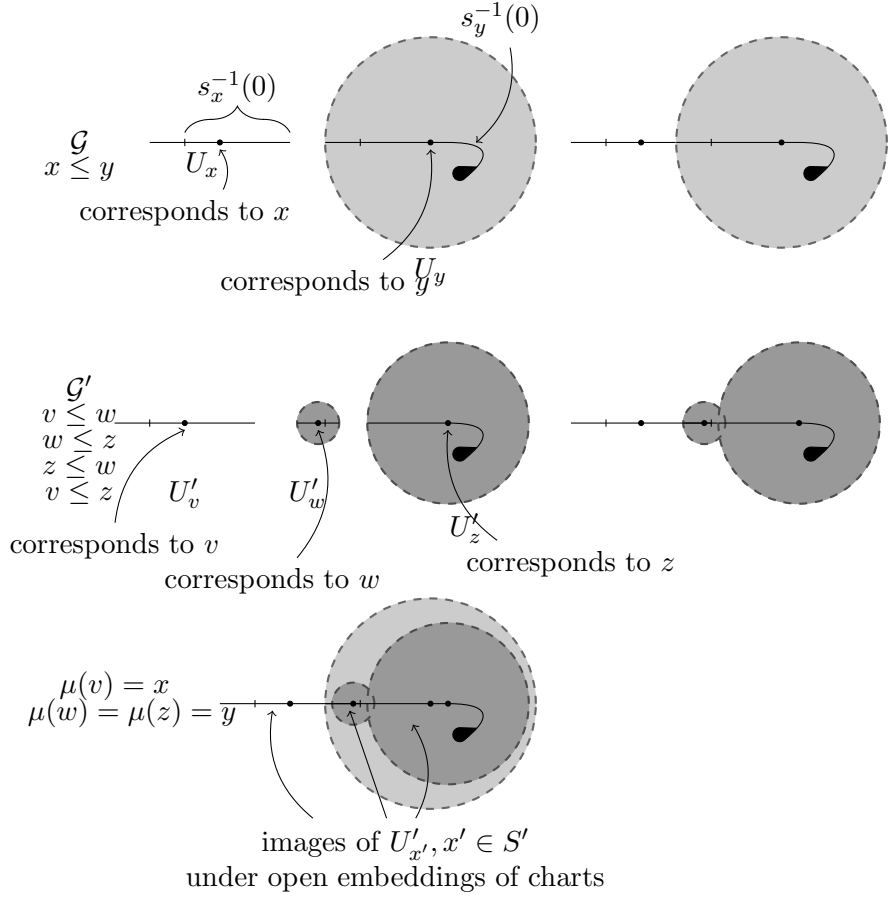

\begin{remark} Since $X'_{y'}|_{U'_{y'}}\subset X_{\mu(y')}|_{U_{\mu(y')}}$ and $y'\leq x'$ implies $\mu(y')\leq \mu(x')$, we have that if $(y', x')\in I(\mathcal{G}')$, then $(\mu(y'), \mu(x'))\in I(\mathcal{G})$. Therefore, the above square always exists if $(y', x')\in I(\mathcal{G}')$.
\end{remark}

\begin{example} A shrinking defined in definition \ref{GCSSHRINKING} is a chart-refinement with $(S, \leq)=(S', \leq')$ and $\mu=Id$.
\end{example}

So when $C'_{x'}|_{U'_{x'}}\to C_x|_{U_x}$ are inclusions, in taking a chart-refinement, we are allowed to shrink the charts and break charts into smaller charts of the same dimensions with a finer index set $S'$, while keeping $X$ globally covered.

\begin{definition}[Kuranishi embedding of good coordinate systems]\label{KEMB} Let $\mathcal{G}$ and $\mathcal{G}'$ be two good coordinate systems indexed by the same set $S=S'$ but with possibly different orders. Suppose we have Kuranishi chart embeddings $C_x|_{U_x}\to C'_x|_{U'_x}$ for all $x\in S=S'$ as in definition \ref{SAMECOORD}. If $(y,x)\in I(\mathcal{G})$, namely, $X_y|_{U_y}\cap X_x|_{U_x}\not=\emptyset$, and $y\leq x$, then we have $(y, x)\in I(\mathcal{G'})$ or $(x, y)\in I(\mathcal{G'})$ (or both)\footnote{Since $S=S'$, $X'_y|_{U'_y}\cap  X'_x|_{U'_x}\supset X_y|_{U_y}\cap X_x|_{U_x}\not=\emptyset$ and $\leq'$ is a non-antisymmetric total order, the implication follows from the definition of $I(\mathcal{G}')$.}. This collection of Kuranishi chart embeddings $C_x|_{U_x}\to C'_x|_{U'_x}, x\in S=S'$ is said to be a \emph{Kuranishi embedding} $\mathcal{G}\Rightarrow \mathcal{G}'$ if for every $(y, x)\in I(\mathcal{G})$, if $(y, x)\in I(\mathcal{G}')$, then the square
$$\begin{CD}
C'_y|_{U'_y}@>>> C'_x|_{U'_x}\\
@AAA @AAA\\
C_y|_{U_y} @>>> C_x|_{U_x}
\end{CD}$$
is commutative up to group actions;
or if we have $(x, y)\in I(\mathcal{G}')$, then the square
$$\begin{CD}
C'_y|_{U'_y}@<<< C'_x|_{U'_x}\\
@AAA @AAA\\
C_y|_{U_y} @>>> C_x|_{U_x}
\end{CD}$$
is commutative up to group actions.
\end{definition}

The two good coordinate systems from a Kuranishi embedding have the same index set, but across a Kuranishi embedding the dimensions of charts can increase and the coordinate change directions can change. See figure \ref{figure10}.

\begin{figure}[htb]
  \begin{center}

\begin{tikzpicture}[scale=7/15]
\hspace{0 cm}

\node at (12,11.5) {Kuranishi embedding};
\node at (0,6) {$\mathcal{G}$};
\node at (0,5.25) {$x\leq y$};

\node at (0,-1) {$\mathcal{G}'$};
\node at (0,-1.75) {$y\leq x$};

\node at (5,-6) {$U'_x$};
\node at (12,-5) {$U'_y$};

\path [->] node at (4,4){corresponds to $x$} edge [bend  right] (5,5.8);

\draw (3.5,6)--(6,6);

\draw (4,6) arc (180:178:3) (4,6)arc (180:182:3);
\filldraw (5,6) circle (2pt);

\draw [decorate,decoration={brace,amplitude=5pt}, xshift=0, yshift=8pt]
(4,6)--(6,6) node [midway,yshift=7.5pt, above]{$s_x^{-1}(0)$};

\node at (4.5,6)[below]{$U_x$};

\begin{scope}[shift={(-1,0)}]
\filldraw[black!20] (11,6) circle (1.5);
\draw[thick, dashed, black!60] (11,6) circle (1.5);
\draw (9.5,6)--(11,6).. controls (12.5,6) and (12,5.4).. (11.5,5);
\filldraw(11.5,5) arc (-35: -270:0.2)--(11.8,5.3)--(11.5,5);

\filldraw (11,6) circle (2pt);

\node at (11,4.5)[below]{$U_y$};
\path[->]node at (13,8){$s_y^{-1}(0)$} edge [bend left] (12,5.9);
\end{scope}

\begin{scope}[shift={(14,0)}]
\draw (3.5,6)--(6,6);

\draw (4,6) arc (180:178:3) (4,6)arc (180:182:3);
\filldraw (5,6) circle (2pt);
\end{scope}

\begin{scope}[shift={(10,0)}]

\filldraw[black!20] (11,6) circle (1.5);
\draw[thick, dashed, black!60] (11,6) circle (1.5);
\draw (9.5,6)--(11,6).. controls (12.5,6) and (12,5.4).. (11.5,5);
\filldraw(11.5,5) arc (-35: -270:0.2)--(11.8,5.3)--(11.5,5);

\filldraw (11,6) circle (2pt);

\draw (10,6) arc (0:3:2) (10,6) arc (0:-3:2);
\filldraw (11,6) circle (2pt);

\end{scope}

\begin{scope}[shift={(0,-16)}]

\filldraw[black!40] (8,6) circle (3.5);
\filldraw[black!40] (11,6) circle (2.5);

\filldraw[black!20] (11,6) circle (1.5);
\draw[thick, dashed, black!60] (11,6) circle (1.5);

\draw[thick, dashed, black!60] (8,6) circle (3.5);
\draw[very thick, black!80] (4.5, 6) arc (-120:-100:6);
\draw[very thick,dotted, black!80] (4.5, 6) arc (120:100:6);

\draw[thick, dashed, black!60] (11,6) circle (2.5);
\draw (9.5,6)--(11,6).. controls (12.5,6) and (12,5.4).. (11.5,5);
\filldraw(11.5,5) arc (-35: -270:0.2)--(11.8,5.3)--(11.5,5);

\draw (10,6) arc (0:3:2) (10,6) arc (0:-3:2);
\filldraw (11,6) circle (2pt);

\end{scope}

\begin{scope}[shift={(4,-16)}]
\draw (3.5,6)--(7,6);

\draw (4,6) arc (180:178:3) (4,6)arc (180:182:3);
\filldraw (5,6) circle (2pt);
\end{scope}

\begin{scope}[shift={(10,-8)}]

\filldraw[black!40] (8.5,6) circle (3.5);

\filldraw[black!40] (11.5,6) circle (2.5);
\draw[thick, dashed, black!70] (11.5,6) circle (2.5);
\filldraw (11.5,6) circle (2pt);

\draw[thick, dashed, black!70] (8.5,6) circle (3.5);
\draw[very thick, black!80] (5, 6) arc (-120:-100:6);
\draw[very thick,dotted, black!80] (5, 6) arc (120:100:6);

\filldraw (9.5,6) circle (2pt);
\draw (8.5,6)--(9,6);

\draw (9,6)--(11.5,6).. controls (13,6) and (12.5,5.4).. (12,5);
\filldraw(12,5) arc (-35: -270:0.2)--(12.3,5.3)--(12,5);

\end{scope}

\begin{scope}[shift={(-4,-8)}]
\filldraw[black!40](8.5,6) circle (3.5);
\draw[thick, dashed, black!70] (8.5,6) circle (3.5);
\draw[very thick, black!80] (5, 6) arc (-120:-100:6);
\draw[very thick,dotted, black!80] (5, 6) arc (120:100:6);

\begin{scope}
\clip (8.5,6) circle (3.5);

\draw (8.5,6)--(11.5,6).. controls (13,6) and (12.5,5.4).. (12,5);
\filldraw(12,5) arc (-35: -270:0.2)--(12.3,5.3)--(12,5);
\filldraw (9.5,6)circle (2pt);
\filldraw (11.5,6)circle (2pt);
\end{scope}

\end{scope}

\begin{scope}[shift={(0,-8)}]

\filldraw[black!40] (11.5,6) circle (2.5);
\draw[thick, dashed, black!70] (11.5,6) circle (2.5);
\filldraw (11.5,6) circle (2pt);

\draw (9,6)--(11.5,6).. controls (13,6) and (12.5,5.4).. (12,5);
\filldraw(12,5) arc (-35: -270:0.2)--(12.3,5.3)--(12,5);

\end{scope}

\path [->] (4,3.5) [below] edge [bend right] (5.4, -1.8);

\path [->] node at (8,2){corresponds to $y$} edge [bend  left] (10,5.8);
\path [->] node at (8,2){corresponds to $y$} edge [bend  left] (11.5,-1.8);

\draw [decorate,decoration={brace,amplitude=5pt}, xshift=0, yshift=-8pt]
(10,-10)--(7.5,-10) node [midway,yshift=-7.5pt, above]{};

\path [->] node at (10,-14) (a) {images of $U_x, x\in S$ under} edge [bend left] (8.7, -10.6);
\path[->] (a) edge[bend right] (10.5, -11);
\node at (10,-15){the Kuranishi embedding};
\end{tikzpicture}

  \end{center}
\caption[A Kuranishi embedding.]{A Kuranishi embedding.}
\label{figure10}
\end{figure}
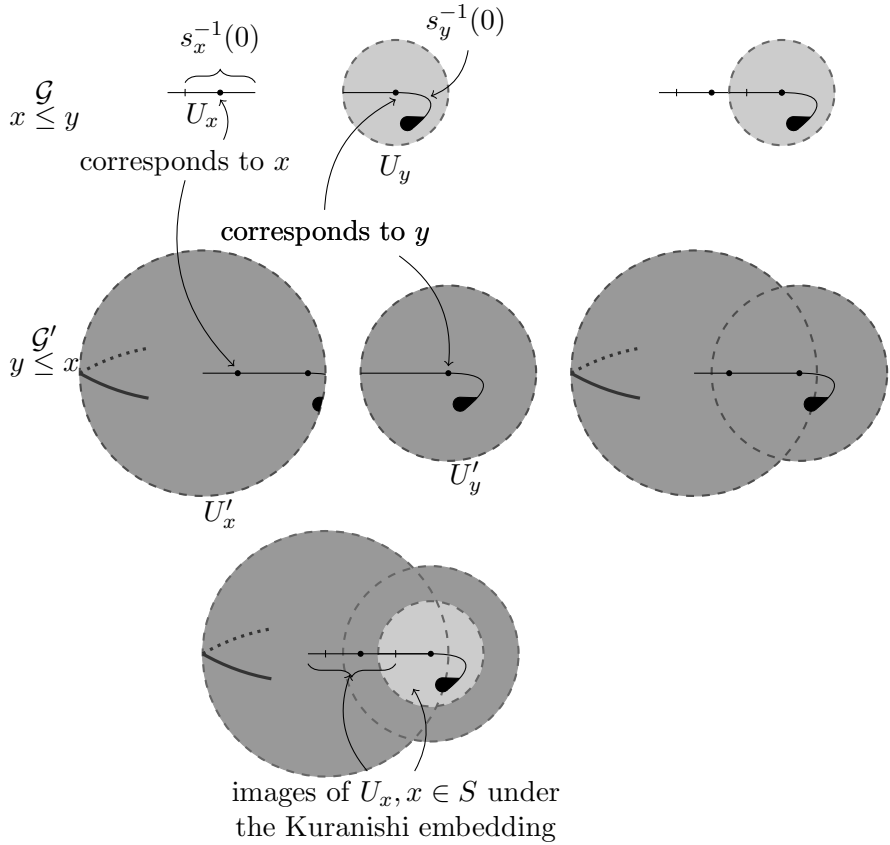

\begin{definition}[concertedness]\label{CONCERTEDNESS} A Kuranishi embedding $\mathcal{G}\Rightarrow \mathcal{G}'$ is called \emph{concerted} if for each pair $(y, x)\in S\times S$ such that $U_y$ and $U_x$ intersect in the identification space, we have $y\leq x$ and $y\leq' x$ at the same time, or $x\leq y$ and $x\leq' y$ at the same time. Equivalently, for each pair $(y,x)\in S\times S$ such that $(y,x)\in I (\mathcal{G})$ and $(x, y)\not\in I(\mathcal{G})$, we have $(y, x)\in I(\mathcal{G}')$.
\end{definition}

So in a concerted Kuranishi embedding, we can always swap one of the horizontal arrows in the second ``commutative square'' (up to group actions) in definition \ref{KEMB} to make it into the form of the first ``commutative square''.

\begin{definition}[chart-refinement of a Kuranishi embedding]\label{CROFKEMB} Let $\mathcal{G}\Rightarrow \mathcal{G'}$ and $\tilde{\mathcal{G}}\Rightarrow \tilde{\mathcal{G}'}$ be Kuranishi embeddings such that $\tilde{\mathcal{G}}$ chart-refines $\mathcal{G}$ and $\tilde{\mathcal{G}'}$ chart-refines $\mathcal{G}'$. Denote the index set of $\tilde{\mathcal{G}}$ by $\tilde S$ and in those two chart-refinements, the index sets are refined via $\mu$ and $\mu'$ respectively. $\tilde{\mathcal{G}}\Rightarrow\tilde{\mathcal{G}'}$ is said to be a \emph{chart-refinement} of $\mathcal{G}\Rightarrow\mathcal{G}'$ if $\mu=\mu'$ and for all $\tilde x\in \tilde S$, the square
$$\begin{CD}
C_{\mu(\tilde x)}|_{U_{\mu(\tilde x)}}@>>>C'_{\mu'(\tilde x)}|_{U'_{\mu'(\tilde x)}}\\
@AAA @AAA\\
\tilde C_{\tilde x}|_{\tilde U_{\tilde x}}@>>>\tilde C'_{\tilde x}|_{\tilde U'_{\tilde x}}
\end{CD}$$
commutes up to group actions.
\end{definition}

Figure \ref{figure11} illustrates how a chart-refinement of a Kuranishi embedding looks like.

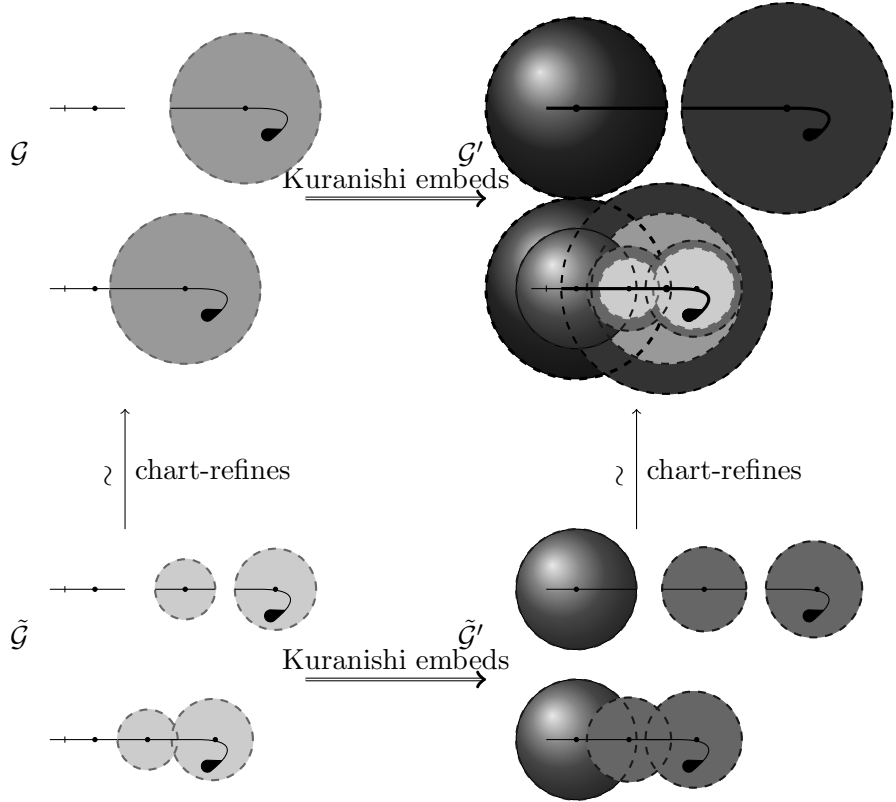
\begin{figure}[htb]
  \begin{center}

\begin{tikzpicture}[scale=20/50]
\hspace{0 cm}

\node at (17,11.5) {A chart-refinement of a Kuranishi embedding};

\node at (2.5,4.5){$\mathcal{G}$};
\node at (17.5,4.5){$\mathcal{G}'$};
\node at (2.5,-11.5){$\tilde{\mathcal{G}}$};
\node at (17.5,-11.5){$\tilde{\mathcal{G}}'$};

\path[->] (6,-8) edge node [left, shift={(.2,0.2)}, sloped]{$\sim$} (6, -4);

\node at (6,-6)[right]{chart-refines};

\path[->] (23,-8) edge node [left, shift={(.2,0.2)}, sloped]{$\sim$} (23, -4);

\node at (23,-6)[right]{chart-refines};

\draw[double, ->] (12,3) -- node [above]{Kuranishi embeds} (18, 3);
\draw[double, ->] (12,-13) -- node [above]{Kuranishi embeds} (18, -13);

\draw (3.5,6)--(6,6);

\draw (4,6) arc (180:178:3) (4,6)arc (180:182:3);
\filldraw (5,6) circle (2pt);

\begin{scope}[shift={(-1,0)}]
\filldraw[black!40] (11,6) circle (2.5);
\draw[thick, dashed, black!60] (11,6) circle (2.5);
\begin{scope}[shift={(-0.1,0)}]
\draw (8.6,6)--(11.5,6).. controls (13,6) and (12.5,5.4).. (12,5);
\filldraw(12,5) arc (-35: -270:0.2)--(12.3,5.3)--(12,5);
\end{scope}
\filldraw (11,6) circle (2pt);
\end{scope}

\draw (3.5,0)--(6,0);

\draw (4,0) arc (180:178:3) (4,0)arc (180:182:3);
\filldraw (5,0) circle (2pt);

\begin{scope}[shift={(-3,-6)}]
\filldraw[black!40] (11,6) circle (2.5);
\draw[thick, dashed, black!60] (11,6) circle (2.5);
\begin{scope}[shift={(-.1,0)}]
\draw (8.6,6)--(11.5,6).. controls (13,6) and (12.5,5.4).. (12,5);
\filldraw(12,5) arc (-35: -270:0.2)--(12.3,5.3)--(12,5);
\end{scope}
\filldraw (11,6) circle (2pt);
\end{scope}

\begin{scope}[shift={(0,-16)}]
\draw (3.5,6)--(6,6);

\draw (4,6) arc (180:178:3) (4,6)arc (180:182:3);
\filldraw (5,6) circle (2pt);

\begin{scope}[shift={(-1,0)}]
\filldraw[black!20] (12,6) circle (1.35);

\filldraw[black!20] (9,6) circle (1);

\draw[thick, dashed, black!60] (12,6) circle (1.35);

\draw[thick, dashed, black!60] (9,6) circle (1);
\draw (8,6)--(10,6);
\filldraw (9,6) circle (2pt);

\draw (10.65,6)--(11.5,6).. controls (13,6) and (12.5,5.4).. (12,5);
\filldraw(12,5) arc (-35: -270:0.2)--(12.3,5.3)--(12,5);

\filldraw (12,6) circle (2pt);
\end{scope}
\end{scope}

\begin{scope}[shift={(0,-21)}]
\draw (3.5,6)--(6,6);

\draw (4,6) arc (180:178:3) (4,6)arc (180:182:3);
\filldraw (5,6) circle (2pt);

\begin{scope}[shift={(-1,0)}]
\filldraw[black!20] (9.9,6) circle (1.35);

\filldraw[black!20] (7.75,6) circle (1);
\draw[thick, dashed, black!60] (9.9,6) circle (1.35);

\draw[thick, dashed, black!60] (7.75,6) circle (1);
\draw (6.75,6)--(8.75,6);
\filldraw (7.75,6) circle (2pt);

\begin{scope}[shift={(-2.1,0)}]
\draw (10.65,6)--(11.5,6).. controls (13,6) and (12.5,5.4).. (12,5);
\filldraw(12,5) arc (-35: -270:0.2)--(12.3,5.3)--(12,5);
\end{scope}
\filldraw (10,6) circle (2pt);
\end{scope}
\end{scope}

\begin{scope}[shift={(16,-21)}]
\filldraw[shading=ball, ball color=black!60] (5,6) circle (2);

\draw (4,6)--(7,6);

\filldraw (5,6) circle (2pt);

\begin{scope}[shift={(-1,0)}]
\filldraw[black!60] (9.9,6) circle (1.6);

\filldraw[black!60] (7.75,6) circle (1.4);

\draw[thick,dashed,black!90] (6,6) circle (2);

\draw[thick, dashed, black!90] (9.9,6) circle (1.6);

\draw[thick, dashed, black!90] (7.75,6) circle (1.4);
\draw (6.35,6)--(9.15,6);
\filldraw (7.75,6) circle (2pt);

\begin{scope}[shift={(-2.1,0)}]
\draw (10.4,6)--(11.5,6).. controls (13,6) and (12.5,5.4).. (12,5);
\filldraw(12,5) arc (-35: -270:0.2)--(12.3,5.3)--(12,5);
\end{scope}
\filldraw (10,6) circle (2pt);
\end{scope}
\end{scope}

\begin{scope}[shift={(16,-16)}]
\filldraw[shading=ball, ball color=black!60] (5,6) circle (2);

\draw (4,6)--(7,6);

\filldraw (5,6) circle (2pt);
\draw[thick,dashed,black!90] (5,6) circle (2);

\begin{scope}[shift={(1.5,0)}]
\filldraw[black!60] (7.75,6) circle (1.4);
\draw[thick, dashed, black!90] (7.75,6) circle (1.4);
\draw (6.35,6)--(9.15,6);
\filldraw (7.75,6) circle (2pt);
\end{scope}

\begin{scope}[shift={(3,0)}]
\filldraw[black!60] (9.9,6) circle (1.6);

\draw[thick, dashed, black!90] (9.9,6) circle (1.6);

\begin{scope}[shift={(-2.1,0)}]
\draw (10.4,6)--(11.5,6).. controls (13,6) and (12.5,5.4).. (12,5);
\filldraw(12,5) arc (-35: -270:0.2)--(12.3,5.3)--(12,5);
\end{scope}
\filldraw (10,6) circle (2pt);
\end{scope}
\end{scope}

\begin{scope}[shift={(16,0)}]

\filldraw[shading=ball, ball color=black!80] (5,6) circle (3);

\draw[very thick] (4,6)--(8,6);

\filldraw (5,6) circle (3pt);
\draw[very thick,dashed] (5,6) circle (3);

\begin{scope}[shift={(1,0)}]
\filldraw[black!80] (11,6) circle (3.5);
\draw[thick, dashed] (11,6) circle (3.5);

\begin{scope}[shift={(-0.1,0)}]
\draw[very thick] (7.6,6)--(11.5,6).. controls (13,6) and (12.5,5.4).. (12,5);
\filldraw(12,5) arc (-35: -270:0.2)--(12.3,5.3)--(12,5);
\end{scope}

\filldraw (11,6) circle (3pt);
\end{scope}

\begin{scope}[shift={(0,-6)}]
\filldraw[shading=ball, ball color=black!80] (5,6) circle (3);
\filldraw[black!80] (8,6) circle (3.5);
\filldraw[black!40] (8,6) circle (2.5);

\draw[very thick] (4,6)--(8,6);

\filldraw (5,6) circle (3pt);
\draw[very thick,dashed] (5,6) circle (3);

\end{scope}

\draw (3,0)--(6,0);

\draw (4,0) arc (180:178:3) (4,0)arc (180:182:3);
\filldraw (5,0) circle (2pt);

\begin{scope}[shift={(-3,-6)}]

\draw (8.5,6)--(11.5,6).. controls (13,6) and (12.5,5.4).. (12,5);
\filldraw(12,5) arc (-35: -270:0.2)--(12.3,5.3)--(12,5);

\filldraw (11,6) circle (2pt);
\end{scope}

\begin{scope}[shift={(0,-6)}]
\filldraw[shading=ball, ball color=black!60] (5,6) circle (2);

\draw (4,6)--(7,6);

\filldraw (5,6) circle (2pt);

\filldraw[black!60] (8.9,6) circle (1.6);

\filldraw[black!60] (6.75,6) circle (1.4);

\filldraw[black!20] (8.9,6) circle (1.35);

\filldraw[black!20] (6.75,6) circle (1);

\begin{scope}[shift={(-1,0)}]

\draw[thick,dashed,black!90] (6,6) circle (2);

\draw[thick, dashed, black!90] (9.9,6) circle (1.6);

\draw[thick, dashed, black!90] (7.75,6) circle (1.4);
\draw[thick, dashed, black!90] (9,6) circle (2.5);

\draw (6.35,6)--(9.15,6);
\filldraw (7.75,6) circle (2pt);

\begin{scope}[shift={(-2.1,0)}]
\draw (10.4,6)--(11.5,6).. controls (13,6) and (12.5,5.4).. (12,5);
\filldraw(12,5) arc (-35: -270:0.2)--(12.3,5.3)--(12,5);
\end{scope}
\filldraw (10,6) circle (2pt);
\end{scope}

\begin{scope}[shift={(0,0)}]
\draw (3.5,6)--(6,6);

\draw (4,6) arc (180:178:3) (4,6)arc (180:182:3);
\filldraw (5,6) circle (2pt);

\begin{scope}[shift={(-1,0)}]

\draw[thick, dashed, black!60] (9.9,6) circle (1.35);

\draw[thick, dashed, black!60] (7.75,6) circle (1);
\draw (6.75,6)--(8.75,6);
\filldraw (7.75,6) circle (2pt);

\begin{scope}[shift={(-2.1,0)}]
\draw (10.65,6)--(11.5,6).. controls (13,6) and (12.5,5.4).. (12,5);
\filldraw(12,5) arc (-35: -270:0.2)--(12.3,5.3)--(12,5);
\end{scope}
\filldraw (10,6) circle (2pt);
\end{scope}
\end{scope}

\end{scope}

\begin{scope}[shift={(-3,-6)}]

\draw[thick, dashed] (11,6) circle (3.5);

\begin{scope}[shift={(-0.1,0)}]
\draw[very thick] (7.6,6)--(11.5,6).. controls (13,6) and (12.5,5.4).. (12,5);
\filldraw(12,5) arc (-35: -270:0.2)--(12.3,5.3)--(12,5);
\end{scope}

\filldraw (11,6) circle (3pt);

\end{scope}
\end{scope}

\end{tikzpicture}

  \end{center}
\caption[A chart-refinement of a Kuranishi embedding.]{A chart-refinement of a Kuranishi embedding.}
\label{figure11}
\end{figure}

The following result is proved in the second part of this paper using a combinatorial process called \emph{tripling}.

\begin{proposition} A Kuranishi embedding of good coordinate systems always has a concerted chart-refinement.
\end{proposition}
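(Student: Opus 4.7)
The plan is to invoke the tripling process introduced in Section~\ref{TRIPLING} as a chart-refinement that splits each chart into three nested layers, and to exploit the extra slack thereby provided to rearrange the order on the new index set so that the induced Kuranishi embedding becomes concerted. The obstruction to concertedness comes from pairs $(y,x)\in I(\mathcal{G})$ where $y\leq x$ strictly (i.e.\ either $\dim E_y<\dim E_x$, or $\dim E_y=\dim E_x$ with $|G_y|<|G_x|$) but the chart embeddings of $\mathcal{G}\Rightarrow\mathcal{G}'$ reverse the dimension/stabilizer inequality, forcing $x\leq' y$ strictly; all other overlapping pairs are already fine by the equivalent formulation given in Definition~\ref{CONCERTEDNESS}.

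First I would apply tripling to $\mathcal{G}$: for each $x\in S$ choose $G_x$-invariant nested open sets $U_x^{(1)}\subset\overline{U_x^{(1)}}\subset U_x^{(2)}\subset\overline{U_x^{(2)}}\subset U_x^{(3)}\subset U_x$ (closures in $U_x$) such that $\{X_x|_{U_x^{(1)}}\}_{x\in S}$ still covers $X$. Set $\tilde S:=S\times\{1,2,3\}$, $\mu(x,i):=x$, $\tilde C_{(x,i)}:=C_x|_{U_x^{(i)}}$. A compatible tripling on the target side produces $\tilde{\mathcal{G}'}:=\{C'_x|_{U'^{(i)}_x}\}_{(x,i)\in\tilde S}$ together with chart embeddings $\tilde C_{(x,i)}\to\tilde C'_{(x,i)}$ induced from the original ones.

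The crux is to define $\tilde\leq$ on $\tilde S$. Dimensionally, for $(y,j),(x,i)\in\tilde S$ one is forced to take the direction dictated by $\dim E_y$ versus $\dim E_x$ when they differ. When the base dimensions and stabilizer sizes coincide, the order is free; tripling exploits this by arranging the layer indices so that for each problematic base pair $(y,x)$, only overlapping triple-versions with matching layer indices remain, and at these versions the direction can be declared in agreement with $\leq'$. The strict closure containments $\overline{U_x^{(i)}}\subset U_x^{(i+1)}$ ensure that all induced coordinate changes between triple-refinements (including the reversed ones in the equal-dimension case, cf.\ case (c) in the proof of Theorem~\ref{EXISTGCS}) genuinely exist and are compatible, and that the maximality and topological matching conditions inherited from $\mathcal{G}$ are preserved.

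The main obstacle is a consistency check: that the chosen order $\tilde\leq$ is simultaneously a non-antisymmetric total order respecting the dimension hierarchy of $\tilde{\mathcal{G}}$, that every demanded coordinate change $\tilde C_{(y,j)}\to\tilde C_{(x,i)}$ actually arises from a coordinate change of $\mathcal{G}$ (possibly inverted in the equal-dimension equal-stabilizer case) together with the prescribed restriction, and that all resulting reversals of direction across problematic pairs are mutually consistent so that no cyclic order is introduced on $\tilde S$. This is the combinatorial content of tripling, and once $\tilde\leq$ is in place the concertedness of $\tilde{\mathcal{G}}\Rightarrow\tilde{\mathcal{G}'}$ follows pair-by-pair from the construction.
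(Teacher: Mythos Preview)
Your proposal has a fundamental gap: the ``tripling'' you describe (three nested shrinkings $U_x^{(1)}\subset U_x^{(2)}\subset U_x^{(3)}$ with index set $\tilde S=S\times\{1,2,3\}$) is not the paper's tripling and cannot resolve the non-concerted pairs. You correctly identified the obstruction as pairs with $\dim E_y<\dim E_x$ but $\dim E'_x<\dim E'_y$. But the bundle of $\tilde C_{(x,i)}=C_x|_{U_x^{(i)}}$ has dimension $\dim E_x$ regardless of $i$, so by condition~2(b) of Definition~\ref{LEVEL0GCS} the order between $(y,j)$ and $(x,i)$ in $\tilde{\mathcal G}$ is \emph{forced} by $\dim E_y$ versus $\dim E_x$; likewise in $\tilde{\mathcal G}'$. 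The layer indices $i,j$ can only influence the order in the equal-dimension case, which is not where the problem lies. Your claim that one can arrange ``only overlapping triple-versions with matching layer indices remain'' is impossible: if the inner layers $U_y^{(1)},U_x^{(1)}$ overlap (they must, to cover $X$), then every pair $U_y^{(j)},U_x^{(i)}$ overlaps, since each contains the inner layer.

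The paper's tripling (Definition~\ref{TRIPLING}) works very differently. It first equips $\mathcal G$ with a level-1 structure; the new index set $\mathcal S_S$ consists of certain \emph{subsets} $T\subset S$, and the chart $U_T$ lives inside $U_{\max T}$, carved out using the level-1 tubular neighborhoods $W_{\alpha\beta}$ together with auxiliary regions $U^1_{(\beta,\alpha)},U^2_{(\beta,\alpha)}$ chosen so that $U^1_{(\beta,\alpha)}$ and $U^2_{(\beta,\alpha)}$ do not meet. The decisive combinatorial fact is Lemma~\ref{INTERSECTISINCLUDE}: if $U_{T_1}$ and $U_{T_2}$ intersect in the identification space, then $T_1\subset T_2$ or $T_2\subset T_1$. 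Concertedness (Proposition~\ref{CONCERTEDPROOF}) then follows because subset inclusion $T_1\subset T_2$ forces $\max T_1\leq\max T_2$ \emph{and} $\widetilde{\max}\,T_1\;\tilde\leq\;\widetilde{\max}\,T_2$ simultaneously, for \emph{any} two total orders $\leq,\tilde\leq$ on $S$. On the target side the chart indexed by $T$ is placed in $\tilde U_{\widetilde{\max}\,T}$ rather than $\tilde U_{\max T}$, with the level-1 submersion $\pi_{\max T\,\widetilde{\max}\,T}$ mediating the passage; this is precisely how the conflicting direction is absorbed, and this mechanism is absent from your proposal.
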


\begin{definition}[general embedding]\label{GEMB} A \emph{general embedding} between two good coordinate systems $\mathcal{G}\hookrightarrow \mathcal{G}'$ is a composition of a Kuranishi embedding followed by a chart-refinement, namely $\mathcal{G}\Rightarrow\mathcal{G}''\overset{\sim}{\to}\mathcal{G}'$ for some good coordinate system $\mathcal{G}''$. 
\end{definition}

\begin{definition}[refinement of good coordinate systems]\label{REFINEMENTMAP} A good coordinate system $\mathcal{G}'$ refines another good coordinate system $\mathcal{G}$ if there exists a third good coordinate system $\mathcal{G}''$ such that we have a chart-refinement $\mathcal{G''}\overset{\sim}{\to}\mathcal{G}$ and a general embedding $\mathcal{G}''\hookrightarrow \mathcal{G}'$. We also say that $\mathcal{G}'$ is a \emph{refinement} of $\mathcal{G}$, and $\mathcal{G}$ has a \emph{refinement} $\mathcal{G}'$. The diagram $$\mathcal{G}\overset{\sim}{\leftarrow}\mathcal{G}''\hookrightarrow\mathcal{G}'$$ is also called a \emph{refinement} of good coordinate systems. More explicitly, a refinement map is of the form 
$$\mathcal{G}\overset{\sim}{\leftarrow}\mathcal{G}''\Rightarrow\mathcal{G}'''\overset{\sim}{\to}\mathcal{G}'.$$
\end{definition}

\begin{definition}[equivalence of good coordinate systems]\label{GCSEQUIV} If two good coordinate systems have a common refinement, then they are said to be \emph{equivalent} as good coordinate systems.
\end{definition}

The following theorem is true, and it requires some highly non-trivial work and is proved in the second part of this paper.

\begin{theorem} Kuranishi equivalence of Kuranishi structures is an equivalence relation, and equivalence of good coordinate systems is also an equivalence relation.
\end{theorem}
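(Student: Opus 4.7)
The plan is to verify reflexivity, symmetry, and transitivity of Kuranishi equivalence, with the analogous argument for equivalence of good coordinate systems running in parallel. Reflexivity holds because the diagram $\mathcal{K} \overset{\sim}{\leftarrow} \mathcal{K} \Rightarrow \mathcal{K}$ with both legs the identity is a valid refinement (the identity is simultaneously an open Kuranishi embedding and a Kuranishi embedding), so $\mathcal{K}$ is its own common refinement. Symmetry is immediate from the symmetric role of the two Kuranishi structures in the definition of common refinement.

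Transitivity is the substantive content. Suppose $\mathcal{K}_1 \sim \mathcal{K}_2$ via a common refinement $P$, witnessed by $\mathcal{K}_1 \overset{\sim}{\leftarrow} A_1 \Rightarrow P$ and $\mathcal{K}_2 \overset{\sim}{\leftarrow} A_2 \Rightarrow P$, and $\mathcal{K}_2 \sim \mathcal{K}_3$ via $Q$, witnessed by $\mathcal{K}_2 \overset{\sim}{\leftarrow} B_2 \Rightarrow Q$ and $\mathcal{K}_3 \overset{\sim}{\leftarrow} B_3 \Rightarrow Q$. I would first take a common chart-refinement $D$ of the two shrinkings $A_2$ and $B_2$ inside $\mathcal{K}_2$; this is essentially a matter of passing to a sufficiently fine open subcover. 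Composing yields Kuranishi embeddings $D \Rightarrow P$ and $D \Rightarrow Q$ sharing the common domain $D$, placing us in the setup of item (5) of the main theorem. Passing through Proposition \ref{GCSKS} to good coordinate systems where convenient, I would then apply the fiber product construction: equip everything with a level-1 structure, chart-refine by tripling to make the pair concerted and then admissible, triple once more, and form the fiber product $R$. By construction $R$ carries Kuranishi embeddings that complete a commutative square over $D$, so $R$ refines both $P$ and $Q$.

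It then remains to compose refinements: given, for instance, the two-step chain $\mathcal{K}_1 \overset{\sim}{\leftarrow} A_1 \Rightarrow P \overset{\sim}{\leftarrow} P' \Rightarrow R$, one must produce a single refinement $\mathcal{K}_1 \overset{\sim}{\leftarrow} C_1 \Rightarrow R$. The obstruction is the zig-zag $A_1 \Rightarrow P \overset{\sim}{\leftarrow} P'$, which cannot be collapsed directly; one must pull back the chart-refinement $P' \overset{\sim}{\to} P$ along the Kuranishi embedding $A_1 \Rightarrow P$ to obtain a chart-refinement of $A_1$ fitting into a commutative square. This is precisely supplied by the tripling of item (4), which yields a concerted chart-refinement of any Kuranishi embedding, so the zig-zag can be resolved into the form of a refinement map as in Definition \ref{REFINEMENTMAP}. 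Performing this resolution on both sides of $R$ produces refinement diagrams $\mathcal{K}_1 \overset{\sim}{\leftarrow} C_1 \Rightarrow R$ and $\mathcal{K}_3 \overset{\sim}{\leftarrow} C_3 \Rightarrow R$, exhibiting $R$ as a common refinement of $\mathcal{K}_1$ and $\mathcal{K}_3$.

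The hard part is the fiber product construction invoked from item (5), which is genuinely non-trivial and rests on the full level-1 theory of coordinate changes, on the existence of concerted level-1 Kuranishi embeddings produced by tripling, and on the admissibility achieved by a further tripling --- essentially the entire technical apparatus of the second half of the paper. The proof for equivalence of good coordinate systems is identical in shape and requires no additional input, since the tripling, the concerted chart-refinement of a Kuranishi embedding, and the fiber product are all formulated directly at the level of good coordinate systems and level-1 good coordinate systems.
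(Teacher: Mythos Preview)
Your proposal is essentially correct and follows the paper's approach: reduce to transitivity, take a common chart-refinement over $\mathcal{K}_2$, pass to good coordinate systems, build the fiber product via the level-1/tripling machinery, then collapse the resulting zig-zags. Two points deserve tightening.

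First, your resolution of the zig-zag $A_1 \Rightarrow P \overset{\sim}{\leftarrow} P'$ does not actually require tripling. In the paper's proof (both the Kuranishi-structure case and Corollary~\ref{GCSEQUIVPF}) this step is handled by an elementary shrinking: one simply chooses a chart-refinement $A_1'$ of $A_1$ small enough that its image under the embedding lands inside $P'$, yielding a commutative square directly. Tripling (item (4)) is about forcing concertedness of a Kuranishi embedding, which is a different issue; invoking it here is unnecessary and slightly misleading.

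Second, your last paragraph claims the good-coordinate-system case is ``identical in shape and requires no additional input.'' The paper explicitly flags a wrinkle here: because two chart-refinements $\tilde{\mathcal{G}}^2$ and $\hat{\mathcal{G}}^2$ of $\mathcal{G}^2$ may have different index sets, a common chart-refinement need not exist. The paper instead produces a third good coordinate system $\mathcal{G}^6$ together with Kuranishi embeddings into chart-refinements of both, using the construction behind Proposition~\ref{COMMONREFINED}. So the shape is the same, but there is one genuine extra step that your sketch elides.
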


We finish this section with two results required later, which also demonstrate how refinement works.

\begin{proposition}\label{COMMONREFINED} Let $\mathcal{G}$ and $\mathcal{G}'$ be two good coordinate systems for a Kuranishi structure $\mathcal{K}$ as defined in \ref{GCSFORKS}. Then $\mathcal{G}$ and $\mathcal{G}'$ both refine some common good coordinate system $\mathcal{G}''$. In fact, we can find a chart-refinement $\tilde{\mathcal{G}}$ of $\mathcal{G}$ and a chart-refinemement $\tilde{\mathcal{G}'}$ of $\mathcal{G}'$ such that there exist natural Kuranishi embeddings $\mathcal{G}''\Rightarrow\tilde{\mathcal{G}}$ and $\mathcal{G}''\Rightarrow\tilde{\mathcal{G}}'$ for some $\mathcal{G}''$.
\end{proposition}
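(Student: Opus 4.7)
The plan is to build $\mathcal{G}''$ by taking a combined index set and letting each chart of $\mathcal{G}''$ be a (suitably small) restriction of the corresponding point-based chart in the ambient Kuranishi structure $\mathcal{K}$. Concretely, set $\tilde{S} := S \sqcup S'$ (with common elements identified), and for each $y \in \tilde{S}$ recall that the Kuranishi structure $\mathcal{K}$ provides a chart $C_y$ based at $y \in X$. For the chart-refinement $\tilde{\mathcal{G}} \overset{\sim}{\to} \mathcal{G}$, I pick a map $\mu:\tilde S\to S$ by $\mu(y)=y$ for $y\in S$ and, for $y\in S'$, by choosing some $x\in S$ with $y\in X_x|_{U_x}$ (available because $\mathcal{G}$ covers $X$); then the chart of $\tilde{\mathcal{G}}$ at $y$ is a small $G_{\mu(y)}$-invariant open restriction of $C_{\mu(y)}|_{U_{\mu(y)}}$ around (the preimage of) $y$. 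Construct $\tilde{\mathcal{G}'}$ analogously using $\mu':\tilde S\to S'$. Equip $\tilde S$ with the non-antisymmetric total order induced by $(\dim E_y, |G_y|)$ as in Section~\ref{EXISTGCS}; this makes $\mu,\mu'$ order-preserving automatically.

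For $\mathcal{G}''$ itself, for each $y\in\tilde S$ take the chart to be a small restriction $C_y|_{W_y}$ of the $\mathcal{K}$-chart $C_y$, chosen small enough so that the embeddings of $W_y$ into both $U_{\mu(y)}$ and $U'_{\mu'(y)}$ provided by coordinate changes of $\mathcal{K}$ land inside the chosen restrictions of $\tilde{\mathcal{G}}$ and $\tilde{\mathcal{G}'}$ respectively. When $y\in S$ the embedding $C_y|_{W_y}\to\tilde C_y$ in $\tilde{\mathcal{G}}$ is simply an inclusion (both are restrictions of the same $C_y$); when $y\in S'$, the needed embedding is the restriction of the coordinate change $C_y\to C_{\mu(y)}$ that exists in $\mathcal{K}$ because $y\in X_{\mu(y)}$. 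Symmetrically for $\tilde{\mathcal{G}'}$. Once the restrictions $W_y$ are small enough, the Kuranishi-embedding squares for $\mathcal{G}''\Rightarrow\tilde{\mathcal{G}}$ (and for $\mathcal{G}''\Rightarrow\tilde{\mathcal{G}'}$) commute up to $G$-action directly from the compatibility of coordinate changes in $\mathcal{K}$, because every arrow in each square is a restriction of a coordinate change in $\mathcal{K}$.

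To finish, I need to certify that $\tilde{\mathcal{G}}$, $\tilde{\mathcal{G}'}$, and $\mathcal{G}''$ are genuine good coordinate systems in the sense of Definition~\ref{LEVEL0GCS}, equivalently in the sense of \ref{GCSFORKS}: maximality, topological matching, covering, and the condition $\psi_y^{-1}(\tilde X_y\cap\tilde X_x)\subset\underline{U_{xy}}$ for coordinate changes. Covering holds since each of $\mathcal{G},\mathcal{G}'$ already covers $X$. The maximality and topological matching conditions are inherited from the Kuranishi structure $\mathcal{K}$: since every chart of $\mathcal{G}''$, $\tilde{\mathcal{G}}$, $\tilde{\mathcal{G}'}$ is a restriction of some $\mathcal{K}$-chart, and every coordinate change is either a restriction of a $\mathcal{K}$-coordinate-change or its inverse (in the dimension- and stabilizer-preserving case), Remark~\ref{TRIVIALFACT}.(2)–(3) applies and both conditions transfer. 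The order-by-$(\dim,|G|)$ convention ensures that whenever two coverages overlap there is a coordinate change in the direction dictated by $\tilde\leq$; together with a shrinking step (applying Proposition~\ref{COORDEXIST} and the Hausdorffness recipe if one wants the strongly intersecting/identification-matching properties), one gets bona fide good coordinate systems.

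The main technical obstacle is the careful ``shrinking calibration'': one must pick the sizes of $W_y$, and the restricted bases inside $U_{\mu(y)}$ and $U'_{\mu'(y)}$ used for $\tilde{\mathcal{G}}$ and $\tilde{\mathcal{G}'}$, coherently for all $y\in\tilde S$ so that (i) the Kuranishi-embedding squares actually land where they should, (ii) the resulting collections of coordinate changes on $\tilde S$ are compatible in the sense of Definition~\ref{GCSCOMPATIBLE}, and (iii) the existence condition $\tilde X_y\cap\tilde X_x\neq\emptyset\Rightarrow$ coordinate change is preserved. Since $\tilde S$ is finite and each constraint is an open condition forcing $W_y$ (and the restricted $U$'s) to be small enough around the chosen base points, finitely many shrinkings suffice; this is the step that requires the most bookkeeping but no new ideas beyond those already used in Sections~\ref{EXISTGCS} and in the construction of Hausdorff good coordinate systems.
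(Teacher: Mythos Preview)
Your approach differs from the paper's: you take the combined index set $\tilde S = S\sqcup S'$, whereas the paper builds a \emph{fresh} finite index set $S''$ by choosing, for every $p\in X$, a metric ball $B_{r_p}(p)$ small enough to lie inside both some $\mathcal G$-coverage and some $\mathcal G'$-coverage, and then extracting a finite subcover. This difference is not cosmetic; it is exactly what makes your covering step fail.

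Your claim ``Covering holds since each of $\mathcal G,\mathcal G'$ already covers $X$'' is incorrect. The chart $C_y|_{W_y}$ of $\mathcal G''$ must embed into both the $\tilde{\mathcal G}$-chart (inside $U_{\mu(y)}$) and the $\tilde{\mathcal G}'$-chart (inside $U'_{\mu'(y)}$), so its coverage is contained in $X_{\mu(y)}|_{U_{\mu(y)}}\cap X_{\mu'(y)}|_{U'_{\mu'(y)}}$. These intersections, taken over $y\in S\cup S'$, need not cover $X$. Concretely: let $X=\{1,\dots,10\}$ with the trivial rank-$0$ Kuranishi structure (all $V_p\supset X$, zero section, identity coordinate changes). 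Take $\mathcal G$ with $S=\{1,10\}$, coverages $\{1,\dots,8\}$ and $\{8,9,10\}$; and $\mathcal G'$ with $S'=\{3,9\}$, coverages $\{1,\dots,5\}$ and $\{3,\dots,10\}$. Then $\mu,\mu'$ are forced: $\mu(3)=1$, $\mu(9)=10$, $\mu'(1)=3$, $\mu'(10)=9$. The maximal possible coverage of each $W_y$ is $\{1,\dots,5\}$ (for $y\in\{1,3\}$) or $\{8,9,10\}$ (for $y\in\{9,10\}$), so $\mathcal G''$ can never cover the points $6$ and $7$. The paper's construction avoids this precisely by basing the new charts at arbitrary points of $X$ (in particular at points like $6$ or $7$), not only at points of $S\cup S'$.

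A secondary issue: the assertion that equipping $\tilde S$ with the $(\dim E_y,|G_y|)$-order makes $\mu,\mu'$ order-preserving is not automatic. Since $\dim E_{\mu(y)}\geq\dim E_y$ (there is a coordinate change $C_y\to C_{\mu(y)}$), two indices $y_1,y_2$ with $\dim E_{y_1}=\dim E_{y_2}$ can have $\dim E_{\mu(y_1)}>\dim E_{\mu(y_2)}$, violating both order-preservation of $\mu$ and condition~\ref{LEVEL0GCS}(2)(b) for $\tilde{\mathcal G}$. The paper handles this by defining the order on $\tilde{\mathcal G}$ (resp.\ $\tilde{\mathcal G}'$) in terms of $\dim E_{\mu(\cdot)}$ (resp.\ $\dim E_{\mu'(\cdot)}$), so that $\tilde{\mathcal G}$ and $\tilde{\mathcal G}'$ carry \emph{different} orders on the same index set. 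This is fixable in your framework, but the covering gap above is not.
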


\begin{proof} Suppose we have chosen two good coordinate systems $C_x|_{U_x}, x\in S$ and $C_x|_{U_x'}, x'\in S'$ for the Kuranishi structure $\mathcal{K}$. Choose a metric $d$ on $X$ so that we can define $B_r(p)\subset X$ for $p\in X$ using $d$. For all $p\in X$, choose $B_{r_p}(p)\subset X_p$ such that $B_{r_p}(p)\subset X_{x(p)}|_{U_{x(p)}}\cap X_{x'(p)}|_{U_{x'(p)}}$ for some $x(p)\in S$ and $x'(p)\in S'$ depending on $p$. The latter is possible, since the coverages on $X$ by charts in a good coordinate system provides an open cover for $X$. Choose a finite $S''$ such that $X=\bigcup_{x''\in S''} B_{r_{x''}/2}(x'')$.

The rest of the construction leading to a good coodinate system $\mathcal{G}''$ using the method in \ref{EXISTGCS} is entirely canonical, except the definition of any of inverted coordinate changes is only unique up to group actions. But this ambiguity will be already absorbed in the Kuranishi embeddings to be defined, since the squares are only required to commute up to group actions.

We now have $C_{x''}|_{U''_{x''}}, x''\in S''$. Since $$X_{x''}|_{U''_{x''}}=B_{r_{x''}/2}(x'')\subset X_{x(x'')}|_{U_{x(x'')}}\cap X_{x'(x'')}|_{U_{x'(x'')}},$$
we can choose a $G_{x''}$-invariant open subset $U'''_{x''}\subset U''_{x''}$ such that $X_{x''}|_{U'''_{x''}}=X_{x''}|_{U''_{x''}}$ such that $\phi_{x(x'') x''}(U'''_{x''})\subset U_{x(x'')}$\text{ and }$\phi_{x'(x'')x''}(U'''_{x''})\subset U'_{x'(x'')}$, where $\phi_{x(x'')x''}$ and $\phi_{x'(x'')x''}$ are the coordinate changes given in the data of the Kuranishi structure (they exist since $x''\in X_{x(x'')}$ and $x''\in X_{x'(x'')}$).

The desired good coordinate system is $\mathcal{G}'':=\{C_{x''}|_{U'''_{x''}}\}_{x''\in S''}$ with the order and the coordinate changes defined from $\mathcal{K}$ as in section \ref{EXISTGCS}.

Choose a $G_{x''}$-invariant open neighborhood $\tilde U_{x''}$ of $\phi_{x(x'') x''}(U'''_{x''})$ in $U_{x(x'')}$, and define $\tilde {\mathcal{G}}:=\{C_{x''}|_{\tilde U_{x''}}\}_{x''\in S''}$ with $\tilde \leq$ on $S''$ defined as $y''\tilde\leq x''$ if and only if\footnote{Or if and only if $x(y'')\leq x(x'')$ and $|G_{y''}|\leq |G_{x''}|$.} (a) $\text{dim} E_{x(y'')}<\text{dim} E_{x(x'')}$ or (b) $\text{dim} E_{x(y'')}=\text{dim} E_{x(x'')}$ and $|G_{y''}|\leq |G_{x''}|$. Define $\mu(x''):=x(x'')$. We have a chart-refinement $\tilde{\mathcal{G}}\overset{\sim}{\to} \mathcal{G}$. The coordinate changes in $\mathcal{K}$ induces a Kuranishi embedding $\mathcal{G}''\Rightarrow\tilde{\mathcal{G}}$.

In the same vein, choose $\tilde U'_{x''}$ as a $G_{x''}$-invariant open neighborhood of $\phi_{x' (x'')x''}(U'''_{x''})$ in $U'_{x'(x'')}$. Define $\tilde {\mathcal{G}}':=\{C'_{x''}|_{\tilde U'_{x''}}\}_{x''\in S''}$, where $y''\tilde\leq' x''$ on $S''$ if and only if\footnote{Or if and only if $x'(y'')\leq' x'(x'')$ and $|G_{y''}|\leq |G_{x''}|$.} (i) $\text{dim} E_{x'(y'')}<\text{dim} E_{x'(x'')}$ or (ii) $\text{dim} E_{x'(y'')}=\text{dim} E_{x'(x'')}$ and $|G_{y''}|\leq |G_{x''}|$. Let $\mu'(x''):=x'(x'')$. We have $\tilde{\mathcal{G}}'\overset{\sim}{\to} \mathcal{G}'$. The coordinate changes in $\mathcal{K}$ induces $\mathcal{G}''\Rightarrow\tilde{\mathcal{G}}'$.\end{proof}

Figure \ref{figure12} gives a visual explanation of proposition \ref{COMMONREFINED}.
\begin{figure}[htb]
  \begin{center}

\begin{tikzpicture}[scale=17/60]
\hspace{0cm}

\node at (13,30) {the common good coordinate system $\mathcal{G}''$ constructed};
\node at (13,28.6){(after some choice made) to be refined by both $\mathcal{G}$ and $\mathcal{G}'$};
\node at (-2,17)[blue] {$\mathcal{G}$};

\node at (25,17)[red] {$\mathcal{G}'$};
\node at (12,-1.5) {$\mathcal{G}''$};
\node at (-2,-3)[blue] {$\tilde{\mathcal{G}}$};
\node at (25,-3)[red] {$\tilde{\mathcal{G}}'$};

\path[->] (-2,-1) edge node [left, shift={(.2,0.2)}, sloped]{$\sim$} (-2, 15);

\draw[double, ->] (7,-1) -- (4,-2.5);

\path[->] (25,-1) edge node [left, shift={(.2,-0.2)}, sloped]{$\sim$} (25, 15);

\draw[double, ->] (16,-1) -- (19,-2.5);

\filldraw [red!20] (6,10) circle (6); 
\filldraw [blue!20] (18.5,10) circle (5.5);
\filldraw [red!20] (18,10) circle (4);

\filldraw [blue!20] (7,10) circle (3.3);

\filldraw [black!40] (6.5,10) circle (1.2);
\filldraw [black!40] (8,10) circle (0.8);
\filldraw [black!40] (16.5,10) circle (1);
\filldraw [black!40] (18.5,10) circle (1.7);

\draw [very thick, red!80] (11,10.08)--(16.5,10.08);

\draw [thick, blue!60] (8,10.05) -- (15,10.05);

\draw [very thick, black!60] (8.5,10)-- (10,10);
\draw [very thick, black!60] (9.5,9.95)-- (11.5,9.95);
\draw [very thick, black!60] (11,9.93)-- (14.7, 9.93);
\draw [very thick, black!60] (14.5, 9.96)-- (16, 9.96);

\draw [black!90] (6, 9.3).. controls (6.5,10.3) and (6.5,10).. (7,10) -- (17,10) ..controls (17.5,10) and (18, 10.6) .. (18.5, 10.2) arc (60:5:0.3) .. controls (18.9,9.2) and (19.8,9.8) .. (20,10.2);
\draw [black!90] (6, 9.3) arc (190: 330 :0.2) -- (6.2,9.7);
\filldraw [black!90] (6.2,9.7)--(6, 9.3) arc (190: 330 :0.2) -- (6.2,9.7);

\begin{scope}[shift={(0,-8)}]

\draw [decorate,decoration={brace,amplitude=3pt}, xshift=0, yshift=-20pt]
(10,10)--(8.5,10) node [midway,yshift=7.5pt, below]{};
\draw [decorate,decoration={brace,amplitude=3pt}, xshift=0, yshift=-15pt]
(11.5,10)--(9.5,10) node [midway,yshift=7.5pt, below]{};
\draw [decorate,decoration={brace,amplitude=3pt}, xshift=0, yshift=-20pt]
(14.7,10)--(11,10) node [midway,yshift=7.5pt, below]{};
\draw [decorate,decoration={brace,amplitude=3pt}, xshift=0, yshift=-15pt]
(16,10)--(14.5,10) node [midway,yshift=7.5pt, below]{};

\filldraw [black!40] (6.5,10) circle (1.2);
\filldraw [black!40] (8,10) circle (0.8);
\filldraw [black!40] (16.5,10) circle (1);
\filldraw [black!40] (18.5,10) circle (1.7);
\draw [very thick, black!60] (8.5,10)-- (10,10);
\draw [very thick, black!60] (9.5,9.95)-- (11.5,9.95);
\draw [very thick, black!60] (11,9.93)-- (14.7, 9.93);
\draw [very thick, black!60] (14.5, 9.96)-- (16, 9.96);

\draw [black!90] (6, 9.3).. controls (6.5,10.3) and (6.5,10).. (7,10) -- (17,10) ..controls (17.5,10) and (18, 10.6) .. (18.5, 10.2) arc (60:5:0.3) .. controls (18.9,9.2) and (19.8,9.8) .. (20,10.2);
\draw [black!90] (6, 9.3) arc (190: 330 :0.2) -- (6.2,9.7);
\filldraw [black!90] (6.2,9.7)--(6, 9.3) arc (190: 330 :0.2) -- (6.2,9.7);
\end{scope}

\begin{scope}[shift={(-10,-15)}]

\filldraw [blue!40] (6.5,10) circle (1.3);
\filldraw [blue!40] (8,10) circle (1);
\filldraw [blue!40] (16.5,10) circle (1);
\filldraw [blue!40] (18.5,10) circle (1.7);
\filldraw [blue!40] (9.25,10) circle (.85);
\filldraw [blue!40] (15.25,10) circle (1);

\draw [decorate,decoration={brace,amplitude=5pt}, xshift=0, yshift=-20pt]
(12,10)--(9,10) node [midway,yshift=7.5pt, below]{};
\draw [decorate,decoration={brace,amplitude=5pt}, xshift=0, yshift=-15pt]
(14.8,10)--(10.5,10) node [midway,yshift=7.5pt, below]{};

\draw [very thick, blue!60] (9,9.96)-- (12,9.96);
\draw [very thick, blue!60] (10.5,9.93)-- (14.8, 9.93);

\draw [black!90] (6, 9.3).. controls (6.5,10.3) and (6.5,10).. (7,10) -- (17,10) ..controls (17.5,10) and (18, 10.6) .. (18.5, 10.2) arc (60:5:0.3) .. controls (18.9,9.2) and (19.8,9.8) .. (20,10.2);
\draw [black!90] (6, 9.3) arc (190: 330 :0.2) -- (6.2,9.7);
\filldraw [black!90] (6.2,9.7)--(6, 9.3) arc (190: 330 :0.2) -- (6.2,9.7);
\end{scope}

\begin{scope}[shift={(10,-15)}]

\filldraw [red!40] (6.5,10) circle (1.5);
\filldraw [red!40] (8,10) circle (0.9);
\filldraw [red!40] (16.5,10) circle (1.2);
\filldraw [red!40] (18.5,10) circle (2);
\filldraw [red!40] (8.5,10) circle (0.9);
\filldraw [red!40] (10.5,10) circle (1.2);
\filldraw [red!40] (15.25,10) circle (1);
\draw [very thick, red!60] (11,9.93)-- (14.7, 9.93);

\draw [decorate,decoration={brace,amplitude=5pt}, xshift=0, yshift=-20pt]
(14.7,10)--(11,10) node [midway,yshift=7.5pt, below]{};

\draw [black!90] (6, 9.3).. controls (6.5,10.3) and (6.5,10).. (7,10) -- (17,10) ..controls (17.5,10) and (18, 10.6) .. (18.5, 10.2) arc (60:5:0.3) .. controls (18.9,9.2) and (19.8,9.8) .. (20,10.2);
\draw [black!90] (6, 9.3) arc (190: 330 :0.2) -- (6.2,9.7);
\filldraw [black!90] (6.2,9.7)--(6, 9.3) arc (190: 330 :0.2) -- (6.2,9.7);
\end{scope}

\begin{scope}[shift={(-12, 12)}]

\filldraw [blue!20] (18.5,10) circle (5.5);

\filldraw [blue!20] (7,10) circle (3.3);

\draw [thick, blue!60] (8,10.05) -- (15,10.05);

\draw [decorate,decoration={brace,amplitude=5pt}, xshift=0, yshift=-20pt]
(15,10)--(8,10) node [midway,yshift=7.5pt, below]{};

\draw [black!90] (6, 9.3).. controls (6.5,10.3) and (6.5,10).. (7,10) -- (17,10) ..controls (17.5,10) and (18, 10.6) .. (18.5, 10.2) arc (60:5:0.3) .. controls (18.9,9.2) and (19.8,9.8) .. (20,10.2);
\draw [black!90] (6, 9.3) arc (190: 330 :0.2) -- (6.2,9.7);
\filldraw [black!90] (6.2,9.7)--(6, 9.3) arc (190: 330 :0.2) -- (6.2,9.7);
\end{scope}

\begin{scope}[shift={(13,12)}]

\filldraw [red!20] (6,10) circle (6); 

\filldraw [red!20] (18,10) circle (4);

\draw [very thick, red!80] (11,10.08)--(16.5,10.08);

\draw [decorate,decoration={brace,amplitude=5pt}, xshift=0, yshift=-20pt]
(16.5,10)--(11,10) node [midway,yshift=7.5pt, below]{};

\draw [black!90] (6, 9.3).. controls (6.5,10.3) and (6.5,10).. (7,10) -- (17,10) ..controls (17.5,10) and (18, 10.6) .. (18.5, 10.2) arc (60:5:0.3) .. controls (18.9,9.2) and (19.8,9.8) .. (20,10.2);
\draw [black!90] (6, 9.3) arc (190: 330 :0.2) -- (6.2,9.7);
\filldraw [black!90] (6.2,9.7)--(6, 9.3) arc (190: 330 :0.2) -- (6.2,9.7);
\end{scope}

\end{tikzpicture}

  \end{center}
\caption[Proposition \ref{COMMONREFINED}.]{Proposition \ref{COMMONREFINED}.}
\label{figure12}
\end{figure}

\begin{remark} Using that the equivalence of good coordinate systems is transitive, we can establish that  $\mathcal{G}$ and $\mathcal{G}'$ are equivalent. The above proposition is an ingredient of directly establishing a common refinement for $\mathcal{G}$ and $\mathcal{G}'$. The remaining step uses a fiber product construction and is the same step for proving that the equivalence is transitive.
\end{remark}

\begin{proposition}\label{KREFINEGCS} If a good coordinate system $\mathcal{G}$ is obtained from a Kuranishi structure $\mathcal{K}$ using the method in \ref{EXISTGCS}, then the Kuranishi structure $\mathcal{K}(\mathcal{G})$ naturally induced from this good coordinate system refines $\mathcal{K}$.
\end{proposition}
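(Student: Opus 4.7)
The plan is to realize the refinement diagram $\mathcal{K}\overset{\sim}{\leftarrow}\mathcal{K}''\Rightarrow\mathcal{K}(\mathcal{G})$ required by Definitions \ref{REFINE}--\ref{KUREQUIV} by taking $\mathcal{K}''$ to be an appropriate shrinking of $\mathcal{K}$, chosen so that each of its charts embeds, via a coordinate change already present in $\mathcal{K}$, into the corresponding chart of $\mathcal{K}(\mathcal{G})$. For each $p\in X$ I will use the choice $y(p)\in S_p$ fixed when constructing $\mathcal{K}(\mathcal{G})$ in Proposition \ref{GCSKS}(2), so that $C_p^{\mathcal{K}(\mathcal{G})}=\tilde C_{y(p)}|_{V_p^{\mathcal{K}(\mathcal{G})}}$ (with the convention $y(p)=p$ when $p\in S$). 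Since $p\in\tilde X_{y(p)}\subset X_{y(p)}$, the coordinate change $\phi_{y(p),p}$ already exists in $\mathcal{K}$, so I can pick a $G_p$-invariant open neighborhood $W_p$ of the preimage of $p$ in $V_p$ with $W_p\subset V_{y(p),p}$ and $\phi_{y(p),p}(W_p)\subset U_{y(p)}\cap V_p^{\mathcal{K}(\mathcal{G})}$. Setting $C''_p:=C_p|_{W_p}$ and restricting each coordinate change of $\mathcal{K}$ to $W_{p,q}:=\phi_{p,q}^{-1}(W_p)\cap W_q$ yields the candidate $\mathcal{K}''$.

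Next I will verify that $\mathcal{K}''$ is a Kuranishi structure. Compatibility is inherited by restriction. For the maximality condition, note that $\sim_{\mathcal{K}''}$ is a subrelation of $\sim_{\mathcal{K}}$, so any $z\in W_q$ whose class is identified with some $\underline{z'}\in\underline{W_p}$ already satisfies $z\in V_{p,q}$ and $\underline{\phi_{p,q}(z)}=\underline{z'}$ by the maximality of $\mathcal{K}$; the $G_p$-invariance of $W_p$ then forces $\phi_{p,q}(z)\in W_p$, hence $z\in W_{p,q}$. The topological matching condition transfers similarly. The chart-refinement $\mathcal{K}''\overset{\sim}{\to}\mathcal{K}$ is then the family of open inclusions $C_p|_{W_p}\hookrightarrow C_p$, whose compatibility squares commute tautologically.

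For the Kuranishi embedding $\mathcal{K}''\Rightarrow\mathcal{K}(\mathcal{G})$, I take the chart embedding $C''_p\to C_p^{\mathcal{K}(\mathcal{G})}$ to be $\phi_{y(p),p}|_{W_p}$, which by construction lands inside $V_p^{\mathcal{K}(\mathcal{G})}$. The remaining content is to verify, for each $q\in X''_p$, commutativity up to $G_{y(p)}$-action of the square whose verticals are $\phi_{y(q),q}|_{W_q}$ and $\phi_{y(p),p}|_{W_p}$, whose bottom is $\phi_{p,q}|_{W_{p,q}}$, and whose top is the coordinate change in $\mathcal{K}(\mathcal{G})$ induced from $\tilde C_{y(q)}\to\tilde C_{y(p)}$ in $\mathcal{G}$; here $y(q)\leq y(p)$ is automatic since $y(p)\in S_q$ and $y(q)$ is the smallest element of $S_q$.

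The main obstacle is this last commutativity check, because by Definition \ref{GCSFORKS}(2) the coordinate change $\tilde C_{y(q)}\to\tilde C_{y(p)}$ in $\mathcal{G}$ comes in two flavors: (a) a restriction of $C_{y(q)}\to C_{y(p)}$ in $\mathcal{K}$, or (b) the inverse of a restriction of $C_{y(p)}\to C_{y(q)}$ in $\mathcal{K}$ across which dimension and stabilizer size are preserved. Note first that $q\in X_{y(p)}$ (push the preimage of $q$ in $V_p$ through $\phi_{y(p),p}$), so $\phi_{y(p),q}$ exists in $\mathcal{K}$. In case (a), the compatibility axiom of $\mathcal{K}$ applied to the triples $(C_q,C_p,C_{y(p)})$ and $(C_q,C_{y(q)},C_{y(p)})$ gives the square directly. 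In case (b), I will apply compatibility of $\mathcal{K}$ to the triples $(C_q,C_p,C_{y(p)})$ and $(C_q,C_{y(p)},C_{y(q)})$ to obtain $\phi_{y(q),y(p)}\circ\phi_{y(p),p}\circ\phi_{p,q}=\phi_{y(q),q}$ up to $G_{y(q)}$-action on the image, and then invert the dimension-preserving diffeomorphism $\phi_{y(q),y(p)}$ on that image, which is exactly the top arrow of the square. I expect the only real work to be careful bookkeeping of the $G_{y(p)}$-action in case (b); no new geometric input beyond the compatibility axiom of $\mathcal{K}$ is needed.
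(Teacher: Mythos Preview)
Your proposal is correct and follows essentially the same approach as the paper: shrink each chart $C_p$ of $\mathcal{K}$ to a $C''_p$ contained in the domain of the existing coordinate change $C_p\to C_{y(p)}$ (the paper writes $x(p)$ for your $y(p)$), so that this coordinate change becomes a chart embedding into $\mathcal{K}(\mathcal{G})$, while the inclusion $C''_p\hookrightarrow C_p$ gives the chart-refinement. The paper's proof is terser---it does not explicitly verify that the shrunken collection is a Kuranishi structure nor separate the two cases (a) and (b) of Definition~\ref{GCSFORKS}(2) in checking the commutativity squares---so your extra bookkeeping, in particular the inversion argument in case~(b), fills in details the paper leaves implicit.
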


\begin{proof} Recall the construction in proposition \ref{GCSKS}.(1). For a good coordinate system $\mathcal{G}=\{C_x|_{U_x}\}_{x\in S}$ for $\mathcal{K}:=\{C_p\}_{p\in X}$, define $S_p:=\{x\in S\;|\; p\in X_x|_{U_x}\}$, and fix a choice $x(p)$ of the smallest elements of $S_p$. For all $p\in X$, choose a $G_p$-invariant open neighborhood of $W_p$ of $U_{x(p)}$ such that $p\in X_{x(p)}|_{W_p}$. Define $K_p:=C_{x(p)}|_{W_p}$. This induces a Kuranishi structure $\mathcal{K}(\mathcal{G}):=\{K_p\}_{p\in X}$ as shown in \ref{GCSKS}.

Now we want to show that $\mathcal{K}(\mathcal{G})$ refines $\mathcal{K}$. For any $p\in X$, by the definition $p\in X_{x(p)}$, so there is a coordinate changes $C_p\to C_{x(p)}$, and this induces a coordinate changes $C_p\to K_p:=C_{x(p)}|_{W_p}$. Since the coverage on $X$ by $K_p$ is an open neighborhood $Y_p:=X_{x(p)}|_{W_p}$ of $p$ in $X$, we can find an open neighborhood $Z_p\subset X_p\cap Y_p$ of $p$ in $X$, define $V'_p:=V_p\backslash (s_p^{-1}(0)\backslash quot_p^{-1}(\psi_p^{-1}(Z_p)))$ and so $X_p|_{V'_p}=Z_p$. Now we consider the restricted coordinate change $C_p|_{V'_p}\to C_{x(p)}|_{W_p}$ with the domain of the coordinate change between restricted charts being $O_{x(p) p}$. By the maximality condition, $Z_p\subset X_p|_{O_{x(p) p}}$. We just define $V''_p:=O_{x(p) p}$. By construction, $C_p|_{V''_p}\to K_p, p\in X$ are all Kuranishi chart embeddings and together provide a Kuranishi embedding from $\mathcal{K}':=\{C_p|_{V''_p}\}_{p\in X}$ to $\mathcal{K}(\mathcal{G})$. The chart-wise inclusion $\mathcal{K}'$ to $\mathcal{K}$ is clearly a chart-refinement. So we have established that $\mathcal{K}(\mathcal{G})$ refines $\mathcal{K}$ as Kuranishi structures.
\end{proof}

\newpage

\specialsection*{PERTURBATION THEORY, CHOICE-INDEPENDENCE AND EQUIVALENCE RELATIONS\label{PERTURBATIONTHEORY}}

In part one of this paper, we demonstrated that we can obtain a strongly intersecting Hausdorff good coordinate system from a Kuranishi structure. We will now show (i) how to compactly perturb the sections in the charts in such a good coordinate system into compatible and transverse ones, (ii) how to compare all the choices made in the theory via common refinements and (iii) that two equivalences defined in section \ref{REFANDEQUIV} are equivalence relations.

To achieve (ii) and (iii), we need to be able to turn Kuranishi chart embeddings in a Kuranishi embedding between two good coordinate systems into compatible submersions with extra properties in order to form a fiber product for a suitable pair of Kuranishi embeddings mapping from a common good coordinate system. For (i) we want to have a structure in place such that when we perturb a section in a chart into a transverse section, we should be able to naturally transfer the perturbation to a chart in higher dimension, so that perturbing the section in the chart of higher dimension using this transferred perturbation is automatically transverse over the perturbed zeros (where the validity of the tangent bundle condition is not guaranteed in general but guaranteed in this structure we want to have). We accomplish both goals by the \emph{level-1} structure, and we will introduce this concept next.

\section{A special metric recovering the submersion}

We need some preliminary results for constructing compatible submersions. We will always call \'{e}tale-proper Lie groupoids as ep-groupoids in this paper, as they coincide in the current finite dimensional setting, and denote ep-groupoids by their object spaces with the morphisms implicit.

\begin{definition}[$\pi^g_{AB}$]\label{SUBMERSIONMODEL} Let $B$ be an invariant submanifold of the object space of an ep-groupoid $A$. Let $g$ be an invariant Riemannian metric on $A$. Denote the normal exponential map by $\exp^{g}_{AB}: U(N_B A)\to W_{AB}$ preserving ep-groupoid structures (namely a functor), where $U(N_BA)$ is an invariant tubular neighborhood of the zero section in the normal bundle $N_B A$ and $W_{AB}$ is the image which is an immersed\footnote{Embedded if \emph{immersed} is omitted.} invariant tubular neighborhood of $B$ in $A$. Let $pr_{AB}: N_B A\to B$ denote the normal bundle projection functor. Define $\pi_{AB}^g:=pr_{AB}\circ (\exp^g_{AB})^{-1}: W_{AB}\to B$. Later, we will refer to this as \emph{constructing the submersion (functor) using the metric $g$}. For such a submersion $\pi_{AB}^g$, we also say $g$ \emph{recovers} it.
\end{definition}

\begin{lemma}\label{TOTALGEOD} Let $\pi_D: D\to C$ be a vector bundle ep-groupoid. Choose an invariant bundle metric $h$ on $D$, an invariant bundle connection $\nabla$ on $D$ compatible with $h$, and an invariant Riemannian metric $g^C$ on $C$. Then we can naturally equip an invariant Riemannian metric $g^D$ on $D$ such that the fibers of $D$ are totally geodesic with respect to $g^D$ and the induced metric on $D_x$ from $g^D$ is $h_x$. In particular, $\pi_D=\pi^{g^D}_{DC}$ with $W_{DC}=D$.
\end{lemma}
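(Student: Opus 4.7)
The plan is to construct $g^D$ as a Sasaki-type metric built from $h$, $\nabla$, and $g^C$, and then verify each property in turn. First I will use the connection $\nabla$ to produce the horizontal-vertical splitting $TD = H \oplus V$, where $V = \ker d\pi_D$ and $H$ consists of the horizontal lifts determined by $\nabla$. Because $D \to C$ is a vector bundle ep-groupoid, each fiber $D_x$ is a vector space, so $V_{(x,v)}$ is canonically identified with $D_x$; I use $h_x$ to define the inner product on $V_{(x,v)}$. On $H_{(x,v)}$, the differential $d\pi_D$ is an isomorphism onto $T_x C$, and I pull back $g^C_x$. I then decree $H \perp V$; this unambiguously determines a Riemannian metric $g^D$ on the object space of $D$. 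Invariance of $g^D$ under the ep-groupoid structure maps follows immediately from the invariance of $h$, $\nabla$, and $g^C$, together with the functoriality of the horizontal/vertical splitting, and the compatibility of bundle charts on the morphism side means $g^D$ assembles into an invariant metric on the groupoid.

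Next I will verify that the induced metric on the fiber $D_x$ is $h_x$. The tangent space to $D_x$ at any point $(x,v)$ lies entirely in $V_{(x,v)}$, and under the canonical identification $V_{(x,v)} \cong D_x$, the metric $g^D|_{V_{(x,v)}}$ is $h_x$ by construction. In particular, the induced metric on each fiber is flat Euclidean with respect to the linear structure, so the affine straight lines $t \mapsto (x, v_0 + t w)$ parametrize the intrinsic geodesics of $(D_x, h_x)$.

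The main nontrivial step is showing that each fiber $D_x$ is totally geodesic in $(D, g^D)$. Here I compute the second fundamental form $\mathrm{II}$ of $D_x \hookrightarrow D$. Using the Koszul formula, if $V_1, V_2$ are vertical vector fields tangent to $D_x$ (extended to a neighborhood as sections of $V$ that are constant along fibers and covariantly constant along $\nabla$-horizontal lifts), and $X$ is any horizontal vector field, then $g^D(\nabla^{g^D}_{V_1} V_2, X)$ reduces via the Koszul formula to terms involving $g^D(V_i, X) = 0$ (by $H \perp V$), Lie brackets $[V_1, V_2]$ which lie in $V$, and $X g^D(V_1, V_2)$. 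The latter is controlled because $\nabla$ is compatible with $h$: the parallel extension makes $g^D(V_1, V_2)$ constant along horizontal directions, so $X g^D(V_1, V_2) = 0$. Similarly $g^D([X, V_i], V_j)$ vanishes because horizontal lifts of $\nabla$-parallel fields commute with such vertical extensions in the vertical component up to $\nabla$-curvature, which is pure in $V$. Thus $\nabla^{g^D}_{V_1} V_2 \in V$ and $\mathrm{II} = 0$, so $D_x$ is totally geodesic.

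Finally, combining totally-geodesic with flatness of the induced fiber metric, the $g^D$-geodesic starting at $(x, 0) \in C \subset D$ with initial velocity $v \in V_{(x,0)} \cong D_x$ is the affine line $t \mapsto (x, tv)$, which is defined for all $t \in \R$. The normal bundle of $C$ in $D$ is exactly $V|_C$, canonically identified with $D$ itself via the zero section; hence $\exp^{g^D}_{DC}$ is defined on all of $N_C D = D$, sends $(x,v)$ to $(x,v)$, and so $W_{DC} = D$ and $\pi^{g^D}_{DC} = pr_{DC} \circ (\exp^{g^D}_{DC})^{-1} = \pi_D$. I expect the only part requiring care to present cleanly is the Koszul-formula computation for $\mathrm{II}=0$; everything else is bookkeeping on the groupoid side.
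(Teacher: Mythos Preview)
Your construction of $g^D$ is exactly the paper's: the Sasaki-type metric built from the connector splitting $TD\cong D\oplus TC$, with $h$ on the vertical part and $\pi_D^*g^C$ on the horizontal part. The verification that the induced fiber metric is $h_x$ and the final identification $\pi^{g^D}_{DC}=\pi_D$ via the affine normal geodesics are also as in the paper.

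Where you differ is in the proof that the fibers are totally geodesic. The paper argues via a flow: it extends a normal vector $u$ by horizontally lifting a base field $Y$, notes that this flow is a fiberwise isometry (since $\nabla$ is $h$-compatible), and then uses $[U,W]=0$ together with $U(g^D(W,W))=0$ to get $g^D(\mathrm{II}(w,w),u)=0$. You instead run the Koszul formula directly. Both approaches are valid, but your bookkeeping on the Koszul terms is loose in two places. First, you cannot extend $V_i$ to be $\nabla$-parallel in \emph{all} horizontal directions over a neighborhood unless the curvature of $\nabla$ vanishes; what you can do, and what suffices, is arrange $(\nabla w_i)(x_0)=0$ at the single base point (e.g.\ by radial parallel transport). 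Second, your remark that $g^D([X,V_i],V_j)$ vanishes ``up to $\nabla$-curvature, which is pure in $V$'' is misplaced: for $X$ the horizontal lift of a base field and $V_i=w_i^a\partial_{v^a}$ constant along fibers, a direct computation gives $[X,V_i]$ equal to the vertical lift of $\nabla_X w_i$, with no curvature term (curvature appears in $[X_1,X_2]$ for two horizontal fields, not here). With the pointwise-parallel extension this bracket vanishes at $p$ and your argument closes. Alternatively, drop the special extension entirely and observe that the three surviving Koszul terms combine to $-Xh(w_1,w_2)+h(\nabla_X w_1,w_2)+h(w_1,\nabla_X w_2)=0$ by $h$-compatibility of $\nabla$; this is the cleanest way to present your route.
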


\begin{remark} Let $M$ be an ep-groupoid equipped with an invariant Riemannian metric $g$ and an invariant connection $\nabla$ compatible with $g$. Then the local stabilizer action $h\in G_x\subset Auto(U_x)$ is isometric and hence $(\nabla|_{U_x}) dh=0$. Therefore, a geodesic $\gamma\subset U_x$ will be mapped to geodesics under $G_x$. So there is no complication arising from the structures of ep-groupoids and ep-groupoid bundles for the arguments in this section, as the constructions will be natural and invariant.
\end{remark}

\begin{proof}
\begin{enumerate}
\item Definition of the connector $K^\nabla $ of a connection $\nabla$:

We can define a horizontal subspace $H_{(x,v)}\subset T_{(x,v)}D$ via the horizontal lift $h_{(x,v)}$. Let $y\in T_xC$, and pick a smooth path $\gamma:[0,\epsilon)\to C$ with $\gamma(0)=x$ and $\gamma'(0)=y$. Parallel transport $v$ along $\gamma$ using the connection $\nabla$ to a path $\hat\gamma: [0,\epsilon)\to D$ with $\hat\gamma(0)=(x,v)$. Define $h^\nabla_{(x,v)}(y):=\hat{\gamma}'(0)$. Denote $H^\nabla_{(x,v)}:=h^\nabla_{(x,v)}(T_xC)\subset T_{(x,v)}D$ and call it the horizontal subspace. Identify $t:T_{(x,v)}D_x\to D_x\cong T_{(x,0)}D_x$ by parallel transporting using the vector space structure. Since $H^\nabla_{(x,v)}$ and $T_{(x,v)}D_x$ are complementary subspaces in $T_{(x,v)}D$, we can define the horizontal projection $\pi^\nabla_{H}: T_{(x,v)}D\to H^\nabla_{(x,v)}$. We can define the connector $K^\nabla: T_{(x,v)}D\to D_x$ by $t\circ (1-\pi^\nabla_H)$\footnote{In local coordinate, $TD$ is of the form $(U\times\R^k)\times(\R^j\times\R^k)$ and $\nabla$ is $\tilde\nabla:=d+\Gamma(\cdot)(\cdot, \cdot)$ where $\Gamma: U\to L(\R^j\times\R^k,\R^k), x\mapsto \Gamma(x)(\cdot,\cdot)$. Denote $\hat\gamma(t):=(x+ty,v(t))$ the parallel transport of $(x,v)$ along $\gamma(t):=x+ty$. By the definition, $\tilde\nabla_{\gamma'}(v)|_{t=0}=v'(0)+\Gamma(0)(y,v)=0$, so $h_{(x,v)}(y):=\hat\gamma'(0)=(y, -\Gamma(0)(y,v))$. So $\pi^\nabla (x,v, y,w)=(x,v, y, -\Gamma(x)(y,v))$, and $K^\nabla$ locally is $(x,v,y,w)\mapsto t((x,v,y,w)-(x,v,y, -\Gamma(x)(y,v)))=t((x,v, 0, w+\Gamma(x)(y,v)))=(x, w+\Gamma(x)(y,v))$.}.

\item Definition of the Riemannian metric $g^D$ on $D$ as an ep-groupoid (not as a vector bundle ep-groupoid):
 
Now, $\pi_D^\ast (K^\nabla\oplus d\pi_D): TD\to \pi_D^\ast(D\oplus TC)$ gives a splitting of the tangent space $T_{(x,v)}D$ into $D_x$ and $T_xC$. Define the metric on $D$ by $$g^D:=(K^\nabla)^\ast h\oplus (d\pi_D)^\ast g^C.$$ Clearly the factors of the tangent space splitting are orthogonal to each other with respect to $g^D$. 

\item We claim that for all $x\in C$, the fiber $D_x$ is totally geodesic with respect to $g^D$. To show this, we prove that the second fundamental form $\text{II}$ on $D_x$ vanishes.

Since $\text{II}$ is symmetric, sufficiently to prove $g^D(\text{II}(w,w),u)=0$ for any non-zero $w\in T_{(x,v)}D_x$ and any non-zero $u\in T_{(x,v)}D$ normal to $D_x$. Such a $u$ comes from the horizontal lift of $y:=d\pi^D(u)\in T_{x}C$. Extend $y$ into a vector field $Y$ on $C$. $Y$ generates a horizontal flow of fibers starting from $D_x$ along the flow line and the flow preserves the induced metrics of the fibers. Denote the horizontal lift $U:=h_{(x,v)} Y$. Extend $w$ into any vector field $\tilde W$ in $D_x$ and let $W$ be the vector field on $D$ starting from $\tilde W$ and induced by the flow. So $U|_{(x,v)}=u$ and $W_{(x,v)}=w$. Denote the Levi-Civita connection associated with $g^\nabla$ by $\nabla^D$. Then
\begin{align*}
0&=U(g^D(W,W)))=2g^D(\nabla^D_U W,W)=2g^D(\nabla^D_W U,W)\\&=2W( g^D(U,W))-2g^D(U,\nabla^D_WW)=-2g^D(\nabla^D_WW)^N,U).
\end{align*}
The right-hand side at $(x,v)$ is precisely $-2g^D(\text{II}(w,w),u)$. In the above derivation, we have used that $\nabla^D_UW-\nabla^D_W U=[U,W]=0$, since $U$ and $W$ near $(x,v)$ come from local coordinate axes, hence commute, and the Levi-Civita connection is torsion-free.
\end{enumerate}
\end{proof}

\begin{proposition}\label{METRICCOMPATIBILITY} Let $B\subset A$ be an invariant submaifold in an ep-groupoid $A$, and $C\subset B$ be an invariant submanifold in $B$. Let $\pi: A\to B$ be an $F_A$-fiber bundle ep-groupoid with the fiber model $F_A$ diffeomorphic to $\R^m$ such that there exists a fiber bundle ep-groupoid diffeomorphism $\phi_A$ preserving ep-groupoid structures and mapping $A$ to an $\R^m$-fiber bundle $A'\to B$ with a vector bundle ep-groupoid structure; and let $\pi': B\to C$ be an $F_B$-fiber bundle ep-groupoid with the fiber model $F_B$ diffeomorphic to $\R^n$ such that there exists a fiber bundle ep-groupoid diffeomorphism $\phi_B$ preserving ep-groupoid structures and mapping $B$ to an $\R^n$-fiber bundle $B'\to C$ with a vector bundle ep-groupoid structure. Then there exists an invariant Riemannian metric $g$ on $A$ such that $\pi=\pi_{AB}^g$ with $A=W_{AB}$, and $\pi'=\pi_{BC}^g$ with $B=W_{BC}$ such that $\pi'\circ\pi(=\pi^g_{BC}\circ \pi^g_{AB})=\pi^g_{AC}$ with $A=W_{AC}$.
\end{proposition}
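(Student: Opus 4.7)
The plan is to construct $g$ by reducing via $\phi_A$ and $\phi_B$ to the vector bundle case and then applying Lemma \ref{TOTALGEOD} to a single Whitney-sum bundle over $C$. First I would use $\phi_A,\phi_B$ to assume $B\to C$ is a rank-$n$ vector bundle ep-groupoid and $A\to B$ a rank-$m$ vector bundle ep-groupoid. Denote by $A^C\to C$ the pullback of $A\to B$ along the zero section $C\hookrightarrow B$ (a rank-$m$ vector bundle over $C$), and form the rank-$(n+m)$ direct-sum vector bundle $\tilde A:=B\oplus A^C\to C$.

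Next I would pick an invariant bundle connection $\nabla^A$ on $A\to B$ and use it to define an invariant fiber-bundle ep-groupoid diffeomorphism $\Psi\colon\tilde A\to A$, sending $(c,v_B,v_A)\in B_c\oplus A^C_c$ to the $\nabla^A$-parallel transport of $v_A\in A_c$ along the ray $t\mapsto tv_B$ in $B_c$; this is well-defined because each fiber $B_c\cong\R^n$ is contractible. Under $\Psi$, the sub-bundle $B\oplus 0\subset\tilde A$ maps identically onto the zero section $B\subset A$, the slices $\{v_B\}\oplus A^C_c\subset\tilde A$ map onto the fibers $A_{v_B}$ of $\pi$, so the projection $\tilde A\to B$ onto the $B$-summand corresponds to $\pi$ and the projection $\tilde A\to C$ corresponds to $\pi'\circ\pi$. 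I would then equip $\tilde A$ with an invariant direct-sum bundle metric $\tilde h:=h^B\oplus h^{A^C}$, a compatible invariant direct-sum bundle connection $\tilde\nabla:=\nabla^B\oplus\nabla^{A^C}$, and an invariant Riemannian metric $g^C$ on $C$, and apply Lemma \ref{TOTALGEOD} to $\tilde A\to C$ to obtain an invariant metric $g^{\tilde A}$ whose fibers $\tilde A_c$ are totally geodesic with the flat Euclidean metric $h^B_c\oplus h^{A^C}_c$, and for which $\pi^{g^{\tilde A}}_{\tilde A C}=(\tilde A\to C)$ with $W_{\tilde A C}=\tilde A$. The desired metric is $g:=\Psi_\ast g^{\tilde A}$ on $A$.

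It then remains to verify the three identities. The identity $\pi'\circ\pi=\pi^g_{AC}$ with $W_{AC}=A$ is immediate from the lemma applied to $\tilde A$ and the projection-compatibility of $\Psi$. For $\pi'=\pi^g_{BC}$ with $W_{BC}=B$: an explicit computation of $g^{\tilde A}$ restricted to the sub-bundle $B\subset\tilde A$ (using the direct-sum splitting of $\tilde h$ and $\tilde\nabla$) shows $g^{\tilde A}|_B=g^B$, the metric on $B$ obtained by Lemma \ref{TOTALGEOD} directly from $(h^B,\nabla^B,g^C)$, so $\pi^g_{BC}=\pi^{g^B}_{BC}=\pi'$. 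For $\pi=\pi^g_{AB}$ with $W_{AB}=A$: at $(c,v_B,0)\in B\subset\tilde A$ the normal bundle $N_B\tilde A$ is the summand $A^C_c$, and the $g^{\tilde A}$-normal geodesic with initial velocity $v_A\in A^C_c$ stays in the totally geodesic flat fiber $\tilde A_c$ along the affine line $\{(c,v_B,tv_A)\}$, reaching $(c,v_B,v_A)$; pushing forward by $\Psi$ yields $\pi^g_{AB}=\pi$ and $W_{AB}=A$. The main obstacle I anticipate is the coherent $G$-invariant construction of the diffeomorphism $\Psi$ and of the direct-sum data $(\tilde h,\tilde\nabla)$ in the ep-groupoid setting; this will be handled by making all the preliminary choices invariantly and using an invariant partition of unity on $C$, with the contractibility of the fibers $B_c\cong\R^n$ being what makes a global $\Psi$ exist.
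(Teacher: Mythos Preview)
Your proposal is correct and follows essentially the same approach as the paper's proof: reduce via $\phi_A,\phi_B$ to the double vector bundle case, use an invariant connection to parallel-transport along the contractible fibers of $B\to C$ and thereby identify $A$ with a Whitney sum $B\oplus A^C$ over $C$, apply Lemma~\ref{TOTALGEOD} with a direct-sum bundle metric and connection, and push the resulting metric back. The paper's verification of the three identities is phrased in terms of totally geodesic fibers and composing normal exponential maps, while you verify them via explicit normal geodesics in the flat Euclidean fibers $\tilde A_c$; these are the same argument.
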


\begin{proof}
\begin{enumerate}
\item Convert the double fiber bundle ep-groupoid into a double vector bundle ep-groupoid:

$\phi_B$ maps $B\to C$ to $B'\to C$ (the vector bundle ep-groupoid structure on $B'$ to be used). Consider $(\phi_B^{-1})^\ast: A\to (\phi_B^{-1})^\ast A$ where $(\phi_B^{-1})^\ast A$ is a pullback bundle over $B'$, and this is mapped to $(\phi_B^{-1})^\ast A'$ using $(\phi_B^{-1})^\ast\phi_A$. $(\phi_B^{-1})^\ast A'\to B'$ is an $\R^m$-fiber bundle ep-groupoid over $B'$ with a vector bundle ep-groupoid structure, hence a vector bundle ep-groupoid. So the fiber bundle ep-groupoid diffeomorphism $(\phi_B^{-1})^\ast\phi_A: A\to A'':=(\phi_B^{-1})^\ast A'$ is a double fiber bundle ep-groupoid diffeomorphism from $A\to B\to C$ to $A''\to B'\to C$, where the latter is regarded as a double vector bundle ep-groupoid.

\item Convert the double vector bundle ep-groupoid into a direct sum of two vector bundle ep-groupoids.

Regard $A''\overset{\tilde\pi}{\to} B'\overset{\tilde\pi'}{\to}C$ as a double vector bundle ep-groupoid. For each $w\in C$, $A''|_{(\tilde\pi')^{-1}(w)\cong\R^n}$ is an $\R^m$-vector bundle ep-groupoid over a contractible space, hence is globally trivialized using an invariant vector bundle connection $\nabla^{w}$ on $\tilde\pi: A''|_{(\tilde\pi')^{-1}(w)}\to (\tilde\pi')^{-1}(w)$, smoothly depending on $w$. So there exists a vector bundle ep-groupoid diffeomorphism $\Psi^w: A''|_{(\tilde\pi')^{-1}(w)}\to (\tilde\pi')^{-1}(w)\times \R^m$ such that $w\mapsto \Psi^w$ is smooth. So $\Psi^w, w\in C$ fit together into a fiber bundle ep-groupoid diffeomorphism $\tilde\Psi$ from $A''\to C$ to an $\R^n\times\R^m$-fiber bundle ep-groupoid $\pi'': D\to C$ over $C$, where the coordinate changes between local bundle charts of $D$ is linear on $\R^n\times\R^m$ and preserve the splitting of this direct product by the construction. So we can regard $D$ as a direct sum of an $\R^n$-vector bundle ep-groupoid $D_1\to C$ and an $\R^m$-vector bundle ep-groupoid $D_2\to C$. Then $\tilde\Psi$ maps $A''\to B'\to C$ to $D=(D_1\oplus D_2)\to D_1\to C$ as a double vector bundle ep-groupoid diffeomorphism, where $D=(D_1\oplus D_2)\to C$ can also be regarded as an $\R^n\times \R^m$-vector bundle ep-groupoid.

\item Construct an invariant metric $g^D$ on $D$ recovering the double vector bundle ep-groupoid projection with totally geodesic fibers for each projection and directly recovering the projection composition.

Choose invariant bundle metrics $h_1$ on $D_1$ and $h_2$ on $D_2$, and use the direct sum metric $h:=h_1\oplus h_2$ on bundle $D$. Choose an invariant Riemannian metric $g^C$ on $C$. By invoking lemma \ref{TOTALGEOD}, we have an invariant Riemannian metric $g^D$ on $D$ such that $D_x$ is totally geodesic with respect to $g^D$ and the induced metric on $D_x$ is a constant metric $h_x=(h_1)_x\oplus (h_2)_x$. So a normal geodesic in $D$ normal to $C$ starting from $x\in C$ will stay in $D_x$ and is a straight line. Thus, $D_x$ is the image of the normal exponential map at $x$ normal to $C$. Because the bundle metric is a direct sum, $(D_1)_x$ is totally geodesic and the induced metric on $(D_1)_x$ is the constant metric $(h_1)_x$; and the fiber $D_{(x, u)}$ of $D\to D_1$ at $(x,u)$ is also totally geodesic and the induced metric on $D_{(x, u)}$ is the constant metric $I_{(x,u)}^\ast (h_2)$, where $I_{(x,u)}$ is the canonical identification of $D_{(x,u)}$ with $D_{(x,0)}=(D_2)_x$ using the vector space structure on $(D_1)_x$. So, a normal geodesic from $x\in C$ to $(x,u)\in (D_1)_x$ followed by a normal geodesic from $(x,u)\in D_1$ to $(x, w)\in D$ will stay inside $D_x$, and $(x,w)$ can be reached by a direct normal geodesic from $x\in C$. Namely, $\exp^{g^D}_{D_1C}\circ\exp^{g^D}_{DD_1}=\exp^{g^D}_{DC}$. 

\item Transfer the Riemannian metric back to the original double fiber bundle ep-groupoid recovering the double fiber bundle ep-groupoid projection as well as directly recovering the projection composition.

We now transfer the metric $g^D$ on the double bundle ep-groupoid $D\to D_1\to C$ back to a metric $g:=(\tilde\Psi\circ ((\phi_B^{-1})^\ast\phi_A))^\ast g^D$ on $A\to B\to C$. Since $(\tilde\Psi\circ ((\phi_B^{-1})^\ast\phi_A))^{-1}$ is now an isometry from $(D\to E\to C, g^D)$ to $(A\to B\to C, g)$, the respective geodesics are canonically identified and $g$ is the desired metric.
\end{enumerate}
\end{proof}

\begin{remark}\label{SINGLESUBMERSION}
Notice that the choice of invariant metric $g^C$ in constructing $g^D$ is irrelevant for our purpose. We also proved that for a fiber bundle ep-groupoid $\pi: A\to C$ that after a fiber bundle ep-groupoid diffeomorphism can be equipped with a vector bundle ep-groupoid structure, we can construct an invariant Riemannian metric $g$ on $A$ such that $\pi=\pi^g_{AC}$ with $W_{AC}=A$, and the fibers of $\pi$ are totally geodesic with respect to $g$.
\end{remark}

\section{The level-1 structure}

The definition of a level-1 good coordinate system $\hat{\mathcal{G}}'$, to be defined and constructed in this section, will only depend on a strongly intersecting Hausdorff good coordinate system $\hat{\mathcal{G}}$. The underlying good coordinate system of $\hat{\mathcal{G}}'$ forgetting the level-1 structure is a precompact shrinking of $\hat{\mathcal{G}}$. The definition does not depend on the Kuranishi structure from which $\hat{\mathcal{G}}$ originates.

\subsection{Changing the indexing set to a total order and grouping of Kuranishi charts}\label{REORGANIZEBYTOT}

We will start with a strongly intersecting Hausdorff good coordinate system $\hat{\mathcal{G}}$ indexed by $(\hat S, \hat \leq)$. We suggest the readers to recall these notions from definition \ref{STRONGLYINT} and theorem \ref{BIGHAUSDORFF}.

\begin{definition}[total order partition]\label{TOPART} A \emph{total order partition} $S$ of $(\hat S, \hat \leq)$ consists of disjoint subsets $\alpha$ of $\hat S$ indexed by $\alpha\in S$ with $\hat S=\bigsqcup_{\alpha\in S}\alpha$ such that
\begin{enumerate}
\item $\text{dim} E_x, x\in \alpha$ are the same. (Namely $\alpha\subset S_i:=\{x\in \hat S\;|\; \text{dim} E_x=i\}$ for some $i$ depending on $\alpha$.)
\item For $\beta,\alpha\in S$, if we have $y\hat \leq x$ for a pair $(y, x)\in \beta\times\alpha$, then we have $y' \hat\leq x'$ for all pairs $(y',x')\in \beta\times \alpha$. Moreover, the induced order $\leq$ on $S$, defined as $\beta\leq \alpha$ if and only if $y \hat\leq x$ for some $(y,x)\in\beta\times\alpha$, is a total order. 
\end{enumerate}
\end{definition}

\begin{example}[total order partition by the bundle dimension]\label{BUNDLEDIMEN}
Recall that for a good coordinate system, if $\text{dim} E_y< \text{dim} E_x$, then $y \hat\leq x$ and $x\hat{\not\leq} y$. Define $\alpha_i:=\{x\in \hat S\;|\; \text{dim}E_x=i\}$ and $N:=\max\{\text{dim} E_x\;|\;x\in\hat S\}$. So if $j<i$, $y\in S_j$, $x\in S_i$, we have $y\hat\leq x$ and $x\hat{\not\leq} y$. So the order $\leq$ on $S:=\{\alpha_0, \cdots, \alpha_N\}$ is compatible with $\hat\leq$ on $\hat S$ and $(S,\leq)$ is now a total order.
\end{example}

\begin{definition}[grouping Kuranishi charts, $(s_\alpha: U_\alpha\to E_\alpha, \psi_\alpha)$, or equivalently $(\mathfrak{s}_\alpha:\mathcal{U}_\alpha\to\mathcal{E}_\alpha, \Psi_\alpha)$]\label{ICHART} Let $(S,\leq)$ be a total order partition of $(\hat S, \hat \leq)$. We can view the subcollection $C_x|_{U_x}, x\in \alpha$ as a single Kuranishi chart slightly generalized, namely, as an orbifold bundle $E_\alpha\to U_\alpha$ with a preferred atlas consisting of finitely many charts, together with a section $s_\alpha: U_\alpha\to E_\alpha$ expressed in this atlas and a topological identification $\psi_\alpha$. More precisely, it can be viewed in either of the following ways:
\begin{enumerate}
\item We can view it as an orbifold bundle with a preferred representation by an ep-groupoid\footnote{An \'etale-proper Lie groupoid, as we are always in finite dimensions within this paper.} together with a section.

The object space will be $U_\alpha:=\bigsqcup_{x\in \alpha} U_x$, and the morphisms between points are induced by $G_x$ and direct coordinate changes (due to the strongly intersecting property), both of which are smooth and local diffeomorphisms. The orbit space $\underline{U_\alpha}$ is exactly $M_\alpha(\mathcal{G}):=(\bigsqcup_{x\in\alpha} U_x)/\text{morphisms}\equiv(\bigsqcup_{x\in \alpha} \underline{U_x})/\sim$. Similarly we can define $E_\alpha$, the morphisms between points in $E_\alpha$ and the orbit space $\underline{E_\alpha}$. Notice that $\underline{U_\alpha}$ is Hausdorff as it is a subspace without dimension jump of a Hausdorff space; therefore $\underline{E_\alpha}$ is also Hausdorff due to the bundle topology, and hence $E_\alpha$ is indeed an orbifold bundle representative by a bundle ep-groupoid. We will denote the ep-groupoids by the object spaces. $s_\alpha: U_\alpha\to E_\alpha$ is just a section functor from the ep-groupoid base to the ep-groupoid bundle. So it is $\bigsqcup_{x\in\alpha} s_x$ between object spaces and preserves the ep-groupoid structures. $\psi_\alpha$ is the homeomorphism $\underline{s_\alpha^{-1}(0)}\to X$ defined by gluing $\psi_x$ together, and we denote its image by $X_\alpha|_{U_\alpha}$.

\item We recall that an atlas for an orbifold is $$(\mathcal{M}, \{G_a, U_a, \varphi_a:\underline{U_a}\to \mathcal{M})\}_{a\in \alpha})$$ consisting of a Hausdorff topological space $\mathcal{M}$ and for $a\in \alpha$, a finite group $G_a$ acts smoothly and effectively on a smooth manifold $U_a$ and a homeomorphism $\varphi_a$ onto its image, such that $\bigcup_{a\in \alpha}\varphi_a(\underline{U_a})=\mathcal{M}$ and for any $a, b\in \alpha$ and any point $z\in \varphi_a(\underline{U_a})\cap\varphi_{b}(\underline{U_{b}})$, there exists $c\in \alpha$ such that $z\in \varphi_{c}(\underline{U_{c}})$ and there exist open equivariant embeddings\footnote{By the definition, an equivariant embedding induces a group isomorphism between the stabilizer groups of every point in the domain and of its image in the target.} from $(G_{c}, U_{c}, \varphi_{c})$ to $(G_a, U_a, \varphi_a)$ and from $(G_{c}, U_{c}, \varphi_{c})$ to $(G_{b}, U_{b}, \varphi_{b})$ respectively (in particular, $\varphi_{c}(\underline{U_{c}})\subset\varphi_a(\underline{U_a})\cap\varphi_{b}(\underline{U_{b}})$).

In our setting, we define $\mathcal{E}_\alpha:=(\bigsqcup_{x\in \alpha} \underline{E_x})/\sim$, where $v\sim v'$ if $v$ and $v'$ can be identified via the quotient of a direct bundle coordinate change pointing in the direction of increasing stabilizer group size, and $\mathcal{U}_\alpha:=M_\alpha(\mathcal{G}):=(\bigsqcup_{x\in \alpha}\underline{U_x})/\sim$. We have natural inclusions $incl_x: \underline{E_x}\to \mathcal{E}_\alpha$ induced by the inclusions, and they are indeed inclusions by the alternative characterization of the maximality condition. The finite collection of charts $(\mathcal{E}_\alpha, (G_x, E_x, incl_x)_{x\in \alpha})$ is not an orbifold atlas, because in general a point in the intersection of a pair from $incl_x(\underline{E_x}), x\in \alpha$ is not always covered by $incl_y(\underline{E_y})$ contained in the intersection of the pair, for some $y\in \alpha$. But we can canonically add finitely many charts to $$(\mathcal{E}_\alpha, (G_x, E_x, incl_x)_{x\in \alpha})$$ to achieve the atlas property. Indeed, define $$\tilde \alpha:=\{T\in 2^{\alpha}\;|\; \bigcap_{x\in T} incl_x(\underline{E_x})\not=\emptyset\}.$$
Define $x_T:=\text{min} T$ using $\leq$, then by the strongly intersecting property, we know that we have coordinate changes $C_{x_T}|_{U_{x_T}}\to C_y|_{U_y}$ for all $y\in T$ with the domains $U_{y x_T}$. Define $$G_T:=G_{x_T},\; E_T:=E_{x_T}|_{(\bigcap_{y\in T} U_{y x_T})}.$$ Then for any $T, T'\in \tilde\alpha$ such that $incl_{T}(\underline{E_T})\cap incl_{T'}(\underline{E_{T'}})\not=\emptyset$, there exists $T'':=T\cup T'\in \tilde \alpha$, and equivariant embeddings from the chart indexed by $T''$ into the charts indexed by $T$ and $T'$ are respectively induced by $C_{x_{T''}}|_{U_{x_{T''}}}\to C_{x_T}|_{U_{x_{T}}}$ and $C_{x_{T''}}|_{U_{x_{T''}}}\to C_{x_{T'}}|_{U_{x_{T'}}}$. Thus, we have $incl_{T''}(\underline{E_{T''}})\subset incl_{T}(\underline{E_T})\cap incl_{T'}(\underline{E_{T'}})$ (and the equality actually holds). Therefore, the collection $$(G_T, E_T, incl_T), T\in \tilde\alpha$$ is an orbifold bundle atlas for $\mathcal{E}_\alpha$. Similarly for $\mathcal{U}_\alpha$. $s_T, T\in \tilde\alpha$ gives a coordinate representation of $\mathfrak{s}_\alpha$ with $\mathfrak{s}_\alpha^{-1}(0)\subset \mathcal{U}_\alpha$. We denote $\Psi_\alpha: \mathfrak{s}_\alpha^{-1}(0)\to X$ the homeomorphism induced by gluing $\psi_x, x\in \alpha$ together and denote its image by $X_\alpha|_{\mathcal{U}_\alpha}$. Note that the canonical transition from $C_x:=(G_x, s_x, \psi_x), x\in \alpha$ to $$\mathfrak{s}_\alpha: (\mathcal{U}_\alpha, \{(G_T, U_T, \iota_T)\}_{T\in \tilde\alpha})\to (\mathcal{E}_\alpha, \{(G_T, E_T, incl_T)\}_{T\in\tilde\alpha})\text{ and }\Psi_\alpha,$$ where $\iota_T$ is the natural inclusion $\underline{U_T}\to \mathcal{U}_\alpha$, contains no more new information but merely conforms to the convention of orbifold atlases. So if we adopt this convention, we can just write $\mathfrak{s}_\alpha: \mathcal{U}_\alpha\to \mathcal{E}_\alpha$ and $C_x|_{U_x}, x\in \alpha$ interchangeably.
\end{enumerate}
\end{definition}

\begin{remark} Both grouping approaches are equivalent due to the orbifold--ep-groupoid correspondence. We will adopt the ep-groupoid approach, as it is the closest in the notation to $C_x|_{U_x}=(G_x, s_x: U_x\to E_x, \psi_x)$ when it is just a single chart. Since the charts $(s_\alpha, \psi_\alpha)$ are already expressed in local coordinates, using this approach to discuss coordinate changes between different $\alpha\in S$ expressed in these charts is not really a hybrid concept of orbifolds and ep-groupoids. Ep-groupoids induced from orbifold coordinates are visual and we can perform constructions invariant under morphisms and preserving coordinate changes in the same way as in the orbifold approach.
\end{remark}

After picking up the convention/interpretation of \ref{ICHART}.(1), we can treat $\{C_x|_{U_x}\}_{x\in \alpha}$ as a single chart for each $\alpha\in S$, the good coordinate system $\{C_x|_{U_x}\}_{x\in\hat S}$ at the start of this section will become $$C_\alpha|_{U_\alpha}:=(s_\alpha: U_\alpha\to E_\alpha, \psi_\alpha), \alpha\in S.$$ In this setup, we do not have a global group action as in the Kuranishi chart, instead we have the morphisms of an ep-groupoid acting. Here we do not want to pass to the Morita-equivalence class of this ep-groupoid, and want to remember and work with this preferred ep-groupoid choice.

Recall $I(\hat{\mathcal{G}}):=\{(y,x)\in \hat S\times \hat S\;|\; y \hat\leq x,\; X_y|_{U_y}\cap X_x|_{U_x}\not=\emptyset\}$. Now define $$I(\beta,\alpha):=\{(y, x)\in I(\hat{\mathcal{G}})\;|\; y\in \beta,\; x\in\alpha\}$$ and for $y\in \beta$ define $I_y(\beta,\alpha):=\{x\in \alpha\;|\; (y, x)\in I(\beta,\alpha)\}$ and define $$U_{\alpha\beta}:=\bigsqcup_{y\in \beta}(\bigcup_{x\in I_y(\beta,\alpha)} U_{xy}).$$

\begin{definition}[coordinate change between grouped charts]\label{EPGROUPOIDCC} A coordinate change $C_\beta|_{U_\beta}\to C_\alpha|_{U_\alpha}$ is $\hat\phi_{\alpha\beta}: E_\beta|_{U_{\alpha\beta}}\to E_\alpha$ covering $\phi_{\alpha\beta}$ with the domain of the coordinate change as $U_{\alpha\beta}$, and it is induced by $\hat\phi_{xy}, (y,x)\in I(\beta,\alpha)$. $\hat\phi_{\alpha\beta}$ can be factorized into $E_\beta|_{U_{\alpha\beta}}\to \hat\phi_{\alpha\beta}(E_\beta|_{U_{\alpha\beta}})\to E_\alpha$, where $\hat\phi_{\alpha\beta}(E_\beta|_{U_{\alpha\beta}}):=\bigsqcup_{x\in\alpha}(\bigcup_{y\in\beta}\hat\phi_{xy}(E_y|_{U_{xy}}))$, the first map in the above factorization is a Morita-refinement\footnote{A map between ep-groupoids is called a \emph{Morita-refinement}, if it is a local diffeomorphism between the object spaces and induces a homeomorphism between the orbit spaces and bijections between the corresponding stabilizers. (In HWZ's terminology in \cite{PolyfoldIII}, it is called an equivalence of ep-groupoids.)} and the second map is an equivariant embedding between ep-groupoids (preserving the induced morphism structure on $\hat\phi_{\alpha\beta}(E_\beta|_{U_{\alpha\beta}})$ and the morphism structure of $E_\alpha$).
\end{definition}

Denote 
\begin{align*}I(\mathcal{G})&:=\{(\beta,\alpha)\in S\times S\;|\; I(\beta,\alpha)\not=\emptyset\}\\
&\;\equiv \{(\beta,\alpha)\in S\times S\;|\;\beta\leq \alpha, X_\beta|_{U_\beta}\cap X_\alpha|_{U_\alpha}\not=\emptyset\}.
\end{align*} We can regard a \emph{grouped good coordinate system} $$\mathcal{G}:=(X, (S, \leq), \{C_\alpha|_{U_\alpha}\}_{\alpha\in S}, \{C_\beta|_{U_\beta}\to C_\alpha|_{U_\alpha}\}_{(\beta,\alpha)\in I(\mathcal{G})})$$ as a strongly intersecting Hausdorff good coordinate system slightly generalized and indexed by a total order.

\begin{remark} In our cases of interests later, $\alpha$ in the total order $(S,\leq)$ is either determined by the bundle dimension as in \ref{BUNDLEDIMEN}, where we write $C_{\alpha_i}|_{U_{\alpha_i}}$ as $C_i|_{U_i}$, or more generally the tuple of bundle dimensions. In general we use $\alpha,\beta, \gamma\in S$ rather than $i,j,k\in\mathbb{N}$.
\end{remark}

\begin{definition}[shrinking of a grouped good coordinate system]\label{SHRINKINGGGCS} A \emph{shrinking} $U'_\alpha,\alpha\in S$ of a good coordinate system $C_\alpha|_{U_{\alpha}},\alpha\in S$ is the same as the definition in \ref{GCSSHRINKING}, except we always require a \emph{chart shrinking of a grouped chart} to be of the following form: $U'_{\alpha}:=\bigsqcup_{x\in\alpha} U'_x$ where $U'_x$ is a $G_x$-invariant open subset of $U_x$ with $x\in X_x|_{U'_x}$ for all $x\in\alpha$ such that for all $y,x\in \alpha$, if $X_y|_{U_y}\cap X_x|_{U_x}\not=\emptyset$, then $X_y|_{U'_y}\cap X_x|_{U'_x}\not=\emptyset$.
\end{definition}

\begin{remark}\label{REMDEF}
In other words, a good coordinate system $C_\alpha|_{U'_\alpha}, \alpha\in S$ is said to be a \emph{shrinking} of $C_\alpha|_{U_\alpha}, \alpha\in S$ if the underlying good coordinate system indexed by $\hat S$ of the former is a shrinking of the underlying good coordinate system indexed by $\hat S$ of the latter. A \emph{precompact shrinking} and other notions of a grouped good coordinate system are defined in the same way as before with the above form of grouped chart shrinking.
\end{remark}

We discuss briefly why grouping charts in $\alpha$ of the same bundle dimension ensures that bases $U_x, x\in \alpha$ have the same dimension in a connected component of $U_\alpha$, hence the validity of ep-groupoid/orbifold description of $U_\alpha$ and $E_\alpha$.

\begin{definition}[connectedness] A Kuranishi structure $\mathcal{K}$ on $X$ is said to be \emph{connected}, if the identification space $M(\mathcal{K})$ is path-connected.
\end{definition}

\begin{remark}\label{CONNECTEDNESSREMARK} We note the following:
\begin{enumerate} 
\item We can use the convention that different components of $U_\alpha$ might have different dimensions (but bases in the same components of $U_\alpha$ will have the same dimension), or consider each connected component of a general Kuranishi structures in turn.
\item We can define a good coordinate system $\mathcal{G}$ to be \emph{connected} if $M(\mathcal{G})$ is path-connected. Figure \ref{figure04} shows that a connected Kuranishi structure can admit a nonconnected good coordinate system.
\item As coordinate changes satisfy the tangent bundle condition, if there is a coordinate change between charts at $q$ and $p$, then $\text{rank} E_p-\text{rank} E_q=\text{dim} U_p-\text{dim} U_q$. So, $\text{dim} E_p-\text{dim} E_q=2(\text{dim} U_p-\text{dim} U_q)$. Therefore, for a connected Kuranishi structure, $\text{dim} E_p-2\text{dim} U_p$ is constant independent of $p$. Therefore, for a good coordinate system obtained from it, $\text{dim} E_x-2\text{dim} U_x, x\in S$ is also constant.
\item Therefore for a good coordinate system obtained from a connected Kuranishi structure, bases $U_x$, $x\in \alpha\subset S_i:=\{x\in \hat S\;|\; \text{dim} E_x=i\}$, of charts of the same bundle dimension $i$ have the same dimension.
\end{enumerate}
\end{remark}

\subsection{The definition and construction of a level-1 coordinate change}

We introduce some notions before defining a level-1 coordinate change.

\begin{definition}[vector-bundle-like submersion]\label{VBLIKE} Let $\pi: W\to U$ be a fiber bundle ep-groupoid. The projection functor $\pi: W\to U$ is called \emph{vector-bundle-like submersion}, if there exists a fiber bundle ep-groupoid diffeomorphism $\Phi$ mapping $\pi: W\to U$ to a fiber bundle ep-groupoid $N\to U$ such that the bundle ep-groupoid projection $N\to U$ can be equipped with a vector bundle ep-groupoid structure.
\end{definition}

\begin{example}[$\pi^g_{AB}$ in definition \ref{SUBMERSIONMODEL}]\label{FSHRINKING} We will only consider the vector-bundle-like submersions in the following setting: Let $B\subset D$ be an invariant embedded submanifold in the object space of an ep-groupoid $D$ with the induced morphisms. Choose an invariant Riemannian metric $g$ on $D$, we can define $\pi':=\pi^g_{A'B}: A'\to B$ as in \ref{SUBMERSIONMODEL} mapping from an invariant immersed tubular neighborhood $A'$ of $B$ in $D$. (Here $\tilde A$ is not necessarily embedded in $D$, which is irrelevant here.) As $U(N_B A')$ appeared in \ref{SUBMERSIONMODEL} can be dilated into a vector bundle ep-groupoid, $\pi': A'\to B$ is a vector-bundle-like submersion. Via the unit disk bundle ep-groupoid of the vector bundle ep-groupoid model of $\pi'$, we can choose an invariant \emph{fiber-shrinking} of $A$ of $A'$ so that for each $x\in B$, the closure of the fiber $A_x:=(\pi'|_A)^{-1}(x)$ in $A'_x:=(\pi')^{-1}(x)$ is a compact disk with smooth boundary in $A'$, $x\mapsto \overline{A_x}$ is smooth, and $\pi:=\pi'|_{A}\equiv\pi_{AB}^g: A\to B$ is also a vector-bundle-like submersion.
\end{example}

\begin{remark} We will see that the notion of a vector-bundle-like submersion is a way to remember a vector bundle ep-groupoid structure of a fiber bundle ep-groupoid up to the identification by a bundle ep-groupoid diffeomorphism, so that when we require the compatibility condition involving fiber bundle ep-groupoids, we will not require the associated vector bundle ep-groupoids to satisfy the compatibility condition via the identifications.
\end{remark}

\begin{definition}[strong open neighborhood\footnote{Ep-groupoid structures are not relevant in this definition.}, strongly embeddedness, figure \ref{figure2_1}]\label{STRONGNBD} Let $U$ be a manifold and let $U'$ be an open subset of $U$ such that the closure $\overline{U'}$ is compact in $U$ and $U'$ is the interior of $\overline{U'}$. Let $U'_1$ be a submanifold in $U'$. $U'_1$ is said to be \emph{strongly embedded} in $(U', U, W)$, if 
\begin{enumerate}
\item there exists an embedded submanifold $U_1$ in $U$ such that
\begin{enumerate}
\item $\overline{U'_1}$ is compact in $U_1$, where the closure $\overline{U'_1}$ is taken in $U$, and 
\item $U'_1$ is an open interior of $\overline{U'_1}$,
\end{enumerate}
\item there exists a vector-bundle-like submersion $\pi: \hat W\to U_1$ for an immersed open subset $\hat W$ of $U$ such that $\pi^{-1}(V)$ is embedded over some open neighborhood $V$ of $\overline{U'_1}$ in $U_1$ (in particular, the fiber $\pi^{-1}(z)$ of $z\in U'_1$ does not degenerate as $z$ approaches $\overline{U'_1}\backslash U'_1$), and
\item $W=\pi^{-1}(U'_1)$ and $U'\cap \hat W=W$.
\end{enumerate}
We also say the $(U', U, W)$ is a \emph{strong open neighborhood} of $U'_1$.
\end{definition}

\begin{figure}[htb]
  \begin{center}

\begin{tikzpicture}[scale=1]
\hspace{0 cm}, 

\filldraw[thick, dashed, color=black!20] (8,0) circle (4 and 2.5);

\filldraw[color=black!45](5.37, 0.63) arc (158: 172.2:4) arc (172: 192: 1).. controls (5.2, -1) and (7, -1.6).. (8,-1.4) .. controls (9, -1.3) and (10.5, -.7).. (10.5, 0) .. controls (10.5, 0.7) and (9.4, 1.2).. (9,1.3)..controls (8, 1.5) and (5.9, 1.4)..(5.37, 0.63);

\begin{scope}
\clip (5.37, 0.63) arc (158: 168.2:4)--(10, -3)--(10, 2)--(5.37, 0.63);
\filldraw[dashed, color=black!60] 
(4,0).. controls (6, 1.5) and (7,-.2) .. (9, 0.8) arc (155:175:4).. controls (8, -0.8) and (7,-1.2) .. (6,-0.4) .. controls (5,.2) and (5,-0.2).. (4,0);

\clip (4,0).. controls (6, 1.5) and (7,-.2) .. (9, 0.8) arc (155:175:4).. controls (8, -0.8) and (7,-1.2) .. (6,-0.4) .. controls (5,.2) and (5,-0.2) ..(4,0);
\foreach \len in {4,4.2,...,11}
{
\draw[black!75] (\len +1, 2.4) arc (125:180:4);
}
\end{scope}

\begin{scope}
\clip (2.5, -0.4) rectangle (8.765, 0.4);
\draw[thick] (5.2,0.13) .. controls (5.8,0.2) and (6.2,0.3) ..(7,0) .. controls (8,-0.5) and (8.5,0.5) .. (9,0);
\end{scope}

\draw[very thick, densely dotted, color=black!75] (5.37, 0.63) arc (158: 172.2:4) arc (172: 192: 1).. controls (5.2, -1) and (7, -1.6).. (8,-1.4) .. controls (9, -1.3) and (10.5, -.7).. (10.5, 0) .. controls (10.5, 0.7) and (9.4, 1.2).. (9,1.3)..controls (8, 1.5) and (5.9, 1.4)..(5.37, 0.63);
\node at (10, 1.8) {$U$};

\node at (9.5, .2) {$U'$};

\path[->] node at (5.4,-2.7)[below] {$U'_1$} edge [bend left] (6,.2);

\path[->] node at (10,-3)[below] {$W$} edge [bend right] (8.02,-.47);

\draw[thick] (5.205, 0.138) circle (1.5pt);
\draw[thick] (8.76, 0.144) circle (1.5pt);

\end{tikzpicture}

  \end{center}
\caption[A strong open neighborhood.]{A strong open neighborhood.}
\label{figure2_1}
\end{figure}
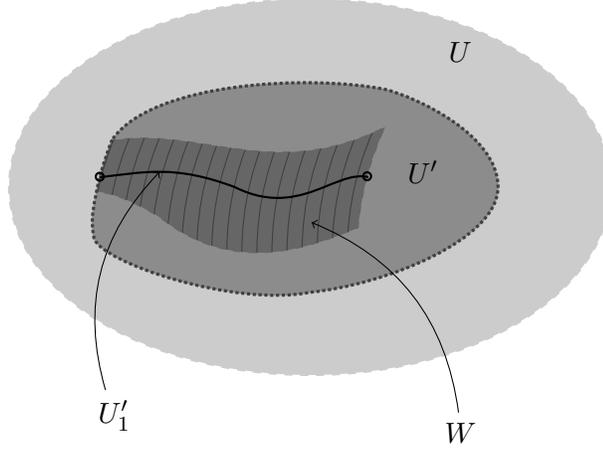

\begin{definition}[(precompact) shrinking of charts with index $J$]\label{PRECOMPACTCSHRINK} Let $\mathcal{G}$ be a strongly intersecting Hausdorff good coordinate system. Fix a subset $J$ of $S$. $\{U'_\alpha\}_{\alpha\in J}$ is said to be a \emph{shrinking} of $\{U_\alpha\}_{\alpha\in J}$ if for all $\alpha\in J$, $U'_\alpha$ is an invariant open subset of $U_\alpha$ and $\{C_\alpha|_{U'_\alpha}\}_{\alpha\in J}\bigsqcup \{C_\beta|_{U_\alpha}\}_{\beta\in S\backslash J}$ with induced coordinate changes among them is a shrinking of $\mathcal{G}$ in the sense of definition \ref{SHRINKINGGGCS}. A shrinking $\{U'_\alpha\}_{\alpha\in J}$ is called a \emph{precompact shrinking}, if additionally for all $\alpha\in J$, $\overline{U'_\alpha}$ is compact in $U_\alpha$ and $U'_\alpha$ is the interior of $\overline{U'_\alpha}$ 
\end{definition}

This is consistent with a (precompact) shrinking of good coordinate systems defined in \ref{SHRINKINGGGCS} and \ref{REMDEF} in the sense that the special case of a (precompact) shrinking of charts with $J=S$ is a precompact shrinking of good coordinate systems.


\begin{definition}[level-1 coordinate change]\label{LEVEL1CCHANGE} Let $$\mathcal{G}:=(X, (S,\leq), \{C_\alpha|_{U_\alpha}\}_{\alpha\in S})$$ be a strongly intersecting Hausdorff good coordinate system indexed by a total order $(S, \leq)$. Let $\beta, \alpha\in S$ with $(\beta,\alpha)\in I(\mathcal{G})$. Let $\{U'_\beta, U'_\gamma\}$ be a precompact shrinking of $\{U_\beta, U_\alpha\}$ as in definition \ref{PRECOMPACTCSHRINK}.

A \emph{level-1 coordinate change} $C_\beta|_{U_\beta'}\overset{\text{level-1}}{\to} C_\alpha|_{U'_\alpha}$ from $C_\beta|_{U'_\beta}$ to $C_\alpha|_{U'_\alpha}$ is a tuple $(C_\beta|_{U'_\beta}\to C_\alpha|_{U'_\alpha}, W'_{\alpha\beta}, \pi_{\alpha\beta}, \tilde\pi_{\alpha\beta}, \hat\pi_{\alpha\beta})$ consisting of the following:
\begin{enumerate}
\item a restricted coordinate change $$(C_\beta|_{U'_\beta}\to C_\alpha|_{U'_\alpha})=(\phi_{\alpha\beta}, \hat\phi_{\alpha\beta}, U'_\beta, U'_\alpha,  U'_{\alpha\beta})$$ such that the domain $U'_{\alpha\beta}$ of the coordinate change $C_\beta|_{U'_\beta}\to C_\alpha|_{U'_\alpha}$ is determined by restricting $C_\beta|_{U_\beta}\to C_\alpha|_{U_\alpha}$;
\item a fiber bundle ep-groupoid projection functor $\pi_{\alpha\beta}: W'_{\alpha\beta}\to \phi_{\alpha\beta}(U'_{\alpha\beta})$, where $W'_{\alpha\beta}$ is an embedded open tubular neighborhood of $\phi_{\alpha\beta}(U'_{\alpha\beta})$ in the object space of $U'_\alpha$ and regarded as an ep-groupoid with the morphisms being the $W'_{\alpha\beta}$-invariant morphisms in $U'_\alpha$, such that
\begin{enumerate}
\item $\pi_{\alpha\beta}$ is a vector-bundle-like submersion,
\item $(U'_\beta, U_\beta, W'_{\alpha\beta})$ is a strong open neighborhood of $\phi_{\alpha\beta}(U'_{\alpha\beta})$ as in \ref{STRONGNBD},
\item $W'_{\alpha\beta}\backslash \phi_{\alpha\beta}(U'_{\alpha\beta})$ contains no zeros of $s_\alpha|_{W'_{\alpha\beta}}$, and
\item for any $(y, x)\in\beta\times\alpha$, presenting the orbits of $\pi_{\alpha\beta}^{-1}(\phi_{xy}(U'_{xy}))$ under the morphisms of $U'_\alpha$ disjointly as $O^{xy}_1, \cdots, O^{xy}_{k_{xy}}$, $\iota^{xy}:\bigsqcup_i \overline{O^{xy}_i}\to U'_\alpha$, where $\iota^{xy}|_{\overline{O^{xy}_i}}$ is the inclusion, is injective;
\end{enumerate}
\item $\tilde\pi_{\alpha\beta}:\tilde E_{\alpha\beta}\to \hat\phi_{\alpha\beta}(E_\beta|_{U'_{\alpha\beta}})$, a fiberwise-isomorphic vector bundle map covering $\pi_{\alpha\beta}: W'_{\alpha\beta}\to \phi_{\alpha\beta}(U'_{\alpha\beta})$, where $\tilde E_{\alpha\beta}\to W'_{\alpha\beta}$ is a subbundle of the object space of $E_\alpha|_{W'_{\alpha\beta}}$, $\tilde E_{\alpha\beta}$ is an ep-groupoid after equipped with the $\tilde E_{\alpha\beta}$-invariant morphisms in $E_\alpha|_{U'_\alpha}$, and $\tilde E_{\alpha\beta}\to W'_{\alpha\beta}$ is a vector bundle ep-groupoid projection functor, such that 
\begin{enumerate}
\item $\tilde\pi_{\alpha\beta}$ is a functor between vector bundle ep-groupoids\footnote{Also a fiber bundle ep-groupoid projection functor.}, 
\item $\tilde E_{\alpha\beta}|_{\phi_{\alpha\beta}(U'_{\alpha\beta})}=\hat\phi_{\alpha\beta}(E_\beta|_{U'_{\alpha\beta}})$,
\item the zero set of $s_\alpha|_{W'_{\alpha\beta}}/\tilde E_{\alpha\beta}: W'_{\alpha\beta}\to E_\alpha|_{W'_{\alpha\beta}}/\tilde E_{\alpha\beta}$ is exactly $\phi_{\alpha\beta}(U'_{\alpha\beta})$, and 
\item the well-defined linearization of $s_\alpha|_{W'_{\alpha\beta}}/\tilde E_{\alpha\beta}$ over $\phi_{\alpha\beta}(U'_{\alpha\beta})$ is transverse; and
\end{enumerate}
\item a fiberwise-projection functor $\hat\pi_{\alpha\beta}: E_\alpha|_{W'_{\alpha\beta}}\to \tilde E_{\alpha\beta}$ between vector bundle ep-groupoids, where $E_\alpha|_{W'_{\alpha\beta}}$ is an ep-groupoid with the morphisms being the $E_\alpha|_{W'_{\alpha\beta}}$-invariant morphisms in $E_\alpha|_{U'_\alpha}$. 
\end{enumerate}
\end{definition}

\begin{remark}\label{LEVEL1CCREM} There are a few important remarks:
\begin{enumerate}[(I)]
\item The following commutative diagram
$$\begin{CD}
\tilde E_{\alpha\beta} @>\tilde\pi_{\alpha\beta}>> \hat\phi_{\alpha\beta}(E_\beta|_{U'_{\alpha\beta}})\\
@VVV @VVV\\
W'_{\alpha\beta} @>\pi_{\alpha\beta}>> \phi_{\alpha\beta}(U'_{\alpha\beta})
\end{CD}$$
of bundle ep-groupoid projection functors appeared in \ref{LEVEL1CCHANGE}.(2) is not invariant\footnote{But is invariant under the natural representation of stabilizer groups of the morphisms of $\phi_{\alpha\beta}(U'_{\alpha\beta})$, for example, hence they are bundle ep-groupoid projections.} under the morphisms of $E_\alpha|_{U'_\alpha}$, but we can always take the invariant hull $\text{orbit}(\tilde E_{\alpha\beta})$ of this picture, the union of the orbits of $\tilde E_{\alpha\beta}$ under the morphisms of $E_\alpha|_{U'_\alpha}$. We can easily convert the definition \ref{LEVEL1CCHANGE} into one which is invariant under $E_\alpha|_{U'_\alpha}$, or vice versa, with the contents staying the same. The current choice is purely a formalism: 
\begin{enumerate}[(i)] 
\item The form of coordinate changes is consistent with ones in the original definition: $E_\beta|_{U'_\beta}\to \hat\phi_{\alpha\beta}(E_\beta|_{U'_{\alpha\beta}})\to E_\alpha$ rather than $E_\beta|_{U'_\beta}\to \text{orbit}(\hat\phi_{\alpha\beta}(E_\beta|_{U'_{\alpha\beta}}))\to E_\alpha$.
\item If sticking to (i), being invariant under the morphisms of $E_\alpha|_{U'_\alpha}$ will require using $\text{orbit}(\cdot)$, making notations clustered in \ref{LEVEL1CCHANGE}. ($\text{orbit}(\cdot)$ is deferred to the inductive constructions in the proof.)
\end{enumerate}
Therefore, we think of the level-1 structure as data near the image ep-groupoid of the coordinate change and it can be made invariant under the morphisms in the target by taking the union of its orbits.
\item Define $\text{dim}\;\alpha:=\text{dim}\;E_x$ for any $x\in\alpha$. For a total order partition, $\text{dim}\;\alpha$ is well-defined. If $\text{dim}\;\beta=\text{dim}\;\alpha$, then the level-1 structure for $C_\beta|_{U'_\beta}\to C_\alpha|_{U'_\alpha}$ is trivial: $\pi_{\alpha\beta}=\text{Id}$, $\tilde\pi_{\alpha\beta}=\text{Id}$ and $\hat\pi_{\alpha\beta}=\text{Id}$.
\item \ref{LEVEL1CCHANGE}.(2)(d) will allow room to extend constructions on the orbit of the image of the coordinate change over $U'_\alpha$. The orbits of $\pi_{\alpha\beta}^{-1}(\phi_{xy}(U'_{xy}))$ under the morphisms of $U'_\alpha$ either are disjoint or coincide, so the disjoint presentation exists and is unique up to reordering. In particular, taking the closure of the disjoint presentation of the orbits of $\phi_{xy}(U'_{xy})$ will not create double points, which is not true for general $\mathcal{G}$. 
\item We can regard $C_\beta|_{U_\beta}\to C_\alpha|_{U_\alpha}$ as obtained from grouping charts, or a \emph{general coordinate change} between charts based on general ep-groupoids, defined to be of the form of a Morita-refinement followed by an embedding functor as in \ref{EPGROUPOIDCC}. The extensibility condition \ref{LEVEL1CCHANGE}(2)(d) is then: for every neighborhood $Q_z$ of every $z\in \phi_{\alpha\beta}(U'_{\alpha\beta})$ such that the $Q_z$-invariant morphisms in $U'_\alpha$ becomes the natural representation of the stabilizer group of $z$, with the orbits of $\pi_{\alpha\beta}^{-1}(Q_z)$ under the morphisms of $U'_\alpha$ disjointly listed as $O^{\alpha z}_1, \cdots, O^{\alpha z}_{k_{\alpha z}}$, $\iota^{\alpha z}:\bigsqcup_i \overline{O^{\alpha z}_i}\to U'_\alpha$ is injective, where $\iota|_{\overline{O^{\alpha z}_i}}$ is the inclusion.
\end{enumerate}
\end{remark}

\begin{proposition}[existence of a level-1 coordinate change]\label{EOFLEVEL1CC} Let $$(C_\beta|_{U_\beta}\to C_\alpha|_{U_\alpha})=(\phi_{\alpha\beta},\hat\phi_{\alpha\beta}, U_{\alpha\beta}), (\beta, \alpha)\in I(\mathcal{G})$$ be a coordinate change in a strongly intersecting Hausdorff good coordinate system $\mathcal{G}=\{C_\alpha|_{U_\alpha}\}_{\alpha\in S}$\footnote{See \ref{LEVEL1CCREM}.(4).}. After choosing an invariant Riemannian metric $g_{\alpha\beta}$ on $U_\alpha$, we can construct a level-1 coordinate change from it in the sense of definition \ref{LEVEL1CCHANGE}. Here the special case $\beta=\alpha$ is also trivially subsumed.
\end{proposition}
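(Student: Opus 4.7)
The plan is to build the level-1 data in two stages: first a tubular-neighborhood construction in the base using the given metric $g_{\alpha\beta}$, then a parallel-transport construction on the bundle side using auxiliary invariant connection and metric on $E_\alpha$. Throughout we exploit precompactness in the shrinkings to turn the pointwise tangent bundle condition into a genuinely open condition via continuity. The trivial case $\text{dim}\,\beta=\text{dim}\,\alpha$ (in particular $\beta=\alpha$) is handled by taking all maps to be identities, as observed in Remark \ref{LEVEL1CCREM}.(II), so we assume $\text{dim}\,\beta<\text{dim}\,\alpha$.

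First I would fix precompact shrinkings $U'_\beta\subset\subset U_\beta$ and $U'_\alpha\subset\subset U_\alpha$ in the sense of \ref{PRECOMPACTCSHRINK}, small enough that $\overline{U'_\beta}$ lies in a prescribed small tubular neighborhood of $\overline{s_\beta^{-1}(0)\cap U'_\beta}$ in $U_\beta$, while still preserving the covering condition $x\in X_x|_{U'_x}$ (this is possible since the coverage depends only on zeros of $s_x$). Set $B:=\phi_{\alpha\beta}(U'_{\alpha\beta})$, an invariant embedded submanifold whose closure in $U_\alpha$ is compact. Applying the construction of Definition \ref{SUBMERSIONMODEL} with $A=U_\alpha$ and the invariant metric $g_{\alpha\beta}$, I obtain $\pi^{g_{\alpha\beta}}_{U_\alpha B}$ from an invariant immersed tubular neighborhood onto $B$; then the fiber-shrinking construction of Example \ref{FSHRINKING} yields an embedded tubular neighborhood $W'_{\alpha\beta}\to B$ on which $\pi_{\alpha\beta}$ is a vector-bundle-like submersion. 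Shrinking the tube radius further (using $g_{\alpha\beta}$-geodesic distance) handles the remaining conditions in \ref{LEVEL1CCHANGE}.(2): (b) follows because over the compact $\overline{B}$ the fiber bundle is embedded, giving the strong neighborhood data of \ref{STRONGNBD}; (c) follows from shrinking the radius below the $g_{\alpha\beta}$-distance from $\overline{B}$ to the closed set of zeros of $s_\alpha$ in $U'_\alpha\setminus B$ (a positive distance by the maximality condition and compactness); (d) follows from properness of the morphisms of the ep-groupoid $U'_\alpha$ applied to the compact set $\overline{B}$, so for sufficiently small radius the orbit components of $\pi_{\alpha\beta}^{-1}(\phi_{xy}(U'_{xy}))$ have disjoint closures in $U'_\alpha$.

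Next, for the bundle data, I would choose an invariant bundle connection $\nabla$ and an invariant bundle metric $h$ on $E_\alpha$. I define $\tilde E_{\alpha\beta}\subset E_\alpha|_{W'_{\alpha\beta}}$ as the subbundle obtained by $\nabla$-parallel transporting $\hat\phi_{\alpha\beta}(E_\beta|_{U'_{\alpha\beta}})|_B$ along the geodesic fibers of $\pi_{\alpha\beta}$. By construction $\tilde E_{\alpha\beta}|_B=\hat\phi_{\alpha\beta}(E_\beta|_{U'_{\alpha\beta}})$, and $\tilde E_{\alpha\beta}\to W'_{\alpha\beta}$ carries a natural vector bundle ep-groupoid structure (invariant once we take the morphism-orbit as in Remark \ref{LEVEL1CCREM}.(I)). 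The projection $\tilde\pi_{\alpha\beta}:\tilde E_{\alpha\beta}\to\tilde E_{\alpha\beta}|_B$ is just the fiberwise identification of the parallel-transported copy with the base copy, and is a fiber bundle functor covering $\pi_{\alpha\beta}$; I then set $\hat\pi_{\alpha\beta}:E_\alpha|_{W'_{\alpha\beta}}\to\tilde E_{\alpha\beta}$ to be the $h$-orthogonal projection onto $\tilde E_{\alpha\beta}$.

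The main obstacle is verifying conditions \ref{LEVEL1CCHANGE}.(3)(c)--(d), namely that $s_\alpha/\tilde E_{\alpha\beta}$ vanishes exactly on $B$ with transverse linearization in the normal direction, since the tangent bundle condition on $C_\beta|_{U_\beta}\to C_\alpha|_{U_\alpha}$ only asserts that the normal linearization $d^Ns_\alpha$ is a fiberwise isomorphism over $s_\alpha^{-1}(0)\cap\phi_{\alpha\beta}(U_{\alpha\beta})$. The key observation is that at any point $z\in B$, the linearization of $s_\alpha/\tilde E_{\alpha\beta}$ in the direction $v\in(N_BU_\alpha)|_z$ is $\nabla_v s_\alpha\pmod{\tilde E_{\alpha\beta,z}}$; at a zero of $s_\alpha$ this is exactly the intrinsic $d^Ns_\alpha$, and by continuity the isomorphism property extends to an open neighborhood of $\overline{s_\alpha^{-1}(0)\cap\overline B}$ in $B$. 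By choosing $U'_\beta$ tight enough around its zero set in the initial shrinking step, $\overline B$ is forced to lie inside this neighborhood, so the normal linearization is an isomorphism over all of $B$. Condition (c) then follows from a standard implicit-function-theorem argument applied to $s_\alpha/\tilde E_{\alpha\beta}$ on a sufficiently thin tube $W'_{\alpha\beta}$: transverse vanishing along $B$ together with compactness of $\overline B$ implies that the zero locus of $s_\alpha/\tilde E_{\alpha\beta}$ in $W'_{\alpha\beta}$ coincides with $B$ after one final radius shrinking. Conditions (3)(a), (b) are immediate from the construction, completing the proof.
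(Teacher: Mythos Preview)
Your approach is essentially the paper's: the same ingredients (normal exponential via $g_{\alpha\beta}$, parallel transport of $\hat\phi_{\alpha\beta}(E_\beta)$ via an invariant connection, orthogonal projection via an invariant bundle metric) assembled in the same order. Two points deserve comment.

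First, your treatment of condition (2)(b) is too quick. The strong open neighborhood condition in Definition \ref{STRONGNBD} is not merely embeddedness of the tube over $\overline B$; item (3) there demands the compatibility $U'_\alpha\cap\hat W = W'_{\alpha\beta}$, where $\hat W$ is the immersed tube over some $U_1\supset\overline B$. A generic precompact $U'_\alpha$ chosen in advance will ``stick out'' into $\hat W\setminus W'_{\alpha\beta}$ (over points of $U_1\setminus B$), and no amount of tube-radius shrinking fixes this. One must instead modify $U'_\alpha$ itself: carve out the overhanging part and glue back exactly the fibers over $B$. This is the explicit construction in the paper's step (4) and Figure \ref{figure2_2}, and the same trick is needed again (step (7)) after the transversality shrinking changes the base of the tube. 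You have not accounted for either adjustment.

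Second, you achieve transversality (3)(c)--(d) by shrinking $U'_\beta$ tight around its zero set, whereas the paper keeps $U'_\beta$ fixed and instead shrinks $U'_\alpha$ (steps (6)--(7)) to cut the domain $U'_{\alpha\beta}$ down to a neighborhood of the zeros. Both are valid here, but the paper's choice is deliberate: in the inductive construction of a full level-1 good coordinate system (Proposition \ref{ALPHABETAGAMMA} and the remark following it), it is essential that the \emph{source} chart of a level-1 coordinate change never needs further shrinking once its data is in place. Your ordering also carries a mild circularity---``tight enough'' for $U'_\beta$ depends on the open set where $\nabla_v s_\alpha\bmod\tilde E_{\alpha\beta}$ is an isomorphism, which is only determined after $\tilde E_{\alpha\beta}$ is built---but that is easily repaired by constructing the tube and subbundle over the unshrunk image first, then shrinking.
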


\begin{proof}
\begin{enumerate}
\item Construct a vector-bundle-like submersion ($W_{\alpha\beta}$ immersed):

We define the normal bundle ep-groupoid projection $pr_{\alpha\beta}:N_{\alpha\beta}:=N_{\phi_{\alpha\beta}(U_{\alpha\beta})} U_\alpha\to\phi_{\alpha\beta}(U_{\alpha\beta})$. Using $g_{\alpha\beta}$, we define the normal exponential map functor $\exp_{\alpha\beta}^{g_{\alpha\beta}}: U(N_{\alpha\beta})\to W_{\alpha\beta}$, which is a fiberwise diffeomorphism. Here $U(N_{\alpha\beta})$ is an invariant tubular neighborhood of the zero section in $N_{\alpha\beta}$ such that $\exp_{\alpha\beta}^{g_{\alpha\beta}}(U(N_{\alpha\beta})\cap (N_{\alpha\beta})_z)$ has a compact closure in $U_\alpha$ for all $z\in \phi_{\alpha\beta}(U_{\alpha\beta})$; and the image $W_{\alpha\beta}$ is an invariant \emph{immersed} tubular neighborhood of $\phi_{\alpha\beta}(U_{\alpha\beta})$. We define $\pi_{\alpha\beta}:=pr_{\alpha\beta}\circ(\exp_{\alpha\beta}^{g_{\alpha\beta}})^{-1}:W_{\alpha\beta}\to \phi_{\alpha\beta}(U_{\alpha\beta})$ and it is a fiber-bundle ep-groupoid projection functor and a vector-bundle-like submersion.

\item Choose a precompact shrinking $\{U'_\beta, U''_\alpha\}$ of $\{U_\beta, U_\alpha\}$:

We choose invariant open neighborhoods $U'_\beta$ of $U_\beta$ and $U''_\alpha$ of $U_\alpha$ such that $\overline{U'_\beta}$ is compact in $U_\beta$ with $U'_\beta$ being the interior of $\overline{U'_\beta}$, $\overline{U''_\alpha}$ is compact in $U_\alpha$ with $U''_\alpha$ being the interior of $\overline{U''_\alpha}$, and $\tilde{\mathcal{G}}:=\{C_{\beta}|_{U'_\beta}, C_{\alpha}|_{U''_{\alpha}}\}\bigsqcup\{C_\gamma|_{U_\gamma}\}_{\gamma\in S\backslash\{\beta,\alpha\}}$ with the induced coordinate changes among the charts is a shrinking of $\mathcal{G}$. In particular, $X$ is still covered and whether bases in any two given charts intersect or not in the identification space remains unchanged.

\item Shrink $C_\beta|_{U_\beta}$ to the chosen $U'_\beta$ in item (2) first and establish all the properties in \ref{LEVEL1CCHANGE}.(2) except the strong open neighborhood property:

Denote the induced domain of the coordinate change $C_{\beta}|_{U'_\beta}\to C_{\alpha}|_{U_\alpha}$ by $U^m_{\alpha\beta}$. Here $m$ stands for \emph{mixed}. For a sufficiently small invariant fiber-shrinking $W^m_{\alpha\beta}$ of $\pi_{\alpha\beta}^{-1}(\phi_{\alpha\beta}(U^m_{\alpha\beta}))$ as in \ref{FSHRINKING}, we have:
\begin{enumerate}[(i)]
\item $W^m_{\alpha\beta}$ is embedded in $U_\alpha$,
\item $W^m_{\alpha\beta}\backslash \phi_{\alpha\beta}(U_{\alpha\beta})$ contains no zeros of $s_p$, because $$X_\beta|_{U^m_{\alpha\beta}}=X_\alpha|_{\phi_{\alpha\beta}(U^m_{\alpha\beta})}$$ is an open set in $X$, and
\item item 2(d) in definition \ref{LEVEL1CCHANGE} holds, because $W^m_{\alpha\beta}$ is a fiber shrinking and $U'_\alpha$ is a precompact shrinking of $U_\alpha$.
\end{enumerate}

We now have a vector-bundle-like submersion $\pi: W^m_{\alpha\beta}\to \phi_{\alpha\beta}(U^m_{\alpha\beta})$ from an invariant embedded tubular neighborhood $W^m_{\alpha\beta}$ of $\phi_{\alpha\beta}(U^m_{\alpha\beta})$, with $W^m_{\alpha\beta}$ satisfying items 2(a), 2(c) and 2(d) in \ref{LEVEL1CCHANGE}.

\item Shrink $C_\alpha|_{U_\alpha}$ to $U''_\alpha$ for the strong open neighborhood property:

Denote the induced domain of the coordinate change $C_{\beta}|_{U'_\beta}\to C_{\alpha}|_{U''_\alpha}$ by $U^{(3)}_{\alpha\beta}$ and define $$U^{(3)}_\alpha:=(U''_\alpha\backslash \text{orbit}((\pi_{\alpha\beta}|_{W^m_{\alpha\beta}})^{-1}(\pi_{\alpha\beta}((U_\alpha\backslash U''_\alpha)\cap W^m_{\alpha\beta}))))\bigcup$$ $$\text{orbit}((\pi_{\alpha\beta}|_{W^m_{\alpha\beta}})^{-1}(\phi_{\alpha\beta}(U^{(3)}_{\alpha\beta}))).$$ 

See figure \ref{figure2_2}:
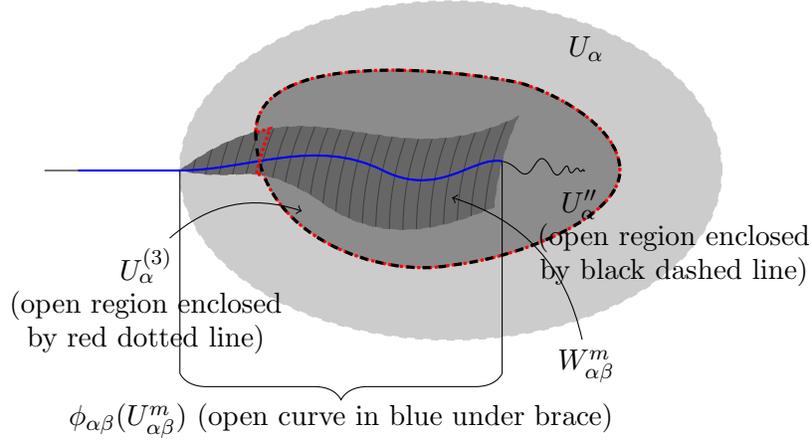
\begin{figure}[htb]
  \begin{center}

\begin{tikzpicture}[scale=0.9]
\hspace{0 cm}, 

\filldraw[thick, dashed, color=black!20] (8,0) circle (4 and 2.5);

\filldraw[color=black!45](5.12, 0.56).. controls (5.13, -1) and (7, -1.6).. (8,-1.4) .. controls (9, -1.3) and (10.5, -.7).. (10.5, 0) .. controls (10.5, 0.7) and (9.4, 1.2).. (9,1.3)..controls (8, 1.5) and (5.13, 1.8)..(5.12, 0.56);

\draw[very thick, densely dotted, color=red](5.12, 0.56).. controls (5.13, -1) and (7, -1.6).. (8,-1.4) .. controls (9, -1.3) and (10.5, -.7).. (10.5, 0) .. controls (10.5, 0.7) and (9.4, 1.2).. (9,1.3)..controls (8, 1.5) and (5.13, 1.8)..(5.12, 0.56);

\filldraw[dashed, color=black!60] (4,0).. controls (6, 1.5) and (7,-.2) .. (9, 0.8) arc (155:175:4).. controls (8, -0.8) and (7,-1.2) .. (6,-0.4) .. controls (5,.2) and (5,-0.2).. (4,0);
\begin{scope}
\clip (4,0).. controls (6, 1.5) and (7,-.2) .. (9, 0.8) arc (155:175:4).. controls (8, -0.8) and (7,-1.2) .. (6,-0.4) .. controls (5,.2) and (5,-0.2) ..(4,0);
\foreach \len in {4,4.2,...,11}
{
\draw[black!75] (\len +1, 2.4) arc (125:180:4);
}
\end{scope}

\draw[very thick, dashed](5.12, 0.56).. controls (5.13, -1) and (7, -1.6).. (8,-1.4) .. controls (9, -1.3) and (10.5, -.7).. (10.5, 0) .. controls (10.5, 0.7) and (9.4, 1.2).. (9,1.3)..controls (8, 1.5) and (5.13, 1.8)..(5.12, 0.56);

\draw (2,0) -- (4,0);
\draw (4,0) .. controls (5,0) and (6,0.5) .. (7,0);
\draw (7,0) .. controls (8,-0.5) and (8.5,0.5) .. (9,0);
\draw (9,0) .. controls (9.25,-0.25) and (9.25,0.5) .. (9.5,0);
\draw (9.5,0) .. controls (9.625,-0.23) and (9.7,0.23) .. (9.75,0);
\draw (9.75,0) .. controls (9.8,-0.12) and (9.85,0.12) .. (9.9,0);
\draw (9.9,0) .. controls (9.94,-0.05) and (9.95,0.05) .. (9.96,0)--(9.97,-0.008)--(9.98,-0.002)--(9.985,-0.001);

\begin{scope}
\clip (2.5, -0.4) rectangle (8.76, 0.4);
\draw[thick, blue] (2.5,0) -- (4,0) .. controls (5,0) and (6,0.5) ..(7,0) .. controls (8,-0.5) and (8.5,0.5) .. (9,0);
\end{scope}
\draw[very thick, densely dotted, color=red] (5.13, 0.57)--(5.335, 0.62) arc (158: 168.2:4) --(5.245,-0.06);
\node at (10, 1.8) {$U_\alpha$};
\node at (9.9, -.5) {$U''_{\alpha}$};
\node at (11.3,-1){(open region enclosed};
\node at (11.3,-1.5){by black dashed line)};

\draw (4,0)--(4,-3) (8.76,0.12)--(8.76,-3);
\draw [decorate,decoration={brace,amplitude=10pt}, xshift=0, yshift=0pt]
(8.76,-3)--(4,-3) node [midway,yshift=-7.5pt, below]{$\phi_{\alpha\beta}(U^m_{\alpha\beta})$ (open curve in blue under brace)};
\path[->] node at (10,-2.5)[below] {$W^m_{\alpha\beta}$} edge [bend right] (8.02,-.4);
\path[->] node at (3.5,-1)[below] {$U^{(3)}_{\alpha}$} edge [bend left] (5.8,-.6);
\node at (3.5,-2){(open region enclosed};
\node at (3.5,-2.5){by red dotted line)};
\end{tikzpicture}

  \end{center}
\caption[Constructing a strong open neighborhood.]{Constructing a strong open neighborhood.}
\label{figure2_2}
\end{figure}

Then the domain of the coordinate change $C_{\beta}|_{U'_\beta}\to C_{\alpha}|_{U^{(3)}_\alpha}$ is still $U^{(3)}_{\alpha\beta}$. Now, we have $W^{(3)}_{\alpha\beta}:=(\pi_{\alpha\beta}|_{W^m_{\alpha\beta}})^{-1}(\phi_{\alpha\beta}(U^{(3)}_{\alpha\beta}))$ as an embedded tubular neighborhood\footnote{An ep-groupoid with the morphisms being the $W^{(3)}_{\alpha\beta}$-invariant morphisms in $U^{(3)}_\alpha$.} of $\phi_{\alpha\beta}(U^{(3)}_{\alpha\beta})$ in $U'_\alpha$, with the vector-bundle-like submersion $\pi_{\alpha\beta}$ onto $\phi_{\alpha\beta}(U^{(3)}_{\alpha\beta})$, and $(U^{(3)}_\alpha, U_\alpha, W^{(3)}_{\alpha\beta})$ is now a strong open neighborhood of $\phi_{\alpha\beta}(U^{(3)}_{\alpha\beta})$. So far, we have chosen $C_\beta|_{U'_\beta}\to C_\alpha|_{U^{(3)}_\alpha}$ and $\pi_{\alpha\beta}|_{W^{(3)}_{\alpha\beta}}$ satisfying \ref{LEVEL1CCHANGE}.(2).

\item Establish $\tilde\pi_{\alpha\beta}$ over $W^{(3)}_{\alpha\beta}$ (the quotient section not yet transverse):
 
Now, by choosing an invariant connection on $E_\alpha|_{U^{(3)}_\alpha}$, we can parallel transport the fiber $(\hat\phi_{\alpha\beta}(E_\beta|_{U^{(3)}_{\alpha\beta}}))_z$ along the $g_{\alpha\beta}$-geodesic joining $z$ to $z'\in \pi_{\alpha\beta}^{-1}(z)$. This extends $\hat\phi_{\alpha\beta}(E_\beta|_{U^{(3)}_{\alpha\beta}})$ to an invariant subbundle ep-groupoid $\tilde\pi_{\alpha\beta}: \tilde E^{(3)}_{\alpha\beta}\to W^{(3)}_{\alpha\beta}$ of $E_\alpha|_{W^{(3)}_{\alpha\beta}}$, where the morphisms of the vector bundle ep-groupoid $E_\alpha|_{W^{(3)}_{\alpha\beta}}$ are the $E_\alpha|_{W^{(3)}_{\alpha\beta}}$-invariant morphisms in $E_\alpha|_{U^{(3)}_\alpha}$.

\item Establish the transversality of the quotient section over a fiber shrinking $W'_{\alpha\beta}$ of the bundle $W^{(3)}_{\alpha\beta}$ restricted to a smaller base:

With $\tilde E^{(3)}_{\alpha\beta}$ constructed, we now have the quotient section functor $s_\alpha|_{W^{(3)}_{\alpha\beta}}/\tilde E^{(3)}_{\alpha\beta}$, with its zero set exactly $\phi_{\alpha\beta}(U^{(3)}_{\alpha\beta})$. Its linearization, well-defined over $\phi_{\alpha\beta}(U^{(3)}_{\alpha\beta})$, is surjective over $s_\alpha^{-1}(0)\cap \phi_{\alpha\beta}(U^{(3)}_{\alpha\beta})$ by the tangent bundle condition. By the implicit function theorem, $s_{\alpha}|_{W^{(3)}_{\alpha\beta}}/\tilde E^{(3)}_{\alpha\beta}$ is transverse when restricting to an invariant fiber-shrinking $W'_{\alpha\beta}$ of $(\pi_{\alpha\beta}|_{W^{(3)}_{\alpha\beta}})^{-1}(\phi_{\alpha\beta}(U'_{\alpha\beta}))$, where $\phi_{\alpha\beta}(U'_{\alpha\beta})$ is an invariant open neighborhood of $s_\alpha^{-1}(0)\cap \phi_{\alpha\beta}(U^{(3)}_{\alpha\beta})$ in $\phi_{\alpha\beta}(U^{(3)}_{\alpha\beta})$. In the next item, we will see $U'_{\alpha\beta}$ is induced from a shrinking $U'_\alpha$ of $U^{(3)}_{\alpha}$ in the sense of \ref{PRECOMPACTCSHRINK}. 

\item Define a shrinking $U'_{\alpha}$ of $U^{(3)}_\alpha$ with the induced domain of coordinate change still being $U'_{\alpha\beta}$ while preserving $W'_{\alpha\beta}$ such that $\{U'_\beta, U'_\alpha\}$ is a precompact shrinking of $\{U_\beta, U_\alpha\}$ as in \ref{PRECOMPACTCSHRINK}:

Define $U'_\alpha:=U^{(3)}_\alpha\backslash (\overline{W^{(3)}_{\alpha\beta}}\backslash W'_{\alpha\beta})$, then clearly the domain of coordinate change $C_\beta|_{U'_\beta}\to C_\alpha|_{U'_{\alpha}}$ is still $U'_{\alpha\beta}$. 

Since $\overline{W^{(3)}_{\alpha\beta}}\backslash W'_{\alpha\beta}$ contains no zeros of $s_\alpha$, during $U''_\alpha$ changing\footnote{Not a shrinking, as $U^{(3)}_\alpha\not\subset U''_\alpha$ in general. See figure \ref{figure2_2}.} to $U^{(3)}_\alpha$ then shrinking to $U'_{\alpha}$, we have $X_\alpha|_{U''_\alpha}=X_\alpha|_{U^{(3)}_\alpha}=X_\alpha|_{U'_\alpha}$.

So the underlying strongly intersecting good coordinate system $$\mathcal{G}^m:=\{C_{\beta}|_{U'_\beta}, C_{\alpha}|_{U'_{\alpha}}\}\bigsqcup\{C_\gamma|_{U_\gamma}\}_{\gamma\in S\backslash\{\beta,\alpha\}}$$  with induced coordinate changes among them is still a shrinking of $\mathcal{G}$. We still have the same index set for coordinate changes, namely, $I(\mathcal{G}^m)=I(\tilde{\mathcal{G}})=I(\mathcal{G})$, and $X$ is still covered. Here $\tilde{\mathcal{G}}$ is defined within item (2) in the current proof.

\item Show that $(U'_\alpha, U_\alpha, W'_{\alpha\beta})$ is automatically a strong neighborhood of $\phi_{\alpha\beta}(U'_{\alpha\beta})$ by the construction:

Only need to show $W'_{\alpha\beta}$ is a vector-bundle-like submersion. Denote $N_{\alpha\beta}':=(\exp_{\alpha\beta}^{g_{\alpha\beta}})^{-1}(W'_{\alpha\beta})$, which is a disk bundle over $\phi_{\alpha\beta}(U'_{\alpha\beta})$ and thus can be equipped with a vector bundle structure, and $\Phi_{\alpha\beta}:=(\exp_{\alpha\beta}^{g_{\alpha\beta}}|_{N'_{\alpha\beta}})^{-1}: W'_{\alpha\beta}\to N'_{\alpha\beta}$ is a fiber bundle ep-groupoid diffeomorphism. $(N_{\alpha\beta}, \Phi_{\alpha\beta})$ makes $W'_{\alpha\beta}$ a vector-bundle-like submersion.

\item Definition of $\tilde E_{\alpha\beta}$ and the summary of the construction so far:

Define $\tilde E_{\alpha\beta}:=\tilde E^{(3)}_{\alpha\beta}|_{W'_{\alpha\beta}}$. We now have constructed the restricted coordinate change $C_\beta|_{U'_\beta}\to C_\alpha|_{U'_\alpha}$ with the domain of the coordinate change $U'_{\alpha\beta}$, a vector-bundle-like submersion $\pi_{\alpha\beta}: W'_{\alpha\beta}\to\phi_{\alpha\beta}(U'_{\alpha\beta})$ satisfying the entire item (2) in definition \ref{LEVEL1CCHANGE}, and $\tilde\pi_{\alpha\beta}: \tilde E_{\alpha\beta}\to W'_{\alpha\beta}$ such that the quotient section is transverse hence item (3) is also established.

\item Construct $\hat\pi_{\alpha\beta}$:

Choose a invariant bundle metric on $E_\alpha|_{W'_{\alpha\beta}}$. This induces a vector bundle ep-groupoid projection functor $\hat\pi_{\alpha\beta}: E_\alpha|_{W'_{\alpha\beta}}\to \tilde E_{\alpha\beta}$ which maps the invariant subbundle in $E_\alpha|_{W'_{\alpha\beta}}$ normal to $\tilde E_{\alpha\beta}$ with respect to this bundle metric to the zero section.

$$(C_\beta|_{U'_\beta}\to C_\alpha|_{U'_{\alpha}}, \pi_{\alpha\beta}:W'_{\alpha\beta}\to\phi_{\alpha\beta} (U'_{\alpha\beta}), \tilde\pi_{\alpha\beta}: \tilde E_{\alpha\beta}\to W'_{\alpha\beta}, \hat\pi_{\alpha\beta})$$ is the desired level-1 coordinate change.
\end{enumerate}
\end{proof}

\subsection{Compatibility of level-1 coordinate changes}

After defining a level-1 coordinate change and explaining how to upgrade a coordinate change to a level-1 coordinate change up to precompact shrinkings, we now define the compatibility condition among level-1 coordinate changes.

\begin{definition}[compatibility]\label{COMPATIBILITYONE} Let $\mathcal{G}:=(X, (S,\leq), \{C_\alpha|_{U_\alpha}\}_{\alpha\in S})$ be a strongly intersecting Hausdorff good coordinate system indexed by a total order $(S, \leq)$. Let $\gamma, \beta, \alpha\in S$ with $(\gamma,\beta), (\gamma,\alpha), (\beta, \alpha)\in I(\mathcal{G}')$. Let $\{U'_\gamma, U'_\beta, U'_\alpha\}$ be a precompact shrinking of $\{U_\gamma, U_\beta, U_\alpha\}$ as in definition \ref{PRECOMPACTCSHRINK}. Suppose we are given level-1 coordinate changes $C_\gamma|_{U_\gamma'}\overset{\text{level-1}}{\to} C_\alpha|_{U'_\alpha}$, $C_\gamma|_{U_\gamma'}\overset{\text{level-1}}{\to} C_\beta|_{U'_\beta}$ and $C_\beta|_{U_\beta'}\overset{\text{level-1}}{\to} C_\alpha|_{U'_\alpha}$. 

These three level-1 coordinate changes are said to be \emph{compatible}, if for any $(z,y,x)\in\gamma\times\beta\times\alpha$, we have that

\begin{enumerate}
\item over the common domain of the definitions, possibly empty, $$U'_{xyz}:=U'_{xz}\cap U'_{yz}\cap (\phi_{yz})^{-1}(U'_{xy})\overset{\text{maximality}}{\equiv} U'_{xz}\cap U'_{yz},$$ $\hat\phi_{xy}\circ\hat\phi_{yz}=h_{xyz}(\hat\phi_{xz}(\cdot))$ for some $h_{xyz}\in G_x$;
\item over the common domain of the definitions, $$W'_{xyz}:=h_{xyz}(W'_{xz})\cap W'_{xy}\cap \pi_{xy}^{-1}(\phi_{xy}(W'_{yz})),$$
\begin{align*}
\text{we have\quad\quad}((\phi_{xy})_\ast \pi_{yz})\circ\pi_{xy}&=\pi_{xz}(h_{xyz}^{-1}(\cdot)), \\
(\tilde \pi_{xy})^{-1}(\hat\phi_{xy}(\tilde E_{yz}))&=h_{xyz}(\tilde E_{xz}),\\
((\hat\phi_{xy})_\ast \tilde \pi_{yz})\circ \tilde\pi_{xy}|_{(\tilde \pi_{xy})^{-1}(\hat\phi_{xy}(\tilde E_{yz}))}&=\tilde \pi_{xz}(h_{xyz}^{-1}(\cdot)),\\
\pi_{xy}^{-1}(\phi_{xy}(\pi_{yz}^{-1}(\phi_{yz}(U'_{xz}\cap U'_{yz}))))&=h_{xyz}(\pi_{xz}^{-1}(\phi_{xz}(U'_{xz}\cap U'_{yz})))\text{ and}
\end{align*}
\item $(\hat\phi_{xy})_\ast\hat\pi_{yz}$ naturally extends to $$(\tilde\pi_{xy})^\ast((\hat\phi_{xy})_\ast\hat\pi_{yz}): E_x|_{W'_{xyz}}\to (\tilde \pi_{xy})^{-1}(\hat\phi_{xy}(\tilde E_{yz}))|_{W'_{xyz}}$$ via $\tilde{\pi}_{xy}$, which is defined by 
\begin{align*}(z,v)&\mapsto \tilde\pi_{xy}((z,v))\mapsto ((\hat\phi_{xy})_\ast\hat\pi_{yz})(\tilde\pi_{xy}((z,v)))\\&\mapsto (\tilde\pi_{xy}|_{(\tilde E_{xy})_z})^{-1}(((\hat\phi_{xy})_\ast\hat\pi_{yz})(\tilde\pi_{xy}((z,v)))),
\end{align*}
and we require over $W'_{xyz}$ that $$((\tilde\pi_{xy})^\ast((\hat\phi_{xy})_\ast\hat\pi_{yz}))\circ\hat\pi_{xy}=\hat\pi_{xz}(h^{-1}_{xyz}(\cdot)).$$
\end{enumerate}

Let $\{U'_\alpha\}_{\alpha\in S}$ be a precompact shrinking of $\{U_\alpha\}_{\alpha\in S}$. Denote $\mathcal{G}':=\{C_\alpha|_{U'_\alpha}\}_{\alpha\in S}$. A collection $\{C_\beta|_{U'_\beta}\overset{\text{level-1}}{\to} C_\alpha|_{U'_\alpha}\}_{(\beta,\alpha)\in I(\mathcal{G}')}$ of level-1 coordinate changes is said to be \emph{compatible} if any such three level-1 coordinate changes mapping among three charts as the above are compatible.
\end{definition}

\begin{remark} For the compatibility of level-1 coordinate changes among charts based on general ep-groupoids, we require the corresponding conditions to \ref{COMPATIBILITYONE} on the invariant neighborhoods in $U'_{\alpha\beta\gamma}:=U'_{\alpha\gamma}\cap U'_{\beta\gamma}\cap \phi_{\beta\gamma}^{-1}(U'_{\alpha\beta})$ with the natural representations of stabilizer groups similar to \ref{LEVEL1CCREM}.(IV). 
\end{remark}

The condition in item (2) above implies $W'_{xyz}=W'_{xy}\cap\pi_{xy}^{-1}(\phi_{xy}(W'_{yz}))$.

\begin{definition}[notation]\label{CLEVERNOTATION}
We have a compact way of writing the above, we can extend $\pi_{\alpha\beta}$ to a fiber bundle ep-groupoid projection functor $$\pi_{\alpha\beta}^o:\text{orbit}(W'_{\alpha\beta})\to \text{orbit}(\phi_{\alpha\beta}(U'_{\alpha\beta}))$$ by the morphisms in $U'_\alpha$, which is now invariant in $U'_\alpha$. See remark \ref{LEVEL1CCREM}.(I). Here $\text{orbit}(A)$ denotes the union of images of $A$ under the morphisms in $U'_\alpha$ or $E_\alpha|_{U'_\alpha}$, and $o$ stands for the orbit version of the map (mapping between the unions of the orbits under the morphisms, \emph{not} mapping between the orbit spaces which is always denoted by $\underline{\pi_{\alpha\beta}}$). Similarly, we can define $\tilde\pi_{\alpha\beta}^o, \hat\pi_{\alpha\beta}^o$, etc.
Then above compatibility identities in \ref{COMPATIBILITYONE}.(2) and (3) become
\begin{align*}((\phi_{\alpha\beta})_\ast \pi_{\beta\gamma})^o\circ\pi_{\alpha\beta}^o&=\pi_{\alpha\gamma}^o, \\
(\pi_{\alpha\beta}^o)^{-1}(\text{orbit}(\phi_{\alpha\beta}((\pi_{\beta\gamma}^o)^{-1}(\text{orbit}(\phi_{\beta\gamma}(U'_{\alpha\gamma}\cap U'_{\beta\gamma}))))))&=\\
(\pi_{\alpha\gamma}^o)^{-1}(\text{orbit}(\phi_{\alpha\gamma}(U'_{\alpha\gamma}\cap U'_{\beta\gamma})))&\\
((\hat\phi_{\alpha\beta})_\ast \tilde \pi_{\beta\gamma})^o\circ \tilde\pi_{\alpha\beta}^o|_{\text{orbit}((\tilde \pi_{\alpha\beta})^{-1}(\hat\phi_{\alpha\beta}(\tilde E_{\beta\gamma})))}&=\tilde \pi_{\alpha\gamma}^o,\\
((\tilde\pi_{\alpha\beta})^\ast((\hat\phi_{\alpha\beta})_\ast\hat\pi_{\beta\gamma}))^o\circ\hat\pi_{\alpha\beta}^o&=\hat\pi_{\alpha\gamma}^o.
\end{align*}
\end{definition}

\begin{remark} Observe that $\pi_{\alpha\beta}$ can be recovered from $\pi_{\alpha\beta}^o$ easily, etc.
\end{remark}

\subsection{A level-1 good coordinate system}

\begin{definition}[level-1 good coordinate system]\label{LEVELONEGCS} 

Let $$\mathcal{G}:=(X, (S,\leq), \{C_\alpha|_{U_\alpha}\}_{\alpha\in S}, \{C_\beta|_{U_\beta}\to C_\alpha|_{U_\alpha}\}_{(\beta.\alpha)\in I(\mathcal{G})})$$ be a strongly intersecting Hausdorff good coordinate system indexed by a total order $(S, \leq)$. Let $\{U'_\alpha\}_{\alpha\in S}$ be a precompact shrinking of $\{U_\alpha\}_{\alpha\in S}$ as in definition \ref{PRECOMPACTCSHRINK}. Denote $\mathcal{G}':=\{C_\alpha|_{U'_\alpha}\}_{\alpha\in S}$. Suppose for $(\beta,\alpha)\in I(\mathcal{G}')$, we are given a level-1 coordinate change $C_\beta|_{U_\beta'}\overset{\text{level-1}}{\to} C_\alpha|_{U'_\alpha}$ such that the underlying coordinate change $C_\beta|_{U_\beta'}\to C_\alpha|_{U'_\alpha}$ is restricted from $C_\beta|_{U_\beta}\to C_\alpha|_{U_\alpha}$, and the collection $\{C_\beta|_{U'_\beta}\overset{\text{level-1}}{\to} C_\alpha|_{U'_\alpha}\}_{(\beta,\alpha)\in I(\mathcal{G}')}$ is compatible as in \ref{COMPATIBILITYONE} (or equivalently \ref{CLEVERNOTATION}). Then $$\mathcal{G}':=(X, (S,\leq), \{C_\alpha|_{U'_\alpha}\}_{\alpha\in S}, \{C_\beta|_{U'_\beta}\overset{\text{level-1}}{\to}C_\alpha|_{U'_\alpha}\}_{(\beta.\alpha)\in I(\mathcal{G}')})$$ is called a \emph{level-1 good coordinate system} for $\mathcal{G}$.
\end{definition}

\begin{definition}[level-1 good coordinate system with a general index set]\label{LEVEL1GCSGENERALINDEX} Let $\hat{\mathcal{G}}$ be a strongly intersecting Hausdorff good coordinate system indexed by a general non-antisymmetric total order $(\hat S, \hat\leq)$. Let $(S, \leq)$ be a total order partition of $(\hat S, \hat \leq)$ as in \ref{TOPART}, obtained by using \ref{BUNDLEDIMEN} (or other methods). Let $\mathcal{G}$ indexed by $(S, \leq)$ be the associated grouped good coordinate system of $\hat{\mathcal{G}}$. If $\mathcal{G}'$ is a level-1 good coordinate system for $\mathcal{G}$ as in \ref{LEVELONEGCS}, denoting the associated level-1 good coordinate system indexed by $(\hat S,\hat\leq)$ of $\mathcal{G}'$ by $\hat{\mathcal{G}}'$, then $\hat{\mathcal{G}}'$ is said to be a \emph{level-1 good coordinate system} for $\hat{\mathcal{G}}$. See remark \ref{REMDEF} for the reason why the underlying good coordinate system of $\hat{\mathcal{G}}'$ without the level-1 structure is a shrinking of $\hat{\mathcal{G}}$.
\end{definition}

\begin{remark}
We can think of the level-1 structure of a level-1 good coordinate system as a \emph{globally compatible} way to make bundles of varying dimensions in Kuranishi charts ``uniform in dimension'' by having structures in place to precisely identify the section and its perturbations (respectively the topologies of base and total space) in $E_{\beta}|_{U'_{\alpha\beta}}$ with the section and its $(\pi_{\alpha\beta}, \tilde\pi_{\alpha\beta})$-controlled perturbations (respectively the $\pi_{\alpha\beta}$ bundle topology of the base and the $\tilde\pi_{\alpha\beta}$ bundle topology of the total space) in $E_\alpha|_{W'_{\alpha\beta}}$ for all $(\beta,\alpha)\in I(\mathcal{G}')$. We can also think of the level-1 structure as inverting coherent system of embeddings among the charts into coherent system of submersions with special properties, so that we can compatibily \emph{lift} constructions from charts of lower dimensions into charts of higher dimensions.
\end{remark}

\subsection{A level-1 Kuranishi embedding} We have defined the level-1 structure for a single object, a good coordinate system. Now we will define the level-1 structure for certain distinguished maps between two such objects, namely \emph{concerted Kuranishi embeddings} between good coordinate systems, resulting in concerted level-1 Kuranishi embeddings between level-1 good coordinate systems. There are clear analogies as well as subtle differences between level-1 chart embeddings and level-1 coordinate changes, and between a concerted level-1 Kuranishi embedding from one level-1 good coordinate system to another and a level-1 good coordinate system. The level-1 structures for concerted Kuranishi embeddings will be important in constructing fiber products, through which we can compare different choices made during working with Kuranishi structures and good coordinate systems and then show the choice-independence of the theory, and in showing two good coordinate systems being equivalent is an equivalence relation.

\begin{definition}[level-1 chart embedding]\label{LEVEL1CEMBEDDING} Let $$(\mathcal{G}\Rightarrow \tilde{\mathcal{G}}):=\{C_\alpha|_{U_\alpha}\to \tilde C_\alpha|_{\tilde U_\alpha}\}_{\alpha\in S}$$ be a Kuranishi embedding between two strongly intersecting Hausdorff good coordinate systems indexed by the same total order $(S,\leq)$.  Recall that a chart embedding $(C_\alpha|_{U_\alpha}\to \tilde C_\alpha|_{\tilde U_\alpha})=(\phi_\alpha,\hat\phi_\alpha, U_\alpha)$ is a coordinate change with the domain $U_\alpha$. A \emph{level-1 chart embedding} $C_\alpha|_{U'_\alpha}\overset{\text{level-1}}{\to}\tilde C_\alpha|_{\tilde U'_\alpha}$ for $\alpha\in S$ is a tuple $(C_\alpha|_{U'_\alpha}\to \tilde C_\alpha|_{\tilde U'_\alpha}, W'_{\alpha}, \Pi_{\alpha}, \tilde\Pi_{\alpha}, \hat\Pi_{\alpha})$ consisting of the following:
\begin{enumerate}
\item a restricted coordinate change $(C_\alpha|_{U'_\alpha}\to \tilde C_\alpha|_{\tilde U'_\alpha})=(\phi_{\alpha}, \hat\phi_{\alpha}, U'_{\alpha}, \tilde U'_\alpha)$, where 
\begin{enumerate}
\item $U'_\alpha$ is a precompact shrinking of $U_\alpha$ in $\mathcal{G}$ as in \ref{PRECOMPACTCSHRINK},
\item $\tilde U'_\alpha$ is a precompact shrinking of $\tilde U_\alpha$ in $\tilde{\mathcal{G}}$ as in \ref{PRECOMPACTCSHRINK}, and
\item $\phi_{\alpha}(U'_\alpha)\subset \tilde U'_\alpha$\footnote{So, the domain of the coordinate change $C_\alpha|_{U'_\alpha}\to \tilde C_\alpha|_{\tilde U'_\alpha}$ is $U'_\alpha$, same as determined by restricting $C_\alpha|_{U_\alpha}\to \tilde C_\alpha|_{\tilde U_\alpha}$.};
\end{enumerate}
\item a fiber bundle ep-groupoid projection functor $\Pi_{\alpha}: W'_{\alpha}\to \phi_{\alpha}(U'_{\alpha})$ such that
\begin{enumerate}
\item $W'_{\alpha}=\tilde U'_\alpha$\footnote{By the definitions of a shrinking with $J=\{\alpha\}$ in \ref{PRECOMPACTCSHRINK} and a chart embedding prior to grouping, $\phi_\alpha(U'_\alpha)$ is invariant under the morphisms of $\tilde U_\alpha$, so it makes sense that $\tilde U'_\alpha$ itself can be chosen to be a tubular neighborhood of $\phi_\alpha(U'_\alpha)$ in $\tilde U'_\alpha$, which is invariant under the morphisms of $\tilde U'_\alpha$ unlike in level-1 coordinate changes in a level-1 good coordinate system. We still use the notation $W'_{\alpha}$ to keep the analogy to a level-1 coordinate change.},
\item $\Pi_\alpha$ is a vector-bundle-like submersion,
\item $(\tilde U'_\alpha, \tilde U_\alpha, W'_{\alpha})$ is a strong open neighborhood of $\phi_{\alpha}(U'_{\alpha})$ as in definition \ref{STRONGNBD}, and
\item $W'_{\alpha}\backslash \phi_{\alpha}(U'_{\alpha})$ contains no zeros of $\tilde s_\alpha|_{W'_{\alpha}}$;
\end{enumerate}
\item $\tilde\Pi_{\alpha}:\tilde F_{\alpha}\to \hat\phi_{\alpha}(E_\alpha|_{U'_{\alpha}})$, a fiberwise-isomorphic bundle map covering $\pi_{\alpha}: W'_{\alpha}\to \phi_{\alpha}(U'_{\alpha})$, where $\tilde F_{\alpha}\to W'_{\alpha}=\tilde U'_\alpha$ is a subbundle of the object space of $\tilde E_\alpha|_{W'_{\alpha}}$, $\tilde F_\alpha$ is invariant under the morphisms of $\tilde E_\alpha|_{\tilde U'_\alpha}$ and becomes an ep-groupoid with those morphisms, and $\tilde F_\alpha \to W'_\alpha$ is a vector bundle ep-groupoid projection functor, such that
\begin{enumerate}
\item $\tilde\Pi_\alpha$ is a functor between vector bundle ep-groupoids\footnote{Also a fiber bundle ep-groupoid projection functor.},
\item $\tilde F_{\alpha}|_{\phi_{\alpha}(U'_{\alpha})}=\hat\phi_{\alpha}(E_\alpha|_{U'_{\alpha}})$, 
\item the zero set of $\tilde s_\alpha|_{W'_{\alpha}}/\tilde F_{\alpha}: W'_{\alpha}\to \tilde E_\alpha|_{W'_{\alpha}}/\tilde F_{\alpha}$ is exactly $\phi_{\alpha}(U'_{\alpha})$, and 
\item the well-defined linearization of $\tilde s_\alpha|_{W'_{\alpha}}/\tilde F_{\alpha}$ over $\phi_{\alpha}(U'_{\alpha})$ is transverse; and
\end{enumerate}
\item a fiberwise-projection functor\footnote{Where $\tilde E_\alpha|_{W'_{\alpha}}$ is obviously invariant under the morphisms of $\tilde E_\alpha|_{\tilde U_\alpha}$ by (2)(a).} $\hat\Pi_{\alpha}:\tilde E_\alpha|_{W'_{\alpha}}\to \tilde F_{\alpha}$ between vector bundle ep-groupoids.
\end{enumerate}
\end{definition}

\begin{definition}[level-1 Kuranishi embedding for a concerted Kuranishi embedding]\label{LEVEL1KEMBED} Let $(\mathcal{G}\Rightarrow\tilde{\mathcal{G}})=\{C_\alpha|_{U_\alpha}\to\tilde C_\alpha|_{\tilde U_\alpha}\}_{\alpha\in S}$ be a Kuranishi embedding between two strongly intersecting Hausdorff good coordinate systems indexed by the same total order $(S,\leq)$ (hence it is trivially concerted). A \emph{general concerted Kuranishi embedding} from $\hat{\mathcal{G}}$ with $(\hat S,\hat\leq)$ to $\hat{\tilde{\mathcal{G}}}$ with $(\hat S, \tilde\leq)$ can be reduced to the above form of a Kuranishi embedding with the same total orders, via the total order partition (or other methods): Define $$\alpha_{i,j}:=\{x\in \hat S\;|\;\text{dim}\hat E_x=i,\;\text{dim} \hat{\tilde{E}}_x=j\},\; i\leq N_1,\; j\leq N_2.$$ Define $\alpha_{i,j}\leq \alpha_{i',j'}$ if $i+j\leq i'+j'$. By the concertedness, for any $x, y\in \hat S$ with $\underline{U_x}$ and $\underline{U_y}$ intersecting, we have $x\hat\leq y$ and $x\tilde\leq y$, or $y\hat \leq x$ and $y\tilde\leq x$, or both. Here, when the order is only used, $\alpha_{ij}\leq\alpha_{i'j'}$ is the same as $x\hat\leq y, x\tilde\leq y$ for some (hence any) $(x,y)\in\alpha_{i,j}\times\alpha_{i',j'}$ (by the definition, the orders are compatible with the bundle dimension). The common total order index set is $S:=\{\alpha_{ij}\}_{i,j}$ with the order $\leq$.

A \emph{level-1 Kuranishi embedding} $$(\mathcal{G}'\overset{\text{level-1}}{\Rightarrow}\tilde{\mathcal{G}}')=\{C_\alpha|_{U'_\alpha}\overset{\text{level-1}}{\to}\tilde C_\alpha|_{\tilde U'_\alpha}\}_{\alpha\in S}$$ for $\mathcal{G}\Rightarrow\tilde{\mathcal{G}}$ consists of strongly intersecting Hausdorff level-1 good coordinate systems $\mathcal{G}'$ for $\mathcal{G}$ and $\tilde{\mathcal{G}}'$ for $\tilde{\mathcal{G}}$, and level-1 chart embeddings $$(C_\alpha|_{U'_\alpha}\overset{\text{level-1}}{\to}\tilde C_\alpha|_{\tilde U'_\alpha})=(C_\alpha|_{U'_\alpha}\to \tilde C_\alpha|_{\tilde U'_\alpha}, W'_\alpha, \Pi_\alpha, \tilde\Pi_{\alpha}, \hat\Pi_\alpha), \alpha\in S,$$ such that for all $\alpha\in S$, $C_\alpha|_{U'_\alpha}\to \tilde C_\alpha|_{\tilde U'_\alpha}$ is restricted from chart embeddings $C_\alpha|_{U_\alpha}\to \tilde C_\alpha|_{\tilde U_\alpha}$, and the following level-1 square is commutative up to group actions, for all $(\beta,\alpha)\in I(\mathcal{G}')=I(\mathcal{G})$:
$$\begin{CD}
\tilde C_\beta|_{\tilde U'_\beta}@>\text{level-1}>>\tilde C_\alpha|_{\tilde U'_\alpha}\\
@A\text{level-1}AA @AA\text{level-1}A\\
C_\beta|_{U'_\beta}@>\text{level-1}>>C_\alpha|_{U'_\alpha}
\end{CD}\;\;\;\;.$$
Here, the \emph{level-1 commutativity up to group actions}, or \emph{level-1 commutativity} for short, means that in addition to coordinate changes being commutative up to group actions, we have that
\begin{align*}((\phi_{\alpha})_\ast \pi_{\alpha\beta})^o\circ\Pi_{\alpha}&=((\tilde\phi_{\alpha\beta})_\ast\Pi_{\beta})^o\circ \widetilde{\pi_{\alpha\beta}}^o,\\
\text{orbit}((\tilde \pi_{\alpha\beta})^{-1}(\widetilde{\hat\phi_{\alpha\beta}}(\tilde F_{\beta})))&=\text{orbit}((\tilde \Pi_{\alpha})^{-1}(\hat\phi_{\alpha}(\tilde E_{\alpha\beta}))),\end{align*}
$$((\hat\phi_{\alpha})_\ast \tilde \pi_{\alpha\beta})^o\circ \tilde\Pi_{\alpha}|_{\text{orbit}((\tilde \pi_{\alpha\beta})^{-1}(\widetilde{\hat\phi_{\alpha\beta}}(\tilde F_{\beta})))}=$$ $$((\widetilde{\hat\phi_{\alpha\beta}})_\ast \tilde \Pi_{\beta})^o\circ \widetilde{\tilde\pi_{\alpha\beta}}^o|_{\text{orbit}((\tilde \Pi_{\alpha})^{-1}(\hat\phi_{\alpha}(\tilde E_{\alpha\beta})))},\text{ and}$$
$$((\tilde\Pi_{\alpha})^\ast((\hat\phi_{\alpha})_\ast\hat\pi_{\alpha\beta}))^o\circ\hat\Pi_{\alpha}=
((\widetilde{\tilde\pi_{\alpha\beta}})^\ast(\widetilde{(\hat\phi_{\alpha\beta}})_\ast\hat\Pi_{\beta}))^o\circ\widetilde{\hat\pi_{\alpha\beta}}^o,$$
using the notation convention explained in \ref{CLEVERNOTATION}.
\end{definition}

\begin{remark} There are three important remarks:
\begin{enumerate}[(I)]
\item (\textbf{convention}) We used $\tilde F_\alpha$ in definition \ref{LEVEL1CEMBEDDING}.(3), because $\tilde E_\alpha$ is already used. We used the capital letter for $\Pi_\alpha$ rather than $\pi_\alpha$, not to confuse it with the projection $E_\alpha\to U'_\alpha$. $\tilde \pi_{\alpha\beta}$ means the bundle ep-groupoid projection functor $\tilde E_{\alpha\beta}\to\hat\phi_{\alpha\beta}(E_\beta|_{U'_{\alpha\beta}})$ in $\mathcal{G}'$, whilst $\widetilde{\pi_{\alpha\beta}}$ means the vector-bundle-like submersion $\tilde W'_{\alpha\beta}\to \tilde\phi_{\alpha\beta}(\tilde U'_{\alpha\beta})$ in $\tilde{\mathcal{G}}'$. Observe that $\Pi_\alpha^o=\Pi_\alpha$, $\tilde\Pi_\alpha^o=\tilde\Pi_\alpha$ and $\hat\Pi_\alpha^o=\hat\Pi_\alpha$, which we already used in the commutativity identities in \ref{LEVEL1KEMBED}. Notice the similarities between level-1 good coordinate systems (resp. the compatibility of level-1 coordinate changes) and concerted level-1 Kuranishi embeddings (resp. the commutativity of level-1 squares).
\item (\textbf{general concerted level-1 Kuranishi embedding}, indexed by a common index set with possibly distinct orders) A \emph{concerted level-1 Kuranishi embedding indexed by a common general index set but with two possibly distinct orders} is defined via \ref{LEVEL1KEMBED}, in the same fashion as in \ref{LEVEL1GCSGENERALINDEX}. We will see later that we can construct a concerted level-1 Kuranishi embedding for $\mathcal{G}\Rightarrow\tilde{\mathcal{G}}$ after specifying a level-1 $\mathcal{G}'$ for $\mathcal{G}=\{C_x|_{U_x}\}_{x\in \hat S}$ with a general index set $(\hat S,\hat \leq)$. What happens to the index sets is the following: We first choose a total order partition $(S_1,\leq_1)$ for $(\hat S, \hat \leq)$, and group $\mathcal{G}$ into $\mathcal{G}_1$, and choose a level-1 good coordinate system $\mathcal{G}''_1$ for $\mathcal{G}_1$, and then recover the associated level-1 $\mathcal{G}''$ with the order $(\hat S, \hat\leq)$ for $\mathcal{G}''_1$. Secondly, we consider $\mathcal{G}''$ without the level-1 structure which maps to $\tilde{\mathcal{G}}$ in a concerted $\mathcal{G}''\Rightarrow\tilde{\mathcal{G}}$, then we choose the common total order partition by the double bundle dimension as in \ref{LEVEL1KEMBED} and group it into $\mathcal{G}''_2\Rightarrow\tilde{\mathcal{G}}_2$, then using the method in \ref{EMBEDDING0TO1EXTENDINGLEVEL1GCS} we can construct a level-1 Kuranishi embedding $\mathcal{G}'_2\Rightarrow\tilde{\mathcal{G}}'_2$ for it, and finally we recover the original index set $\hat S$ to get a concerted level-1 $\mathcal{G}'\Rightarrow\tilde{\mathcal{G}}'$ for the original $\mathcal{G}\Rightarrow\tilde{\mathcal{G}}$.
\item (\textbf{general level-1 Kuranishi embedding}, possibly not concerted) We can define a \emph{level-1 Kuranishi embedding for a general Kuranishi embedding} which is not necessarily concerted. In that case, we can only reduce to a common index set with two different orders (possibly not total orders). Moreover, when the respective orders between $\beta$ and $\alpha$ do not agree in $\mathcal{G}$ and $\tilde{\mathcal{G}}$, and $\underline{U_\beta}$ and $\underline{U_\alpha}$ intersect in the identification space, we need to require the level-1 compatibility squares with the top arrow direction reversed. The level-1 square can also be stated explicitly using identities similar to ones in \ref{LEVEL1KEMBED},$$((\tilde\phi_{\beta\alpha}\circ\phi_\alpha)_\ast \pi_{\alpha\beta})^o\circ((\tilde\phi_{\beta\alpha})_\ast(\Pi_\alpha))^o\circ\widetilde{\pi_{\beta\alpha}}^o=\Pi_\beta,\text{ etc.}$$ This notion appears just once for the completeness and is restricted from a concerted one (non-concerted $\mathcal{B}(\mathcal{G})\overset{\text{level-1}}{\Rightarrow}\mathcal{G}^i$ in the lower half of the fiber product square is never used); and we can always find a concerted chart-refinement of a Kuranishi embedding defined in \ref{CROFKEMB}.
\end{enumerate}
\end{remark}

\subsection{The simplest non-trivial inductive step for constructing a level-1 good coordinate system}

Having introduced the concept of level-1 good coordinate systems and level-1 Kuranishi embeddings, we now prove a key proposition in establishing compatible level-1 coordinate changes among three charts. We assume that $\alpha$, $\beta$ and $\gamma$ are distinct for this inductive step to be genuinely non-trivial\footnote{But the special cases of this result, where some of $\alpha,\beta, \gamma$ coincide, are trivial to show and the corresponding statements are subsumed in the statement of the proposition, except with the precompactness of $U''_\beta$ in $U'_\beta$ removed if $\beta=\gamma$.}.

\begin{proposition}[the $\{\alpha,\beta,\gamma\}$-step]\label{ALPHABETAGAMMA} Let $\mathcal{G}$ be a strongly intersecting Hausdorff good coordinate system indexed by a total order $(S,\leq)$, and let $\gamma, \beta, \alpha\in S$ be distinct with $(\gamma, \beta), (\gamma, \alpha), (\beta,\alpha)\in I(\mathcal{G})$. Suppose that we have chosen a level-1 coordinate change $C_\gamma|_{U'_\gamma}\overset{\text{level-1}}{\to} C_\beta|_{U'_\beta}$, where $\{U'_\gamma, U'_\beta\}$ is a precompact shrinking of $\{U_\gamma, U_\beta\}$ in the sense of \ref{PRECOMPACTCSHRINK}.

Then we can choose 
\begin{enumerate}[(i)]
\item $U''_\gamma=U'_\gamma$, and
\item a precompact shrinking $\{U''_\beta, U''_\alpha\}$ of $\{U'_\beta, U_\alpha\}$ such that $$(U''_\beta, U'_\beta, W''_{\beta\gamma})$$ is a strong open neighborhood of $\phi_{\beta\gamma}(U''_{\beta\gamma})$, where $U''_{\beta\gamma}$ is the domain of $C_\gamma|_{U''_\gamma}\to C_\beta|_{U''_\beta}$ and $W''_{\beta\gamma}$ is an invariant fiber shrinking of $$(\pi_{\beta\gamma})^{-1}(\phi_{\beta\gamma}(U''_{\beta\gamma})),$$
\end{enumerate}
such that we can construct level-1 coordinate changes $$C_\beta|_{U''_\beta}\overset{\text{level-1}}{\to} C_\alpha|_{U''_\alpha}\text{ and }C_\gamma|_{U''_\gamma}\overset{\text{level-1}}{\to} C_\alpha|_{U''_\alpha},$$ and those two coordinate changes and the restricted level-1 $C_\gamma|_{U''_\gamma}\overset{\text{level-1}}{\to} C_\beta|_{U''_\beta}$  with the above $W''_{\beta\gamma}$ from $C_\gamma|_{U'_\gamma}\overset{\text{level-1}}{\to} C_\beta|_{U'_\beta}$ are compatible as in \ref{COMPATIBILITYONE}, \ref{CLEVERNOTATION} (in particular, $\text{orbit}(W''_{\alpha\gamma}|_{\phi_{\alpha\gamma}(U''_{\alpha\gamma}\cap U''_{\beta\gamma})})=\text{orbit}((\pi_{\alpha\beta})^{-1}(\phi_{\alpha\beta}(W''_{\beta\gamma})))$.
\end{proposition}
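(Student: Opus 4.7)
The plan is to reduce everything to a coordinated application of Proposition \ref{EOFLEVEL1CC} with a metric on $U_\alpha$ chosen so that the composition identity $((\phi_{\alpha\beta})_\ast \pi_{\beta\gamma})^o\circ\pi_{\alpha\beta}^o=\pi_{\alpha\gamma}^o$ holds automatically. First I would set up the double fiber bundle structure on an invariant tubular neighborhood $T$ of $\phi_{\alpha\beta}(\phi_{\beta\gamma}(U_{\beta\gamma})\cap U_{\alpha\beta})$ in $U_\alpha$: the submanifold $\phi_{\alpha\gamma}(U_{\alpha\gamma}\cap U_{\beta\gamma})$ sits inside $\phi_{\alpha\beta}(\phi_{\beta\gamma}(U_{\beta\gamma})\cap U_{\alpha\beta})$, which sits inside $T\subset U_\alpha$, with both inclusions bundle-like in the sense required by Proposition \ref{METRICCOMPATIBILITY}. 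That proposition then provides an invariant Riemannian metric $g^{\mathrm{ov}}_\alpha$ on $T$ such that both the one-step normal projection onto $\phi_{\alpha\beta}(\cdot)$ and the composed two-step projection down to $\phi_{\alpha\gamma}(U_{\alpha\gamma}\cap U_{\beta\gamma})$ are realized as normal exponential projections of the same metric. I would then extend $g^{\mathrm{ov}}_\alpha$ invariantly to a metric $g_\alpha$ on $U_\alpha$ by an invariant partition of unity, using that outside a smaller neighborhood of the overlap the extension is unconstrained.

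Using the single metric $g_\alpha$, I would now run the construction of Proposition \ref{EOFLEVEL1CC} in parallel for the two edges $C_\beta|_{U'_\beta}\to C_\alpha$ and $C_\gamma|_{U'_\gamma}\to C_\alpha$, producing $(\pi_{\alpha\beta}, W'_{\alpha\beta})$ and $(\pi_{\alpha\gamma}, W'_{\alpha\gamma})$. Over the overlap these are normal exponential projections for the same metric $g_\alpha$, so the composition identity in \ref{CLEVERNOTATION} holds by construction, and likewise $\pi_{\alpha\beta}^{-1}(\phi_{\alpha\beta}(W''_{\beta\gamma}))$ and $W''_{\alpha\gamma}$ restricted to the overlap coincide as orbits. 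For the bundle-level data $\tilde\pi$ and $\hat\pi$, I would repeat the same pattern one dimension up: pick an invariant connection $\nabla^{\mathrm{ov}}_\alpha$ on $E_\alpha|_T$ that parallel-transports $\hat\phi_{\alpha\gamma}(E_\gamma|_{U'_{\alpha\gamma}})$ into $(\tilde\pi_{\alpha\beta})^{-1}(\hat\phi_{\alpha\beta}(\tilde E_{\beta\gamma}))$ (which is possible because $\nabla^{\mathrm{ov}}_\alpha$ can be specified on the subbundles first), then extend invariantly to $\nabla_\alpha$ on all of $E_\alpha$. Similarly, an invariant bundle metric on $E_\alpha$ chosen to split compatibly over $T$ determines $\hat\pi_{\alpha\beta}$, $\hat\pi_{\alpha\gamma}$ matching the composition with $\hat\pi_{\beta\gamma}$.

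The remaining work is the shrinking bookkeeping modeled on the proof of Proposition \ref{EOFLEVEL1CC}. Setting $U''_\gamma=U'_\gamma$, I would use the vector-bundle-like model of the given $\pi_{\beta\gamma}$ (Example \ref{FSHRINKING}) to pick a fiber-shrinking $W''_{\beta\gamma}$ of $\pi_{\beta\gamma}^{-1}(\phi_{\beta\gamma}(U''_{\beta\gamma}))$, then define $U''_\beta$ by the step-(7) recipe of Proposition \ref{EOFLEVEL1CC} to make $(U''_\beta, U'_\beta, W''_{\beta\gamma})$ a strong open neighborhood of $\phi_{\beta\gamma}(U''_{\beta\gamma})$. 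Then $U''_\alpha$ is chosen by simultaneously enforcing the strong open neighborhood condition for $\phi_{\alpha\beta}(U''_{\alpha\beta})$ and for $\phi_{\alpha\gamma}(U''_{\alpha\gamma})$, the extensibility condition \ref{LEVEL1CCHANGE}.(2)(d), and the transversality of the quotient sections after fiber-shrinking, again in the manner of steps (4), (6), (7) of Proposition \ref{EOFLEVEL1CC}, but applied to both edges at once. The final identity $\mathrm{orbit}(W''_{\alpha\gamma}|_{\phi_{\alpha\gamma}(U''_{\alpha\gamma}\cap U''_{\beta\gamma})})=\mathrm{orbit}((\pi_{\alpha\beta})^{-1}(\phi_{\alpha\beta}(W''_{\beta\gamma})))$ follows because both sides are the same $g_\alpha$-normal neighborhoods of the same base.

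The main obstacle is the simultaneous invariant extension of $g^{\mathrm{ov}}_\alpha$, $\nabla^{\mathrm{ov}}_\alpha$ and the overlap bundle metric from $T$ to $U_\alpha$ (and $E_\alpha$) without destroying the structures already fixed by the given level-1 $C_\gamma\overset{\text{level-1}}{\to} C_\beta$ on the image of $\phi_{\beta\gamma}$-tubes; this is handled by first arranging the extension on the bundle-like model via Proposition \ref{METRICCOMPATIBILITY}, then transferring back, and finally absorbing any residual mismatch into the precompact shrinking $U''_\alpha\subset U_\alpha$ so that the shrunken chart never sees the non-compatible region. Everything else is a careful but routine translation of the one-edge argument of Proposition \ref{EOFLEVEL1CC} into the coupled $\{\alpha,\beta,\gamma\}$-setting.
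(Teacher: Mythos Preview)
Your overall strategy is sound and uses the same key ingredient as the paper, namely Proposition \ref{METRICCOMPATIBILITY} applied to a double fiber bundle over the overlap, followed by an extension of the resulting metric (and connection, and bundle metric) through a transition region. However, the intermediate level of your double bundle is misstated: you write $B=\phi_{\alpha\beta}(\phi_{\beta\gamma}(U_{\beta\gamma})\cap U_{\alpha\beta})$, which has dimension $\dim U_\gamma$ and therefore coincides (up to the group action) with your bottom level $C=\phi_{\alpha\gamma}(U_{\alpha\gamma}\cap U_{\beta\gamma})$. The correct intermediate level must have dimension $\dim U_\beta$; it is $\phi_{\alpha\beta}(W'_{\beta\gamma}\cap U'_{\alpha\beta})$ (or rather a fiber-saturated piece of it), with the projection $B\to C$ being the pushforward $(\phi_{\alpha\beta})_\ast\pi_{\beta\gamma}$ of the \emph{given} tubular projection. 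Your phrase ``the one-step normal projection onto $\phi_{\alpha\beta}(\cdot)$'' shows you have the right picture, so this is a notational slip rather than a conceptual one, but it should be fixed. Relatedly, you skip the step of arranging that $(\phi_{\alpha\beta})_\ast\pi_{\beta\gamma}$ is genuinely a fiber bundle (not merely an open piece of one with truncated fibers); the paper handles this in its step (II)(2) by a targeted shrinking of $U_\alpha$, and you should do the same before invoking Proposition \ref{METRICCOMPATIBILITY}.

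The main structural difference is the order of operations. The paper first constructs a level-1 $C_\beta|_{U'_\beta}\to C_\alpha|_{U'_\alpha}$ with an arbitrary metric (its step (I)), and only afterwards builds the metric $g_\alpha$ that \emph{recovers} this already-fixed $\pi_{\alpha\beta}$ together with $(\phi_{\alpha\beta})_\ast\pi_{\beta\gamma}$ on the overlap; $\pi_{\alpha\gamma}$ is then constructed from $g_\alpha$. You instead build a master metric on the overlap first and use it to produce $\pi_{\alpha\beta}$ and $\pi_{\alpha\gamma}$ simultaneously. For the three-chart statement in isolation your simultaneous route works, but the paper's sequential order is not incidental: the point of this proposition (see the remark following it) is that new level-1 data can be added \emph{without altering} previously constructed level-1 structures except by restriction. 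That invariance is what makes the induction in Theorem \ref{EXLEVELONEGCS} go through cleanly. In your approach $\pi_{\alpha\beta}$ is an output of the overlap metric rather than an input, so if later steps required adding a fourth chart $\delta$ with $(\delta,\alpha)\in I(\mathcal{G})$, you could not simply keep your $\pi_{\alpha\beta}$ and build $\pi_{\alpha\delta}$ compatibly---you would have to redo the master-metric construction. The paper's ordering is thus better adapted to the inductive role this proposition plays.
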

\begin{remark} Item (i) says that we do not need to shrink the domain charts of level-1 coordinate changes already constructed during inductively constructng new level-1 structures for coordinate changes not equipped with level-1 structures yet, where the induction is in the direction of going up the order. Moreover, as we will see in item (1) of the proof, we do not need to shrink the target charts of the existing level-1 coordinate changes if not for creating room for establishing the compatibility, namely we can construct a level-1 coordinate change $C_\beta|_{U'_\beta}\to C_\alpha|_{U'_\alpha}$ with the same previous $U'_\beta$.
\end{remark}

\begin{proof}
\begin{enumerate}[(I)]
\item Construct a level-1 $C_\beta|_{U'_\beta}\to C_\alpha|_{U'_\alpha}$:

Choose an invariant metric $g_{\alpha\beta}$ on $U_\alpha$, and apply the proposition \ref{LEVEL1CCHANGE} to $C_\beta|_{U_\beta}\to C_\alpha|_{U_\alpha}$. Notice in that proposition, we can fix $U'_{\beta}$ beforehand, and as long as $U'_{\beta}$ is a precompact shrinking of $U_\beta$, we can find an embedded tubular neighborhood $W'_{\alpha\beta}$ of $\phi_{\alpha\beta}(U'_{\alpha\beta})$ in $U_\alpha$, where $U'_{\alpha\beta}$ is the domain of $C_\beta|_{U'_\beta}\to C_\alpha|_{U'_\alpha}$ and $\pi_{\alpha\beta}$ is a fiber bundle ep-groupoid projection functor from $W'_{\alpha\beta}$.

\item Construct $\pi_{\alpha\gamma}$ compatible with the existing $\pi_{\beta\gamma}$ and $\pi_{\alpha\beta}$ up to a precompact shrinking of $\{U'_\beta, U'_\alpha\}$, accomplished in the following steps:

\begin{enumerate}[(1)]
\item Place $\pi_{\beta\gamma}$ and $\pi_{\alpha\beta}$ into the same setting, namely, in $U'_\alpha$:

We extend the submersion $\pi_{\alpha\beta}$ to $\pi_{\alpha\beta}^o$ by the morphisms acting on $U'_\alpha$, and we pushforward $\pi_{\beta\gamma}$ to $(\phi_{\alpha\beta})_\ast\pi_{\beta\gamma}$ and extend it to $((\phi_{\alpha\beta})_\ast\pi_{\beta\gamma})^o$ by the moprhisms in $U'_\alpha$. We observe that $$((\phi_{\alpha\beta})_\ast\pi_{\beta\gamma})^o: \text{orbit}(\phi_{\alpha\beta}(W'_{\beta\gamma}))\to \text{orbit}(\phi_{\alpha\beta}(\phi_{\beta\gamma}(U'_{\beta\gamma}))),$$ where $\phi_{\alpha\beta}(W'_{\beta\gamma}):=\phi_{\alpha\beta}(U'_{\alpha\beta}\cap W'_{\beta\gamma})$, is an invariant open subset of a fiber bundle ep-groupoid (whose projection is a vector-bundle-like submersion), but not a fiber bundle ep-groupoid in general. 

\item Make $((\phi_{\alpha\beta})_\ast\pi_{\beta\gamma})^o$ into a fiber bundle ep-groupoid by a precompact shrinking $U^{(4)}_\alpha$ of $U'_\alpha$:

Choose a precompact shrinking $U^{(3)}_\alpha$ of $U'_\alpha$ in the good coordinate system $\{C_\delta|_{U'_\delta}\}_{\delta\in \{\gamma, \beta,\alpha\}}\bigsqcup \{C_\epsilon|_{U_\epsilon}\}_{\epsilon\in S\backslash \{\gamma, \beta, \alpha\}}$ as in \ref{PRECOMPACTCSHRINK}. Denote the domain of the coordinate change $C_\beta|_{U_\beta}\to C_\alpha|_{U^{(3)}_\alpha}$ by $U^{(3)}_{\alpha\beta}$.

Then an invariant fiber-shrinking $W^{(3)}_{\beta\gamma}$ of $W'_{\beta\gamma}$ as in \ref{FSHRINKING} will have the property that $$\phi_{\alpha\beta}((\pi_{\beta\gamma}|_{W^{(3)}_{\beta\gamma}})^{-1}(\pi_{\beta\gamma}(U^{(3)}_{\alpha\beta}\cap W^{(3)}_{\beta\gamma})))\subset U'_\alpha.$$
Similar to figure \ref{figure2_2}, we define $U_\alpha^{(4)}:=$
$$(U^{(3)}_\alpha\backslash \text{orbit}(\phi_{\alpha\beta}((\pi_{\beta\gamma}|_{W^{(3)}_{\beta\gamma}})^{-1}(\pi_{\beta\gamma}((U'_{\alpha\beta}\backslash U^{(3)}_{\alpha\beta})\cap W^{(3)}_{\beta\gamma})))))\bigcup$$ $$\text{orbit}(\phi_{\alpha\beta}((\pi_{\beta\gamma}|_{W^{(3)}_{\beta\gamma}})^{-1}(\pi_{\beta\gamma}(U^{(3)}_{\alpha\beta}\cap W^{(3)}_{\beta\gamma}))))\;\subset U'_\alpha.$$ 
This is a trick of invariantly shrinking a chart guaranteeing that it will be a strong neighborhood around the image of the domain of the coordinate change into it.
Denote the domain of the coordinate change $C_\beta|_{U'_\beta}\to C_\alpha|_{U^{(4)}_\alpha}$ as $U^{(4)}_{\alpha\beta}$. Then, $$((\phi_{\alpha\beta})_\ast\pi_{\beta\gamma})^o:\text{orbit}(\phi_{\alpha\beta}(U^{(4)}_{\alpha\beta}\cap W^{(3)}_{\beta\gamma}))\to \text{orbit}(\phi_{\alpha\beta}(U^{(4)}_{\alpha\beta}\cap \phi_{\beta\gamma}(U'_{\beta\gamma})))$$ is a fiber bundle ep-groupoid projection and a vector-bundle-like submersion.

\item Construct a Riemannian metric simultaneously recovering vector-bundle-like submersions obtained as well as the composition:

By the construction, $$\pi_{\alpha\beta}^o:\text{orbit}((\pi_{\alpha\beta}|_{W'_{\alpha\beta}})^{-1}(\phi_{\alpha\beta}(U^{(4)}_{\alpha\beta})))\to \text{orbit}(\phi_{\alpha\beta}(U^{(4)}_{\alpha\beta}))\;\;\text{and}$$
$$((\phi_{\alpha\beta})_\ast\pi_{\beta\gamma})^o:\text{orbit}(\phi_{\alpha\beta}(U^{(4)}_{\alpha\beta})\cap W^{(3)}_{\beta\gamma}))\to \text{orbit}(\phi_{\alpha\beta}(U^{(4)}_{\alpha\beta}\cap \phi_{\beta\gamma}(U'_{\beta\gamma})))$$ are both vector-bundle-like submersions as defined in definition \ref{VBLIKE}. Denote the first fiber bundle ep-groupoid projection restricted to the invariant open subset $\text{orbit}(\phi_{\alpha\beta}(U^{(4)}_{\alpha\beta})\cap W^{(3)}_{\beta\gamma})$ of its base by $\pi: A\to B$, and denote the second fiber bundle ep-groupoid projection by $\pi':=B\to C$. Now we are in the position to apply proposition \ref{METRICCOMPATIBILITY}. In particular, we have an invariant Riemannian metric $g_A$ on $$A:=\text{orbit}((\pi_{\alpha\beta}|_{W'_{\alpha\beta}})^{-1}(\phi_{\alpha\beta}(U^{(4)}_{\alpha\beta})))|_{\text{orbit}(\phi_{\alpha\beta}(U^{(4)}_{\alpha\beta})\cap W^{(3)}_{\beta\gamma}))}$$ which recovers the vector-bundle-like submersions $\pi: A\to B$ and $\pi':B\to C$, and directly recovers $\pi'\circ\pi$ in the sense of \ref{SUBMERSIONMODEL}.

\item Precompactly shrink $U'_\beta$ to $U''_\beta$ and create a transition region to extend the metric $g_A$ constructed in item (3):

Now denote $U''_\gamma:=U'_\gamma$. Use the above trick in item (2) to choose a precompact shrinking $U''_\beta$ of $U'_\beta$ in the good coordinate system $$\{C_\gamma|_{U''_\gamma}, C_\beta|_{U'_\beta}, C_\alpha|_{U^{(4)}_\alpha}\}\bigsqcup\{C_\delta|_{U_\delta}\}_{\delta\in S\backslash \{\gamma, \beta, \alpha\}},$$
such that $(U''_\beta, U'_\beta, W''_{\beta\gamma})$ is an invariant strong open neighborhood of $\phi_{\beta\gamma}(U''_{\beta\gamma})$. Here $U''_{\beta\gamma}$ is the domain of $C_\gamma|_{U''_\gamma}\overset{\text{level-1}}{\to} C_\beta|_{U''_\beta}$ and $W''_{\beta\gamma}$ is an invariant fiber-shrinking of  $W^{(3)}_{\beta\gamma}|_{\phi_{\beta\gamma}(U''_{\beta\gamma})}$.

We also have the induced $C_\beta|_{U''_\beta}\to C_\alpha|_{U^{(4)}_\alpha}$ with the domain of the coordinate change denoted by $U''_{\alpha\beta}$, and the associated $W''_{\alpha\beta}\subset U^{(4)}_\alpha$ is an invariant fiber-shrinking of $W'_{\alpha\beta}|_{\phi_{\alpha\beta}(U''_{\alpha\beta})}$. 

Denote $W^{(4)}_{\alpha\beta}:=W'_{\alpha\beta}|_{\phi_{\alpha\beta}(U^{(4)}_{\alpha\beta})}$, where $U^{(4)}_{\alpha\beta}$ is defined in item (2).

Now via the transition region $\text{orbit}(W^{(4)}_{\alpha\beta}\backslash W''_{\alpha\beta})$, $g_A|_{\text{orbit}(W''_{\alpha\beta})}$ extends to an invariant Riemannian metric $g_{\alpha}$ on $U_\alpha^{(4)}$.

\item Construction of $\pi_{\alpha\gamma}$:

Use $g_{\alpha}$ to produce an immersed tubular neighborhood $W'_{\alpha\gamma}$ of the image $\phi_{\alpha\gamma}(U'_{\alpha\gamma})$ of the domain $U'_{\alpha\gamma}$ of $C_{\gamma}|_{U_\gamma}\to C_\alpha|_{U_{\alpha}}$, such that the submersion $\pi_{\alpha\gamma}:W'_{\alpha\gamma}\to\phi_{\alpha\gamma}(U'_{\alpha\gamma})$ is a functor between ep-groupoids. A sufficiently small invariant fiber-shrinking $W^m_{\alpha\gamma}$ of $W'_{\alpha\gamma}|_{\phi_{\alpha\gamma}(U^m_{\alpha\gamma})}$ restricted to the image of the domain $U^m_{\alpha\gamma}$ of the coordinate change $C_{\gamma}|_{U''_\gamma}\to C_\alpha|_{U^{(4)}_{\alpha}}$ will be embedded in $U^{(4)}_\alpha$. So here we never need to further shrink $U''_\gamma=U'_\gamma$ chosen before. Using the idea of the trick in item (II)(2) again (also see figure \ref{figure2_2}), we shrink and modify $U^{(4)}_\alpha$ away from $W^{(4)}_{\alpha\beta}$ into $U^{(5)}_\alpha\subset U^{(4)}_\alpha$ to achieve the strong open neighborhood property of $W^{(5)}_{\alpha\gamma}$ for $\phi_{\alpha\gamma}(U^{(5)}_{\alpha\gamma})$, where $U^{(5)}_{\alpha\gamma}$ is the domain of $C_\gamma|_{U''_\gamma}\to C_\alpha|_{U^{(5)}_\alpha}$ and $W^{(5)}_{\alpha\gamma}\subset U^{(5)}_\alpha$ is an invariant fiber-shrinking of $W^m_{\alpha\gamma}|_{\phi_{\alpha\gamma}(U^{(5)}_{\alpha\beta})}$ such that $\text{orbit}(W^{(5)}_{\alpha\gamma}|_{\phi_{\alpha\gamma}(U^{(5)}_{\alpha\gamma}\cap U''_{\beta\gamma})})=(\pi_{\alpha\beta}^o)^{-1}(\text{orbit}(\phi_{\alpha\beta}(\text{orbit}(W''_{\beta\gamma}))))$ $\equiv\text{orbit}((\pi_{\alpha\beta})^{-1}(\phi_{\alpha\beta}(W''_{\beta\gamma})))$ (by fiber-shrinking strictly inside the transition region if necessary).
Here, we do not need to change the $\text{orbit}((\pi_{\alpha\beta})^{-1}(\phi_{\alpha\beta}(W''_{\beta\gamma})))$ part of $U^{(4)}_\alpha$ during shrinking $U^{(4)}_\alpha$ into $U^{(5)}_\alpha$, so the previously constructed data for the other two coordinate changes are intact: The domain of $C_\beta|_{U'_\beta}\to C_\alpha|_{U^{(5)}_\alpha}$ is still $U^{(4)}_{\alpha\beta}$ with $W^{(4)}_{\alpha\beta}\subset U^{(5)}_\alpha$ (which implies that the domain of $C_\beta|_{U''_\beta}\to C_\alpha|_{U^{(5)}_\alpha}$ is still $U''_{\alpha\beta}$ with $W''_{\alpha\beta}\subset U^{(5)}_\alpha$), etc. We just constructed the vector-bundle-like submersion $\pi_{\alpha\gamma}: W^{(5)}_{\alpha\gamma}\to \phi_{\alpha\gamma}(U^{(5)}_{\alpha\gamma})$, which is compatible with $\pi_{\alpha\beta}|_{W''_{\alpha\beta}}$ and $\pi_{\beta\gamma}|_{W''_{\beta\gamma}}$ by the construction.
\end{enumerate}

\item Construct $\tilde\pi_{\alpha\gamma}$ compatible with the existing $$\tilde\pi_{\alpha\beta}|_{(\tilde E_{\alpha\beta}|_{W''_{\alpha\beta}})}\text{ and }\tilde\pi_{\beta\gamma}|_{(\tilde E_{\beta\gamma}|_{W''_{\beta\gamma}})}.$$

\begin{enumerate}[(i)]
\item Construct a connection $\nabla_{\beta\gamma}$ on $E_\beta|_{W^{(3)}_{\beta\gamma}}$ recovering $\pi_{\beta\gamma}$:

Observe that $\tilde \pi_{\beta\gamma}: \tilde E_{\beta\gamma}|_{W^{(3)}_{\beta\gamma}}\to \phi_{\beta\gamma}(E_\gamma|_{U'_{\beta\gamma}})$ provides an invariant trivialization of $\tilde E_{\beta\gamma}\to W^{(3)}_{\beta\gamma}$ along each fiber of $\pi_{\beta\gamma}$. Once we pick an invariant bundle connection on $\phi_{\beta\gamma}(E_\gamma|_{U'_{\beta\gamma}})$ and use the trivial connection along each fiber of $\pi_{\beta\gamma}$, we arrive at a connection $\nabla_1$ on $\tilde E_{\beta\gamma}|_{W^{(3)}_{\beta\gamma}}$. Consider $\ker(\hat\pi_{\beta\gamma}: E_\beta|_{W^{(3)}_{\beta\gamma}}\to \tilde E_{\beta\gamma}|_{W^{(3)}_{\beta\gamma}})$ which is a vector bundle ep-groupoid over $W^{(3)}_{\beta\gamma}$. We pick an invariant connection $\nabla_2$ on this bundle $\ker(\hat\pi_{\beta\gamma})\to W^{(3)}_{\beta\gamma}$. Then define the invariant direct sum connection $\nabla_{\beta\gamma}$ on $E_{\beta}|_{W^{(3)}_{\beta\gamma}}\cong \tilde E_{\beta\gamma}\oplus_{W^{(3)}_{\beta\gamma}}\ker(\hat\pi_{\beta\gamma})$. $\nabla_{\beta\gamma}$ recovers $\tilde\pi_{\beta\gamma}$.

\item Construct $\nabla_{\alpha\beta}$ extending the pushforward of $\nabla_{\beta\gamma}$ and recovering $\pi_{\alpha\beta}$:

We can repeat item (i) for $\tilde\pi_{\alpha\beta}: \tilde E_{\alpha\beta}|_{W^{(4)}_{\alpha\beta}}\to \phi_{\alpha\beta}(W^{(4)}_{\alpha\beta})$, except that here instead of using $\nabla_1$ on $\tilde E_{\alpha\beta}|_{W^{(4)}_{\alpha\beta}}$, we will use the following $\nabla'_1$: We extend the invariant pushforward $((\phi_{\alpha\beta})_\ast(\nabla_{\beta\gamma}))^o$ on $\hat\phi_{\alpha\beta}(E_\beta|_{\text{orbit}(W^{(3)}_{\beta\gamma})\cap U^{(4)}_{\alpha\beta}})$ invariantly over $\hat\phi_{\alpha\beta}(E_\beta|_{U^{(4)}_{\alpha\beta}})$, then to an invariant connection $\nabla'_1$ on $\tilde E_{\alpha\beta}|_{W^{(4)}_{\alpha\beta}}$ which is trivial along fiber of $\pi_{\alpha\beta}$. Let $\nabla_{\alpha\beta}$ denote the resulting connection on $E_\alpha|_{W^{(4)}_{\alpha\beta}}$ by repeating (i) using $\nabla'_1$ in our current setting. $\nabla_{\alpha\beta}$ recovers $\tilde\pi_{\alpha\beta}$, and $\pi_{\alpha\beta}$ is trivial on each fiber of $\pi_{\alpha\gamma}$.

\item Use the transition region to construction an invariant connection on $E_\alpha|_{U^{(5)}_\alpha}$, where $U^{(5)}_\alpha$ is defined in item (II)(5):

Use the tranisition region $\text{orbit}(W^{(4)}_{\alpha\beta}\backslash W''_{\alpha\beta})$ to extend $(\nabla_{\alpha\beta})|_{W''_{\alpha\beta}}$ to an invariant connection on $E_\alpha|_{U^{(5)}_{\alpha}}$. Use this connection, we can parallel transport $\hat\phi_{\alpha\gamma}(E_\gamma|_{U^{(5)}_{\alpha\gamma}})$ to $\tilde E_{\alpha\gamma}|_{W^{(5)}_{\alpha\gamma}}$ along the geodesic $g_{\alpha}$ restricted to the fibers of $\pi_{\alpha\gamma}$. Recall those fibers of $\pi_{\alpha\gamma}$ are totally geodesic by proposition \ref{METRICCOMPATIBILITY}) and the contruction in item (II)(3). This defines $\tilde\pi_{\alpha\gamma}: \tilde E_{\alpha\gamma}|_{W^{(5)}_{\alpha\gamma}}\to \phi_{\alpha\gamma}(U^{(5)}_{\alpha\gamma})$.

\item The compatibility of $\tilde\pi_{\ast\ast}$ follows by the construction:

Notice that since the connection restricted to $\text{orbit}(\tilde E_{\alpha\beta}|_{W''_{\alpha\beta}})$ is trivial along each fiber of $\pi_{\alpha\gamma}^o=((\phi_{\alpha\beta})_\ast(\pi_{\beta\gamma}))^o\circ\pi_{\alpha\beta}^o$. Therefore, parallel transporting along the geodesic of $g_{\alpha}$ restricted to each fiber of $\pi_{\alpha\gamma}$ yields the same result as parallel transporting along the geodesic in the fiber of $((\phi_{\alpha\beta})_\ast(\pi_{\beta\gamma})^o$ first then followed by the geodesic in the fiber $\pi_{\alpha\beta}^o$. This proves that $\tilde\pi_{\alpha\gamma}$ is compatible with $\tilde\pi_{\beta\gamma}|_{(\tilde E_{\beta\gamma}|_{W''_{\beta\gamma}})}$ and $\tilde\pi_{\alpha\beta}|_{(\tilde E_{\alpha\beta}|_{W''_{\alpha\beta}})}$.

\item Make the quotient section transverse by a shrinking $U''_\alpha$ of $U^{(5)}_\alpha$, and define $W''_{\ast\ast}$ to finish the construction of all $\pi_{\ast\ast}$ and $\tilde\pi_{\ast\ast}$:

We can apply the construction in proposition \ref{LEVEL1CCHANGE}, namely by shrinking $U^{(5)}_\alpha$ to $U''_\alpha$ and fiber-shrink $W^{(5)}_{\alpha\gamma}|_{\phi_{\alpha\gamma}(U''_{\alpha\gamma})}$ to $W''_{\alpha\gamma}$ in order to achieve the transversality of the quotient section. Here $U''_\gamma=U'_\gamma$ stays fixed as we can change the domain of the coordinate change by shrinking the target chart alone, just as in \ref{LEVEL1CCHANGE}. Notice that we do not need to modify the $W''_{\alpha\beta}$ part of $U^{(5)}_\alpha$. So we have the level-1 structure for $C_\gamma|_{U''_\gamma}\to C_{\alpha}|_{U''_\alpha}$ except $\hat\pi_{\alpha\gamma}$.

\end{enumerate}

\item Construct $\hat\pi_{\alpha\gamma}: E_\alpha|_{W''_{\alpha\gamma}}\to \tilde E_{\alpha\gamma}$:

Via $E_\beta|_{W''_{\beta\gamma}}\cong \ker\hat\pi_{\beta\gamma}\oplus_{W''_{\beta\gamma}}\tilde E_{\beta\gamma}$, define an invariant bundle metric $h_{\beta\gamma}$ on $E_\beta|_{W''_{\beta\gamma}}$, making $\ker\hat\pi_{\beta\gamma}$ and $\tilde E_{\beta\gamma}$ orthogonal. Extend $((\hat\phi_{\alpha\beta})_\ast h_{\beta\gamma})^o$ to an invariant bundle metric $h_1$ on $\hat\phi_{\alpha\beta}(E_\beta|_{U^{(4)}_{\alpha\beta}})$. Using $\tilde\pi_{\alpha\beta}^o$, extend $h_1$ to a bundle metric $h_2$ on $\tilde E_{\alpha\beta}|_{W^{(4)}_{\alpha\beta}}$. Using $E_\alpha|_{W^{(4)}_{\alpha\beta}}\cong \ker\hat\pi_{\alpha\beta}\oplus_{W^{(4)}_{\alpha\beta}}\tilde E_{\alpha\beta}$, define an $E_\alpha|_{W^{(4)}_{\alpha\beta}}$-invariant direct sum bundle metric $h_3$ which makes the summands orthogonal and restricts to the first factor to $h_2$. Using the transition region $E_\alpha|_{\text{orbit}(W^{(4)}_{\alpha\beta}\backslash W''_{\alpha\beta})}$, extend $h_3^o$ to an invariant bundle metric $h_4$ on $E_\alpha|_{U^{(5)}_\alpha}$. Define $h_{\alpha\gamma}:=h_4|_{W^{(5)}_{\alpha\gamma}}$. Then over $\text{orbit}(W^{(5)}_{\alpha\gamma})\cap \text{orbit}(W''_{\alpha\beta})$, $h_{\alpha\gamma}^o$ recovers $\hat\pi_{\alpha\beta}^o$. Define $\hat\pi_{\alpha\gamma}: E_\alpha|_{W''_{\alpha\beta}}\to \tilde E_{\alpha\beta}$ using $h_{\alpha\gamma}$. Then $\hat\pi_{\alpha\gamma}$ is compatible with $\hat\pi_{\beta\gamma}$ and $\hat\pi_{\alpha\beta}$.
\end{enumerate}
\end{proof}

\begin{remark} The above general three-chart construction shows that further constructions do not modify the existing constructions except only shrinking the existing data, and Riemannian metrics and connections are not needed in the level-1 data and we can always construct those Riemmanian metrics and connections to recover the previously constructed $\pi_{\ast\ast}, \tilde\pi_{\ast\ast}$ and $\hat\pi_{\ast\ast}$.
\end{remark}

\subsection{Constructing a level-1 good coordinate system}
\begin{theorem}[existence of a level-1 good coordinate system]\label{EXLEVELONEGCS} Let $\mathcal{G}$ be a strongly intersecting Hausdorff good coordinate system indexed by a total order $(S, \leq)$. Then there exists a level-1 good coordinate system $\mathcal{G}'$ for $\mathcal{G}$ in the sense of definition \ref{LEVELONEGCS}.
\end{theorem}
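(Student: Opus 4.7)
The plan is to prove Theorem \ref{EXLEVELONEGCS} by a finite induction powered entirely by Proposition \ref{ALPHABETAGAMMA} (the $\{\alpha,\beta,\gamma\}$-step). First I fix a precompact shrinking $\{U^{(0)}_\alpha\}_{\alpha\in S}$ of $\{U_\alpha\}_{\alpha\in S}$, which exists by Lemma \ref{GSSE} applied to the grouped Hausdorff good coordinate system; on each diagonal pair $(\alpha,\alpha)\in I(\mathcal{G})$ I assign the trivial (identity) level-1 structure noted in Remark \ref{LEVEL1CCREM}(II). I then enumerate $S$ using the total order as $\alpha_1<\alpha_2<\cdots<\alpha_N$, and enumerate the off-diagonal pairs of $I(\mathcal{G})$ lexicographically by target first and then by source.

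The outer induction runs on the target: assume that for every $(\delta,\alpha)\in I(\mathcal{G})$ with $\alpha\leq\alpha_n$ we have already chosen a level-1 coordinate change $C_\delta|_{U'_\delta}\overset{\text{level-1}}{\to} C_\alpha|_{U^{(n)}_\alpha}$, where $\{U'_\delta,U^{(n)}_\alpha\}_{\delta,\alpha\leq\alpha_n}$ is a precompact shrinking of $\{U_\delta,U_\alpha\}_{\delta,\alpha\leq\alpha_n}$ and all triples of these level-1 data are compatible in the sense of Definition \ref{COMPATIBILITYONE}. For the step $n\leadsto n+1$ I list the sources $\beta<\alpha_{n+1}$ with $(\beta,\alpha_{n+1})\in I(\mathcal{G})$ as $\beta_1<\beta_2<\cdots<\beta_k$, and within this list I run an inner induction on $i$ that enumerates, in lexicographic order, all triples $(\gamma,\beta_i,\alpha_{n+1})$ with $\gamma\leq\beta_i$ and both $(\gamma,\beta_i),(\gamma,\alpha_{n+1})\in I(\mathcal{G})$. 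For each such triple I apply Proposition \ref{ALPHABETAGAMMA}: the source $U'_\gamma$ is left untouched (conclusion (i) of the proposition), the intermediate $U'_{\beta_i}$ may be shrunk to a $U''_{\beta_i}$ which I then freeze for all subsequent sub-steps involving $\beta_i$ as an intermediate chart, and the target $U^{(n)}_{\alpha_{n+1}}$ is replaced by a strictly smaller precompact shrinking that carries the new level-1 data for $C_{\beta_i}\to C_{\alpha_{n+1}}$ and $C_\gamma\to C_{\alpha_{n+1}}$ compatibly with the already-fixed level-1 $C_\gamma\to C_{\beta_i}$. Once all triples with target $\alpha_{n+1}$ are exhausted, the final shrinking $U^{(n+1)}_{\alpha_{n+1}}$ together with $\{U'_\delta,U^{(n)}_\alpha\}_{\alpha\leq\alpha_n}$ (after the possible refinements of the $U'_{\beta_i}$'s, which are propagated backwards by restriction) is the new inductive datum.

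The main obstacle, and where most of the bookkeeping concentrates, is verifying that each application of the $\{\gamma,\beta_i,\alpha_{n+1}\}$-step preserves all previously established compatibility identities. Two facts make this work. First, the key construction in the proof of Proposition \ref{ALPHABETAGAMMA} is the \emph{transition-region trick} of Item (II)(4) there: whenever a new metric or connection on $U_{\alpha_{n+1}}$ is needed, one extends the previously chosen metric/connection from $\mathrm{orbit}(W''_{\alpha_{n+1}\beta_j})$ (for previously treated $\beta_j$) outward across a transition region, so the previously constructed $\pi_{\alpha_{n+1}\,\beta_j}^o,\tilde\pi_{\alpha_{n+1}\,\beta_j}^o,\hat\pi_{\alpha_{n+1}\,\beta_j}^o$ are untouched; any further fiber-shrinking and any shrinking of $U^{(n)}_{\alpha_{n+1}}$ away from these fixed $W$-regions only restricts the existing data without altering the identities. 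Second, the orbit-version notation of \ref{CLEVERNOTATION} ensures that restricting the compatibility identities from $\mathrm{orbit}(W)$ to $\mathrm{orbit}(W')$ for any invariant fiber-shrinking $W'\subset W$ is purely formal, so compatibility for already-processed triples automatically survives each new application of the $\{\alpha,\beta,\gamma\}$-step. After $N$ outer steps the resulting data $\mathcal{G}'=\{C_\alpha|_{U'_\alpha}\}_{\alpha\in S}$ together with the level-1 coordinate changes indexed by $I(\mathcal{G})=I(\mathcal{G}')$ satisfies Definition \ref{LEVELONEGCS}, and still underlies a strongly intersecting Hausdorff good coordinate system by Remark \ref{REMDEF}, completing the proof.
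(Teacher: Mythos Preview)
Your inductive scheme has the right outer structure (target first, then sources), but there is a genuine gap in the inner induction: Proposition \ref{ALPHABETAGAMMA} cannot be applied to individual triples $(\gamma,\beta_i,\alpha_{n+1})$ in the way you describe. As stated, that proposition takes \emph{only} the level-1 datum $C_\gamma\to C_{\beta_i}$ as input and constructs \emph{both} level-1 $C_{\beta_i}\to C_{\alpha_{n+1}}$ and $C_\gamma\to C_{\alpha_{n+1}}$ fresh. Once you have ``frozen'' the level-1 structure on $C_{\beta_i}\to C_{\alpha_{n+1}}$ from the first triple involving $\beta_i$, you cannot invoke the proposition again for a second triple $(\gamma',\beta_i,\alpha_{n+1})$ with the same intermediate: the proposition would overwrite the frozen $\pi_{\alpha_{n+1}\beta_i}$. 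What you actually need there is a different statement---given level-1 $C_{\gamma'}\to C_{\beta_i}$ \emph{and} level-1 $C_{\beta_i}\to C_{\alpha_{n+1}}$, construct a compatible level-1 $C_{\gamma'}\to C_{\alpha_{n+1}}$---and this is not what Proposition \ref{ALPHABETAGAMMA} says.

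The more serious issue is the case where one source $\gamma$ sits below \emph{several} intermediates $\beta_{i_1},\beta_{i_2},\ldots$ all mapping to $\alpha_{n+1}$. Your scheme would construct $\pi_{\alpha_{n+1}\gamma}$ once per triple, using a different metric $g_\alpha$ each time (the one built in item (II)(4)--(5) of the proposition's proof). Nothing in your argument forces these metrics to produce the same submersion on the overlap $\bigcap_k\mathrm{orbit}(W_{\alpha_{n+1}\beta_{i_k}})$. The transition-region trick you cite from (II)(4) only ensures that the \emph{previously built} $\pi_{\alpha_{n+1}\beta_j}$ are preserved; it says nothing about reconciling multiple constructions of $\pi_{\alpha_{n+1}\gamma}$. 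This is exactly why the paper calls the inductive step a ``slight generalization'' of the three-chart proposition rather than a repeated application: in item (4)(iv)(c) the paper builds, for each new source $\alpha_{-j}$, a \emph{single} invariant metric on the union $W_1\cup W_2\cup\cdots$ by gluing the route-specific metrics $g_{\alpha\alpha_{-k}}$ with a partition of unity constant along the fibers of the common submersion $\pi_{-j}$; the point is that compatibility of the lower level-1 data forces all these route-specific submersions to agree on overlaps, so the glued metric recovers one well-defined $\pi_{\alpha\alpha_{-j}}$. You need this argument (and its analogues for the connection and bundle metric), not an iteration of the black-box proposition.
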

\begin{proof}\label{PERTNCOMPATIBILITY} We prove by induction according to the total order $(S,\leq)$, by systematically applying generalized versions of proposition \ref{ALPHABETAGAMMA}.

\begin{enumerate}
\item The first step to start the induction process is this: Let $\beta:=\min S$ and let $\alpha:=\min(S\backslash \{\beta\})$ and apply the proposition \ref{LEVEL1CCHANGE}.

\item Denote $S_\beta:=\{\gamma\in S\;|\; \gamma\leq \beta\}$. For $\beta\in S$, we now define \emph{$S_\beta$-step}:

\begin{definition}[$S_\beta$-step] For a precompact shrinking $\{U'_{\gamma}\}_{\gamma\in S_\beta}$ of $\{U_\gamma\}_{\gamma\in S_\beta}$ in $\mathcal{G}$, we can construct level-1 $C_{\gamma_1}|_{U'_{\gamma_1}}\overset{\text{level-1}}{\to} C_{\gamma_2}|_{U'_{\gamma_2}}$ for $\gamma_1, \gamma_2\in S_\beta$ with $(\gamma_1,\gamma_2)\in I(\mathcal{G})$ such that the underlying coordinate changes $C_{\gamma_1}|_{U'_{\gamma_1}}\to C_{\gamma_2}|_{U'_{\gamma_2}}$ are restricted from the coordinate changes in $\mathcal{G}$ and those level-1 coordinate changes are compatible.
\end{definition}
\item We now define $I_\alpha$ and $J_\alpha$, and state the inductive step:

For $\alpha\in S$, denote $I_\alpha:=\{\beta\in S\;|\;(\beta,\alpha)\in I(\mathcal{G})\}$. $\beta\in I_\alpha$ is said to be an \emph{initial index} if there does not exist $\gamma\in I_\alpha$ such that $(\gamma, \beta)\in I(\mathcal{G})$. Denote the set of initial indices in $I_\alpha$ by $J_\alpha$.

The statement of the \emph{inductive step} is the following. For $\beta\in S$, assume that $S_{\beta}$-step is true and denote $\alpha:=\min{S\backslash S_{\beta}}$, then we can establish the $S_\alpha$-step for $U''_{\gamma}, \gamma\in S_{\alpha}$, such that \begin{enumerate}
\item if $\gamma\in (S_\alpha\backslash I_\alpha)\bigsqcup J_\alpha$, then $U''_\gamma=U'_\gamma$;
\item if $\gamma\in I_\alpha\backslash (J_\alpha\bigsqcup\{\alpha\})$, then $U''_\gamma$ is a precompact shrinking of $U'_\gamma$ obtained in $S_\beta$-step, as defined in \ref{PRECOMPACTCSHRINK} (with $J=\{\gamma\}$ there);
\item if $\gamma=\alpha$, then $U''_\alpha$ is a precompact shrinking of $U_\alpha$; and
\item $\{C_\gamma|_{U''_\gamma}\}_{\gamma\in S_\alpha}\bigsqcup \{C_\delta|_{U_\delta}\}_{\delta\in S\backslash S_\alpha}$ is a shrinking of $\mathcal{G}$.
\end{enumerate}

We can summarize items (a)-(d) by saying $\{U''_\gamma, U''_\alpha\}_{\gamma\in I_\alpha\backslash (J_{\alpha}\bigsqcup\{\alpha\})}$ is a precompact shrinking of $\{U'_\gamma, U_\alpha\}_{\gamma\in I_\alpha\backslash (J_{\alpha}\bigsqcup\{\alpha\})}$ in the good coordinate system obtained after $S_\beta$-step.
\item We show that the general inductive step formulated above is a slight generalization of \ref{ALPHABETAGAMMA} regarding three charts to $S_\alpha$-many charts:

\begin{enumerate}[(i)]
\item Let $\alpha_{-1}:=\max (I_\alpha\backslash \{\alpha\})$, $\alpha_{-k}:=\max(I_\alpha\backslash \{\alpha,\cdots, \alpha_{-(k-1)}\})$, and $N:=|I_\alpha|-1$. So, $I_\alpha=\{\alpha_{-N},\alpha_{-N+1},\cdots,\alpha_{-1},\alpha\}$.

\item Since $S_\beta=S_\alpha\backslash\{\alpha\}$ is one chart less, by the inductive hypothesis, we can assume that we already have level-1 coordinate changes among the charts $C_\gamma|_{U'_\gamma}, \gamma\in S_\beta$, hence in particular among the charts $\{C_\gamma|_{U'_\gamma}\}_{\gamma\in I_\alpha\backslash\{\alpha\}}\equiv\{C_{\alpha_{-i}}|_{U'_{\alpha_{-i}}}\}_{1\leq i\leq N}$.

\item We apply proposition \ref{ALPHABETAGAMMA} for the case $(\alpha,\beta,\gamma)=(\alpha,\alpha_{-1},\alpha_{-2})$, and obtain level-1 structure for a chart shrinking $U^{(-2)}_\delta$ of $U'_{\delta}$ for $\delta=\alpha_{-2}, \alpha_{-1}, \alpha$ with $U_{\alpha_{-2}}^{(-2)}=U'_{\alpha_{-2}}$. Here the shrinking $U^{(-2)}_{\alpha_{-1}}$ respects the $\pi_{\ast\ast}$-structure already in place, namely the $\pi_{\alpha_{-1}\gamma}|_{W^{(-2)}_{\alpha_{-1}\gamma}}$ of the restricted level-1 coordinate changes from any chart in $\{C_\gamma|_{U^{(-2)}_\gamma=U'_\gamma}\}_{\gamma\in I_\alpha\backslash\{\alpha,\alpha_{-1}\}}$ to $C_{\alpha_{-1}}|_{U^{(-2)}_{\alpha_{-1}}}$ and $\pi_{\gamma\gamma'}|_{W'_{\gamma\gamma'}}, \gamma,\gamma'\in I_\alpha\backslash\{\alpha, \alpha_{-1}\}$ from (ii) are still compatible. This step includes the trivial case where $(\alpha_{-2}, \alpha_{-1})\not\in I(\mathcal{G})$.

\item Construct $C_{\alpha_{-3}}|_{U^{(-3)}_{\alpha_{-3}}=U'_{\alpha_{-3}}}\overset{\text{level-1}}{\to} C_{\alpha}|_{U^{(-3)}_\alpha}$ with compatible level-1 coordinate changes among the $\{\alpha_{-3},\alpha_{-2},\alpha_{-1},\alpha\}$-charts:

\begin{enumerate}[(a)]
\item Consider the charts indexed by $\{\alpha, \alpha_{-1}, \alpha_{-3}\}$ (this item is omitted if $(\alpha_{-3},\alpha_{-1})\not\in I(\mathcal{G})$):

By applying \ref{METRICCOMPATIBILITY} as in item (II) of the proof of \ref{ALPHABETAGAMMA}, we can find an invariant Riemannian metric $g_{\alpha\alpha_{-1}}$ on $$W_1:=\text{orbit}(W^{(-2)}_{\alpha\alpha_{-1}}|_{\phi_{\alpha\alpha_{-1}}(W^{(-2)}_{\alpha\alpha_{-1}})\cap\text{orbit}((\pi_{\alpha_{-1}\alpha_{-3}})^{-1}(\phi_{\alpha_{-1}\alpha_{-3}}(U^{(-2)}_{\alpha_{-1}\alpha_{-3}})))})$$ that recovers $\pi_{\alpha\alpha_{-1}}^o|_{W_1}$ and $((\phi_{\alpha\alpha_{-1}})_\ast\pi_{\alpha_{-1}\alpha_{-3}})^o$.

\item By the same token, consider the $\{\alpha, \alpha_{-2}, \alpha_{-3}\}$-charts (this item is omitted if $(\alpha_{-3},\alpha_{-2})\not\in I(\mathcal{G})$), we can find an invariant Riemannian metric $g_{\alpha\alpha_{-2}}$ on $$W_2:=\text{orbit}(W^{(-2)}_{\alpha\alpha_{-2}}|_{\phi_{\alpha\alpha_{-2}}(W^{(-2)}_{\alpha\alpha_{-2}})\cap\text{orbit}((\pi_{\alpha_{-2}\alpha_{-3}})^{-1}(\phi_{\alpha_{-2}\alpha_{-3}}(U'_{\alpha_{-2}\alpha_{-3}})))})$$ that recovers $\pi_{\alpha\alpha_{-2}}^o|_{W_2}$ and $((\phi_{\alpha\alpha_{-2}})_\ast\pi_{\alpha_{-2}\alpha_{-3}})^o$.

\item $g_{\alpha\alpha_{-1}}$ and $g_{\alpha\alpha_{-2}}$ on the overlap give the same vector-bundle-like submersion $\pi_{-3}$ from the invariant tubular neighborhood $W_1\cap W_2$ onto the image in $\text{orbit}(\phi_{\alpha\alpha_{-3}}(U_{\alpha\alpha_{-3}}))$, thanks to the compatibility of $\{\alpha, \alpha_{-1}, \alpha_{-3}\}$ and $\{\alpha, \alpha_{-2}, \alpha_{-3}\}$.

Using a partition of unity that is constant along the fibers of $\pi_{-3}$, we can find an invariant metric $g_{-3}$ on $W_1\cup W_2$ that recovers $\pi_{\alpha\alpha_{-1}}^o$ and $((\phi_{\alpha\alpha_{-1}})_\ast\pi_{\alpha_{-1}\alpha_{-3}})^o$ on $W_1$, and $\pi_{\alpha\alpha_{-2}}^o$ and $((\phi_{\alpha\alpha_{-2}})_\ast\pi_{\alpha_{-2}\alpha_{-3}})^o$ on $W_2$.

We then precompactly shrink $U^{(-2)}_{\alpha_{-1}}$ to $U^{(-3)}_{\alpha_{-1}}$ such that $\text{orbit}(W^{(-3)}_{\alpha\alpha_{-1}})$ is a strong open neighborhood over $$U_1:=((\alpha\alpha_{-1})_\ast\pi_{\alpha_{-1}\alpha_{-3}})^{-1}(\text{orbit}(\phi_{\alpha\alpha_{-3}}(U^{(-3)}_{\alpha\alpha_{-3}})))\cap \text{orbit}(\phi_{\alpha\alpha_{-1}}(U^{(-3)}_{\alpha\alpha_{-1}}));$$ and precompactly shrink $U^{(-2)}_{\alpha_{-2}}$ to $U^{(-3)}_{\alpha_{-2}}$ with $\text{orbit}(W^{(-3)}_{\alpha\alpha_{-2}})$ is a strong open neighborhood over $$U_2:=((\alpha\alpha_{-2})_\ast\pi_{\alpha_{-2}\alpha_{-3}})^{-1}(\text{orbit}(\phi_{\alpha\alpha_{-3}}(U^{(-3)}_{\alpha\alpha_{-3}})))\cap \text{orbit}(\phi_{\alpha\alpha_{-2}}(U^{(-3)}_{\alpha\alpha_{-2}})),$$
such that $(\text{orbit}(W^{(-3)}_{\alpha\alpha_{-1}}))|_{U_1\cap U_2}=(\text{orbit}(W^{(-3)}_{\alpha\alpha_{-2}}))|_{U_1\cap U_2}$ and we also ensure that shrunken charts together with remaining charts is still a shrinking of $\mathcal{\mathcal{G}}$. Due to the compatibility, each strong open neighborhood over $U_1\cap U_2$ is a double fiber bundle ep-groupoid over $$\text{orbit}(\phi_{\alpha\alpha_{-3}}(U^{(-3)}_{\alpha\alpha_{-3}}))\cap \text{orbit}(\phi_{\alpha\alpha_{-1}}(U^{(-3)}_{\alpha\alpha_{-1}}))\cap \text{orbit}(\phi_{\alpha\alpha_{-2}}(U^{(-3)}_{\alpha\alpha_{-2}})).$$ 

Then we use the transition region $$(W_1\cup W_2)\backslash ((\text{orbit}(W^{(-3)}_{\alpha\alpha_{-1}}))|_{U_1}\cup (\text{orbit}(W^{-2}_{\alpha\alpha_{-3}}))|_{U_2})$$ to construct a Riemannan metric $g^{(-3)}_\alpha$ over $U^{(-2)}_{\alpha}$ (for constructing $\pi_{\alpha\alpha_{-3}}$, so we only needed to recover over $W_1\cup W_2$).

We use $g^{(-3)}_\alpha$ and shrink $U^{(-2)}_\alpha$ to $U^{(-3)}_\alpha$ (leaving the existing strong open neighborhoods intact) to contruct a vector-bundle-like submersion $\pi_{\alpha\alpha_{-3}}$ compatible with existing $\pi_{\ast\ast}$ in the data of compatible level-1 coordinate changes among charts $C_{\delta}|_{U^{(-3)}_\delta}, \delta=\alpha,\alpha_{-1},\alpha_{-2}, \alpha_{-3}$. Similarly, we can recover connections and bundle metrics to finish compatible data in the level-1 coordinate changes among those four charts. Observe that above general consideration include the cases where there is no coordinate changes between some of the charts with index $\delta\not=\alpha$.
\end{enumerate}
\item Construction of $C_{\alpha_{-j}}|_{U^{(-j)}_{\alpha_{-j}}=U'_{\alpha_{-j}}}\overset{\text{level-1}}{\to} C_{\alpha}|_{U^{(-j)}_\alpha}$ with the compatibility of level-1 coordinate changes among the charts at $(\alpha_{-j}, \cdots, \alpha_{-1},\alpha)$ (allowing no coordinate changes among some of the charts) is a straightforward generalization of item (iv), and builds on the construction of level-1 coordinate changes among $\{C_{\alpha_{-j+1}}|_{U^{(-j+1)}_{\alpha_{-j+1}}},\cdots,C_\alpha|_{U^{(-j+1)}_\alpha}\}$. After this step, we have compatibile level-1 coordinate changes among the charts $C_{\delta}|_{U^{(-j)}_\delta}, \delta=\alpha,\alpha_{-1},\cdots, \alpha_{-j}$. 

\item Compatible level-1 coordinate changes among $\{C_{\delta}|_{U^{(-N)}_\delta}\}_{\delta\in I_\alpha}$, hence among $\{C_\gamma|_{U''_\gamma=U'_\gamma}\}_{\gamma\in S_\alpha\backslash I_\alpha}\cup\{C_{\delta}|_{U''_\delta=U^{(-N)}_\delta}\}_{\delta\in I_\alpha}$, are obtained after $N$ steps. The inductive step is completed.
\end{enumerate}

\item After finite many steps, we reach $(S_\alpha)$-step with $\alpha=\max S$. Namely, we have compatible level-1 coordinate changes among the charts $\{C_{\gamma}|_{U'_\gamma}\}_{\gamma\in S_\alpha=S}$, the desired level-1 good coordinate system.
\end{enumerate}
\end{proof}

\begin{proposition}[controlling the size of a precompact shrinking]\label{CONTROLLINGTHESIZE} Let $\mathcal{G}$ be a strongly intersecting Hausdorff good coordinate system indexed by a total order $(S, \leq)$. Let $\tilde{\mathcal{G}}:=\{\tilde U_\alpha\}_{\alpha\in S}$ be a precompact shrinking of $\{U_\alpha\}_{\alpha\in S}$ in $\mathcal{G}$. Then there exists a level-1 good coordinate system $\mathcal{G}'$ for $\mathcal{G}$ such that $\tilde{\mathcal{G}}$ is a shrinking of the underlying good coordinate system of $\mathcal{G}'$.
\end{proposition}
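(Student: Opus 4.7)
The strategy is to run the construction of Theorem \ref{EXLEVELONEGCS} essentially unchanged, but at every intermediate shrinking step insert the additional constraint that the shrunken chart must still contain $\tilde U_\alpha$. The point is that all the shrinkings in the proof of \ref{EXLEVELONEGCS} are flexible ``upper-bound'' constraints that are satisfied once certain fiber-shrinkings of tubular neighborhoods are chosen sufficiently small, whereas containing $\tilde U_\alpha$ is a ``lower-bound'' constraint on a compact set; these two types of constraints can be met simultaneously.

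More concretely, I would first fix an auxiliary precompact shrinking $\mathcal{G}^{*}=\{U^{*}_\alpha\}_{\alpha\in S}$ of $\mathcal{G}$ with $\overline{\tilde U_\alpha}\subset U^{*}_\alpha$ and $\overline{U^{*}_\alpha}$ compact in $U_\alpha$, which exists since $\overline{\tilde U_\alpha}$ is compact in $U_\alpha$. I would then apply the inductive construction of Theorem \ref{EXLEVELONEGCS} with $\mathcal{G}^{*}$ in place of the starting good coordinate system, and with the following \emph{a priori} constraint imposed on every precompact shrinking introduced during the induction (the sets $U_\alpha^{(3)},U_\alpha^{(4)},U_\alpha^{(5)},U''_\alpha,U''_\beta$ appearing in Proposition \ref{ALPHABETAGAMMA}, together with their analogues in the $S_\beta$- and $S_\alpha$-steps of the proof of \ref{EXLEVELONEGCS}): each such shrinking of a chart $U^{(\,\cdot\,)}_\alpha$ must contain $\overline{\tilde U_\alpha}$. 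This constraint is preserved by composition of shrinkings, so it is enough to check it at each individual step.

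To check feasibility at each step I would observe that every modification of the form
\[
U^{(\,\cdot\,)}_\alpha\;\longmapsto\;
\bigl(U^{(\,\cdot\,)}_\alpha\setminus\operatorname{orbit}(\phi_{\alpha\beta}(\pi_{\beta\gamma}^{-1}(\cdot)))\bigr)
\cup\operatorname{orbit}(\phi_{\alpha\beta}(\pi_{\beta\gamma}^{-1}(\cdot)))
\]
used to engineer a strong open neighborhood (as in \ref{ALPHABETAGAMMA}(II)(2), (II)(5), (III)(v)) removes a set whose closure, by Lemma \ref{TOPOLOGYLEMMA}-type considerations, shrinks down to a compact subset of $\phi_{\alpha\beta}(\phi_{\beta\gamma}(U'_{\beta\gamma}))\setminus\phi_{\alpha\beta}(\phi_{\beta\gamma}(U^{(\,\cdot\,)}_{\beta\gamma}))$ (or its analogues) as the fiber-shrinkings $W^{(\,\cdot\,)}_{\ast\ast}$ are made thinner. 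That compact set is disjoint from $\tilde U_\alpha$ whenever $\phi_{\alpha\beta}(\phi_{\beta\gamma}(\tilde U_{\beta\gamma}))\subset U^{(\,\cdot\,)}_\alpha$ is already secured by the inductive constraint. Hence, shrinking the fibers of $W^{(\,\cdot\,)}_{\ast\ast}$ further if necessary, each modification leaves $\tilde U_\alpha$ intact. The final shrinking of $U^{(5)}_\alpha$ to $U''_\alpha$, used to force transversality of the quotient section, is again a removal of a fiber-neighborhood of the image of the coordinate change whose closure shrinks to $s_\alpha^{-1}(0)\cap\phi_{\alpha\gamma}(U^{(5)}_{\alpha\gamma})\setminus\phi_{\alpha\gamma}(U''_{\alpha\gamma})$; arguing with an auxiliary precompact shrinking ensuring $\tilde U_\alpha\cap s_\alpha^{-1}(0)\subset\phi_{\alpha\gamma}(U''_{\alpha\gamma})$, we again preserve containment of $\tilde U_\alpha$.

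Finally, since restrictions of coordinate changes compose, any level-1 good coordinate system $\mathcal{G}'$ for $\mathcal{G}^{*}$ whose underlying shrinking contains $\tilde{\mathcal{G}}$ is automatically a level-1 good coordinate system for $\mathcal{G}$ (the underlying chart shrinkings $U'_\alpha\subset U^{*}_\alpha\subset U_\alpha$ are precompact in $U_\alpha$, and the underlying coordinate changes, being restrictions of restrictions, are restrictions of $C_\beta|_{U_\beta}\to C_\alpha|_{U_\alpha}$ by the maximality condition of Lemma \ref{MAXIMALITYEQUIV}), and $\tilde{\mathcal{G}}$ is a shrinking of its underlying good coordinate system in the sense of Definition \ref{SHRINKINGGGCS}. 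The main obstacle is purely bookkeeping: verifying that at every one of the finitely many shrinking steps inductively indexed by $(S,\leq)$ one can simultaneously satisfy the ``make fibers thin enough'' conditions required to achieve strong open neighborhoods, transversality of quotients, and compatibility of $\pi_{\ast\ast}$, $\tilde\pi_{\ast\ast}$, $\hat\pi_{\ast\ast}$, \emph{and} the ``keep the chart large enough'' condition that $\tilde U_\alpha$ remains inside. As sketched above, the parts of a chart where modifications remove points are always compactly disjoint from $\tilde U_\alpha$ once the fiber-shrinkings are chosen fine enough, so both families of constraints can be met in tandem.
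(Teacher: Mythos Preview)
Your proposal is correct and follows essentially the same approach as the paper: rerun the inductive construction of Theorem \ref{EXLEVELONEGCS}, imposing at each precompact shrinking step the additional lower-bound constraint that the shrunken chart still contains $\tilde U_\alpha$ (the paper phrases this as requiring $\tilde U_\alpha$ to remain a precompact shrinking of each intermediate $U^{(-i)}_{\alpha_{-k}}$, which is your containment condition plus room for the next step). Your auxiliary $\mathcal{G}^{*}$ and the detailed feasibility analysis of why the removed regions miss $\tilde U_\alpha$ are more elaborate than the paper's two-sentence justification, but the underlying idea is identical.
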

\begin{proof} Notice that in choosing precompact shrinkings of charts, we merely require the precompactness, not the size at all. So we can just choose those precompact shrinkings $U^{(-i)}_{\alpha_{-k}}$ in the proof for theorem \ref{EXLEVELONEGCS} to include the corresponding bases $\hat U_{\alpha_{-k}}$ in $\hat{\mathcal{G}}$ in each step such that $\hat U_{\alpha_{-k}}$ is a precompact shrinking of $U^{(-i)}_{\alpha_{-k}}$. We need the last requirement so that we have room to precompact shrink $U^{(-i)}_{\alpha_{-k}}$ again if required in the construction, and then the resulting precompact shrinking still contains $\hat U_{\alpha_{-k}}$.
\end{proof}

\begin{remark}
Partitions of the index set in constructing level-1 good coordinate systems can be finer than by the bundle dimension, and the method is unaffected as long as we fix a total order partition, which by definition is compatible with the order by the bundle dimension. The level-1 structure for $C_\beta|_{U'_\beta}\to C_\alpha|_{U'_\alpha}$ is trivial, ($W'_{\alpha\beta}=\phi_{\alpha\beta}(U'_{\alpha\beta})$, $\tilde E_{\alpha\beta}=E_\alpha|_{\phi_{\alpha\beta}(U'_{\alpha\beta})}$, $\pi_{\alpha\beta}=Id$, $\tilde\pi_{\alpha\beta}=Id$ and $\hat\pi_{\alpha}=Id$), if the bundle dimensions of $E_\beta$ and $E_\alpha$ are the same, and this is implicit in the above proof.
\end{remark}

\subsection{From a concerted Kuranishi embedding to a concerted level-1 Kuranishi embedding}

\begin{theorem}\label{EMBEDDING0TO1EXTENDINGLEVEL1GCS} Let $\mathcal{G}\Rightarrow \tilde{\mathcal{G}}$ be a concerted Kuranishi embedding between two strongly intersecting Hausdorff good coordinate systems indexed by the same total order $(S,\leq)$. Let $\mathcal{G}''$ be a level-1 good coordinate system for $\mathcal{G}$. By composing, we have a concerted Kuranishi embedding $\mathcal{G}''\Rightarrow \tilde{\mathcal{G}}$ where we have compatible level-1 coordinate changes among charts $C_\alpha|_{U''_\alpha},\alpha\in S$.

Then there exists a concerted level-1 Kuranishi embedding $\mathcal{G}'\overset{\text{level-1}}{\Rightarrow}\tilde{\mathcal{G}}'$ for $\mathcal{G}''\Rightarrow \tilde{\mathcal{G}}$ in the sense of definition \ref{LEVEL1KEMBED}. (Hence, it is also a level-1 Kuranishi embedding for $\mathcal{G}\Rightarrow\tilde{\mathcal{G}}$).
\end{theorem}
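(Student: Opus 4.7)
The plan is to mirror the inductive construction of Theorem \ref{EXLEVELONEGCS}, but carry along the embedding data $\Pi_\alpha, \tilde{\Pi}_\alpha, \hat{\Pi}_\alpha$ in parallel with the level-1 coordinate changes of $\tilde{\mathcal{G}}'$, so that both the compatibility inside $\tilde{\mathcal{G}}'$ and the level-1 commutativity of each square are established simultaneously. Concretely, I would induct on $\alpha \in S$ according to $(S,\leq)$. At the base step (minimum $\alpha$) there is nothing to do for coordinate changes, and one produces a single level-1 chart embedding $C_\alpha|_{U'_\alpha} \overset{\text{level-1}}{\to} \tilde{C}_\alpha|_{\tilde{U}'_\alpha}$ by applying the single-submersion form of the construction (Remark \ref{SINGLESUBMERSION}) to an invariant Riemannian metric on $\tilde{U}_\alpha$: pick $\tilde{U}'_\alpha$ precompact in $\tilde{U}_\alpha$ small enough that it is a strong open neighborhood of $\phi_\alpha(U'_\alpha)$ with no stray zeros, and then extend $\hat{\phi}_\alpha(E_\alpha|_{U'_\alpha})$ along geodesics to obtain $\tilde{\Pi}_\alpha$, and pick a bundle metric on $\tilde{E}_\alpha|_{\tilde{U}'_\alpha}$ to get $\hat{\Pi}_\alpha$, exactly as in Proposition \ref{EOFLEVEL1CC}.

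For the inductive step, assume the desired concerted level-1 embedding has been built for all indices $\leq \beta = \max(S \setminus \{\alpha\})$. I would add the $\alpha$-data by a direct generalization of Proposition \ref{ALPHABETAGAMMA}: for each $\gamma \in I_\alpha \setminus \{\alpha\}$ I need to construct $\widetilde{\pi_{\alpha\gamma}}$ together with $\Pi_\alpha$ such that the two compatibility systems hold at once, namely the level-1 compatibility of $\widetilde{\pi_{\alpha\gamma}}$ with all previously built $\widetilde{\pi_{\alpha_i \alpha_j}}$ in $\tilde{\mathcal{G}}'$, and the level-1 commutativity identity
\[
((\phi_\alpha)_\ast \pi_{\alpha\gamma})^o \circ \Pi_\alpha = ((\tilde{\phi}_{\alpha\gamma})_\ast \Pi_\gamma)^o \circ \widetilde{\pi_{\alpha\gamma}}^o
\]
of the embedding square. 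The key observation is that this identity simply says $\Pi_\alpha$ and $\widetilde{\pi_{\alpha\gamma}}$ are two commuting projections on $\tilde{U}_\alpha$, whose composition is forced to be $((\phi_\alpha)_\ast \pi_{\alpha\gamma})^o$ pulled back via $\Pi_\alpha$. So the task is to produce, inside $\tilde{U}_\alpha$, a Riemannian metric that simultaneously recovers (i) $\Pi_\alpha$ onto $\phi_\alpha(U'_\alpha)$, (ii) $\widetilde{\pi_{\alpha\gamma}}^o$ onto $\tilde{\phi}_{\alpha\gamma}(\tilde{U}'_{\alpha\gamma})$ for each $\gamma \in I_\alpha$, (iii) all compositions among them, including $((\tilde{\phi}_{\alpha\gamma})_\ast \widetilde{\pi_{\gamma\delta}})^o \circ \widetilde{\pi_{\alpha\gamma}}^o$ and $((\phi_\alpha)_\ast \pi_{\alpha\gamma})^o \circ \Pi_\alpha$, as the appropriate larger submersions; the various compatibility squares force these compositions to coincide on overlap.

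The implementation proceeds exactly as in steps (II)(1)--(II)(5) of the proof of Proposition \ref{ALPHABETAGAMMA}, but applied to an enlarged family of fiber bundle ep-groupoids: I first push forward the existing $\Pi_\gamma$ and all $\pi_{\gamma\delta}$ via $\tilde{\phi}_{\alpha\gamma}$ into $\tilde{U}_\alpha$, precompactly shrink to make each pushforward a genuine fiber bundle ep-groupoid projection on its domain of definition, then invoke Proposition \ref{METRICCOMPATIBILITY} pairwise to obtain local metrics $g_{\alpha\gamma}, g_\alpha$ recovering each relevant composition; I next glue these via a partition of unity constant along the common fibers (possible because on overlaps the two fiber structures already agree by the inductive hypothesis and the square identity). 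Finally, the usual trick of trimming $\tilde{U}_\alpha$ away from $\phi_\alpha(U'_\alpha) \cup \bigcup_\gamma \tilde{\phi}_{\alpha\gamma}(\tilde{U}'_{\alpha\gamma})$ produces the strong open neighborhood property, and fiber-shrinking restores transversality of the quotient sections (for both $\tilde{s}_\alpha/\tilde{F}_\alpha$ and $\tilde{s}_\alpha/\tilde{E}_{\alpha\gamma}$); bundle data $\tilde{\Pi}_\alpha, \widetilde{\tilde{\pi}_{\alpha\gamma}}$ and $\hat{\Pi}_\alpha, \widetilde{\hat{\pi}_{\alpha\gamma}}$ are then built by parallel-transporting along the geodesics of this metric and choosing a compatibly extended bundle metric on $\tilde{E}_\alpha$, exactly as in items (III)(i)--(iii) and (IV) of Proposition \ref{ALPHABETAGAMMA}. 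The shrinkings of the charts $\tilde{U}'_\gamma$ for $\gamma < \alpha$ required along the way do not disturb previously constructed structures thanks to Proposition \ref{CONTROLLINGTHESIZE}, and by the same argument as in that proposition we can arrange so that the final level-1 underlying shrinkings are as prescribed.

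The main obstacle will be the \emph{simultaneous} recovery of $\Pi_\alpha$ and all $\widetilde{\pi_{\alpha\gamma}}$ from a single invariant metric on $\tilde{U}_\alpha$ in the presence of the coupling identities imposed by the level-1 squares: one must verify that on every triple overlap the two ways of computing the composite submersion agree before invoking Proposition \ref{METRICCOMPATIBILITY}, which amounts to a compatibility check among the inductively constructed pushforwards. This is however exactly what is guaranteed by the inductive hypothesis (for $\tilde{\mathcal{G}}'$) and by the already-established level-1 coordinate changes in $\mathcal{G}''$ composed with the embedding, so the construction goes through and yields the desired concerted level-1 Kuranishi embedding $\mathcal{G}' \overset{\text{level-1}}{\Rightarrow} \tilde{\mathcal{G}}'$.
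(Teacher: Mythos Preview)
Your overall plan is reasonable in spirit, but the direction of your induction is reversed from the paper's, and this reversal creates a real difficulty that your write-up glosses over.

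The paper inducts \emph{downward} from $\alpha=\max S$. At the top, $\Pi_\alpha$ is chosen freely (Proposition~\ref{EOFLEVEL1CC}). At each lower $\gamma$, the composite $\Lambda_{\beta\gamma}:=((\phi_\beta)_\ast\pi_{\beta\gamma})^o\circ\Pi_\beta$ is already a known submersion defined on an \emph{open} subset of $\tilde U_\beta$, because $\Pi_\beta$ was constructed earlier and $\pi_{\beta\gamma}$ comes from $\mathcal{G}''$. This $\Lambda_{\beta\gamma}$ is then pulled back along the open embedding $\tilde\phi_{\beta\gamma}$ to produce a partially defined submersion $\tau_{\beta\gamma}$ on an open subset of $\tilde U_\gamma$; these agree on overlaps by the inductive compatibility, are glued via a fiber-constant partition of unity, and extended to $\Pi_\gamma$. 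The coordinate change $\widetilde{\pi_{\beta\gamma}}$ is then simply the \emph{restriction} of $\Lambda_{\beta\gamma}$ to the appropriate preimage, so no further choice is needed there. Thus at each step only one object, $\Pi_\gamma$, is genuinely constructed, and it is tightly constrained.

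Your bottom-up induction, by contrast, arrives at $\alpha$ needing to build \emph{both} $\Pi_\alpha$ and every $\widetilde{\pi_{\alpha\gamma}}$ simultaneously. You propose to push forward the already-built $\Pi_\gamma$ via $\tilde\phi_{\alpha\gamma}$, but $(\tilde\phi_{\alpha\gamma})_\ast\Pi_\gamma$ is only defined on the lower-dimensional submanifold $\tilde\phi_{\alpha\gamma}(\tilde U'_{\alpha\gamma})\subset\tilde U_\alpha$, not on an open subset; it is therefore not of the form required by Proposition~\ref{METRICCOMPATIBILITY}, which deals with a nested chain $C\subset B\subset A$ of submanifolds of the ambient and submersions from open sets. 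The constraint you obtain from below is ``thin'' and does not directly tell you what $\Pi_\alpha$ should be on a tubular neighborhood. Your phrase ``invoke Proposition~\ref{METRICCOMPATIBILITY} pairwise'' and the later appeal to the inductive hypothesis hide exactly this point: neither side of the square identity is determined until one of $\Pi_\alpha$ or $\widetilde{\pi_{\alpha\gamma}}$ is built, and the only data you have pinning them down lives on a set of positive codimension in $\tilde U_\alpha$.

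One could likely repair your scheme by first choosing $\Pi_\alpha$ freely, forming $\Lambda_{\alpha\gamma}$, and then attempting to factor it through $(\tilde\phi_{\alpha\gamma})_\ast\Pi_\gamma$; but showing that such a factorization exists (that the $\Pi_\gamma$-fibers, pushed into $\tilde U_\alpha$, lie inside $\Lambda_{\alpha\gamma}$-fibers) would again require constraining $\Pi_\alpha$ along $\tilde\phi_{\alpha\gamma}(\tilde U'_{\alpha\gamma})$, and the simplest way to impose that constraint cleanly is precisely the paper's downward induction. I recommend reorganizing your proof to start at $\max S$ and descend.
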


\begin{proof}
\begin{enumerate}[(I)]
\item Denote $\alpha:=\max S$, by proposition \ref{LEVEL1CCHANGE} with $\tilde U^{(0)}_\alpha:=W^{(0)}_\alpha$, we construct a level-1 chart embedding $C_\alpha|_{U^{(0)}_\alpha}\overset{\text{level-1}}{\to} \tilde C_\alpha|_{\tilde U^{(0)}_\alpha}$ for the chart embedding $C_\alpha|_{U''_\alpha}\to\tilde C_\alpha|_{\tilde U_\alpha}$, where $U^{(0)}_\alpha$ is a precompact shrinking of $U''_\alpha$.

\item Establish $C_\beta|_{U''_\beta}\overset{\text{level-1}}{\to} \tilde C_\alpha|_{\tilde U^{(0)}_\alpha}$ for $(\beta,\alpha)\in I(\mathcal{G})$ with $\beta\not=\alpha$ for free to be used in the next step:

We have a coordinate change $C_{\beta}|_{U''_\beta}\to \tilde C_\alpha|_{\tilde U^{(0)}_\alpha}$ for $\beta\in S\backslash \{\alpha\}$ with $(\beta,\alpha)\in \mathcal{G}$, induced by composing $C_\beta|_{U''_\beta}\to C_\alpha|_{U^{(0)}_{\alpha}}\to \tilde C_\alpha|_{\tilde U^{(0)}_\alpha}$. Since the two coordinate changes in the composition are already level-1, we automatically have the level-1 structure on $C_{\beta}|_{U''_\beta}\to \tilde C_\alpha|_{\tilde U^{(0)}_\alpha}$ by composing the level-1 structures.

\item We will describe the simplest non-trivial inductive step. 
\begin{enumerate}[(1)]
\item Whilst in the level-1 good coordinate system case, we establish the level-1 structure for $(\gamma, \beta)$ first, then for $(\beta,\alpha)$ and finally for $(\gamma,\alpha)$; here we are given the level-1 structure for $(\gamma, \alpha)$ for free, and we will first construct the level-1 structure on $(\gamma,\beta)$, then on $(\beta,\alpha)$. Moreover, the domains of the chart-embeddings are the entire bases of the first charts.
\item Step up the notation:

Define $I_{\alpha}:=\{\beta\in S\;|\; (\beta,\alpha)\in I(\mathcal{G})\}$ and $\alpha_{-1}:=\max (I_\alpha\backslash \{\alpha\})$. We now construct a level-1 chart embedding for $C_{\alpha_{-1}}|_{U''_{\alpha_{-1}}}\to \tilde C_{\alpha_{-1}}|_{\tilde U_{\alpha_{-1}}}$ and a level-1 coordinate change for $\tilde C_{\alpha_{-1}}|_{\tilde U_{\alpha_{-1}}}\to \tilde C_\alpha|_{\tilde U^{(0)}_\alpha}$ with the domain of coordinate change denoted by $\tilde U^{(3)}_{\alpha\alpha_{-1}}$.

Denote the domain of coordinate change for $C_{\alpha_{-1}}|_{U''_{\alpha_{-1}}}\to C_\alpha|_{U^{(0)}_\alpha}$ by $U^{(3)}_{\alpha\alpha_{-1}}$ and $W^{(3)}_{\alpha\alpha_{-1}}:=\pi_{\alpha\alpha_{-1}}^{-1}(\phi_{\alpha\alpha_{-1}}(U^{(3)}_{\alpha\alpha_{-1}}))$.

\item We construct an invariant Riemannian metric $\tilde g_\alpha$ which over a part of the base recovers a vector-bundle-like submersion $\tau_{\alpha\alpha_{-1}}$ compatible with the existing structures:

We can find an invariant Riemannian metric $\tilde g_\alpha$ for recovering $$\Lambda_{\alpha\alpha_{-1}}:=((\phi_{\alpha})_\ast\pi_{\alpha\alpha_{-1}})^o\circ \Pi_{\alpha}$$ on $$\Pi_\alpha^{-1}(\text{orbit}(W^{(3)}_{\alpha\alpha_{-1}}))\subset W^{(0)}_\alpha=\tilde U^{(0)}_\alpha,$$ and $\tilde g_\alpha$ is totally geodesic on each fiber of $$\Lambda_{\alpha\alpha_{-1}}: \Pi_\alpha^{-1}(\text{orbit}(W^{(3)}_{\alpha\alpha_{-1}}))\to \phi_{\alpha\alpha_{-1}}(U^{(3)}_{\alpha\alpha_{-1}}).$$

We want to define a submersion $\Pi_{\alpha_{-1}}: W_{\alpha_{-1}}\to \phi_{\alpha_{-1}}(U''_{\alpha_{-1}})$. We already know a part of the definition. We use the pullback metric $(\tilde \phi_{\alpha\alpha_{-1}})^\ast (\tilde g_\alpha)$ to define the vector-bundle-like submersion $\tau_{\alpha\alpha_{-1}}$ on an immersion invariant tubular neighborhood $$(\tilde{\phi}_{\alpha\alpha_{-1}})^{-1}(\tilde{\phi}_{\alpha\alpha_{-1}}(\tilde U^{(3)}_{\alpha\alpha_{-1}})\cap \Pi_\alpha^{-1}(\text{orbit}(W^{(3)}_{\alpha\alpha_{-1}})))$$ of $\phi_{\alpha_{-1}}(U^{(3)}_{\alpha\alpha_{-1}})$.

Take a precompact shrinking of $U^{(-1)}_{\alpha_{-1}}$ of $U''_{\alpha_{-1}}$ and a precompact shrinking $U^{(-0.5)}_\alpha$ of $U^{(0)}_\alpha$, and denote the domain of coordinate change $C_{\alpha_{-1}}|_{U^{(-1)}_{\alpha_{-1}}}\to C_{\alpha}|_{U^{(-0.5)}_\alpha}$ by $U^{(-0.5)}_{\alpha\alpha_{-1}}$.

Then, we have a fiber-shrinking $O_{\alpha\alpha_{-1}}$ of $(\tau_{\alpha\alpha_{-1}})^{-1}(\phi_{\alpha_{-1}}(U^{(-0.5)}_{\alpha\alpha_{-1}}))$ being an invariant embedded strong open neighborhood of $$\phi_{\alpha_{-1}}(U^{(-0.5)}_{\alpha\alpha_{-1}}).$$

\item Create a transition region to extend the pullback metric to $\tilde g_{\alpha_{-1}}$.

Take another precompact shrinking $U^{(-1)}_{\alpha}$ of $U^{(-0.5)}_{\alpha}$ with the domain of the coordinate change $C_{\alpha_{-1}}|_{U^{(-1)}_{\alpha_{-1}}}\to C_{\alpha}|_{U^{(-1)}_\alpha}$ denoted by $U^{(-1)}_{\alpha\alpha_{-1}}$.

Using $((\tau_{\alpha\alpha_{-1}})^{-1}(\phi_{\alpha_{-1}}(U^{(-0.5)}_{\alpha\alpha_{-1}})))\backslash O_{\alpha\alpha_{-1}}$ as a transition region, we can extend the metric $(\tilde\phi_{\alpha\alpha_{-1}})^\ast(\tilde g_\alpha|_{(\tau_{\alpha\alpha_{-1}})^{-1}(\phi_{\alpha_{-1}}(U^{(-1)}_{\alpha\alpha_{-1}}))})$ to an invariant Riemannian metric $\tilde g_{\alpha_{-1}}$ on $\tilde U_{\alpha_{-1}}$. 

\item Use $\tilde g_{\alpha_{-1}}$ on $\tilde U_{\alpha_{-1}}$ to construct $\Pi_{\alpha_{-1}}$ in the level-1 chart embedding for index $\alpha_{-1}$:

We use this metric $\tilde g_{\alpha_{-1}}$ to produce a vector-bundle-like submersion $\Pi_{\alpha_{-1}}: \tilde U^{(-1)}_{\alpha_{-1}}=W^{(-1)}_{\alpha_{-1}}\to \phi_{\alpha_{-1}}(U^{(-1)}_{\alpha_{-1}})$ such that $W^{(-1)}_{\alpha_{-1}}$ is an invariant strong open neighborhood of $\phi_{\alpha_{-1}}(U^{(-1)}_{\alpha_{-1}})$ and $$\Pi_{\alpha_{-1}}^{-1}(\phi_{\alpha_{-1}}(U^{(-1)}_{\alpha\alpha_{-1}}))= O_{\alpha\alpha_{-1}}.$$

$\Pi_{\alpha_{-1}}$ is a part of the level-1 structure of the chart embedding $$C_{\alpha_{-1}}|_{U^{(-1)}_{\alpha_{-1}}}\to \tilde C_{\alpha_{-1}}|_{\tilde U^{(-1)}_{\alpha_{-1}}},$$ which extends $\tau_{\alpha\alpha_{-1}}: O_{\alpha\alpha_{-1}}\to\phi_{\alpha_{-1}}(U^{(-1)}_{\alpha\alpha_{-1}})$.

\item Similarly, we can construct the connection and bundle metric to recover the $\tilde\Lambda_{\alpha\alpha_{-1}}:=((\hat\phi_{\alpha})_\ast \tilde\pi_{\alpha\alpha_{-1}})^o\circ\tilde\Pi_{\alpha}$ and $$\hat\Lambda_{\alpha\alpha_{-1}}:=((\tilde \Pi_\alpha)^\ast((\hat\phi_\alpha)_\ast \hat\pi_{\alpha\alpha_{-1}}))^o\circ \hat\Pi_\alpha.$$ We proceed as follows:

\item Construct the connection in a similar way recovering the subbundle extension over a part of the base, pullback and extend it:

One can contrast the following arguments with the ones in the proof of proposition \ref{ALPHABETAGAMMA}. 

For the simplicity of notation, denote $U:=\tilde\phi_{\alpha\alpha_{-1}}(\phi_{\alpha_{-1}}(U^{(-0.5)}_{\alpha\alpha_{-1}}))$. Then $\tilde\Lambda_{\alpha\alpha_{-1}}$ gives a trivialization of $\tilde F_\alpha|_{\Lambda_{\alpha\alpha_{-1}}^{-1}(U)}$ along each fiber of $\Lambda_{\alpha\alpha_{-1}}$. By picking an invariant connection on the vector bundle ep-groupoid $\ker\hat\Lambda_{\alpha\alpha_{-1}}\to \Lambda_{\alpha\alpha_{-1}}^{-1}(U)$, we can trivialize it along each fiber of $\Lambda_{\alpha\alpha_{-1}}$. So, once we pick an invariant direct-sum connection on $\tilde{E}_\alpha|_{U}= \tilde F_{\alpha}|_{U}\oplus_{U}\ker\hat\Lambda_{\alpha\alpha_{-1}}|_U$, we have an invariant connection $\nabla$ on $\tilde{E}_\alpha|_{\Lambda_{\alpha\alpha_{-1}}^{-1}(U)}= \tilde F_{\alpha}|_{\Lambda_{\alpha\alpha_{-1}}^{-1}(U)}\oplus_{\Lambda_{\alpha\alpha_{-1}}^{-1}(U)}\ker\hat\Lambda_{\alpha\alpha_{-1}}$. We pullback $\nabla$ to $(\widetilde{\hat\phi_{\alpha\alpha_{-1}}})^\ast\nabla$, and use the above transition region $\tau_{\alpha\alpha_{-1}}(U^{(-0.5)}_{\alpha\alpha_{-1}})\backslash O_{\alpha\alpha_{-1}}$ to extend it to an invariant connection on $\tilde E_{\alpha_{-1}}|_{U^{(-1)}_{\alpha_{-1}}}$. We use this conention to parallel transport $\hat\phi_{\alpha_{-1}}(E_{\alpha_{-1}}|_{U^{-1}_{\alpha_{-1}}})$ into $\tilde F_{\alpha_{-1}}$ along $\tilde g_{\alpha_{-1}}$-geodesics in the toally geodesic fibers of $\Pi_{\alpha_{-1}}$. This defines $\tilde\Pi_{\alpha_{-1}}$. Define $\tilde U^{(-1)}_{\alpha_{-1}}:=W^{(-1)}_{\alpha_{-1}}$.

\item By shrinking $U^{(-1)}_{\alpha_{-1}}$ and fiber-shrinking $W^{(-1)}_{\alpha_{-1}}$ restricted to the shrunken base (without changing the established level-1 structures where the transversality already holds), we achieve the transversality of the quotient section. This (due to the domain of a chart embedding being the entire base) is a distinction to level-1 good coordinate system, where the $\gamma$-chart does not need to shrink again for the transversality in \ref{ALPHABETAGAMMA}. We restrict the level-1 structures to the shrinking, and keep the same notation.

\item Construct the bundle metric (the level-1 $\alpha_{-1}$-chart embedding):

Now we construct an invariant bundle metric which gives $\hat \Lambda_{\alpha\alpha_{-1}}$. First construct an invariant direct-sum bundle metric for $\tilde{E}_\alpha|_{U}= \tilde{F}_{\alpha}|_{U}\oplus_{U}\ker\hat\Lambda_{\alpha\alpha_{-1}}|_U$, then use the identification along the fibers of $\Lambda_{\alpha\alpha_{-1}}$ to extend it to an invariant bundle metric $h$ on $$\tilde{E}_\alpha|_{\Lambda_{\alpha\alpha_{-1}}^{-1}(U)}= \tilde F_{\alpha}|_{\Lambda_{\alpha\alpha_{-1}}^{-1}(U)}\oplus_{\Lambda_{\alpha\alpha_{-1}}^{-1}(U)}\ker\hat\Lambda_{\alpha\alpha_{-1}}.$$ We pullback $h$ to $(\widetilde{\hat\phi_{\alpha\alpha_{-1}}})^\ast h$, and use the above transition region $\tau_{\alpha\alpha_{-1}}(U^{(-0.5)}_{\alpha\alpha_{-1}})\backslash O_{\alpha\alpha_{-1}}$ to extend it to an invariant bundle metric on $\tilde{E}_{\alpha_{-1}}|_{U^{(-1)}_{\alpha_{-1}}}$. We use this bundle metric to define $\hat\Pi_{\alpha_{-1}}$.

So, we have a level-1 chart embedding $C_{\alpha_{-1}}|_{U^{(-1)}_{\alpha_{-1}}}\overset{\text{level-1}}{\to}\tilde C_{\alpha_{-1}}|_{\tilde U^{(-1)}_{\alpha_{-1}}}$.

\item Construct the level-1 coordinate change in $\tilde{\mathcal{G}}$ indexed by $(\alpha_{-1},\alpha)$:

We use the above $\tilde g_{\alpha}$ to define the vector-bundle-like submersion $\widetilde{\pi_{\alpha\alpha_{-1}}}:\tilde W^{(-1)}_{\alpha\alpha_{-1}}\to \tilde\phi_{\alpha\alpha_{-1}}(\tilde U^{(-1)}_{\alpha\alpha_{-1}})$ in the usual way. Note that $$\widetilde{\pi_{\alpha\alpha_{-1}}}=(((\tilde\phi_{\alpha})_\ast\pi_{\alpha\alpha_{-1}})^o\circ \Pi_\alpha)|_{(((\tilde\phi_{\alpha})_\ast\pi_{\alpha\alpha_{-1}})^o\circ \Pi_\alpha)^{-1}(\phi_{\alpha\alpha_{-1}}(\Pi_{\alpha_{-1}}(\tilde U^{(-1)}_{\alpha\alpha_{-1}})))}.$$ We parallel transport using the above connection $\nabla$ along $\tilde g_{\alpha}$-geodesics in the fibers of $\widetilde{\pi_{\alpha\alpha_{-1}}}$, which belong to totally geodesic fibers of $\Lambda_{\alpha\alpha_{-1}}$, we obtain $\widetilde{\tilde\pi_{\alpha\alpha_{-1}}}$. Using the above bundle metric $h$, we can define $\widetilde{\hat\pi_{\alpha\alpha_{-1}}}$.

We have a level-1 coordinate change $\tilde C_{\alpha_{-1}}|_{\tilde U^{(-1)}_{\alpha_{-1}}}\overset{\text{level-1}}{\to} \tilde C_{\alpha}|_{\tilde U^{(-1)}_{\alpha}}$ now, such that $\tilde U^{(-1)}_{\alpha_{-1}}:=\Pi_{\alpha_{-1}}^{-1}(U^{(-1)}_{\alpha_{-1}})$ is a predetermined precompact shrinking of $\tilde U_{\alpha_{-1}}$ and $\tilde U^{(-1)}_\alpha:=\Pi_{\alpha}^{-1}(U^{(-1)}_{\alpha})$ is a predetermined precompact shrinking of $\tilde U^{(0)}_{\alpha}$.

\item The commutativity of the square follows by the construction:

Because each fiber of $\Lambda_{\alpha\alpha_{-1}}$ is totally geodesic, and the connection restricted to $\tilde E_\alpha$ and the bundle metric are trivial along the fibers of $\Lambda_{\alpha\alpha_{-1}}$, the following level-1 square commutes up to the morphisms in $\tilde C_\alpha|_{\tilde U^{(-1)}_\alpha}$:

$$\begin{CD}
\tilde C_{\alpha_{-1}}|_{\tilde U^{(-1)}_{\alpha_{-1}}}@>\text{level-1}>>\tilde C_\alpha|_{\tilde U^{(-1)}_\alpha}\\
@A\text{level-1}AA @AA\text{level-1}A\\
C_{\alpha_{-1}}|_{U^{(-1)}_{\alpha_{-1}}}@>\text{level-1}>>C_\alpha|_{U^{(-1)}_\alpha}
\end{CD}\;\;\;\;.$$
\end{enumerate}

\item Now we describe the general inductive step, which has exactly the same structure as the above simplest non-trivial case. 

\begin{enumerate}
\item Step up the notations and the inductive hypothesis:

Fix $\alpha\in S$. Denote $T_\alpha:=\{\beta\in S\;|\;\alpha\leq \beta\}$ and $i:=|T_\alpha|-1$.

Suppose we already established level-1 chart embeddings $$C_{\beta}|_{U^{(-i)}_\beta}\overset{\text{level-1}}{\to}\tilde C_\beta|_{\tilde U^{(-i)}_\beta}$$ for all $\beta\in T_\alpha$ and compatible level-1 coordinate changes $$\tilde C_\gamma|_{\tilde U^{(-i)}_\gamma}\overset{\text{level-1}}{\to}\tilde C_\beta|_{\tilde U^{(-i)}_\beta}$$ for all $(\gamma,\beta)\in I(\tilde{\mathcal{G}})=I(\mathcal{G})$ with $\gamma,\beta\in T_\alpha$ such that $U^{(-i)}_\beta$ and $\tilde U^{(-i)}_\beta$, $\beta\in T_\alpha$ are precompact shrinkings of $U''_\beta$ and $\tilde U_\beta$, $\beta\in T_\alpha$ and we have the level-1 commutative squares $$\begin{CD}
\tilde C_\gamma|_{\tilde U^{(-i)}_\gamma}@>\text{level-1}>>\tilde C_\beta|_{\tilde U^{(-i)}_\beta}\\
@A\text{level-1}AA @AA\text{level-1}A\\
C_{\gamma}|_{U^{(-i)}_\gamma}@>\text{level-1}>>C_\beta|_{U^{(-i)}_\beta}
\end{CD}$$ for all $(\gamma,\beta)\in I(\mathcal{G})$ with $\gamma, \beta\in T_{\alpha}$.

Denote $\gamma:=\max(S\backslash T_\alpha)$ and $K_\gamma:=\{\beta\in S\;|\; (\gamma, \beta)\in I(\mathcal{G})\}\backslash\{\gamma\}$. Then $K_\gamma\subset T_\alpha$.
\item A trivial case:

If $K_\gamma=\emptyset$, we just proceed as in item (I), to get a level-1 chart embedding $C_\gamma|_{U^{(-i-1)}_\gamma}\overset{\text{level-1}}{\to} \tilde C_\gamma|_{\tilde U^{(-i-1)}_\gamma}$, where $U^{(-i-1)}_\gamma$ is a shrinking of $U''_\gamma$ and $\tilde U^{(-i-1)}_\gamma=W^{(-i-1)}_\gamma$ is a shrinking of $\tilde U_\gamma$. For $\beta\in T_\alpha$, define $U^{(-i-1)}_\beta:=U^{(-i)}_\beta$ and $\tilde U^{(-i-1)}_\beta:=\tilde U^{(-i)}_\beta$.

\item The non-trivial alternative and generalize items (I)-(III):

If $K_\gamma\not=\emptyset$, we proceeed similarly to item (I), (II) and (III). We first get a level-1 chart embedding $C_\gamma|_{U^{(i)}_\gamma}\overset{\text{level-1}}{\to} \tilde C_\gamma|_{\tilde U^{(i)}_\gamma}$: Compose the level-1 $C_\gamma|_{U''_\gamma}\overset{\text{level-1}}{\to} C_\beta|_{U^{(i)}_\beta}$ with $C_\beta|_{U^{(i)}_\beta}\overset{\text{level-1}}{\to}\tilde C_\beta|_{\tilde U^{(i)}_\beta}$ to have level-1 coordinate changes $C_\gamma|_{U''_\gamma}\overset{\text{level-1}}{\to} \tilde C_\beta|_{\tilde U^{(i)}_\beta}$ for all $\beta\in K_\gamma$.

\item Construct the metric:

For each $\beta\in K_\gamma$, denote $\Lambda_{\beta\gamma}:=((\phi_\beta)_\ast\pi_{\beta\gamma})^o\circ\Pi_{\beta}$ and find an invariant metric $\tilde g_\beta$ recovering it, in particular $\tilde g_\beta$ is totally geodesic on the fibers of the vector-bundle-like submersion $\Lambda_{\beta\gamma}$. We pull them back to $(\tilde\phi_{\beta\gamma})^\ast\tilde g_\beta$ for all $\beta\in K_\gamma$. Use them to define the vector-bundle-like submersions $\tau_{\beta\gamma}: O_{\beta\gamma}\to \phi_{\gamma}(U^{(-i-0.5)}_{\beta\gamma})$, where $U^{(-i-0.5)}_{\beta\gamma}$ is the domain of the coordinate changes between charts of precompact shrinkings $U^{(-i-1)}_{\gamma}$ and $U^{(-i-0.5)}_\beta$ of $C_\gamma|_{U^{(-i)}_\gamma}$ and $C_\beta|_{U^{(-i)}_\beta}$. Observe that $\tau_{\beta_1\gamma}=\tau_{\beta_2\gamma}$ over $$\phi_\gamma(U^{(-i-0.5)}_{\beta_1\gamma})\cap \phi_\gamma(U^{(-i-0.5)}_{\beta_2\gamma})=\phi_\gamma(U^{(-i-0.5)}_{\beta_1\gamma}\cap U^{(-i-0.5)}_{\beta_2\gamma})$$ for $\beta_1, \beta_2\in K_\gamma$. Use a partition of unity $\{\mu_\beta\}_{\beta\in K_\gamma}$ for the open sets $$\phi_\beta(U_{\beta\gamma}), \beta\in K_\gamma,$$ we have a partition of unity $\mu_\beta\circ\tau_{\beta\gamma}$ for the open cover $\{O_{\beta\gamma}\}_{\beta\in K_\gamma}$ for $\bigcup_{\beta\in K_\gamma} O_{\beta\gamma}$. We define the invariant metric $$\tilde g'_\gamma:=\sum_{\beta\in K_\gamma}(\mu_\beta\circ \tau_{\beta\gamma}) ((\tilde\phi_{\beta})^\ast\tilde g_\beta).$$ Use $\tilde g'_\gamma$, we can define the vector-bundle-like submersion $$\tau_\gamma: O_\gamma\to\phi_\gamma(\bigcup_{\beta\in K} U^{(-i-0.5)}_{\beta\gamma})$$ such that the fiber $(O_\gamma)_z\subset\bigcap_{\beta\in K_\gamma, z\in \phi_\gamma(U^{(-i-0.5)}_{\beta\gamma})} (O_{\beta\gamma})_z$. We have that $\tau_\gamma=\tau_{\beta\gamma}$ over the common domain of the definitions.

Take another precompact shrinking $U^{(-i-1)}_\gamma$ of $U^{(-i-0.5)}_\gamma$ and a fiber shrinking $O'_\gamma$ of $O_\gamma$. Denote the domains of the coordinate changes of $C_{\gamma}|_{U^{(-i-1)}_\gamma}\to C_{\beta}|_{U^{(-i-1)}_\beta}$ by $U^{(-i-1)}_{\beta\gamma}$ for all $\beta\in K_\gamma$. Then use the transition region $$(\tau_{\gamma})^{-1}(\phi_{\gamma}(\bigcup_{\beta\in K_\gamma} U^{(-i-0.5)}_{\beta\gamma}))\backslash O'_{\gamma}|_{\phi_{\gamma}(\bigcup_{\beta\in K_\gamma} U^{(-i-1)}_{\beta\gamma})}$$ to extend the Riemannian metric $\tilde g'_\gamma|_{(O'_{\gamma}|_{\phi_{\gamma}(\bigcup_{\beta\in K_\gamma} U^{(-i-1)}_{\beta\gamma})})}$ to an invariant Riemannian metric $\tilde g_\gamma$ on $\tilde U_{\gamma}$. We use $\tilde g_\gamma$ to produce a vector-bundle-like submersion $\Pi_{\gamma}:\tilde U^{(-i-1)}_\gamma=W^{(-i-1)}_\gamma\to \phi_{\gamma}(U^{(-i-1)}_{\gamma})$ such that $W^{(-i-1)}_\gamma$ is an invariant strong open neighborhood of $\phi_\gamma(U^{(-i-1)}_\gamma)$ and $$\Pi_\gamma^{-1}(\phi_\gamma(\bigcup_{\beta\in K_\gamma}U^{(-i-1)}_{\beta\gamma}))\subset O'_\gamma.$$

\item We  now construct the remaining data for the level-1 chart embedding $$C_\gamma|_{U^{(-i-1)}_\gamma}\overset{\text{level-1}}{\to}\tilde C_\gamma|_{\tilde U^{(-i-1)}_\gamma},$$ by first constructing the connection:

Within this item, we denote $U^\beta:=\tilde\phi_{\beta\gamma}(\phi_\gamma(U^{(-i-0.5)}_{\beta\gamma}))$. Then $$\tilde\Lambda_{\beta\gamma}:=((\phi_{\beta})_\ast \tilde\pi_{\beta\gamma})^\circ \tilde\Pi_\beta$$
gives a trivialization of $\tilde F_\beta|_{\Lambda_{\beta\gamma}^{-1}(U^\beta)}$. 

Denote $\hat\Lambda_{\beta\gamma}:=((\tilde\Pi_{\beta})^\ast((\hat\phi_\beta)_\ast\hat\pi_{\beta\gamma}))^o\circ\hat\Pi_\beta$.
 
By picking an invariant bundle connection on $\ker\hat\Lambda_{\beta\gamma}\to \Lambda^{-1}_{\beta\gamma}(U^\beta)$, we can trivialize it along each fiber of $\Lambda_{\beta\gamma}$. So, we pick an invariant direct-sum connection on $\tilde{E}_\beta|_{U^\beta}=\tilde F_\beta|_{U^\beta}\oplus_{U^\beta}\ker\hat\Lambda_{\beta\gamma}|_{U^\beta}$, then we have an invariant connection $\nabla^\beta$ on $$\tilde{E}_\beta|_{\Lambda_{\beta\gamma}^{-1}(U^\beta)}=\tilde F_\beta|_{\Lambda_{\beta\gamma}^{-1}(U^\beta)}\oplus_{\Lambda_{\beta\gamma}^{-1}(U^\beta)}\ker\hat\Lambda_{\beta\gamma}|_{\Lambda_{\beta\gamma}^{-1}(U^\beta)}.$$ 

We pullback $\nabla^\beta$ to $(\widetilde {\hat\phi_{\beta\gamma}})^\ast \nabla^\beta$ on $\tilde E_\gamma|_{\tau_{\beta\gamma}^{-1}(\phi_\gamma(U^{(-i-0.5)}_{\beta\gamma}))}$. Denote the bundle projection $\widetilde E_\gamma \to \tilde U_\gamma$ by $\lambda_\gamma$. We use the partition of unity $\mu_\beta\circ\tau_{\beta\gamma}\circ\lambda_{\gamma}, \beta\in K_\gamma$ to get an invariant connection $\nabla'_\gamma$ on $\tilde E_\gamma|_{O_\gamma}$ and use the transition region $$(\tau_{\gamma})^{-1}(\phi_{\gamma}(\bigcup_{\beta\in K_\gamma} U^{(-i-0.5)}_{\beta\gamma}))\backslash (O'_{\gamma}|_{\phi_{\gamma}(\bigcup_{\beta\in K_\gamma} U^{(-i-1)}_{\beta\gamma})})$$
to extend  $\nabla'_\gamma|_{(O'_{\gamma}|_{\phi_{\gamma}(\bigcup_{\beta\in K_\gamma} U^{(-i-1)}_{\beta\gamma})})}$ to an invariant bundle connection $\nabla_\gamma$ on $\tilde E_\gamma|_{\tilde U^{(-i-1)}_\gamma}$. We use this conneciton to parallel transport $\hat \phi_\gamma(E_\gamma|_{U^{(-i-1)}_\gamma})$ into $\tilde F_\gamma$ along $\tilde g_\gamma$-geodesics in the totally geodesic fibers of $\Pi_\gamma$. This defines $\tilde\Pi_\gamma$. Define $\tilde U^{(-1)}_\gamma:=W^{(-i-1)}_\gamma$. By shrinking the base then a fiber-shrinking (keeping the part with the validity of the transversality from the existing level-1 structures intact), we obtain the transversality of the quotient section. We then restrict other level-1 structures adapted to this shrinking and we still keep the same notation.

\item We construct the bundle metric:

Now we construct the invariant bundle metric which gives $\hat\Lambda_{\beta\gamma}$ defined above. We first construct an invariant direct-sum bundle metric for $\tilde{E}_\beta|_{U^\beta}= \tilde F_{\beta}|_{U^\beta}\oplus_{U^\beta}\ker\hat\Lambda_{\beta\gamma}|_{U^\beta}$, then use the identification along the fibers of $\Lambda_{\beta\gamma}$ to extend it to an invariant bundle metric $h_\beta$ on $\tilde{E}_\beta|_{\Lambda_{\beta\gamma}^{-1}(U^\beta)}= \tilde{F}_{\beta}|_{\Lambda_{\beta\gamma}^{-1}(U^\beta)}\oplus_{\Lambda_{\beta\gamma}^{-1}(U^\beta)}\ker\hat\Lambda_{\beta\gamma}$. We pullback $h_\beta$ to $(\widetilde{\hat\phi_{\beta\gamma}})^\ast h_\beta$. Using the partition of unity $\mu_\beta\circ\pi_{\beta\gamma}\circ\lambda_\gamma$, we can get an invariant bundle metric $h'_\gamma$ on $\tilde E_\gamma|_{O_\gamma}$, and use the above transition region $$(\tau_{\gamma})^{-1}(\phi_{\gamma}(\bigcup_{\beta\in K_\gamma} U^{(-i-0.5)}_{\beta\gamma}))\backslash O'_{\gamma}|_{\phi_{\gamma}(\bigcup_{\beta\in K_\gamma} U^{(-i-1)}_{\beta\gamma})}$$ to extend  $h'_\gamma|_{(O'_{\gamma}|_{\phi_{\gamma}(\bigcup_{\beta\in K_\gamma} U^{(-i-1)}_{\beta\gamma})})}$ to an invariant bundle metric $h_\gamma$ on $\widetilde {E_{\gamma}}|_{U^{(-i-1)}_{\gamma}}$. We use this bundle metric to define $\hat\Pi_{\gamma}$.

So, we have a level-1 chart embedding $C_{\gamma}|_{U^{(-i-1)}_{\gamma}}\overset{\text{level-1}}{\to}\tilde C_{\gamma}|_{\tilde U^{(-i-1)}_{\gamma}}$.

\item Construct new level-1 coordinate changes in $\tilde{\mathcal{G}}$:

We use above $\tilde g_\beta$ on $\Lambda_{\beta\gamma}^{-1}(U^\beta)$ to define the vector-bundle-like submersion $\widetilde{\pi_{\beta\gamma}}:\tilde W^{(-i-1)}_{\beta\gamma}\to \tilde\phi_{\beta\gamma}(\tilde U^{(-i-1)}_{\beta\gamma})$ in the usual way (by defining $\widetilde{\pi_{\beta\gamma}}$ over $\tilde\phi_{\beta\gamma}(\tilde U^{(-i-0.5)}_{\beta\gamma})$ first). Note that we then have $$\widetilde{\pi_{\beta\gamma}}=(((\tilde\phi_{\beta})_\ast\pi_{\beta\gamma})^o\circ \Pi_\beta)|_{(((\tilde\phi_{\beta})_\ast\pi_{\beta\gamma})^o\circ \Pi_\beta)^{-1}(\phi_{\beta\gamma}(\Pi_{\gamma}(\tilde U^{(-i-1)}_{\beta\gamma})))}.$$ We parallel transport using the above connection $\nabla^\beta$ along $\tilde g_{\beta}$-geodesics in the fibers of $\widetilde{\pi_{\beta\gamma}}$, which belongs to totally geodesic fibers of $\Lambda_{\beta\gamma}$, we obtain $\widetilde{\tilde\pi_{\beta\gamma}}$. The transversality of the quotient section is automatic.

Using the above bundle metric $h_\beta$, we can define $\widetilde{\hat\pi_{\beta\gamma}}$.

Therefore, we constructed a level-1 coordinate change $$\tilde C_{\gamma}|_{\tilde U^{(-i-1)}_{\gamma}}\overset{\text{level-1}}{\to} \tilde C_{\beta}|_{\tilde U^{(-i-1)}_{\beta}}$$ for all $\beta\in K_\gamma$, where $$\tilde U^{(-i-1)}_{\gamma}:=\Pi_{\gamma}^{-1}(U^{(-i-1)}_{\gamma})$$ is a predetermined precompact shrinking of $\tilde U_{\gamma}$ and $$\tilde U^{(-i-1)}_\beta:=\Pi_{\beta}^{-1}(U^{(-i-1)}_{\beta})$$ is a predetermined precompact shrinking of $\tilde U^{(-i)}_{\beta}$.

\item The level-1 commutativity of the level-1 squares constructed so far follows by the construction:

Because each fiber of $\Lambda_{\beta\gamma}$ is totally geodesic, and the connection restricted to $\tilde E_\beta$ and the bundle metric are trivial along fibers of $\Lambda_{\beta\gamma}$, the following level-1 squares commute for all $\beta\in K_\gamma$:

$$\begin{CD}
\tilde C_{\gamma}|_{\tilde U^{(-i-1)}_{\gamma}}@>\text{level-1}>>\tilde C_\beta|_{\tilde U^{(-i-1)}_\beta}\\
@A\text{level-1}AA @AA\text{level-1}A\\
C_{\gamma}|_{U^{(-i-1)}_{\gamma}}@>\text{level-1}>>C_\beta|_{U^{(-i-1)}_\beta}
\end{CD}\;\;\;\;.$$

Notice that we heavily relied on the compatibility of the previously constructed structures in the above construction.

Define $U^{(-i-1)}_\delta:=U^{(-i)}_\delta$ for all $\delta\in T_\alpha\backslash K_\gamma$. Then we completed the inductive step.
\end{enumerate}
\item After finitely many steps, we finish the construction of a concerted level-1 Kuranishi embedding between two level-1 good coordinate systems. We relabel the resulting data to match the conclusion.
\end{enumerate}
\end{proof}

\section{Perturbation of the level-1 good coordinate system}\label{PERTN}

In theorem \ref{EXLEVELONEGCS}, starting from a strongly intersecting Hausdorff good coordinate system $\mathcal{G}$ indexed by a total order $(S, \leq)$, we can obtain a level-1 good coordinate system $\mathcal{G}'$ for $\mathcal{G}$. In this section, we explain how to use the level-1 structure to naturally lift a perturbation that makes the section $s_\beta$ in one chart $C_\beta|_{U'_\beta}$ transverse to a perturbation in another charts $C_\alpha|_{U'_\alpha}$ of a higher order that automatically makes the section $s_\alpha|_{W'_{\alpha\beta}}$ transverse (because the tangent bundle condition holds over the perturbed zeros in $W'_{\alpha\beta}$ too). This is important for globally constructing a compact invariant perturbation by induction.

Before discussing perturbations by multisections, we need to recall some definitions regarding the orientation for Kuranishi structures and good coordinate systems from Fukaya-Oh-Ohta-Ono \cite{FOOO}, which we will use.

\begin{definition}[orientation, \cite{FOOO}]\label{ORIENTATION}
\begin{enumerate}
\item A Kuranishi chart $C_x|_{U_x}$ is \emph{orientable} if $TU_x\oplus_{U_x} (E_x|_{U_x})^*$ is orientable, where $(E_x|_{U_x})^*$ denotes the dual bundle of $E_x|_{U_x}$. An \emph{orientation} for an orientable Kuranishi chart is a choice of an orientation for $TU_x\oplus (E_x|_{U_x})^*$. An \emph{orientable Kuranishi chart} with a chosen orientation is called an oriented Kuranishi chart.
\item Let $C_y|_{U_y}\to C_x|_{U_x}$ be a coordinate change from an oriented chart $C_y|_{U_y}$ to another oriented chart $C_x|_{U_x}$. The orientation on the chart $C_x|_{U_x}$ induces a natural orientation on $T(\phi_{xy}(U_{xy}))\oplus (\hat\phi_{xy}(E_y|_{U_{xy}}))^*$ because of the tangent bundle condition. The orientation on $$T(\phi_{xy}(U_{xy}))\oplus (\hat\phi_{xy}(E_y|_{U_{xy}}))^*$$ in turn naturally induces an orientation on $TU_y\oplus (E_y|_{U_y})^*$ through the identification via the embedding. If the induced orientation on $TU_y\oplus (E_y|_{U_y})^*$ obtained this way agrees with the given orientation, we say that the coordinate change is \emph{orientation-preserving}.
\item A Kuranishi structure is said to be \emph{orientable} if each Kuranishi chart is orientable and can be equipped with an orientation such that all coordinate changes are orientation-preserving. An \emph{orientation} for an orientable Kuranishi structure is the data consisting of a choice of orientation for each Kuranishi chart such that all coordinate changes are orientation-preserving. An \emph{oriented Kuranishi structure} is an orientable Kuranishi structure equipped with an orientation.
\item Orientability, an orientation and being oriented can also be defined in the same way for good coordinate systems and level-1 good cooordinate systems. A (level-1) good coordinate system of an oriented Kuranishi structure is naturally oriented, and we will always use such an orientation.\end{enumerate}
\end{definition}

\begin{definition}[multisection and its transversality] One way is to extract a version from \cite{PolyfoldIII} for each vector bundle ep-groupoid $E_\alpha|_{U_\alpha}\to U_\alpha$, since the notation there has the usual meaning for the finite dimensional differential geometry. Moreover, this meaning agrees with the M-polyfold meaning in finite dimensions, as we have the usual smoothness and retracts are submanifolds and hence can be replaced by the standard local models. We require weights and branch structures to be preserved under coordinate changes. Alternatively one can get a working definition from how the perturbation is constructed in the genericity trick \ref{GENERICITYTRICK} and the proof of theorem \ref{CITPERTURBATION} below.
\end{definition}

\begin{definition}[lifting a perturbation] In a level-1 good coordinate system $\mathcal{G}'$, let $(\beta,\alpha)\in I(\mathcal{G}')$, $$s_\alpha|_{W'_{\alpha\beta}}/\tilde E_{\alpha\beta}: W'_{\alpha\beta}\to E_\alpha|_{W'_{\alpha\beta}}/\tilde E_{\alpha\beta}$$ has a well-defined linearization over the points in its zero set $\phi_{\alpha\beta}(U'_{\alpha\beta})$ and is transverse over the entire $W'_{\alpha\beta}$ by the definition. Using this structure, we can lift a perturbation $\tau$ of $s_\beta|_{U'_{\alpha\beta}}$ to a perturbation of $s_\alpha|_{W'_{\alpha\beta}}$, $$(\pi_{\alpha\beta}, \tilde\pi_{\alpha\beta})^*\tau=(z\mapsto (\tilde\pi_{\alpha\beta}|_{(\tilde E_{\alpha\beta})_z})^{-1}(\hat\phi_{\alpha\beta}(\tau((\phi_{\alpha\beta}^{-1}|_{\phi_{\alpha\beta}(U'_{\alpha\beta})})(\pi_{\alpha\beta}(z))))).$$
\end{definition}

In other words, this lifted perturbation $$(\pi_{\alpha\beta},\tilde\pi_{\alpha\beta})^\ast\tau: W'_{\alpha\beta}\to \tilde E_{\alpha\beta}\subset E_\alpha|_{W'_{\alpha\beta}}$$ is defined by first looking at the image of $\tau$ under $\hat\phi_{\alpha\beta}$, then extend it to the entire $W'_{\alpha\beta}$ by using the fiberwise identification $\tilde\pi_{\alpha\beta}$. Recall that $\tilde\pi_{\alpha\beta}$ identifies a vector in the fiber $(\tilde E_{\alpha\beta})_w=(\hat\phi_{\alpha\beta}(E_\beta|_{U'_{\alpha\beta}}))_w, w\in \phi_{\alpha\beta}(U'_{\alpha\beta})$ to a vector in the fiber $(\tilde E_{\alpha\beta})_z$ for all $z\in (\pi_{\alpha\beta})^{-1}(w)$.

Notice that $(\pi_{\alpha\beta},\tilde\pi_{\alpha\beta})^\ast(s_\beta|_{U'_{\alpha\beta}})=s_\alpha|_{W'_{\alpha\beta}}$.

\begin{lemma}\label{LIFTINGPERT} If $s_\beta|_{U'_{\alpha\beta}}+\tau$ is transverse, then $$(\pi_{\alpha\beta},\tilde\pi_{\alpha\beta})^*(s_\beta|_{U'_{\alpha\beta}}+\tau)=s_\alpha|_{W'_{\alpha\beta}}+(\pi_{\alpha\beta},\tilde\pi_{\alpha\beta})^*\tau$$ is automatically transverse over the \emph{perturbed zeros}.
\end{lemma}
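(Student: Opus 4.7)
The plan is to reduce transversality of the lifted section on $W'_{\alpha\beta}$ to two separate surjectivity statements: one on the quotient bundle $E_\alpha|_{W'_{\alpha\beta}}/\tilde E_{\alpha\beta}$ supplied by the level-1 tangent bundle condition, and one on $\tilde E_{\alpha\beta}$ along $\phi_{\alpha\beta}(U'_{\alpha\beta})$ supplied by the hypothesis on $s_\beta+\tau$. First I would denote $F:=s_\alpha|_{W'_{\alpha\beta}}+(\pi_{\alpha\beta},\tilde\pi_{\alpha\beta})^\ast\tau$ and observe that by construction $(\pi_{\alpha\beta},\tilde\pi_{\alpha\beta})^\ast\tau$ takes values in $\tilde E_{\alpha\beta}$. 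Hence post-composing $F$ with the bundle quotient $p:E_\alpha|_{W'_{\alpha\beta}}\to E_\alpha|_{W'_{\alpha\beta}}/\tilde E_{\alpha\beta}$ kills the perturbation and gives $p\circ F= s_\alpha|_{W'_{\alpha\beta}}/\tilde E_{\alpha\beta}$. By definition \ref{LEVEL1CCHANGE}(3)(c), the zero set of this quotient section is exactly $\phi_{\alpha\beta}(U'_{\alpha\beta})$, so every zero $z$ of $F$ lies in $\phi_{\alpha\beta}(U'_{\alpha\beta})$; in particular, $z$ is in the domain where the quotient linearization is transverse by \ref{LEVEL1CCHANGE}(3)(d).

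Next I would compute $F$ explicitly along $\phi_{\alpha\beta}(U'_{\alpha\beta})$. Since $\pi_{\alpha\beta}$ restricts to the identity on this submanifold (it is the base of the submersion) and $\tilde\pi_{\alpha\beta}$ restricts to the identity on $\hat\phi_{\alpha\beta}(E_\beta|_{U'_{\alpha\beta}})=\tilde E_{\alpha\beta}|_{\phi_{\alpha\beta}(U'_{\alpha\beta})}$, the lifted perturbation simplifies to $(\pi_{\alpha\beta},\tilde\pi_{\alpha\beta})^\ast\tau(z)=\hat\phi_{\alpha\beta}(\tau(\phi_{\alpha\beta}^{-1}(z)))$ for $z\in\phi_{\alpha\beta}(U'_{\alpha\beta})$. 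Combined with the coordinate change intertwining $s_\alpha\circ\phi_{\alpha\beta}=\hat\phi_{\alpha\beta}\circ s_\beta|_{U'_{\alpha\beta}}$, this yields
\[
F|_{\phi_{\alpha\beta}(U'_{\alpha\beta})}=\hat\phi_{\alpha\beta}\circ(s_\beta|_{U'_{\alpha\beta}}+\tau)\circ\phi_{\alpha\beta}^{-1},
\]
which in particular lies in $\tilde E_{\alpha\beta}|_{\phi_{\alpha\beta}(U'_{\alpha\beta})}$. A zero $z\in\phi_{\alpha\beta}(U'_{\alpha\beta})$ of $F$ therefore corresponds to a zero $\phi_{\alpha\beta}^{-1}(z)$ of $s_\beta|_{U'_{\alpha\beta}}+\tau$, and the linearization of $F$ along the tangent directions $T_z(\phi_{\alpha\beta}(U'_{\alpha\beta}))$ is $\hat\phi_{\alpha\beta}\circ d(s_\beta+\tau)\circ d\phi_{\alpha\beta}^{-1}$, which surjects onto $\tilde E_{\alpha\beta}|_z$ by the hypothesized transversality of $s_\beta|_{U'_{\alpha\beta}}+\tau$ (using that $\hat\phi_{\alpha\beta}$ and $\phi_{\alpha\beta}$ are embeddings).

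Finally I would combine the two pieces via the short exact sequence $0\to \tilde E_{\alpha\beta}|_z\to E_\alpha|_z\xrightarrow{p} (E_\alpha/\tilde E_{\alpha\beta})|_z\to 0$. The full linearization $dF_z:T_z W'_{\alpha\beta}\to E_\alpha|_z$ projects surjectively onto the quotient (step one, using the whole tangent space, in particular any complement to $T_z(\phi_{\alpha\beta}(U'_{\alpha\beta}))$), and its restriction to $T_z(\phi_{\alpha\beta}(U'_{\alpha\beta}))$ alone already surjects onto the subbundle fibre $\tilde E_{\alpha\beta}|_z$ (step two). A routine diagram chase through the exact sequence then gives surjectivity of $dF_z$ onto $E_\alpha|_z$, i.e. transversality of $F$ at $z$. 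The only place requiring care is the precise identification of $F$ along $\phi_{\alpha\beta}(U'_{\alpha\beta})$ and the verification that $(\pi_{\alpha\beta},\tilde\pi_{\alpha\beta})^\ast\tau$ genuinely takes values in $\tilde E_{\alpha\beta}$ rather than just in $E_\alpha$; once this bookkeeping is done, the argument is essentially formal and no genuine analytic obstacle appears.
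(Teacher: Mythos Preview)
Your argument is correct and is precisely the unpacking of the paper's one-line proof, which reads in its entirety ``By the definition of the level-1 structure.'' You have made explicit the two ingredients from Definition~\ref{LEVEL1CCHANGE}(3)(c)(d) that the paper is tacitly invoking, and combined them with the hypothesis via the obvious short exact sequence; this is exactly the intended content.
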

\begin{proof} By the definition of the level-1 structure.
\end{proof}

\begin{lemma}[genericity trick, \cite{PolyfoldII}]\label{GENERICITYTRICK} Let $E\to U$ be a finite dimensional vector bundle ep-groupoid, and let $s: U\to E$ be a section (where the orbit space of the zero set $\underline{s^{-1}(0)}$ is not necessarily compact). For $i=1, 2$, let $U'_i$ be an invariant open subset of $U$ such that $\overline{U'_i}$ is compact in $U$, and $U'_1\subset U'_2$. Let $O$ be an invariant (possibly empty) open subset of $U$ such that $s^{-1}(0)\subset U'_2\bigcup O$ and, letting $\tilde W'_1:=U'_2\bigcup ((\overline{U'_2}\backslash U'_2)\bigcap (\overline{U'_1}\bigcup O))$, $s^{-1}(0)\cap \tilde W'_1$ is compact in $\tilde W'_1$.

Suppose $s|_{\overline{U'_1}}$ is transverse, then we can choose an invariant multisectional perturbation $\tau$ with its support being compact inside $(U'_2\bigcup O)\backslash \overline{U'_1}$ such that $s+\tau$ is transverse over $\overline{U'_2}$. The same statement also holds when $s$ is a multisection.
\end{lemma}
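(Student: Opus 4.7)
The plan is a standard transversality-via-multisections argument, localized so that the support stays inside $(U'_2 \cup O) \setminus \overline{U'_1}$. First I would isolate the compact set where genuine work is needed. Since $s|_{\overline{U'_1}}$ is already transverse, I do not perturb near $\overline{U'_1}$. Let $K := s^{-1}(0) \cap \overline{U'_2} \setminus U'_1$, which is closed in the set $s^{-1}(0) \cap \tilde W'_1$ that is assumed compact in $\tilde W'_1$; since $K$ is also disjoint from $\overline{U'_1}$ by construction and contained in $\overline{U'_2}\subset U$, one checks $K$ is compact in $U$. Moreover $K\subset (U'_2\cup O)\setminus\overline{U'_1}$.

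Next I would cover $K$ by finitely many small invariant open sets $V_1,\dots,V_n$ with the following properties: each $\overline{V_i}$ is compact in $(U'_2\cup O)\setminus\overline{U'_1}$; on each $V_i$ the bundle $E|_{V_i}$ admits an invariant local frame (in the ep-groupoid sense, by passing to a natural representation of the local stabilizer on a uniformizer and then taking all translates under morphisms); and the closure $\overline{\bigcup_i V_i}$ is disjoint from $\overline{U'_1}$. Choose invariant smooth cut-off functions $\chi_i$ compactly supported in $V_i$ and equal to $1$ on slightly smaller invariant open sets $V_i'$ that still cover $K$. On each $V_i$ pick finitely many local sections $\sigma^i_1,\dots,\sigma^i_{k_i}$ of $E$ whose values span every fiber $E_z$ for $z\in V_i$; then form the invariant multisectional perturbation parametrized by $t=(t^i_j)$,
\[
\tau_t \;=\; \sum_{i,j}\,\chi_i\, t^i_j\,\sigma^i_j,
\]
where each summand is symmetrized over the local stabilizers in the multisection sense to preserve invariance under morphisms (this is where passing to multisections rather than ordinary sections is essential when stabilizers act nontrivially on fibers).

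With this parametric family in hand, I would apply the standard parametric transversality (Sard--Smale) argument: the map $(z,t)\mapsto s(z)+\tau_t(z)$, restricted to $\bigcup_i V_i$ and a small ball of parameters, is a submersion onto $E$ wherever some $\chi_i(z)>0$, because the $\sigma^i_j$ span the fiber there. Hence for a dense set of arbitrarily small parameters $t$, the single perturbation $s+\tau_t$ is transverse on $\bigcup_i V_i'$. Over $\overline{U'_1}$ the perturbation $\tau_t$ vanishes by the support condition, so $s+\tau_t=s$ remains transverse there. Thus transversality holds on an invariant open set $N\supset \overline{U'_1}\cup (K\cap \text{support region})$.

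The main obstacle, and the last step, is to ensure $s+\tau_t$ is transverse on all of $\overline{U'_2}$, i.e.\ that no new zeros escape $N$. For this I would choose $\|t\|$ small enough. The set $A:=\overline{U'_2}\setminus (N\cup \{|s|<\varepsilon\})$ is compact and disjoint from $s^{-1}(0)$, so $|s|\geq \delta>0$ on $A$; taking $\|t\|$ so small that $\sup|\tau_t|<\min(\varepsilon,\delta)/2$ confines the zeros of $s+\tau_t\big|_{\overline{U'_2}}$ to $N$, where transversality has been arranged. The same argument applies verbatim when $s$ is already a multisection, since multisections are closed under branchwise addition of an invariant multisection and each local branch is handled identically. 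The hard part is really the containment of zeros: the hypothesis on $\tilde W'_1$ is precisely what is needed so that no sequence of perturbed zeros can escape into the boundary of $\overline{U'_2}$, because such a limit point would lie in $s^{-1}(0)\cap\tilde W'_1$, which is compact and already covered by our finite patch.
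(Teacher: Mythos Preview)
Your approach is essentially the same parametric-transversality argument as the paper's, but there is one genuine slip and one organizational difference worth noting.

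The slip: you set $K:=s^{-1}(0)\cap\overline{U'_2}\setminus U'_1$ and then assert $K$ is disjoint from $\overline{U'_1}$. This is false in general, since $\overline{U'_2}\setminus U'_1$ contains the boundary $\overline{U'_1}\setminus U'_1$. Consequently you cannot cover $K$ by sets $V_i$ with $\overline{V_i}\subset (U'_2\cup O)\setminus\overline{U'_1}$. The paper avoids this by first enlarging $\overline{U'_1}$ to an open invariant neighborhood $V_1\supset\overline{U'_1}$ on which $s$ is still transverse (openness of transversality over a compact set), and then working with the compact set $s^{-1}(0)\cap\overline{U'_2\setminus V_1}$, which is genuinely disjoint from $\overline{U'_1}$. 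With this fix your argument goes through.

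The organizational difference: the paper applies parametric transversality directly on $\overline{V_2}\times B_\epsilon$ for a compact $\overline{V_2}\supset\overline{U'_2}$, observing that the total map $(x,a)\mapsto s(x)+a\cdot t(x)$ is already transverse along $(s|_{\overline{U'_2}})^{-1}(0)\times\{0\}$ and hence on a full neighborhood; Sard applied to the (proper) projection then yields regular $a$ for which $s+a\cdot t$ is transverse over all of $\overline{U'_2}$ at once. You instead obtain transversality on $\bigcup V_i'$ and on $\overline{U'_1}$ separately and then run a confinement argument to rule out stray zeros. Both work, but the paper's route makes the role of the hypothesis on $\tilde W'_1$ implicit and avoids the extra $\varepsilon$-$\delta$ step. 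Finally, the paper chooses a non-invariant regular parameter $a$ first and symmetrizes $a\cdot t$ afterwards to produce the multisection, whereas you symmetrize the local sections before applying Sard; the paper's order keeps the Sard step in the ordinary (non-multisection) setting.
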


\begin{proof}
Since $s$ is transverse over a compact invariant $\overline{U'_1}$ hence it is transverse over an open invariant neighborhood $V_1\subset U$ containing $\overline{U'_1}$. For each $z\in s^{-1}(0)\bigcap \overline{U'_2\backslash V_1}$, we choose local non-invariant sections $t_{(z, i)}, i\leq m_z$ compactly supported aroud $z$ inside $(U'_2\bigcup O)\backslash\overline{U'_1}$, such that the collection $t_{(z,i)}$ is invariant under the morphisms, and $s, t_{(z,i)}, i\leq m_z$ span $E_z$ hence span for $E|_{V_z}$ for some open set $V_z\subset (U'_2\bigcup O)\backslash\overline{U'_1}$ around $z$. Since $s^{-1}(0)\bigcap\overline{U'_2\backslash V_1}$ compact, we can choose finite many $z_1, \cdots, z_n$ such that $s^{-1}(0)\bigcap\overline{U'_2\backslash V_1}\subset \bigcup_{j\leq n} V_{z_j}$. Then define $$s+a\cdot 
t: U\times \oplus_{i\leq n}\R^{\oplus m_{z_j}}\to E,$$ $$(x, (a_{(z_j, i)})_{ i\leq m_{z_j}, j\leq n})\mapsto s(x)+\sum_{j\leq n}\sum_{i\leq m_{z_j}} a_{(z_j, i)} t_{(z_j, i)}(x).$$

There are morphisms acting on $\oplus_{i\leq n}\R^{\oplus m_{z_j}}$ induced by the morphisms acting on the collection of the local sections. Notice that $s+a\cdot t$ is not equivariant but its symmetrization under the morphisms is. $s+a\cdot t$ is transverse over $(s|_{\overline{U'_2}})^{-1}(0)\times\{0\}$, so it is transverse over the set of the form $\overline{V_2}\times B_\epsilon$, where $V_2$ is an invariant open subset of $U$ containing $\overline{U'_2}$, and $B_\epsilon$ is an $\epsilon$-ball centered at $0$ in $\oplus_{i\leq n}\R^{\oplus m_{z_j}}$.  $Q:=((s+a\cdot t)|_{\overline{V_2}\times B_\epsilon})^{-1}(0)$ is a manifold (which in general is not invariant and possibly has a non-regular boundary). Define the projection $\text{pr}_a: \overline{V_2}\times B_\epsilon\to B_\epsilon, (x,a)\mapsto a$. Then $\text{pr}_\text{a}|_Q: Q\to B_{\epsilon}$ is a proper map and its regular values $a$ can be chosen to be as close to 0 as possible but in general not invariant under the morphisms. Any of its regular value has the property that the symmetrization of $s+a\cdot t$ under the morphisms, which equals to $s$ added with the symmetrization $\text{symm}(a\cdot t)$ of $a\cdot t$, is an invariant transverse multisection over $\overline{U'_2}$. Here $\text{symm}(a\cdot t)$ is a rational linear combination of the local sections $t_{(z_j, i)}, i\leq m_{z_j}, j\leq n$ and the desired perturbation $\tau$.
\end{proof}

\begin{theorem}\label{CITPERTURBATION} Let $\mathcal{G}':=\{C_x|_{U'_x}\}_{x\in S}$ be a level-1 good coordinate system for an oriented strongly intersecting Hausdorff $\mathcal{G}$. Then $\mathcal{G}$ admits an invariant compact transverse multisectional perturbation.
\end{theorem}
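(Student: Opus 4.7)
The plan is to construct the invariant compact transverse multisectional perturbation $\tau = \{\tau_\alpha\}_{\alpha \in S}$ by induction on the total order $(S, \leq)$, using the level-1 structure as the mechanism of coherent lifting. For the base case $\alpha_0 := \min S$, I would first choose a precompact shrinking $U''_\alpha \subset U'_\alpha$ (for all $\alpha \in S$) such that $\{X_\alpha|_{U''_\alpha}\}_{\alpha \in S}$ still covers $X$ and whose closures behave well relative to each $W'_{\alpha\beta}$; the relative Hausdorffness of $M(\mathcal{G}')$ together with precompactness gives us the room to do this. Then I would apply the genericity trick \ref{GENERICITYTRICK} to $s_{\alpha_0}$ on $\overline{U''_{\alpha_0}} \subset U'_{\alpha_0}$ with empty $O$ and $U'_1 = \emptyset$, producing an invariant multisection perturbation $\tau_{\alpha_0}$, compactly supported in $U'_{\alpha_0}$, such that $s_{\alpha_0} + \tau_{\alpha_0}$ is transverse over $\overline{U''_{\alpha_0}}$.

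For the inductive step, assuming compatible invariant perturbations $\tau_\gamma$, $\gamma < \alpha$, have been constructed with $(s_\gamma + \tau_\gamma)|_{\overline{U''_\gamma}}$ transverse, I would first define a partial perturbation on the union of the $W'_{\alpha\beta}$'s as $\beta$ ranges over indices below $\alpha$ with $(\beta,\alpha) \in I(\mathcal{G}')$. Concretely, on each invariant neighborhood $\mathrm{orbit}(W'_{\alpha\beta})$, I set $\tau_\alpha^{\text{pre}} := \mathrm{orbit}((\pi_{\alpha\beta}, \tilde\pi_{\alpha\beta})^* \tau_\beta)$. The compatibility identities in definition \ref{COMPATIBILITYONE} (equivalently \ref{CLEVERNOTATION}) --- especially $((\phi_{\alpha\beta})_* \pi_{\beta\gamma})^o \circ \pi_{\alpha\beta}^o = \pi_{\alpha\gamma}^o$, the corresponding $\tilde\pi$-identity, and $(\tilde\pi_{\alpha\beta})^{-1}(\hat\phi_{\alpha\beta}(\tilde E_{\beta\gamma}))^o = \tilde E_{\alpha\gamma}^o$ --- guarantee that the different lifts from different $\beta$'s agree on the overlaps $\mathrm{orbit}(W'_{\alpha\beta}) \cap \mathrm{orbit}(W'_{\alpha\beta'})$, so $\tau_\alpha^{\text{pre}}$ is a well-defined invariant multisection with values in (the orbit hull of) the subbundle $\tilde E_{\alpha\beta}$ where relevant. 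By lemma \ref{LIFTINGPERT}, $s_\alpha + \tau_\alpha^{\text{pre}}$ is automatically transverse over the perturbed zero set inside each $W'_{\alpha\beta}$, because the quotient section $s_\alpha/\tilde E_{\alpha\beta}$ is transverse by the definition of a level-1 coordinate change.

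Next I would extend $\tau_\alpha^{\text{pre}}$ to the full chart $U'_\alpha$ while preserving transversality over the previously controlled region. This is exactly the setup of the genericity trick \ref{GENERICITYTRICK}, applied with $U = U'_\alpha$, $U'_1$ an invariant neighborhood of the already-transverse part $\overline{\bigcup_\beta \mathrm{orbit}(\pi_{\alpha\beta}^{-1}(\phi_{\alpha\beta}(\overline{U''_\beta \cap U''_{\alpha\beta}})))}$, $U'_2$ an invariant neighborhood of $\overline{U''_\alpha}$, and $O$ a neighborhood of the escaping part of the zero set of the perturbed section; the strong open neighborhood property \ref{LEVEL1CCHANGE}.2.(b) and the extensibility property \ref{LEVEL1CCHANGE}.2.(d) ensure that $\tilde W'_1$ can be arranged so that the perturbed zero set intersected with it is compact in $\tilde W'_1$. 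The trick then adds a compactly supported invariant multisection perturbation supported away from $U'_1$, which combined with $\tau_\alpha^{\text{pre}}$ gives an invariant $\tau_\alpha$ such that $s_\alpha + \tau_\alpha$ is transverse on $\overline{U''_\alpha}$. Crucially, the support of the added increment is disjoint from $U'_1$, so restriction to each $\mathrm{orbit}(W'_{\alpha\beta})$ still coincides with $\tau_\alpha^{\text{pre}}$ there, keeping the inductive compatibility condition intact.

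After finitely many steps (since $S$ is finite), one obtains a compatible family $\{\tau_\alpha\}_{\alpha \in S}$. Compactness of the perturbed zero set in $M(\mathcal{G}')$ follows from: (i) the precompact shrinking $\overline{U''_\alpha}$ being compact in $U'_\alpha$, (ii) $X = \bigcup_\alpha X_\alpha|_{U''_\alpha}$ so that perturbed zero set near the original $s^{-1}(0)$ lives inside $\bigcup_\alpha \overline{U''_\alpha}$ after choosing perturbations small enough in $C^0$, and (iii) the Hausdorffness of $M(\mathcal{G}')$ in the relative topology $\mathcal{T}(\mathcal{G}', \mathcal{G})$ from theorem \ref{HAUSIDSPACE}, which allows us to conclude that the glued perturbed zero set is Hausdorff and compact. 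Orientability of $\mathcal{G}$ (and hence of $\mathcal{G}'$) carries through as in definition \ref{ORIENTATION}, so the perturbed zero set inherits the structure of an invariant weighted branched oriented object.

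The main obstacle I anticipate is the book-keeping required to guarantee \emph{simultaneous} compatibility of $\tau_\alpha^{\text{pre}}$ lifted from multiple lower indices $\beta_1, \beta_2 < \alpha$ with $(\beta_i, \alpha) \in I(\mathcal{G}')$; this is where the full three-chart compatibility of the level-1 structure (the identity $\pi_{\alpha\beta}^{-1}(\phi_{\alpha\beta}(\pi_{\beta\gamma}^{-1}(\phi_{\beta\gamma}(U'_{\alpha\gamma} \cap U'_{\beta\gamma})))) = h_{\alpha\beta\gamma}(\pi_{\alpha\gamma}^{-1}(\phi_{\alpha\gamma}(U'_{\alpha\gamma} \cap U'_{\beta\gamma})))$ up to group action, and the matching identities for $\tilde\pi$ and $\hat\pi$) must be invoked to show the two lifts agree on overlap, and where the careful choice of the transition/extension regions in the genericity trick must be arranged so that the added invariant increment does not disturb any of the lifted parts. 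Secondarily, ensuring that the perturbed zero set stays compact in the relative topology requires tracking that perturbations are small enough so that no zero escapes to the boundary $\overline{U'_\alpha} \setminus U''_\alpha$ --- this is a standard $C^0$-smallness argument but must be combined with the lifting mechanism at each inductive step.
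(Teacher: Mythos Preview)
Your approach coincides with the paper's: induct on the total order, lift perturbations via $(\pi_{\alpha\beta},\tilde\pi_{\alpha\beta})$ to obtain automatic transversality on the tubular regions by Lemma~\ref{LIFTINGPERT}, then extend by the genericity trick. Two points deserve correction.

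First, your choice $O=\emptyset$ in the base case and the vague ``escaping part'' $O$ in the inductive step do not meet the hypothesis $s^{-1}(0)\subset U'_2\cup O$ of Lemma~\ref{GENERICITYTRICK}: zeros of $s_\alpha$ in $U'_\alpha\setminus U''_\alpha$ may well exist. The paper takes $O=\bigcup_{n>\alpha}U^m_{n\alpha}$, the domains of coordinate changes to \emph{higher} charts; this works because $X=\bigcup_\beta X_\beta|_{U''_\beta}$ forces every such zero into some $U^m_{n\alpha}$. At the top index $\alpha=\max S$ one then has $O=\emptyset$ automatically, and this is precisely what yields compactness of the perturbed solution set directly from the support structure, with no $C^0$-smallness argument needed.

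Second, you define $\tau_\alpha^{\text{pre}}$ by \emph{gluing} the lifts $(\pi_{\alpha\beta},\tilde\pi_{\alpha\beta})^*\tau_\beta$ of the cumulative perturbations and appeal to the level-1 compatibility for agreement on overlaps. The paper instead \emph{sums} the lifts of the \emph{increments} $\tau_k$ (the new piece added at step $k$, supported in $(U''_k\cup\bigcup_{n>k}U^m_{nk})\setminus\overline{\bigcup_{j<k}W''_{kj}}$), so no overlap-agreement is required; one then checks that on each $\overline{W''_{ik}}$ the sum collapses to the single lift $(\pi_{ik},\tilde\pi_{ik})^*(s_k+\sum_{j<k}(\pi_{kj},\tilde\pi_{kj})^*\tau_j+\tau_k)$ because $\tau_j$ for $k<j<i$ vanishes on $\overline{W''_{jk}}$ by construction. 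Your gluing claim would need the lift of the cumulative $\tau_{\beta_1}$ to equal the lift of the cumulative $\tau_{\beta_2}$ on all of $\mathrm{orbit}(W'_{\alpha\beta_1})\cap\mathrm{orbit}(W'_{\alpha\beta_2})$, which in turn forces $\tau_{\beta_2}$ to equal the lift of $\tau_{\beta_1}$ on the corresponding region of $U'_{\beta_2}$; but the increment at $\beta_2$ is only guaranteed to vanish on $\overline{W''_{\beta_2\beta_1}}$, not on all of $W'_{\beta_2\beta_1}$, so the claim is not justified as written.
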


\begin{proof}
\begin{enumerate}
\item We can assume that $\text{dim} U'_x-\text{rank} E_x=2\text{dim} U'_x- \text{dim} E_x$ is constant for all $x\in S$ without the loss of generality. (Otherwise, we just repeat the following for each connected component. See \ref{CONNECTEDNESSREMARK}.)

\item Choose a total order partition $K$ of $S$ as follows: 

In the notation, we have used abstract total order partitions to perform inductive constructions so far, an example of which for the case of a single good coordinate system can always be the partition by the bundle dimension. Therefore, an index in a total order partition for a connected (component of) good coordinate system also indexes the bases with the same dimension. If we explicitly use the total order partition by the bundle dimension, we have to increase the dimension of base by 2 each time. With the current hypothsis of $\text{dim} U'_x-\text{rank} E_x$ being constant, we are in the case of either bundle dimensions are all odd or all even for the good coordinate system, and it would be also cumbersome to carry the multiple of 2 in the notation. We will use the partition by the bundle rank as a total order partition instead, which achieve the same effect for any discussion regarding a single good coordinate system.

Define $\alpha_i:=\{x\in S\;|\; \text{rank} E_x=i\}$. Let $$N:=\max\{{\text{rank} E_x\;|\; x\in S}\}.$$ $K:=\{\alpha_1, \cdots, \alpha_N\}$. Here, any other total order partition also works. 

We organize $C_x|_{U'_x}, x\in\alpha_i$  into $C_{\alpha_i}|_{U'_{\alpha_i}}$ as usual, and denote the latter by $C_i|_{U'_i}$ for short. Then by item 1 and the assumption, $U'_i$ is an ep-groupoid of a fixed dimension (where the Hausdorffness is a part of the definition of an ep-groupoid). We can also index $\mathcal{G}$ in the same way. If $\alpha_i=\emptyset$, then the $i$th step below is vacuously true. 

\item Precompactly shrink while preserving the strong open neighborhood property:

We take a precompact shrinking $\{U''_i\}_{i\leq N}$ of $\{U'_i\}_{i\leq N}$, where we use the previous trick to ensure $(U''_i, U'_i, W''_{ij})$ is an invariant strong open neighborhood of $\phi_{ij}(U''_{ij})$ for all $j<i$, $W''_{ij}$ is a fiberwisely precompact fiber shrinking of $W'_{ij}|_{\phi_{ij}(U''_{ij})}$, and the double-primed level-1 structures are compatible. Let $U^m_{ij}$ denote the domain of the coordinate change $C_j|_{U'_j}\to C_i|_{U''_i}$. We use the \textbf{convention} that if there is no coordinate change between the $j$-chart and the $i$-chart, the corresponding notation for the level-1 structure is omitted.

\item The special case $C_0|_{U'_0}$:

We start with $C_0|_{U'_0}$. The section $s_0|_{U'_0}$ is already transverse. So we do not need to do peturbation and just take the restriction $s_0|_{U''_0}$.

\item The initial step of the induction by the bundle rank is to perturb $C_1|_{U'_1}$:

$s_1$ is already transverse over $\overline{W''_{10}}$ with the closure taken in $U'_0$. We apply lemma \ref{GENERICITYTRICK} for the case $$(s: U\to E, U'_1, U'_2, O):=(s_1: U'_1\to E_1|_{U'_1}, W''_{10}, U''_1, \bigcup_{2\leq j\leq N}U^m_{j1}),$$
so we can find a multisectional perturbation $\tau_1$ compactly supported in $(U''_1\cup\bigcup_{2\leq j\leq N} U^m_{j1})\backslash \overline{W''_{10}}$ such that $s_1+\tau_1$ that is defined over $U'_1$ is transverse over $\overline{U''_1}$.  We fix $(s_1+\tau_1)|_{U''_1}$ as the perturbed section chosen in this initial step.

\item The simplest non-trivial inductive step:

Since $\tau_1$ is defined on $U'_1\backslash\overline{W''_{10}}$, $(\pi_{21}, \tilde\pi_{21})^\ast \tau_1$ is defined on $W'_{21}\backslash \overline{W''_{20}}$, and hence $s_2+(\pi_{21}, \tilde\pi_{21})^\ast \tau_1$ is defined on $U'_2$.

We have that $s_1+\tau_1$ is transverse on $\overline{U''_1}$ hence on $\overline{U''_{21}}$. By lemma \ref{LIFTINGPERT}, $s_2+(\pi_{21}, \tilde\pi_{21})^\ast \tau_1=(\pi_{21},\tilde\pi_{21})^\ast(s_1+\tau_1)$ is transverse on $\overline{W''_{21}}$. 

On $\overline{W''_{20}}$, $s_2+(\pi_{21}, \tilde\pi_{21})^\ast \tau_1=s_2=(\pi_{20},\tilde\pi_{20})^\ast s_0$ is transverse, by lemma \ref{LIFTINGPERT} and item (4).

We thus have that $s_2+(\pi_{21}, \tilde\pi_{21})^\ast \tau_1$ is transverse on $\overline{W''_{20}\cup W''_{21}}$.

By applying lemma \ref{GENERICITYTRICK} for the case $$(s: U\to E, U'_1, U'_2, O):=$$ $$(s_2+(\pi_{21}, \tilde\pi_{21})^\ast \tau_1: U'_2\to E_2|_{U'_2}, W''_{20}\cup W''_{21}, U''_2, \bigcup_{3\leq j\leq N}U^m_{j2}),$$
we can find a multisectional perturbation $\tau_2$ compactly supported in $$(U''_2\cup\bigcup_{3\leq j\leq N} U^m_{j2}))\backslash \overline{W''_{20}\cup W''_{21}}$$ such that $s_2+(\pi_{21}, \tilde\pi_{21})^\ast \tau_1+\tau_2$ is transverse on $\overline{U''_2}$. We fix $$(s_2+(\pi_{21}, \tilde\pi_{21})^\ast \tau_1+\tau_2)|_{U''_2}$$ as the perturbed section chosen in this first inductive step.

\item The general $i$th inductive step:

For $k\leq i-1$, we have constructed $\tau_{k}$, which is defined on $$U'_{k}\backslash\overline{\bigcup_{j\leq k-1}W''_{k j}}.$$ So, $(\pi_{i k}, \tilde\pi_{i k})^\ast \tau_{k}$ is defined on $$W'_{i k}\backslash \overline{\bigcup_{j\leq k-1}W''_{i j}},$$ and hence $(\pi_{i k}, \tilde\pi_{i k})^\ast \tau_{k}$ is defined on $U'_i$. $s_i+\sum_{k\leq i-1}(\pi_{i k}, \tilde\pi_{i k})^\ast \tau_{k}$ is defined on $U'_i$.

For $k$ such that $0\leq k\leq i-1$, we have that $$s_{k}+\sum_{j\leq k-1}(\pi_{k j}, \tilde\pi_{k j})^\ast \tau_j+\tau_{k}$$ is transverse on $\overline{U''_{k}}$, hence on $\overline{U''_{i k}}$. So, by lemma \ref{LIFTINGPERT},
\begin{align*}&\;\;\;\;\;(s_i+\sum_{j\leq i-1}(\pi_{i j}, \tilde\pi_{i j})^\ast \tau_{j})|_{\overline{W''_{ik}}}\\
&=(s_i+\sum_{j\leq k}(\pi_{i j}, \tilde\pi_{i j})^\ast \tau_{j})|_{\overline{W''_{ik}}}\\
&=(\pi_{i k}, \tilde\pi_{i k})^\ast(s_k+\sum_{j\leq k-1}(\pi_{i j}, \tilde\pi_{i j})^\ast \tau_{j}+\tau_k)|_{\overline{W''_{ik}}}
\end{align*}
is transverse on $\overline{W''_{ik}}$. 

Thus, $s_i+\sum_{j\leq i-1}(\pi_{i j}, \tilde\pi_{i j})^\ast \tau_{j}$ is transverse on $\overline{\bigcup_{0\leq k\leq i-1} W''_{i k}}$.

Invoking lemma \ref{GENERICITYTRICK} for the case $(s: U\to E, U'_1, U'_2, O):=$ $$(s_i+\sum_{j\leq i-1}(\pi_{i j}, \tilde\pi_{i j})^\ast \tau_{j}: U'_i\to E_i|_{U'_i}, \bigcup_{0\leq k\leq i-1} W''_{i k}, U''_i, \bigcup_{i+1\leq n\leq N}U^m_{ni}),$$
we can find a multisectional perturbation $\tau_i$ compactly supported in $$(U''_i\cup\bigcup_{i+1\leq n\leq N}U^m_{ni})\backslash \overline{\bigcup_{0\leq k\leq i-1} W''_{i k}}$$ such that $s_i+\sum_{j\leq i-1}(\pi_{i j}, \tilde\pi_{i j})^\ast \tau_{j}+\tau_i$ is transverse on $\overline{U''_i}$. We fix $$(s_i+\sum_{j\leq i-1}(\pi_{i j}, \tilde\pi_{i j})^\ast \tau_{j}+\tau_i)|_{U''_i}$$ as the perturbed section chosen in this $i$th inductive step. We have used the strong open neighborhood properties and the compatibility of level-1 structures in the above.

\item After the $N$th step, we have completed the construction by induction. Note that in the $N$th step, $O=\emptyset$. 

\item Notice that the union of the solution sets of the perturbed sections in the identification space lies inside the union of the supports of perturbation $\tau_i, 1\leq i\leq N$, which is in turn compactly included in $(\bigcup_{0\leq 1\leq N}\underline{U''_i})/\sim=M(\mathcal{G}'')$ by the construction. Therefore, we have constructed a compact invariant transverse multisectional perturbation of the level-1 good coordinate system $\mathcal{G}''$.
\end{enumerate}
\end{proof}

\begin{remark} Observe that by using a level-1 good coordinate system and perturbing in the above manner, there is no issue of zeros leaking through the dimension-jumping region while perturbing the section in the target chart of a coordinate change. The above argument also ensures the compactness of the \emph{perturbation} of a good coordinate system, which is the collection of the perturbed sections.
\end{remark}

\section{Tripling process and fiber products}
\subsection{Tripling process}

In this section, we will describe a unique way to chart-refine a strongly intersecting Hausdorff level-1 good coordinate system ordered by a total order to address the following two scenarios.

\begin{enumerate}[A.]
\item We have explained how to make a concerted Kuranishi embedding into level-1 mudulo some precompact shrinking of the domain good coordinate system. A tripling provides a way to make any Kuranishi embedding $\mathcal{G}\to\tilde{\mathcal{G}}$ into a concerted Kuranishi embedding, up to chart-refinement. Namely, we chart-refine $\mathcal{G}$ and $\tilde{\mathcal{G}}$ to resolve conflicting dimension jumpings in coordinate changes between intersecting charts indexed by $\beta$ and $\alpha$ in $\mathcal{G}$ versus $\tilde{\mathcal{G}}$. This solution will be found in subsection \ref{ADMISSIBLETRIPLING}.
\item Suppose that we have a pair of concerted level-1 Kuranishi embeddings $\mathcal{G}\to \mathcal{G}^1$ and $\mathcal{G}\to \mathcal{G}^2$ indexed by $S$ with the same order $(S, \leq)$. The level-1 structures in the chart embeddings of Kuranishi embeddings allow us to form a fiber product Kuranishi chart $(C^1_\alpha|_{U^1_\alpha})_{\Pi^1_\alpha}\times_{C_\alpha|_{U_\alpha},\; \Pi^2_\alpha}(C^2_\alpha|_{U^2_\alpha})$ for each $\alpha\in S$. Let $\beta,\alpha\in S$ distinct with $\beta\leq \alpha$. Then $\text{dim} E_\beta\leq \text{dim} E_\alpha$, $\text{dim} E^1_\beta\leq \text{dim} E^1_\alpha$ and $\text{dim} E^2_\beta\leq \text{dim} E^2_\alpha$. However, the bundle dimension in the fiber product chart indexed by $\alpha$ can be less than that indexed by $\beta$, hence there is no possible coordinate change from the fiber product chart at $\beta$ to that at $\alpha$ if the fiber product charts are defined this way. The tripling process provides a way to form a fiber product good coordinate system, explained in subsection \ref{FIBPROD}. Here we chart-refine $\mathcal{G}$ so that charts are indexed by certain subsets $T$ of $S$, and two charts indexed by $T$ and $T'$ intersect if and only if one of $T$ and $T'$ is included in the other, and in this setting we can always construct the fiber product charts with naturally induced coordinate changes among them.
\end{enumerate}

\begin{definition}[tripling]\label{TRIPLING} Let $\mathcal{G}:=\{C_\alpha|_{U_\alpha}\}_{\alpha\in S}$ be a strongly intersecting Hausdorff level-1 good coordinate system indexed by a total order $(S, \leq)$ for some other good coordinate system $\hat{\mathcal{G}}:=\{C_\alpha|_{\hat U_\alpha}\}_{\alpha\in S}$. Here for the notational simplicity, in changing from ``level-1 $\mathcal{G}'$ for $\mathcal{G}$'' to ``level-1 $\mathcal{G}$ for $\hat{\mathcal{G}}$'', all primes in the level-1 structures are removed. Recall \ref{LEVEL0GCS}(2)(b) in the general definition of a good coordinate system, we ask the order to be compatible with the order by the bundle dimension.

Denote $I^\ast (\mathcal{G}):=\{(\beta,\alpha)\in S\times S\;|\; \beta\not=\alpha\})\cap I(\mathcal{G})$.

For any $(\beta,\alpha)\in I^\ast(\mathcal{G})$, denote $U_{(\beta,\alpha)}^3:=\text{orbit}(W_{\alpha\beta})$ appeared in the level-1 coordinate change for $(\beta,\alpha)$, and choose invariant shrinkings $U^1_{(\beta,\alpha)}$ and $U^2_{(\beta,\alpha)}$ of $U_\beta$ and $U_\alpha$ (see figure \ref{figure_tripling}) such that 
\begin{enumerate}[(a)]
\item $U_\beta\backslash U_{\alpha\beta}\subset U^1_{(\beta,\alpha)}$,
\item $U_\alpha\backslash \text{closure}^{\text{fiber}}_{\hat U_\alpha}({U^3_{(\beta,\alpha)}})\subset U^2_{(\beta,\alpha)}$, where $\text{closure}^{\text{fiber}}_{\hat U_\alpha}({U^3_{(\beta,\alpha)}})$ denotes the union of the orbits of the fiber closure of $W_{\alpha\beta}$ under the morphisms in $\hat U_\alpha$ (note that if $\text{dim} E_\beta=\text{dim} E_\alpha$, $\text{closure}^{\text{fiber}}_{\hat U_\alpha}({U^3_{(\beta,\alpha)}}):=\text{orbit}(\phi_{\alpha\beta}(U_{\alpha\beta}))$),
\item $(U^2_{(\beta,\alpha)}, \hat U_{\alpha}, W_{\alpha\beta}\cap U^2_{(\beta,\alpha)})$ is an invariant strong open neighborhood of\\
$\phi_{\alpha\beta}(U_{\alpha\beta})\cap U^2_{(\beta,\alpha)}$\footnote{Just pick an invariant open subset $V^1_{(\beta,\alpha)}\subset U_{\alpha\beta}$ disjoint from $U^1_{(\beta,\alpha)}$ in $U_\beta$ such that $$((\pi_{\alpha\beta})^{-1}(\phi_{\alpha\beta}(V^1_{(\beta,\alpha)})))\cup(U_\alpha\backslash \text{closure}^{\text{fiber}}_{\hat U_\alpha}({U^3_{(\beta,\alpha)}}))$$ is an open neighborhood of $U_\alpha\backslash \text{closure}^{\text{fiber}}_{\hat U_\alpha}({U^3_{(\beta,\alpha)}})$.}, and
\item the domain of the restricted coordinate change $C_\beta|_{U^1_{(\beta,\alpha)}}\to C_\alpha|_{U^2_{(\beta,\alpha)}}$ is empty.
\end{enumerate}

\begin{figure}[htb]
  \begin{center}

\begin{tikzpicture}[scale=3/5]
\hspace{0.2 cm}

\filldraw [black!20] (2.5,8.5) rectangle (7,11.5);
\filldraw [black!40] (2.5,9.2) rectangle (4,10.8);

\draw (0,10)--(4,10);

\begin{scope}[shift={(9,0)}]
\filldraw [black!20] (2.5,8.5) rectangle (7,11.5);
\filldraw [yellow!60] (2.5,9.2) rectangle (4,10.8);

\draw[very thick, dashed, blue!90] (2.5,9.2)--(3.7, 9.2)--(3.7,10.8)--(2.5,10.8)--(2.5,11.5)--(7,11.5)--(7,8.5)
--(2.5,8.5)--(2.5,9.2);

\draw (0,10)--(4,10);
\draw[very thick, red!90] (0,10.03)--(3,10.03);
\end{scope}

\filldraw [black!20]  (1,3) circle (2 and 1);
\filldraw [black!20]  (4.5,3) circle (3 and 2.5);

\draw [very thick, dotted, black!60]  (1,3) circle (2 and 1);
\draw [very thick, dotted, black!60]  (4.5,3) circle (3 and 2.5);

\filldraw [black!20]  (11,3) circle (2 and 1);
\filldraw [black!20]  (14.5,3) circle (3 and 2.5);

\draw [very thick, dotted, black!60]  (11,3) circle (2 and 1);
\draw [very thick, dotted, black!60]  (14.5,3) circle (3 and 2.5);

\begin{scope}[shift={(10,0)}]
\clip (1,3) circle (2 and 1);
\clip (4.5,3) circle (3 and 2.5);
\filldraw [yellow!60] (1,3) circle (2 and 1) (4.5,3) circle (3 and 2.5);

\end{scope}

\begin{scope}
\clip (9, 1.5) rectangle (11.8,4.5);

\draw [very thick, dashed, red!90]  (11,3) circle (2 and 1);

\end{scope}

\draw  [very thick, dashed, red!90]  (11.7, 2.1)-- (11.7,4);

\draw [very thick, dotted, black!60]  (14.5,3) circle (3 and 2.5);

\begin{scope}
\clip (11.8, 0) rectangle (18,6);

\draw [very thick, dashed, blue!90]  (14.5,3) circle (3 and 2.5);

\end{scope}

\draw  [very thick, dashed, blue!90]  (11.7, 2.1) ..controls (12.5, 2.1) and (12.5,3.9) .. (11.7,3.9);

\node at (0,7){$U_\beta$};
\path[->] (0,6.5)edge [bend right] (0,3);
\node at (4,7){$U_\alpha$};
\path[->] (4,6.5)edge [bend left] (5,4);

\node at (9,7)[red]{$U_{(\beta,\alpha)}^1$};
\node at (9,6.25)[red]{(open region enclosed};
\node at (9,5.5)[red]{by red contour)};
\path[->] (10,5)edge [bend right] (11,3);

\node at (16,7.5)[blue]{$U_{(\beta,\alpha)}^2$};
\node at (16,6.75)[blue]{(open region enclosed};
\node at (16,6)[blue]{by blue contour)};
\path[->] (17,5.5)edge [bend left] (15,3);

\node at (13,-1)[yellow]{$U_{(\beta,\alpha)}^3=W_{\alpha\beta}$};
\node at (13,-1.75)[yellow]{(open region shaded};
\node at (13,-2.5)[yellow]{in green)};
\path[->] (13,-.5)edge [bend right] (12.5,3);

\node at (8,-3.25){In the second example, one can imagine another dimension};
\node at (8,-4){of $U_\alpha$ perpendicular to the page.};

\end{tikzpicture}

  \end{center}
\caption[A notion in the definition of tripling.]{A notion in the definition of tripling.}
\label{figure_tripling}
\end{figure}

For $\alpha\in S$, $T\subset S$ nonempty, denote $I(T,\alpha):=\{\beta\in T\;|\; (\beta,\alpha)\in I^\ast (\mathcal{G})\}$ and $I(\alpha, T):=\{\beta\in T\;|\; (\alpha,\beta)\in I^\ast (\mathcal{G})\}$.

For a nonempty subset $T\subset S$ with $I(T,\max T)=T\backslash \{\max T\}$, the object $\bigcap_{\beta\in T\backslash\{\max T\}} U^3_{(\beta,\max T)}$ is defined\footnote{Observe that if $\bigcap_{\beta\in T\backslash\{\max T\}} U^3_{(\beta,\max T)}\not=\emptyset$, then we also have $I(\min T, T)=T\backslash\{\min T\}$.}.

Denote $\mathcal{S}_S:=$ $$\{T\subset S\;|\; T\not=\emptyset, I(T, \max T)=T\backslash\{\max T\},\;\;\bigcap_{\beta\in T\backslash\{\max T\}} U^3_{(\beta,\max T)}\not=\emptyset\}.$$

Now let $T\in \mathcal{S}_S$, we define $U_T:=$

$$\left (\bigcap_{\gamma\in I(S\backslash T, \max T)}U^2_{(\gamma,\max T)}\cap \bigcap_{\alpha\in I(\max T, S\backslash T) }U^1_{(\max T,\alpha)}\right)\cap\bigcap_{\beta\in T\backslash \{\max T\}} $$
$$\left(U^3_{(\beta,\max T)}\cap (\pi_{\max T \beta})^{-1}\left((\bigcap_{\delta\in I(S\backslash T, \beta)}U^2_{(\delta,\beta)})\cap(\bigcap_{\epsilon\in I(\beta, S\backslash T)} U^1_{(\beta, \epsilon)})\right)\right),$$
which is a nonempty invariant open subset of $U_{\max T}$.

\begin{remark} Because of the way in which $U^2_{(\beta,\alpha)}$'s are constructed and the compatibility of level-1 coordinate changes, we have $$U_T:=(\pi_{\max T\beta})^{-1}(\pi_{\max T\beta}(U_T))$$ for all $\beta\in T\backslash \{\max T\}$. Namely, $(U_T, \hat U_{\max T}, U_T)$ is a strong open neighborhood of $\pi_{\max T\beta}(U_T)$ for all $\beta\in T\backslash \{\max T\}$.
\end{remark}

The induced order $\leq$ on $\mathcal{S}_S$ is that $T_1\leq T_2$ if and only if $\max T_1\leq \max T_2$.

\begin{lemma}\label{INTERSECTISINCLUDE} Let $T_1, T_2\in\mathcal{S}_S$. If $U_{T_1}$ and $U_{T_2}$ intersect in the identification space $M(\mathcal{G})$, then $T_1\subset T_2$ or $T_2\subset T_1$. 
\end{lemma}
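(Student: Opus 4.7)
The plan is to argue by contradiction via a case analysis on the relationship between $\max T_1$ and $\max T_2$. Without loss of generality assume $\max T_1\leq\max T_2$; set $\alpha_i:=\max T_i$. The strongly intersecting property of $\mathcal{G}$ together with the maximality condition reduce the intersection in $M(\mathcal{G})$ to a direct identification: either $\alpha_1<\alpha_2$ and there is $z\in U_{T_1}\cap U_{\alpha_2\alpha_1}$ with $w:=\phi_{\alpha_2\alpha_1}(z)\in U_{T_2}$ (up to morphisms), or $\alpha_1=\alpha_2$ and $z\in U_{T_1}\cap U_{T_2}\subset U_{\alpha_1}$.

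In the unequal-max case I first show $\alpha_1\in T_2$: otherwise $\alpha_1\in I(S\setminus T_2,\alpha_2)$ and the definitions give $z\in U^1_{(\alpha_1,\alpha_2)}\cap U_{\alpha_2\alpha_1}$ and $w\in U^2_{(\alpha_1,\alpha_2)}$, directly violating property (d) of the tripling. Once $\alpha_1\in T_2$, I use the identity $\phi_{\alpha_2\alpha_1}^{-1}(\pi_{\alpha_2\alpha_1}(w))=z$ to translate the recursive constraints of $U_{T_2}$ indexed by $\beta\in T_2$ with $\beta\leq\alpha_1$ into constraints on $z\in U_{\alpha_1}$, reducing the problem to the equal-max case applied within $U_{\alpha_1}$. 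For the equal-max case I suppose for contradiction that $\beta\in T_1\setminus T_2$ and $\gamma\in T_2\setminus T_1$ exist (both strictly less than $\alpha_1$) and assume without loss of generality $\beta<\gamma$. The recursive constraint of $U_{T_1}$ places $z'_\beta:=\phi_{\alpha_1\beta}^{-1}(\pi_{\alpha_1\beta}(z))\in U^1_{(\beta,\gamma)}\cap U_{\alpha_1\beta}$, and the recursive constraint of $U_{T_2}$ places $z'_\gamma:=\phi_{\alpha_1\gamma}^{-1}(\pi_{\alpha_1\gamma}(z))\in U^2_{(\beta,\gamma)}$. Once I verify that $z$ lies in the triple compatibility domain $W_{\alpha_1\gamma\beta}$, the compatibility identity yields $\pi_{\gamma\beta}(z'_\gamma)=\phi_{\gamma\beta}(z'_\beta)$, so in particular $z'_\beta\in U_{\gamma\beta}$; the strong open neighborhood property (c) then forces $\pi_{\gamma\beta}(z'_\gamma)\in U^2_{(\beta,\gamma)}$, contradicting property (d).

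The main obstacle will be establishing that $z$ lies in the triple compatibility domain $W_{\alpha_1\gamma\beta}$, since the compatibility identities of \ref{COMPATIBILITYONE}(2) only hold there and the hypothesis $z\in U^3_{(\beta,\alpha_1)}\cap U^3_{(\gamma,\alpha_1)}$ does not a priori force $z'_\gamma\in W_{\gamma\beta}$. I expect to handle this by a further case split on whether $(\beta,\gamma)\in I(\mathcal{G})$: in the affirmative case I invoke the domain identity from \ref{COMPATIBILITYONE}(2), which rewrites $\pi_{\alpha_1\beta}^{-1}(\phi_{\alpha_1\beta}(U_{\alpha_1\beta}\cap U_{\gamma\beta}))=\pi_{\alpha_1\gamma}^{-1}(\phi_{\alpha_1\gamma}(\pi_{\gamma\beta}^{-1}(\phi_{\gamma\beta}(U_{\alpha_1\beta}\cap U_{\gamma\beta}))))$ to certify $z'_\gamma\in W_{\gamma\beta}$; in the negative case the strongly intersecting property of the underlying Hausdorff good coordinate system $\hat{\mathcal{G}}$ forces $\phi_{\alpha_1\beta}(U_{\alpha_1\beta})$ and $\phi_{\alpha_1\gamma}(U_{\alpha_1\gamma})$ to be disjoint in $U_{\alpha_1}$, and one arranges the tripling so that the $U^3$ tubular neighborhoods are disjoint as well, excluding $z$ from the intersection altogether.
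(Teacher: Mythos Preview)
Your approach is substantially more complicated than the paper's, and the ``main obstacle'' you flag is a genuine difficulty in your route that the paper avoids entirely.

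The paper's proof is a direct contrapositive with no case split on $\max T_1$ versus $\max T_2$, no reduction to an equal-max case, and no appeal to the compatibility identities of \ref{COMPATIBILITYONE}. Instead, it simply sets $\alpha:=\max(T_1\setminus T_2)$ and $\beta:=\max(T_2\setminus T_1)$ (assuming neither symmetric difference is empty), takes without loss of generality $\alpha\leq\beta$, and then reads two containments straight off the definition of $U_T$. Since $\alpha\in T_1$ and $\beta\in S\setminus T_1$ with $\alpha\leq\beta$, the factor $U^1_{(\alpha,\beta)}$ (or $(\pi_{\max T_1\,\alpha})^{-1}(U^1_{(\alpha,\beta)})$ when $\alpha\neq\max T_1$) appears in the big intersection defining $U_{T_1}$, so $U_{T_1}\subset V_1$ for that set. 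Symmetrically, since $\beta\in T_2$ and $\alpha\in S\setminus T_2$, the definition of $U_{T_2}$ forces $U_{T_2}\subset V_2$, where $V_2$ is $U^2_{(\alpha,\beta)}$ or its $\pi_{\max T_2\,\beta}$-preimage. Property (d) of the tripling then says $V_1$ and $V_2$ do not intersect in $M(\mathcal{G})$, finishing the contrapositive.

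The gain from the paper's choice of $\alpha$ and $\beta$ is that both are members of $T_1$ (resp.\ $T_2$), so the relevant $U^1$ and $U^2$ factors are literally present in the defining intersections of $U_{T_1}$ and $U_{T_2}$; there is nothing to project or compose, hence no triple compatibility domain to worry about. By contrast, your equal-max argument picks arbitrary $\beta\in T_1\setminus T_2$ and $\gamma\in T_2\setminus T_1$ and then has to push information through $\pi_{\alpha_1\beta}$ and $\pi_{\alpha_1\gamma}$ simultaneously, which is exactly what forces you into $W_{\alpha_1\gamma\beta}$ and the compatibility identities. Your proposed fallback for the case $(\beta,\gamma)\notin I(\mathcal{G})$---``arrange the tripling so that the $U^3$ tubular neighborhoods are disjoint''---is not part of the tripling as defined and would require adding hypotheses to Definition~\ref{TRIPLING}; this is the weakest point of your plan. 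The paper's argument needs no such additional arrangement.
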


\begin{proof} If $T_1\not\subset T_2$ and $T_2\not\subset T_1$, then $T_1\backslash T_2$ is nonempty and $T_2\backslash T_1$ is nonempty. Define $\alpha:=\max (T_1\backslash T_2)$ and $\beta:=\max (T_2\backslash T_1)$. Then $\alpha\not=\beta$. Since $S$ is a total order, and since the statement about $T_1$ and $T_2$ is symmetric, we can assume $\alpha\leq \beta$ without the loss of generality by switching $T_1$ and $T_2$ if necessary. Then $\beta\not\leq \alpha$ since $S$ is a total order.

Denote $V_1:=U^1_{(\alpha,\beta)}$ if $\alpha=\max T_1$, or otherwise $V_1:=\pi_{\max T_1 \alpha}^{-1}(U^1_{(\alpha,\beta)})$. Denote $V_2:=U^2_{(\alpha,\beta)}$ if $\beta=\max T_2$, or otherwise $V_2:=\pi_{\max T_2 \beta}^{-1}(U^2_{(\alpha,\beta)})$. Then we have that $U_{T_1}\subset V_1$ and $U_{T_2}\subset V_2$, but $V_1$ and $V_2$ do not intersect in $M(\mathcal{G})$ by the property of $U^1_{(\alpha,\beta)}$ and $U^2_{(\alpha,\beta)}$. Therefore, $U_{T_1}$ and $U_{T_2}$ do not intersect in $M(\mathcal{G})$ and we have proved the contrapositive.
\end{proof}

The converse of lemma \ref{INTERSECTISINCLUDE} is true. Indeed, it is clear that if $T_1\subset T_2$, then $U_{T_1}$ intersect with $U_{T_2}$ in $M(\mathcal{G})$.

Let $T_1, T_2\in \mathcal{S}_S$ with $\max T_1\leq \max T_2$. Suppose that $U_{T_1}$ and $U_{T_2}$ intersect in the identification space $M(\mathcal{G})$ (by lemma \ref{INTERSECTISINCLUDE}, we have $T_1\subset T_2$ or we have $T_2\subset T_1$ with $\max T_2=\max T_1$). By the strong intersecting property, we have $(\max T_1, \max T_2)\in I(\mathcal{G})$, then we have a coordinate change $$C_{\max T_1}|_{U_{T_1}}\to C_{\max T_2}|_{U_{T_2}}$$ with the domain of the coordinate change $U_{T_2 T_1}$ determined from the restriction (or equivalently from the maximality condition). Denote $$I(\mathcal{G}_S):=\{(T_1, T_2)\subset \mathcal{S}_S\times \mathcal{S}_S\;|\; T_1\leq T_2, U_{T_1}\text{ and }U_{T_2}\text{ intersect in }M(\mathcal{G})\}.$$

Denote the strongly intersecting good coordinate system $$\mathcal{G}_S:=\{(\mathcal{S}_S, \leq), \{C_{\max T}|_{U_T}\}_{T\in\mathcal{S}_S}, \{C_{\max T_1}|_{U_{T_1}}\to C_{\max T_2}|_{U_{T_2}}\}_{(T_1, T_2)\in I(\mathcal{G}_S)}\},$$
and $\mathcal{G}_S$ is automatically a level-1 good coordinate system induced from $\mathcal{G}$.
\emph{Tripling} is the canonical process of replacing a strongly intersecting level-1 good coordinate system $\mathcal{G}$ indexed by a total order $S$ with $\mathcal{G}_S$ indexed by an order $\mathcal{S}_S$ (where ($\mathcal{S}_S, \leq)$ is not antisymmetric in general).
\end{definition}

\begin{remark} Observe that $(\mathcal{G}_S, (\mathcal{S}_S, \leq))$ chart-refines $(\mathcal{G}, (S, \leq))$. In fact, more is true. We can choose a total order partition of $\mathcal{S}_S$ which is isomorphic to $S$ as follows: If $\alpha\in S$, define $\mathcal{S}_\alpha:=\{T\subset \mathcal{S}_S\;|\; \max T=\alpha\}$. If we regroup Kuranishi charts in $\mathcal{G}_S$ by this partition, namely $\tilde C_\alpha:=C_\alpha|_{\bigcup_{T\in \mathcal{S}_\alpha} U_T}$, we arrive at a shrinking $\{\tilde C_\alpha\}_{\alpha\in S}$ of the original $\mathcal{G}$. Notice that here we have taken the union instead of disjoint union for it to a shrinking.
\end{remark}

\begin{remark} We could have defined the tripling inductively by $(S,\leq)$, where it might be easier to visualize but harder notation-wise. The point is that all data can be chosen simultaneously, and after the choice is made the resulting tripling is unambiguously defined immediately.
\end{remark}
\subsection{Agreement of the order and the admissibility}\label{ADMISSIBLETRIPLING}

In this section, we show how to construct a concerted chart-refinement of a general Kuranishi embedding. We also introduce the notion of an admissible pair of concerted (level-1) Kuranishi embeddings, as well as preparing the setting for forming a fiber product in the next subsection.

Readers should review the following notions before continuing: a Kuranishi embedding \ref{KEMB}, the concertedness \ref{CONCERTEDNESS}, a chart-refinement of a good coordinate system \ref{CHARTREFINEMENT}, a refinement of a good coordinate system \ref{REFINEMENTMAP}, a chart-refinement of a Kuranishi embedding \ref{CROFKEMB}, and a general embedding \ref{GEMB}.

We have also defined a level-1 Kuranishi embedding \ref{LEVEL1KEMBED}. Now we define the notion of a level-1 chart-refinement:

\begin{definition}[level-1 chart-refinement]\label{LEVEL1CRMENT} Let $\mathcal{G}=(X, (S, \leq), \{C_x|_{U_x}\}_{x\in S})$ and $\mathcal{G}'=(X, (S', \leq'), \{C'_x|_{U'_x}\}_{x\in S'})$ be two strongly intersecting Hausdorff level-1 good coordinate systems for $X$, indexed by $(S, \leq)$ and $(S, \leq')$ respectively. $\mathcal{G}'$ is said to level-1 chart-refine $\mathcal{G}$ if there is a map $\mu: S'\to S$ such that if $\beta'\leq'\alpha'$, then $\mu(\beta')\leq \mu(\alpha')$, and for each $\alpha'\in S'$, we have an open chart embedding $C'_{\alpha'}|_{U'_{\alpha'}}\to C_{\mu(\alpha')}|_{U_{\mu(\alpha')}}$ with the domain of the coordinate change being the entire $U'_{\alpha'}$, such that for all $(\beta', \alpha')\in I(\mathcal{G}')$
$$\begin{CD}
C_{\mu(\beta')}|_{U_{\mu(\beta')}}@>\text{level-1}>> C_{\mu(\alpha')}|_{U_{\mu(\alpha')}}\\
@AAA @AAA\\
C'_{\beta'}|_{U'_{\beta'}} @>\text{level-1}>> C'_{\alpha'}|_{U'_{\alpha'}}
\end{CD}$$ is level-1 commutative up to the morphisms (where the level-1 structures for the vertical chart-embedding are trivial).
\end{definition}

\begin{example} The tripling $\mathcal{G}_S$ is a level-1 chart-refinement of $\mathcal{G}$.
\end{example}

\begin{remark} In the discussions below, all the level-1 chart-refinements are inclusions of the restrictions.
\end{remark}

\begin{proposition}[concertedness trick]\label{CONCERTEDPROOF} Let $\mathcal{G}\Rightarrow\tilde{\mathcal{G}}$ be a Kuranishi embedding between strongly intersecting Hausdorff good coordinate systems indexed by $(\hat S, \hat\leq)$ and $(\hat S,\hat\leq_1)$. Then we can find a concerted Kuranishi embedding which chart-refines $\mathcal{G}\Rightarrow\tilde{\mathcal{G}}$.
\end{proposition}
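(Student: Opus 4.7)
The plan is a three-step reduction built on a common total order partition together with the tripling process. In the first step, I replace $\hat S$ by its common refinement under the double bundle dimension (mirroring the setup in \ref{LEVEL1KEMBED}): set $\alpha_{i,j}:=\{x\in\hat S:\dim E_x=i,\ \dim\tilde E_x=j\}$ and order $\alpha_{i,j}\leq\alpha_{i',j'}\iff i+j\leq i'+j'$. Because $\hat\leq$ is compatible with $\dim E$ and $\hat\leq_1$ is compatible with $\dim\tilde E$, the resulting $(S,\leq)$ is simultaneously a total order partition of $(\hat S,\hat\leq)$ and of $(\hat S,\hat\leq_1)$. Grouping charts of $\mathcal{G}$ and $\tilde{\mathcal{G}}$ over each $\alpha_{i,j}$ yields grouped good coordinate systems $\mathcal{G}^{(g)}$ and $\tilde{\mathcal{G}}^{(g)}$ indexed by the same total order $(S,\leq)$, and the Kuranishi embedding descends to $\mathcal{G}^{(g)}\Rightarrow\tilde{\mathcal{G}}^{(g)}$.

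In the second step, I equip $\mathcal{G}^{(g)}$ with a level-1 structure by theorem \ref{EXLEVELONEGCS} (allowing a precompact shrinking) and apply the tripling of definition \ref{TRIPLING} to obtain $\mathcal{G}^{(g)}_S$ indexed by $(\mathcal{S}_S,\leq)$, where $T_1\leq T_2\iff \max T_1\leq\max T_2$. By construction $\mathcal{G}^{(g)}_S$ is a chart-refinement of $\mathcal{G}^{(g)}$ through $\mu:T\mapsto\max T$, and the key input lemma \ref{INTERSECTISINCLUDE} guarantees that two charts $U_{T_1},U_{T_2}$ of $\mathcal{G}^{(g)}_S$ intersect in the identification space only when $T_1\subset T_2$ or $T_2\subset T_1$; in either case $\max T_1$ and $\max T_2$ are comparable in $(S,\leq)$. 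I then carry the chart-refinement over to the target: for each $T\in \mathcal{S}_S$ I pick a $G_{\max T}$-invariant open $\tilde U_T\subset \tilde U_{\max T}$ containing $\phi_{\max T}(U_T)$ so that $\tilde{\mathcal{G}}^{(g)}_S:=\{\tilde C_{\max T}|_{\tilde U_T}\}_{T\in\mathcal{S}_S}$ is a chart-refinement of $\tilde{\mathcal{G}}^{(g)}$ through the same $\mu$, with chart embeddings $C_{\max T}|_{U_T}\to\tilde C_{\max T}|_{\tilde U_T}$ restricted from the original; the cleanest recipe is to first endow $\tilde{\mathcal{G}}^{(g)}$ with a level-1 structure and repeat the formula for $U_T$ using the $\widetilde{\pi_{\alpha\beta}}$'s of $\tilde{\mathcal{G}}^{(g)}$ to define $\tilde U_T$, which automatically preserves coverage of $X$, maximality and topological matching.

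Concertedness of $\mathcal{G}^{(g)}_S\Rightarrow\tilde{\mathcal{G}}^{(g)}_S$ is then immediate: both objects carry the same order on the shared index $\mathcal{S}_S$, so one direction of comparability suffices, and for any intersecting $U_{T_1},U_{T_2}$ the nestedness $T_1\subset T_2$ together with the compatibility of $(S,\leq)$ with both $\hat\leq$ and $\hat\leq_1$ produces the required simultaneous inequalities $\max T_1\hat\leq\max T_2$ and $\max T_1\hat\leq_1\max T_2$. Composing the grouping, the level-1 enhancement and the tripling (all of which are chart-refinements) delivers a concerted Kuranishi embedding chart-refining the original $\mathcal{G}\Rightarrow\tilde{\mathcal{G}}$. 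The main obstacle is the target-side construction of $\tilde U_T$: one must ensure that these choices assemble into a genuine chart-refinement of $\tilde{\mathcal{G}}^{(g)}$ while keeping every Kuranishi embedding square commutative up to group actions, and this is exactly where mirroring the tripling formula inside a level-1 structure on $\tilde{\mathcal{G}}^{(g)}$ is most useful, since it controls the strong-open-neighbourhood condition and the induced coordinate-change domains on both sides simultaneously.
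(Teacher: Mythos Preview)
Your argument has a genuine circularity at its very first step. You claim that the partition $S=\{\alpha_{i,j}\}$ with the order $\alpha_{i,j}\leq\alpha_{i',j'}\iff i+j\leq i'+j'$ is simultaneously a total order partition of $(\hat S,\hat\leq)$ and of $(\hat S,\hat\leq_1)$, and hence that both grouped systems $\mathcal{G}^{(g)}$ and $\tilde{\mathcal{G}}^{(g)}$ carry the \emph{same} total order. But this is precisely the concertedness hypothesis in disguise: it is the content of Proposition~\ref{GROUPING} and of the reduction in Definition~\ref{LEVEL1KEMBED}, both of which explicitly invoke concertedness. In the non-concerted situation you can have, say, $x\in\alpha_{10,2}$ and $y\in\alpha_{2,4}$ with $\underline{U_x}$ and $\underline{U_y}$ intersecting; then $\dim E_y<\dim E_x$ forces $y\,\hat\leq\, x$ only, while $\dim\tilde E_x<\dim\tilde E_y$ forces $x\,\hat\leq_1\, y$ only. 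The order by $i+j$ gives $\alpha_{2,4}\leq\alpha_{10,2}$ strictly, which violates condition~2(b) of Definition~\ref{LEVEL0GCS} for $\tilde{\mathcal{G}}^{(g)}$. So your grouped target is not a good coordinate system with this order, and your final sentence ``both objects carry the same order \ldots\ so concertedness is immediate'' is assuming what you want to prove.

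The paper's proof avoids this by choosing the total order on $S$ compatible only with the bundle dimension in $\mathcal{G}$, performing the tripling there, and then on the target side placing the chart indexed by $T$ inside $\tilde U_{\widetilde{\max}\, T}$ rather than $\tilde U_{\max T}$, where $\widetilde{\max}$ is taken with respect to the $\tilde{\mathcal{G}}$-order. The point is that $\widetilde{\max}\, T$ can differ from $\max T$, and the level-1 data on $\mathcal{G}$ (specifically $\pi_{\max T\,\widetilde{\max}\,T}$) is what lets you project $U'_T$ down to something landing in $\tilde U_{\widetilde{\max}\, T}$. Concertedness then follows because nested $T_1\subset T_2$ forces both $\max T_1\leq\max T_2$ and $\widetilde{\max}\, T_1\,\tilde\leq\,\widetilde{\max}\, T_2$ independently. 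Your target-side construction, which mirrors the tripling formula using a level-1 structure on $\tilde{\mathcal{G}}^{(g)}$ with respect to the $i+j$ order, never gets off the ground because that order is not available on $\tilde{\mathcal{G}}^{(g)}$ in the first place.
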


\begin{proof} Concertedness fails when there exist squares of the following form:
$$\begin{CD}
\tilde C_\beta|_{\tilde U_\beta}@<<<\tilde C_\alpha|_{\tilde U_\alpha}\\
@AAA @AAA\\
C_\beta|_{U_\beta} @>>>C_\alpha|_{U_\alpha}
\end{CD}, $$
where $(\alpha,\beta)\not\in I(\mathcal{G})$ and $(\beta,\alpha)\not\in I(\tilde{\mathcal{G}})$. Namely, we cannot change the direction of one of the horizontal arrows to make them agree.

We define a partition $S$ of $\hat S$ by the double bundle dimensions as in \ref{LEVEL1KEMBED}, but choose a total order $\leq$ on $S$ by hand, which is compatible with the bundle dimension in $\mathcal{G}$. By \ref{EXLEVELONEGCS}, we obtain a level-1 $\mathcal{G}'$ for $\mathcal{G}$ such that the underlying good coordinate system of $\mathcal{G}'$ is a precompact shrinking for $\mathcal{G}$.

We then apply definition \ref{TRIPLING} to $\mathcal{G}'$, so we have a level-1 good coordinate system $\mathcal{G}'_S$. Then we consider the composition of $\mathcal{G}'_S\to \mathcal{G}'\to\mathcal{G}\to \tilde{\mathcal{G}}$. For every $T\in \mathcal{S}_S$, we can find $\alpha\in S$ such that there exists a Kuranishi chart embedding $C_{\max T}|_{U'_{T}}\to \tilde C_\alpha|_{\tilde U_\alpha}$. Indeed, just choose $\alpha:=\max T$. Since $\mathcal{S}_S$ is finite, we can choose an invariant open subset $\tilde U_T$ of $\tilde U_{\max T}$ for every $T\in \mathcal{S}_S$ such that $\phi_{\max T}(U'_T)\subset \tilde U_T$, and if $U'_{T_1}$ and $U'_{T_2}$ do not intersect in $M(\mathcal{G}')$, then $\tilde U_{T_1}$ and $\tilde U_{T_2}$ do not intersect in $M(\tilde{\mathcal{G}})$.

Observe that $\max T$ is not necessarily the $\widetilde{\max} T$ defined using the order $\tilde\leq$ by the bundle dimension in $\tilde {\mathcal{G}}$. For each $T$ with $\widetilde \max T\not=\max T$, since $$\pi_{\max T \widetilde{\max} T}(U'_T)\subset U'_{\max T\widetilde\max T},$$ we have that $\phi_{\widetilde {\max} T}(U'_{\max T\widetilde\max T})\subset \tilde\phi_{\widetilde\max T \max T}(\tilde U_{\widetilde\max T \max T})\subset \tilde U_{\widetilde\max T}$. Hence $\phi_{\widetilde{\max} T}(\pi_{\max T \widetilde{\max} T}(U'_T))\subset \tilde U_{\widetilde \max T}$. 

Define $\tilde\leq$ on $\mathcal{S}_S$ by $T_1\tilde\leq T_2$ if and only if $\widetilde\max T_1\tilde\leq \widetilde\max T_2$. Denote $\mathcal{S}'_S:=\{T\in\mathcal{G}_S\;|\;\widetilde \max T\not=\max T\}$. We do the following by induction on $\mathcal{S}'_S$ using the order $\tilde\leq$. Fix a general $T\in \mathcal{S}'_S$, denote $J:=\{T'\in \mathcal{S}'_S\;|\; T'\leq T\}$. Suppose we have $U^J_K, K\in \mathcal{S}_S$ from the previous induction. Choose an invariant open subset $\tilde U^T_{T}$ of $\phi_{\widetilde{\max} T}(\pi_{\max T \widetilde{\max} T}(U'_T))$ in $\tilde U_{\widetilde \max T}$ such that for every $K\in \mathcal{S}_S\backslash \{T\}$, $\tilde U^T_T$ intersects with $\tilde U^J_{K}$ if and only if $\tilde U^J_T$ intersects with $\tilde U^J_{K}$. Let $\tilde U^T_{K}:=\tilde U^J_{K}$ for all $K\in \mathcal{S}_S\backslash\{T\}$, which completes the inductive step.

After finitely many steps, we complete the induction, and denote the result $\tilde U'_{T}, T\in \mathcal{S}_S$. Denote $\tilde{\mathcal{G}}'_S:=\{\tilde C_{\widetilde \max T}|_{\tilde U'_T}\}_{T\in\mathcal{S}_S}$ with the induced coordinate changes among the restricted charts, then it is a good coordinate system and we have $\tilde U'_T\subset \tilde U_{\widetilde\max T}$. Moreover, the Kuranishi embedding $\mathcal{G}'_S\Rightarrow \tilde{\mathcal{G}}'_S$ is naturally induced from the composed map $\mathcal{G}'_S\to\tilde{\mathcal{G}}$.

We claim that $\mathcal{G}'_S\Rightarrow \tilde{\mathcal{G}}'_S$ is concerted now. Indeed, suppose $T_1, T_2\in \mathcal{S}_S$ and $U'_{T_1}$ and $U'_{T_2}$ intersect, then by lemma \ref{INTERSECTISINCLUDE}, we have $T_1\subset T_2$ or $T_2\subset T_1$, or both. In the first case, $\max T_1\leq T_2$ and $\widetilde \max T_1\tilde\leq \widetilde\max T_2$, hence $T_1\leq T_2$ and $T_1\tilde \leq T_2$; similarly in the second case, both orders also agree. Hence, we achieved concertedness for the Kuranishi embedding $\mathcal{G}'_{S}\Rightarrow\tilde{\mathcal{G}}'_S$ which chart-refines $\mathcal{G}\Rightarrow\tilde{\mathcal{G}}$.
\end{proof}

\begin{proposition}\label{GROUPING}
Let $(\mathcal{G}, (\hat S, \hat\leq))\Rightarrow (\mathcal{G}^1, (\hat S, \hat\leq_1))$ be a concerted Kuranishi embedding (by the definition the orders on $\hat S$ is compatible with the bundle dimensions in the respective good coordinate systems), then we can choose a total order partition $(S,\leq)$ for both $(\hat S, \hat\leq)$ and $(\hat S, \hat\leq_1)$.
\end{proposition}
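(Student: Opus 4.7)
The plan is to use the same construction already sketched in the discussion preceding Definition \ref{LEVEL1KEMBED}: partition $\hat S$ by the pair of bundle dimensions in $\mathcal{G}$ and $\mathcal{G}^1$. Explicitly, set
\[
\alpha_{i,j}:=\{x\in\hat S\;|\;\dim E_x=i,\ \dim E^1_x=j\},\qquad S:=\{\alpha_{i,j}\;|\;\alpha_{i,j}\neq\emptyset\},
\]
and define $\alpha_{i,j}\leq\alpha_{i',j'}$ iff $i+j\leq i'+j'$ (breaking ties by, say, lexicographic order on $(i,j)$) so that $(S,\leq)$ is an honest (antisymmetric) total order.

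Verifying condition (1) of Definition \ref{TOPART} is immediate for both $\hat\leq$ and $\hat\leq_1$, since within a single class $\alpha_{i,j}$ the dimensions of $E_x$ and of $E^1_x$ are constant equal to $i$ and $j$. For condition (2), fix distinct $\beta=\alpha_{i,j}$ and $\gamma=\alpha_{i',j'}$ in $S$. Since $(i,j)\neq(i',j')$, either $i\neq i'$ or $j\neq j'$. If $i\neq i'$, say $i<i'$, then for every pair $(y,x)\in\beta\times\gamma$ we have $\dim E_y<\dim E_x$, and compatibility of $\hat\leq$ with the bundle dimension in $\mathcal{G}$ forces $y\hat\leq x$ and $x\not\hat\leq y$; the ``for some $\Rightarrow$ for all'' consistency is automatic and the induced direction on $S$ from $\hat\leq$ matches our chosen $\leq$ in the sense that $\beta\leq\gamma$. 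If $i=i'$ and $j\neq j'$, the same argument applied to $\hat\leq_1$ yields the analogous consistency for $\hat\leq_1$; the only thing left is to check that $\hat\leq$ is also consistent on $\beta\times\gamma$ in this case, and here I would invoke concertedness (Definition \ref{CONCERTEDNESS}): for every intersecting pair $(y,x)$, the two orders agree, while for non-intersecting pairs of common $\mathcal G$-bundle dimension the relation in $\hat\leq$ is unconstrained by the good-coordinate-system structure and can be taken to hold in both directions without violating non-antisymmetric totality, reflexivity, transitivity, or compatibility with $\dim E$. After this standard adjustment, both $y\hat\leq x$ and $x\hat\leq y$ hold uniformly on $\beta\times\gamma$, which is a strong form of the ``for some $\Rightarrow$ for all'' consistency.

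It remains to observe that the induced orders on $S$ from both $\hat\leq$ and $\hat\leq_1$ agree with, or at least are refined by, the chosen total order $\leq$ on $S$: this follows since $\leq$ on $S$ is compatible with the partial order by each of the two bundle dimensions separately. In particular both induced relations on $S$ are total orders. The main conceptual point — and the one on which the argument rests — is that the concertedness hypothesis is exactly what rules out the pathological case $i<i'$ together with $j>j'$ for intersecting charts, which would otherwise force $y\hat\leq x$ and $x\hat\leq_1 y$ simultaneously.

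I expect the only mildly subtle step to be the verification of condition (2) in the ``same $\mathcal{G}$-dimension, different $\mathcal{G}^1$-dimension'' case (and symmetrically), where the direction is forced only by one of the two orders; the resolution is the concertedness-plus-non-antisymmetric-totality argument above, which one could alternatively phrase by first replacing $\hat\leq$ and $\hat\leq_1$ with their ``maximal'' extensions making all same-bundle-dimension pairs mutually $\leq$-related (this operation manifestly preserves all axioms of a non-antisymmetric total order compatible with the bundle dimension). With either phrasing the proof is short and essentially bookkeeping.
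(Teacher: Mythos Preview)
Your construction and the paper's are the same: partition $\hat S$ by the pair of bundle dimensions $(i,j)$ in $\mathcal G$ and $\mathcal G^1$, order the classes by $i+j$, and invoke concertedness to rule out the conflicting case $i<i'$ with $j>j'$ for intersecting pairs. The paper's proof is just these three sentences.

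The one place you go beyond the paper is your careful attempt to verify the literal ``for some $\Rightarrow$ for all'' clause of Definition~\ref{TOPART}(2) in the case $i=i'$, $j\neq j'$. You are right that concertedness only constrains intersecting pairs, and that for non-intersecting pairs of equal $\mathcal G$-bundle dimension the given $\hat\leq$ could in principle go either way (e.g.\ it is defined via $|G_x|$ in Section~\ref{EXISTGCS}). The paper simply does not address this: its proof checks compatibility only where charts intersect, which is all that matters downstream, since the order is only ever used to organize coordinate changes and there are none between non-intersecting charts. Your proposed fix---declaring both relations to hold for non-intersecting same-dimension pairs---amounts to harmlessly replacing the given non-antisymmetric total order by an equivalent one for the purposes of the grouping, and this is fine; but it is worth noting that you are tacitly relaxing the strict reading of Definition~\ref{TOPART} in exactly the way the paper does implicitly. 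So: same approach, you were just more scrupulous about a technicality the paper skates over.
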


\begin{proof}(\ref{LEVEL1KEMBED}) Let $S_{i,j}:=\{\alpha\in S\;|\; \text{dim} (E_\alpha|_{U_\alpha})=i\;\text{dim} (E^1_{U^1_\alpha}|_{U^1_\alpha})=j\}$. We define $S_{i,j}\leq S_{i',j'}$ if $i+j\leq i'+j'$. By the concertedness, when $\underline{U_{S_{i,j}}}$ and $\underline{U_{S_{i',j'}}}$ intersect in the identification space, we must have $i\leq i'$ and $j\leq j'$, or $i'\leq i$ and $j'\leq j$, or both. They agree with the order by the bundle dimension sum. This $\leq$ on $\{S_{i,j}\}_{i,j}$ defines a total order on this partition of $S$. We can define grouped charts $C_{i,j}|_{U_{i,j}}:=C_{S_{i,j}}|_{U_{S_{i,j}}}$ as in \ref{REORGANIZEBYTOT}.
\end{proof}

The above results have prepared the hypothesis of the following theorem:

\begin{theorem}\label{LEVELONEKEMB} This theorem is just theorem \ref{EMBEDDING0TO1EXTENDINGLEVEL1GCS}:
\begin{enumerate}
\item Let $\mathcal{G}\Rightarrow\tilde{\mathcal{G}}$ be a concerted Kuranishi embedding indexed by the same total order $(S,\leq)$, then we can construct a level-1 Kuranishi embedding $\mathcal{G}'\overset{\text{level-1}}{\Rightarrow}\tilde{\mathcal{G}}'$ whose underlying Kuranishi embedding chart-refines $\mathcal{G}\Rightarrow\tilde{\mathcal{G}}$, where $\mathcal{G}'$ and $\tilde{\mathcal{G}}'$ are precompactly shrinkings of $\mathcal{G}$ and $\tilde{\mathcal{G}}$ respectively. 
\item In (1), we can pick a level-1 $\mathcal{G}^0$ for $\mathcal{G}$ first and require the level-1 $\mathcal{G}'$ in the conclusion to be a precompact shrinking of the level-1 $\mathcal{G}^0$. Recall that this means: $W'_{\alpha\beta}$ is a precompact fiber shrinking of $W^0_{\alpha\beta}|_{\phi_{\alpha\beta}(U'_{\alpha\beta})}$ and $(U'_\alpha, U^0_\alpha, W'_{\alpha\beta})$ is a strong open neighborhood of $\phi_{\alpha\beta}(U'_{\alpha\beta})$.
\end{enumerate}
\end{theorem}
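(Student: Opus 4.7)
The plan is to prove this by reverse induction on the total order $(S,\leq)$, upgrading chart embeddings to level-1 chart embeddings from the top down while simultaneously constructing the level-1 coordinate changes in $\tilde{\mathcal{G}}'$ that fit into level-1 commutative squares. First I would apply Theorem \ref{EXLEVELONEGCS} to obtain a level-1 good coordinate system $\mathcal{G}^0$ for $\mathcal{G}$ (which is Part (2)'s starting data), so that the problem reduces to producing a concerted level-1 Kuranishi embedding for the composed concerted Kuranishi embedding $\mathcal{G}^0 \Rightarrow \tilde{\mathcal{G}}$. All subsequent shrinkings of $U_\alpha$ will be arranged to be precompact in $U^0_\alpha$, which handles Part (2) cleanly; the requirement that $\mathcal{G}^0$ be preserved modulo precompact shrinking is exactly parallel to how Proposition \ref{CONTROLLINGTHESIZE} is used to control the size of shrinkings during inductive constructions.

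For the base case $\alpha = \max S$, I would apply Proposition \ref{EOFLEVEL1CC} (treating the chart embedding $C_\alpha|_{U^0_\alpha} \to \tilde C_\alpha|_{\tilde U_\alpha}$ as a coordinate change with domain equal to the entire base) to produce $\Pi_\alpha, \tilde\Pi_\alpha, \hat\Pi_\alpha$ on some precompact shrinking, with $\tilde U'_\alpha := W'_\alpha$. For the inductive step, assume the level-1 chart embeddings and level-1 coordinate changes have been constructed for all indices in $T_\alpha := \{\beta\in S \mid \alpha \leq \beta\}$ and are compatible with the level-1 structure of $\mathcal{G}^0$. Fix $\gamma := \max(S \setminus T_\alpha)$ and let $K_\gamma := \{\beta \in T_\alpha \mid (\gamma,\beta) \in I(\mathcal{G})\}$. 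For each $\beta \in K_\gamma$ I would apply Proposition \ref{METRICCOMPATIBILITY} to the double fiber bundle $\Pi_\beta : \tilde U'_\beta \to \phi_\beta(U^0_\beta)$ followed by $(\phi_\beta)_\ast \pi_{\beta\gamma}$, obtaining a $G$-invariant Riemannian metric $\tilde g_\beta$ on $\tilde U_\beta$ for which both factors, and hence the composition $\Lambda_{\beta\gamma} := ((\phi_\beta)_\ast \pi_{\beta\gamma})^o \circ \Pi_\beta$, are recovered and have totally geodesic fibers.

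Next I would pull these metrics back via $\tilde\phi_{\beta\gamma}$ to open subsets of $\tilde U_\gamma$ covering $\phi_\gamma(\bigcup_{\beta \in K_\gamma} U^{(-i-0.5)}_{\beta\gamma})$, and glue them by a partition of unity $\{\mu_\beta \circ \tau_{\beta\gamma}\}$ that is constant along the fibers of $\tau_{\beta\gamma}$ (so the totally-geodesic-fiber property survives). A transition region then extends this metric to a $G$-invariant Riemannian metric $\tilde g_\gamma$ on all of $\tilde U_\gamma$, from which $\Pi_\gamma$ is produced by the normal exponential construction of Definition \ref{SUBMERSIONMODEL}. Exactly the same partition-of-unity plus transition-region recipe, applied to invariant bundle connections and bundle metrics assembled from direct-sum data on $\tilde F_\beta \oplus_{\cdot} \ker\hat\Lambda_{\beta\gamma}$, produces $\tilde\Pi_\gamma$ and $\hat\Pi_\gamma$, and simultaneously the vector-bundle-like submersion $\widetilde{\pi_{\gamma\beta}}$ along with $\widetilde{\tilde\pi_{\gamma\beta}}, \widetilde{\hat\pi_{\gamma\beta}}$ in $\tilde{\mathcal{G}}'$. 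A shrinking of $U^{(-i-1)}_\gamma$ and a fiber-shrinking of $W^{(-i-1)}_\gamma$ (leaving existing structures intact) secure the transversality of the quotient section.

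The level-1 commutativity of the new square over $(\gamma,\beta)$ and the compatibility with the pre-existing level-1 coordinate changes in $\tilde{\mathcal{G}}'$ then follow from the three geometric facts that (i) fibers of $\Lambda_{\beta\gamma}$ are $\tilde g_\beta$-totally geodesic, (ii) the connection on $\tilde E_\beta$ is trivial along those fibers, and (iii) the bundle metric on $\tilde E_\beta$ is fiberwise constant along those fibers; these are preserved by pullback and by the partition-of-unity averaging. The main obstacle I anticipate is precisely the \emph{simultaneous} compatibility in the inductive step when $|K_\gamma| \geq 2$: the pulled-back metrics $(\tilde\phi_{\beta_i\gamma})^\ast \tilde g_{\beta_i}$ for distinct $\beta_1, \beta_2 \in K_\gamma$ must agree on overlaps so that the partition-of-unity glueing yields a well-defined submersion $\tau_\gamma$ compatible with each $\tau_{\beta_i\gamma}$; this hinges on the level-1 compatibility of the $\{\beta_1,\beta_2,\gamma\}$-triangle already established in $\mathcal{G}^0$ by Proposition \ref{ALPHABETAGAMMA}, and is the reason the level-1 structure on $\mathcal{G}^0$ must be in place before starting this induction. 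Once all inductive steps are performed and relabelled, restriction to a common precompact shrinking contained in every $U^{(-i)}_\alpha$ yields the desired concerted level-1 Kuranishi embedding $\mathcal{G}' \overset{\text{level-1}}{\Rightarrow} \tilde{\mathcal{G}}'$ chart-refining $\mathcal{G}\Rightarrow\tilde{\mathcal{G}}$, with $\mathcal{G}'$ precompact in $\mathcal{G}^0$ as required by Part (2).
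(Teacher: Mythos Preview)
Your proposal is correct and follows essentially the same approach as the paper's proof of Theorem~\ref{EMBEDDING0TO1EXTENDINGLEVEL1GCS} (of which this theorem is explicitly a restatement): reverse induction on $(S,\leq)$ starting from a prechosen level-1 $\mathcal{G}^0$, base case at $\alpha=\max S$ via Proposition~\ref{EOFLEVEL1CC}, and inductive step built from the composite submersions $\Lambda_{\beta\gamma}=((\phi_\beta)_\ast\pi_{\beta\gamma})^o\circ\Pi_\beta$ by applying Proposition~\ref{METRICCOMPATIBILITY}, pulling back, gluing by the partition of unity $\{\mu_\beta\circ\tau_{\beta\gamma}\}$, extending through a transition region, and repeating the recipe for connections and bundle metrics. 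Your identification of the key obstacle---overlap agreement of the pulled-back metrics when $|K_\gamma|\geq 2$, resolved precisely by the level-1 compatibility already in $\mathcal{G}^0$---is exactly the point the paper relies on (one notational slip: the new level-1 coordinate changes in $\tilde{\mathcal{G}}'$ are $\widetilde{\pi_{\beta\gamma}}$, not $\widetilde{\pi_{\gamma\beta}}$, since $\gamma<\beta$).
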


\begin{definition}[admissibility]\label{ADMISSIBLE} Let $\mathcal{G}{\Rightarrow}\mathcal{G}^1$ and $\mathcal{G}{\Rightarrow}\mathcal{G}^2$ be two concerted (level-1) Kuranishi embeddings of of strongly intersecting Hausdorff (level-1) good coordinate systems with the order $\leq, \leq_1, \leq_2$ respectively on a common $S$. They are an \emph{admissible pair of concerted (level-1) Kuranishi embeddings} if for every pair of indices $\beta,\alpha\in S$ such that $U_\beta$ and $U_\alpha$ intersect in the identification space, we have $\beta\leq \alpha, \beta\leq_1\alpha, \beta\leq_2\alpha$ hold at the same time or $\alpha\leq \beta,\alpha\leq_1\beta, \alpha\leq_2\beta$ hold at the same time (or both). We define $\alpha\leq^a \beta$ as $\alpha\leq\beta, \alpha\leq_1\beta,\alpha\leq_2\beta$ when the indexed charts intersect in the identification space, and as $\alpha\leq\beta$ otherwise. $\leq^a$ is a nonantisymmetric total order. 
\end{definition}

\begin{theorem}[achieving an admissible pair of concerted level-1 Kuranishi embeddings]\label{ADMISSIBLEPAIRKEMB} Let $\mathcal{G}\Rightarrow \hat{\mathcal{G}}$ and $\mathcal{G}\Rightarrow\tilde{\mathcal{G}}$ be two Kuranishi embeddings, where $\mathcal{G}$, $\hat{\mathcal{G}}$ and $\tilde{\mathcal{G}}$ are strongly intersecting Hausdorff good coordinate systems indexed by $(S,\leq)$, $(S, \hat\leq)$ and $(S,\tilde\leq)$ respectively. Then there exist a concerted level-1 Kuranishi embedding $\mathcal{G}'\overset{\text{level-1}}{\Rightarrow}\hat{\mathcal{G}}'$ whose underlying Kuranishi embedding chart-refines $\mathcal{G}\Rightarrow\hat{\mathcal{G}}$, and a concerted level-1 Kuranishi embedding for $\mathcal{G}'\overset{\text{level-1}}{\Rightarrow}\tilde{\mathcal{G}}'$ whose underlying Kuranishi embedding chart-refines $\mathcal{G}\Rightarrow\tilde{\mathcal{G}}$, where $\mathcal{G}'$, $\hat{\mathcal{G}}'$ and $\tilde{\mathcal{G}}'$ are indexed by $(S',\leq')$, $(S', \hat\leq_1')$ and $(S',\tilde\leq_2')$ respectively, such that the new pair is an admissible pair.
\end{theorem}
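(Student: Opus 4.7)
The plan is to first chart-refine the pair of Kuranishi embeddings so that both underlying embeddings are concerted over a common refined domain (which yields admissibility at the underlying level), and then to lift to level-1 by applying Theorem~\ref{EMBEDDING0TO1EXTENDINGLEVEL1GCS} twice, using that precompact shrinking preserves level-1 structures.

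For the first stage, I apply the concertedness trick of Proposition~\ref{CONCERTEDPROOF} to $\mathcal{G}\Rightarrow\hat{\mathcal{G}}$ to obtain a concerted chart-refinement $\mathcal{G}_1\Rightarrow\hat{\mathcal{G}}_1$ indexed by a tripled index set $\mathcal{S}_1$. Next I regroup $\tilde{\mathcal{G}}$ by assigning to each $T\in\mathcal{S}_1$ a chart $\tilde C_{\max T}|_{\tilde U_T}$ for an appropriate invariant open $\tilde U_T\subset\tilde U_{\max T}$ (containing the image $\tilde\phi_{\max T}(\pi_{\max T\,\widetilde{\max} T}(U_T))$ when $\max T\ne\widetilde{\max}T$, as in the proof of \ref{CONCERTEDPROOF}), so the composite $\mathcal{G}_1\to\mathcal{G}\to\tilde{\mathcal{G}}$ becomes a Kuranishi embedding $\mathcal{G}_1\Rightarrow\tilde{\mathcal{G}}'_1$ indexed by $\mathcal{S}_1$. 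Applying \ref{CONCERTEDPROOF} again yields a concerted chart-refinement $\mathcal{G}_2\Rightarrow\tilde{\mathcal{G}}_2$ indexed by $\mathcal{S}_2:=\mathcal{S}_{\mathcal{S}_1}$. Via the map $\mu:\mathcal{S}_2\to\mathcal{S}_1$ given by $T\mapsto\max T$, I lift $\hat{\mathcal{G}}_1$ to a chart-refinement $\hat{\mathcal{G}}_2$ indexed by $\mathcal{S}_2$, choosing $\hat U_T\subset\hat U_{\mu(T)}$ that contains the chart-embedding image of $U_T$ and preserves the non-intersecting pattern of previously disjoint pairs. Concertedness of the induced $\mathcal{G}_2\Rightarrow\hat{\mathcal{G}}_2$ then follows from Lemma~\ref{INTERSECTISINCLUDE}, which forces any intersecting pair in $\mathcal{G}_2$ to have nested indices $T_1\subset T_2$, combined with concertedness of $\mathcal{G}_1\Rightarrow\hat{\mathcal{G}}_1$. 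Admissibility of the pair $(\mathcal{G}_2\Rightarrow\hat{\mathcal{G}}_2,\mathcal{G}_2\Rightarrow\tilde{\mathcal{G}}_2)$ is now immediate, since the common order $\leq_2$ on $\mathcal{S}_2$ is concerted with both $\hat\leq_2$ and $\tilde\leq_2$.

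For the second stage, I fix a level-1 structure $\mathcal{G}_2^0$ on $\mathcal{G}_2$ via Theorem~\ref{EXLEVELONEGCS}. Applying Theorem~\ref{EMBEDDING0TO1EXTENDINGLEVEL1GCS}(2) to the concerted embedding $\mathcal{G}_2\Rightarrow\hat{\mathcal{G}}_2$, after grouping by the double-bundle-dimension total order partition $(\dim E,\dim\hat E)$ granted by Proposition~\ref{GROUPING}, produces a level-1 Kuranishi embedding $\mathcal{G}_3\overset{\text{level-1}}{\Rightarrow}\hat{\mathcal{G}}_3$ with $\mathcal{G}_3$ a precompact shrinking of $\mathcal{G}_2^0$, hence itself a level-1 good coordinate system. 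I then apply \ref{EMBEDDING0TO1EXTENDINGLEVEL1GCS}(2) a second time to the restricted concerted embedding $\mathcal{G}_3\Rightarrow\tilde{\mathcal{G}}_2^{\mathrm{res}}$, grouped via $(\dim E,\dim\tilde E)$ and fed with the inherited level-1 structure on $\mathcal{G}_3$; this yields a level-1 Kuranishi embedding $\mathcal{G}'\overset{\text{level-1}}{\Rightarrow}\tilde{\mathcal{G}}'$ with $\mathcal{G}'$ a precompact shrinking of $\mathcal{G}_3$. Finally, the level-1 embedding $\mathcal{G}_3\overset{\text{level-1}}{\Rightarrow}\hat{\mathcal{G}}_3$ restricts naturally along $\mathcal{G}'\subset\mathcal{G}_3$ to give $\mathcal{G}'\overset{\text{level-1}}{\Rightarrow}\hat{\mathcal{G}}'$ upon shrinking $\hat{\mathcal{G}}_3$ to an appropriate $\hat{\mathcal{G}}'$ so that each $(\hat U'_T,\hat U_T,\hat U'_T)$ remains a strong open neighborhood of $\phi_T(U'_T)$. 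Since $\mathcal{G}'$ chart-refines $\mathcal{G}_2$ with the identity on indices and orders (up to precompact shrinking), admissibility at the level-1 stage carries over from the underlying stage.

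The principal obstacle is bookkeeping: one must verify that concertedness, strong intersection, Hausdorffness, and the level-1 data (the fiber bundle projections $\Pi_T,\tilde\Pi_T,\hat\Pi_T$) remain compatible through the cascade of chart-refinements, precompact shrinkings, restrictions, and regroupings by different partitions. The crucial compatibility of the two applications of \ref{EMBEDDING0TO1EXTENDINGLEVEL1GCS}, which use different double-dimension partitions, holds because both applications start from the common level-1 $\mathcal{G}_2^0$ on the common domain $\mathcal{G}_2$, and each produces a precompact shrinking of its input level-1 structure; restricting the first output to the precompact shrinking obtained in the second then gives a single level-1 structure on $\mathcal{G}'$ that simultaneously supports both restricted level-1 chart embeddings of the admissible pair.
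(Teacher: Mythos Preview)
Your proposal is correct and follows essentially the same strategy as the paper: two applications of the concertedness trick (tripling) to achieve admissibility, two applications of Theorem~\ref{EMBEDDING0TO1EXTENDINGLEVEL1GCS} to upgrade to level-1, and one restriction step to carry the first level-1 embedding down to the final common domain. The admissibility verification via Lemma~\ref{INTERSECTISINCLUDE} is exactly the paper's argument.

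The one organizational difference is the interleaving. You first do both triplings at the underlying level, then apply~\ref{EMBEDDING0TO1EXTENDINGLEVEL1GCS} twice, and finally restrict the first level-1 embedding through a mere precompact shrinking $\mathcal{G}'\subset\mathcal{G}_3$. The paper instead interleaves: it upgrades the first embedding to level-1 \emph{before} the second tripling (using the triple-bundle-dimension partition from the outset so that $\tilde{\mathcal{G}}$'s dimension data is tracked already in step~(1)), then performs the second tripling on the already level-1 $\mathcal{G}^{(2)}$, and in step~(3) restricts the first level-1 embedding through that tripling by the explicit formula $\hat U^{(3)}_T=(\Pi_{\widehat\max T})^{-1}(\pi_{\max T\,\widehat\max T}(U^{(3)}_T))$, which uses both the chart-embedding submersions $\Pi$ and the coordinate-change submersions $\pi$. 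Your ordering has the advantage that the final restriction is only through a precompact shrinking (so $\hat U'_T:=\Pi_T^{-1}(\phi_T(U'_T))$ suffices and no $\pi$ appears), which is conceptually cleaner; the paper's ordering has the advantage that the explicit formula using $\Pi$ and $\pi$ makes transparent why the restricted $\hat{\mathcal{G}}^{(3)}$ remains a good coordinate system with the required strong-open-neighborhood properties. One minor redundancy in your write-up: $\mathcal{G}_2$, being the output of a tripling, is already level-1, so invoking~\ref{EXLEVELONEGCS} to produce $\mathcal{G}_2^0$ is unnecessary.
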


\begin{proof} (a proof by just quoting the above results but requiring two triplings)
\begin{enumerate}
\item Construct a concerted level-1 Kuranishi embedding $\mathcal{G}^{(2)}\overset{\text{level-1}}{\Rightarrow}\hat{\mathcal{G}}^{(2)}$ whose underlying Kuranishi embedding chart-refines $\mathcal{G}\Rightarrow\hat{ \mathcal{G}}$. The order $(S^{(1)}, \leq^{(1)})$ for $\mathcal{G}^{(2)}$ is obtained from applying the tripling to $(\mathcal{G} , (S', \leq'))$, where $S'$ is the partition of $S$ by the \textbf{triple bundle dimensions} and $\leq'$ is a total order made by hand from the order by the bundle dimension in $\mathcal{G}$; and $(S^{(1)}, \hat\leq^{(1)})$ is induced from tripling the non-level-1 $(\hat{\mathcal{G}}, (S', \hat\leq'))$ as in \ref{CONCERTEDPROOF}, where $\leq'$ is the order by the bundle dimension in $\hat{\mathcal{G}}$:

By proposition \ref{CONCERTEDPROOF}, we can find a concerted Kuranishi embedding $\mathcal{G}^{(1)}\Rightarrow\hat{\mathcal{G}}^{(1)}$ with the orders $(S^{(1)}, \leq^{(1)})$ and $(S^{(1)},\hat\leq^{(1)})$ whose underlying Kuranishi embedding chart-refines $\mathcal{G}\Rightarrow\hat{\mathcal{G}}$. 

Then by \ref{GROUPING} and \ref{LEVELONEKEMB}, we can obtain a concerted level-1 Kuranishi embedding $\mathcal{G}^{(2)}\overset{\text{level-1}}{\Rightarrow}\hat{\mathcal{G}}^{(2)}$ with the common total order $(S^{(2)}, \leq^{(2)})$. 

We can revert the common total order $(S^{(2)}, \leq^{(2)})$ of $\mathcal{G}^{(2)}\overset{\text{level-1}}{\Rightarrow}\hat{\mathcal{G}}^{(2)}$ back the the orders $(S^{(1)}, \leq^{(1)})$ and $(S^{(1)},\hat\leq^{(1)})$ (which are the orders after tripling in proposition \ref{CONCERTEDPROOF} and right before grouping in proposition \ref{GROUPING}). Recall that we can do this because in shrinking the grouped charts in theorem \ref{LEVELONEKEMB}, we are required to preserve how charts indexed by the original indices (indices after tripling in this case) intersect among each other. Then we still have concertedness in $\mathcal{G}^{(2)}\overset{\text{level-1}}{\Rightarrow}\hat{\mathcal{G}}^{(2)}$, but the orders $(S^{(1)}, \leq^{(1)})$ on $\mathcal{G}^{(2)}$ and $(S^{(1)},\hat\leq^{(1)})$ on $\hat{\mathcal{G}}^{(2)}$ might not agree, and they might no longer be total orders.

\item Establish the second concerted level-1 Kuranishi embedding:

We can compose $\mathcal{G}^{(2)}\overset{\sim}{\to} \mathcal{G}\Rightarrow \tilde{\mathcal{G}}$ and chart-refine $\tilde{\mathcal{G}}$ to $\tilde{\mathcal{G}}^{(2)}$, so we have a Kuranishi embedding $\mathcal{G}^{(2)}\Rightarrow \tilde{\mathcal{G}}^{(2)}$ with the common index set $S^{(1)}$ and the induced order $\tilde\leq^{(1)}$ on $S^{(1)}$ for $\tilde{\mathcal{G}}^{(2)}$.

We again in the same setting as in (1) except we already have a level-1 structure on $\mathcal{G}^{(2)}$ which we will use in repeating above by applying theorem \ref{LEVELONEKEMB}.(2). As a result we have a concerted level-1 Kuranishi embedding $\mathcal{G}^{(3)}\overset{\text{level-1}}{\Rightarrow}\hat{\mathcal{G}}^{(3)}$ which chart-refines $\mathcal{G}^{(2)}\Rightarrow \tilde{\mathcal{G}}^{(2)}$. Here the order $(S^{(3)}, \leq^{(3)})$ on $\mathcal{G}^{(3)}$ is induced from the tripling of $(\mathcal{G}^{(2)}, (S^{(1)},\leq^{(1)}))$, and denote the order on $\tilde{\mathcal{G}}^{(3)}$ by $(S^{(3)},\tilde\leq^{(3)})$.

\item Revisit the first level-1 Kuranishi embedding and chart-refine:

We compose $\mathcal{G}^{(3)}\overset{\sim}{\to}\mathcal{G}^{(2)}\overset{\text{level-1}}{\Rightarrow}\hat{\mathcal{G}}^{(2)}$. Define the order $\hat\leq^{(3)}$ on $S^{(3)}$ by $T_1\hat\leq^{(3)} T_2$ if and only if $\widehat\max T_1 \hat\leq^{(1)}\widehat\max T_2$. For all $T\in S^{(3)}$, define $$\hat U^{(3)}_T:=(\Pi_{\widehat\max T^{(1)}})^{-1}(\pi_{\max^{(1)}T\; \widehat\max^{(1)} T }U^{(3)}_T)$$ and define $\hat{\mathcal{G}}^{(3)}:=\{\hat C^{(1)}_{\widetilde\max^{(1)} T}|_{\hat U^{(3)}_T}\}_{T\in S^{(3)}}$ with the induced coordinate changes among the restricted charts. Then $\hat{\mathcal{G}}^{(3)}$ indexed by $(S^{(3)}, \hat\leq^{(3)})$ is a chart-refinement of $\hat{\mathcal{G}}^{(2)}$.

Define $\hat\phi_T:=\hat\phi^{(2)}_{\widehat\max T\;\,\max^{(1)} T}\circ\phi_{\max^{(1)} T}$, and $$\Pi_{T}:=\Pi_{\max^{(1)} T}\circ\widehat{\pi_{\widehat\max^{(1)}}}_{T\;\max^{(1)} T}.$$ We now have a level-1 Kuranishi embedding $\mathcal{G}^{(3)}\overset{\text{level-1}}{\Rightarrow}\hat{\mathcal{G}}^{(3)}$, which is still concerted and is a restriction from (hence chart-refines) the level-1 Kuranishi embedding $\mathcal{G}^{(2)}\overset{\text{level-1}}{\Rightarrow}\hat{\mathcal{G}}^{(2)}$.

Here recall that $S^{(3)}:=\mathcal{S}_{S^{(1)}}$ is obtained from tripling $\mathcal{G}^{(2)}\Rightarrow\tilde{\mathcal{G}}^{(2)}$ in achieving the concertedness.

\item Confirm that we already have an admissible pair:

To summarize, we now have level-1 Kuranishi embeddings $$\mathcal{G}^{(3)}\overset{\text{level-1}}{\Rightarrow}\hat{\mathcal{G}}^{(3)}\text{ and }\mathcal{G}^{(3)}\overset{\text{level-1}}{\Rightarrow}\tilde{\mathcal{G}}^{(3)},$$ where the orders for $\mathcal{G}^{(3)}, \hat{\mathcal{G}}^{(3)}$ and $\tilde{\mathcal{G}}^{(3)}$ are $(S^{(3)}, \leq^{(3)})$, $(S^{(3)}, \hat\leq^{(3)})$ and $(S^{(3)}, \tilde\leq^{(3)})$ respectively. Observe that this is an admissible pair.

Indeed, suppose $T_1, T_2\in S^{(3)}$ and $U^{(3)}_{T_1}$ intersect with $U^{(3)}_{T_2}$ in the identification space, then by lemma \ref{INTERSECTISINCLUDE}, we have $T_1\subset T_2$ or $T_2\subset T_1$ as subsets of $S^{(1)}$, or both. In any case, we have three orders $\leq^{(5)}$, $\hat\leq^{(5)}$, and $\tilde\leq^{(5)}$ all agree by the definition of the induced orders.\end{enumerate}
\end{proof}

Now we are ready to form a fiber product in the next subsection.

\subsection{Fiber products}\label{FIBPROD}

At the end of the last subsection, we have an admissible pair of concerted level-1 Kuranishi embeddings $\mathcal{G}\overset{\text{level-1}}{\Rightarrow}\mathcal{G}^1$ and $\mathcal{G}\overset{\text{level-1}}{\Rightarrow}\mathcal{G}^2$, where $\mathcal{G}$, $\mathcal{G}^1$ and $\mathcal{G}^2$ are strongly intersecting Hausdorff level-1 good coordinate systems indexed by the orders $(S^0, \leq^0)$, $(S^0, \leq^1)$ and $(S^0, \leq^2)$ respectively.

We take a common total order partition $(S,\leq)$ for these three orders, similar to proposition \ref{GROUPING}: We partition $S^0$ by the triple bundle dimensions into $S$: Denote $\mathbf{i}:=(i^0, i^1, i^2)$ and $$\alpha_{\mathbf{i}}:=\{\alpha\in S^0\;|\; \text{dim}(E_\alpha|_{U_\alpha})=i^0,\;\text{dim}(E^1_\alpha|_{U^1_\alpha})=i^1,\text{ and }\text{dim}(E^2_\alpha|_{U^2_\alpha})=i^2\}.$$ Denote $S=\{\alpha_{\mathbf{i}}\;|\;\alpha_{\mathbf{i}}\not=\emptyset\}\cong\{\mathbf{i}\;|\;\alpha_{\mathbf{i}}\not=\emptyset\}$. Define $\alpha_{\mathbf{j}}\leq \alpha_{\mathbf{i}}$ (or $\mathbf{j}\leq\mathbf{i}$) if and only if $j^0\leq^0 i^0$, $j^1\leq i^1$ and $j^2\leq i^2$. It is a total order. Define $\text{max}$ by taking the the maximal element using $\leq$, which is consistent with the orders by the $i$-coordinate, $i^1$-th coordinate and $i^2$-coordinate of $\mathbf{i}:=(i, i^1, i^2)$ in $\alpha_{\mathbf{i}}$ respectively.

Apply tripling for $\mathcal{G}, (S,\leq)$ to get $\mathcal{G}_S, (\mathcal{S}_{S}, \leq)$ as in definition \ref{TRIPLING}, and this induces triplings $\mathcal{G}^1_S$ and $\mathcal{G}^2_S$ for $\mathcal{G}^1$ and $\mathcal{G}^2$, and an admissible pair of concerted level-1 Kuranishi embeddings $\mathcal{G}_S\overset{\text{level-1}}{\Rightarrow}\mathcal{G}^1_S$ and $\mathcal{G}_S\overset{\text{level-1}}{\Rightarrow}\mathcal{G}^2_S$. Indeed, define $U^1_T:=(\Pi^1_{\max T})^{-1}(U_T)$ for $T\in\mathcal{S}_{S}$ and $\mathcal{G}^1:=\{C^1_{\max^1 T}|_{U^1_T}\}_{T\in\mathcal{S}_S}$ with the induced coordinate changes among the restricted charts. Define $\phi^1_{T}:=\phi^1_{\max T}$, $\hat\phi^1_{T}:=\hat\phi^1_{\max T}$, $\Pi^1_T:=\Pi^1_{\max T}$, $\tilde\Pi^1_T:=\tilde\Pi^1_{\max T}$ and $\hat\Pi^1_T:=\hat\Pi^1_{\max T}$, which gives the level-1 Kuranishi embedding $\mathcal{G}_S\overset{\text{level-1}}{\Rightarrow}\mathcal{G}^1_S$. The same goes for $\mathcal{G}^2_S$ by changing the superscripts $1$ to $2$.

\begin{definition}[notation for a fiber product Kuranishi chart] Suppose $C_\alpha|_{U_\alpha}$ is a Kuranishi chart, and $C_\alpha|_{U_\alpha}\overset{\text{level-1}}{\to} C^i_\alpha|_{U^i_\alpha}$ is a level-1 Kuranishi chart embedding with the vector-bundle like submersion $\Pi^i_\alpha: U^i_\alpha\to \phi^i_\alpha(U_\alpha)$, the fiberwisely isomorphic bundle maps $\tilde\Pi^i_\alpha: \tilde F^i_\alpha\to \hat\phi_\alpha^i(E_\alpha|_{U_\alpha})$ covering $\Pi^i_\alpha$ and the bundle projection $\hat \Pi^i_\alpha: E^i_\alpha|_{U^i_\alpha}\to\tilde F^i_\alpha$. Here, $i=1, 2$.

Denote the fiber product base $\mathbf{U}_\alpha:=U^1_\alpha\times_{U_\alpha} U^2_\alpha:=U^1_\alpha\;_{\Pi_\alpha}\times_{U_\alpha,\;\Pi^2_\alpha} U^2_\alpha$, and the fiber product bundle is \begin{align*}\mathbf{E}_\alpha|_{\mathbf{U}_\alpha}&:=({E^1_\alpha}\times _{E_\alpha}E^2_\alpha)|_{U^1_\alpha\times_{U_\alpha} U^2_\alpha}\\
&:=(E^1_\alpha|_{U^1_\alpha})\;_{(\tilde\Pi^1_\alpha\circ\hat \Pi^1_\alpha)}\times_{(E_\alpha|_{U_\alpha}),\;(\tilde\Pi^2_\alpha\circ\hat \Pi^2_\alpha)}(E^2_\alpha|_{U^2_\alpha})\\
&:=\{(v^1, v^2)\;|\;\tilde\Pi^1_\alpha(\hat\Pi_\alpha(v^1))=\tilde\Pi^2_\alpha(\hat\Pi^2_\alpha(v^2))\}.
\end{align*}

We have the naturally induced fiber product section $\mathbf{s}_\alpha:=s^1_\alpha\times_{s_\alpha} s^2_\alpha$. We denote 
$$\mathbf{C}_\alpha|_{\mathbf{U}_\alpha}:=(C^1_\alpha|_{U^1_\alpha})_{\hat\Pi^1_\alpha}\times_{(C_\alpha|_{U_\alpha}),\;\hat\Pi^2_\alpha}(C^2_\alpha|_{U^2_\alpha}):=(\mathbf{s}_\alpha: \mathbf{U}_\alpha\to \mathbf{E}_\alpha),$$ and call it a \emph{fiber product Kuranishi chart}. We have naturally induced level-1 Kuranishi chart embeddings $C^1_\alpha|_{U^1_\alpha}\overset{\text{level-1}}{\to}\mathbf{C}_\alpha|_{\mathbf{U}_\alpha}$ and $C^2_\alpha|_{U^2_\alpha}\overset{\text{level-1}}{\to}\mathbf{C}_\alpha|_{\mathbf{U}_\alpha}$, such that the following level-1 square is level-1 commutative:

$$\begin{CD}
C^1_\alpha|_{U^1_\alpha}@>\text{level-1}>> \mathbf{C}_\alpha|_{\mathbf{U}_\alpha}\\
@A\text{level-1}AA @AA\text{level-1}A\\
C_\alpha|_{U_\alpha} @>\text{level-1}>> C^2_\alpha|_{U^2_\alpha}
\end{CD}\;\;\;\;.$$
\end{definition}

\begin{remark} In order to have a fiber product good coordinate system globally, we have to define each fiber product Kuranishi chart using the data from the tripling in the above notion, rather than just simply applying the above notion to the charts at each index.
\end{remark}

\begin{definition}[fiber product good coordinate system $\mathcal{G}^{\text{FP}}$]\label{FPGCSLEVELONE}

We need a tripling with a slightly better property. For $\mathcal{G}$, we choose $\tilde U^1_{(\beta,\alpha)}$, $\tilde U^2_{(\beta,\alpha)}$, $\tilde U^3_{(\beta,\alpha)}=\text{orbit}(W_{\alpha\beta})$ for $(\beta,\alpha)\in I^\ast(\mathcal{G}):=\{(\beta,\alpha)\in I(\mathcal{G})\;|\;\beta\not=\alpha\}$ as before. $$\mathcal{G}_m:=\{C_\beta|_{\tilde U^1_{(\beta,\alpha)}}, C_\alpha|_{\tilde U^2_{(\beta,\alpha)}}, C_\alpha|_{\tilde U^3_{\alpha\beta}}\}_{(\beta,\alpha)\in I^\ast(\mathcal{G})}$$ with the induced coordinate changes is a good coordinate system. Choose a precompact shrinking $\{U^1_{(\beta,\alpha)}, U^2_{(\beta,\alpha)}\}_{(\beta,\alpha)\in I^\ast (\mathcal{G})}$ of $\{\tilde U^1_{(\beta,\alpha)}, \tilde U^2_{(\beta,\alpha)}\}_{(\beta,\alpha)\in I^\ast(\mathcal{G})}$ in $\mathcal{G}_m$ in the sense of \ref{PRECOMPACTCSHRINK} which respects the strong open neighborhoods in the level-1 structures. Define $U^3_{\alpha\beta}:=\tilde U^3_{\alpha\beta}$ and use those $U^i_{(\beta,\alpha)}$ to proceed with the tripling construction.

For $T\subset \mathcal{S}_S$, denote $$\hat\Pi^{\text{FP}}_{T,i}:=((\tilde \Pi^i_{\max T})^\ast ((\hat\phi^i_{\max T})_\ast \hat\pi_{\max T \min T}))\circ \hat\Pi^i_{\max T},\;\;\text{and}$$
\begin{align*}B_T:&=(\hat \phi_{\max T \min T})_\ast(C_{\min T}|_{(\phi_{\max T\min T})^{-1}(\pi_{\max T \min T} (U_T)}))\\&\equiv (\tilde\pi_{\max T\min T}\circ\hat\pi_{\max T\min T})_\ast(C_T|_{U_T}).
\end{align*}
Then we can define $$C^{\text{FP}}_T|_{U^{\text{FP}}_T}:=(C^1_{\max T}|_{U^1_T})_{\hat\Pi^{\text{FP}}_{T,1}}\times_{B_T,\; \hat\Pi^{\text{FP}}_{T, 2}}(C^2_{\max T}|_{U^2_T}).$$

Define $\text{dim} (T):=\text{dim} (C^{\text{FP}}|_{U^{\text{FP}}_T}):=\text{dim}(E^{\text{FP}}_T)=\text{dim} E^1_T+\text{dim} E^2_{T}-\text{dim} E_{\min T}$ which is the bundle dimension of $E^{\text{FP}}_T$. Define the order $T_1 \leq^{\text{FP}} T_2$ if and only if $\text{dim} T_1\leq \text{dim} T_2$. Observe that if $T\subset T'$, then $\text{dim} T\leq \text{dim} T'$.

Suppose that $U_{T_1}$ and $U_{T_2}$ intersect in the identification space $M(\mathcal{G}_S)$, then by the definition of $(S,\leq)$ via the triple bundle dimensions and lemma \ref{INTERSECTISINCLUDE}, we have $T_1\subset T_2$ or $T_2\subset T_1$, or both.

We only consider the case $T_1\subset T_2$, since the second case is symmetric with respect to switching $1$ and $2$. In this case, we have, $\text{dim}T_1\leq \text{dim}T_2$. We now define a coordinate change from $C^{\text{FP}}_{T_1}|_{U^{\text{FP}}_{T_1}}$ to $C^{\text{FP}}_{T_2}|_{U^{\text{FP}}_{T_2}}$ in the direction of this order:

Define $U_{T_2 T_1}:=$
$$U_{T_1}\cap (\phi_{\max T_2 \max T_1})^{-1}(\pi_{\max T_2\max T_1} (\text{orbit}(W_{\max T_2\max T_1})\cap U_{T_2}))$$ and $U^i_{T_2 T_1}:=(\Pi^{\text{FP}}_{T_1, i})^{-1}(U_{T_2 T_1})$. Then we have a natural coordinate change $C^{\text{FP}}_{T_1}|_{U^{\text{FP}}_{T_1}}\to C^{\text{FP}}_{T_2}|_{U^{\text{FP}}_{T_2}}$ with the domain $U^{\text{FP}}_{T_2 T_1}:=U^1_{T_2 T_1}\times_{U_{T_2 T_1}} U^2_{T_2 T_1}$ and this coordinate change is induced by the following three coordinate changes:
\begin{align*}
C^1_{\max T_1}|_{U^1_{T_2 T_1}}&\to  C^1_{\max T_2}|_{U^1_{T_2}}\to C^{\text{FP}}_{T_2}|_{U^{\text{FP}}_{T_2}}\\
C^2_{\max T_1}|_{U^2_{T_2 T_1}}&\to  C^2_{\max T_2}|_{U^2_{T_2}}\to C^{\text{FP}}_{T_2}|_{U^{\text{FP}}_{T_2}}\\
B_{T_2 T_1} &\overset{\text{diagonal embedding}}{\to}\\
((\phi^1_{\max T_2 \max T_1})_\ast((\phi^1_{\max T_1})_\ast & C_B))_{\hat\Pi^1_{\max T_2}}\times_{B_{T_2},\; \hat\Pi^2_{\max T_2}}\\((\phi^2_{\max T_2 \max T_1})_\ast((\phi^2_{\max T_1})_\ast& C_B))
\quad\quad\to C^{\text{FP}}_{T_2}|_{U^{\text{FP}}_{T_2}}
\end{align*}
$$\text{where }B_{T_2 T_1}:=(\hat \phi_{\max T_1 \min T_1})_\ast(C_{\min T_1}|_{\phi_{\max T_1\min T_1}^{-1}(\pi_{\max T_1 \min T_1} (U_{T_2 T_1}))})$$
$$\quad\quad\quad\; B_{T_2}:=(\hat \phi_{\max T_2 \min T_2})_\ast(C_{\min T_2}|_{\phi_{\max T_2\min T_2}^{-1}(\pi_{\max T_2 \min T_2} (U_{T_2}))}).$$

Hence, we have $C^1_{\max T_1}|_{U^1_{T_2 T_1}}\times_{B_{T_2 T_1}} C^2_{\max T_1}|_{U^2_{T_2 T_1}}\to C^{\text{FP}}_{T_2}|_{U^{\text{FP}}_{T_2}}$. We verify that $C^{\text{FP}}_{T_1}|_{U^{\text{FP}}_{T_2 T_1}}$ is exactly the left hand side $C^1_{\max T_1}|_{U^1_{T_2 T_1}}\times_{B_{T_2 T_1}} C^2_{\max T_1}|_{U^2_{T_2 T_1}}$, and 
$C^{\text{FP}}_{T_1}|_{U^{\text{FP}}_{T_2 T_1}}\to C^{\text{FP}}_{T_2}|_{U^{\text{FP}}_{T_2}}$ is a coordinate change, and moreover is naturally equipped with a level-1 structure induced from the given data, and this coordinate change is by the convention denoted as $C^{\text{FP}}_{T_1}|_{U^{\text{FP}}_{T_1}}\to C^{\text{FP}}_{T_2}|_{U^{\text{FP}}_{T_2}}$.

Define $I(\mathcal{G}^{\text{FP}}):=$ $$\{(T_1, T_2)\;|\; T_1\leq^{\text{FP}} T_2,\;\;\text{$U^{\text{FP}}_{T_1}$ and $U^{\text{FP}}_{T_2}$ intersect in the identification space}\}$$ $$\equiv\{(T_1, T_2)\in \mathcal{S}_S\times \mathcal{S}_S\;|\; T_1\subset T_2\}.$$
Here we observe that $U^{\text{FP}}_{T_1}$ and $U^{\text{FP}}_{T_2}$ intersect in the identification space if and only if $U_{T_1}$ and $U_{T_2}$ intersect in the identification space. 

We denote $$\mathcal{G}^{\text{FP}}:=(X, (\mathcal{S}_S, \leq^{\text{FP}}), \{C^{\text{FP}}_T|_{U^{\text{FP}}_T}\}_{T\in\mathcal{S}_S}, \{C^{\text{FP}}_{T_1}|_{U^{\text{FP}}_{T_1}}\to C^{\text{FP}}_{T_2}|_{U^{\text{FP}}_{T_2}}\}_{(T_1, T_2)\in I(\mathcal{G}^{\text{FP}})})$$ and it is automatically a level-1 good coordinate system. $\mathcal{G}^{\text{FP}}$ is called a \emph{fiber product good coordinate system}.
\end{definition}

Now we want to define $\{(\Psi^i_T, \hat\Psi^i_T, U^i_T)=(C^i_T|_{U^i_T}\to C^{\text{FP}}_T|_{U^{\text{FP}}_T})\}_{T\in \mathcal{S}_S}$ which will have naturally induced level-1 structures and hence give rise to level-1 Kuranishi embeddings $\mathcal{G}^i_S\Rightarrow \mathcal{G}^{\text{FP}}$, $i=1,2$, such that the following level-1 square of concerted level-1 Kuranishi embeddings commutes:

$$\begin{CD}
\mathcal{G}^1_S@>\text{level-1}>> \mathcal{G}^{\text{FP}}\\
@A\text{level-1}AA @AA\text{level-1}A\\
\mathcal{B}(\mathcal{G}_S) @>\text{level-1}>> \mathcal{G}^2_S
\end{CD}\;\;\;\;\;\;,$$
which means the following level-1 square is level-1 commutative.
$$\begin{CD}
C^1_T|_{U^1_T}@>\text{level-1}>> C^{\text{FP}}_T|_{U^{\text{FP}}_T}\\
@A\text{level-1}AA @AA\text{level-1}A\\
\mathcal{B}(C_T|_{U_T}) @>\text{level-1}>> C^2_T|_{U^2_T}
\end{CD}\;\;\;,$$
where $\mathcal{B}(\cdot)$ is some natural procedure applied to $\mathcal{G}_S$ to make the above level-1 square commute.

$C^i_T|_{U^i_T}\to C^{\text{FP}}_T|_{U^{\text{FP}}_T}$ for $T=\{\alpha\}\in \mathcal{S}_S$ with $\alpha\in S$ has to be the usual inclusion in the $i$-th factor of the fiber product, for example, the image of $C^1_{\{ \alpha\}}|_{U^1_{\{\alpha\}}}$ in $C^{\text{FP}}_{\{\alpha\}}|_{U^{\text{FP}}_{\{\alpha\}}}$ is $(C^1_{\{\alpha\}}|_{U^1_{\{\alpha\}}})\times_{(C_{\{\alpha\}}|_{U_{\{\alpha\}}})} ((\hat\phi^2_{\{\alpha\}})_\ast(C_{\{\alpha\}}|_{U_{\{\alpha\}}}))$.

For $T$ with $|T|>1$ (i.e. $T\in \mathcal{S}_S\backslash\{\{\alpha\}\;|\;\alpha\in S\}$), we need a twisting.

For brevity of the notations, denote $\overline{T}=\max T$ and $\underline{T}=\min T$. Recall $C^{\text{FP}}_T|_{U^{\text{FP}}_T}:=(C^1_{\overline{T}}|_{U^1_{T}})\times_{(\hat\phi_{\overline{T}\underline{T}})_\ast(C_{\underline{T}}|_{\phi^{-1}_{\overline{T}\underline{T}}(\pi_{\overline{T} \underline{T}}(U_T))})} (C^2_{\overline{T}}|_{U^2_T})$. 

Consider $(T,T')\in I(\mathcal{G}_S)$ with $I\not=I'$. Thus, $T\subsetneqq T'$. Let us look at the images of $C^1_T|_{(\Pi^1_T)^{-1}(\phi^1_{T}(U_{T'T}))}\equiv C^1_{\overline{T}}|_{(\Pi^1_{\overline{T}})^{-1}(\phi^1_{\overline{T}}(U_{T'T}))}$, where we have $U^1_{T'T}=(\Pi^1_T)^{-1}(\phi^1_{T}(U_{T'T}))$.

The image of $C^1_T|_{U^1_{T'T}}$ as the first factor inside $C^{\text{FP}}_T|_{U^{\text{FP}}_T}$ is $$C^1_{\overline{T}}|_{U^1_{T'T}}\times_{((\hat\phi_{\overline{T}\underline{T}})_\ast(C_{\underline{T}}|_{(\phi_{\overline{T}\underline{T}})^{-1}(\pi_{\overline{T} \underline{T}}(U_{T'T}))}))} (\hat\phi^1_{\overline{T}})_\ast((\hat\phi_{\overline{T}\underline{T}})_\ast(C_{\underline{T}}|_{(\phi_{\overline{T}\underline{T}})^{-1}(\pi_{\overline{T} \underline{T}}(U_{T'T}))}));$$
and the base of the image of the latter inside $C^{\text{FP}}_{T'}|_{U^{\text{FP}}_{T'}}$ is
$$\{((z,v,w),(z,v,0))\;|\;z\in \pi_{\overline{T'}\underline{T'}}(\phi_{\overline{T'}\;\overline{T}}(U_{T'T})),$$
$$ v\in (\pi_{\overline{T'}\underline{T'}}|_{\phi_{\overline{T'}\;\overline{T}}(U_{T'T})})^{-1}(z), w\in (\Pi^1_{\overline{T'}}|_{\phi^1_{\overline{T'}\;\overline{T}}(U^1_{T'T})})^{-1}((z,v))\}.$$

The image of $C^1_T|_{U^1_{T'T}}$ inside $C^1_{T'}|_{U^1_{T'}}$ is $(\hat\phi^1_{\overline{T'}\overline{T}})_\ast (C^1_{\overline{T}}|_{U^1_{T'T}})$; and the base of the image of the latter as the first factor inside $C^{\text{FP}}_{T'}|_{U^{\text{FP}}_{T'}}$ is
$$\{((z,v,w),(z,0,0))\;|\;z\in \pi_{\overline{T'}\underline{T'}}(\phi_{\overline{T'}\;\overline{T}}(U_{T'T})),$$
$$ v\in (\pi_{\overline{T'}\underline{T'}}|_{\phi_{\overline{T'}\;\overline{T}}(U_{T'T})})^{-1}(z), w\in (\Pi^1_{\overline{T'}}|_{\phi^1_{\overline{T'}\;\overline{T}}(U^1_{T'T})})^{-1}((z,v))\}.$$

Therefore to have the commutativity of the fiber product square, we need to define $\Psi^1_{T}: C^1_T|_{U^1_T}\to C^{\text{FP}}_T|_{U^{\text{FP}}_T}$ so that its image smoothly twists from the first factor embedding $((z,v,w),(z,0,0))$ over the strong neighborhood over the image $\phi_{T\{\underline{T}\}}(U_{T\{\underline{T}\}})$ of the domain of the coordinate change from $C^1_{\{\underline{T}\}}|_{U_{\{\underline{T}\}}}$ to the partial diagonal embedding $((z,v^1+v^2,w),(z,v^1,0))$ over the strong neighborhoood over the image $\phi_{TR}(U_{TR})$ of the domain of the coordinate change from $C_{R}|_{U^1_{R}}$ for each $R\subset T$\footnote{As $\underline{U_R}$ always intersect with $\underline{U_S}$ in the indentification space for $R\subset T\in\mathcal{S}_S$.}, where $((z,v^1,0),(z,v^1,0))$ is the diagonal embedding into $\pi_{\overline{T}\;\overline{R}}(U_{TR})\times_{\pi_{\overline{T}\underline{T}}(U_{TR})}\pi_{\overline{T}\;\overline{R}}(U_{TR})$. Moreover, we need to do this systematically to have the global compatibility, so the images of $C^1_T|_{U^1_T}, T\in \mathcal{S}_S$ in $\mathcal{G}^{\text{FP}}$ with the induced coordinate changes is a good coordinate system, and is level-1 with the naturally induced level-1 structures for those coordinate changes. The same goes for $\Psi^2$.

To achieve this, we define a twisting sytem:

\begin{definition}[twisting system] Recall from \ref{FPGCSLEVELONE}. Let $\mathcal{G}$ be a level-1 good coordinate system indexed by a total order $(S,\leq)$. At the start of the tripling process, for each $(\beta,\alpha)\in I^\ast(\mathcal{G})$, we have chosen $\tilde U^i_{(\beta,\alpha)}, i=1,2,3$ respecting the strong neighborhoods in the level-1 structure, giving rise to a chart-refinement\footnote{This is defined in \ref{FPGCSLEVELONE}, and it is not $\mathcal{G}_S$ as we have not taken the tripling yet.} $\mathcal{G}_m$ of $\mathcal{G}$. Choose a precompact $\{U^1_{(\beta,\alpha)},U^2_{(\beta,\alpha)}\}_{(\beta,\alpha)\in I^\ast(\mathcal{G})}$ of $\{\tilde U^1_{(\beta,\alpha)},\tilde U^2_{(\beta,\alpha)}\}_{(\beta,\alpha)\in I^\ast(\mathcal{G})}$ in $\mathcal{G}_m$ in the sense of \ref{PRECOMPACTCSHRINK}. Denote the result of the tripling using $\{U^1_{(\beta,\alpha)},U^2_{(\beta,\alpha)}, U^3_{(\beta,\alpha)}=\tilde U^3_{(\beta,\alpha)}=\text{orbit}(W_{\alpha\beta})\}_{(\beta,\alpha)\in I^\ast(\mathcal{G})}$ by $\{U_T\}_{T\in \mathcal{S}_S}$. Denote $I_{\alpha}=\{\beta\in S\;|\;(\beta,\alpha)\in I^\ast(\mathcal{G})\}$. Treating $$\pi^o_{\alpha\beta}:\text{orbit}(W_{\alpha\beta})\to\text{orbit}(\phi_{\alpha\beta}(U_{\alpha\beta}))$$ as a fiber bundle ep-groupoid with the total space $\text{orbit}(W_{\alpha\beta})$. A collection of bundle ep-groupoid morphisms $\lambda_{\alpha\beta}: \text{orbit}(W_{\alpha\beta})\to\text{orbit}(W_{\alpha\beta})$ covering the identity on the bases\footnote{Namely, a smooth section functor $\lambda_{\alpha\beta}\in \Gamma(\text{orbit}(\phi_{\alpha\beta}(U_{\alpha\beta})), \text{End}(\text{orbit}(W_{\alpha\beta})))$.}, $\beta\in I_\alpha$, $\alpha\in S$, is called a \emph{twisting system} if
\begin{enumerate}
\item $\lambda_{\alpha\beta}=\lambda_{\alpha\gamma}$ on $\text{orbit}(W_{\alpha\beta})\cap \text{orbit}(W_{\alpha\gamma})$, so $\lambda_{\alpha\beta}$ fits into a smooth ep-groupoid map $\lambda_\alpha: W^{\cup}_\alpha\to W^{\cup}_\alpha$ with $W^{\cup}_\alpha:=\bigcup_{\beta\in I_\alpha}\text{orbit}(W_{\alpha\beta})$;
\item $\lambda_\beta|_{U_{\alpha\beta}}=(\phi_{\alpha\beta})^{-1}\circ\pi_{\alpha\beta}\circ\lambda_\alpha\circ \phi_{\alpha\beta}$;
\item denoting $\alpha_{-1}:=\max I_\alpha$, $$\lambda_{\alpha}|_{(W^{\cup}_\alpha)\cap U^1_{(\alpha_{-1},\alpha)}}=\pi_{\alpha \alpha_{-1}}\text{ and }\lambda_\alpha|_{(W^{\cup}_\alpha)\cap U^2_{(\alpha_{-1},\alpha)}}=Id;$$
\item on $(W^{\cap}_\alpha)\cap (\pi_{\alpha\beta}^{-1}(\phi_{\alpha\beta}(U^1_{(\beta,\alpha)})))$, $\lambda_\alpha=\phi_{\alpha\beta}\circ\lambda_\beta\circ(\phi_{\alpha\beta}^{-1})\circ\pi_{\alpha\beta}$ for all $\beta\in I_\alpha$, and
\item $\lambda_\alpha|_{U^2_{(\alpha,\gamma)}\backslash (\bigcup_{\beta\in I_\alpha\backslash(I_\gamma\cup\{\gamma\})\;:\;(\gamma,\beta)\in I(\mathcal{G})}\text{orbit}(W_{\beta\gamma}))}=Id$ for all $\gamma\in I_\alpha$.
\end{enumerate}
We will write $\lambda_{\alpha\beta},\beta\in I_\alpha$ as $\lambda_\alpha$ in the following.
\end{definition}

We can build this inductively. Note that the precompact shrinking from $\{\tilde U^1_{(\beta,\alpha)},\tilde U^2_{(\beta,\alpha)}\}_{(\beta,\alpha)\in I^\ast(\mathcal{G})}$ to $\{U^1_{(\beta,\alpha)},U^2_{(\beta,\alpha)}\}_{(\beta,\alpha)\in I^\ast(\mathcal{G})}$ makes room for bump functions as well as creating transition regions for the compatibility.

\begin{definition}[fiber product square, $\mathcal{G}^{\text{FP}}=\mathcal{G}^1_S\times_{\mathcal{B}(\mathcal{G}_S)}\mathcal{G}^2_S$]

For $T=\{\alpha\}$, $C^i_T|_{U^i_T}\to C^{\text{FP}}_T|_{U^{\text{FP}}_T}$ is the $i$-th factor inclusion in the fiber product. For $|T|>1$, define $(C^1_T|_{U^1_T}\to C^{\text{FP}}_T|_{U^{\text{FP}}_T})=(\Psi^1_T,\hat\Psi^1_T, U^1_T)$, using a twisting system $\{\lambda_\alpha\}_{\alpha\in S}$ as $\Psi^1_{T}: U^1_T\mapsto U^1_T\times_{\phi_{\overline{T}\underline{T}}(\pi_{\overline{T}\underline{T}}(U_T))}U^2_T, w\mapsto (w, \lambda_{\overline{T}}(\Pi^1_T(w)))$. By the property of the twisting system $\{\lambda_\alpha\}_{\alpha\in S}$, $\Psi^1_{T'}\circ\phi_{T'T}=\phi_{T'T}^{\text{FP}}\circ\Psi^1_T$ for all $(T,T')\in I(\mathcal{G}^{\text{FP}})$. Use the preimage, we can define $\Psi^1_T$. Then $\{(\Psi^1_T,\hat\Psi^1_T, U^1_T)\}_{T\in\mathcal{S}_S}$ is a Kuranishi embedding and in fact a concerted level-1 Kuranishi embedding $\mathcal{G}^1_S\overset{\text{level-1}}{\Rightarrow}\mathcal{G}^{\text{FP}}$ with the induced level-1 structures. Define $\Psi^2_T$ by swapping factors and using the same $\{\lambda_\alpha\}_{\alpha\in S}$ as the above. Define $\mathcal{B}(\mathcal{G}_S):=\{(\tilde\pi_{\max{T}\min{T}}\circ\hat\pi_{\max{T}\min{T}})_\ast (C_T|_{U_T})\}_{T\in\mathcal{S}_S}$. For $i=1, 2$, using $\{\phi^1_T|_{\pi_{\max T\min T}(U_T)}\}_{T\in\mathcal{S}_S}$, $\mathcal{B}(\mathcal{G}_S)\Rightarrow\mathcal{G}^1_S$ is a Kuranishi embedding and chart-refines $\mathcal{G}\Rightarrow\mathcal{G}^i$. Although $\mathcal{B}(\mathcal{G}_S)\Rightarrow\mathcal{G}^i_S$ is not concerted but it has a level-1 structure induced from the concerted level-1 Kuranishi embedding.

We have the fiber product property:
$$\begin{CD}
\mathcal{G}^1_S@>\text{level-1}>> \mathcal{G}^{\text{FP}}\\
@A\text{level-1}AA @AA\text{level-1}A\\
\mathcal{B}(\mathcal{G}_S) @>\text{level-1}>> \mathcal{G}^2_S
\end{CD}\;\;\;\;\;\;.$$

We say $\mathcal{G}^{\text{FP}}$ is the \emph{fiber product level-1 good coordinate system} $\mathcal{G}_S^1\times_{\mathcal{B}(\mathcal{G}_S)}\mathcal{G}^2_S$ \emph{formed from an admissible pair of concerted level-1 Kuranishi embeddings} $\mathcal{G}\overset{\text{level-1}}{\Rightarrow} \mathcal{G}^1$ and $\mathcal{G}\overset{\text{level-1}}{\Rightarrow}\mathcal{G}^2$.
\end{definition}

See figure \ref{FP} for illustrating the bases in a fiber product square.

\begin{figure}[htb]
\begin{center}

\begin{tikzpicture}[scale=.75]
\hspace{0 cm}

\filldraw[black!20] (13,-1)--(13.5,-1)--(13.5,1)--(13,1)--(13,1.5)--(15,1.5)--(15,-1.5)--(13,-1.5)--(13,-1);
\draw[very thick](10,0)--(12,0);
\draw [very thick] (13,-1)--(13.5,-1)--(13.5,1)--(13,1)--(13,1.5)--(15,1.5)--(15,-1.5)--(13,-1.5)--(13,-1);
\draw [very thick, red](11.5,0.03)--(14,0.03);

\begin{scope}[shift={(-5,5)}]

\filldraw[black!20] (13,-1)--(13.5,-1)--(13.5,1)--(13,1)--(13,1.5)--(15,1.5)--(15,-1.5)--(13,-1.5)--(13,-1);

\filldraw[black!40] (10,-1.2) rectangle (12,1.2);

\filldraw[path fading=south,color=red!70] (11.5,-1) rectangle (14,1);
\draw[very thick](10,0)--(12,0);
\draw[magenta, very thick] (11.5,-1) rectangle (14,1);
\draw[yellow] (10,-1.2) rectangle (12,1.2);
\draw [very thick] (13,-1)--(13.5,-1)--(13.5,1)--(13,1)--(13,1.5)--(15,1.5)--(15,-1.5)--(13,-1.5)--(13,-1);
\draw [very thick, red](11.5,0.03)--(14,0.03);
\end{scope}

\begin{scope}[shift={(5,5)}]

\filldraw[black!20] (13,-1)--(13.5,-1)--(13.5,1)--(13,1)--(13,1.5)--(15,1.5)--(15,-1.5)--(13,-1.5)--(13,-1);

\filldraw[blue!40] (10,-1.4) rectangle (12,1.4);

\filldraw[path fading=south,color=red!70] (11.5,-1) rectangle (14,1);
\draw[very thick](10,0)--(12,0);
\draw[blue, very thick] (11.5,-1) rectangle (14,1);
\draw[cyan] (10,-1.4) rectangle (12,1.4);
\draw [very thick] (13,-1)--(13.5,-1)--(13.5,1)--(13,1)--(13,1.5)--(15,1.5)--(15,-1.5)--(13,-1.5)--(13,-1);
\draw [very thick, red](11.5,0.03)--(14,0.03);
\end{scope}

\begin{scope}[shift={(0,10)}]

\filldraw[black!20](14.38,-.825)--(12.38,-.825)--(12.6,-.55)--(13.1,-.55)-- (13.9,.55)--(13.4,.55)--(13.62, .825)--(15.62, .825)--(14.38,-.825);

\filldraw[blue!40] (10,-1.4) rectangle (12,1.4);

\filldraw[path fading=west, color=black!25] (9.5,-.85)--(10.5,-1.95)--(10.5,.85)--(9.5,1.95)--(9.5,-.85);

\begin{scope}[shift={(2,0)}]
\filldraw[path fading=west, color=black!25] (9.5,-.85)--(10.5,-1.95)--(10.5,.85)--(9.5,1.95)--(9.5,-.85);
\end{scope}

\filldraw[color=black!35] (10.5,-1.95)--(12.5,-1.95)--(12.5,.85)--(10.5, .85)--(10.5,-1.95);

\filldraw[path fading=east, color=black!45] (10.5,.85)--(12.5,.85)--(11.5,1.95)--(9.5, 1.95)--(10.5,.85);

\filldraw[path fading=south,color=red] (11.5,-1) rectangle (14,1);

\filldraw[color=red!55] (11.9,-1.45)--(14.4,-1.45)--(14.4,.55)--(11.9,.55)--(11.9,-1.45);
\filldraw[path fading=west,color=red!45] (11.9,-1.45)--(11.1,-.55)--(11.1,1.45)--(11.9,.55)--(11.9,-1.45);
\filldraw[path fading=east,color=red!65] (11.1,1.45)--(11.9,.55)--(14.4,.55)--(13.6,1.45)--(11.1,1.45);


\draw[very thick](10,0)--(12,0);
\draw[cyan] (10,-1.4) rectangle (12,1.4);

\draw [very thick](14.30,-.825)--(12.30,-.825)--(12.52,-.55)--(13.02,-.55)-- (13.9,.55)--(13.4,.55)--(13.62, .825)--(15.62, .825)--(14.30,-.825);

\draw [blue, very thick] (13.3,.55)--(14.4,.55)--(13.52, -.55)--(12.42, -.55);
\draw [blue, very thick] (12.05, 1)--(11.5,1)--(11.5,-1)--(12.05,-1);
\draw [blue, very thick](12.05,-1)..controls (12.35,-.95) and (12.4,-.60) .. (12.42,-.55);
\draw [blue, very thick] (12.05,1).. controls (12.8,1) and (13,.55).. (13.3,.55);

\draw [magenta, very thick] (11.7,0.45)--(11.1,0.45)--(11.9,-.45)--(12.5,-.45);
\draw [magenta, very thick] (13.33,.52)--(14.43,.52)--(13.55, -.58)--(12.45, -.58);
\draw [magenta, very thick] (12.5,-.45)..controls (13.1, -.4) and (13.15, .5).. (13.33,.52);
\draw [magenta, very thick] (11.7,0.45)..controls (12,0.37) and (12.3,.4)..(12.45,-.58);
\draw [very thick, red](11.5,0.03)--(14,0.03);
\draw [yellow] (10.5,-0.55)--(12.5,-0.55)--(11.5,0.55)--(9.5, 0.55)--(10.5,-0.55);
\end{scope}

\path[->](6,7) edge (8.5,9);

\path[->](18,7) edge (15.5,9);

\path[->] (15.5,1) edge (18,3);

\path[->] (8.5,1) edge (6,3);

\node at (12.5, -2.5){tripling and fiber product};

\end{tikzpicture}

\end{center}
\caption[Forming a fiber product using a tripling.]{Forming a fiber product using a tripling in the simplest non-trivial case where the na\"{i}ve fiber product without a tripling does not work.}
\label{FP}
\end{figure}
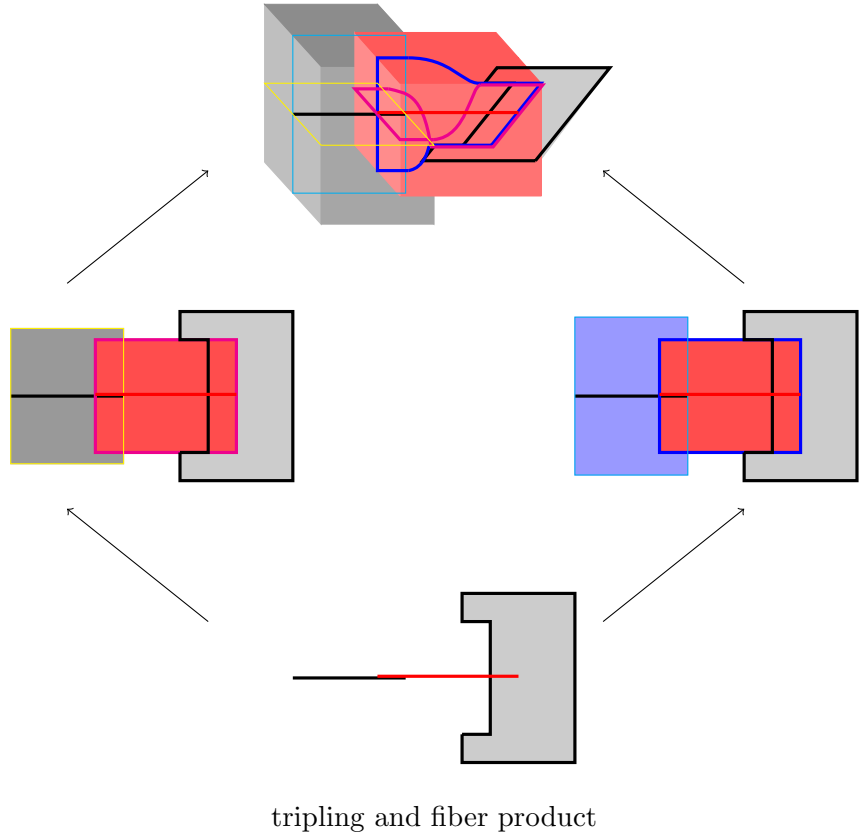

\begin{remark} Since $\mathcal{G}^i_S$ chart-refines $\mathcal{G}^i$ for $i=1, 2$, $\mathcal{G}^{\text{FP}}$ is a common refinement of $\mathcal{G}^1$ and $\mathcal{G}^2$. We can view the level-1 Kuranishi embedding $\mathcal{B}(\mathcal{G}_S)\overset{\text{level-1}}{\Rightarrow} \mathcal{G}^i_S$ as a projection from $\mathcal{G}^i_S$ to the image of $\mathcal{B}(\mathcal{G}_S)$ in $\mathcal{G}^i_S$, and this justifies the name of a \emph{fiber product}. The fiber product is not unique, but different constructions are equivalent as level-1 good coordinate systems. Namely, any two constructions of fiber product this way are level-1 refined by a common level-1 good coordinate system.
\end{remark}

\section{Independence of choices}\label{INDEP}

We make various choices in each stage of the construction above, and we now explain how the notion of a common refinement (with the chart-refinement inverted and followed by a general embedding \ref{GEMB}) absorbs the dependence of those choices. We will compare the different choices made at each stage here. For a general comparison of all choices at once, see \ref{GENERALCHOICECOMPARISON}.

\subsection{Equivalence of the good coordinate systems for a given Kuranishi structure}\label{INDEPGCSYSTEM}

We start by examining the independence of choices of good coordinate systems. Two different choices of good coordinate systems can be shown to refine a common good coordinate system. Then this picture can be made into a pair of concerted level-1 Kuranishi embeddings of strongly intersecting Hausdorff good coordinate systems, up to chart-refinements. Then by using the fiber product, we can show that both shrunken (level-1) good coordinate systems are then (level-1) refined by a common (level-1) good coordinate system, and therefore the two starting good coordinate systems are refined by a common good coordinate system, namely, equivalent.

Now follow the details. First, recall the following two results:

\begin{theorem}[part of theorem \ref{BIGHAUSDORFF}]\label{SIHSHRINKING} Let $\mathcal{G}$ be a good coordinate system. Then we can find a shrinking $\mathcal{G}'$ of $\mathcal{G}$ such that $\mathcal{G}'$ is strongly intersecting and Hausdorff.
\end{theorem}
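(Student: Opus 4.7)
The plan is to assemble this result by chaining together the three main constructions already established in Section 4 of the excerpt. First I would invoke the existence theorem for good coordinate systems to reduce to the case where $\mathcal{G}$ satisfies the auxiliary property that $X_x|_{U_x}$ is the interior of its closure in $X$ for all $x\in S$; this was already built into the output of the construction in Section \ref{EXISTGCS}, but if $\mathcal{G}$ is given abstractly (not necessarily coming from a Kuranishi structure via that recipe), a routine shrinking using the metric $d$ on $X$ and replacing each $X_x|_{U_x}$ by an open ball around $x$ of a suitable radius will arrange this condition. I would record this in a preparatory paragraph.

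Next, I would apply Proposition \ref{COORDEXIST} to this prepared good coordinate system, producing a shrinking $\mathcal{G}'=\{C_x|_{U'_x}\}_{x\in S}$ which is strongly intersecting and still has the property $X_x|_{U'_x}=\mathrm{interior}(\overline{X_x|_{U'_x}})$. At this stage $\mathcal{G}'$ satisfies item (2) and item (3) of the conclusion of Theorem \ref{STRONGGCS}; by Corollary \ref{SETMATCHING} it is automatically identification-matching as well. The output $\mathcal{G}'$ however need not have a Hausdorff identification space yet, so a further shrinking is required.

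Now I would invoke Lemma \ref{GSSE} to extract a strong shrinking sequence $U'_x(k)$, $x\in S$, $k\in\mathbb{N}$, of $\mathcal{G}'$, and then apply Theorem \ref{HAUSIDSPACE}: for $k$ sufficiently large the identification space $M(\mathcal{G}'(k))$ is Hausdorff in the relative topology $\mathcal{T}(\mathcal{G}'(k),\mathcal{G}')$. Fixing such a $k$, set $\mathcal{G}''=\mathcal{G}'(k)$. Since $\mathcal{G}''$ is a shrinking of the strongly intersecting $\mathcal{G}'$, it inherits the strongly intersecting property by the remark following Definition \ref{STRONGLYINT}. As a shrinking of $\mathcal{G}'$, which itself is a shrinking of $\mathcal{G}$, $\mathcal{G}''$ is also a shrinking of $\mathcal{G}$ by transitivity of the shrinking relation (Definition \ref{GCSSHRINKING}). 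Declaring $\mathcal{G}'':=\mathcal{G}'$ in the statement then completes the proof.

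The main obstacle I expect is bookkeeping rather than new mathematical content: one must verify that the strongly intersecting property, once achieved, is preserved under the subsequent strong shrinking sequence needed for Hausdorffness, and that the relative topology used to formulate Hausdorffness is intrinsically attached to the eventual shrinking so the conclusion is stated cleanly. These checks are all contained in definitions and remarks already in the excerpt, so the proof reduces to a careful concatenation of \ref{COORDEXIST}, \ref{GSSE}, and \ref{HAUSIDSPACE}.
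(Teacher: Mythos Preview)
Your proposal is correct and follows essentially the same route as the paper: the statement is explicitly labeled as part of Theorem \ref{BIGHAUSDORFF}, whose proof is precisely the concatenation of Proposition \ref{COORDEXIST}, Lemma \ref{GSSE}, and Theorem \ref{HAUSIDSPACE} that you outline, together with the observation (already in the remark after Definition \ref{STRONGLYINT}) that the strongly intersecting property is inherited by shrinkings. Your preparatory paragraph about arranging $X_x|_{U_x}=\mathrm{interior}(\overline{X_x|_{U_x}})$ for an abstractly given $\mathcal{G}$ is a reasonable addition, though in the paper's context every good coordinate system arises from a Kuranishi structure via \ref{GCSFORKS} and this property is already available from the construction in Section \ref{EXISTGCS}.
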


\begin{proposition}\label{ENCORECOMMONREFINED} Let $\tilde{\mathcal{G}}^1$ and $\tilde{\mathcal{G}}^2$ be two good coordinate systems for a Kuranishi structure $\mathcal{K}$ as defined in \ref{GCSFORKS}. Then $\tilde{\mathcal{G}}^1$ and $\tilde{\mathcal{G}}^2$ both refine some common good coordinate system $\mathcal{G}$. In fact, we can find strongly intersecting Hausdorff good coordinate systems $\mathcal{G}$, $\mathcal{G}^1$ and $\mathcal{G}^2$ with the same index set $S$ but with possibly different orders on $S$ such that:
\begin{enumerate}
\item $\mathcal{G}^1$ is a chart-refinement of $\tilde{\mathcal{G}}^1$,
\item $\mathcal{G}^2$ is a chart-refinement of $\tilde{\mathcal{G}}^2$, and
\item there exist Kuranishi embeddings $\mathcal{G}\Rightarrow\mathcal{G}^1$ and $\mathcal{G}\Rightarrow\mathcal{G}^2$.
\end{enumerate}
\end{proposition}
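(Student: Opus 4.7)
The plan is to upgrade Proposition \ref{COMMONREFINED}, which already gives a common good coordinate system embedded into chart-refinements of $\tilde{\mathcal{G}}^1$ and $\tilde{\mathcal{G}}^2$, by performing compatible Hausdorff-shrinkings so that all three output systems become strongly intersecting and Hausdorff while all the Kuranishi embeddings survive. First I would apply Proposition \ref{COMMONREFINED} to $\tilde{\mathcal{G}}^1$ and $\tilde{\mathcal{G}}^2$, producing a good coordinate system $\mathcal{G}''$ indexed by some finite $S$, chart-refinements $\hat{\mathcal{G}}^i \overset{\sim}{\to} \tilde{\mathcal{G}}^i$ also indexed by $S$ (with possibly different orders for $i=1,2$), and Kuranishi embeddings $\mathcal{G}'' \Rightarrow \hat{\mathcal{G}}^i$ with chart embeddings $\phi^i_\alpha \colon U''_\alpha \hookrightarrow \hat U^i_\alpha$. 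At this stage none of $\mathcal{G}''$, $\hat{\mathcal{G}}^1$, $\hat{\mathcal{G}}^2$ is guaranteed strongly intersecting or Hausdorff.

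Next I would apply Theorem \ref{SIHSHRINKING} to $\mathcal{G}''$ to obtain a strongly intersecting Hausdorff shrinking $\mathcal{G} = \{C_\alpha|_{U_\alpha}\}_{\alpha \in S}$ of $\mathcal{G}''$. Since a shrinking in the sense of Definition \ref{GCSSHRINKING} preserves the index set, the chart embeddings $\phi^i_\alpha$ restrict to chart embeddings $U_\alpha \hookrightarrow \hat U^i_\alpha$, the commutative-up-to-group-actions squares restrict too, and so one obtains Kuranishi embeddings $\mathcal{G} \Rightarrow \hat{\mathcal{G}}^i$ for $i = 1, 2$.

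The key technical step, and where the care is required, is to then shrink each $\hat{\mathcal{G}}^i$ to a strongly intersecting Hausdorff $\mathcal{G}^i$ while keeping the image $\phi^i_\alpha(\overline{U_\alpha})$ inside the new base $U^i_\alpha$ for every $\alpha \in S$. The observation that makes this possible is that the shrinking procedures assembling into Theorem \ref{BIGHAUSDORFF}, namely Proposition \ref{COORDEXIST} and Theorem \ref{HAUSIDSPACE}, only require the shrunken bases to be \emph{sufficiently small}; they impose no lower bound, and at each stage (the choice of metric balls $B_{r_p}(p)$ in Proposition \ref{COORDEXIST}, and the initial term $U_x(1)$ of a strong shrinking sequence in Theorem \ref{HAUSIDSPACE}) one is free to enlarge the chosen set to contain any prescribed invariant precompact subset. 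Since $\overline{U_\alpha}$ is compact in $U''_\alpha$ and $\phi^i_\alpha$ is an embedding, $\phi^i_\alpha(\overline{U_\alpha})$ is compact in $\hat U^i_\alpha$, so I would initialize the shrinking sequence for $\hat{\mathcal{G}}^i$ with an invariant precompact open set containing this compact subset and extract a sufficiently high-indexed term to produce $\mathcal{G}^i$. This is the analogue, at the level of strongly intersecting Hausdorff good coordinate systems, of the lower-bound statement recorded as Proposition \ref{CONTROLLINGTHESIZE} for level-1 good coordinate systems, and the main obstacle is simply to verify that the arguments of Proposition \ref{COORDEXIST} and Theorem \ref{HAUSIDSPACE} can be rerun verbatim with this lower bound in place.

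Finally, the containments $\phi^i_\alpha(U_\alpha) \subset U^i_\alpha$ imply that $\mathcal{G} \Rightarrow \hat{\mathcal{G}}^i$ further restricts to a Kuranishi embedding $\mathcal{G} \Rightarrow \mathcal{G}^i$ in the sense of Definition \ref{KEMB}, with the commutative squares inherited by restriction. Composing the inclusion chart-refinement $\mathcal{G}^i \overset{\sim}{\to} \hat{\mathcal{G}}^i$ (same $S$, $\mu = \mathrm{Id}$) with $\hat{\mathcal{G}}^i \overset{\sim}{\to} \tilde{\mathcal{G}}^i$ from Step 1 gives the required chart-refinement $\mathcal{G}^i \overset{\sim}{\to} \tilde{\mathcal{G}}^i$, completing the three conclusions of the proposition.
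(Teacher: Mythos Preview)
Your overall strategy matches the paper's: the one-line proof there is literally ``Invoke theorem \ref{SIHSHRINKING} in the proof of proposition \ref{COMMONREFINED}'', which is what you are unpacking. You also correctly locate where the real work lies, namely in making the three shrinkings compatible so that the Kuranishi embeddings survive.

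However, your proposed resolution of that obstacle does not work. The contradiction arguments in Proposition \ref{COORDEXIST} and Theorem \ref{HAUSIDSPACE} depend essentially on the shrinking sequence converging to the zero set: one extracts limit points in $\overline{(s_x|_{U_x(1)})^{-1}(0)}$ via Lemma \ref{TOPOLOGYLEMMA} and then invokes the topological matching condition, which is only stated for zeros. If you force every term $U^i_\alpha(k)$ to contain the fixed compact $\phi^i_\alpha(\overline{U_\alpha})$, then $\bigcap_k \overline{U^i_\alpha(k)}\supset \phi^i_\alpha(\overline{U_\alpha})$, and this set is not contained in $(s^i_\alpha)^{-1}(0)$ in general (non-zeros of $s_\alpha$ map to non-zeros of $s^i_\alpha$). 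So the high-indexed term you extract need not contain $\phi^i_\alpha(\overline{U_\alpha})$, and the analogy with Proposition \ref{CONTROLLINGTHESIZE} breaks down: there the prescribed lower bound is itself a precompact shrinking of the \emph{same} good coordinate system, so it already sits inside every neighborhood of the zero set, which is exactly what fails here.

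The fix is to reverse the order. Run the good/strong shrinking sequences on $\hat{\mathcal{G}}^i$ and on $\mathcal{G}''$ \emph{in tandem}: since $\phi^i_\alpha$ intertwines sections, $\phi^i_\alpha\big((s''_\alpha)^{-1}(0)\big)\subset (s^i_\alpha)^{-1}(0)$, so one can choose nested sequences $U''_\alpha(k)$ and $\hat U^i_\alpha(k)$, each converging to its own zero set, with $\phi^i_\alpha(U''_\alpha(k))\subset \hat U^i_\alpha(k)$ maintained for all $k$. Extracting a common large $k$ then gives strongly intersecting Hausdorff $\mathcal{G}$, $\mathcal{G}^1$, $\mathcal{G}^2$ with the embeddings intact. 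Equivalently, first shrink the targets and then rerun the construction of $\mathcal{G}$ inside the pulled-back constraints; either way the point is that the domain shrinking must follow the target shrinking, not precede it.
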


\begin{proof} Invoke theorem \ref{SIHSHRINKING} in the proof of proposition \ref{COMMONREFINED}.
\end{proof}

The results of the last section and this subsection culminate in the following theorem: 

\begin{theorem} Any two choices of good coordinate systems $\tilde{\mathcal{G}}^1$ and $\tilde{\mathcal{G}}^2$ obtained from a Kuranishi structure $\mathcal{K}$ are equivalent. In fact, we estabish that there exists strongly intersecting Hausdorff level-1 good coordinate systems $\mathcal{G}^1_S$, $\mathcal{G}^2_S$ and $\mathcal{G}^{\text{FP}}$ such that for $i=1, 2$, we have that underlying good coordinate system $\mathcal{G}^i_S$ chart-refines $\tilde{\mathcal{G}}^i$ and there exist concerted level-1 Kuranishi embeddings $\mathcal{G}^i_S\overset{\text{level-1}}{\Rightarrow}\mathcal{G}^{\text{FP}}$. (So, $\mathcal{G}^{\text{FP}}$ is a common refinement of $\tilde{\mathcal{G}}^1$ and $\tilde{\mathcal{G}}^2$).
\end{theorem}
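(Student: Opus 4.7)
The plan is to chain together the common-refinement proposition, the admissible-pair theorem, and the fiber product construction in precisely the pattern that the section has been building toward. First I would invoke Proposition \ref{ENCORECOMMONREFINED} applied to $\tilde{\mathcal{G}}^1$ and $\tilde{\mathcal{G}}^2$: this produces strongly intersecting Hausdorff good coordinate systems $\mathcal{G}, \mathcal{G}^1, \mathcal{G}^2$ all indexed by a common set $S$ (with possibly different orders), together with chart-refinements $\mathcal{G}^i \overset{\sim}{\to} \tilde{\mathcal{G}}^i$ and Kuranishi embeddings $\mathcal{G} \Rightarrow \mathcal{G}^1$, $\mathcal{G} \Rightarrow \mathcal{G}^2$. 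This reduces the problem of comparing two entirely unrelated good coordinate systems to the problem of constructing a common refinement of a pair of Kuranishi embeddings out of a common source.

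Next I would feed the pair $\{\mathcal{G}\Rightarrow\mathcal{G}^1,\;\mathcal{G}\Rightarrow\mathcal{G}^2\}$ into Theorem \ref{ADMISSIBLEPAIRKEMB}, which produces an admissible pair of concerted level-1 Kuranishi embeddings
\[
\mathcal{H}\overset{\text{level-1}}{\Rightarrow}\mathcal{H}^1,\qquad \mathcal{H}\overset{\text{level-1}}{\Rightarrow}\mathcal{H}^2,
\]
where $\mathcal{H},\mathcal{H}^1,\mathcal{H}^2$ are strongly intersecting Hausdorff level-1 good coordinate systems and the underlying Kuranishi embeddings chart-refine $\mathcal{G}\Rightarrow\mathcal{G}^i$. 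In particular, $\mathcal{H}^i$ chart-refines $\mathcal{G}^i$, and since chart-refinements compose (the defining commutative squares stack vertically), $\mathcal{H}^i$ chart-refines $\tilde{\mathcal{G}}^i$ as well. This is the step where all the technology accumulated in Sections on level-1 structures, tripling, and the concertedness trick gets used; its role is to put the pair in the exact shape required as input to the fiber product construction.

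I would then apply the fiber product construction of \S\ref{FIBPROD} (Definition \ref{FPGCSLEVELONE}) to the admissible pair: choose the common total order partition by triple bundle dimensions, perform the tripling of $\mathcal{H}$ to obtain $\mathcal{H}_S$ (which level-1 chart-refines $\mathcal{H}$) and the induced triplings $\mathcal{H}^i_S$ of $\mathcal{H}^i$, choose a twisting system, and form
\[
\mathcal{G}^{\mathrm{FP}}\;:=\;\mathcal{H}^1_S\;\times_{\mathcal{B}(\mathcal{H}_S)}\;\mathcal{H}^2_S.
\]
The output comes with concerted level-1 Kuranishi embeddings $\mathcal{H}^i_S \overset{\text{level-1}}{\Rightarrow} \mathcal{G}^{\mathrm{FP}}$ fitting into a level-1 fiber product square. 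Setting $\mathcal{G}^i_S := \mathcal{H}^i_S$ and using that each is a (level-1) chart-refinement of $\mathcal{H}^i$, hence of $\tilde{\mathcal{G}}^i$ by composition, yields the diagram $\tilde{\mathcal{G}}^i\overset{\sim}{\leftarrow}\mathcal{G}^i_S\overset{\text{level-1}}{\Rightarrow}\mathcal{G}^{\mathrm{FP}}$, which is precisely a refinement in the sense of Definition \ref{REFINEMENTMAP}. Thus $\mathcal{G}^{\mathrm{FP}}$ is a common refinement of $\tilde{\mathcal{G}}^1$ and $\tilde{\mathcal{G}}^2$, establishing the equivalence.

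The main obstacle I anticipate is purely bookkeeping: one must verify that the various chart-refinements actually compose in the required way, i.e.\ that each step's ``shrinking plus index refinement plus open chart embedding'' data stays coherent after being stacked on top of the next step's data. Concretely, one must check that the map $\mu$ of index sets in the composed chart-refinement $\mathcal{G}^i_S=\mathcal{H}^i_S \overset{\sim}{\to} \mathcal{H}^i \overset{\sim}{\to} \mathcal{G}^i \overset{\sim}{\to} \tilde{\mathcal{G}}^i$ is order-preserving at each stage and that the stacked commutative squares (up to group actions) paste into commutative squares for the composite. Beyond this, the substantive content of the proof has already been carried out in Theorem \ref{ADMISSIBLEPAIRKEMB} and the fiber product construction; what remains is only to arrange the input and read off the conclusion.
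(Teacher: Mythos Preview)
Your proposal is correct and follows essentially the same approach as the paper: invoke Proposition \ref{ENCORECOMMONREFINED} to obtain a pair of Kuranishi embeddings from a common source, apply Theorem \ref{ADMISSIBLEPAIRKEMB} to upgrade to an admissible pair of concerted level-1 Kuranishi embeddings, and then apply the fiber product construction \ref{FPGCSLEVELONE} to produce the common refinement $\mathcal{G}^{\mathrm{FP}}$. The only differences are notational (your $\mathcal{H},\mathcal{H}^i$ are the paper's $\mathcal{G},\mathcal{G}^i$ after the second step), and the paper simply asserts the composition of chart-refinements without the bookkeeping caveat you raise.
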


\begin{proof} Let $\tilde{\mathcal{G}}^1$ and $\tilde{\mathcal{G}}^2$ be any two good coordinate systems obtained from the Kuranishi structure $\mathcal{K}$. 

Applying proposition \ref{ENCORECOMMONREFINED}, we have a chart-refinement $\hat{\mathcal{G}}^1$ of $\tilde{\mathcal{G}}^1$ and a chart-refinemement $\hat{\mathcal{G}}^2$ of $\tilde{\mathcal{G}}^2$ such that there exist natural Kuranishi embeddings $\hat{\mathcal{G}}\Rightarrow\hat{\mathcal{G}}^1$ and $\hat{\mathcal{G}}\Rightarrow\hat{\mathcal{G}}^2$ for some $\hat{\mathcal{G}}$, where $\hat{\mathcal{G}}$, $\hat{\mathcal{G}}^1$ and $\hat{\mathcal{G}}^2$ are all strongly intersecting, Hausdorff and indexed by the same set $S$.

Applying \ref{ADMISSIBLEPAIRKEMB}, we have an admissible pair of concerted level-1 Kuranishi embeddings $\mathcal{G}\Rightarrow\mathcal{G}^1$  and $\mathcal{G}\Rightarrow\mathcal{G}^2$, whose underlying Kuranishi embeddings chart-refines $\hat{\mathcal{G}}\Rightarrow\hat{\mathcal{G}}^1$  and $\hat{\mathcal{G}}\Rightarrow\hat{\mathcal{G}}^2$.

Applying construction \ref{FPGCSLEVELONE}, we have a level-1 good coordinate system $\mathcal{G}^{\text{FP}}:=\mathcal{G}^1_S\times_{\mathcal{G}_S}\mathcal{G}^2_S$ whose underlying good coordinate system still denoted by $\mathcal{G}^{FP}$ (even level-1) refines $\mathcal{G}^1$ and $\mathcal{G}^2$, which in turn chart-refine $\hat{\mathcal{G}}^1$ and $\hat{\mathcal{G}}^2$ respectively, which in turn chart-refine $\tilde{\mathcal{G}}^1$ and ${\mathcal{G}}^2$ respectively.

Therefore $\mathcal{G}^{\text{FP}}$ is a common refinement of $\tilde{\mathcal{G}}^1$ and $\tilde{\mathcal{G}}^2$. Therefore, $\tilde{\mathcal{G}}^1$ and $\tilde{\mathcal{G}}^2$ are equivalent.
\end{proof}

\subsection{Independence of the choices of level-1 structures}

The different choices of Hausdorff level-1 good coordinate systems are also equivalent, as a common refinement can be found by using the tripling and a fiber-product-like construction.

\begin{proposition}\label{INDEPLEVEL1} Let $\mathcal{G}^1:=\{C_\alpha|_{U^1_\alpha}\}_{\alpha\in S}$ and $\mathcal{G}^2:=\{C_\alpha|_{U^2_\alpha}\}_{\alpha\in S}$ be two choices of level-1 good coordinate systems whose underlying good coordinate systems are precompact shrinkings of a strongly intersecting Hausdorff good coordinate system $\mathcal{G}:=\{C_\alpha|_{U_\alpha}\}_{\alpha\in S}$. Here $(S, \leq)$ is a total order.

Then there exists level-1 good coordinate systems $\mathcal{G}^3$ indexed by $(S', \leq)$ and level-1 chart-refinements $\mathcal{G}^4$ and $\mathcal{G}^5$ of $\mathcal{G}^1$ and $\mathcal{G}^2$ respectively such that $\mathcal{G}^4$ and $\mathcal{G}^5$ both concertedly level-1 Kuranishi embed into $\mathcal{G}^3$. Namely, $\mathcal{G}^3$ is a common level-1 refinement for $\mathcal{G}^1$ and $\mathcal{G}^2$.
\end{proposition}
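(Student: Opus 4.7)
The plan is to parallel the construction used in the analogous result for two different choices of good coordinate systems in Subsection~\ref{INDEPGCSYSTEM}, adapting it to the level-1 setting by running the fiber product machinery of Subsection~\ref{FIBPROD} on $\mathcal{G}^1$ and $\mathcal{G}^2$ directly, with the dimension jumps needed to separate the two incompatible level-1 structures supplied by the tripling process of Definition~\ref{TRIPLING} rather than by chart embeddings coming from the ambient Kuranishi structure.

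First I would pick a common precompact shrinking $\mathcal{G}^0$ of the underlying good coordinate system $\mathcal{G}$ with bases $U^0_\alpha$ precompact in $U^1_\alpha\cap U^2_\alpha$, chosen small enough that the tubular neighborhoods $W^i_{\alpha\beta}$ from both level-1 structures restrict to strong open neighborhoods over the shrunken bases and that a common tripling data $\{U^k_{(\beta,\alpha)}\}_{k=1,2,3}$ can be made compatible with both $\mathcal{G}^1$'s and $\mathcal{G}^2$'s level-1 data. Define $\mathcal{G}^4$ to be $\mathcal{G}^0$ equipped with the restriction of $\mathcal{G}^1$'s level-1 structure, and $\mathcal{G}^5$ to be $\mathcal{G}^0$ with the restriction of $\mathcal{G}^2$'s level-1 structure; by Definition~\ref{LEVEL1CRMENT} these are level-1 chart-refinements of $\mathcal{G}^1$ and $\mathcal{G}^2$ respectively, the vertical chart-embeddings being the open inclusions with trivial level-1 data.

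Next I would view the trivial open inclusions $\mathcal{G}^0\Rightarrow\mathcal{G}^1$ and $\mathcal{G}^0\Rightarrow\mathcal{G}^2$ as two (a priori non-level-1) Kuranishi embeddings, concerted because all three are indexed by the same total order $(S,\leq)$, and apply Theorem~\ref{ADMISSIBLEPAIRKEMB} together with Theorem~\ref{EMBEDDING0TO1EXTENDINGLEVEL1GCS}(2) to upgrade these to an admissible pair of concerted level-1 Kuranishi embeddings $\mathcal{G}^6\overset{\text{level-1}}{\Rightarrow}\mathcal{G}^{1,\ast}$ and $\mathcal{G}^6\overset{\text{level-1}}{\Rightarrow}\mathcal{G}^{2,\ast}$, where $\mathcal{G}^{i,\ast}$ is a chart-refinement of $\mathcal{G}^i$, doing so while forcing the level-1 data on the sources and targets to restrict (up to the trivial chart embeddings) from those on $\mathcal{G}^1$ and $\mathcal{G}^2$; that is, we run the inductive construction in the proof of Theorem~\ref{EMBEDDING0TO1EXTENDINGLEVEL1GCS} so that at each stage the chosen Riemannian metrics, connections, and bundle metrics on $\mathcal{G}^{i,\ast}$ agree with those recovering $\mathcal{G}^i$'s given level-1 coordinate-change data. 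Then I would apply the fiber product construction of~\ref{FPGCSLEVELONE} to obtain $\mathcal{G}^3:=\mathcal{G}^{\text{FP}}=\mathcal{G}^{1,\ast}_{S}\times_{\mathcal{B}(\mathcal{G}^6_S)}\mathcal{G}^{2,\ast}_{S}$, together with concerted level-1 Kuranishi embeddings $\mathcal{G}^{i,\ast}\overset{\text{level-1}}{\Rightarrow}\mathcal{G}^3$. Crucially, at tripling indices $T\in\mathcal{S}_S$ with $|T|>1$, the chart $C^{\text{FP}}_T$ has bundle dimension $2\dim E_{\max T}-\dim E_{\min T}>\dim E^i_{\max T}$, so the embeddings $\mathcal{G}^{i,\ast}\Rightarrow\mathcal{G}^3$ become genuinely dimension-increasing there and their level-1 chart embeddings have nontrivial $\Pi$'s that can accommodate both of the incompatible level-1 structures simultaneously. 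Precomposing with the level-1 chart-refinements $\mathcal{G}^4\overset{\sim}{\to}\mathcal{G}^{1,\ast}$ and $\mathcal{G}^5\overset{\sim}{\to}\mathcal{G}^{2,\ast}$ produced in the previous step delivers the required diagram.

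The hard part is the synchronization step just above: ensuring that in the application of Theorem~\ref{ADMISSIBLEPAIRKEMB} (and the tripling feeding into the fiber product) the level-1 structures constructed afresh on $\mathcal{G}^{i,\ast}$ actually agree (up to restriction through the open chart-refinement $\mathcal{G}^{i,\ast}\overset{\sim}{\to}\mathcal{G}^i$) with the originally given level-1 structures on $\mathcal{G}^i$, so that Definition~\ref{LEVEL1CRMENT} is genuinely satisfied for $\mathcal{G}^4\to\mathcal{G}^1$ and $\mathcal{G}^5\to\mathcal{G}^2$. The constructions of Theorems~\ref{EMBEDDING0TO1EXTENDINGLEVEL1GCS} and~\ref{ADMISSIBLEPAIRKEMB} produce level-1 structures by induction on the order, choosing connections, metrics, and bundle metrics at each step; the task is to verify that these choices can be taken to \emph{extend} the pre-given level-1 data on $\mathcal{G}^i$ rather than overwriting it, by seeding the inductive construction with the existing $\pi_{\alpha\beta},\tilde\pi_{\alpha\beta},\hat\pi_{\alpha\beta}$ on the image of the chart embeddings and extending them only into the transition regions created by the precompact shrinkings. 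Once this seeding is carried out carefully and the tripling shrinkings are chosen small enough to lie in the region where the extended level-1 structures coincide with the originals, the composed level-1 square between $\mathcal{G}^4$ and $\mathcal{G}^3$ commutes using $\mathcal{G}^4$'s inherited (restricted) level-1 structure, and similarly for $\mathcal{G}^5$.
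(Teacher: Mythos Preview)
Your very first step assumes the existence of a common precompact shrinking $\mathcal{G}^0$ with $U^0_\alpha$ precompact in $U^1_\alpha\cap U^2_\alpha$; but this is exactly the obstruction the paper's proof opens by naming: ``The difficulty lies in that we cannot find a common shrinking of the underlying good coordinate systems of $\mathcal{G}^1$ and $\mathcal{G}^2$.'' The problem is coverage of $X$. Both $\mathcal{G}^1$ and $\mathcal{G}^2$ are shrinkings of $\mathcal{G}$, so $X=\bigcup_\alpha X_\alpha|_{U^1_\alpha}=\bigcup_\alpha X_\alpha|_{U^2_\alpha}$, but there is no reason $\bigcup_\alpha X_\alpha|_{U^1_\alpha\cap U^2_\alpha}$ covers $X$: a point $p\in X$ may lie in $X_{\alpha}|_{U^1_{\alpha}}$ and in $X_{\beta}|_{U^2_{\beta}}$ with $\alpha\neq\beta$, yet in neither $X_\gamma|_{U^1_\gamma\cap U^2_\gamma}$. (A two-chart example with overlapping but differently-weighted coverages already exhibits this.) Without $\mathcal{G}^0$ your definitions of $\mathcal{G}^4,\mathcal{G}^5$ and the pair $\mathcal{G}^0\Rightarrow\mathcal{G}^i$ collapse.

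The paper circumvents this by going \emph{outward} first rather than inward: it takes $V_\alpha:=U^1_\alpha\cup U^2_\alpha$ and uses Proposition~\ref{CONTROLLINGTHESIZE} to produce a third level-1 $\mathcal{G}^6$ for $\mathcal{G}$ containing both; then for each $x\in X$ it picks the minimal $\alpha_x$ among indices covering $x$ in either system and builds a new finite-indexed $\mathcal{G}^7$ (with a genuinely different index set $J$) that Kuranishi-embeds into both $\mathcal{G}^1$ and $\mathcal{G}^2$. Your ``synchronization'' worry is also handled differently: rather than trying to seed the inductive construction of Theorem~\ref{EMBEDDING0TO1EXTENDINGLEVEL1GCS} so that the freshly built level-1 structure on $\tilde{\mathcal{G}}^i$ coincides with the given one on $\mathcal{G}^i$ (note that Theorem~\ref{EMBEDDING0TO1EXTENDINGLEVEL1GCS}(2) lets you prescribe the level-1 structure on the \emph{domain}, not the target), the paper accepts that $\tilde{\mathcal{G}}^i$ and a chart-refinement $\hat{\mathcal{G}}^i$ of $\mathcal{G}^i$ carry two different level-1 structures with the same underlying charts, and reconciles them via a second-tier fiber product $\mathbf{G}^i$ of the two triplings. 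The final common refinement $\mathcal{G}^3$ is then a fiber product of the $\mathbf{G}^i$'s over the tripling of $\mathcal{G}^7$.
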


\begin{proof}
The difficulty lies in that we cannot find a common shrinking of the underlying good coordinate systems of $\mathcal{G}^1$ and $\mathcal{G}^2$.
\begin{enumerate}
\item Prepare an admissible pair of Kuranishi embeddings to form a fiber product but both Kuranishi embeddings are not level-1 with respect to given level-1 structures on level-1 good coordinate systems:

Define $V_\alpha:=U^1_\alpha\cup U^2_\alpha$, then $\{C_\alpha|_{V_\alpha}\}_{\alpha\in S}$ is precompact shrinking of $\mathcal{G}$. Using proposition \ref{CONTROLLINGTHESIZE}, choose a level-1 good coordinate system $\mathcal{G}^6$ for $\mathcal{G}$ such that $\{C_\alpha|_{V_\alpha}\}_{\alpha\in S}$ is a precompact shrinking of the underlying good coordinate system of $\mathcal{G}^6$.

For each $x\in X$, define $\alpha^1_x:=\min\{\alpha\in S\;|\; x\in X_\alpha|_{U^1_\alpha}\}$ and $\alpha^2_x:=\min\{\alpha\in S\;|\; x\in X_\alpha|_{U^2_\alpha}\}$. Denote $\alpha_x:=\min\{\alpha^1_x, \alpha^2_x\}$.

Choose $U^7_{\alpha_x}$ to be an invariant shrinking of $$((\phi_{\alpha^1_x\alpha_x})^{-1}(\pi^6_{\alpha^1_x\alpha_x} (W^6_{\alpha^1_x \alpha_x})))\cap ((\phi_{\alpha^2_x\alpha_x})^{-1}(\pi^6_{\alpha^2_x\alpha_x}(W^6_{\alpha^2_x\alpha_x})))\footnote{Here convention of this notation is that $\pi^6_{\alpha^i_x\alpha_x} (W^6_{\alpha^i_x \alpha_x})=\phi_{\alpha^i_x\alpha_x}(U^6_{\alpha_x})$ if $\alpha_x=\alpha^i_x$.}$$ such that 
\begin{enumerate}
\item $x\in X_{\alpha_x}|_{U^7_{\alpha_x}}$, 
\item $\phi^6_{\alpha^i\alpha_x}(U^7_{\alpha_x})\subset U^i_{\alpha^i_x}$ for $i=1, 2$, and 
\item $(U^7_{\alpha_x}, U_{\alpha_x}, U^7_{\alpha_x\beta})$ is an invariant strong open neighborhood of $\phi_{\alpha_x\beta}(U_{\alpha_x\beta})$ for all $\beta\in \alpha_x$, here $U_{\alpha_x\beta}$ is the domain of the coordinate change $C_\beta|_{U^6_\beta}\to C_\alpha|_{U^7_{\alpha_x}}$.
\end{enumerate}

Choose a finite set $J\subset S$ such that $X=\bigcup_{x\in J} X_{\alpha_x}|_{U_{\alpha_{x}}}$. Then $$\mathcal{G}^7:=\{C_{\alpha_x}|_{U^7_{\alpha_x}}\}_{x\in J}$$ is a level-1 good coordinate system induced from $\mathcal{G}^6$, and by construction, we have non-level-1 Kuransihi embeddings $\mathcal{G}^7\Rightarrow\mathcal{G}^1$ and $\mathcal{G}^7\Rightarrow\mathcal{G}^2$.

We can apply theorem \ref{ADMISSIBLEPAIRKEMB} to have an admissible pair of concerted level-1 Kuranishi embeddings $\mathcal{G}^7\overset{\text{level-1}}{\Rightarrow}\tilde{\mathcal{G}}^1$ and $\mathcal{G}^7\overset{\text{level-1}}{\Rightarrow}\tilde{\mathcal{G}}^2$ indexed by some order $(S'', \leq'')$ and the underlying good coordinate system $\tilde{\mathcal{G}}^i$ chart-refines the underlying good coordinate system $\mathcal{G}^i$ for $i=1, 2$.

\item For $i=1, 2$, compare level-1 structures on $\mathcal{G}^i$ and $\tilde{\mathcal{G}}^i$, where the underlying good coordinate systems $\tilde{\mathcal{G}}^i$ chart-refines the underlying good coordinate systems $\mathcal{G}^i$:

We can choose a chart-refinement $\hat{\mathcal{G}}^i$ of $\mathcal{G}^i$ such that it is a level-1 good coordinate system, the underlying good coordinate systems $\tilde{\mathcal{G}}^i$ chart-refines the underlying good coordinate systems $\hat{\mathcal{G}}^i$ and both $\tilde{\mathcal{G}}$ and $\hat{\mathcal{G}}^i$ are now indexed by $(S'', \leq)$, and $\text{dim} \tilde E_\alpha=\text{dim} \hat E^i_\alpha$ for all $\alpha\in S''$. In particular, we can ensure that the image of $\tilde W^i_{\alpha\beta}$ under the Kuranishi embedding belongs to $\hat W^i_{\alpha\beta}$.

Now we perform the tripling for $\tilde{\mathcal{G}}^i$ to arrive at $\tilde U^i_T, T\in \mathcal{S}_{S''}$. Define $\hat U^i_T:=(\hat\pi^i_{\max T \min T})^{-1}(\hat \pi^i_{\max T\min T}(\tilde U_T)$. Form the fiber product $$\mathbf{C}^i_T:=\tilde C^i_{T}|_{\tilde U^i_T}\times_{\hat C^i_{\min T}|_{\phi_{\max T\min T}^{-1}(\hat\pi^i_{\max T\min T}(\hat U^i_T)})} \hat C^i_{T}|_{\hat U^i_T}$$ with induced fiber product coordinate changes among the charts. Denote this level-1 good coordinate change by $\mathbf{G}^i$ indexed by $\mathcal{S}_{S''}$. We have naturally induced concerted level-1 Kuranishi embeddings $\tilde{\mathcal{G}}^i_S\overset{\text{level-1}}{\to}\mathbf{G}^i$ and $\hat{\mathcal{G}}^i_S\overset{\text{level-1}}{\to}\mathbf{G}^i$ by diagonal embeddings for $i=1, 2$.

\item Forming the fiber product, which reaches the conclusion:

So far, we have:
$$\begin{CD}
\mathcal{G}^1@<\sim<<\hat{\mathcal{G}}^1@<\sim<<\hat{\mathcal{G}}^1_{S''}@>>>\mathbf{G}^1@.@.\\
@.@.@. @AAA @. @. \\
@.@.@. \tilde{\mathcal{G}}^1_{S''}@.@.\\
@.@. @. @AAA @. @.\\
@.@.@.\mathcal{G}^7_{S''}@>>>\tilde{\mathcal{G}}^2_{S''}@>>>\mathbf{G}^2\\
@. @.@. @. @. @AAA\\
@.@.@.@.@.\hat{\mathcal{G}}^2_{S''}\\
@. @. @. @.@. @VV\sim V\\
@.@.@.@.@.\hat{\mathcal{G}}^2\\
@. @. @. @.@. @VV\sim V\\
@.@.@.@.@.\mathcal{G}^2
\end{CD}. $$

From the above picture, we can form a fiber product and compose some maps:

$$\begin{CD}
\mathcal{G}^1@<\sim<<\hat{\mathcal{G}}^1_{S''}@<\sim<<(\hat{\mathcal{G}}^1_{S''})_{\mathcal{S}_{S''}}@>>>(\mathbf{G}^1)_{\mathcal{S}_{S''}}@>>>\mathcal{G}^3\\
@.@. @. @AAA @AAA \\
@.@.@.\mathcal{B}((\mathcal{G}^7_{S''})_{\mathcal{S}_{S''}})@>>>(\mathbf{G}^2)_{\mathcal{S}_{S''}}\\
@. @.@. @. @AAA\\
@.@.@.@.(\hat{\mathcal{G}}^2_{S''})_{\mathcal{S}_{S''}}\\
@. @. @. @. @VV\sim V\\
@.@.@.@.\hat{\mathcal{G}}^2_{S''}\\
@. @. @. @. @VV\sim V\\
@.@.@.@.\mathcal{G}^2
\end{CD}, $$
where $\mathcal{G}^3:=(\mathbf{G}^1)_{\mathcal{S}_{S''}}\times_{\mathcal{B}((\mathcal{G}^7_{S''})_{\mathcal{S}_{S''}})}(\mathbf{G}^2)_{\mathcal{S}_{S''}}$

Denote $\mathcal{G}^4:=(\hat{\mathcal{G}}^1_{S''})_{\mathcal{S}_{S''}}$ and $\mathcal{G}^5:=(\hat{\mathcal{G}}^2_{S''})_{\mathcal{S}_{S''}}$. Then we arrive at the conclusion:

$$\begin{CD}
\mathcal{G}^1@<\sim<<\mathcal{G}^4@>>>\mathcal{G}^3\\
@. @. @AAA \\
@.@.\mathcal{G}^5\\
  @. @. @VV\sim V\\
@.@.\mathcal{G}^2
\end{CD}.$$

Namely, we have constructed a common level-1 refinement $\mathcal{G}^3$ for $\mathcal{G}^1$ and $\mathcal{G}^2$. 
\end{enumerate}
\end{proof}

\subsection{Comparing perturbations of different level-1 good coordinate systems for a Kuranishi structure}

We have now transferred every choice made during the constructions leading to a level-1 good coordinate system for perturbation into a Kuranishi embedding concept, up to chart-refinement. Next, we examine choices of different perturbations and compare them in the setting of the Kuranishi embeddings and chart-refinements.

A perturbation on $\mathcal{G}$ (supported in a precompact shrinking of $\mathcal{G}$) can be modified in a cobordant manner to be contained in the image of the chart-refinement $\mathcal{G}'\to\mathcal{G}$ hence pullback to $\mathcal{G}'$, and we can use $(\Pi_\alpha, \tilde\Pi_\alpha)$ in the level-1 structure of a level-1 Kuranishi embedding $\mathcal{G}'\Rightarrow\mathcal{G}''$ to lift the perturbation from $\mathcal{G}'$ to $\mathcal{G}''$. Those moves keep the weighted branched orbifold solution set unchanged after the cobordant change in the initial modification before these two pullbacks.

Two different perturbations on two different choices of level-1 good coordinate systems for a Kuranish structure can be compared in a common level-1 refinement using this method, and perturbations hence the solution sets will be cobordant.

\begin{theorem}\label{COMPAREPERTURBATION} Let $\mathcal{G}'$ and $\mathcal{G}''$ be two choices of strongly intersecting Hausdorff level-1 good coordinate systems for an oriented Kuranishi structure $\mathcal{K}$. Suppose $\mathcal{T}'$ and $\mathcal{T}''$ are two compact invariant transverse multisectional perturbations constructed on chart-refinements $\tilde{\mathcal{G}}'$ and $\tilde{\mathcal{G}}''$ of $\mathcal{G}'$ and $\mathcal{G}''$, respectively, using the method in subsection \ref{PERTN}. Then there exist concerted level-1 Kuranishi embeddings from the chart-refinements $\hat{\mathcal{G}}'$ and $\hat{\mathcal{G}}''$ of $\tilde{\mathcal{G}}'$ and $\tilde{\mathcal{G}}''$ into a level-1 good coordinate system $\hat{\mathcal{G}}$. The compact perturbation $\mathcal{T}'$ on $\tilde{\mathcal{G}}'$ is cobordant to a compact perturbation $\mathcal{T}'''$ on $\tilde{\mathcal{G}}'$, which pulls back to a compact perturbation on $\hat{\mathcal{G}}'$, which in turn lifts to a compact perturbation $\hat{\mathcal{T}}'$ on $\mathcal{G}'$. In the same way, the compact perturbation $\mathcal{T}''$ on $\tilde{\mathcal{G}}''$ is cobordant to a perturbation $\mathcal{T}''''$ on $\tilde{\mathcal{G}}''$, which gives rise to a compact perturbation $\hat{\mathcal{T}}''$on $\mathcal{G}''$. Moreover, the two choices of compact perturbations $\hat{\mathcal{T}}'$ and $\hat{\mathcal{T}}''$ on $\hat{\mathcal{G}}$ are cobordant. So all the choices made in the construction will yield the same invariant if the invariant mechanism factors through cobordisms.
\end{theorem}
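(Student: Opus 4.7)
The plan is to combine the fiber product construction with the fact that the space of transverse multisectional perturbations of a fixed level-1 good coordinate system is, up to invariant cobordism, path connected. First I would produce the common refinement. Starting from $\mathcal{G}'$ and $\mathcal{G}''$, I would view both as level-1 good coordinate systems for $\mathcal{K}$ and apply the chain of constructions used in Subsection \ref{INDEP}: using Proposition \ref{INDEPLEVEL1} together with Theorem \ref{ADMISSIBLEPAIRKEMB} and the fiber product construction \ref{FPGCSLEVELONE}, I obtain chart-refinements $\hat{\mathcal{G}}'$ of $\tilde{\mathcal{G}}'$ and $\hat{\mathcal{G}}''$ of $\tilde{\mathcal{G}}''$ together with a common level-1 good coordinate system $\hat{\mathcal{G}}$ into which both concertedly level-1 Kuranishi embed. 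The orientations on $\mathcal{K}$ propagate to oriented structures on each good coordinate system and each level-1 Kuranishi embedding, by Definition \ref{ORIENTATION}.

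The next step is to transport the perturbations $\mathcal{T}'$ and $\mathcal{T}''$ onto $\hat{\mathcal{G}}$. The support of $\mathcal{T}'$ on $\tilde{\mathcal{G}}'$ may overhang parts of the charts that are not contained in the image of the chart-refinement $\hat{\mathcal{G}}'\overset{\sim}{\to}\tilde{\mathcal{G}}'$, so I would first cobord $\mathcal{T}'$ to a perturbation $\mathcal{T}'''$ supported inside this image. The cobordism is built chart-by-chart along the total order, using the genericity trick of Lemma \ref{GENERICITYTRICK} applied to the product with $[0,1]$: on $[0,1]\times \tilde{U}'_\alpha$ we interpolate via a bump function between $\mathcal{T}'$ and a small transverse perturbation supported near the image of $\hat{\mathcal{G}}'$, inductively lifting through the existing level-1 structure of $\tilde{\mathcal{G}}'$ so the interpolation respects coordinate changes. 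The resulting $\mathcal{T}'''$ pulls back through the open chart-refinement $\hat{\mathcal{G}}'\overset{\sim}{\to}\tilde{\mathcal{G}}'$ and then lifts, via the level-1 chart-embedding data $(\Pi_\alpha,\tilde\Pi_\alpha,\hat\Pi_\alpha)$ of the concerted level-1 Kuranishi embedding $\hat{\mathcal{G}}'\overset{\text{level-1}}{\Rightarrow}\hat{\mathcal{G}}$, to a compact invariant transverse multisectional perturbation $\hat{\mathcal{T}}'$ of $\hat{\mathcal{G}}$; here the lifting formula is exactly the one in Lemma \ref{LIFTINGPERT}, which guarantees transversality and compatibility across all coordinate changes of $\hat{\mathcal{G}}$ simultaneously. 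Exactly the same procedure applied to $\mathcal{T}''$ produces $\hat{\mathcal{T}}''$ on $\hat{\mathcal{G}}$, via an intermediate $\mathcal{T}''''$ on $\tilde{\mathcal{G}}''$.

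Now both $\hat{\mathcal{T}}'$ and $\hat{\mathcal{T}}''$ are compact invariant transverse multisectional perturbations of the same oriented level-1 good coordinate system $\hat{\mathcal{G}}$. To produce the cobordism between them, I would work on the auxiliary level-1 good coordinate system $[0,1]\times \hat{\mathcal{G}}$ obtained by taking the product of every chart, coordinate change and level-1 datum with $[0,1]$; this inherits in the obvious way a level-1 structure whose coordinate changes and submersions are products with the identity. The straight-line homotopy $(1-t)\hat{\mathcal{T}}'+t\hat{\mathcal{T}}''$ defines a multisection of $[0,1]\times \hat{\mathcal{G}}$ which agrees with $\hat{\mathcal{T}}'$ and $\hat{\mathcal{T}}''$ at the endpoints and is transverse near $\{0,1\}\times\hat{\mathcal{G}}$; applying the inductive perturbation construction of Theorem \ref{CITPERTURBATION} and the genericity trick of Lemma \ref{GENERICITYTRICK} to $[0,1]\times\hat{\mathcal{G}}$, rel boundary, I would produce a compact invariant transverse perturbation of $[0,1]\times\hat{\mathcal{G}}$ that restricts to $\hat{\mathcal{T}}'$ and $\hat{\mathcal{T}}''$ at the two endpoints. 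Its weighted branched solution set is the required oriented cobordism.

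The main obstacle I anticipate is bookkeeping rather than conceptual: to carry out the straight-line homotopy over $[0,1]\times\hat{\mathcal{G}}$ one must check that the level-1 compatibilities of Definition \ref{COMPATIBILITYONE} together with the commuting squares of Definition \ref{LEVEL1KEMBED} are preserved under the product with $[0,1]$ and under the intermediate cobordisms $\mathcal{T}'\leadsto\mathcal{T}'''$ and $\mathcal{T}''\leadsto\mathcal{T}''''$, so that at every inductive step the lifted/interpolated perturbations on overlapping charts really do agree. This forces the cobording perturbations to be constructed inductively along the total order on the index set, exactly paralleling the proof of Theorem \ref{CITPERTURBATION}, and requires the strong open neighborhood property built into level-1 chart embeddings to guarantee that each local cobordism extends across the dimension-jump regions without producing solutions that escape the prescribed compact support. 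Once these compatibilities are verified, the cobordism of the perturbed zero sets is automatic and the final statement about invariants follows.
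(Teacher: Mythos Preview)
Your proposal is correct and follows essentially the same approach as the paper. The paper's own proof is extremely brief---it consists of a single observation explaining that when a chart $U'_\alpha$ is shrunk to $\tilde U'_\alpha$, the portion of the perturbation support that gets cut off lies only in regions not covered by the other shrunken charts, and over those regions one may cobordantly replace the local perturbations by ones supported close enough to $X$ to lie inside the chart-refinement, after which the pullback through the chart-refinement and the lift via $(\Pi_\alpha,\tilde\Pi_\alpha)$ proceed exactly as you describe. Your write-up is more explicit than the paper's: you spell out the construction of the common refinement $\hat{\mathcal{G}}$ by quoting the relevant results from Section~\ref{INDEP}, and you supply the standard $[0,1]\times\hat{\mathcal{G}}$ argument for the final cobordism between $\hat{\mathcal{T}}'$ and $\hat{\mathcal{T}}''$, which the paper leaves implicit.
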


\begin{proof} The proof follows from the following observation:

The idea behind pulling back a chosen compact perturbation using the method in \ref{PERTN} through refinement is as follows: Shrinking $U'_\alpha$ into $\tilde U'_\alpha$ will cut off some of the compact perturbation, but it only affects the region that is not further covered by other charts $\tilde U'_\beta, \beta\not=\alpha$, and over that region, we can replace\footnote{We may end up with a different linear combination, but the result is cobordant to the previous choice.} local perturbations with perturbations that have support close enough to $X$ that they can be compact and completely contained in chart-refinement, and then these can be pulled back through a chart-refinement and then lifted along a level-1 Kuranishi embedding by pulling back via $(\Pi_\alpha,\tilde\Pi_\alpha)$.
\end{proof}

\section{The equivalences are equivalence relations}
\subsection{The equivalence of good coordinate systems is an equivalence relation}

\begin{corollary} \label{GCSEQUIVPF} For two good coordinate systems, admitting a common refinement is transitive and hence an equivalence relation.
\end{corollary}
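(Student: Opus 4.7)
The plan is to observe that reflexivity is immediate (take the trivial chain $\mathcal{G} \overset{\sim}{\leftarrow} \mathcal{G} \Rightarrow \mathcal{G} \overset{\sim}{\to} \mathcal{G}$ as a refinement) and symmetry is immediate from the symmetry of Definition \ref{GCSEQUIV}. The substantive content is transitivity, for which I would use the fiber product machinery of Section \ref{FIBPROD} as the main engine, just as in the independence-of-choices argument of \ref{INDEPLEVEL1}.

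Suppose $\mathcal{G}_1 \sim \mathcal{G}_2$ via common refinement $\mathcal{H}_{12}$ and $\mathcal{G}_2 \sim \mathcal{G}_3$ via common refinement $\mathcal{H}_{23}$. Unpacking Definition \ref{REFINEMENTMAP}, we get refinement chains $\mathcal{G}_2 \overset{\sim}{\leftarrow} \mathcal{A}_{12} \Rightarrow \mathcal{B}_{12} \overset{\sim}{\to} \mathcal{H}_{12}$ and $\mathcal{G}_2 \overset{\sim}{\leftarrow} \mathcal{A}_{23} \Rightarrow \mathcal{B}_{23} \overset{\sim}{\to} \mathcal{H}_{23}$. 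The first step is to produce a single object that chart-refines both $\mathcal{A}_{12}$ and $\mathcal{A}_{23}$: this is exactly the situation handled by Proposition \ref{ENCORECOMMONREFINED} applied to these two chart-refinements of $\mathcal{G}_2$, so we obtain a strongly intersecting Hausdorff $\mathcal{G}_2'$ with Kuranishi embeddings $\mathcal{G}_2' \Rightarrow \tilde{\mathcal{A}}_{12}$ and $\mathcal{G}_2' \Rightarrow \tilde{\mathcal{A}}_{23}$ where $\tilde{\mathcal{A}}_{1j}$ chart-refines $\mathcal{A}_{1j}$. Composing with the given Kuranishi embeddings into $\mathcal{B}_{12}$ and $\mathcal{B}_{23}$, and then post-composing with the chart-refinements to $\mathcal{H}_{12}$ and $\mathcal{H}_{23}$, we arrive at a pair of Kuranishi embeddings $\mathcal{G}_2' \Rightarrow \mathcal{H}_{12}$ and $\mathcal{G}_2' \Rightarrow \mathcal{H}_{23}$ (after one more invocation of the chart-refining lemmas, absorbed into further chart-refinement if necessary).

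Next, apply Theorem \ref{ADMISSIBLEPAIRKEMB} to this pair to produce, after chart-refinement of all three good coordinate systems, an admissible pair of concerted level-1 Kuranishi embeddings $\mathcal{G} \overset{\text{level-1}}{\Rightarrow} \mathcal{G}^1$ and $\mathcal{G} \overset{\text{level-1}}{\Rightarrow} \mathcal{G}^2$, where (the underlying good coordinate systems of) $\mathcal{G}^1$ and $\mathcal{G}^2$ chart-refine $\mathcal{H}_{12}$ and $\mathcal{H}_{23}$, respectively. Now apply the tripling and fiber product construction of \ref{FPGCSLEVELONE} to form $\mathcal{G}^{\text{FP}} := \mathcal{G}^1_S \times_{\mathcal{B}(\mathcal{G}_S)} \mathcal{G}^2_S$, together with the concerted level-1 Kuranishi embeddings $\mathcal{G}^i_S \overset{\text{level-1}}{\Rightarrow} \mathcal{G}^{\text{FP}}$. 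Chasing through the chain of chart-refinements (from $\mathcal{G}^1_S$ to $\mathcal{G}^1$ to $\mathcal{H}_{12}$, and noting $\mathcal{H}_{12}$ already refines $\mathcal{G}_1$), we see that $\mathcal{G}^{\text{FP}}$ refines $\mathcal{H}_{12}$ which refines $\mathcal{G}_1$; analogously $\mathcal{G}^{\text{FP}}$ refines $\mathcal{G}_3$. Hence $\mathcal{G}^{\text{FP}}$ is a common refinement of $\mathcal{G}_1$ and $\mathcal{G}_3$, giving $\mathcal{G}_1 \sim \mathcal{G}_3$.

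The only real obstacle is verifying that the composition of refinements is again a refinement, i.e., that ``$\mathcal{G}'$ refines $\mathcal{G}$'' composes well with ``$\mathcal{G}''$ refines $\mathcal{G}'$'', since the explicit form $\mathcal{G} \overset{\sim}{\leftarrow} \cdot \Rightarrow \cdot \overset{\sim}{\to} \mathcal{G}'$ is not obviously closed under composition without additional chart-refinement moves. This is precisely the role played by Proposition \ref{ENCORECOMMONREFINED} together with Theorem \ref{ADMISSIBLEPAIRKEMB} in the above plan: any incompatibilities of orders or dimension jumps arising from composing two refinement zigzags get resolved by passing to admissible concerted level-1 Kuranishi embeddings, after which the fiber product produces a single common refinement in a canonical way. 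Everything else is bookkeeping of chart-refinements and compositions of general embeddings, using the already-established fact from \ref{FPGCSLEVELONE} that fiber products exist for any admissible pair.
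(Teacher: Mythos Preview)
Your approach is essentially the same as the paper's: find a common domain under both chart-refinements of $\mathcal{G}_2$, compose to obtain a pair of Kuranishi embeddings into (chart-refinements of) $\mathcal{H}_{12}$ and $\mathcal{H}_{23}$, upgrade to an admissible concerted level-1 pair via Theorem~\ref{ADMISSIBLEPAIRKEMB}, and form the fiber product $\mathcal{G}^{\text{FP}}$ via \ref{FPGCSLEVELONE}. Two small points where the paper is more explicit than your sketch:

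First, your invocation of Proposition~\ref{ENCORECOMMONREFINED} for $\mathcal{A}_{12}$ and $\mathcal{A}_{23}$ needs the observation that both are good coordinate systems for the Kuranishi structure $\mathcal{K}(\mathcal{G}_2)$ induced via Proposition~\ref{GCSKS}; the paper sidesteps this by simply saying ``similar to before'' and producing the required $\mathcal{G}^6$ with Kuranishi embeddings into chart-refinements of both.

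Second, and more substantively, the step you flag as ``the only real obstacle'' (that refinement composes with refinement) is \emph{not} handled in the paper by a general composition lemma via \ref{ENCORECOMMONREFINED} and \ref{ADMISSIBLEPAIRKEMB} as you suggest. Instead the paper does an explicit diagram lift: given the chart-refinement $\mathbf{G}^4_S \overset{\sim}{\to} \mathcal{G}^4$ and the original refinement $\mathcal{G}^1 \overset{\sim}{\leftarrow} \tilde{\mathcal{G}}^1 \Rightarrow \tilde{\mathcal{G}}^4_1 \overset{\sim}{\to} \mathcal{G}^4$, one finds a chart-refinement $\tilde{\mathbf{G}}^1$ of $\tilde{\mathcal{G}}^1$ and a chart-refinement $\mathbf{G}^1$ of $\mathbf{G}^4_S$ with $\tilde{\mathbf{G}}^1 \Rightarrow \mathbf{G}^1$ making the square commute; then one composes $\tilde{\mathbf{G}}^1 \Rightarrow \mathbf{G}^1 \overset{\sim}{\to} \mathbf{G}^4_S \Rightarrow \mathcal{G}^7$ directly to obtain $\mathcal{G}^1 \overset{\sim}{\leftarrow} \tilde{\mathbf{G}}^1 \Rightarrow \mathcal{G}^7$. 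This lift-through-a-chart-refinement move is elementary (it amounts to shrinking charts so their images land in the smaller target) but it is the actual mechanism, and your sketch should name it rather than appeal to the fiber-product machinery a second time.
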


\begin{proof}
Suppose $\mathcal{G}^1$ and $\mathcal{G}^2$ have a common refinement $\mathcal{G}^{4}$, that is, there exist a chart-refinement of $\tilde{\mathcal{G}}^i$ of $\mathcal{G}^i$ and a chart-refinement $\tilde{\mathcal{G}}_i^4$ of $\mathcal{G}^4$ such that there exists Kuranishi embeddings $\tilde{\mathcal{G}}^i\Rightarrow\tilde{\mathcal{G}}_i^{4}$ for $i=1, 2$.

Suppose $\mathcal{G}^2$ and $\mathcal{G}^3$ have a common refinement $\mathcal{G}^{5}$, that is, there exist a chart-refinement of $\hat{\mathcal{G}}^i$ of $\mathcal{G}^i$ and a chart-refinement $\hat{\mathcal{G}}_i^5$ of $\mathcal{G}^5$ such that there exists Kuranishi embeddings $\hat{\mathcal{G}}^i\Rightarrow\hat{\mathcal{G}}_i^{5}$ for $i=2, 3$.

Observe that admitting a common chart-refinement for good coordinate systems is not an equivalence relation. Therefore, the best we can do is similar to before, we can find a $\mathcal{G}^6$ such that there exist $\mathcal{G}^6\Rightarrow\tilde{\mathbf{G}}^2\overset{\sim}{\to}\tilde{\mathcal{G}}^2$ and $\mathcal{G}^6\Rightarrow\hat{\mathbf{G}}^2\overset{\sim}{\to}\hat{\mathcal{G}}^2$.

We chart-refine $\tilde{\mathcal{G}}_2^4$ by $\tilde{\mathbf{G}}^4$ to have $\tilde{\mathbf{G}}^2\Rightarrow\tilde{\mathbf{G}}^4$; and we chart-refine $\hat{\mathcal{G}}_2^5$ by $\hat{\mathbf{G}}^5$ to have $\hat{\mathbf{G}}^2\Rightarrow\hat{\mathbf{G}}^5$.

By composing, we now have a pair of Kuranishi embeddings $\mathcal{G}^6\Rightarrow\tilde{\mathbf{G}}^4$ and $\mathcal{G}^6\Rightarrow\hat{\mathbf{G}}^5$.

Then by theorem \ref{ADMISSIBLEPAIRKEMB}, we have an admissible pair of concerted level-1 Kuranishi embeddings $\mathbf{G}^6\Rightarrow\mathbf{G}^4$ and $\mathbf{G}^6\Rightarrow\mathbf{G}^5$ indexed by $S$, whose underlying Kuranishi embeddings chart-refine $\mathcal{G}^6\Rightarrow\tilde{\mathbf{G}}^4$ and $\mathcal{G}^6\Rightarrow\hat{\mathbf{G}}^5$ respectively.

Then by construction \ref{FPGCSLEVELONE}, we have a fiber product good coordinate system $\mathcal{G}^7:=\mathbf{G}^4_S\times_{\mathcal{B}(\mathbf{G}^6_S)}\mathbf{G}^5_S$ into which both $\mathbf{G}^4_S$ and $\mathbf{G}^5_S$ Kuranishi embed.

So putting the above together, we have a string of chart-refinements $$\mathbf{G}^4_S\overset{\sim}{\to}\mathbf{G}^4\overset{\sim}{\to}\tilde{\mathbf{G}}^4\overset{\sim}{\to}\tilde{\mathcal{G}}^4_2\overset{\sim}{\to}\mathcal{G}^4.$$

By composing the above, and together with the rest picture, we have (where $\rightarrow$ in the diagram always denotes $\Rightarrow$):

$$\begin{CD}
\mathcal{G}^1@<\sim<<\tilde{\mathcal{G}}^1@>>>\tilde{\mathcal{G}}^4_1@>\sim>>\mathcal{G}^4@.\\
@.@.@. @A\sim AA @. \\
@.@.@.\mathbf{G}^4_S@>>>\mathcal{G}^7
\end{CD}.$$

We can find a chart-refinement $\tilde{\mathbf{G}}^1$ of $\tilde{\mathcal{G}}^1$ to complete the following picture:

$$\begin{CD}
\mathcal{G}^1@<\sim<<\tilde{\mathcal{G}}^1@>>>\tilde{\mathcal{G}}^4_1@>\sim>>\mathcal{G}^4@.\\
@.@A\sim AA@. @A\sim AA @. \\
@.\tilde{\mathbf{G}}^1@>>>\mathbf{G}^1@>\sim>>\mathbf{G}^4_S@>>>\mathcal{G}^7
\end{CD}.$$

$$\begin{CD}
\mathcal{G}^1@<\sim<<\tilde{\mathcal{G}}^1@>>>\tilde{\mathcal{G}}^4_1@>\sim>>\mathcal{G}^4\\
@.@A\sim AA@. @A\sim AA  \\
@.\tilde{\mathbf{G}}^1@>>>\mathbf{G}^1@>\sim>>\mathbf{G}^4_S\\
@.@.@VVV @VVV  \\
@.@.\mathbf{G}^7@>\sim >> \mathcal{G}^7
\end{CD}.$$

After composing, we have $\mathcal{G}^1\overset{\sim}{\leftarrow}\tilde{\mathbf{G}}^1\Rightarrow\mathbf{G}^7\overset{\sim}{\to}\mathcal{G}^7$, namely, $\mathcal{G}^7$ refines $\mathcal{G}^1$.

By the same token, $\mathcal{G}^7$ refines $\mathcal{G}^3$. $\mathcal{G}^7$ is the desired common refinement of $\mathcal{G}^1$ and $\mathcal{G}^3$.

Thus it shows that admitting a common refinement is transitive. The reflexivity and symmetry properties are trivial.
\end{proof}

\subsection{The equivalence of Kuranishi structures is an equivalence relation}\label{KEQUIV}

\begin{theorem}\label{GCSEMBEDDINGFORKS} Let $\Phi:\mathcal{K}\Rightarrow\mathcal{K}'$ be a Kuranishi embedding between Kuranishi structures. Then there exists a concerted level-1 Kuranishi embedding $\mathcal{G}\Rightarrow \mathcal{G}'$, where $\mathcal{G}$ and $\mathcal{G}'$ are strongly intersecting Hausdorff level-1 good coordinate systems obtained from $\mathcal{K}$ and $\mathcal{K}'$ respectively, such that the underlying embedding $\mathcal{G}\Rightarrow\mathcal{G}'$ is induced by $\mathcal{K}\Rightarrow\mathcal{K}'$.
\end{theorem}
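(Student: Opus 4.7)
The plan is to mirror, in the relative setting of an embedding, the construction pipeline already developed for a single Kuranishi structure: first extract a pair of good coordinate systems with a common index set realising $\Phi$, then achieve the strongly intersecting and Hausdorff properties simultaneously, then force concertedness by a tripling, and finally lift everything to level-1.

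First I would construct good coordinate systems $\mathcal{G}_0$ and $\mathcal{G}'_0$ in parallel by imitating the proof of Theorem \ref{EXISTGCS}. Fix a metric $d$ on $X$; for each $p\in X$, since the chart embedding $C_p\to C'_p$ in $\Phi$ forces $X_p\subset X'_p$, I can choose a radius $r_p>0$ with $B_{r_p}(p)\subset X_p\subset X'_p$. Shrink $C_p$ and $C'_p$ as in \ref{EXISTGCS} so that the coverages both become exactly $B_{r_p/2}(p)$, then by compactness pick a finite $S\subset X$ with $X=\bigcup_{x\in S}B_{r_x/2}(x)$. This gives bases $U_x\subset V_x$ and $U'_x\subset V'_x$ together with a common restricted chart embedding $C_x|_{U_x}\to C'_x|_{U'_x}$ (after a further slight shrinking of $U_x$ inside $\phi_x^{-1}(U'_x)$). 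Equip $S$ with two separate orders $\leq$ and $\leq'$ using the bundle dimensions and stabilizer sizes in $\mathcal{K}$ and $\mathcal{K}'$ respectively, together with inverted coordinate changes of the same dimension as in \ref{EXISTGCS}. Since chart embeddings preserve stabilizers and the tangent bundle condition forces compatible jumps in dimension, the chart-wise embeddings $\{C_x|_{U_x}\to C'_x|_{U'_x}\}_{x\in S}$ assemble into a (generally non-concerted) Kuranishi embedding $\mathcal{G}_0\Rightarrow\mathcal{G}'_0$ in the sense of \ref{KEMB}, whose underlying chart data are induced by $\Phi$.

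Next I would apply the strongly-intersecting–Hausdorff machinery of Section 4 \textit{simultaneously} to both $\mathcal{G}_0$ and $\mathcal{G}'_0$. Running Proposition \ref{COORDEXIST} and Theorem \ref{HAUSIDSPACE} on each side independently gives precompact shrinkings, but I need the shrinkings to be compatible with the chart embeddings, i.e.\ to still satisfy $\phi_x(U_x^{\mathrm{new}})\subset (U'_x)^{\mathrm{new}}$. I would run the good/strong shrinking sequence arguments on the disjoint union $\mathcal{G}_0\sqcup\mathcal{G}'_0$, choosing at every step the shrinkings of $U'_x$ to contain $\phi_x$ of the shrinking of $U_x$ (which costs nothing because those shrinkings are only required to be precompact and to preserve the coverages $X_x|_{U_x}$). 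The topological matching condition of $\mathcal{K}$ and $\mathcal{K}'$ feeds into each side separately, so the limiting arguments in \ref{COORDEXIST} and \ref{HAUSIDSPACE} go through and produce strongly intersecting Hausdorff $\mathcal{G}_1$ and $\mathcal{G}'_1$ together with a Kuranishi embedding $\mathcal{G}_1\Rightarrow\mathcal{G}'_1$ that chart-refines $\mathcal{G}_0\Rightarrow\mathcal{G}'_0$ and whose chart embeddings are still those induced by $\Phi$.

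Finally I would invoke the concertedness trick of Proposition \ref{CONCERTEDPROOF} applied to $\mathcal{G}_1\Rightarrow\mathcal{G}'_1$: the proof there only needs the source side to be a strongly intersecting Hausdorff (in fact level-1) good coordinate system; the target is chart-refined by shrinking $\tilde U_T\subset \tilde U_{\max T}$, and this procedure does not disturb the property that the chart embeddings come from $\Phi$. This yields a concerted (non-level-1) Kuranishi embedding chart-refining $\mathcal{G}_1\Rightarrow\mathcal{G}'_1$. Now Theorem \ref{EMBEDDING0TO1EXTENDINGLEVEL1GCS} (equivalently \ref{LEVELONEKEMB}) applies: starting from any level-1 good coordinate system $\mathcal{G}''$ for (the grouped version of) the source, it produces a concerted level-1 Kuranishi embedding $\mathcal{G}\overset{\text{level-1}}{\Rightarrow}\mathcal{G}'$ whose underlying embedding chart-refines the given concerted one. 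The existence of such a starting $\mathcal{G}''$ is guaranteed by Theorem \ref{EXLEVELONEGCS}. Relabelling and reverting to the original (possibly non-total) index sets as in Remark after \ref{LEVEL1KEMBED}(II) gives strongly intersecting Hausdorff level-1 good coordinate systems $\mathcal{G}$ and $\mathcal{G}'$ for $\mathcal{K}$ and $\mathcal{K}'$ and a concerted level-1 Kuranishi embedding $\mathcal{G}\Rightarrow\mathcal{G}'$ whose underlying chart data are still induced by $\Phi$.

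The main obstacle is maintaining the compatibility $\phi_x(U_x)\subset U'_x$ throughout the nested sequence of shrinkings and chart-refinements; this is not a deep difficulty, but it has to be tracked carefully at each invocation of \ref{COORDEXIST}, \ref{HAUSIDSPACE}, \ref{CONCERTEDPROOF} and \ref{EMBEDDING0TO1EXTENDINGLEVEL1GCS}. Concretely, every time the argument prescribes a shrinking on the source one must first enlarge it slightly (still precompactly) to accommodate subsequent choices, and then shrink the target to contain the image; the strong open neighborhood and precompactness clauses give enough room to do so, and the topological matching condition on each of $\mathcal{K}$, $\mathcal{K}'$ ensures the Hausdorffness step on each side is unaffected by the presence of the other.
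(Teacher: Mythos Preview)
Your proposal is correct and follows essentially the same route as the paper. The paper's proof is terser: after constructing $\mathcal{G}$ and $\mathcal{G}'$ from the common radii $r_p$ (exactly as you do), it simply invokes ``the first part of the proof of Theorem \ref{ADMISSIBLEPAIRKEMB}'', which packages together \ref{CONCERTEDPROOF}, \ref{GROUPING} and \ref{LEVELONEKEMB}; you have unpacked this chain and additionally made explicit the intermediate step of achieving the strongly intersecting and Hausdorff properties on both sides simultaneously while keeping $\phi_x(U_x)\subset U'_x$, a point the paper leaves implicit. One small remark: your ``further slight shrinking of $U_x$ inside $\phi_x^{-1}(U'_x)$'' is in fact unnecessary, since the definition $U_p=V_p\setminus(s_p^{-1}(0)\setminus quot_p^{-1}\psi_p^{-1}(B_{r_p/2}(p)))$ together with the section-intertwining and $\psi$-compatibility of $\phi_p$ already forces $\phi_p(U_p)\subset U'_p$ automatically.
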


\begin{proof}
Just follow the proof of theorem \ref{EXISTGCS}.

As $X$ is metrizable, choose a metric $d$ on $X$. We can choose $B_{r_p}(p)\subset X_p$ for some $r_p>0$ depending on $p$.

Denote $quot_p: V_p\to \underline{V_p}$, and define an invariant open subset of $V_p$:
$$U_p:=V_p\backslash (s_p^{-1}(0)\backslash quot_p^{-1}(\psi_p^{-1}(B_{r_p/2}(p))))$$ such that $p\in X_p|_{U_p}$, so $X_p|_{U_p}=B_{r_p/2}(p)$.

Choose a finite index set $S$, such that $X=\bigcup_{x\in S}X_x|_{U_x}$.

Define the order $\leq$ on $S$ as for $y, x\in S$, $y\leq x$ if and only if
\begin{enumerate}[(i)]
\item $\text{dim} E_y<\text{dim} E_x$, or
\item $\text{dim} E_y=\text{dim} E_x$ and $|G_y|\leq |G_x|$.
\end{enumerate}

We then construct coordinate changes among charts $\{C_x|_{U_x}\}_{x\in S}$ induced from $\mathcal{K}$ as in construction \ref{EXISTGCS}. We have a good coordinate system $\mathcal{G}$.

Since $B_{r_p}(p)\subset X_p\subset X'_p$, we can define $U'_p\subset V'_p$ similarly, and define $\leq'$ similarly. Then we have a good coordinate system $\mathcal{G}'$ induced from $\mathcal{K}'$.

The Kuranishi embedding induces a Kuranishi embedding $\mathcal{G}\Rightarrow\mathcal{G}'$, indexed by $S$ with the orders $(S, \leq)$ and $(S, \leq')$ respectively.

Then applying the first part of the proof of theorem \ref{ADMISSIBLEPAIRKEMB} (which is obtained by applying proposition \ref{CONCERTEDPROOF}, proposition \ref{GROUPING} and theorem \ref{LEVELONEKEMB}), we get the desired conclusion.
\end{proof}

\begin{theorem} For two Kuranishi structures, admitting a common refinement is a transitive relation.
\end{theorem}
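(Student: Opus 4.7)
The plan is to mirror closely the proof of Corollary \ref{GCSEQUIVPF} (transitivity of equivalence at the good coordinate system level) and then descend the resulting GCS-level common refinement back to the Kuranishi structure level via Proposition \ref{KREFINEGCS}. Given $\mathcal{K}^1 \sim \mathcal{K}^2$ via a common refinement $\mathcal{K}^4$ and $\mathcal{K}^2 \sim \mathcal{K}^3$ via a common refinement $\mathcal{K}^5$, I would first unfold these into the four refinement diagrams $\mathcal{K}^i \overset{\sim}{\leftarrow} \tilde{\mathcal{K}}^{i,j} \Rightarrow \mathcal{K}^j$ for $(i,j) \in \{(1,4),(2,4),(2,5),(3,5)\}$, with the goal of producing a common Kuranishi refinement $\mathcal{K}^\ast$ of $\mathcal{K}^1$ and $\mathcal{K}^3$.

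The core step, paralleling the first move in \ref{GCSEQUIVPF}, is to produce a common source Kuranishi structure $\mathcal{K}^6$ together with Kuranishi embeddings into charts-refinements of both $\tilde{\mathcal{K}}^{2,4}$ and $\tilde{\mathcal{K}}^{2,5}$; this is the Kuranishi structure analogue of Proposition \ref{COMMONREFINED}. I would build $\mathcal{K}^6$ directly by choosing, for each $p \in X$, a $G_p$-invariant chart sufficiently small to lie in both $\tilde{\mathcal{K}}^{2,4}_p$ and $\tilde{\mathcal{K}}^{2,5}_p$ (available since both chart-refine $\mathcal{K}^2$), and inducing coordinate changes from either side, which agree up to group action because they both restrict coordinate changes of $\mathcal{K}^2$. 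Composing with the embeddings into $\mathcal{K}^4$ and $\mathcal{K}^5$ gives Kuranishi embeddings $\mathcal{K}^6 \Rightarrow \mathcal{K}^4$ and $\mathcal{K}^6 \Rightarrow \mathcal{K}^5$.

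Now apply Theorem \ref{GCSEMBEDDINGFORKS} to each of these Kuranishi embeddings, producing strongly intersecting Hausdorff level-1 good coordinate systems $\mathcal{G}^6, \mathcal{G}^4, \mathcal{G}^5$ together with concerted level-1 Kuranishi embeddings $\mathcal{G}^6 \Rightarrow \mathcal{G}^4$ and $\mathcal{G}^6 \Rightarrow \mathcal{G}^5$. After harmonizing these into a common domain by a further chart-refinement of $\mathcal{G}^6$ and invoking Theorem \ref{ADMISSIBLEPAIRKEMB}, I would obtain an admissible pair of concerted level-1 Kuranishi embeddings $\mathbf{G}^6 \Rightarrow \mathbf{G}^4$ and $\mathbf{G}^6 \Rightarrow \mathbf{G}^5$ chart-refining the two original GCS-level embeddings. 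Then Construction \ref{FPGCSLEVELONE} yields the fiber product level-1 GCS $\mathcal{G}^7 := \mathbf{G}^4_S \times_{\mathcal{B}(\mathbf{G}^6_S)} \mathbf{G}^5_S$. Following the same diagram chase as in \ref{GCSEQUIVPF}, $\mathcal{G}^7$ is a common refinement (as GCSs) of good coordinate systems associated respectively to $\mathcal{K}^1$ and $\mathcal{K}^3$.

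Finally, I would apply Proposition \ref{KREFINEGCS} along with Proposition \ref{GCSKS}, iterated along the refinement chain, to conclude that the Kuranishi structure $\mathcal{K}(\mathcal{G}^7)$ naturally induced from $\mathcal{G}^7$ refines both $\mathcal{K}^1$ and $\mathcal{K}^3$, exhibiting the desired common Kuranishi refinement. The hardest part will be the second paragraph: the direct construction of $\mathcal{K}^6$ requires care to preserve the pointwise-formulated axioms of Definition \ref{KURANS}---the compatibility, maximality, and topological matching conditions---because unlike the finite-cover setting for GCSs, these are delicate to maintain when intersecting two independent chart choices at every point of $X$. Once $\mathcal{K}^6$ is in hand, the remainder is a systematic translation to the GCS world, where \ref{ADMISSIBLEPAIRKEMB} and \ref{FPGCSLEVELONE} do the heavy lifting, followed by \ref{KREFINEGCS} to pull the refinement back to Kuranishi structures.
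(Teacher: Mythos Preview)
Your approach is essentially the same as the paper's: find a common Kuranishi-structure source mapping into both $\mathcal{K}^4$ and $\mathcal{K}^5$, pass to good coordinate systems, apply Theorem~\ref{ADMISSIBLEPAIRKEMB} and the fiber product construction~\ref{FPGCSLEVELONE}, then use Propositions~\ref{GCSKS} and~\ref{KREFINEGCS} to descend $\mathcal{K}(\mathcal{G}^{\mathrm{FP}})$ back to a common Kuranishi refinement of $\mathcal{K}^1$ and $\mathcal{K}^3$.

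One remark on what you flag as the ``hardest part'': the construction of $\mathcal{K}^6$ (the paper's $\mathbf{K}$) is in fact straightforward and the paper treats it in one line. Since both $\tilde{\mathcal{K}}^{2,4}$ and $\tilde{\mathcal{K}}^{2,5}$ are chart-refinements of the \emph{same} Kuranishi structure $\mathcal{K}^2$, the chart at each $p$ in either one is an open invariant subset of the same $V_p$; taking the intersection of these two open subsets yields the common chart-refinement immediately. Compatibility, maximality, and topological matching are then inherited verbatim from $\mathcal{K}^2$ by restriction --- there is no delicate verification to perform. So the step you anticipate as the bottleneck is actually the easiest, and the genuine work lies exactly where you expect it to be routine: the passage through~\ref{ADMISSIBLEPAIRKEMB} and~\ref{FPGCSLEVELONE}.
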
 

\begin{proof}
Let $\mathcal{K}^4$ be a common refinement for $\mathcal{K}^1$ and $\mathcal{K}^2$ and let $\mathcal{K}^5$ be a common refinement for $\mathcal{K}^2$ and $\mathcal{K}^3$. Explicitly, we have refinements $\mathcal{K}^2\overset{\sim}{\leftarrow}\mathbf{K}'\Rightarrow\mathcal{K}^4$ and $\mathcal{K}^2\overset{\sim}{\leftarrow}\mathbf{K}''\Rightarrow\mathcal{K}^5$. We can find a common chart-refinement $\mathbf{K}$ for both $\mathbf{K}'$ and $\mathbf{K}''$.

Composing maps, we have $\mathbf{K}\Rightarrow\mathcal{K}^4$ and $\mathbf{K}\Rightarrow\mathcal{K}^5$. This induces a pair of Kuranishi embedding of good coordinate systems $\mathcal{G}\Rightarrow\mathcal{G}^4$ and $\mathcal{G}\Rightarrow\mathcal{G}^5$.

Applying theorem \ref{ADMISSIBLEPAIRKEMB} we have an admissible pair of concerted level-1 Kuranishi embeddings $\mathbf{G}\Rightarrow\mathbf{G}^4$ and $\mathbf{G}\Rightarrow\mathbf{G}^5$ with a common index set $S$, whose underlying Kuranishi embedding chart-refines the existing pair.

After construction \ref{FPGCSLEVELONE}, we have $\mathbf{G}^4_S\Rightarrow\mathcal{G}^{\text{FP}}$ and $\mathbf{G}^5_S\Rightarrow\mathcal{G}^{\text{FP}}$ where $\mathcal{G}^{\text{FP}}:=\mathbf{G}^4_S\times_{\mathcal{B}(\mathbf{G}_S)}\mathbf{G}^5_S$, such that $\mathbf{G}^4_S$ chart-refines $\mathbf{G}^4$ and $\mathbf{G}^5_S$ chart-refines $\mathbf{G}^5$.

By proposition \ref{GCSKS}, we have $\mathcal{K}(\mathbf{G}^4_S)\Rightarrow\mathcal{K}(\mathcal{G}^{\text{FP}})$. 

Since $\mathbf{G}^4_S$ is obtained from $\mathcal{K}^4$, by proposition \ref{KREFINEGCS}, we have $\mathcal{K}(\mathbf{G}^4_S)$ refines $\mathcal{K}^4$, namely $\mathcal{K}^4\overset{\sim}{\leftarrow}\mathcal{K}^6\Rightarrow \mathcal{K}(\mathbf{G}^4_S)$ for some $\mathcal{K}^6$. So $\mathcal{K}^4\overset{\sim}{\leftarrow}\mathcal{K}^6\Rightarrow \mathcal{K}(\mathcal{G}^{\text{FP}})$.

Therefore, for some $\tilde{\mathcal{K}}^1$, we have
$$\begin{CD}
\mathcal{K}^1@<\sim<<\tilde{\mathcal{K}}^1@>>>\mathcal{K}^4@.\\
@.@. @A\sim AA @. \\
@.@.\mathcal{K}^6@>>>\mathcal{K}(\mathcal{G}^{\text{FP}})
\end{CD}.$$

We can find a chart-refinement $\tilde{\mathbf{K}}$ of $\tilde{\mathcal{K}}^1$ to complete the following picture:

$$\begin{CD}
\mathcal{K}^1@<\sim<<\tilde{\mathcal{K}}^1@>>>\mathcal{K}^4@.\\
@.@A\sim AA @A\sim AA @. \\
@.\tilde{\mathbf{K}}@>>>\mathcal{K}^6@>>>\mathcal{K}(\mathcal{G}^{\text{FP}})
\end{CD}.$$

By composing maps, we have $\mathcal{K}^1\overset{\sim}{\leftarrow}\tilde{\mathbf{K}}\Rightarrow\mathcal{K}(\mathcal{G}^{\text{FP}})$. Namely, $\mathcal{K}(\mathcal{G}^{\text{FP}})$ is a refinement of $\mathcal{K}^1$.

By the same token, $\mathcal{K}(\mathcal{G}^{\text{FP}})$ is also a refinement of $\mathcal{K}^2$, hence is a common refinement for $\mathcal{K}^1$ and $\mathcal{K}^2$.

So the relation is transitive.
\end{proof}

\begin{corollary}\label{KSEQUIVPF} The equivalence of Kuranishi structures is an equivalence relation.
\end{corollary}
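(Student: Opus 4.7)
The plan is to derive the corollary directly from the theorem just established, since that theorem already handled the only non-trivial property (transitivity). By Definition \ref{KSEQUIV}, Kuranishi equivalence is defined via admitting a common refinement in the sense of Definition \ref{KUREQUIV}, so I need to verify reflexivity, symmetry, and transitivity for this relation.

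For reflexivity, given a Kuranishi structure $\mathcal{K}$, I would take the trivial refinement diagram $\mathcal{K} \overset{\mathrm{id}}{\leftarrow} \mathcal{K} \overset{\mathrm{id}}{\to} \mathcal{K}$, where the identity Kuranishi embedding is simultaneously a chart-refinement (each chart embedding is the identity, hence an open map with the domain equal to the entire base) and a Kuranishi embedding. This exhibits $\mathcal{K}$ as a common refinement of itself.

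For symmetry, if $\mathcal{K}^3$ is a common refinement of $\mathcal{K}^1$ and $\mathcal{K}^2$, witnessed by diagrams $\mathcal{K}^i \overset{\sim}{\leftarrow} \mathbf{K}^i \Rightarrow \mathcal{K}^3$ for $i=1,2$, then the roles of $\mathcal{K}^1$ and $\mathcal{K}^2$ are interchangeable in the very definition of ``common refinement,'' so $\mathcal{K}^3$ is also a common refinement of $\mathcal{K}^2$ and $\mathcal{K}^1$. Hence the relation is symmetric.

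Transitivity is exactly the content of the preceding theorem, whose proof uses the main machinery of this section: given common refinements $\mathcal{K}^4$ of $(\mathcal{K}^1, \mathcal{K}^2)$ and $\mathcal{K}^5$ of $(\mathcal{K}^2, \mathcal{K}^3)$, one passes to a common chart-refinement $\mathbf{K}$ of the two intermediate Kuranishi structures mapping to $\mathcal{K}^2$, extracts good coordinate systems via Theorem \ref{GCSEMBEDDINGFORKS}, upgrades to an admissible pair of concerted level-1 Kuranishi embeddings via Theorem \ref{ADMISSIBLEPAIRKEMB}, forms the fiber product good coordinate system via Construction \ref{FPGCSLEVELONE}, and finally uses Proposition \ref{GCSKS} together with Proposition \ref{KREFINEGCS} to produce a Kuranishi structure $\mathcal{K}(\mathcal{G}^{\mathrm{FP}})$ that is a common refinement of $\mathcal{K}^1$ and $\mathcal{K}^3$. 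Since the corollary follows by combining these three properties, the proposal is essentially a one-line citation of the theorem together with the two trivial observations above; there is no remaining obstacle.
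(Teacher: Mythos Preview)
Your proposal is correct and matches the paper's own proof exactly: the paper simply states ``We have established the transitivity; and the reflexivity and symmetry are trivial,'' which is precisely what you argue, with your added detail on reflexivity and symmetry being a harmless elaboration of what the paper leaves implicit.
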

\begin{proof} We have established the transitivity; and the reflexivity and symmetry are trivial.
\end{proof}

\section{Perturbation factoring through equivalence and independence of most general choices}\label{GENERALCHOICECOMPARISON}

\begin{definition}[strong refinement] A refinement from a good coordinate system $\mathcal{G}$ to another $\mathcal{G}'$ is \emph{strong}, if it is of the form $$\mathcal{G}\overset{\text{chart-refinement}}{\leftarrow}\mathcal{G}''\overset{\text{concerted Kuranishi embedding}}{\to}\mathcal{G}',$$ without the need of the last chart-refinement map in definition \ref{REFINEMENTMAP}.
\end{definition}

Before showing the full choice-independence of the perturbation theory, we collect some results we will need in constructing a common refinement. The proofs of these results have been covered in the earlier sections.
\begin{theorem}\label{PRELIMTHM} The following statements are true:
\begin{enumerate}
\item Two level-1 chart-refinements of a good coordinate system admits a common strong level-1 refinement.
\item Two good coordinate systems of equivalent Kuranishi structures have a common strong level-1 refinement.
\item We can lift perturbn across level-1 Kuranishi embeddings. Therefore, perturbations of two level-1 good coordinate systems can be modified cobordantly to push through level-1 refinements to become perturbations on the common level-1 good coordinate system, which then are cobordant.
\end{enumerate}
\end{theorem}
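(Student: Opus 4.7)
The plan is to assemble the three claims directly from the machinery already developed in Sections \ref{FIBPROD}, \ref{INDEP} and \ref{KEQUIV}; each statement is essentially a repackaging of work already done, and the main task is arranging the inputs correctly. The common thread is: first arrange a pair of (possibly non-level-1, possibly non-concerted) Kuranishi embeddings from a common domain into the two given objects, then invoke Theorem \ref{ADMISSIBLEPAIRKEMB} to promote this pair to an admissible pair of concerted level-1 Kuranishi embeddings modulo chart-refinement, and finally form a tripling and fiber product as in Construction \ref{FPGCSLEVELONE} to obtain a common strong level-1 refinement.

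For item (1), given two level-1 chart-refinements $\mathcal{G}^1 \overset{\sim}{\to} \mathcal{G}$ and $\mathcal{G}^2 \overset{\sim}{\to} \mathcal{G}$, I would mimic the argument of Proposition \ref{INDEPLEVEL1}. First use Proposition \ref{CONTROLLINGTHESIZE} to produce a level-1 good coordinate system $\mathcal{G}^6$ on $\mathcal{G}$ whose underlying precompact shrinking contains the union $V_\alpha := U^1_\alpha \cup U^2_\alpha$ for each $\alpha$. From $\mathcal{G}^6$ one then extracts a finer level-1 $\mathcal{G}^7$ admitting (not yet level-1) Kuranishi embeddings into both $\mathcal{G}^1$ and $\mathcal{G}^2$ by mapping each base point through whichever index it lies in. Theorem \ref{ADMISSIBLEPAIRKEMB} upgrades this pair, up to chart-refinement, to an admissible pair of concerted level-1 Kuranishi embeddings, and Construction \ref{FPGCSLEVELONE} produces the desired common strong level-1 refinement.

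For item (2), with $\mathcal{G}^i$ a good coordinate system for $\mathcal{K}^i$ and $\mathcal{K}^1$ equivalent to $\mathcal{K}^2$, I would first unfold the equivalence, by transitivity as in the proof of Corollary \ref{KSEQUIVPF}, into a diagram $\mathcal{K}^1 \overset{\sim}{\leftarrow} \mathbf{K}' \Rightarrow \mathcal{K} \overset{\sim}{\leftarrow} \mathbf{K}'' \Leftarrow \mathcal{K}^2$. Apply Theorem \ref{GCSEMBEDDINGFORKS} to each Kuranishi embedding to produce strongly intersecting Hausdorff level-1 good coordinate systems together with concerted level-1 Kuranishi embeddings between them. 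By Proposition \ref{KREFINEGCS} the good coordinate systems extracted from $\mathbf{K}'$ and $\mathbf{K}''$ refine $\mathcal{G}^1$ and $\mathcal{G}^2$ up to further chart-refinement, and composing reduces to the situation of item (1) applied to a common level-1 good coordinate system for $\mathcal{K}$. A fiber product of that pair then yields the common strong level-1 refinement of $\mathcal{G}^1$ and $\mathcal{G}^2$.

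For item (3), the lifting statement across a single level-1 Kuranishi embedding $\mathcal{G}' \overset{\text{level-1}}{\Rightarrow} \mathcal{G}''$ is immediate from Lemma \ref{LIFTINGPERT}: the fiberwise lift $(\Pi_\alpha, \tilde\Pi_\alpha)^\ast \tau$ of a transverse multisection $\tau$ on $\mathcal{G}'$ is automatically transverse over the lifted perturbed zeros, so the genericity trick \ref{GENERICITYTRICK} in the invariant complement extends it compactly and transversely to $\mathcal{G}''$. The cobordance claim is then exactly the content of Theorem \ref{COMPAREPERTURBATION}: apply the genericity trick once to cobordantly shrink each perturbation's support into the image of the respective chart-refinement so it can be pulled back, lift the pullbacks across the level-1 Kuranishi embeddings of items (1) and (2), and observe that the one-parameter family of local perturbations furnished by the genericity trick constitutes the required cobordism on the common level-1 refinement. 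The main obstacle, conceptually present only in item (1), is that neither underlying good coordinate system of $\mathcal{G}^1$ or $\mathcal{G}^2$ is contained in the other, which rules out a naive restriction or shrinking argument; this is exactly what the enlarging trick using Proposition \ref{CONTROLLINGTHESIZE} followed by the fiber product is designed to overcome.
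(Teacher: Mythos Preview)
Your proposal is correct and aligns with the paper's approach: the paper does not give a standalone proof of this theorem, stating only that ``the proofs of these results have been covered in the earlier sections,'' and your roadmap correctly identifies the relevant earlier results (Proposition \ref{INDEPLEVEL1} and \ref{CONTROLLINGTHESIZE} for (1), Theorem \ref{GCSEMBEDDINGFORKS} together with the fiber product construction for (2), and Lemma \ref{LIFTINGPERT} plus Theorem \ref{COMPAREPERTURBATION} for (3)). One small notational wobble: in your diagram for item (2) the arrow pattern $\mathcal{K} \overset{\sim}{\leftarrow} \mathbf{K}'' \Leftarrow \mathcal{K}^2$ is garbled---the definition of equivalence gives $\mathcal{K}^2 \overset{\sim}{\leftarrow} \mathbf{K}'' \Rightarrow \mathcal{K}$ with the Kuranishi embedding pointing into the common refinement---but the intended argument is clear and goes through.
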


\begin{theorem}[full choice-independence of the perturbation theory]\label{FULLINDEP} Let $[\mathcal{K}]$ be a Kuranishi structure equivalence class. We choose two Kuranishi structures $\mathcal{K}^i$ in $[\mathcal{K}]$, then choose a good coordinate system $\mathcal{H}^i$ for $\mathcal{K}^i$, then choose a level-1 good coordinate system $\mathcal{G}^i$ for $\mathcal{H}^i$, then choose a perturbation on a level-1 precompact shrinking $\mathcal{J}^i$ of $\mathcal{G}^i$. Then $\mathcal{J}^1$ and $\mathcal{J}^2$ have a common strong level-1 refinement to compare perturbations.
\end{theorem}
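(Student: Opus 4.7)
The plan is to chain together the three parts of Theorem \ref{PRELIMTHM}, using (2) to align the underlying good coordinate systems, (1) to align the level-1 structures, and (3) to transfer the perturbations, before assembling a single common target by a fiber product.

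First I would apply Theorem \ref{PRELIMTHM}(2) directly to the pair $(\mathcal{H}^1, \mathcal{H}^2)$, which are good coordinate systems for the equivalent Kuranishi structures $\mathcal{K}^1$ and $\mathcal{K}^2$. This produces a common strong level-1 refinement: a level-1 good coordinate system $\mathcal{N}$ together with chart-refinements $\mathcal{H}^i \overset{\sim}{\leftarrow} \mathcal{H}^i_0$ and concerted level-1 Kuranishi embeddings $\mathcal{H}^i_0 \Rightarrow \mathcal{N}$ for $i=1,2$.

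Next I would descend to $\mathcal{J}^i$, whose underlying good coordinate system is a precompact shrinking of that of $\mathcal{H}^i$. Restricting $\mathcal{H}^i_0$ to the portion lying over $\mathcal{J}^i$ and, where needed, replacing $\mathcal{N}$ by a sufficiently small precompact shrinking via Proposition \ref{CONTROLLINGTHESIZE} so that the concertedness and the strong-open-neighborhood requirements are retained, I obtain chart-refinements $\mathcal{J}^i \overset{\sim}{\leftarrow} \mathcal{J}^i_0$ and concerted level-1 Kuranishi embeddings $\mathcal{J}^i_0 \Rightarrow \mathcal{N}$. The level-1 structure on $\mathcal{J}^i_0$ inherited from $\mathcal{N}$ via the embedding need not coincide with the one pulled back from $\mathcal{J}^i$ across the chart-refinement, so I would then apply Theorem \ref{PRELIMTHM}(1) to each factor separately to reconcile the two level-1 structures by a further strong level-1 refinement; this step is precisely why (2) cannot be invoked alone on $(\mathcal{J}^1,\mathcal{J}^2)$, since that would lose contact with the prescribed level-1 data.

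With the two factors now sharing the common target $\mathcal{N}$ and compatible level-1 structures along the embeddings, I would promote the pair of concerted level-1 Kuranishi embeddings to an admissible pair via Theorem \ref{ADMISSIBLEPAIRKEMB}, and then form the fiber product $\mathcal{G}^{\text{FP}}$ as in Definition \ref{FPGCSLEVELONE}. By construction $\mathcal{G}^{\text{FP}}$ admits strong level-1 refinements from shrinkings of both $\mathcal{J}^1$ and $\mathcal{J}^2$, hence is a common strong level-1 refinement in the sense of Definition \ref{REFINEMENTMAP}. Finally, Theorem \ref{PRELIMTHM}(3) produces cobordant modifications of the two given perturbations supported in the chart-refined pieces, lifts them along the concerted level-1 Kuranishi embeddings into $\mathcal{G}^{\text{FP}}$, and exhibits them as cobordant there. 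The main obstacle will be bookkeeping across the four successive shrinkings (good-coordinate-system comparison, descent to $\mathcal{J}^i$, level-1 reconciliation, and fiber product), ensuring at each stage that the strong-open-neighborhood conditions of the level-1 data and the compactness of the perturbation supports are simultaneously preserved.
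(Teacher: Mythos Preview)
Your overall strategy mirrors the paper's, but there is a real gap at the reconciliation step. After you invoke Theorem~\ref{PRELIMTHM}(1) ``to reconcile the two level-1 structures by a further strong level-1 refinement'', the output is a new level-1 object $\mathcal{R}^i$ into which (chart-refinements of) the two structures on $\mathcal{J}^i_0$ \emph{embed}. The arrows point \emph{into} $\mathcal{R}^i$, not out of it. So at this point you have $\mathcal{J}^i_0 \Rightarrow \mathcal{R}^i$ and $\mathcal{J}^i_0 \Rightarrow \mathcal{N}$, a pair of Kuranishi embeddings with a common \emph{source}, not a single embedding $\mathcal{R}^i \Rightarrow \mathcal{N}$. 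Your next sentence, ``With the two factors now sharing the common target $\mathcal{N}$'', is therefore unjustified: the reconciliation destroyed the common target.

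To recover a common target you need a \emph{second} layer of fiber products: for each $i$, form $\mathcal{Q}^i$ as the fiber product of $\mathcal{R}^i$ and $\mathcal{N}$ over (a tripling of) $\mathcal{J}^i_0$ (or rather over the common chart-refinement playing the role of $\tilde{\tilde{\mathcal{H}}}^i$ in the paper). Only then do you have concerted level-1 Kuranishi embeddings from a common source $(\mathcal{N}_S)_{\mathcal{S}_S}$ into both $\mathcal{Q}^1$ and $\mathcal{Q}^2$, and a third fiber product $\mathcal{Q}^{\mathrm{FP}}$ over that common source finishes the job. This is exactly the diagram in Figure~\ref{full_choice_ind}: the paper uses \emph{three} fiber products (two intermediate $\mathcal{Q}^i$ and one final $\mathcal{Q}^{\mathrm{FP}}$), not one. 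Your single fiber product at the end cannot absorb the branching created by the two separate applications of~\ref{PRELIMTHM}(1). A secondary issue is that ``restricting $\mathcal{H}^i_0$ to the portion lying over $\mathcal{J}^i$'' is not well-defined when the index sets differ; the paper handles this by treating $\mathcal{J}^i$ and $\hat{\mathcal{H}}^i$ as two level-1 chart-refinements of the same $\mathcal{H}^i$ and invoking~\ref{PRELIMTHM}(1) directly, which is cleaner than your restriction-then-reconcile approach.
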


\begin{proof}
We summarize the proof in figure \ref{full_choice_ind}.

\begin{figure}[htb]
  \begin{center}

\begin{tikzpicture}[scale=1]

\node at (-2, 3.45) {$(\mathcal{Q}^1_{\mathcal{S}_{(\mathcal{S}_S)}})_{\mathcal{S}_{(\mathcal{S}_{(\mathcal{S}_S)})}}$};
\node at (3.9, 3.55) {$(\mathcal{Q}^2_{\mathcal{S}_{(\mathcal{S}_S)}})_{\mathcal{S}_{(\mathcal{S}_{(\mathcal{S}_S)})}}$};

\node at (-4.2, 2.2){$\widetilde{(\tilde{\mathcal{L}}^1_S)_{\mathcal{S}_S}}$};
\node at (0.2, 2){$(\mathcal{N}_S)_{\mathcal{S}_S}$};
\node at (4.6, 2.3){$\widetilde{(\tilde{\mathcal{L}}^2_S)_{\mathcal{S}_S}}$};

\matrix(m)[matrix of math nodes, row sep=2.25em, column sep=1.5em,
text height=1.5ex, text depth=0.25ex]{
& & & & Q^{\text{FP}}\\
& & {\;} & & & & {\;}\\
& {\;} & \mathcal{Q}^1 &  & {\;\;\;\;} & & \mathcal{Q}^2 & {\;}\\
\tilde{\tilde{\mathcal{J}}}^1 & \tilde{\mathcal{L}}^1_{\mathcal{S}_S} & & & \mathcal{N} & & & \tilde{\mathcal{L}}^2_{\mathcal{S}_S} & \tilde{\tilde{\mathcal{J}}}^2\\
& \tilde{\mathcal{L}}^1 & & \tilde{\mathcal{N}}^1 & & \tilde{\mathcal{N}}^2 & & \tilde{\mathcal{L}}^2\\
& \mathcal{L}^1 & \tilde{\tilde{\mathcal{H}}}^1 & \mathcal{N}^1 & & \mathcal{N}^2 & \tilde{\tilde{\mathcal{H}}}^2 & \mathcal{L}^2\\
\tilde{\mathcal{J}}^1 & & \tilde{\mathcal{H}}^1 & & \mathcal{H} & & \tilde{\mathcal{H}}^2 & & \tilde{\mathcal{J}}^2\\
\mathcal{J}^1 & \mathcal{G}^1 & \mathcal{H}^1 & \hat{\mathcal{H}}^1 & & \hat{\mathcal{H}}^2 & \mathcal{H}^2 & \mathcal{G}^2 & \mathcal{J}^2\\
};
\draw[double,>= angle 60,->](m-2-3) -- (m-1-5);
\draw[double,>= angle 60,->](m-2-7) -- (m-1-5);
\draw[double,>= angle 60,->](m-3-2) -- (m-2-3);
\draw[double,>= angle 60,->](m-3-8) -- (m-2-7);
\draw[double,>= angle 60,->](m-4-9) -- (m-3-8);
\draw[double,>= angle 60,->](m-4-1) -- (m-3-2);
\draw[double,>= angle 60,->](m-4-2) -- (m-3-3);
\draw[double,>= angle 60,->](m-4-8) -- (m-3-7);
\draw[double,>= angle 60,->](m-3-5) -- (m-3-3);
\draw[double,>= angle 60,->](m-3-5)-- (m-3-7);
\draw[double,>= angle 60,->](m-6-3) -- (m-5-2);
\draw[double,>= angle 60,->](m-6-3)-- (m-5-4);
\draw[double,>= angle 60,->](m-5-4) -- (m-4-5);
\draw[double,>= angle 60,->](m-6-7) -- (m-5-8);
\draw[double,>= angle 60,->](m-6-7) -- (m-5-6);
\draw[double,>= angle 60,->](m-5-6) -- (m-4-5);
\draw[double,>= angle 60,->](m-7-1) -- (m-6-2);
\draw[double,>= angle 60,->](m-7-3) -- (m-6-2);
\draw[double,>= angle 60,->](m-7-3) -- (m-6-4);
\draw[double,>= angle 60,->](m-8-4) -- (m-7-5);
\draw[double,>= angle 60,->](m-8-6) -- (m-7-5);
\draw[double,>= angle 60,->](m-7-7) -- (m-6-6);
\draw[double,>= angle 60,->](m-7-7) -- (m-6-8);
\draw[double,>= angle 60,->](m-7-9) -- (m-6-8);
\path[thick, >= angle 60, ->]
(m-4-1) edge node [below, sloped]{$\sim$}(m-7-1)
(m-4-9) edge node [above, sloped]{$\sim$}(m-7-9)
(m-7-1) edge node [below, sloped]{$\sim$}(m-8-1)
(m-7-9) edge node [above, sloped]{$\sim$}(m-8-9)
(m-3-2) edge node [below, sloped]{$\sim$}(m-4-2)
(m-2-3) edge node [below, sloped]{$\sim$}(m-3-3)
(m-2-7) edge node [below, sloped]{$\sim$}(m-3-7)
(m-3-8) edge node [below, sloped]{$\sim$}(m-4-8)
(m-4-2) edge node [below, sloped]{$\sim$}(m-5-2)
(m-4-8) edge node [below, sloped]{$\sim$}(m-5-8)
(m-3-5) edge node [below, sloped]{$\sim$}(m-4-5)
(m-5-2) edge node [below, sloped]{$\sim$}(m-6-2)
(m-6-3) edge node [below, sloped]{$\sim$}(m-7-3)
(m-5-4) edge node [below, sloped]{$\sim$}(m-6-4)
(m-5-6) edge node [below, sloped]{$\sim$}(m-6-6)
(m-6-7) edge node [below, sloped]{$\sim$}(m-7-7)
(m-5-8) edge node [below, sloped]{$\sim$}(m-6-8)
(m-8-1) edge node [above, sloped]{$\sim$}(m-8-2)
(m-8-2) edge node [above, sloped]{$\sim$}(m-8-3)
(m-8-4) edge node [above, sloped]{$\sim$}(m-8-3)
(m-7-3) edge node [above, sloped]{$\sim$}(m-8-4)
(m-6-4) edge node [above, sloped]{$\sim$}(m-7-5)
(m-6-6) edge node [above, sloped]{$\sim$}(m-7-5)
(m-7-7) edge node [above, sloped]{$\sim$}(m-8-6)
(m-8-6) edge node [above, sloped]{$\sim$}(m-8-7)
(m-8-8) edge node [above, sloped]{$\sim$}(m-8-7)
(m-8-9) edge node [above, sloped]{$\sim$}(m-8-8);
\end{tikzpicture}

  \end{center}
\caption[Independence of most general choices made in the perturbation theory.]{Independence of most general choices made in the perturbation theory.}
\label{full_choice_ind}
\end{figure}
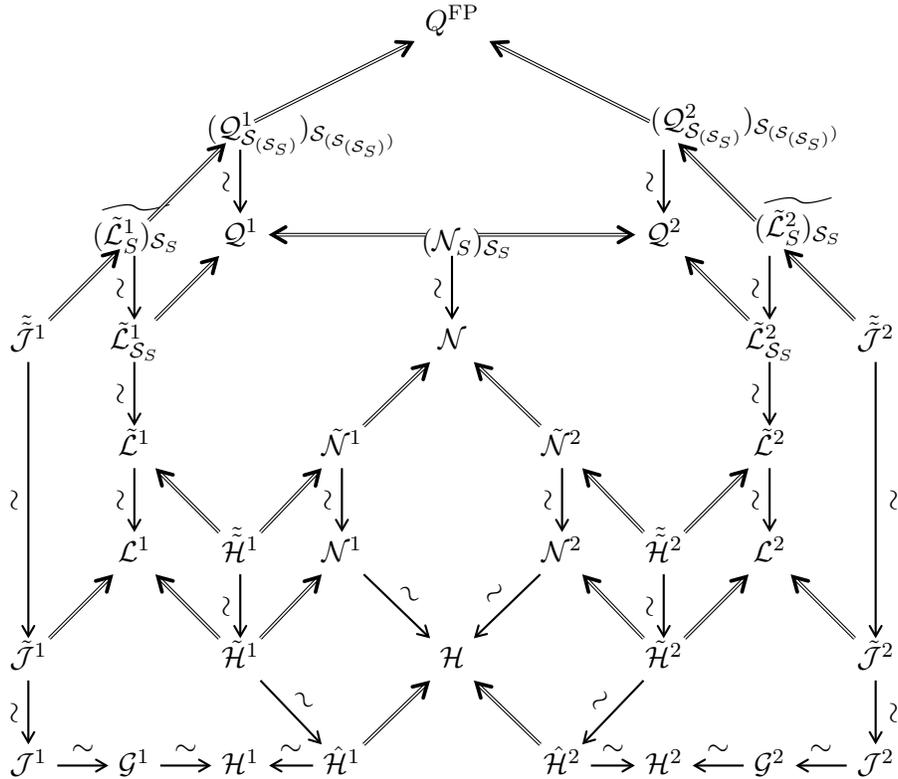

By theorem \ref{PRELIMTHM}.(2), $\mathcal{H}^1$ and $\mathcal{H}^2$ admit a common strong level-1 refinement $\mathcal{H}$.

Since both $\mathcal{J}^i$ and $\hat{\mathcal{H}}^i$ are level-1 chart-refinement of $\mathcal{H}^i$, they have a common strong level-1 refinement $\mathcal{L}^i$, by theorem \ref{PRELIMTHM}.(1).

We can lift a concerted level-1 Kuranishi embedding $\hat{\mathcal{H}}^i\Rightarrow \mathcal{H}$ through a chart-refinement of the domain to a concerted level-1 Kuranishi embedding $\tilde {\mathcal{H}}^i\Rightarrow \mathcal{N}^i$.

Since both $\mathcal{N}^1$ and $\mathcal{N}^2$ are common chart-refinement of $\mathcal{H}$, we can find a common strong level-1 refinement $\mathcal{N}$, by theorem \ref{PRELIMTHM}.(1).

We can lift the concerted level-1 Kuranishi embedding $$\tilde{\mathcal{H}}^i\Rightarrow \mathcal{N}^i$$ through a chart-refinement of the target to a concerted level-1 Kuranishi embedding $\tilde{\tilde {\mathcal{H}}}^i\Rightarrow \tilde{\mathcal{N}}^i$.

We can lift the level-1 Kuranishi embedding $$\tilde{\mathcal{H}}^i\Rightarrow \mathcal{L}^i$$ through a chart-refinement of the domain to a level-1 Kuranishi embedding $\tilde{\tilde {\mathcal{H}}}^i\Rightarrow \tilde{\mathcal{L}}^i$.

We now have a pair of concerted level-1 Kuranishi embeddings $\tilde{\tilde{H}}^i\Rightarrow\tilde{L}^i$ and $\tilde{\tilde{H}}^i\Rightarrow\mathcal{N}$. Observe, we perform tripling once to get the admissibility, and when form a fiber product $\mathcal{Q}^i$ of the admissible pair of concerted level-1 Kuranishi embeddings between the triplings (which level-1 chart-refines the original pair of concerted level-1 Kuranishi embeddings), we do a tripling again, hence the notation $\mathcal{Q}^i:=((\tilde{L}^i_S)_{\mathcal{S}_S})\times_{\mathcal{B}((\tilde{\tilde{\mathcal{H}}}^i_S)_{\mathcal{S}_S})}\times((\mathcal{N}_S)_{\mathcal{S}_S})$ indexed by $\mathcal{S}_{(\mathcal{S}_S)}$. Notice we can manage to use the same tripling $(\mathcal{N}_S)_{\mathcal{S}_S}$ for both fiber products. Indeed after picking $(U^j_{(\beta,\alpha)})^i, j=1,2,3$ for $\tilde{\tilde{\mathcal{H}}}^i, i=1,2$, we use $$\Pi^i_\beta(\bigcap_{k=1}^2\Pi^{-1}_\beta((U^1_{(\beta,\alpha)})^k)), \Pi^i_\alpha(\bigcap_{k=1}^2\Pi^{-1}_\alpha((U^2_{(\beta,\alpha)})^k)), \Pi^i_\alpha(\bigcap_{k=1}^2\Pi^{-1}_\alpha((U^3_{(\beta,\alpha)})^k))$$ instead to initiate the tripling. We do this again for the second tripling.

We have a pair of concerted level-1 Kuranishi embeddings $(\mathcal{N}_S)_{\mathcal{S}_S}\Rightarrow \mathcal{Q}^i$ commonly indexed by $\mathcal{S}_{(\mathcal{S}_S)}$ now. Again, we can form a fiber product $$\mathcal{Q}^{\text{FP}}:=(\mathcal{Q}^1_{\mathcal{S}_{(\mathcal{S}_S)}})_{\mathcal{S}_{(\mathcal{S}_{(\mathcal{S}_S)})}}\times_{\mathcal{B}((\mathcal{N}_{\mathcal{S}_{(\mathcal{S}_S)}})_{\mathcal{S}_{(\mathcal{S}_{(\mathcal{S}_S)})}})}(\mathcal{Q}^2_{\mathcal{S}_{(\mathcal{S}_S)}})_{\mathcal{S}_{(\mathcal{S}_{(\mathcal{S}_S)})}}$$ of the admissible pair of concerted level-1 Kuranishi embeddings between triplings commonly indexed by $\mathcal{S}_{(\mathcal{S}_{(\mathcal{S}_S)})}$ (which level-1 chart-refines the original pair of level-1 Kuranishi embeddings).

We can lift the concerted level-1 Kuranishi embedding $$(\tilde{\mathcal{L}}^i_S)_{\mathcal{S}_S}\Rightarrow\mathcal{Q}^i$$ through a chart-refinement of the target to a concerted level-1 Kuranishi embedding $\widetilde{(\tilde{\mathcal{L}}^i_S)_{\mathcal{S}_S}}\Rightarrow (\mathcal{Q}^i_{\mathcal{S}_{(\mathcal{S}_S)}})_{\mathcal{S}_{(\mathcal{S}_{(\mathcal{S}_S)})}}$.

We can lift the concerted level-1 Kuranishi embedding $$\tilde{\mathcal{J}}^i\Rightarrow\mathcal{L}^i$$ through a chart-refinement of the target to a concerted level-1 Kuranishi embedding $\tilde{\tilde{\mathcal{J}}}^i\Rightarrow \widetilde{(\tilde{\mathcal{L}}^i_S)_{\mathcal{S}_S}}$.

Now, if we compose the outmost edges of the diagram, we have for $i=1, 2$, $$\mathcal{J}^i\underset{\text{level-1}}{\overset{\sim}{\leftarrow}}\tilde{\tilde{\mathcal{J}}}^i\underset{\text{level-1}}{\Rightarrow}\mathcal{Q}^{\text{FP}}.$$

Therefore, $\mathcal{Q}^{\text{FP}}$ is the common strong level-1 refinement where compact invariant transverse perturbations on $\mathcal{J}^i$ can be modified in cobordant ways and lifted through level-1 refinements to be compared on $\mathcal{Q}^{\text{FP}}$, using theorem \ref{PRELIMTHM}.(3).

Therefore, the perturbation theory factors through the equivalence of Kuranishi structures and is independent of all the choices made, up to several cobordisms.
\end{proof}

\section{Subtleties in constructing a Kuranishi theory}

Having introduced all the ingredients of a choice-independent Kuranishi structure theory, we now outline some of the subtleties that have been encountered and dealt with:

\begin{enumerate}[(A)]

\item In a minimal formulation of Kuranishi structures, the tangent bundle condition is most naturally specified for zeros; however, extensions to higher charts are chosen after perturbing sections from lower charts, and the tangent bundle condition for perturbed sections may fail; given the constraint that we do not want to modify perturbed sections of lower charts, compact perturbation can thus be problematic and messy. This is one of many reasons why we introduced the level-1 structures.

We illustrate this issue with an example of a coordinate change; we can also think of this as a local part of $X$ and can add other charts to make $X$ compact. Let $$E_y=(\R\times (-5,1))\times \R\text{ and }E_x=(\R\times (-1,5)\times (-R,R)\times \R^2,$$ and let $\phi_{xy}(z_1, z_2)=(z_1, z_2, 0)$ with the domain of coordinate change $\R\times (-1, 1)$. Let $s_y(z_1, z_2)=(z_1, z_2, \eta (z_1) )$, where $\eta$ has zeros in $(-\epsilon, \epsilon)$, and let $s_x(z_1, z_2, z_3)=(z_1, z_2, z_3, \eta(z_1), \tau (z_1, z_3))$. We can define $\tau(z_1, \cdot)$ to be a cubic polynomial parameterized by $z_1$ that always has a zero at $z_3=0$, with $(\partial_{z_3}\tau)(z_1, 0)>0$ for $z_1\in(-\epsilon,\epsilon)$; and when $z_1$ increases from $2$ to $4$, say, two more zeros of $\tau(z_1,\cdot)$ gradually start to appear at $z_1=2$ and disappear at $z_1=4$; and at $z_1=3$ one of these newly-born zeros coincides with the existing zero and the other zero has $z_1$ larger than $R$. We can add one more chart to make $X$ compact. Suppose we perturb $\eta$ to have a single transverse zero at $3$, which is a perfectly legitimate transverse perturbation for $s_y$. Suppose that we also extend this perturbation by pulling back along the $z_3$ coordinate near $z_3=0$; in order to keep the perturbed $s_y$ intact, we have to perturb over $V_x\backslash \phi_{xy}(V_{xy})$ to make this extension transverse. A generic perturbation with the constraint that the existing choice must be kept fixed will create a noncompact component of $X$ that cannot be cancelled out. The reason is that here we have to perturb the section on $V_x\backslash\phi_{xy}(V_{xy})$, which is not compact. Observe that since the tangent bundle condition is specified at zeros, the tangent bundle condition will not be available for use without due care. This example can also be modified into an example showing that the tangent bundle condition is crucial.

This issue is solved by a level-1 coordinate change that makes full use of the tangent bundle condition and addresses the above compactness issues (using the fact that the part of $X$ covered by $V_{pq}$ is open in $X$).  

\item A coordinate change moving up in dimensions will create a quotient topology that is not locally compact, so there is a need to be careful to obtain a compact perturbation when moving up in dimensions.

The quotient topology of two charts identified by a coordinate change strictly increasing in dimension is not locally compact. Let us assume $V_p$ is a precompact shrinking of a bigger $\tilde V_p$, so we can talk about the boundary $\partial V_p:=\overline{V_p}\backslash V_p$, with the closure $\overline{V_p}$ taken in $\tilde V_p$. An immersed invariant tubular neighborhood $W_{pq}$ of $\phi_{pq}(V_{pq})$ (introducing no additional zeros) becomes increasingly narrow towards $\partial V_p$, where it is embedded. One intuition is to shrink $V_p$ to $V'_p$ in the direction normal to the boundary so that the resulting tubular neighborhood $W'_{pq}$ does not shrink to $\phi_{pq}(V_{pq})$ as it approaches $\partial V_p$; and if there is a way to ensure the extended perturbation of $(\phi_{pq})_\ast (s_q|_{V_{pq}})$ in $W'_{pq}$ is already transverse, we can use a partition of unity to glue it with a compact perturbation in $V'_p$ supported away from $V'_p\backslash \phi_{pq}(V'_{pq}$). This intuition is justified because it is impossible that $z_n\in X$ coming from $V_p\backslash W_{pq}$ converges to $z\in X_q|_{V_q}\cap X_p|_{\partial V_p}$, where the convergence is defined using the topology of $X$. For if this were possible, then we would have a contradiction because $V_{pq}$ induces an open set in $X$ by the definition of a Kuranishi structure (indeed, $V_{pq}$ is open in $V_q$ and $V_q$ induces an open set in $X$), and this limiting point $z$ lies in the open set in $X$ induced by $V_{pq}$, equivalently $\phi_{pq}(V_{pq})$, so eventually the limiting sequence will lie inside the open set induced by $\phi_{pq}(V_{pq})$, which is a contradiction. With this understood, we do not actually need to shrink $V_p$, and as there is no limiting zero to the boundary, we can always perturb the section over $V_p$ in a compact set away from places responsible for a non-locally-compact topology. This is a consequence of the coverage on $X$ by a Kuranishi chart being an open set in $X$ and a way of dealing with item (A) so that the extended perturbation is already transverse (for example, the existence of the level-1 structure for a coordinate change).

\item In order to analyze the various choices involved and to show that Kuranishi structure equivalence is an equivalence relation, we need to form a fiber product whose natural definition requires additional structures. This requirement is met by an admissible pair of concerted level-1 Kuranishi embeddings and the tripling process.

Let us illustrate with a simple example of a good coordinate system consisting of two charts, which illustrates the issue where dimension becomes an obstacle and why the tripling is necessary. Suppose that we have an admissible pair of concerted level-1 Kuranishi embeddings $C_y|_{U_y}\overset{\text{level-1}}{\to} C_x|_{U_x}$ to $C^i_{y}|_{U^i_y}\overset{\text{level-1}}{\to} C^i_x|_{U^i_x}$  for $i=1, 2$. The most natural candidate for a fiber product will not work in general, because, for example, if $\text{dim} E^i_y=\text{dim} E^i_x=\text{dim} E_x=6$ and $\text{dim} E_y=2$, we cannot have a coordinate change from the fiber product chart of dimension 10 indexed by $y$ to the fiber product chart of dimension 6 indexed by $x$.

\item Because we often shrink charts in our theory, it is essential to be able to determine the domains of the coordinate changes from the charts alone. This is achieved by the maximality condition, which rules out pathological examples that cause the incompatibility when extending perturbations. We discussed them before and recall two situations here:

One possible failure of the maximality is that two regions in $U_x$ are identified in higher charts, while the $G_x$-action does not map one to the other. By the definition of a Kuranishi chart, the quotient of the zero set maps injectively to $X$. Thus, the two regions have to be away from the zero set, but it is possible that the two regions still exist in any neighborhood of the zero set in $U_x$. Then if we perturb inductively by order, the two regions will in general have nonisomorphic perturbations chosen in the earlier stages of induction, and this will make it impossible to extend to a higher chart as they are identified there.

We could also have $C_y\to C_x$ with $U_{xy}$ not being as large as it should be, so that it is possible that although a part of $U_y\backslash U_{xy}$ is not identified with a part of $U_x\backslash\phi_{xy}(U_{xy})$ by this direct coordinate change, they are identified via some coordinate changes into higher-order charts (those parts need not be away from zeros like in the scenario of the previous paragraph). Then perturbations for $C_y$ and $C_x$ that are chosen to be compatible over $U_{xy}$ might not be compatible in the higher-order chart.

\item We only know that $X$ is Hausdorff, but the Hausdorffness of the space formed by gluing together charts in a good coordinate system is not apriori true with respect to any reasonable topology, and we need to have the Hausdorffness in this glued space to do inductive constructions and gluing (using a partition of unity). We also need a better control of coordinate changes in a good coordinate system, namely, we want to have that a point of $\underline{U'_x}$ and a point of $\underline{U'_y}$ are identified if and only if there exists a coordinate change between chart indexed by $x$ and $y$ in the direction determined by the order $\leq$. Namely, the control is upgraded from the coverages on $X$ by the charts to the (quotiented) bases of the charts. This is the strongly intersecting property.

The topological matching condition in the definition of a Kuranishi structure $\mathcal{K}$ allows a strongly intersecting shrinking $\mathcal{G}'$ for any good coordinate system $\mathcal{G}$ obtained from $\mathcal{K}$, and after defining the relative topology, to obtain a strongly intersecting Hausdorff shrinking from $\mathcal{G}'$.

\item To have a naturally defined and useful definition of the germ of Kuranishi structures for our purpose, we need to consider Kuranishi structure equivalence, where we allow the shrinking of charts and coherently replacing charts by possibly higher dimensional charts. This can be used to show the choice-independence throughout each stage of the theory (the maximality condition is needed to show the existence of a well-defined good coordinate system from a given choice). We also need to show that this is indeed an equivalence and that perturbation factors through this equivalence. This is established by using fiber product constructions in several stages.

\item For a given dimension, we need a recipe to glue perturbations extended from lower dimensional charts and perturbations chosen elsewhere.

\item The level-1 structure chosen on top of the minimal definition of a Kuranishi structure to carry out the above always exists based on the minimal definition and this is essential for the theory to be useful in general. A part but not the whole structure of the level-1 structure is actually naturally given if the Kuranishi structure from which the good coordinate system is obtained is from a \emph{polyfold Fredholm structure} via the \emph{forgetful construction}, as discussed in the next installment \cite{DII} of this series (after the forgetful construction is established). However, even if we restricting to those Kuranishi structures, constructing the remaining data for the level-1 structure is not trivial and still requires the vast majority of the ideas and methods of this paper.

\end{enumerate}

\newpage

\bibliographystyle{amsplain}

\end{document}